\apptocmd{\sloppy}{\hbadness 10000\relax}{}{}
\newtheorem{theorem}{Theorem}[section]
\newtheorem{corollary}[theorem]{Corollary}
\newtheorem{lemma}[theorem]{Lemma}
\newtheorem{definition}[theorem]{Definition}
\theoremstyle{definition}
\declaretheoremstyle[
  headfont=\normalfont\bfseries,
  sharenumber = theorem,
  bodyfont=\normalfont,
    qed = {\hbox{$\triangleleft$}}
]{examplestyle2}
\declaretheorem[
  style=examplestyle2,
  title=Example,
  refname={example,examples},
  Refname={Example,Examples}
]{example}
\declaretheorem[
  style=examplestyle2,
  title=Remark,
  refname={remark, remarks},
  Refname={Remark, Remarks}
]{remark}
\numberwithin{equation}{section}
\def\l@subsection{\@tocline{2}{0pt}{1pc}{4.6em}{}}
\renewcommand{\tocsubsection}[3]{%
  \indentlabel{\@ifnotempty{#2}{\hspace*{2.3em}\makebox[2.3em][l]{%
    \ignorespaces#1 #2.\hfill}}}#3}
\newcommand{\rat}{\mathbb{Q}}
\newcommand{\cc}{\omega}
\newcommand{\one}{\mathbb{1}}
\newcommand{\monster}{\mathbb{M}}
\newcommand{\bw}{\mathscr{W}}
\newcommand{\cmlt}{\bar{\circ}}
\newcommand{\eperp}{\lb e \ra^{\perp}}
\newcommand{\cdmlt}{\odot}
\newcommand{\herm}{\mathbb{Herm}}
\newcommand{\mat}{\mathbb{M}}
\newcommand{\sect}{\mathscr{K}}
\newcommand{\isect}{\mathscr{T}}
\newcommand{\msect}{\mathscr{M}}
\newcommand{\fl}{\si}
\newcommand{\szero}{\mathbb{N}}
\newcommand{\midem}{\mathbb{M}}
\newcommand{\idem}{\mathbb{I}}
\newcommand{\spec}{\Pi}
\newcommand{\specp}{\spec^{\perp}}
\newcommand{\talg}{\mathbb{S}}
\newcommand{\sphere}{\mathbb{S}}
\newcommand{\ones}{\mathbb{1}}
\newcommand{\quat}{\mathbb{H}}
\newcommand{\cayley}{\mathbb{O}}
\newcommand{\ideal}{\mathbb{I}}
\newcommand{\fie}{\mathbb{k}}
\newcommand{\fiel}{\mathbb{l}}
\newcommand{\so}{\mathfrak{so}}
\newcommand{\chr}{\operatorname{char}}
\newcommand{\diag}{\Delta}
\newcommand{\zalg}{\mathbb{Z}}
\newcommand{\trip}{\mathfrak{T}}
\newcommand{\tmlt}{\odot}
\newcommand{\nahm}{\mathfrak{Nahm}}
\newcommand{\mtens}{\mu}
\newcommand{\alg}{\mathbb{A}}
\newcommand{\halg}{\hat{\mathbb{A}}}
\newcommand{\balg}{\mathbb{B}}
\newcommand{\cl}{\mathfrak{c}}
\newcommand{\hbalg}{\hat{\balg}}
\newcommand{\mprod}{\circ}
\newcommand{\h}{\mathfrak{h}}
\newcommand{\kl}{\mathfrak{k}}
\newcommand{\om}{\omega}
\newcommand{\mlt}{\circ}
\newcommand{\hmlt}{\hat{\circ}}
\newcommand{\ricc}{\bar{\rho}}
\newcommand{\lalg}{\bar{\alg}}
\newcommand{\lmlt}{\bar{\mlt}}
\newcommand{\nmlt}{\tilde{\mlt}}
\newcommand{\bmlt}{\lmlt_{\be}}
\newcommand{\ka}{\kappa}
\renewcommand{\part}{\vdash}
\newcommand{\Id}{\operatorname{Id}}
\newcommand{\dum}{\,\cdot\,\,}
\newcommand{\Ga}{\Gamma}
\newcommand{\cycle}{\mathsf{Cycle}}
\def\cn#1{c_{#1}}
\def\bn#1{b_{#1}}
\newcommand{\ric}{\operatorname{\rho}}
\newcommand{\id}{\operatorname{Id}}
\newcommand{\scal}{\si}
\renewcommand{\j}{\mathsf{i}}
\newcommand{\djrd}{\odot}
\renewcommand{\S}{\mathscr{S}}
\newcommand{\la}{\lambda}
\newcommand{\ep}{\epsilon}
\newcommand{\fiet}{\fie^{\times}}
\newcommand{\ext}{\bigwedge}
\newcommand{\perm}{\operatorname{perm}}
\newcommand{\eno}{\operatorname{End}}
\newcommand{\si}{\sigma}
\newcommand{\sign}{\operatorname{sgn}}
\newcommand{\integer}{\mathbb{Z}}
\newcommand{\re}{\operatorname{Re}}
\newcommand{\im}{\operatorname{im}}
\newcommand{\B}{\mathcal{B}}
\newcommand{\lb}{\langle}
\newcommand{\ra}{\rangle}
\newcommand{\ste}{\mathbb{V}}
\newcommand{\al}{\alpha}
\newcommand{\be}{\beta}
\newcommand{\ga}{\gamma}
\newcommand{\spn}{\operatorname{Span}}
\newcommand{\su}{\mathfrak{su}}
\newcommand{\proj}{\mathbb{P}}
\newcommand{\Aut}{\operatorname{Aut}}
\newcommand{\g}{\mathfrak{g}}
\newcommand{\ad}{\operatorname{ad}}
\newcommand{\Ad}{\operatorname{Ad}}
\newcommand{\tensor}{\otimes}
\newcommand{\rea}{\mathbb R}
\newcommand{\com}{\mathbb C}
\newcommand{\tr}{\operatorname{tr}}
\DeclareMathOperator{\aut}{\mathfrak{aut}}
\newcommand{\der}{\operatorname{Der}}
\newcommand{\ealg}{\mathbb{E}}
\let\oldtocsection=\tocsection
\let\oldtocsubsection=\tocsubsection
\renewcommand{\tocsection}[2]{\hspace{0em}\oldtocsection{#1}{#2}}
\renewcommand{\tocsubsection}[2]{\hspace{1em}\oldtocsubsection{#1}{#2}}
\begin{document}
\title[Commutative algebras with nondegenerate invariant trace form]{Commutative algebras with nondegenerate invariant trace form and trace-free multiplication endomorphisms}

\author{Daniel J.~F. Fox}
\address{Departamento de Matemática Aplicada a la Ingeniería Industrial\\ Escuela Técnica Superior de Ingeniería y Diseño Industrial\\ Universidad Politécnica de Madrid\\Ronda de Valencia 3\\ 28012 Madrid España}
\email{daniel.fox@upm.es} 



\begin{abstract}
A commutative algebra is exact if its multiplication endomorphisms are trace-free and is Killing metrized if its Killing type trace-form is nondegenerate and invariant. A Killing metrized exact commutative algebra is necessarily neither unital nor associative. Such algebras can be viewed as commutative analogues of semisimple Lie algebras or, alternatively, as nonassociative generalizations of étale (associative) algebras. Some basic examples are described and there are introduced quantitative measures of nonassociativity, formally analogous to curvatures of connections, that serve to facilitate the organization and characterization of these algebras.
\end{abstract}

\maketitle

\setcounter{tocdepth}{1}  

\tableofcontents

\section{Introduction}
A commutative algebra $(\alg, \mlt)$ is \emph{exact} if its multiplication endomorphism $L_{\mlt}(x) \in \eno(\alg)$ defined by $L_{\mlt}(x)y = x\mlt y$ are trace-free, and is \emph{Killing metrized} if its \emph{Killing form} $\tau_{\mlt}(x, y) = \tr L_{\mlt}(x)L_{\mlt}(y)$ is nondegenerate and invariant. Such an algebra has no unit and is not associative. This article provides evidence that the class of Killing metrized exact commutative algebras is a tractable commutative analogue of the class of semisimple Lie algebras, and introduces a quantitative measure of sectional nonassociativity, formally analogous to sectional curvature, that generalizes the usual Norton inequality and can be used to organize the study of such algebras and, in some particular cases, to characterize and classify them.

Throughout the article the base field is denoted $\fie$ and $\fiet$ is the multiplicative group comprising its units. Where not specified the characteristic of $\fie$ is assumed to be zero, although this is stated explicitly where necessary. For some constructions that depend on positivity it is assumed that $\fie = \rea$.

An \emph{algebra} means a $\fie$-vector space $\alg$ equipped with a $\fie$-bilinear map $\mlt:\alg \times \alg \to \alg$ that is called the \emph{multiplication}. It need not be either unital or associative. In this paper all algebras are finite-dimensional as $\fie$-vector spaces, and this assumption is not repeated.

Let $(\alg, \mlt)$ be a commutative algebra. The image $L_{\mlt}(x)$ of the linear map $L_{\mlt}:\alg \to \eno_{\fie}(\alg)$ defined by $L_{\mlt}(x)y = x\mlt y$ is called the \emph{multiplication endomorphism} determined by $x \in \alg$.

A bilinear form $h \in \tensor^{2}\alg^{\ast}$ is \emph{invariant} if for all $x, y, z \in \alg$,
\begin{align}
&h(x\mlt y, z) = h(x, y\mlt z).
\end{align}
The algebra $(\alg, \mlt)$ is \emph{metrized} if there is a nondegenerate invariant symmetric bilinear form $h \in S^{2}\alg^{\ast}$ (a \emph{metric}). For background on metrized commutative algebras see \cite{Bordemann, Nadirashvili-Tkachev-Vladuts}. 

Provided $\chr \fie$ is relatively prime to $6$, the \emph{cubic polynomial} $P \in S^{3}\alg^{\ast}$ of the metrized commutative algebra $(\alg, \mlt, h)$ is defined by defined by $6P(x) = h(x\mlt x, x)$. The multiplication of $(\alg, \mlt, h)$ is determined completely from its associated cubic polynomial via polarization by
\begin{align}\label{hpolar}
\begin{aligned}
h(x, y\mlt z) =& P(x + y + z) - P(x + y) - P(y + z) - P(x + z) + P(x) + P(y) + P(z).
\end{aligned}
\end{align}
Metrized commutative algebras can be studied by applying techniques from analysis and invariant theory to the cubic polynomial \cite{Fox-cubicpoly, Nadirashvili-Tkachev-Vladuts}. 

That a commutative algebra be metrized means that its multiplication endomorphisms $L_{\mlt}(x)$ are $h$-self-adjoint. This has strong consequences for the spectral properties of multiplication endomorphisms $L_{\mlt}(x)$. If $\fie = \rea$ and $h$ has definite signature, it means that the $L_{\mlt}(x)$ are semisimple with real eigenvalues. 

Despite its good features, the class of metrized commutative algebras is too general to admit a good structure theory. In order to have some hope of reasonable classification and characterization some further conditions need to be imposed. One possibility is to link the invariant metric to the algebra structure in some a priori fashion. The conditions considered here are that certain particular trace-forms, the Ricci and Killing forms, be invariant and nondegenerate.

For a Lie algebra the Killing form is automatically invariant. For a Jordan algebra over a field of characteristic not equal to $2$, the trace-form $\tr L_{\mlt}(x \mlt y)$ is automatically invariant \cite{Schafer}, but the Killing type trace-form generally is not invariant. For a commutative algebra the invariance of some symmetric bilinear form is in general not automatic and the choice of the particular trace-form required to be invariant or nondegenerate matters and conditions requiring such invariance or nondegeneracy should be regarded as part of the algebraic structure. To explain this more precisely, some definitions are needed.

An \emph{automorphism} of an algebra $(\alg, \mlt)$ is an invertible endomorphism $\phi \in \eno(\alg)$ such that $\phi(x \mlt y) = \phi(x)\mlt \phi(y)$. The automorphisms of $(\alg, \mlt)$ constitute a closed subgroup $\Aut(\alg, \mlt)$ of $GL(\alg)$. An automorphism of a metrized commutative algebra $(\alg, \mlt, h)$ is an algebra automorphism that preserves $h$. The group $\Aut(\alg, \mlt, h)$ of automorphisms of $(\alg, \mlt, h)$ equals the intersection $\Aut(\alg, \mlt) \cap O(h)$, where $O(h)$ is the orthogonal group of $h$. 

By a trace-form is meant a multilinear form constructed from powers of multiplication endomorphisms and the multiplication itself. The importance of trace-forms is that they are automorphism invariants. For a general discussion of trace-forms from the point of view of invariant theory see \cite{Popov}. 

Any linear trace-form is a multiple of $\tr L_{\mlt}(x)$, so the only interesting general condition involving linear trace-forms is that $\tr L_{\mlt}(x) = 0$ for all $x \in \alg$. A commutative algebra $(\alg, \mlt)$ is \emph{exact} if $\tr L_{\mlt}(x) = 0$ for all $x \in \alg$. Exactness of a commutative algebra is analogous to the unimodularity of a Lie algebra. Here it serves as a helpful simplifying assumption, in the same sense that restricting to unimodular Lie algebras excludes the wilder solvable Lie algebras. 

The most general bilinear trace-form is a linear combination of the form
\begin{align}
\al \tr L_{\mlt}(x)L_{\mlt}(y) + \be \tr L_{\mlt}(x) \tr L_{\mlt}(y) + \ga \tr L_{\mlt}(x \mlt y).
\end{align}
The invariance or nondegeneracy of a particular trace-form are additional structural conditions not satisfied by a generic commutative algebra.
In the presence of the exactness condition the only possibility is the Killing type trace-form 
\begin{align}
\tau_{\mlt}(x, y) = \tr L_{\mlt}(x)L_{\mlt}(y). 
\end{align}
A commutative algebra is \emph{Killing invariant} if its Killing form is invariant, and is \emph{Killing metrized} if its Killing form is invariant and nondegenerate.
For a Killing metrized commutative algebra, an automorphism necessarily preserves the Killing form, so in this case $\Aut(\alg, \mlt) = \Aut(\alg, \mlt, \tau_{\mlt})$.

The invariance and nondegeneracy of the Killing form of a Killing metrized commutative algebra can be further strengthened. Over a general base field $\fie$, $\tau_{\mlt}$ can be supposed to be moreover anisotropic, and when $\fie = \rea$ (or, more generally, $\fie$ is real closed) it can be supposed to be positive definite. Stronger results are obtained in these settings.

One motivation for considering the class of Killing metrized commutative algebras is that it can be regarded as a commutative analogue of the class of semisimple Lie algebras. The semisimplicity corresponds to the nondegeneracy of the metric. On the other hand, the invariance of the metric is an independent condition that, while automatic for Lie algebras, must be imposed in the setting of commutative algebras. This suggests that relaxing the nondegeneracy condition could be interesting, and this is the reason for delineating the more general class of Killing invariant commutative algebras.

The relation with Lie algebras is in fact stronger than mere analogy as the tensor product of simple Lie algebras is a Killing metrized commutative algebra; see Lemma \ref{mcatensorlemma}. 
The class of Killing metrized commutative algebras is essentially the smallest that contains the algebra tensor products of semisimple Lie algebras (a precise formulation is not given here). This alone gives good motivation for studying metrized commutative algebras. Further motivation is given by the identification of interesting special examples such as those described here in Sections \ref{simplicialsection}, \ref{jordansection}, and \ref{triplesection} and Example \ref{voaexample}. 

The \emph{associator} $[x, y, z]$ of $(\alg, \mlt)$ is the trilinear map $\tensor^{3}\alg^{\ast} \to \alg$ defined by
\begin{align}
[x, y, z] = (x\mlt y)\mlt z - x\mlt(y\mlt z) = \left(L_{\mlt}(x \mlt y) - L_{\mlt}(x)L_{\mlt}(y)\right)z.
\end{align}
An analogy, not developed in full detail here, treating the structure tensor of the multiplication as parallel to an affine connection, leads to regarding the associator as an analogue of the curvature tensor of a connection. 
Following this analogy, the \emph{Ricci form} of a commutative algebra $(\alg, \mlt)$ is defined to be the bilinear trace-form 
\begin{align}\label{ricciform}
\ric_{\mlt}(x, y) = \tr L_{\mlt}(x\mlt y) - \tr L_{\mlt}(x)\tr L_{\mlt}(y) =\tr L_{\mlt}(x\mlt y) - \tau_{\mlt}(x, y).
\end{align}
The expression \eqref{ricciform} formally mimics the part of the expression for the Ricci curvature that depends algebraically (and not differentially) on the connection coefficients with respect to a background frame (its Christoffel symbols). Its vanishing is a weak quasi-associativity condition. From this point of view, the invariance of a symmetric bilinear form is the algebraic analogue of a symmetric covariant tensor being parallel, so that the invariance of the Ricci-form is formally analogous to the Einstein equations for a Riemannian metric (which imply that its Ricci form is parallel). For an exact commutative algebra the Ricci form is the negative of the Killing form, $\ric_{\mlt} = -\tau_{\mlt}$, and this line of thinking provides an independent motivation for studying Killing metrized algebras. The most natural generalization of the Killing metrized condition to the context of not necessarily exact commutative algebras is Ricci metrizability. A commutative algebra is \emph{Ricci invariant} if its Ricci form is invariant, and \emph{Ricci metrized} if its Ricci form is invariant and nondegenerate.

An associative Killing metrized commutative algebra is necessarily unital (it follows from Lemma \ref{ricciflatunitlemma} that a Killing metrized commutative algebra with vanishing Ricci form is necessarily unital). A unital, associative $\fie$-algebra $(\alg, \mlt)$ is \emph{split} if it is isomorphic to a product of $n$ copies of $\fie$. A unital, associative $\fie$-algebra $(\alg, \mlt)$ is \emph{étale} if $\alg\tensor_{\fie}\fiel$ is split for some field extension $\fiel$ of $\fie$. An étale algebra is necessarily commutative. An $n$-dimensional associative commutative $\fie$-algebra is Killing metrized if and only if it is an étale $\fie$-algebra of rank $n$ \cite[Proposition $1$, A.V.$47$]{Bourbaki-algebre4}. By the Artin-Wedderburn theorem, the algebra is Killing metrizable if and only if $(\alg, \mlt)$ is a direct sum of finite separable field extensions of $\fie$ whose ranks sum to $n$. In particular, over an algebraically closed field $\fie$, an étale $\fie$-algebra is necessarily split. The operative assumptions in this setting can be relaxed in various senses. The first is to consider not algebraically closed fields; this amounts to considering the general setting of étale algebras, where the complications are codified in Galois theory and its extensions. The second is to relax the associativity (and the concomitant unitality condition - unitality and associativity are linked in general \cite{Poonen}) while maintaining the commutativity and the invariance and nondegeneracy of the trace-form. This provides still  another motivation for considering Killing metrized commutative algebras.

The most basic examples of Killing metrized exact commutative algebras are the \emph{simplicial} algebras $\ealg^{n}(\fie)$, a family of commutative nonassociative algebras, one for each dimension $n \geq 2$, first studied in \cite{Dong-Griess} and \cite{Harada} as models of an algebraic structure with automorphisms equal to the symmetric group. The simplicial algebras are the only exact algebras in a one-parameter family of Ricci metrized algebras $\talg^{n}_{\al}(\fie)$ described in Section \ref{simplicialsection}. The family $\talg^{n}_{\al}(\fie)$ plays an important role in the classification of cyclic elements in semisimple Lie algebras \cite{Elashvili-Jibladze-Kac-semisimple}.

The analogy between the associator of a metrized commutative algebra and the curvature tensor of a metric leads to a notion called here \emph{sectional nonassociativity} directly modeled on the usual notion of sectional curvature. It facilitates quantifications of nonassociativity that in particular generalize the Norton inequality that plays an important role in the study of Griess algebras \cite[Chapter $8$]{Ivanov}. The Norton inequality is the same as \emph{nonnegative sectional nonassociativity}. For metrized commutative algebras, the corresponding upper bound, nonpositive sectional nonassociativity, has consequences reminiscent of conclusions stemming from nonpositive sectional curvature, for example finiteness of the automorphism group (see Lemma \ref{finiteautolemma}). However, the notion of sectional nonassociativity facilitates quantifications more precise than positivity or negativity, namely sharper bounds, or constancy. In particular, it is shown that constant sectional nonassociativity is related to notions of vanishing projective or conformal nonassociativity. For example, Theorem \ref{confassclassificationtheorem} gives a new characterization of the simplicial algebras in terms of sectional associativity; precisely, for $\fie$ algebraically closed or real, an projectively associative exact commutative algebra with nondegenerate Killing form is isomorphic to a simplicial algebra.

Following the analogy with the curvature of a metric connection, a metrized commutative algebra $(\alg, \mlt, h)$ is \emph{Killing Einstein} if there is $0 \neq \ka \in \fie$ such that $\tau_{\mlt} = \ka h$. The element $\ka$ is called the \emph{Einstein constant}. In this case $(\alg, \mlt)$ is Killing metrized, there is no need to mention $h$ in the definition. The inclusion of $h$ in the definition reflects an analogy with the Einstein equations for a Riemannian metric. These equations state that the Ricci curvature of the Levi-Civita connection of a Riemannian metric is a multiple of the metric. At least in the case of nonzero scalar curvature, they could equally well be formulated as the Ricci curvature of an affine connection is nondegenerate and parallel. The Einstein constant $\ka$ is the analogue of the scalar curvature. The Killing Einstein condition is considered here only for exact commutative algebras. The more appropriate notion for not necessarily exact metrized commutative algebras is that such an algebra is \emph{Einstein} if its Ricci-form is a nonzero multiple of $h$. Note that the Ricci form of a unital commutative algebra is necessarily degenerate for the unit $e$ satisfies $\ric_{\mlt}(e, \dum) = 0$.

This perspective is most appropriate when some natural invariant metric $h$ is fixed in advance and some work is needed to show that $\tau_{\mlt}$ is a multiple of $h$. For example, such a situation occurs in Examples \ref{voaexample} and \ref{voanortonexample}, where it is explained that results of Matsuo and Miyamoto yield that the deunitalizations of the Griess algebras of certain vertex operator algebras are Killing metrized exact commutative algebras with nonnegative sectional nonassociativity. In this context there is a natural invariant metric $h$ induced directly from the vertex operator algebra structure, and it is interesting to calculate the Einstein constant $\ka$ such that $\tau_{\mlt} = \ka h$. This is vaguely analogous to the much simpler example of a simple matrix Lie algebra, for which the Killing form is a constant multiple of the Frobenius trace form on the matrix algebra.

A variety of examples other than the $\talg^{n}_{\al}(\fie)$ are described. These include tensor products of simplicial algebras, the deunitalizations of simple Euclidean Jordan algebras (trace-free Hermitian matrices over real Hurwitz algebras, equipped with the trace-free Jordan product), the tensor products of simple Lie algebras, and the Griess algebras of OZ vertex operator algebras. 
Sections \ref{tensorsection}, \ref{jordansection}, and \ref{triplesection} and Example \ref{voaexample} describe these applications of the general theory to specific examples. They are largely independent and can be read separately.

Section \ref{tensorsection} describes partial results about the structure of the tensor products of simplicial algebras with themselves. Although $\ealg^{n}(\fie)$ is simple for all $n \geq 2$, the tensor product $\ealg^{m}(\fie)\tensor_{\fie}\ealg^{n}(\fie)$ is not simple if $m = 2$ or $m \geq 6$, although it is if $m = 3$. A peculiar and particular role is played by the tensor product with $\ealg^{3}(\fie)$ for the following reason. By \eqref{hpolar} the cubic form determined by polarizing the cubic polynomial of a metrized commutative algebra $(\alg, \mlt, h)$ can be regarded as the cubic polynomial of a metrized commutative algebra structure on the threefold sum $\alg \oplus \alg \oplus \alg$. The triple algebra construction is a special case of a more general construction described in Section \ref{triplesection} and associating to a not necessarily commutative algebra a metrized exact commutative algebra, that when applied to semisimple Lie algebras recovers the Nahm algebras studied by M. Kinyon and A. Sagle \cite{Kinyon-Sagle}. Lemma \ref{tripletensorlemma} shows that the triple algebra of a metrized commutative algebra is isomorphic to its tensor product with $\ealg^{3}(\fie)$ while the Nahm algebra of semisimple Lie algebra is isomorphic to its tensor product with $\so(3)$. This formal parallelism between $\ealg^{3}(\fie)$ and $\so(3)$ is partly explained in \cite{Ivanov}, where Ivanov has considered algebras satisfying conditions related to those here. (More generally, the point of view taken here has a lot in common with that advocated in \cite{Ivanov}, where the Killing trace-form and Norton inequality explicitly play important roles.)

The author believes that the simplicial algebras should play a role in the general theory somewhat akin to that played by Cartan subalgebras in Lie theory. In particular there should be a notion of rank for Killing metrized exact commutative algebras, related to the dimension of the maximal subalgebra isomorphic to a simplicial algebra. Substance is given to such a speculation by the results of Section \ref{jordansection}, where, among other things, it is shown that the diagonal subalgebra of a deunitalized simple Euclidean Jordan algebra is a simplicial algebra. That the deunitalizations of simple Euclidean Jordan algebras are Killing metrized exact algebras follows from results in \cite{Nadirashvili-Tkachev-Vladuts}. Here they are described in order to illustrate the notion of sectional nonassociativity. Lemma \ref{hermsectlemma} shows the simple Euclidean Jordan algebras have nonnegative sectional nonassociativity and deduces a sharp upper bound on their sectional nonassociativities from the Chern-do Carmo-Kobayashi-Böttcher-Wenzel inequality (see the appendix). The corresponding bounds on the sectional nonassociativities of their deunitalizations are given in Corollary \ref{jordandeunitalizationsectcorollary}.

To limit the paper's size, two of the most interesting applications are omitted, except for some remarks in Section \ref{conclusionsection}:
\begin{itemize}
\item The algebras of metric Weyl and Kähler Weyl curvature tensors studied in \cite{Fox-curvtensor} are Killing metrized and exact, so fit into the context considered here.
\item The author has shown \cite{Fox-frames} that the Norton algebra of the association scheme associated with the strongly regular graph \cite{Cameron-Goethals-Seidel, Cameron-Goethals-Seidel-stronglyregular} determined by a balanced two-distance tight unit norm frame in a Euclidean vector space \cite{Barg-Galzyrin-Okoudjou-Yu, Waldron-frames} is Killing metrized and exact (for a balanced equiangular tight frame this claim follows from \cite{Fox-cubicpoly}). The rich combinatorial structure gives access to explicit information about its idempotents and sectional nonassociativity. 
\end{itemize}

In recent years there has been a lot of activity related to axial algebras \cite{Hall-Rehren-Shpectorov-primitive} and certain algebras associated with $3$-transposition groups and Fischer systems. See \cite{Hall-transpositionalgebras} for a recent survey of these topics. Although such algebras often admit an invariant metric (for example see \cite[Theorem $4.1$]{Hall-Segev-Shpectorov-primitiveaxial}), in general they are \emph{not} Killing metrized. However some of them are, and this condition may identify particularly interesting algebras in these contexts. Examining in detail how axial algebras relate to the algebras considered here seems an interesting project.  

Most of the ideas presented here, in particular the notion of sectional nonassociativity, extend to a broader class of not necessarily commutative algebras, that could be called symmetric Frobenius (not necessarily unital or associative) algebras, that includes also semisimple Lie algebras. A systematic treatment in such generality is planned for the future.

In general metrized commutative algebras, bounds on sectional nonassociativity have strong implications for the spectra of the multiplication endomorphisms of idempotents. These will be studied as part of a joint project with V.~G. Tkachev.

This paper complements \cite{Fox-cubicpoly}, which studies the cubic polynomials of Euclidean Killing metrized exact commutative algebras without explicitly mentioning the algebraic aspect. A preliminary version of some of the results reported here is contained in \cite[Section $7.2$]{Fox-ahs} and reported in \cite{Fox-crm}.

\section{Basic structural results}
This section records basic structural results about metrized and Killing metrized commutative algebras that help to illustrate the importance of the metrized and Killing metrized assumptions. In this section the base field $\fie$ is arbitrary except where indicated otherwise.

An algebra is \emph{simple} if its multiplication is nontrivial and it has no nontrivial proper ideal. If $(\alg, \mlt)$ is simple then, since the multiplication is nontrivial, there must hold $\alg \mlt \alg = \alg$. Note that by this definition a simple algebra has nonzero dimension. 

\begin{lemma}\label{simpleideallemma}
If $\ideal$ is a simple ideal in a metrized commutative algebra $(\alg, \mlt, h)$, then either $\ideal$ is $h$-isotropic or $\alg = \ideal \oplus \ideal^{\perp}$ is a direct sum of $h$-nondegenerate ideals, where $\ideal^{\perp}= \{x \in \alg: h(x, a) = 0 \,\,\text{for all}\,\, a \in \ideal\}$ is the subspace $h$-orthogonal to $\ideal$.
\end{lemma}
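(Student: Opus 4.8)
The plan is to analyze the ideal $\ideal \cap \ideal^{\perp}$, which is itself an ideal of $\alg$, and to exploit that it is contained in $\ideal$. First I would check that $\ideal^{\perp}$ is an ideal: since $h$ is invariant, for $x \in \ideal^{\perp}$, $a \in \ideal$, and $z \in \alg$ one has $h(z \mlt x, a) = h(x, z \mlt a)$, and $z \mlt a \in \ideal$ because $\ideal$ is an ideal, so $h(z \mlt x, a) = 0$; hence $z \mlt x \in \ideal^{\perp}$. Consequently $\ideal \cap \ideal^{\perp}$ is an ideal of $\alg$ contained in the simple ideal $\ideal$, so by simplicity of $\ideal$ (as an algebra, using that an ideal of $\alg$ inside $\ideal$ is in particular an ideal of $\ideal$) either $\ideal \cap \ideal^{\perp} = \ideal$ or $\ideal \cap \ideal^{\perp} = \{0\}$.

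In the first case $\ideal \subseteq \ideal^{\perp}$, which says exactly that $h$ restricted to $\ideal$ vanishes, i.e.\ $\ideal$ is $h$-isotropic. In the second case $\ideal \cap \ideal^{\perp} = \{0\}$; I would then argue that $\alg = \ideal \oplus \ideal^{\perp}$ as a vector space. Nondegeneracy of $h$ on all of $\alg$ gives $\dim \ideal + \dim \ideal^{\perp} = \dim \alg$ (the standard fact that for a nondegenerate form the orthogonal complement of a subspace $W$ has dimension $\dim \alg - \dim W$), and combined with $\ideal \cap \ideal^{\perp} = \{0\}$ this yields the direct sum decomposition as vector spaces. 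Since both summands are ideals, the decomposition is a direct sum of ideals, and the multiplication respects it. It remains to see that $h$ is nondegenerate on each summand: if $x \in \ideal$ satisfies $h(x, a) = 0$ for all $a \in \ideal$, then also $h(x, a) = 0$ for all $a \in \ideal^{\perp}$ (as $x \in \ideal$ and $\ideal \perp \ideal^{\perp}$), hence $h(x, \alg) = 0$, so $x = 0$ by nondegeneracy of $h$; the same argument applies to $\ideal^{\perp}$.

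The only subtlety — and the step I would be most careful about — is the reduction to simplicity of $\ideal$ as an algebra in its own right: one must note that $\ideal \cap \ideal^{\perp}$, being an ideal of $\alg$ contained in $\ideal$, is automatically an ideal of the algebra $(\ideal, \mlt|_{\ideal})$, so that simplicity of $\ideal$ forces the dichotomy. One should also observe that $\ideal$ being simple excludes $\ideal = \{0\}$ (a simple algebra has nonzero dimension, and $\ideal \mlt \ideal = \ideal$), so the two cases are genuinely exclusive. The rest is linear algebra over an arbitrary field, so no characteristic hypotheses are needed.
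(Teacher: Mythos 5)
Your proof is correct and follows essentially the same route as the paper's: one observes that $\ideal^{\perp}$ is an ideal by the invariance of $h$, that $\ideal \cap \ideal^{\perp}$ is then an ideal contained in the simple ideal $\ideal$, and the dichotomy follows from simplicity. The paper's version is terser (it omits the dimension count giving $\alg = \ideal \oplus \ideal^{\perp}$ and the nondegeneracy check on each summand, which you rightly spell out), but the underlying argument is identical.
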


\begin{proof}
The invariance and nondegeneracy of $h$ implies that a subspace $\ideal \subset \alg$ is an ideal if and only if $\ideal^{\perp}$ is an ideal. In this case $\ideal \cap \ideal^{\perp}$ is an ideal, so if $\ideal$ is simple, then either $\ideal \cap \ideal^{\perp} = \{0\}$, in which case $\ideal$ is $h$-nondegenerate, or $\ideal = \ideal^{\perp}$, in which case $\ideal$ is $h$-isotropic.
\end{proof}

\begin{lemma}\label{simpledecomposelemma}
If a metrized commutative algebra $(\alg, \mlt, h)$ admits a decomposition $\alg = \alg_{1}\oplus \alg_{2}$ as a direct sum of 
ideals, then $\alg_{2} \subset (\alg_{1} \mlt \alg_{1})^{\perp}$. If, moreover, $\alg_{1}$ is simple, then $\alg_{1}^{\perp} = \alg_{2}$ and the restriction of $h$ to $\alg_{1}$ is nondegenerate. Consequently, if metrized commutative algebra $(\alg, \mlt, h)$ is expressible as a direct sum $\alg = \oplus_{i = 1}^{k}\alg_{i}$ of simple ideals $\alg_{i}$, then the summands are nondegenerate and pairwise orthogonal with respect to $h$.
\end{lemma}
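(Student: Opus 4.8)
The plan is to combine the invariance of $h$ with the observation, recorded above, that a simple algebra $\alg_1$ satisfies $\alg_1 \mlt \alg_1 = \alg_1$, together with a dimension count that uses the nondegeneracy of $h$ on all of $\alg$.

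First I would check that a direct sum of ideals has vanishing cross products: if $\alg = \alg_1 \oplus \alg_2$ with both summands ideals, then $\alg_1 \mlt \alg_2 \subseteq \alg_2$ and $\alg_2 \mlt \alg_1 \subseteq \alg_1$ (each summand being an ideal), so commutativity gives $\alg_1 \mlt \alg_2 = \alg_2 \mlt \alg_1 \subseteq \alg_1 \cap \alg_2 = \{0\}$. Hence for $x, y \in \alg_1$ and $z \in \alg_2$, invariance yields $h(x \mlt y, z) = h(x, y \mlt z) = h(x, 0) = 0$, which says precisely that $\alg_2 \subseteq (\alg_1 \mlt \alg_1)^{\perp}$; this is the first claim.

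When $\alg_1$ is simple I would substitute $\alg_1 \mlt \alg_1 = \alg_1$ to obtain $\alg_2 \subseteq \alg_1^{\perp}$. Because $h$ is nondegenerate on $\alg$ one has $\dim \alg_1^{\perp} = \dim \alg - \dim \alg_1 = \dim \alg_2$, so the inclusion is an equality $\alg_1^{\perp} = \alg_2$. The radical of $h|_{\alg_1}$ lies in $\alg_1 \cap \alg_1^{\perp} = \alg_1 \cap \alg_2 = \{0\}$, so $h$ restricts nondegenerately to $\alg_1$; equivalently one may quote Lemma \ref{simpleideallemma} for the simple ideal $\alg_1$, observing that the isotropic alternative would force $\alg_2 \subseteq \alg_1$, hence $\alg = \alg_1$ to be $h$-isotropic, contradicting nondegeneracy.

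For the final assertion, given $\alg = \bigoplus_{i=1}^{k}\alg_i$ with all $\alg_i$ simple ideals, I would apply the previous paragraph to each splitting $\alg = \alg_i \oplus \bigl(\bigoplus_{j \neq i}\alg_j\bigr)$, whose second summand is again an ideal; this gives $\alg_i^{\perp} = \bigoplus_{j\neq i}\alg_j$ and nondegeneracy of $h|_{\alg_i}$ for every $i$, and in particular $\alg_j \subseteq \alg_i^{\perp}$ whenever $j \neq i$, which is the asserted pairwise orthogonality. I do not expect a genuine obstacle here; the only step needing a little care is the dimension count, which genuinely uses that $h$ is nondegenerate on $\alg$ and not merely invariant, together with the bookkeeping that $\bigoplus_{j \neq i}\alg_j$ is an ideal.
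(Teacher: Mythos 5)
Your proof is correct and follows essentially the same route as the paper's: invariance of $h$ together with the vanishing of $\alg_{1}\mlt\alg_{2}$ (as it lies in $\alg_{1}\cap\alg_{2}=\{0\}$) gives $\alg_{2}\subset(\alg_{1}\mlt\alg_{1})^{\perp}$, and then $\alg_{1}\mlt\alg_{1}=\alg_{1}$ for simple $\alg_{1}$ plus the dimension count from nondegeneracy of $h$ on $\alg$ yields the remaining claims. No gaps.
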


\begin{proof}
Any $a_{1} \in \alg_{1} \mlt \alg_{1}$ can be written in the form $\sum_{i = 1}^{k}r_{i}\mlt t_{i}$ for some $r_{i}, t_{i} \in \alg_{1}$. If $a_{2} \in \alg_{2}$, then $h(a_{1}, a_{2}) = \sum_{i = 1}^{k}h(r_{i}\mlt t_{i}, a_{2}) = \sum_{i = 1}^{k}h(r_{i}, t_{i}\mlt a_{2}) = 0$, the last equality because $t_{i} \mlt a_{2} \in \alg_{1}\cap \alg_{2} = \{0\}$. This shows $\alg_{2} \subset (\alg_{1}\mlt\alg_{1})^{\perp}$. If $\alg_{1}$ is simple, then it equals $\alg_{1} \mlt \alg_{1}$, so $\alg_{2} \subset \alg_{1}^{\perp}$. In this case, by the nondegeneracy of $h$, $\dim \alg_{1}^{\perp} = \dim \alg_{2}$, so $\alg_{1}^{\perp} = \alg_{2}$. Since this means $\alg_{1} \cap \alg_{1}^{\perp} = \alg_{1} \cap \alg_{2} = \{0\}$, the restriction of $h$ to $\alg_{1}$ is nondegenerate.
\end{proof}

An algebra is \emph{semisimple} if it is a direct sum of simple ideals. 
When considering semisimplicity, some care is necessary, as some equivalences valid for associative algebras fail in its absence, e.g. the characterization of semisimplicity in terms of the solvable radical is not valid for general algebras. In this regard see \cite{Albert-radical}. 

The multiplication defining an algebra $(\alg, \mlt)$ is \emph{faithful} if $L_{\mlt}:\alg \to \eno(\alg)$ is injective. 

\begin{lemma}\label{faithfulmultiplicationlemma}
The multiplication of a metrized semisimple commutative algebra $(\alg, \mlt, h)$ is faithful.
\end{lemma}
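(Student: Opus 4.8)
The plan is to identify $\ker L_{\mlt}$ with the annihilator ideal $N = \{x \in \alg : x \mlt y = 0 \text{ for all } y \in \alg\}$ (the left and right annihilators agree by commutativity) and show it is trivial. First I would record two elementary facts. The first is that $N$ is an ideal, and in fact $\alg \mlt N = 0 \subseteq N$ outright. The second is that a semisimple commutative algebra satisfies $\alg \mlt \alg = \alg$: writing $\alg = \bigoplus_{i=1}^{k}\alg_{i}$ as a direct sum of simple ideals, each simple summand satisfies $\alg_{i} = \alg_{i}\mlt \alg_{i}$ (as recalled above, since the multiplication of a simple algebra is nontrivial), whence $\alg = \bigoplus_{i}\alg_{i}\mlt\alg_{i} \subseteq \alg\mlt\alg \subseteq \alg$.

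The metrized hypothesis then finishes the argument in one line: if $x \in N = \ker L_{\mlt}$, then for all $y, z \in \alg$ the invariance of $h$ gives $h(x, y\mlt z) = h(x\mlt y, z) = h(0, z) = 0$, so $x$ is $h$-orthogonal to $\alg\mlt\alg = \alg$; nondegeneracy of $h$ then forces $x = 0$. Hence $L_{\mlt}$ is injective.

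For completeness one could also note a metric-free route: since $\alg_{i}\mlt\alg_{j} \subseteq \alg_{i}\cap\alg_{j} = \{0\}$ for $i \neq j$, an element $x = \sum_{i} x_{i}$ annihilates $\alg$ if and only if each component $x_{i}$ annihilates $\alg_{i}$, which reduces the claim to the case of a simple algebra; and in a simple algebra the annihilator is a proper ideal (it cannot be all of $\alg$, as the multiplication is nontrivial), hence zero. This shows the conclusion actually needs only semisimplicity, but the metrized hypothesis yields the cleanest proof and matches the context of the section. There is no real obstacle here; the only point requiring care is the setup fact $\alg\mlt\alg = \alg$, which depends on the convention that a simple algebra has nontrivial multiplication.
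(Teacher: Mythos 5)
Your main argument is exactly the paper's: invariance of $h$ gives $\ker L_{\mlt} \subset (\alg\mlt\alg)^{\perp}$, and semisimplicity gives $\alg\mlt\alg = \sum_{i}\alg_{i}\mlt\alg_{i} = \alg$, so nondegeneracy finishes it. Your metric-free aside (reducing to the simple case and noting the annihilator is a proper ideal) is also correct, though the paper does not make this observation.
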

\begin{proof}
If $L_{\mlt}(z) = 0$, then, by the invariance of $h$, $0 = h(L_{\mlt}(z)x, y) = h(z, x\mlt y)$ for all $x, y \in \alg$, so that $z \in (\alg \mlt \alg)^{\perp}$. This shows $\ker L \subset (\alg \mlt \alg)^{\perp}$. In particular, if $\alg \mlt \alg = \alg$, then $L_{\mlt} = (\alg \mlt \alg)^{\perp} = \alg^{\perp} = \{0\}$, so $L_{\mlt}$ is injective. 
If $\alg = \oplus_{i =1 }^{k}\alg_{i}$ is a direct sum of simple ideals, then $\alg_{i}\mlt \alg_{j} = \{0\}$ if $i \neq j$, and, since $\alg_{i}$ is simple, $\alg_{i}\mlt \alg_{i} = \alg_{i}$, so $\alg \mlt \alg = \sum_{i}\alg_{i}\mlt \alg_{i} = \sum_{i}\alg_{i} = \alg$, and $L_{\mlt}$ is injective.
\end{proof}

\begin{lemma}\label{ricciflatunitlemma}
Suppose the Killing metrized commutative algebra $(\alg, \mlt)$ has vanishing Ricci form.
\begin{enumerate}
\item The unique $\ell \in \alg$ such that $\tau_{\mlt}(\ell, x) = \tr L_{\mlt}(x)$ for all $x \in \alg$ is a unit.
\item A square-zero element $z \in \alg$ is $\tau_{\mlt}$-isotropic.
\end{enumerate}
\end{lemma}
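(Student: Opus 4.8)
The plan is to reduce both parts to a single reformulation of the hypothesis. By the definition \eqref{ricciform} of the Ricci form, the vanishing of $\ric_{\mlt}$ is equivalent to the identity
\[
\tau_{\mlt}(x, y) = \tr L_{\mlt}(x \mlt y) \qquad \text{for all } x, y \in \alg,
\]
and the two assertions follow quickly from this together with the invariance and nondegeneracy of $\tau_{\mlt}$.

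For part (1), the existence and uniqueness of $\ell$ are immediate: $x \mapsto \tr L_{\mlt}(x)$ is a linear functional on $\alg$, and nondegeneracy of $\tau_{\mlt}$ produces a unique $\ell \in \alg$ with $\tau_{\mlt}(\ell, x) = \tr L_{\mlt}(x)$ for all $x$. To see that $\ell$ is a unit it suffices, by commutativity, to show $\ell \mlt x = x$ for every $x$, and by nondegeneracy of $\tau_{\mlt}$ this in turn reduces to checking $\tau_{\mlt}(\ell \mlt x, y) = \tau_{\mlt}(x, y)$ for all $y$. I would then chain three facts in order: invariance of $\tau_{\mlt}$ gives $\tau_{\mlt}(\ell \mlt x, y) = \tau_{\mlt}(\ell, x \mlt y)$; the defining property of $\ell$ gives $\tau_{\mlt}(\ell, x \mlt y) = \tr L_{\mlt}(x \mlt y)$; and the reformulated hypothesis gives $\tr L_{\mlt}(x \mlt y) = \tau_{\mlt}(x, y)$. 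Hence $\tau_{\mlt}(\ell \mlt x - x, y) = 0$ for all $y$, so $\ell \mlt x = x$ by nondegeneracy, and commutativity upgrades this one-sided unit to a two-sided one.

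For part (2), if $z$ is square-zero then $z \mlt z = 0$, so $L_{\mlt}(z \mlt z) = L_{\mlt}(0) = 0$, and the reformulated hypothesis gives $\tau_{\mlt}(z, z) = \tr L_{\mlt}(z \mlt z) = 0$; thus $z$ is $\tau_{\mlt}$-isotropic. Once part (1) is in hand this also follows from $\tau_{\mlt}(z, z) = \tau_{\mlt}(\ell \mlt z, z) = \tau_{\mlt}(\ell, z \mlt z) = 0$, so part (2) can be presented either independently or as a corollary of part (1).

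There is no serious obstacle here: the whole content is in rewriting the vanishing-Ricci hypothesis as $\tau_{\mlt}(x,y) = \tr L_{\mlt}(x \mlt y)$ and then applying invariance and nondegeneracy in the right order. The only points that require a little care are remembering that commutativity is needed to promote $\ell$ from a left unit to a genuine unit, and keeping the logical structure clean, since part (1) is precisely the statement used earlier to conclude that an associative Killing metrized commutative algebra is unital.
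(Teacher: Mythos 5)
Your proof is correct and follows essentially the same route as the paper's: rewrite vanishing Ricci form as $\tau_{\mlt}(x,y)=\tr L_{\mlt}(x\mlt y)$, chain invariance, the defining property of $\ell$, and this identity to get $\tau_{\mlt}(\ell\mlt x - x, y)=0$, then invoke nondegeneracy; part (2) is the same one-line computation. No gaps.
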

\begin{proof}
For $x, y \in \alg$, $\tau_{\mlt}(\ell \mlt x, y) = \tau_{\mlt}(\ell, x \mlt y) = \tr L_{\mlt}(x\mlt y) = \tr L_{\mlt}(x)L_{\mlt}(y) = \tau_{\mlt}(x, y)$, the first equality by the invariance of $\tau_{\mlt}$ and the third equality by the vanishing of the Ricci form. Since this holds for all $y \in \alg$, by the nondegeneracy of $\tau_{\mlt}$ there holds $x\mlt \ell  = x$ for all $x \in \alg$. If $z \in \alg$ satisfies $z \mlt z = 0$, then $\tau_{\mlt}(z, z) = \tr L_{\mlt}(z)^{2} = \tr L_{\mlt}(z \mlt z) = 0$.
\end{proof}

By Lemma \ref{ricciflatunitlemma}, an associative Killing metrized commutative algebra is necessarily unital. 

\begin{lemma}\label{dieudonnelemma}
Let $(\alg, \mlt)$ be a commutative $\fie$-algebra.
If there are $p, q, r \in \fie$ such that the symmetric bilinear form $\tau_{p, q, r}(x, y) = p\tr L_{\mlt}(x\mlt y) + q\tr L_{\mlt}(x)L_{\mlt}(y) + r \tr L_{\mlt}(x)\tr L_{\mlt}(y)$ is nondegenerate and invariant, then $(\alg, \mlt)$ is semisimple, expressible, in a unique way up to reordering the factors, as a direct sum $\alg = \oplus_{i}\alg_{i}$ of simple ideals $\alg_{i} \subset \alg$ pairwise orthogonal with respect to $\tau_{p, q, r}$.
\end{lemma}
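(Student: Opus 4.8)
The plan is to observe first that $\tau_{p,q,r}$, being symmetric (the term $\tr L_{\mlt}(x\mlt y)$ is symmetric because $\mlt$ is commutative, and the other two terms are manifestly symmetric), nondegenerate, and invariant, is a metric on $(\alg, \mlt)$. Hence $(\alg, \mlt, \tau_{p,q,r})$ is a metrized commutative algebra and Lemmas \ref{simpleideallemma} and \ref{simpledecomposelemma} apply: in particular, a subspace is an ideal if and only if its $\tau_{p,q,r}$-orthogonal complement is, and any expression of a metrized commutative algebra as a direct sum of simple ideals is automatically $\tau_{p,q,r}$-orthogonal. Consequently, once the existence of a decomposition $\alg = \oplus_{i}\alg_{i}$ into simple ideals is in hand, the pairwise orthogonality of the summands follows immediately from Lemma \ref{simpledecomposelemma}, so only existence and uniqueness need to be proved.

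The crux will be to show that $\alg$ contains no nonzero ideal that is isotropic for $\tau_{p,q,r}$. Suppose $\ideal \neq \{0\}$ were an ideal with $\tau_{p,q,r}(\ideal, \ideal) = \{0\}$. Invariance gives $\tau_{p,q,r}(\ideal\mlt\ideal, \alg) = \tau_{p,q,r}(\ideal, \ideal\mlt\alg) \subseteq \tau_{p,q,r}(\ideal, \ideal) = \{0\}$ since $\ideal$ is an ideal, so $\ideal\mlt\ideal \subseteq \alg^{\perp} = \{0\}$; that is, $\ideal$ is square-zero. Now for $z \in \ideal$ and $x \in \alg$, any operator of the form $A L_{\mlt}(z)$ with $A \in \eno(\alg)$ preserving $\ideal$ sends $\alg$ into $\ideal$ and annihilates $\ideal$, hence squares to zero and has vanishing trace. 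Applying this with $A = \id$, with $A = L_{\mlt}(x)$, and with $L_{\mlt}(z\mlt x)$ itself (legitimate since $z\mlt x \in \ideal$), one gets $\tr L_{\mlt}(z\mlt x) = \tr L_{\mlt}(z)L_{\mlt}(x) = \tr L_{\mlt}(z) = 0$, so $\tau_{p,q,r}(z, x) = 0$ and therefore $\ideal \subseteq \alg^{\perp} = \{0\}$, a contradiction. I expect this to be the main obstacle: the customary shortcut that products of elements of a nilpotent ideal eventually vanish is unavailable without associativity, and one must instead exploit that the operators entering $\tau_{p,q,r}$ all factor through the square-zero ideal $\ideal$.

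The remainder is an induction on $\dim\alg$. If $\alg$ is simple there is nothing to prove. Otherwise pick a minimal nonzero ideal $\ideal \subsetneq \alg$. Then $\ideal \cap \ideal^{\perp}$ is an ideal contained in $\ideal$, so by minimality it is $\{0\}$ or $\ideal$; the latter would make $\ideal$ isotropic, which the previous step forbids, so $\ideal \cap \ideal^{\perp} = \{0\}$ and by nondegeneracy $\alg = \ideal \oplus \ideal^{\perp}$ is an orthogonal direct sum of ideals with $\ideal\mlt\ideal^{\perp} \subseteq \ideal \cap \ideal^{\perp} = \{0\}$ (both factors being ideals). From $\ideal\mlt\ideal^{\perp} = \{0\}$ it follows that every ideal of the algebra $\ideal$, and every ideal of the algebra $\ideal^{\perp}$, is already an ideal of $\alg$; hence $\ideal$, which is not square-zero by the previous step, is simple, and a decomposition of $\ideal^{\perp}$ into simple ideals is a decomposition of $\alg$ by ideals. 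Moreover, because $L_{\mlt}(w)$ has image in $\ideal^{\perp}$ for $w \in \ideal^{\perp}$ and $L_{\mlt}(w)L_{\mlt}(x)$ annihilates $\ideal$ for $w \in \ideal^{\perp}$ (using $\ideal\mlt\ideal^{\perp} = \{0\}$), the form $\tau_{p,q,r}$ computed inside $\ideal^{\perp}$ coincides with the restriction to $\ideal^{\perp}$ of the $\tau_{p,q,r}$ of $\alg$, which is nondegenerate and invariant; the inductive hypothesis then presents $\ideal^{\perp}$ as a direct sum of simple ideals, and therefore so is $\alg$.

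Finally, orthogonality of the simple summands is Lemma \ref{simpledecomposelemma}, and uniqueness up to reordering is standard: given two decompositions $\alg = \oplus_{i}\alg_{i} = \oplus_{j}\balg_{j}$ into simple ideals, each intersection $\alg_{i}\cap\balg_{j}$ is an ideal of $\alg$ contained in the simple ideal $\alg_{i}$, hence is $\{0\}$ or $\alg_{i}$; since $\alg_{i} = \alg_{i}\mlt\alg_{i} \subseteq \sum_{j}\alg_{i}\mlt\balg_{j} \subseteq \sum_{j}(\alg_{i}\cap\balg_{j})$, exactly one $j$ gives $\alg_{i}\cap\balg_{j} = \alg_{i}$, whence $\alg_{i}\subseteq\balg_{j}$, and since $\balg_{j}$ is simple and $\alg_{i}$ a nonzero ideal of $\alg$ this forces $\alg_{i} = \balg_{j}$.
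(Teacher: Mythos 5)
Your proof is correct. The heart of it --- showing that a square-zero (equivalently, isotropic) ideal $\ideal$ must vanish because every operator of the form $AL_{\mlt}(z)$ with $z \in \ideal$ and $A$ preserving $\ideal$ squares to zero and is therefore trace-free, which kills all three trace forms entering $\tau_{p,q,r}(z,\dum)$ --- is exactly the computation in the paper's proof; the paper packages it slightly differently by taking $A = \id_{\alg} + \la L_{\mlt}(a)$ and varying $\la$, whereas you take $A = \id$ and $A = L_{\mlt}(x)$ separately, but these are the same trick. Where you genuinely diverge is in what you do with this fact: the paper invokes Dieudonn\'e's theorem (via Schafer or Jacobson) to pass from ``no nonzero ideal squares to zero'' to the existence of a decomposition into simple ideals, and leaves uniqueness implicit in that citation, whereas you replace the citation with a self-contained induction on dimension (splitting off a minimal ideal as a nondegenerate orthogonal summand, checking that $\tau_{p,q,r}$ restricts correctly to the complement because all the relevant multiplication operators respect the splitting) and you prove uniqueness explicitly by intersecting two decompositions. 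Your version is longer but has the advantage of making the role of the metric transparent --- the orthogonal complement does the work that the radical does in the classical associative argument --- and of not requiring the reader to check that the cited theorems apply verbatim to nonassociative algebras; the paper's version is shorter and situates the lemma in its classical lineage. The one point worth stating more carefully is that $\ideal^{\perp}$ is an ideal whenever $\ideal$ is (needed for $\ideal \cap \ideal^{\perp}$ to be an ideal in your induction); this is established in the proof of Lemma \ref{simpleideallemma}, which you do cite, so the gap is only expository.
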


\begin{proof}
Since $\tau_{\mlt}$ is by assumption invariant, by \cite[Theorem $2.6$]{Schafer-book} or \cite[Theorem III.$5.3$]{Jacobson} (the argument is due to Dieudonne \cite{Dieudonne-killing}, where it is given for Lie algebras), the expressibility of $\alg$ as a direct sum of simple ideals follows provided $\balg\mlt \balg \neq \{0\}$ for every ideal $\balg \neq \{0\}$. Suppose $\balg \subset \alg$ is an ideal such that $\balg\mlt \balg = \{0\}$. For $b \in \balg$ and any $A \in \eno(\alg)$ having the form $A = \id_{\alg} + \la L_{\mlt}(a)$ for $\la \in \fie$ and $a \in \alg$, the composition $L_{\mlt}(b)A$ maps $\alg$ into $\balg$ and $\balg$ into $\{0\}$, so that $(L_{\mlt}(b)A)^{2} = 0$. Hence $0 = \tr L_{\mlt}(b)A = \tr L_{\mlt}(b) + \la \tr L_{\mlt}(b)L_{\mlt}(a)$. Since $\la$ is arbitrary, this implies $\tr L_{\mlt}(b) =0$ and $\tr L_{\mlt}(a)L_{\mlt}(b) = 0$ for all $a \in \alg$. Consequently $\tau_{p, q, r}(a, b) = p\tr L_{\mlt}(a\mlt b ) + q\tr L_{\mlt}(a)L_{\mlt}(b) + r \tr L_{\mlt}(a) \tr L_{\mlt}(b) = 0$ for all $a \in \alg$, which by nondegeneracy of $\tau_{p, q, r}$ implies $b = 0$, and hence $\balg = \{0\}$. This suffices to show that $(\alg, \mlt)$ is semisimple. The pairwise orthogonality of the summands follows from Lemma \ref{simpledecomposelemma}.
\end{proof}

Lemma \ref{dieudonnelemma} implies that a Killing (Ricc) metrized exact commutative algebra is a direct sum of simple ideals orthogonal with respect to its Killing (Ricci) form. Consequently, to classify Killing (Ricci) metrized exact commutative algebras, it suffices to classify the simple ones.

\begin{lemma}\label{flatalgebralemma}
An $n$-dimensional associative Killing metrized commutative $\fie$-algebra is a split étale algebra if either $\fie$ is algebraically closed or $\fie = \rea$ and its Killing form is positive definite.
\end{lemma}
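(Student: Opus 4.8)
The plan is to obtain the structure of $\alg$ from the semisimplicity already established in Lemma~\ref{dieudonnelemma}, and then read the splitting off from the hypothesis on $\fie$. First I would apply Lemma~\ref{dieudonnelemma} with $(p, q, r) = (0, 1, 0)$: since $(\alg, \mlt)$ is Killing metrized, the form $\tau_{0, 1, 0} = \tau_{\mlt}$ is nondegenerate and invariant, so the lemma gives a decomposition $\alg = \oplus_{i = 1}^{k}\alg_{i}$ into simple ideals that are pairwise $\tau_{\mlt}$-orthogonal. By Lemma~\ref{simpledecomposelemma} the restriction of $\tau_{\mlt}$ to each $\alg_{i}$ is nondegenerate; and since $\alg_{i}$ and $\alg_{j}$ are ideals with $\alg_{i}\mlt\alg_{j} = \{0\}$ for $i \neq j$, for $x \in \alg_{i}$ the operator $L_{\mlt}(x)$ kills each $\alg_{j}$ with $j \neq i$ and acts on $\alg_{i}$ as the left multiplication of the subalgebra $\alg_{i}$. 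Hence the restriction of $\tau_{\mlt}$ to $\alg_{i}$ is exactly the Killing form of $\alg_{i}$ regarded as an algebra in its own right, and $\tau_{\mlt}$ is the orthogonal direct sum of these.

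Next I would identify each simple summand. A finite-dimensional simple commutative associative $\fie$-algebra $\balg$ is a finite field extension of $\fie$: the set $\{x \in \balg : x\mlt\balg = \{0\}\}$ is an ideal, and it is proper because $\balg\mlt\balg = \balg \neq \{0\}$, hence it is $\{0\}$; therefore for $0 \neq a \in \balg$ the ideal $a\mlt\balg$ is nonzero, so equals $\balg$, and letting $a$ range over the nonzero elements produces first a unit and then multiplicative inverses. Thus $\alg = \oplus_{i}\alg_{i}$ with each $\alg_{i}$ a finite field extension of $\fie$; each $\alg_{i}$ is separable over $\fie$ because its trace form, which by the previous paragraph is the restriction of $\tau_{\mlt}$, is nondegenerate, so $\alg$ is an étale $\fie$-algebra of rank $n$ in any case, as recalled in the introduction. (Alternatively one may cite \cite[A.V.$47$]{Bourbaki-algebre4}.)

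It remains to promote ``étale'' to ``split'' under the two hypotheses, factor by factor. If $\fie$ is algebraically closed, it has no proper finite extension, so $\alg_{i} = \fie$ for every $i$, whence $\alg \cong \fie^{n}$, which is split. If $\fie = \rea$, then each $\alg_{i}$ is isomorphic to $\rea$ or to $\com$; but the Killing form of $\com$ over $\rea$ is indefinite, for in the $\rea$-basis $\{1, i\}$ of $\com$ it has $\tau_{\mlt}(1, 1) = \tr L_{\mlt}(1) = 2$, $\tau_{\mlt}(i, i) = \tr L_{\mlt}(-1) = -2$, and $\tau_{\mlt}(1, i) = \tr L_{\mlt}(i) = 0$. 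Hence by the orthogonal decomposition of the first paragraph a summand $\alg_{i} \cong \com$ would force $\tau_{\mlt}$ to be indefinite, contrary to the assumption that $\tau_{\mlt}$ is positive definite. Therefore $\alg_{i} \cong \rea$ for all $i$, so $\alg \cong \rea^{n}$, and $\alg$ is split.

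Almost everything here is routine once Lemma~\ref{dieudonnelemma} is in hand; the one point needing a little care is the compatibility, recorded in the first paragraph, of the Killing form of $\alg$ with the Killing forms of its simple summands, since it is this that lets the sign condition on $\tau_{\mlt}$ (or the absence of proper finite extensions of $\fie$) be applied one factor at a time.
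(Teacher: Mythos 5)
Your proof is correct and follows essentially the same route as the paper's: Lemma \ref{dieudonnelemma} yields the $\tau_{\mlt}$-orthogonal decomposition into simple ideals, each simple summand is identified as a finite field extension of $\fie$, and the hypothesis on $\fie$ (no proper finite extensions, or positive definiteness against the split-signature Killing form of $\com$ over $\rea$) forces each summand to be $\fie$ itself. The only real difference is that where the paper first invokes Lemma \ref{ricciflatunitlemma} for unitality and then cites Artin--Wedderburn to see that each simple summand is a field, you give the direct elementary argument producing a unit and inverses from simplicity, and you make explicit the compatibility of $\tau_{\mlt}$ with the Killing forms of the summands, a point the paper leaves implicit.
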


\begin{proof}
In either case, by Lemma \ref{ricciflatunitlemma}, $(\alg, \mlt)$ is unital.
By Lemma \ref{dieudonnelemma} it is a direct sum of $\tau_{\mlt}$-orthogonal simple ideals. By the Artin-Wedderburn theorem each of these simple ideals is isomorphic to a matrix algebra over a division ring over $\fie$, and by the commutativity of $\mlt$, each of these must be a commutative division ring over $\fie$, so a field extension of $\fie$. If $\fie$ is algebraically closed the conclusion follows, while if $\fie = \rea$ and $\tau_{\mlt}$ is positive definite, then each of the simple ideals is isomorphic to $\rea$ or $\com$, and since the restriction of $\tau_{\mlt}$ to each ideal is an invariant metric with definite signature, each must be isomorphic to $\rea$.
\end{proof}

\begin{example}\label{cubicexample}
The conclusion of Lemma \ref{dieudonnelemma} is false for general metrized commutative algebras as the following simple example illustrates. Let $\chr \fie = 0$ and for $\ep \in \fie$ consider the commutative, associative, unital algebra $\alg = \fie[t]/(t^{2} - \ep)$. 
For $\la \in \alg^{\ast}$ defined by $\la(x_{0} + x_{1}t) = x_{1}$, the metric $h(x, y) = \la(xy) = x_{0}y_{1} + x_{1}y_{0}$ is nondegenerate and invariant. The Killing form $\tau_{\mlt}(x, y) = x_{0}y_{0} + \ep x_{1}y_{1}$ is invariant and is nondegenerate provided $\ep \neq 0$. When $\ep = 0$, $\alg$ is not semisimple, for the ideal generated by $t$ is isotropic and squares to zero and the Killing form $\tau_{\mlt}(x, y) = 2x_{1}y_{1}$ is degenerate. When $\ep \neq 0$, the metric $h$ is insensitive to $\ep$, while the metric $\tau_{\mlt}$ is sensitive to $\ep$. The properties of the metrized algebra $(\alg, \tau)$ depend on $\ep$ if $\fie$ is not algebraically closed. For example, in the case $\fie = \rea$, when $\ep < 0$, then $\alg$ is isomorphic to the complex numbers, viewed as a $\rea$-algebra, and the Killing form has split signature, while when $\ep > 0$, $\alg$ is isomorphic to the paracomplex numbers, $\com$ viewed as a real vector space with the multiplication $u  \mlt v = \bar{u}\bar{v}$, and the Killing form is positive definite.

This example illustrates two different aspects of the conditions imposed on the Killing form. The nondegeneracy forces semisimplicity of the algebra. When the base field is not algebraically closed the structure of a metrized algebra depends on more refined properties of the metric as a bilinear form, for example, in the real case, whether it is or is not positive definite. For example, $\com$ is metrizable as a $\rea$-algebra, for $\lb u, v \ra = uv + \bar{u}\bar{v}$ is a split signature invariant $\rea$-valued metric, but $\com$ admits no definite signature invariant metric $h$, for an invariant metric $h$ must satisfy $-h(1, 1) = h(1, \j\mlt \j) = h(\j, \j)$. This shows the necessity of the assumption of definite signature in Lemma \ref{flatalgebralemma} for its conclusion in the case $\fie = \rea$.

With anisotropy of $\tau_{\mlt}$ in place of positive definiteness, the conclusion of Lemma \ref{flatalgebralemma} is false over more general fields. A simple example of an étale algebra that is not split is the degree $3$ cyclic Galois extension of $\rat$ obtained by adjoining a root of $r^{3} + r^{2} - 2r - 1$, which has positive definite Killing form and is not split over $\rat$ (if it were, it would admit three orthogonal idempotents, but in a field any idempotent equals $1$). 
\end{example}

The tensor product $\alg_{1}\tensor_{\fie}\alg_{2}$ of commutative or anticommutative $\fie$-algebras $(\alg_{1}, \mlt_{1})$ and $(\alg_{2}, \mlt_{2})$ is a commutative algebra with the product $\mlt$ defined on decomposable elements by
\begin{align}
(a_{1}\tensor b_{1})\mlt (a_{2}\tensor b_{2}) = (a_{1}\mlt_{1} a_{2})\tensor (b_{1}\mlt_{2} b_{2}),
\end{align}
and extending linearly. Alternatively, $L_{\mlt}(a_{1}\tensor a_{2}) = L_{\mlt_{1}}(a_{1})\tensor L_{\mlt_{2}}(a_{2})$. Since decomposable elements span $\alg_{1}\tensor_{\fie}\alg_{2}$ and $\tr L_{\mlt}(a_{1} \tensor a_{2}) = \tr L_{\mlt_{1}}(a_{1})\tensor L_{\mlt_{2}}(a_{2})$, $(\alg_{1}\tensor_{\fie}\alg_{2}, \mlt)$ is exact if either of $(\alg_{1}, \mlt_{1})$ or $(\alg_{2}, \mlt_{2})$ is exact. 
If $h_{1}$ and $h_{2}$ are invariant metrics on $(\alg_{1}, \mlt_{1})$ and $(\alg_{2}, \mlt_{2})$, then $h_{1}\tensor h_{2}$ defined by $(h_{1}\tensor h_{2})(a_{1}\tensor b_{1}, a_{2}\tensor b_{2}) = h_{1}(a_{1}, a_{2})h_{2}(b_{1}, b_{2})$ is invariant on $(\alg_{1}\tensor_{\fie}\alg_{2}, \mlt)$, for
\begin{align}
\begin{split}
(h_{1}\tensor h_{2})&((a_{1}\tensor b_{1})\mlt (a_{2}\tensor b_{2}), a_{3}\tensor b_{3}) =h_{1}(a_{1}\mlt_{1} a_{2}, a_{3})h_{2}(b_{1}\mlt_{1} b_{2}, b_{3}) \\
&= h_{1}(a_{1}, a_{2}\mlt_{1} a_{3})h_{2}(b_{1}, b_{2}\mlt_{2} b_{3}) = (h_{1}\tensor h_{2})(a_{1}\tensor b_{1}, (a_{2}\tensor b_{2})\mlt(a_{3}\tensor b_{3})). 
\end{split}
\end{align}
Define the tensor product of metrized commutative algebras $(\alg_{1}, \mlt_{1}, h_{1})$ and $(\alg_{2}, \mlt_{2}, h_{2})$ to be $(\alg_{1}\tensor_{\fie}\alg_{2}, \mlt, h_{1}\tensor h_{2})$. 
\begin{lemma}\label{mcatensorlemma}
\noindent
\begin{enumerate}
\item The tensor product of metrized commutative $\fie$-algebras $(\alg_{1}, \mlt_{1}, h_{1})$ and $(\alg_{2}, \mlt_{2}, h_{2})$ at least one of which is exact is a metrized exact commutative $\fie$-algebra.
\item The tensor product of Killing metrized commutative $\fie$-algebras $(\alg_{1}, \mlt_{1})$ and $(\alg_{2}, \mlt_{2})$ at least one of which is exact is a Killing metrized exact commutative $\fie$-algebra.
\item The tensor product $(\g\tensor_{\fie}\h,\mlt)$ of semisimple Lie algebras $\g$ and $\h$ over a field $\fie$ of characteristic zero is a Killing Einstein exact commutative algebra.
\end{enumerate}
\end{lemma}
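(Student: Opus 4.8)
The plan is to obtain all three assertions from the facts already assembled in the paragraph preceding the statement — that $\alg_{1}\tensor_{\fie}\alg_{2}$ is commutative with $L_{\mlt}(a_{1}\tensor a_{2}) = L_{\mlt_{1}}(a_{1})\tensor L_{\mlt_{2}}(a_{2})$, that it is exact whenever one factor is, and that $h_{1}\tensor h_{2}$ is invariant — together with two standard inputs: the tensor product of nondegenerate bilinear forms is nondegenerate, and, for part (3), Cartan's criterion for semisimple Lie algebras. Thus for part (1) essentially nothing remains but to record that $h_{1}\tensor h_{2}$ is nondegenerate; I would argue this either by noting that the musical map $\alg_{1}\tensor_{\fie}\alg_{2}\to\alg_{1}^{\ast}\tensor_{\fie}\alg_{2}^{\ast}$ it induces is the tensor product of the musical isomorphisms of $h_{1}$ and $h_{2}$, hence an isomorphism, or, in bases, that its Gram matrix is the Kronecker product of the two Gram matrices, of determinant $(\det h_{1})^{\dim\alg_{2}}(\det h_{2})^{\dim\alg_{1}}\neq 0$.

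For part (2) the key identity to establish first is $\tau_{\mlt}=\tau_{\mlt_{1}}\tensor\tau_{\mlt_{2}}$. From $L_{\mlt}(a_{1}\tensor a_{2})=L_{\mlt_{1}}(a_{1})\tensor L_{\mlt_{2}}(a_{2})$ and the multiplicativity of trace over tensor products of endomorphisms,
\[
\tau_{\mlt}(a_{1}\tensor a_{2},\,b_{1}\tensor b_{2}) = \tr\bigl(L_{\mlt_{1}}(a_{1})L_{\mlt_{1}}(b_{1})\bigr)\,\tr\bigl(L_{\mlt_{2}}(a_{2})L_{\mlt_{2}}(b_{2})\bigr) = \tau_{\mlt_{1}}(a_{1},b_{1})\,\tau_{\mlt_{2}}(a_{2},b_{2})
\]
on decomposable elements, hence everywhere by bilinearity. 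Since $\tau_{\mlt_{1}}$ and $\tau_{\mlt_{2}}$ are invariant and nondegenerate (this is what Killing metrized means for the factors), the invariance computation preceding the lemma, applied with $h_{i}=\tau_{\mlt_{i}}$, shows $\tau_{\mlt}$ is invariant, and the observation from part (1) shows it is nondegenerate; so the tensor product is Killing metrized, and it is exact by part (1).

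For part (3) I would first observe that a Lie algebra is an anticommutative algebra, so the tensor construction preceding the lemma applies and makes $\g\tensor_{\fie}\h$ commutative — the anticommutative-times-anticommutative case, where the two sign changes cancel — with multiplication $(a_{1}\tensor b_{1})\mlt(a_{2}\tensor b_{2}) = [a_{1},a_{2}]\tensor[b_{1},b_{2}]$ and $L_{\mlt}(a_{1}\tensor a_{2}) = \ad_{\g}(a_{1})\tensor\ad_{\h}(a_{2})$. Exactness follows because a semisimple Lie algebra is perfect, so $\g=[\g,\g]$ and $\tr\ad_{\g}([x,y])=\tr[\ad_{\g}(x),\ad_{\g}(y)]=0$ forces $\tr\ad_{\g}\equiv 0$, and similarly for $\h$, whence $\tr L_{\mlt}(a_{1}\tensor a_{2})=\tr\ad_{\g}(a_{1})\,\tr\ad_{\h}(a_{2})=0$; since decomposables span, $\g\tensor_{\fie}\h$ is exact. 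Exactly as in part (2) one computes $\tau_{\mlt}=B_{\g}\tensor B_{\h}$, where $B_{\g}(x,y)=\tr\bigl(\ad_{\g}(x)\ad_{\g}(y)\bigr)$ and $B_{\h}$ are the Killing forms; these are invariant (the Lie-theoretic identity $B_{\g}([x,y],z)=B_{\g}(x,[y,z])$ is precisely the invariance condition for the bracket regarded as an anticommutative multiplication) and, because $\chr\fie=0$ and $\g,\h$ are semisimple, nondegenerate by Cartan's criterion; so, by the computation preceding the lemma and the observation from part (1), $\tau_{\mlt}=B_{\g}\tensor B_{\h}$ is an invariant metric and $(\g\tensor_{\fie}\h,\mlt)$ is Killing metrized. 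Finally, taking $h=B_{\g}\tensor B_{\h}=\tau_{\mlt}$ exhibits the Killing Einstein property with Einstein constant $\ka=1$, in accordance with the remark that no particular metric $h$ need be singled out.

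None of the steps is deep; the work is in matching conventions. The point requiring the most care is verifying that the Lie-theoretic invariance of $B_{\g}$ and $B_{\h}$ agrees with the bilinear-form invariance used here once the bracket is regarded as an anticommutative multiplication, and that the tensor construction genuinely produces a commutative (rather than anticommutative) product out of two anticommutative ones. The only nonelementary external fact invoked is Cartan's criterion, and that is the single place where the hypothesis $\chr\fie=0$ is used.
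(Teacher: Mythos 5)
Your proposal is correct and follows essentially the same route as the paper: the Gram-matrix (Kronecker product) argument for nondegeneracy of $h_{1}\tensor h_{2}$, the identity $\tau_{\mlt}=\tau_{\mlt_{1}}\tensor\tau_{\mlt_{2}}$ via multiplicativity of the trace, and, for the Lie algebra case, nondegeneracy and invariance of the Killing forms together with unimodularity (which you prove via perfectness where the paper simply cites it). The only additions are explicit but harmless: naming Cartan's criterion and recording the Einstein constant $\ka=1$ for $h=\tau_{\mlt}$.
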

\begin{proof}
If $\{a_{1}, \dots, a_{\dim\alg_{1}}\}$ and $\{b_{1}, \dots, b_{m_{2}}\}$ are bases of $\alg_{1}$ and $\alg_{2}$, then $\{a_{i}\tensor b_{j}: 1 \leq i \leq m_{1}, 1 \leq j \leq m_{2}\}$ is a basis of $\alg_{1}\tensor_{\fie}\alg_{2}$, and the Gram matrix of $h_{1}\tensor h_{2}$ with respect to this basis is the tensor product of the Gram matrices of $h_{1}$ and $h_{2}$, which are invertible because $h_{1}$ and $h_{2}$ are nondegenerate, so the tensor product $(\alg_{1}\tensor_{\fie}\alg_{2}, \mlt, h_{1}\tensor h_{2})$ is a metrized commutative algebra. If $(\alg_{1}, \mlt_{1})$ and $(\alg_{2}, \mlt_{2})$ are either both commutative or both anticommutative, the Killing form of $(\alg_{1}\tensor_{\fie}\alg_{2}, \mlt)$ equals $\tau_{\mlt_{1}}\tensor \tau_{\mlt_{2}}$, for
\begin{align}\label{tauaa}
\begin{split}
\tau_{\mlt}(a_{1}\tensor b_{1}, a_{2}\tensor b_{2}) &= \tr L_{\mlt}(a_{1}\tensor b_{1})L_{\mlt}(a_{2}\tensor b_{2})\\
&= \tr \left(L_{\mlt_{1}}(a_{1})L_{\mlt_{1}}(a_{2}) \tensor L_{\mlt_{2}}(b_{1})L_{\mlt_{2}}(b_{2}) \right) =  \tau_{\mlt_{1}}(a_{1}, a_{2})\tau_{\mlt_{2}}(b_{1}, b_{2}).
\end{split}
\end{align}
By \eqref{tauaa}, if $\tau_{\mlt_{1}}$ and $\tau_{\mlt_{2}}$ are invariant or nondegenerate, then $\tau_{\mlt}$ has the same property. This shows that the tensor product of Killing metrized algebras qua commutative algebras equals their tensor product qua metrized algebras. Similarly, if $\alg_{1} = \g$ and $\alg_{2} = \h$ are semisimple Lie algebras over a field of characteristic zero, then their Killing forms are nondegenerate and invariant, so their tensor product is a Killing metrized commutative algebra that is exact because a semisimple Lie algebra is unimodular. 
\end{proof}
(Essentially the same arguments show the analogous statements are true for Ricci metrized commutative algebras.) 

\begin{remark}
The special case of the tensor product of Lie algebras $\so(3)\tensor \g$ is isomorphic to what was called the \emph{Nahm algebra} of $\g$ in \cite{Kinyon-Sagle}. See Section \ref{triplesection} for further discussion of this example.
\end{remark}

\begin{example}
For semisimple Lie algebras $\g$ and $\h$, $\Aut(\g\tensor_{\fie}\h)$ contains $\Aut(\g)\times \Aut(\h)$ acting on each factor separately. This shows that the automorphism group of a Killing metrized exact commutative algebra can be large.
\end{example}

In any class of algebras general structural properties are reflected in the behavior of idempotents and square-zero elements. 
An element $e \in \alg$ is \emph{idempotent} if $e \mlt e = e$ and \emph{square-zero} if $e \mlt e = 0$. 
Let $\idem(\alg, \mlt) = \{0 \neq e \in \alg: e \mlt e = e\}$ and $\szero(\alg, \mlt) = \{0 \neq z \in \alg: z \mlt z  = 0\}$.

\begin{lemma}\label{idealidempotentlemma}
If the metrized commutative algebra $(\alg, \mlt, h)$ is expressible as a direct sum $\oplus_{i = 1}^{k}\alg_{i}$ of $h$-orthogonal ideals $\alg_{i} \subset \alg$ and $\pi_{i}:\alg \to \alg_{i}$ is the $h$-orthogonal projection onto the $i$th component, then $e \in \alg$ is idempotent if and only if its projections $e_{i} = \pi_{i}(e)$ are idempotents. 
\end{lemma}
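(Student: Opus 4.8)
The plan is to reduce the statement to the observation that the decomposition $\alg = \oplus_{i=1}^{k}\alg_{i}$ into ideals is automatically a decomposition as a direct product of algebras, so that multiplication acts componentwise, and then to check that the $h$-orthogonal projections $\pi_{i}$ are exactly the coordinate projections of this decomposition.

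First I would record that for $i \neq j$ one has $\alg_{i}\mlt \alg_{j} \subseteq \alg_{i}\cap \alg_{j} = \{0\}$: indeed $\alg_{i}\mlt\alg_{j}\subseteq \alg_{i}$ because $\alg_{i}$ is an ideal, and likewise $\alg_{i}\mlt\alg_{j}\subseteq\alg_{j}$. Next I would verify that the restriction of $h$ to each $\alg_{i}$ is nondegenerate, so that the $h$-orthogonal projection $\pi_{i}$ is well defined: if $x \in \alg_{i}$ is $h$-orthogonal to $\alg_{i}$, then, since by hypothesis $x$ is also $h$-orthogonal to every $\alg_{j}$ with $j \neq i$, $x$ is $h$-orthogonal to all of $\alg = \oplus_{j}\alg_{j}$, hence $x = 0$ by nondegeneracy of $h$. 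It follows that $\alg_{i}^{\perp} = \oplus_{j\neq i}\alg_{j}$, so $\pi_{i}$ coincides with the projection onto $\alg_{i}$ along $\oplus_{j\neq i}\alg_{j}$; in particular, writing the unique decomposition $e = \sum_{i}e_{i}$ with $e_{i}\in\alg_{i}$, one has $e_{i} = \pi_{i}(e)$.

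Finally, using the componentwise multiplication, $e\mlt e = \sum_{i, j}e_{i}\mlt e_{j} = \sum_{i}e_{i}\mlt e_{i}$, and each $e_{i}\mlt e_{i}$ lies in $\alg_{i}$. By uniqueness of the decomposition $\alg = \oplus_{i}\alg_{i}$, the equality $e\mlt e = e$ holds if and only if $e_{i}\mlt e_{i} = e_{i}$ for every $i$, which is precisely the asserted equivalence. (The identical argument shows that $e$ is square-zero if and only if each $e_{i}$ is square-zero.)

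I do not expect any genuine obstacle here; the statement is essentially a bookkeeping consequence of the direct-sum-into-ideals hypothesis. The only point requiring a little care is the identification of the $h$-orthogonal projections $\pi_{i}$ with the coordinate projections of the algebra decomposition, which is where the $h$-orthogonality of the $\alg_{i}$ together with the nondegeneracy of $h$ is used; once this is in place the computation of $e\mlt e$ is immediate.
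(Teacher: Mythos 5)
Your proof is correct and follows essentially the same route as the paper's: both reduce to the observation that $e_{i}\mlt e_{j}=0$ for $i\neq j$, so that $e\mlt e=\sum_{i}e_{i}\mlt e_{i}$ with $e_{i}\mlt e_{i}\in\alg_{i}$, and then read off the equivalence componentwise. The extra checks you include (that $\pi_{i}$ is well defined and agrees with the coordinate projection) are details the paper leaves implicit, but they do not change the argument.
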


\begin{proof}
Because $e_{i} \mlt e_{j} =0$ if $i \neq j$ and $e_{i}\mlt e_{i} \in \alg_{i}$, $e\mlt e = \sum_{i = 1}^{k}e_{i}\mlt e_{i}$ and so $\pi_{i}(e\mlt e) = e_{i}\mlt e_{i}$, from which the claim follows. 
\end{proof}

For $\la \in \fie$, let $\alg^{(\la)}(e) = \{x \in \alg: L_{\mlt}(e)x = \la x\}$ and define the \emph{spectrum} of $e\in \alg$ by $\spec(e) = \{\la \in \fie: \dim \alg^{(\la)}(e) > 0\}$.
For a metrized commutative algebra $(\alg, \mlt, h)$, the endomorphism $L_{\mlt}(e)$ is $h$-self-adjoint for any $e \in \alg$. If $e \in \idem(\alg, \mlt)$, then $L_{\mlt}(e)$ preserves $\eperp =  \{y \in \alg: h(e, y) =0\}$, for if $h(e, y) = 0$ then $h(L_{\mlt}(e)y, e) = h(y, e \mlt e) = h(y, e) = 0$. For an $h$-anisotropic idempotent $e$, define the \emph{orthogonal spectrum}
\begin{align}
\specp(e) = \{\la \in \fie: \text{there is}\,\, x \in \lb e \ra^{\perp}\,\, \text{such that}\,\, L_{\mlt}(e)x = \la x\},
\end{align} 
so that $\spec(e) = \specp(e) \cup \{1\}$.

Idempotents $e$ and $f$ in a commutative algebra are \emph{orthogonal} if $e \mlt f = 0$. In a metrized commutative algebra $(\alg, \mlt, h)$, orthogonal idempotents $e$ and $f$ are $h$-orthogonal, for $h(e, f) = h(e\mlt e, f) = h(e, e\mlt f) = 0$, by the invariance of $h$. When $\chr \fie \neq 2$, if an idempotent $e$ can be written as a sum of nonzero idempotents $u$ and $v$, then necessarily $u$ and $v$ are orthogonal, for in this case $u + v = e = e\mlt e = u + v + 2u \mlt v$. 

A nonzero idempotent idempotent is \emph{primitive} if it cannot be written as a sum of two nonzero idempotents.
Following \cite{Hall-Segev-Shpectorov}, a nonzero idempotent $e$ in a metrized commutative algebra is \emph{absolutely primitive} if $\dim \alg^{(1)}(e) = 1$; that is $1$ has multiplicity $1$ in $\spec(e)$.

\begin{lemma}\label{absolutelyprimitivelemma}
Suppose $\chr \fie \neq 2$. In a metrized commutative $\fie$-algebra $(\alg, \mlt, h)$, an absolutely primitive anisotropic idempotent is primitive.
\end{lemma}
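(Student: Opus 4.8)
The plan is to argue by contradiction. Suppose the absolutely primitive anisotropic idempotent $e$ is not primitive, so that $e = u + v$ for nonzero idempotents $u, v \in \idem(\alg, \mlt)$. Since $\chr \fie \neq 2$, the observation recorded above (from $e = e\mlt e = u + v + 2\,u\mlt v$) shows that $u$ and $v$ are orthogonal, $u \mlt v = 0$.

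Next I would show $u, v \in \alg^{(1)}(e)$. Indeed, $L_{\mlt}(e)u = e\mlt u = (u+v)\mlt u = u\mlt u + v\mlt u = u$, using the idempotency of $u$ and the orthogonality $u\mlt v = 0$; symmetrically $L_{\mlt}(e)v = v$. Also $L_{\mlt}(e)e = e\mlt e = e$, so $e \in \alg^{(1)}(e)$ as well. By absolute primitivity $\dim \alg^{(1)}(e) = 1$, and since $e \neq 0$ this forces $\alg^{(1)}(e) = \lb e \ra$. Hence there are $\la, \mu \in \fie$ with $u = \la e$ and $v = \mu e$, and $\la + \mu = 1$ since $e = u+v$. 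Idempotency of $u$ then gives $\la e = u = u\mlt u = \la^{2}(e\mlt e) = \la^{2} e$, so $(\la - \la^{2})e = 0$; as $e \neq 0$ this yields $\la^{2} = \la$, i.e.\ $\la \in \{0,1\}$. Since $u \neq 0$ we have $\la \neq 0$, so $\la = 1$, whence $v = (1-\la)e = 0$, contradicting that $v$ is a nonzero idempotent. This contradiction shows $e$ is primitive.

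I do not expect a genuine obstacle here: the argument is short, and the only points requiring care are the correct use of $\chr \fie \neq 2$ to guarantee the orthogonality $u\mlt v = 0$ (without which $u$ and $v$ need not be $L_{\mlt}(e)$-eigenvectors of eigenvalue $1$), and the clean identification $\alg^{(1)}(e) = \lb e \ra$ coming from the dimension hypothesis together with $e \neq 0$. The anisotropy of $e$ is what makes available the orthogonal decomposition $\alg = \lb e \ra \oplus \eperp$ on which $L_{\mlt}(e)$ acts, recalled just before the statement; this is the natural setting for the eigenspace $\alg^{(1)}(e)$ appearing in the definition of absolute primitivity, though beyond supplying that context it is not otherwise invoked in this particular implication.
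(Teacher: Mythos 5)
Your proof is correct, and it takes a genuinely more direct route than the paper's. You observe that the orthogonality $u \mlt v = 0$ (forced by $\chr\fie \neq 2$) immediately makes $u$ and $v$ themselves $1$-eigenvectors of $L_{\mlt}(e)$, since $e\mlt u = u\mlt u + v\mlt u = u$; absolute primitivity then pins $\alg^{(1)}(e) = \lb e \ra$ and the rest is a one-line computation with scalars. The paper instead decomposes $u = \al e + x$ and $v = \be e + y$ with $x, y \in \eperp$, subtracts the two idempotency equations to conclude $x \in \alg^{(1)}(e)$, and then uses $h(x, e) = 0$ together with $h(e, e) \neq 0$ to kill $x$. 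Your version buys something real: as you note yourself, neither the metric nor the anisotropy of $e$ is actually invoked, so your argument proves the stronger statement that in any commutative algebra over a field of characteristic not $2$, a nonzero idempotent $e$ with $\dim\alg^{(1)}(e) = 1$ is primitive. The paper's route pays for its reliance on the $h$-orthogonal splitting $\alg = \lb e\ra \oplus \eperp$ by needing both hypotheses; yours dispenses with them. Both arguments use $\chr\fie \neq 2$ in the same essential place, namely to deduce $u \mlt v = 0$ from $e = e\mlt e$.
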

\begin{proof}
Suppose the absolutely primitive anisotropic idempotent $e$ equals $u + v$ where $u$ and $v$ are idempotents, necessarily orthogonal. Write $u = \al e + x$ and $v = \be e + y$ where $\al, \be \in \fie$ and $x$ and $y$ are $h$-orthogonal to $e$. Because $h(e, e) \neq 0$, $x$ and $y$ are not multiples of $e$. That $e = u + v = (\al + \be)e + x + y$ forces $\be = 1-\al$ and $y = -x$. That $u$ and $v$ be idempotents implies the equations $\al e + x = \al^{2} + 2\al e \mlt x + x \mlt x$ and $(1-\al)e - x = (1-\al)^{2}e + 2(\al -1 )e \mlt x + x \mlt x$. The difference of these two equations yields $(2\al - 1)e + 2x = (2\al - 1)e + 2 e\mlt x$, so $L_{\mlt}(e)x = x$, that is $x \in \alg^{(1)}(e)$. Since $h(x, e) = 0$ and $h(e, e) \neq 0$, this forces $x = 0$. It follows that $u$ and $v$ are multiples of $e$, and since they are orthogonal, one of them must be $0$.
\end{proof}

A metrized commutative $\rea$-algebra is \emph{Euclidean} if the metric is positive definite. Such an algebra is called a Euclidean metrized commutative algebra; the omitted qualifier \emph{real} is implied by the use of \emph{Euclidean}. The Euclidean assumption has strong implications for the properties of idempotents.

An idempotent $e$ in a Euclidean metrized commutative algebra, $(\alg, \mlt, h)$ is \emph{minimal} if there is no $f \in \idem(\alg, \mlt)$ such that $|f|_{h} < |e|_{h}$. By definition two minimal idempotents have the same $h$-norm. Given a metrized commutative algebra $(\alg, \mlt, h)$, let $\midem(\alg, \mlt, h) \subset \idem(\alg, \mlt)$ be the set (in principal possibly empty) of minimal idempotents in $(\alg, \mlt, h)$.
The notion of minimal idempotent depends only on the positive homothety class of $h$ and not on $h$ itself. Minimal idempotents play an important role in the structure theory for Euclidean metrized commutative algebras. Similar considerations are important in the study of Griess algebras, going back at least to \cite{Meyer-Neutsch}. V.~G. Tkachev and Y. Krasnov and collaborators have studied idempotents extensively in metrized algebras \cite{Krasnov-Tkachev, Krasnov-Tkachev-idempotentgeometry, Nadirashvili-Tkachev-Vladuts, Tkachev-universality, Tkachev-correction, Tkachev-extremal, Tkachev-summary}. Lemma \ref{criticalpointlemma} summarizes some basic facts that are needed here.

\begin{lemma}[{\cite[Proposition $2.5$]{Tkachev-jordan}}, {\cite[Lemma $2.3$]{Tkachev-correction}}, {\cite[Proposition $2.1$]{Tkachev-extremal}}]\label{criticalpointlemma}
\noindent
\begin{enumerate}
\item Let $(\alg, \mlt, h)$ be a metrized commutative $\rea$-algebra with cubic polynomial $P$. 
An anisotropic $e \in \alg$ is a critical point of the restriction of $P$ to $\sphere_{h(e, e)}$ if and only if there holds one of the following conditions.
\begin{enumerate}
\item $P(e) \neq 0$ and $\tfrac{h(e, e)}{6 P(e)}e$ is idempotent in $\alg$.
\item $P(e) = 0$ and $e \mlt e = 0$.
\end{enumerate}
\item Let $(\alg, \mlt, h)$ be a nontrivial Euclidean metrized commutative algebra.
\begin{enumerate}
\item\label{minimalidempotentlemma}
$\midem(\alg, \mlt, h)$ is nonempty and any two minimal idempotents have the same $h$-norm.
\item\label{localpmaxspeclemma}
$e \in \midem(\alg, \mlt, h)$ is primitive and absolutely primitive and $\specp(e) \subset (-\infty, 1/2]$.
\end{enumerate}
\end{enumerate}
\end{lemma}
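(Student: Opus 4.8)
For part (1), the plan is to apply the Lagrange multiplier criterion. Write $Q(x) = h(x,x)$ and fix $e$ with $r = h(e,e)\neq 0$, so that $\sphere_{r} = \{x : Q(x) = r\}$ is a smooth hypersurface with $\grad Q(x) = 2x$. Differentiating $6P(x) = h(x\mlt x, x)$ and using commutativity together with the invariance and symmetry of $h$ (which make $(a,b,c)\mapsto h(a\mlt b,c)$ totally symmetric) gives $dP_{x}(v) = \tfrac12 h(x\mlt x, v)$, i.e. $\grad P(x) = \tfrac12\, x\mlt x$. Hence $e$ is a critical point of $P|_{\sphere_{r}}$ if and only if $\grad P(e) = \lambda\,\grad Q(e)$ for some scalar $\lambda$, that is, $e\mlt e = \mu e$ for some $\mu\in\rea$. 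If $\mu = 0$ then $e\mlt e = 0$ and $6P(e) = h(e\mlt e, e) = 0$, which is case (b); if $\mu\neq 0$ then $6P(e) = \mu h(e,e)\neq 0$, so $P(e)\neq 0$, $\mu = 6P(e)/h(e,e)$, and $\tfrac{h(e,e)}{6P(e)}e = \mu^{-1}e$ is idempotent, which is case (a). The converses are immediate: $e\mlt e = 0$ forces $\grad P(e) = 0$, and if $\tfrac{h(e,e)}{6P(e)}e$ is idempotent then $e\mlt e$ is a nonzero multiple of $e$, hence proportional to $\grad Q(e)$.

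For part (2)(a), I would first note that since the multiplication is nontrivial and $h$ is nondegenerate, $P$ is not identically zero: otherwise polarizing via \eqref{hpolar} would give $h(x, y\mlt z) = 0$ for all $x,y,z$, forcing $y\mlt z = 0$. As $P$ is odd and not identically zero, it takes a positive value on the compact sphere $\sphere_{1}$; let $u$ be a maximizer of $P|_{\sphere_{1}}$, so $P(u) > 0$, and by part (1) the element $\tfrac{1}{6P(u)}u$ is idempotent, so $\idem(\alg,\mlt)\neq\emptyset$. For any idempotent $f$, writing $f = |f|_{h}\,\hat f$ with $\hat f\in\sphere_{1}$, the identity $6P(f) = h(f\mlt f, f) = h(f,f)$ gives $|f|_{h} = 1/(6P(\hat f))$; thus minimizing $|f|_{h}$ over $\idem(\alg,\mlt)$ amounts to maximizing $P$ over the unit directions of idempotents, and since the global maximizer $u$ of $P$ on $\sphere_{1}$ is itself such a direction (by part (1)), $\tfrac{1}{6P(u)}u$ realizes the minimum. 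Hence $\midem(\alg,\mlt,h)$ is nonempty, and if $e,e'$ are minimal then $|e|_{h}\le|e'|_{h}$ and $|e'|_{h}\le|e|_{h}$, so they have the same $h$-norm.

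For part (2)(b), let $e\in\midem(\alg,\mlt,h)$. By the previous paragraph $e/|e|_{h}$ is a global maximizer of $P$ on $\sphere_{1}$, so after rescaling $e$ is a local maximizer of $P$ on $\sphere_{h(e,e)}$. From $\grad P(e) = \tfrac12\, e\mlt e = \tfrac12 e$ and $\grad Q(e) = 2e$ the Lagrange multiplier is $\lambda = 1/4$, and the second-order necessary condition for a constrained maximum is that $\hess(P - \tfrac14 Q)(e)$ be negative semidefinite on $T_{e}\sphere_{h(e,e)} = \eperp$. Since $\hess P(e) = L_{\mlt}(e)$ and $\hess Q(e) = 2\,\Id$, this reads $h(L_{\mlt}(e)v, v)\le\tfrac12 h(v,v)$ for all $v\in\eperp$. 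As $L_{\mlt}(e)$ is $h$-self-adjoint and preserves $\eperp$, on which $h$ is positive definite, this is equivalent to every eigenvalue of $L_{\mlt}(e)$ on $\eperp$ being at most $1/2$, i.e. $\specp(e)\subset(-\infty,1/2]$. In particular $1\notin\specp(e)$, so $\alg^{(1)}(e)\cap\eperp = \{0\}$; combined with $\alg = \lb e\ra\oplus\eperp$ and $e\in\alg^{(1)}(e)$ this forces $\alg^{(1)}(e) = \lb e\ra$, so $e$ is absolutely primitive. Being anisotropic with $\chr\fie = 0$, $e$ is then primitive by Lemma \ref{absolutelyprimitivelemma}.

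The gradient and Hessian computations are routine; the points needing care are the bookkeeping in part (2)(a) identifying minimal idempotents with global maximizers of $P$ on $\sphere_{1}$ — in particular the fact that the unconstrained maximizer of $P$ on $\sphere_{1}$ is automatically the unit direction of an idempotent, so that the two notions genuinely coincide — and, in part (2)(b), the observation that a minimal idempotent is a bona fide local maximum rather than merely a critical point, which is what powers the spectral bound $\specp(e)\subset(-\infty,1/2]$.
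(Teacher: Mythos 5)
Your proof is correct. The paper does not prove this lemma itself — it is quoted from Tkachev's papers — but your Lagrange-multiplier argument (with $\grad P(x)=\tfrac12 x\mlt x$, $\hess P(x)=L_{\mlt}(x)$, and the second-order necessary condition at a constrained maximum yielding $\specp(e)\subset(-\infty,1/2]$) is the standard variational route taken in those references, and the two points you flag as needing care — that the global maximizer of $P$ on $\sphere_{1}$ is itself the direction of an idempotent, so minimal idempotents are exactly the rescaled global maximizers, and that a minimal idempotent is a genuine local maximum rather than a mere critical point — are handled correctly.
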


\section{Simplicial algebras}\label{simplicialsection}
This section treats a one-parameter family of algebras $\talg^{n}_{\al}(\fie)$ that play a basic role.  

\begin{definition}
Let $\fie$ be a field of characteristic zero.
For $\al \in \fie$, let $(\talg^{n}_{\al}(\fie), \mlt)$ be the $n$-dimensional commutative algebra generated by $\{e_{i}: 1\leq i \leq n\}$ subject to the relations
\begin{align}\label{talgrelations}
&e_{i} \mlt e_{i} = e_{i}, &&
e_{i} \mlt e_{j} = 
\al(e_{i} + e_{j}) ,\quad i \neq j \in \{1, \dots, n\}.
\end{align}
\end{definition}

For $\al = -1/(n-1)$ these algebras were studied by Griess and Harada as algebras with automorphism group $S_{n+1}$. The full family plays a role in the work of Elashvili-Jibladze-Kac \cite{Elashvili-Jibladze-Kac-semisimple} classifying cyclic elements in semisimple Lie algebras. Here these algebras are basic examples in relation to the notions of projective and conformal associativity and sectional nonassociativity introduced later.

\begin{lemma}\label{talgmodellemma}
Let $\fie$ be a field of characteristic $0$ and let $\al \in \fie$. 
\begin{enumerate}
\item\label{hatexactclaim} $\talg_{\al}^{n}(\fie)$ is exact if and only if $\al = -1/(n-1)$.
\item The Killing form $\tau_{\mlt}$ is invariant if and only if $\al \in \{-1/(n-1), 0, 1/2\}$.
\item $\talg^{n}_{\al}(\fie)$ is Ricci invariant.
If $n > 2$, the rank and inertial indices of $\ric_{\mlt}$ are as in Table \ref{talgtable1}, while if $n =2$, the rank and signature of $\ric_{\mlt}$ are as in Table \ref{talgtable2}, where in both tables the inertial indices are defined only if $\fie = \rea$ (or, more generally, $\fie$ is a Euclidean field). In particular, $\talg^{n}_{\al}(\fie)$ is Ricci metrized if $\al \notin\{-1/(n-2), 0, 1/2\}$. In particular:
\begin{enumerate}
\item If $\al = -1/(n-2)$, then $e = \sum_{i = 1}^{n}e_{i}$ spans the radical of $\ric_{\mlt}$.
\item If $\al = 1/2$, the radical of $\ric_{\mlt}$ is $\ker \tr L_{\mlt}$.
\item If $\al \notin \{0, 1/2\}$, then the restriction of $\ric_{\mlt}$ to $\ker \tr L_{\mlt}$ is nondegenerate.
\end{enumerate}
\begin{table}[!ht]
\parbox{.45\linewidth}{
\begin{center}
\begin{tabular}{|c|c|c|}
\hline
range of $\al$ & rank & inertia $(+, -)$\\
\hline
$(1/2, \infty)$ & $n$ & $(1, n-1)$\\
\hline
$1/2$ & $1$ & $(1, 0)$\\
\hline
$(0, 1/2)$ & $n$ & $(n, 0)$\\
\hline
$0$ & $0$ & $(0, 0)$\\
\hline
$(-1/(n-2), 0)$ & $n$ & $(0, n)$\\
\hline
$-1/(n-2)$ & $n-1$ & $(0, n-1)$\\
\hline
$(-\infty, -1/(n-2))$ & $n$ & $(1, n-1)$\\
\hline
\end{tabular}
\caption{$n > 2$}\label{talgtable1}
\end{center}
}\parbox{.45\linewidth}{
\begin{center}
\begin{tabular}{|c|c|c|}
\hline
range of $\al$ & rank & inertia $(+, -)$\\
\hline
$(1/2, \infty)$ & $2$ & $(1, 1)$\\
\hline
$1/2$ & $1$ & $(1, 0)$\\
\hline
$(0, 1/2)$ & $2$ & $(2, 0)$\\
\hline
$0$ & $0$ & $(0, 0)$\\
\hline
$(-\infty, 0)$ & $2$ & $(1, 1)$\\
\hline
\end{tabular}
\caption{$n =2$}\label{talgtable2}
\end{center}
}
\end{table}
\item For all $x, y, z \in \talg^{n}_{\al}(\fie)$, the associator of $\talg^{n}_{\al}(\fie)$ satisfies
\begin{align}\label{tconfnon0}
\begin{aligned}
[x, y, z] =  \tfrac{1}{n-1}\left(\ric_{\mlt}(x, y)z - \ric_{\mlt}(y, z)x   \right).
\end{aligned}
\end{align}
\item $\talg^{n}_{\al}(\fie)$ is simple if $\al \notin \{-1/(n-2), 0, 1/2\}$.
\item\label{idealclaim} Any ideal in $\talg^{n}_{-1/(n-2)}(\fie)$ is contained in $\spn\{\sum_{i = 1}^{n}e_{i}\}$, which is an ideal in $\talg^{n}_{-1/(n-2)}(\fie)$.
\item Any ideal in $\talg_{1/2}^{n}(\fie)$ is contained in the subspace $\ker \tr L_{\mlt}$, which is an ideal in $\talg_{1/2}^{n}(\fie)$.
\end{enumerate}
\end{lemma}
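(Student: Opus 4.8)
The plan is to reduce everything to the explicit matrix form of the multiplication operators. Fix the basis $e_1,\dots,e_n$, identify $x=\sum_{i}x_ie_i$ with the column vector $(x_1,\dots,x_n)^{\top}$, and write $s(x)=\sum_{i}x_i$, $\langle x,y\rangle=\sum_{i}x_iy_i$, $\one=(1,\dots,1)^{\top}$, and $\operatorname{diag}(x)$ for the diagonal matrix with entries $x_i$. Reading off \eqref{talgrelations} gives $L_{\mlt}(x)=\al\,s(x)\,\id+(1-2\al)\operatorname{diag}(x)+\al\,x\,\one^{\top}$, hence $\tr L_{\mlt}(x)=\bigl((n-1)\al+1\bigr)s(x)$, which vanishes identically precisely when $\al=-1/(n-1)$; this is (1). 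Since $x\mlt y=\al\,s(x)\,y+(1-2\al)(x\odot y)+\al\,s(y)\,x$ (entrywise product), one has $s(x\mlt y)=2\al\,s(x)s(y)+(1-2\al)\langle x,y\rangle$, and multiplying out $L_{\mlt}(x)L_{\mlt}(y)$ and taking traces gives
\begin{align*}
\tau_{\mlt}(x,y)&=\al\bigl[(n-1)\al+2\bigr]s(x)s(y)+(1-2\al)\langle x,y\rangle,\\
\ric_{\mlt}(x,y)&=\tr L_{\mlt}(x\mlt y)-\tau_{\mlt}(x,y)=(n-1)\al\bigl[\al\,s(x)s(y)+(1-2\al)\langle x,y\rangle\bigr].
\end{align*}

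Both forms lie in the plane spanned by $s\otimes s$ (Gram matrix $\one\one^{\top}$) and $g=\langle\,\cdot\,,\,\cdot\,\rangle$ (Gram matrix $\id$), and the crux of (2) and of the invariance part of (3) is the observation that $\mu\,(s\otimes s)+\nu\,g$ is invariant if and only if $\mu(1-2\al)=\nu\al$: when $h=\mu(s\otimes s)+\nu g$, the only part of $h(x\mlt y,z)$ not manifestly symmetric under $x\leftrightarrow z$ is $\mu(1-2\al)\langle x,y\rangle s(z)+\nu\al\,s(x)\langle y,z\rangle$, and symmetry of this forces $\mu(1-2\al)=\nu\al$ (since $\langle x,y\rangle s(z)\not\equiv\langle y,z\rangle s(x)$). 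For $\tau_{\mlt}$ this becomes $(1-2\al)\al\bigl((n-1)\al+1\bigr)=0$, giving (2); for $\ric_{\mlt}$ it is automatic, so $\ric_{\mlt}$ is always invariant. For the rank and inertia I would diagonalize: $\one\one^{\top}$ has eigenvalue $n$ on the line $\fie\sum_{i}e_i$ and $0$ on the hyperplane $\{s=0\}$, so $\ric_{\mlt}$ has eigenvalue $(n-1)\al\bigl[(n-2)\al+1\bigr]$ (multiplicity $1$) on $\fie\sum_{i}e_i$ and $(n-1)\al(1-2\al)$ (multiplicity $n-1$) on $\{s=0\}$. A routine analysis of the vanishing and signs of these two eigenvalues as $\al$ varies fills in Tables \ref{talgtable1} and \ref{talgtable2} (the inertia indices requiring $\fie=\rea$), and $\ric_{\mlt}$ is nondegenerate exactly when both are nonzero, i.e. $\al\notin\{-1/(n-2),0,1/2\}$; the three sub-items then read off from the eigenspaces, using $\ker\tr L_{\mlt}=\{s=0\}$ when $(n-1)\al+1\neq0$ (there $s\otimes s$ vanishes, so $\ric_{\mlt}$ restricts to $(n-1)\al(1-2\al)g$, nondegenerate for $\al\notin\{0,1/2\}$) and $\ker\tr L_{\mlt}=\alg$ when $\al=-1/(n-1)$ (where $\ric_{\mlt}$ is already nondegenerate).

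For the associator (4) I would substitute the formula for $L_{\mlt}$ into $L_{\mlt}(x\mlt y)-L_{\mlt}(x)L_{\mlt}(y)$, using $\operatorname{diag}(x\mlt y)=\al\,s(x)\operatorname{diag}(y)+(1-2\al)\operatorname{diag}(x)\operatorname{diag}(y)+\al\,s(y)\operatorname{diag}(x)$; all diagonal and $\operatorname{diag}(x)\operatorname{diag}(y)$ terms cancel, leaving
\begin{align*}
L_{\mlt}(x\mlt y)-L_{\mlt}(x)L_{\mlt}(y)=\bigl(\al^{2}s(x)s(y)+\al(1-2\al)\langle x,y\rangle\bigr)\id-\al^{2}s(y)\,x\,\one^{\top}-\al(1-2\al)\,x\,y^{\top}.
\end{align*}
Applying this operator to $z$ and recognizing $\al^{2}s(u)s(v)+\al(1-2\al)\langle u,v\rangle=\tfrac{1}{n-1}\ric_{\mlt}(u,v)$ yields \eqref{tconfnon0}; alternatively one could note the associator is antisymmetric in its outer arguments and $S_n$-equivariant for the permutation action on $\{e_i\}$, which already forces this form up to a scalar.

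Finally, items (5), (6) and (7) all follow from a single dichotomy: if $\B$ is an ideal and $0\neq b\in\B$, then $[x,y,b]\in\B$ for all $x,y$, so \eqref{tconfnon0} gives $\ric_{\mlt}(x,y)\,b-\ric_{\mlt}(y,b)\,x\in\B$, whence $\ric_{\mlt}(y,b)\,x\in\B$ for all $x,y$; thus either $\ric_{\mlt}(y,b)\neq0$ for some $y$, forcing $\B=\alg$, or $b\in\operatorname{rad}\ric_{\mlt}$ --- so every ideal is either $\alg$ or contained in $\operatorname{rad}\ric_{\mlt}$. When $\al\notin\{-1/(n-2),0,1/2\}$ the radical is $\{0\}$, so the algebra is simple, proving (5); when $\al=-1/(n-2)$ the radical is the line $\fie\sum_{i}e_i$, which is itself an ideal since $\bigl(\sum_{i}e_i\bigr)\mlt x=-\tfrac{s(x)}{n-2}\sum_{i}e_i$, proving (6); when $\al=1/2$ the radical is $\ker\tr L_{\mlt}=\{s=0\}$, which is an ideal since $s(x\mlt y)=s(x)s(y)$ there, proving (7). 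The computations are all routine; the parts demanding care are the expansions behind the formulas for $\tau_{\mlt}$, $\ric_{\mlt}$ and the associator operator, and the sign bookkeeping behind the two tables --- but conceptually the load-bearing points are small: that $\tau_{\mlt}$ and $\ric_{\mlt}$ live in the two-dimensional span of $s\otimes s$ and $g$, the one-line invariance criterion for forms in that span, and the ideal dichotomy forced by \eqref{tconfnon0}.
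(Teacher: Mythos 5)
Your proof is correct; I checked the closed-form expressions for $L_{\mlt}(x)$, $\tau_{\mlt}$, $\ric_{\mlt}$, and the associator operator against the basis computations, and each item follows as you describe. The computational core coincides with the paper's: both arguments reduce to tracing products of multiplication operators and diagonalizing the Gram matrix of $\ric_{\mlt}$, whose two eigenvalues $(n-1)\al(1-2\al)$ (multiplicity $n-1$, on $\{s=0\}$) and $(n-1)\al(1+(n-2)\al)$ (multiplicity $1$, on $\fie\sum_i e_i$) you and the paper identify identically. You do, however, organize the argument differently in two places. Where the paper verifies invariance by evaluating $h(e_i,e_j\mlt e_k)$ and $h(e_i,e_i\mlt e_j)-h(e_i\mlt e_i,e_j)$ on basis triples, you isolate the clean criterion $\mu(1-2\al)=\nu\al$ for invariance of any form $\mu(s\otimes s)+\nu\langle\,\cdot\,,\,\cdot\,\rangle$, which explains at a glance why $\ric_{\mlt}$ is invariant for every $\al$ while $\tau_{\mlt}$ is invariant only for $\al\in\{0,1/2,-1/(n-1)\}$. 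More substantively, your treatment of items (5)--(7) is a genuinely different route: the paper computes $a\mlt e_{i}-a$ for $a$ in an ideal and directly analyzes the null space of $(1-2\al)I+\al\ones$, whereas you deduce from \eqref{tconfnon0} that every ideal is either all of $\talg^{n}_{\al}(\fie)$ or contained in the radical of $\ric_{\mlt}$. This dichotomy is more conceptual, makes the role of projective associativity explicit, and is essentially the mechanism behind Lemma \ref{dieudonnelemma}; the two arguments are reconciled by noting that for $\al\neq 0$ the paper's matrix $(1-2\al)I+\al\ones$ is $\tfrac{1}{(n-1)\al}$ times the Gram matrix of $\ric_{\mlt}$, so both end up inspecting the same null space. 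The only costs of your route are that the sign bookkeeping for the two tables is left as ``routine'' (it is, and the eigenvalues you supply are the correct inputs) and that, like the paper's proof, your dichotomy establishes items (6) and (7) in the form ``every proper ideal is contained in the stated subspace.''
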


\begin{proof}
The matrix of $L_{\mlt}(e_{i})$ with respect to the basis $\{e_{1}, \dots, e_{n}\}$ is $e_{ii} + \al\sum_{j \neq i}(e_{jj} + e_{ji})$, where $e_{jk}$ is the matrix with $1$ in row $j$ and column $k$ and $0$ in every other entry.
It follows that $\tr L_{\mlt}(e_{i}) = 1 + (n-1)\al$, so the algebra is exact if and only if $\al = -1/(n-1)$. It follows also that 
\begin{align}\label{lgaga}
\begin{split}
L_{\mlt}(e_{i})L_{\mlt}(e_{i}) &= e_{ii} + \al(1 + \al)\sum_{j \neq i} e_{ij} + \al^{2}\sum_{j \neq i}e_{jj},\\
L_{\mlt}(e_{i})L_{\mlt}(e_{j}) & = \al(1 + \al)e_{ii} + \al e_{jj} + \al^{2} e_{ji} + \al e_{ij} + \al^{2}\sum_{k \neq i, k \neq j}\left(2e_{ik} +e_{kk}+e_{jk} \right).
\end{split}
\end{align}
Tracing \eqref{lgaga} shows that the Gram matrix of $\tau_{\mlt}$ with respect to $\{e_{1}, \dots, e_{n}\}$ has components
\begin{align}\label{tauperm}
\begin{aligned}
&\tau_{\mlt}(e_{i}, e_{i}) = 1 + (n-1)\al^{2},& 
&\tau_{\mlt}(e_{i}, e_{j}) = \al(2 + (n-1)\al),& &i \neq j.
\end{aligned}
\end{align}
There hold
\begin{align}\label{ricperm}
\begin{aligned}
&\ric_{\mlt}(e_{i}, e_{i}) = \tr L_{\mlt}(e_{i}) - \tau_{\mlt}(e_{i}, e_{i}) =  (n-1)\al(1 -\al), \\
&\ric_{\mlt}(e_{i}, e_{j}) =\al \tr L_{\mlt}(e_{i} +e_{j}) - \tau_{\mlt}(e_{i}, e_{j}) =   (n-1)\al^{2},& &i \neq j.
\end{aligned}
\end{align}
By \eqref{ricperm}, the Gram matrix of $\ric_{\mlt}$ with respect to the basis $\{e_{1}, \dots, e_{n}\}$ is $(n-1)\al(1-2\al)I + (n-1)\al^{2}\ones$, where $\ones$ is the all $1$s matrix, and its eigenvalues are $(n-1)\al(1-2\al)$ with multiplicity $n-1$ and $(n-1)\al(1-2\al) + n(n-1)\al^{2} = (n-1)\al(1 + (n-2)\al)$ with multiplicity $1$. The entries of Tables \ref{talgtable1} and \ref{talgtable2} follow. For $e = \sum_{i = 1}^{n}e_{i}$ there holds $\ric_{\mlt}(e, x) = (n-1)\al(1 + (n-2)\al)\ell(x)$, where $\ell(x) = \sum_{i = 1}^{n}x_{i}$ for $x = \sum_{i = 1}^{n}x_{i}e_{i}$. If $\al = -1/(n-2)$, $e$ generates the radical of $\ric_{\mlt}$. If $\al \neq -1/(n-1)$, then $\ker \tr L_{\mlt}$ is spanned by vectors of the form $e_{i} - e_{j}$, and from $\ric_{\mlt}(e_{i} - e_{j}, e_{i} - e_{j}) = 2(n-1)\al(1-2\al)$ it follows that $\ric_{\mlt}$ is nondegenerate on $\ker \tr L_{\mlt}$ provided $\al \notin \{0, 1/2\}$, while the radical of $\ric_{\mlt}$ is $\ker \tr L_{\mlt}$ if $\al = 1/2$.

By \eqref{tauperm}, for distinct $i, j, k \in \{1, \dots, n\}$,
\begin{align}
\begin{split}
&\tau_{\mlt}(e_{i}, e_{j}\mlt e_{k})  = 2\al^{2}(2 + (n-1)\al),\\
&\tau_{\mlt}(e_{i}, e_{i} \mlt e_{j})  - \tau_{\mlt}(e_{i}\mlt e_{i}, e_{j})  = \al \tau_{\mlt}(e_{i}, e_{i}) + (\al - 1)\tau_{\mlt}(e_{i}, e_{j}) = \al(2\al - 1)((n-1)\al - 1)
\end{split}
\end{align}
Since to check the invariance of $\tau_{\mlt}$ it suffices to check it on the basis $\{e_{i}: 1 \leq i \leq n\}$, this suffices to show that $\tau_{\mlt}$ is invariant if and only if $\al \in \{ -1/(n-1), 0, 1/2\}$.

From \eqref{ricperm} there follow
\begin{align}
\begin{split}
&\ric_{\mlt}(e_{i}, e_{j}\mlt e_{k})  = 2(n-1)\al^{3},\\
&\ric_{\mlt}(e_{i}, e_{i} \mlt e_{j})  - \tau_{\mlt}(e_{i}\mlt e_{i}, e_{j})  = \al \ric_{\mlt}(e_{i}, e_{i}) + (\al - 1)\ric_{\mlt}(e_{i}, e_{j}) = 0
\end{split}
\end{align}
for all distinct $i, j, k \in \{1, \dots, n\}$. Since to check the invariance of $\ric_{\mlt}$ it suffices to check it on the basis $\{e_{i}: 1\leq i \leq n\}$, this suffices to show that $\tau_{\mlt}$ is invariant if and only if $\al \in \{ -1/(n-1), 0, 1/2\}$.

Because both sides \eqref{confnon0} are trilinear forms, to check equality it suffices to check equality on basis vectors. For pairwise distinct $i$, $j$, and $k$, the nonzero associators of the basis vectors $e_{i}$ are 
\begin{align}\label{permassoc}
\begin{split}
[e_{i}, e_{i}, e_{j}] = - [e_{j}, e_{i}, e_{i}] &= -\al^{2}e_{i} + \al(1-\al)e_{j} = \tfrac{1}{n-1}\left(\ric_{\mlt}(e_{i}, e_{i})e_{j} - \ric_{\mlt}(e_{i}, e_{j})e_{i}\right), \\
[e_{i}, e_{j}, e_{k}] &= \al^{2}(e_{k} - e_{i}) = \tfrac{1}{n-1}\left(\ric_{\mlt}(e_{i}, e_{j})e_{k} - \ric_{\mlt}(e_{j}, e_{k})e_{i}\right),
\end{split}
\end{align}
and this shows \eqref{tconfnon0}. 

Suppose $\al \neq 0$. Let $\ideal$ be an ideal of $\talg^{n}_{\al}(\fie)$. If $e_{i} \in \ideal$ for some $1 \leq i \leq n$, then $e_{j} = \al^{-1}e_{i}\mlt e_{j} - e_{i} \in \ideal$ for all $j \neq i$, so $\ideal = \talg^{n}_{\al}(\fie)$. Let $a \in \ideal$. Write $a = \sum_{i = 1}^{n}a_{i}e_{i}$. Then 
\begin{align}
a\mlt e_{i} - a = ((1-\al)a_{i} + \al \sum_{j \neq i}a_{j})e_{i} =  ((1-2\al)a_{i} + \al \sum_{j = 1}^{n}a_{j})e_{i},
\end{align}
so $e_{i} \in \ideal$ provided $(1-\al)a_{i} + \al \sum_{j \neq i}a_{j} \neq 0$. Suppose $(1-2\al)a_{i} + \al \sum_{j = 1}^{n}a_{j} = 0$ for all $1 \leq i \leq n$. Equivalently the $n \times 1$ vector $\bar{a}$ with components $a_{i}$ is in the null space of the $n \times n$ matrix $(1 - 2\al)I + \al \ones$, where $\ones$ is the all $1$s matrix. Since this matrix is invertible if and only if $\al \notin \{-1/(n-2), 0, 1/2\}$, in this case, $a = 0$. If $\al = 1/2$, then $\bar{a}$ is in the codimension one subspace $\{x = \sum_{i= 1}^{n}x_{i}e_{i}: \sum_{i = 1}^{n} x_{i} =0\} = \ker \tr L_{\mlt}$; this shows that if $\al = 1/2$ any ideal is contained in $\ker \tr L_{\mlt}$. If $\al = -1/(n-2)$, then $\bar{a}$ must be in the one-dimensional subspace spanned by $n \times 1$ all $1$s vector; this shows that if $\al = -1/(n-2)$ any nontrivial ideal equals this subspace. 
\end{proof}

\begin{remark}
The algebras $\talg^{n}_{\al}(\rea)$ arise naturally in \cite[section $5$]{Elashvili-Jibladze-Kac-semisimple} on a subspace of a semisimple Lie algebra associated with the grading determined by a semisimple cyclic element.
\end{remark}

\begin{definition}
Let $\chr \fie = 0$. For $n \geq 2$, the $n$-dimensional \emph{simplicial} algebra $\ealg^{n}(\fie)$ is the quotient of $\talg^{n+1}_{1, -1/(n-1)}(\fie)$ by its maximal ideal, equipped with the induced multiplication $\mlt$. 
\end{definition}

\begin{remark}
The definition of $\ealg^{n}(\fie)$ makes sense over a field of positive characteristic $p$ provided that $p$ does not divide $n-1$.
\end{remark}

\begin{lemma}\label{twomodelslemma}
The algebras $\talg^{n}_{-1/(n-1)}(\fie)$ and $\ealg^{n}(\fie)$ are isomorphic.
\end{lemma}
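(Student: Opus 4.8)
The plan is to exhibit an explicit isomorphism determined on generators. Let $B = \talg^{n+1}_{-1/(n-1)}(\fie)$ be the algebra appearing in the definition of $\ealg^{n}(\fie)$, with generating idempotents $f_{1}, \dots, f_{n+1}$, so that $\ealg^{n}(\fie) = B/R$ where $R$ is the maximal ideal of $B$. Since $-1/(n-1) = -1/((n+1)-2)$, Lemma \ref{talgmodellemma}\eqref{idealclaim}, applied with $n+1$ in place of $n$, identifies this maximal ideal with $R = \spn\{f_{1} + \dots + f_{n+1}\}$, which is one-dimensional because the $f_{i}$ form a basis of $B$; hence $\ealg^{n}(\fie) = B/R$ has dimension $n$. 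Writing $\bar f_{i}$ for the image of $f_{i}$ in $B/R$, the relation $\bar f_{1} + \dots + \bar f_{n+1} = 0$ together with this dimension count shows that $\{\bar f_{1}, \dots, \bar f_{n}\}$ is a basis of $\ealg^{n}(\fie)$.

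First I would define $\phi$ on generators by $\phi(e_{i}) = \bar f_{i}$ for $1 \leq i \leq n$, where $e_{1}, \dots, e_{n}$ are the generators of $\talg^{n}_{-1/(n-1)}(\fie)$. To see that this extends to an algebra homomorphism it suffices to check that the $\bar f_{i}$ satisfy the defining relations \eqref{talgrelations} of $\talg^{n}_{-1/(n-1)}(\fie)$: reducing the relations \eqref{talgrelations} that hold among $f_{1}, \dots, f_{n+1}$ in $B$ modulo $R$ gives $\bar f_{i} \mlt \bar f_{i} = \bar f_{i}$ and $\bar f_{i} \mlt \bar f_{j} = -\tfrac{1}{n-1}(\bar f_{i} + \bar f_{j})$ for distinct $i, j \in \{1, \dots, n\}$, which are precisely those relations. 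Since $\talg^{n}_{-1/(n-1)}(\fie)$ is $n$-dimensional with $\{e_{1}, \dots, e_{n}\}$ a basis (consistently with Lemma \ref{talgmodellemma}, as $-1/(n-1) \notin \{-1/(n-2), 0, 1/2\}$) and $\phi$ carries this basis to the basis $\{\bar f_{1}, \dots, \bar f_{n}\}$ of $\ealg^{n}(\fie)$, $\phi$ is bijective.

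Being simultaneously a linear isomorphism and an algebra homomorphism, $\phi$ is an isomorphism of algebras, which is the assertion. I do not expect any real obstacle: the argument is essentially bookkeeping, and its only slightly delicate inputs — the identification of the maximal ideal of $B$ with $\spn\{f_{1} + \dots + f_{n+1}\}$ and the consequent value $\dim B/R = n$, together with the fact that the generators of each $\talg$-algebra in sight really form a basis — are all already furnished by Lemma \ref{talgmodellemma}.
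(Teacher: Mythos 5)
Your proof is correct and is essentially the paper's argument run in the opposite direction: the paper exhibits a surjective homomorphism $\Psi:\talg^{n+1}_{-1/(n-1)}(\fie)\to\talg^{n}_{-1/(n-1)}(\fie)$ whose kernel is the maximal ideal $\spn\{\sum_{i}e_{i}\}$, whereas you map $\talg^{n}_{-1/(n-1)}(\fie)$ into the quotient by sending generators to the residues $\bar f_{i}$, with the same inputs (the identification of the maximal ideal via Lemma \ref{talgmodellemma}\eqref{idealclaim} and the resulting dimension count). Both reduce to the same check of the relations \eqref{talgrelations} on a basis, so there is nothing to add.
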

\begin{proof}
Let $e_{0}, \dots, e_{n}$ be the idempotents generating $\talg^{n+1}_{-1/(n-1)}$ and satisfying the relations $e_{i}\mlt e_{j}= -\tfrac{1}{n-1}(e_{i} + e_{j})$ for $0 \leq i \neq j \leq n$. Define a linear map $\Psi:\talg^{n+1}_{-1/(n-1)} \to \talg^{n}_{-1/(n-1)}$ by $\Psi\left(\sum_{i = 0}^{n}x_{i}e_{i}\right) =\sum_{i = 1}^{n}(x_{i}- x_{0})\ga_{i}$, where $\ga_{1}, \dots, \ga_{n}$ are the idempotents generating $\talg^{n}_{-1/(n-1)}$, subject to the relations \eqref{talgrelations}, so that $\Psi(e_{i}) = \ga_{i}$ for $1 \leq i \leq n$ and $\Psi(e_{0}) = -\sum_{i = 1}^{n}\ga_{i}$. It is straightforward to check that $\ga_{0} = -\sum_{i = 1}^{n}\ga_{i}$ is an idempotent in $\talg^{n}_{-1/(n-1)}$ satisfying $\ga_{0}\mlt \ga_{i} = -\tfrac{1}{n-1}(\ga_{0} + \ga_{i})$ for $1 \leq i \leq n$, and it follows that $\Psi$ is a surjective algebra homomorphism with kernel generated by $e = \sum_{i = 0}^{n}e_{i}$. 
\end{proof}

The proof of Lemma \ref{twomodelslemma} shows that the images $\ga_{0}, \dots, \ga_{n}$ in $\ealg^{n}(\fie)$ of the generators $e_{0}, \dots, e_{n}$ of $\talg^{n+1}_{-1/(n-1)}(\fie)$ satisfy the relations
\begin{align}
\label{ealgrelations}
& \sum_{i = 0}^{n} \ga_{i} = 0,& &\ga_{i} \mlt \ga_{i} = \ga_{i},&&\ga_{i} \mlt \ga_{j} = -\tfrac{1}{n-1}(\ga_{i} + \ga_{j}),
\end{align}
and any $n$ of the $\ga_{i}$ constitute a basis of $\ealg^{n}(\fie)$. 

\begin{corollary}\label{ealgmodelcorollary}
The algebra $\ealg^{n}(\fie)$ is exact, Killing metrized, simple, and its associator satisfies
\begin{align}\label{confnon0}
\begin{aligned}
[x, y, z] =  \tfrac{1}{n-1}\left(\tau_{\mlt}(y, z)x  - \tau_{\mlt}(x, y)z \right) ,
\end{aligned}
\end{align}
for all $x, y, z \in \ealg^{n}(\fie)$. If $\fie = \rea$, the Killing form $\tau_{\mlt}$ is positive definite. 
\end{corollary}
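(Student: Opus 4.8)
The plan is to derive every assertion from Lemma~\ref{talgmodellemma} through the isomorphism $\ealg^{n}(\fie) \cong \talg^{n}_{-1/(n-1)}(\fie)$ of Lemma~\ref{twomodelslemma}; it then suffices to verify the claims for $(\talg^{n}_{\al}(\fie), \mlt)$ at the single parameter $\al = -1/(n-1)$. First I would check that for every $n \geq 2$ this value avoids the exceptional set $\{-1/(n-2), 0, 1/2\}$ that governs the clauses of Lemma~\ref{talgmodellemma}: $-1/(n-1) = -1/(n-2)$ has no solution (and for $n = 2$ the breakpoint $-1/(n-2)$ is simply absent), $-1/(n-1) \neq 0$, and $-1/(n-1) = 1/2$ would force $n = -1$. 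Granting this, exactness is immediate, since by Lemma~\ref{talgmodellemma} the value $-1/(n-1)$ is precisely the one for which $\talg^{n}_{\al}(\fie)$ is exact, and simplicity follows from the same lemma because $\al = -1/(n-1)$ lies outside the exceptional set.

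Next I would settle the Killing-metrized claim. Lemma~\ref{talgmodellemma} gives that $\tau_{\mlt}$ is invariant exactly when $\al \in \{-1/(n-1), 0, 1/2\}$, so it is invariant here; and since $\talg^{n}_{-1/(n-1)}(\fie)$ is exact, the general identity $\ric_{\mlt} = -\tau_{\mlt}$ valid for exact commutative algebras (see \eqref{ricciform} and the accompanying discussion) applies. By Lemma~\ref{talgmodellemma} (part 3, and Tables~\ref{talgtable1}--\ref{talgtable2}) the form $\ric_{\mlt}$ is nondegenerate at $\al = -1/(n-1)$ — equivalently, its restriction to $\ker \tr L_{\mlt}$, which for an exact algebra is all of $\talg^{n}_{-1/(n-1)}(\fie)$, is nondegenerate, since $\al \notin \{0, 1/2\}$. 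Hence $\tau_{\mlt}$ is nondegenerate, so $\ealg^{n}(\fie)$ is Killing metrized. For the associator identity I would substitute $\ric_{\mlt} = -\tau_{\mlt}$ into the formula \eqref{tconfnon0} of Lemma~\ref{talgmodellemma}:
\begin{align*}
[x, y, z] = \tfrac{1}{n-1}\bigl(\ric_{\mlt}(x, y)z - \ric_{\mlt}(y, z)x\bigr) = \tfrac{1}{n-1}\bigl(\tau_{\mlt}(y, z)x - \tau_{\mlt}(x, y)z\bigr),
\end{align*}
which is exactly \eqref{confnon0}.

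It remains to treat the case $\fie = \rea$, where I would argue positive definiteness of $\tau_{\mlt}$ directly. By \eqref{tauperm}, in the generating basis $\{e_{1}, \dots, e_{n}\}$ (equivalently, in any $n$ of the $\ga_{i}$ of \eqref{ealgrelations}) the Gram matrix of $\tau_{\mlt}$ at $\al = -1/(n-1)$ equals $\tfrac{n+1}{n-1}I - \tfrac{1}{n-1}\ones$, where $\ones$ is the all-ones matrix; its eigenvalues are $\tfrac{1}{n-1}$ on $\spn\{\sum_{i} e_{i}\}$ and $\tfrac{n+1}{n-1}$ on the complement, both strictly positive for $n \geq 2$ (equivalently $\ric_{\mlt} = -\tau_{\mlt}$ is negative definite, which for $n > 2$ is the inertia $(0, n)$ recorded at $\al = -1/(n-1) \in (-1/(n-2), 0)$ in Table~\ref{talgtable1}). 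Thus $\tau_{\mlt}$ is positive definite. The whole proof is essentially bookkeeping; the only points that reward attention are confirming that $\al = -1/(n-1)$ lands in the favorable case of each clause of Lemma~\ref{talgmodellemma} (watching the $n = 2$ degeneration of the breakpoint $-1/(n-2)$) and getting the sign of the definiteness right over $\rea$, which is why I would recompute the $\tau_{\mlt}$-eigenvalues by hand. A fully self-contained alternative bypasses Lemma~\ref{talgmodellemma} and reads everything off the relations \eqref{ealgrelations} directly, but it only duplicates the computations already performed in the proof of that lemma.
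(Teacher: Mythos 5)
Your proposal is correct and follows essentially the same route as the paper, which likewise deduces everything from Lemma \ref{talgmodellemma} at $\al = -1/(n-1)$ and obtains \eqref{confnon0} from \eqref{tconfnon0} via $\ric_{\mlt} = -\tau_{\mlt}$. Your explicit Gram-matrix computation of the eigenvalues $\tfrac{1}{n-1}$ and $\tfrac{n+1}{n-1}$ is a sensible way to settle positive definiteness directly from \eqref{tauperm} rather than from the inertia tables, and it is consistent with the norm formula in Lemma \ref{haradalemma}.
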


\begin{proof}
This follows from Lemma \ref{talgmodellemma}.
When $\al = -1/(n-1)$, $\tau_{\mlt} = -\ric_{\mlt}$, so \eqref{tconfnon0} becomes \eqref{confnon0}.
\end{proof}

\begin{remark}
A concrete model for $\ealg^{n}(\fie)$ can be given as follows. Let $\mat(n+1, \fie)$ be the Frobenius algebra of $(n+1)\times(n+1)$ matrices over $\fie$ and let $\mat_{0}(n+1, \fie)$ be the subspace of trace-free elements. Let $\si:\fie \to \fie$ be an involution (an antiautomorphism of order $2$). Let $A \to \si(A)^{t}$ be the involution on $\mat(n+1, \fie)$ given by the transpose composed with $\si$. Let $\herm(n+1, \fie)  = \{A \in \mat(n+1, \fie): A^{t} = \si(A)\}$ and let $\herm_{0}(n+1, \fie) = \herm(n+1, \fie) \cap \mat_{0}(n+1, \fie)$. The metric $\tr \si(A)^{t}B = \tr AB$ on $\herm_{0}(n+1, \fie)$ is invariant with respect to the commutative multiplication $\djrd$ on $\herm_{0}(n+1, \fie)$ defined by
\begin{align}
A \djrd B = \tfrac{1}{2}(AB + BA) - \tfrac{1}{n+1}(\tr AB)I,
\end{align}
where $I$ is the identity matrix.
It is shown in Section \ref{jordansection} that the commutative algebra $(\herm_{0}(n+1, \fie), \djrd)$ is exact and Killing metrized.
For $i \in \{0,\dots, n\}$ the diagonal matrices
\begin{align}
m(i) = \tfrac{n+1}{n-1}\left(e_{ii} - \tfrac{1}{n+1}I\right) \in \herm_{0}(n+1, \fie),
\end{align}
where $e_{ij} \in \mat(n+1, \fie)$ is the matrix with the $ij$ component equal to $1$ and all other components equal to $0$, satisfy
\begin{align}
 &m(i)\djrd m(i) = m(i),& &m(i)\djrd m(j) = -\tfrac{1}{n-1}(m(i) + m(j)),&& i \neq j,
\end{align}
so serve as a model for $\ealg^{n}(\fie)$. See Lemma \ref{herm0cartanlemma} for a more general statement. 
\end{remark}

\begin{remark}
On $\ealg^{n}(\com)$, $\tau_{\mlt}$ is not anisotropic because $(1 + \sqrt{n^{2} - 1}\j)\ga_{i} + n\ga_{j} \in \ealg^{n}(\com)$ is isotropic for any $i \neq j$.
\end{remark}

\begin{remark}
The \emph{para-Hurwitz} algebra is the two-dimensional $\rea$-algebra of complex numbers $\com$ equipped with the multiplication $u \cmlt v = \bar{u}\bar{v}$ (see \cite[section VIII.$34$]{Knus-Merkurjev-Rost-Tignol}). It is isomorphic to $\ealg^{2}(\rea)$ via the $\rea$-linear map $p + \j q \to (\tfrac{q}{\sqrt{3}} - p)\ga_{1} - (\tfrac{q}{\sqrt{3}} + p)\ga_{2}$, that sends $1$ to $-\ga_{1} - \ga_{2} = \ga_{0}$.
\end{remark}

\begin{remark}
The $n$-dimensional simplicial algebra arises as the Norton algebra of the complete graph on $n$ vertices as in \cite{Cameron-Goethals-Seidel, Cameron-Goethals-Seidel-stronglyregular}. See \cite{Fox-frames}. 
\end{remark}

\begin{remark}\label{griessharadaremark}
R.~L. Griess, in unpublished work in 1977 (see the second paragraph of \cite{Smith} and \cite[appendix]{Dong-Griess}) and K. Harada, in \cite{Harada}, showed that the $n$-dimensional algebra $\ealg^{n}(\fie)$ with presentation \eqref{ealgrelations} arises by equipping the irreducible quotient of the standard permutation module for the symmetric group $S_{n+1}$ with the multiplication obtained by projecting the standard multiplication in the permutation module. Griess and Harada also showed that the automorphism group of $\ealg^{n}(\fie)$ equals $S_{n+1}$. Here this is proved as Corollary \ref{griessharadacorollary}. 

Theorem \ref{confassclassificationtheorem} characterizes the simplicial algebras as the unique algebras among a wide class of algebras that satisfy certain particular quantitative nonassociativity conditions. 
\end{remark}

\begin{lemma}\label{talgidempotentlemma}
Let $\fie$ be a field of characteristic zero and let $\al \in \fie \setminus \{0, 1/2\}$. 
Let $\{e_{1}, \dots, e_{n}\}$ be the idempotents generating $\talg^{n}_{\al}(\fie)$ and satisfying \eqref{talgrelations}. 
For $I = \{i_{1}, \dots, i_{p}\} \subset \{1, \dots, n\}$, define $e_{I}= e_{i_{1}}+ \dots + e_{i_{p}} \in \ealg^{n}(\fie)$ and write $|I| = p$. 
\begin{enumerate}
\item\label{tidem1} Every nonzero idempotent in $\talg^{n}_{\al}(\fie)$ has the form $\si_{I} = \tfrac{1}{1 - 2\al + 2\al|I|}e_{I}$ for some $I \subset \{1, \dots, n\}$ such that $|I|  \neq 1 - 1/(2\al)$.
\item\label{tidem2} There is a nonzero element in $\szero(\talg^{n}_{\al}(\fie))$ if and only if $1 - \tfrac{1}{2\al} \in \{1, \dots, n\}$, in which case every such element is a scalar multiple of an element of the form $e_{I}$ with $|I| = 1 - 1/(2\al)$.
\item\label{tidemcardinality} The cardinality of $\proj(\idem(\talg^{n}_{\al}(\fie)))\cup \proj(\szero(\talg^{n}_{\al}(\fie)))$ is $2^{n} - 1$.
\item\label{tidemnorm} There holds $\ric_{\mlt}(\si_{I}, \si_{I}) = \tfrac{(n-1)|I|\al(1- 2\al + \al|I|)}{(1 - 2\al + 2\al|I|)^{2}} = \tfrac{(n-1)|I|(1/\al - 2 + |I|)}{(1/\al - 2 + 2|I|)^{2}}$.
\item\label{tidem5} If $I \cap J = \emptyset$, $I \cap K = \emptyset$, $J \cap K = \emptyset$ and $|J| = |K|$, then $\ric_{\mlt}(e_{I}, e_{J} - e_{K}) = 0$ and $e_{I}\mlt (e_{J} - e_{K}) = -\al |I|(e_{J} - e_{K})$.
\item\label{tidem6} If $I = J \cup K$ with $J \cap K = \emptyset$, then $\ric_{\mlt}(e_{I}, |K|e_{J}-|J|e_{K}) = 0$ and $e_{I} \mlt (|K|e_{J} - |J|e_{K}) = (1 -2\al + \al  |I|)(e_{J} - e_{K})$.
\item\label{talgspec} If $|I| \neq 1 -1/(2\al)$, then $\specp(\si_{I})$ contains $\tfrac{\al |I|}{1 - 2\al + 2\al|I|}$ with multiplicity $n- |I|$ and $\tfrac{1 - 2\al + \al|I|}{1 - 2\al + 2\al|I|}$ with multiplicity $|I| - 1$.
\item\label{talgzspec} if $|I| = 1 -1/(2\al)$, then $\specp(e_{I})$ contains $\al - 1/2$ with multiplicity $ n- 1 + 1/(2\al)$ and $1/2 - \al$ with multiplicity $-1/(2\al)$.
\end{enumerate}
\end{lemma}

\begin{proof}
Any $x \in \talg^{n}_{\al}(\fie)$ can be written $x = \sum_{i = 1}^{n}x_{i}e_{i}$ for some $x_{i} \in \fie$. That $x \mlt x = \la x$ yields the equations $\la x_{i} = x_{i}(x_{i} +2\al\sum_{j \neq i}x_{j})$ for $1 \leq i \leq n$. For each $i$, either $x_{i} = 0$ or $x_{i} = \la - 2\al \sum_{j \neq i}a_{j}$. Let $I = \{i:x_{i} \neq 0\}$. If $I = \{i\}$, then $a_{i} = \la + -2\al \sum_{j \neq i}x_{j} = 1$, so $x = \la e_{i}$. Suppose $|I| \geq 2$ and choose distinct $i, j \in I$. Then $x_{i} + 2\al x_{j} = \la  -2\al \sum_{k \in I \setminus\{i, j\}}x_{k} = x_{j} + 2\al x_{i}$. Because $\al \neq 1/2$, this yields $x_{i} = x_{j}$. Since $i, j \in I$ are arbitrary, $x_{i} = \la - 2\al\sum_{j \neq i}x_{j} = \la - 2\al(|I| - 1)x_{i}$, so $\la = (1 + 2\al(|I| - 1))x_{i}$. When $\la = 1$ there results $x_{i} = \tfrac{1}{1 - 2\al + 2\al|I|}$, and $e = \si_{I}$. When $\la = 0$, this can be only if $1 - \tfrac{1}{2\al} =  |I|$. Because $\{e_{1}, \dots, e_{n}\}$ is a basis, $e_{I}$ and $e_{J}$ are linearly independent if $I \neq J$. It follows that the cardinality of $\idem(\ealg^{n})\cup \szero(\ealg^{n})$ is $\sum_{k =1}^{n}\binom{n}{k} = 2^{n} - 1$. 
Claim \eqref{tidemnorm} follows from
\begin{align}\label{ricperm2}
\begin{split}
\ric_{\mlt}(e_{I}, e_{J}) & = (n-1)\al\left((1-2\al)|I\cap J| + \al |I||J|\right),\\
\ric_{\mlt}(e_{I}, e_{I}) &= (n-1)\al|I|\left(1 - 2\al + \al |I|\right).
\end{split}
\end{align}
It is straightforward to check that
\begin{align}\label{eIeJ}
\begin{split}
e_{I} \mlt e_{J}& =  \al |J|e_{I\setminus I \cap J} + \al|I|e_{J\setminus I \cap J} + (1-2\al +\al |I| +\al |J|)e_{I\cap J}, \\
e_{I}\mlt e_{I} &= (1 - 2\al + 2\al |I|)e_{I}.
\end{split}
\end{align}
If $I \cap J = \emptyset$, $I \cap K = \emptyset$, $J \cap K = \emptyset$ and $|J| = |K|$, then $\ric_{\mlt}(e_{I}, e_{J} - e_{K}) = 0$ follows from \eqref{ricperm}. From \eqref{eIeJ} there follows $e_{I} \mlt e_{J} = \al|J|e_{I} +\al|I| e_{J}$ and similarly with $K$ in place of $J$. From this there follows $e_{I}\mlt(e_{J} - e_{K}) = \al|I|(e_{J} - e_{K}) + \al(|J| - |K|)e_{I}$. This shows \eqref{tidem5}. The proof of \eqref{tidem6} is similar. 

By \eqref{eIeJ}, if $i \in I$ then $e_{I}\mlt e_{i} = (1 - 2\al + \al |I|)e_{i} + \al e_{I}$, so if $i\neq j$ and $i, j \in I$, then $e_{I}\mlt(e_{i} - e_{j}) =  (1 - 2\al + \al |I|)(e_{i} - e_{j})$. 
By \eqref{eIeJ}, if $i \notin I$ then $e_{I}\mlt e_{i} = \al|I|e_{i} + \al e_{I}$, so if $i\neq j$ and $i, j \notin I$, then $e_{I}\mlt(e_{i} - e_{j}) = \al |I|(e_{i} - e_{j})$.
Together these observations imply \eqref{talgspec} and \eqref{talgzspec}. 
\end{proof}

\begin{remark}
From remarks in \cite[section $5$]{Elashvili-Jibladze-Kac-semisimple} it seems those authors were aware of most of the details of Lemma \ref{talgidempotentlemma}. 
\end{remark}

\begin{remark}
From claims \eqref{talgspec} and \eqref{talgzspec} of Lemma \ref{talgidempotentlemma} it follows that $\specp(\si_{I})$ and $\specp(\si_{J})$ have nontrivial intersection if and only if either $|I| = |J|$ or $|I| + |J| - 2 = -1/\al$, in which case $\tfrac{1 - 2\al + \al|I|}{1 - 2\al + 2\al|I|} = \tfrac{\al |J|}{1 - 2\al + 2\al|J|}$ is contained in $\specp(\si_{I})\cap \specp(\si_{J})$. In particular this occurs if $|I| + |J| = n+1$ and $\al = -1/(n-1)$.
\end{remark}

Specializing Lemma \ref{talgidempotentlemma} with $\al = -1/(n-1)$ yields Lemma \ref{haradalemma}, although it should be noted that the statement admits certain refinement in this case, and it is made explicitly for this reason.
The claims \eqref{idem1} and \eqref{idem2} of Lemma \ref{haradalemma}, about the forms of idempotents and nilpotents of $\ealg^{n}(\fie)$, are Lemma $2$ and Corollary $3$ of Harada's \cite{Harada}. 

\begin{lemma}\label{haradalemma}
Let $\fie$ be a field of characteristic zero.
Let $\{\ga_{0}, \dots, \ga_{n}\}$ be the spanning set of $\ealg^{n}(\fie)$ satisfying \eqref{ealgrelations}. 
For $I = \{i_{1}, \dots, i_{p}\} \subset \{0, \dots, n\}$, write $|I| = p$ and define $\ga_{I}= \ga_{i_{1}}+ \dots + \ga_{i_{p}} \in \ealg^{n}(\fie)$. Let $I^{c}$ be the subset of $\{0, \dots, n\}$ complementary to $I$, so that $|I^{c}| = n+1 - |I|$, $n+1 - 2|I^{c}| = -(n+1 - 2|I|)$, and $\ga_{I} + \ga_{I^{c}} = 0$.
\begin{enumerate}
\item\label{idem1} Every nonzero idempotent in $\ealg^{n}(\fie)$ has the form $\si_{I} = \tfrac{n-1}{n +1 - 2|I|}\ga_{I} = \tfrac{n-1}{n +1 - 2|I^{c}|}\ga_{I^{c}} = \si_{I^{c}}$, where $1 \leq |I| \leq n$, $2|I| \neq n+1$. There holds $\tau_{\mlt}(\si_{I}, \si_{I}) = \tfrac{|I|(n-1)(n - |I| + 1)}{(n- 2|I| + 1)^{2}}$.
\item\label{idem2} There is a nonzero $x \in \szero(\ealg^{n}(\fie))$ if and only if $n$ is odd, in which case every such element is a scalar multiple of an element of the form $\ga_{I}$ with $2|I| = n+1$.
\item\label{idemcardinality} The cardinality of $\proj(\idem(\ealg^{n}(\fie)))\cup \proj(\szero(\ealg^{n}(\fie)))$ is $2^{n} - 1$.
\item\label{ealgspec} If $2|I| \neq n+1$, the spectrum $\spec(\si_{I})$ contains $-\tfrac{|I|}{n+1 - 2|I|}$ with multiplicity $n- |I|$ and $\tfrac{n+1 - |I|}{n+1 - 2|I|}$ with multiplicity $|I| - 1$.
\item \label{ealgzspec} if $2|I| = n+1$, then $\spec(\ga_{I})$ contains $-(n+1)/(2(n-1))$ with multiplicity $(n-1)/2$ and $(n+1)/(2(n-1))$ with multiplicity $(n-1)/2$.
\item\label{idem4} The set of idempotents $\{\ga_{0}, \dots, \ga_{n}\}$ is an equal norm equiangular set in $\ealg^{n}(\fie)$ satisfying
\begin{align}\label{idemtight}
\sum_{i = 0}^{n}\tau_{\mlt}(x, \ga_{i})\ga_{i} = \tfrac{n+1}{n-1}x
\end{align}
for all $x \in \ealg^{n}(\fie)$.
\item\label{idemabs} The minimal idempotents of $\ealg^{n}(\rea)$ are the $(n+1)$ elements $\ga_{0}, \dots, \ga_{n}$, any $n$ of which span $\ealg^{n}(\rea)$, and all of which are absolutely primitive and primitive. For $0 \leq i \leq n$, $\spec(\ga_{i})$ contains $-1/(n-1)$ with multiplicity $n-1$.
\end{enumerate}
\end{lemma}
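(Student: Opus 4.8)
The plan is to obtain all eight claims by specializing Lemma~\ref{talgidempotentlemma} to the parameter $\al = -1/(n-1)$, passing between $\talg^n_{-1/(n-1)}(\fie)$ and $\ealg^n(\fie)$ via Lemma~\ref{twomodelslemma}. Since $-1/(n-1) \notin \{0, 1/2\}$ for $n \geq 2$, Lemma~\ref{talgidempotentlemma} applies to $\talg^n_{-1/(n-1)}(\fie)$; under the isomorphism of Lemma~\ref{twomodelslemma} the generators $e_1, \dots, e_n$ correspond to $\ga_1, \dots, \ga_n$ and $\ga_0 = -\sum_{i = 1}^n \ga_i$, and $\tau_{\mlt} = -\ric_{\mlt}$ by exactness (Corollary~\ref{ealgmodelcorollary}). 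Substituting $\al = -1/(n-1)$ yields the simplifications $1 - 2\al + 2\al|I| = (n + 1 - 2|I|)/(n-1)$, $\,1 - 2\al + \al|I| = (n + 1 - |I|)/(n-1)$, and $\,1 - 1/(2\al) = (n+1)/2$, and the formulas of claims~\eqref{idem1}, \eqref{idem2}, \eqref{ealgspec}, \eqref{ealgzspec} then follow from Lemma~\ref{talgidempotentlemma}: claim~\eqref{idem1} from~\eqref{tidem1} and~\eqref{tidemnorm}, claim~\eqref{idem2} from~\eqref{tidem2}, claim~\eqref{ealgspec} from~\eqref{talgspec}, and claim~\eqref{ealgzspec} from~\eqref{talgzspec} (for~\eqref{ealgspec} one also uses $\spec = \specp \cup \{1\}$, and for~\eqref{ealgzspec} that $L_{\mlt}(\ga_I)$ kills $\ga_I$, so $0$ accounts for the missing multiplicity). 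The point that needs spelling out is the relabelling by subsets of $\{0, \dots, n\}$: if $0 \in J \subset \{0, \dots, n\}$ then $\ga_J = \ga_0 + \sum_{i \in J,\, i \neq 0}\ga_i = -\sum_{i \notin J}\ga_i = -\ga_{J^c}$, so the idempotent attached to $J$ coincides with that attached to $J^c \subset \{1, \dots, n\}$; this produces the identity $\ga_I + \ga_{I^c} = 0$ and shows the labelling by subsets of $\{0, \dots, n\}$ is two-to-one onto the idempotents. Then claim~\eqref{idem2} is immediate because $(n+1)/2 \in \{1, \dots, n\}$ exactly when $n$ is odd, and claim~\eqref{idemcardinality} is Lemma~\ref{talgidempotentlemma}\eqref{tidemcardinality}.

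For claim~\eqref{idem4} I would first note $\tau_{\mlt}(\ga_i, \ga_i) = n/(n-1)$ from claim~\eqref{idem1} with $|I| = 1$, then use invariance of $\tau_{\mlt}$ and $\ga_i \mlt \ga_j = -\tfrac{1}{n-1}(\ga_i + \ga_j)$ to get $\tau_{\mlt}(\ga_i, \ga_j) = \tau_{\mlt}(\ga_i \mlt \ga_i, \ga_j) = -\tfrac{1}{n-1}\bigl(\tau_{\mlt}(\ga_i, \ga_i) + \tau_{\mlt}(\ga_i, \ga_j)\bigr)$, hence $\tau_{\mlt}(\ga_i, \ga_j) = -1/(n-1)$ for $i \neq j$; this is the equal-norm equiangular assertion. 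For the tightness identity I would let $T(x) = \sum_{i = 0}^n \tau_{\mlt}(x, \ga_i)\ga_i$, evaluate on $\ga_j$ using $\sum_{i \neq j}\ga_i = -\ga_j$ to get $T(\ga_j) = \tfrac{n}{n-1}\ga_j - \tfrac{1}{n-1}\sum_{i \neq j}\ga_i = \tfrac{n+1}{n-1}\ga_j$, and conclude $T = \tfrac{n+1}{n-1}\Id$ since the $\ga_j$ span.

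For claim~\eqref{idemabs} I would work over $\rea$, where $\tau_{\mlt}$ is positive definite (Corollary~\ref{ealgmodelcorollary}). Writing $t = |I|$, $s = |I^c| = n + 1 - |I|$, so that $t + s = n+1$ and $(n + 1 - 2|I|)^2 = (s - t)^2 = (n+1)^2 - 4ts$, claim~\eqref{idem1} becomes $\tau_{\mlt}(\si_I, \si_I) = (n-1)\,ts/\bigl((n+1)^2 - 4ts\bigr)$, which is a strictly increasing function of $ts$ on the relevant range ($ts < (n+1)^2/4$, as $t \neq s$ for an idempotent). Since $t + s$ is fixed and $t \neq s$, $ts$ is least exactly when $\{t, s\} = \{1, n\}$, so the minimal idempotents are precisely the $\si_I$ with $|I| \in \{1, n\}$, that is $\ga_0, \dots, \ga_n$, of common norm $n/(n-1)$. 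These are distinct since the only linear relation among $\ga_0, \dots, \ga_n$ is $\sum_i \ga_i = 0$ (because $\ga_1, \dots, \ga_n$ is a basis), which also shows any $n$ of them form a basis. Finally, claim~\eqref{ealgspec} with $|I| = 1$ gives that $\spec(\ga_i)$ consists of $1$ with multiplicity $1$ and $-1/(n-1)$ with multiplicity $n-1$; hence each $\ga_i$ is absolutely primitive, and primitive by Lemma~\ref{absolutelyprimitivelemma} (or by Lemma~\ref{criticalpointlemma}\eqref{localpmaxspeclemma}).

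The main obstacle is the minimality step in claim~\eqref{idemabs}: one must verify that the $\ga_i$ really are the idempotents of smallest $\tau_{\mlt}$-norm. Reformulating $\tau_{\mlt}(\si_I, \si_I)$ as $(n-1)\,ts/((n+1)^2 - 4ts)$ with $t + s = n+1$ reduces this to the elementary fact that a product of positive integers with fixed sum is smallest at the extreme split, but one has to keep track of the constraint $t \neq s$ (the locus, present only for $n$ odd, on which no idempotent lies) and of the $I \leftrightarrow I^c$ identification. The remaining steps are substitution into Lemma~\ref{talgidempotentlemma} and bookkeeping with $\sum_{i = 0}^n \ga_i = 0$.
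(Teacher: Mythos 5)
Your proposal is correct and follows essentially the same route as the paper: specialize Lemma~\ref{talgidempotentlemma} at $\al = -1/(n-1)$ via Lemma~\ref{twomodelslemma}, derive the equiangular and tight-frame identities from the Gram matrix \eqref{tauperm} (equivalently, from invariance of $\tau_{\mlt}$), and deduce claim~\eqref{idemabs} from the classification of idempotents. Your explicit minimality computation, rewriting $\tau_{\mlt}(\si_I,\si_I)$ as $(n-1)ts/((n+1)^2-4ts)$ and minimizing over the split $t+s = n+1$, is a correct and useful elaboration of the paper's unexplained assertion that claim~\eqref{idemabs} ``follows from the other claims.''
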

\begin{proof}
Claims \eqref{idem1}-\eqref{talgzspec} follow from Lemma \ref{talgidempotentlemma}. As noted, that every idempotent and square-zero element is as in \eqref{idem1} and \eqref{idem2} follows from Lemma $2$ and Corollary $3$ of Harada's \cite{Harada}. 
It follows from \eqref{tauperm} that the elements of $\{\ga_{0}, \dots, \ga_{n}\}$ constitute an equal norm equiangular set. The identity \eqref{idemtight} follows straightforwardly from \eqref{tauperm}.
Claim \eqref{idemabs} follows from the other claims. 
\end{proof}

\begin{corollary}[R. Griess {\cite[appendix]{Dong-Griess}}, K. Harada \cite{Harada}]\label{griessharadacorollary}
The automorphism group of $\ealg^{n}(\fie)$ is the symmetric group $S_{n+1}$.
\end{corollary}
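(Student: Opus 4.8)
The plan is to realize $S_{n+1}$ inside $\Aut(\ealg^{n}(\fie))$ in the obvious way and then show that every automorphism arises this way, by proving that an automorphism must stabilize the spanning set $\{\ga_{0}, \dots, \ga_{n}\}$. For the easy containment I would observe that the defining relations \eqref{ealgrelations} are invariant under permuting the index set $\{0, \dots, n\}$, so each $\sigma \in S_{n+1}$ determines a linear map $\phi_{\sigma}$ with $\phi_{\sigma}(\ga_{i}) = \ga_{\sigma(i)}$ (well defined since any $n$ of the $\ga_{i}$ form a basis) that respects $\mlt$; the assignment $\sigma \mapsto \phi_{\sigma}$ is visibly a group homomorphism, and it is injective because distinct $\ga_{i}$ are distinct (any two lie in a common basis), so no nontrivial $\sigma$ can fix all of them.

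For the reverse containment, the key point is that an automorphism $\phi$ preserves the Killing form $\tau_{\mlt}$ (it is a trace-form, and $\ealg^{n}(\fie)$ is Killing metrized by Corollary \ref{ealgmodelcorollary}), and permutes the set $\idem(\ealg^{n}(\fie))$ of nonzero idempotents. By Lemma \ref{haradalemma}\eqref{idem1} each such idempotent equals $\si_{I} = \tfrac{n-1}{n+1-2|I|}\ga_{I}$ for some $I \subset \{0, \dots, n\}$ with $2|I| \neq n+1$, with $\si_{I} = \si_{I^{c}}$ and $\tau_{\mlt}(\si_{I}, \si_{I}) = g(|I|)$, where $g(m) = \tfrac{m(n-1)(n+1-m)}{(n+1-2m)^{2}}$. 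I would then check the rational-function identity $g'(m) = \tfrac{(n-1)(n+1)^{2}}{(n+1-2m)^{3}}$, which shows that $g$ is strictly increasing for $m < (n+1)/2$; combined with the symmetry $g(m) = g(n+1-m)$, this means that the scalar $\tau_{\mlt}(\si_{I}, \si_{I})$ recovers the unordered pair $\{|I|, n+1-|I|\}$. Consequently $\phi$ carries idempotents with $|I| \in \{1, n\}$ to idempotents of the same kind, and since these are exactly $\ga_{0}, \dots, \ga_{n}$ (note $\si_{\{i\}} = \ga_{i}$), $\phi$ permutes $\{\ga_{0}, \dots, \ga_{n}\}$; being invertible, it induces some $\sigma \in S_{n+1}$ with $\phi(\ga_{i}) = \ga_{\sigma(i)}$, so $\phi = \phi_{\sigma}$ on a spanning set. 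This yields surjectivity and hence the isomorphism $S_{n+1} \cong \Aut(\ealg^{n}(\fie))$.

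The only step with genuine content is the monotonicity of $g$: it is precisely what forces $\phi$ to respect the partition of $\idem(\ealg^{n}(\fie))$ by the ``size'' invariant $\{|I|, n+1-|I|\}$, and thereby to fix the spanning set setwise; everything else is bookkeeping. When $\fie = \rea$ this step can be bypassed, since by Lemma \ref{haradalemma}\eqref{idemabs} the $\ga_{i}$ are exactly the minimal idempotents of $\ealg^{n}(\rea)$, and $\phi$, preserving $\tau_{\mlt}$-norms, must permute them.
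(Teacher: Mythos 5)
Your proof is correct, and its skeleton is the same as the paper's: exhibit $S_{n+1}$ acting by automorphisms permuting $\{\ga_{0},\dots,\ga_{n}\}$, show every automorphism preserves this spanning set, and conclude by injectivity of the restriction map. The genuine difference is the invariant used to single out $\{\ga_{0},\dots,\ga_{n}\}$ inside $\idem(\ealg^{n}(\fie))$. The paper calls these ``the set of absolutely primitive idempotents,'' but by the spectral data of Lemma \ref{haradalemma}\eqref{ealgspec} the eigenvalue $1$ has multiplicity one in $\spec(\si_{I})$ for \emph{every} admissible $I$ (neither $-\tfrac{|I|}{n+1-2|I|}$ nor $\tfrac{n+1-|I|}{n+1-2|I|}$ equals $1$ when $1\leq |I|\leq n$), so absolute primitivity alone does not distinguish the $\ga_{i}$; what does the work is the $\tau_{\mlt}$-norm, which the paper invokes only implicitly. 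Your discriminator --- $\tau_{\mlt}(\si_{I},\si_{I})=g(|I|)$ together with $g'(m)=\tfrac{(n-1)(n+1)^{2}}{(n+1-2m)^{3}}$ (which checks out), the symmetry $g(m)=g(n+1-m)$, and the fact that the values $g(m)$ are rational numbers and hence distinct elements of any $\fie$ of characteristic zero --- closes this step cleanly and uniformly over all such fields, whereas the minimal-idempotent shortcut via Lemma \ref{haradalemma}\eqref{idemabs} is available only over $\rea$. The forward containment via permutation-invariance of the relations \eqref{ealgrelations} is fine (and agrees with the reflection realization in Lemma \ref{rootsystemlemma}); the only point worth spelling out is that the linear map determined on the basis $\{\ga_{1},\dots,\ga_{n}\}$ sends $\ga_{0}=-\sum_{i=1}^{n}\ga_{i}$ to $\ga_{\sigma(0)}$ because $\sum_{j=0}^{n}\ga_{j}=0$.
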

\begin{proof}
Since $\Aut(\ealg^{n})$ permutes the idempotents of $\ealg^{n}(\fie)$ and preserves $\tau_{\mlt}$, it permutes the set of absolutely primitive idempotents, $\{\ga_{0}, \dots, \ga_{n}\}$, so acts as a subgroup of $S_{n+1}$. Because an element of $\Aut(\ealg^{n}(\fie))$ which fixes $n$ of the idempotents $\{\ga_{0}, \dots, \ga_{n}\}$ must act on $\ealg^{n}(\fie)$ as the identity, the group $\Aut(\ealg^{n}(\fie))$ is equal to $S_{n+1}$. 
\end{proof}
\begin{remark}
From the remarks in \cite[appendix]{Dong-Griess}, it seems likely that the proof given here of Corollary \ref{griessharadacorollary} is essentially the unpublished one referenced in that same appendix.
\end{remark}

Lemma \ref{rootsystemlemma} gives a more explicit description of the action of $\Aut(\ealg^{n}(\fie))$. 

\begin{lemma}\label{rootsystemlemma}
Let $\{\ga_{0}, \dots, \ga_{n}\}$ be the spanning set in $\ealg^{n}(\fie)$ as in \ref{ealgrelations}.
\begin{enumerate}
\item $\Phi = \{r_{ij} = \ga_{i} - \ga_{j}: 0 \leq i\neq j \leq n\}$ is a root system of type $A_{n}$.
\item For $0 \leq i \leq n-1$, the reflection $\fl_{ij}$ through the hyperplane $\tau_{\mlt}$-orthogonal to the element $r_{ij}$ is an automorphism of $\ealg^{n}(\fie)$ that acts on $\{\ga_{0}, \dots, \ga_{n}\}$ as the transposition interchanging $\ga_{i}$ and $\ga_{j}$ and fixing $\ga_{k}$ for $k \notin \{i, j\}$.The automorphism group $\Aut(\ealg^{n}(\fie))$ is identified with $S_{n+1}$ acting irreducibly as the group generated by these reflections. 
\end{enumerate}
\end{lemma}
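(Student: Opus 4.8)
The plan is to reduce everything to the Gram matrix of the spanning set $\{\ga_{0},\dots,\ga_{n}\}$ with respect to $\tau_{\mlt}$. From \eqref{tauperm} with $\al = -1/(n-1)$ (equivalently from Lemma~\ref{haradalemma}\eqref{idem1} together with \eqref{idemtight}) one gets $\tau_{\mlt}(\ga_{i},\ga_{i}) = n/(n-1)$ and $\tau_{\mlt}(\ga_{i},\ga_{j}) = -1/(n-1)$ for $i \neq j$. I would first record the consequences for the $r_{ij} = \ga_{i}-\ga_{j}$: $\tau_{\mlt}(r_{ij},r_{ij}) = 2(n+1)/(n-1)$ (so all the $r_{ij}$ have the same nonzero length), $\tau_{\mlt}(r_{ij},r_{ik}) = (n+1)/(n-1)$, $\tau_{\mlt}(r_{ij},r_{jk}) = -(n+1)/(n-1)$, and $\tau_{\mlt}(r_{ij},r_{k\ell}) = 0$ when $\{i,j\}\cap\{k,\ell\} = \emptyset$; hence the Cartan integers $2\tau_{\mlt}(r_{k\ell},r_{ij})/\tau_{\mlt}(r_{ij},r_{ij})$ lie in $\{0,\pm 1,\pm 2\}$, with $\pm 2$ occurring only when $r_{k\ell} = \pm r_{ij}$. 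Since $\sum_{i}\ga_{i} = 0$ gives $\ga_{i} = \tfrac{1}{n+1}\sum_{j\neq i}r_{ij}$, the $r_{ij}$ span $\ealg^{n}(\fie)$, which is therefore $n$-dimensional, and there are exactly $n(n+1)$ of them; taking $\al_{k} = r_{k-1,k}$ for $1\le k\le n$ as a candidate simple system and using $r_{ij} = \sum_{k=i+1}^{j}\al_{k}$ for $i<j$ identifies the Cartan matrix as that of $A_{n}$. Over a general field of characteristic zero I would make sense of ``root system of type $A_{n}$'' by restricting $\tau_{\mlt}$ to the $\rat$-linear span of the $r_{ij}$, a rational quadratic form that becomes positive definite after $\tensor_{\rat}\rea$ by Corollary~\ref{ealgmodelcorollary}; the standard axioms (finiteness, spanning, $0\notin\Phi$, closure under the reflections $\fl_{ij}$, integrality of Cartan numbers, $\rea r_{ij}\cap\Phi = \{\pm r_{ij}\}$) then hold verbatim, the closure under reflections being part of (2).

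For (2), the reflection through the hyperplane $\tau_{\mlt}$-orthogonal to $r_{ij}$ is $\fl_{ij}(x) = x - \tfrac{2\tau_{\mlt}(x,r_{ij})}{\tau_{\mlt}(r_{ij},r_{ij})}r_{ij}$ (well defined since $\tau_{\mlt}(r_{ij},r_{ij})\neq 0$). Using the Gram entries above, $\tau_{\mlt}(\ga_{i},r_{ij}) = (n+1)/(n-1)$, $\tau_{\mlt}(\ga_{j},r_{ij}) = -(n+1)/(n-1)$, and $\tau_{\mlt}(\ga_{k},r_{ij}) = 0$ for $k\notin\{i,j\}$, so a direct substitution gives $\fl_{ij}(\ga_{i}) = \ga_{j}$, $\fl_{ij}(\ga_{j}) = \ga_{i}$, and $\fl_{ij}(\ga_{k}) = \ga_{k}$ otherwise; that is, $\fl_{ij}$ acts on $\{\ga_{0},\dots,\ga_{n}\}$ as the transposition $(i\,j)$, and in particular it permutes $\Phi$. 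Next I would check that for each $\pi\in S_{n+1}$ the assignment $\ga_{i}\mapsto\ga_{\pi(i)}$ extends to a well-defined algebra automorphism $\Theta_{\pi}$: it is well defined because the space of linear relations among $\ga_{0},\dots,\ga_{n}$ is spanned by $\sum_{i}\ga_{i} = 0$ (any $n$ of the $\ga_{i}$ being a basis), which $\pi$ preserves, and it is multiplicative because the relations \eqref{ealgrelations} are symmetric under permuting indices. Then $\pi\mapsto\Theta_{\pi}$ is an injective homomorphism $S_{n+1}\to\Aut(\ealg^{n}(\fie))$ (injectivity uses that the $\ga_{i}$ are pairwise distinct: $\ga_{i} = \ga_{j}$ would give $\ga_{i} = \ga_{i}\mlt\ga_{i} = -\tfrac{2}{n-1}\ga_{i}$, forcing $\ga_{i} = 0$). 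Since $\fl_{ij} = \Theta_{(i\,j)}$ and transpositions generate $S_{n+1}$, the $\fl_{ij}$ generate $\Theta(S_{n+1})$, which by Corollary~\ref{griessharadacorollary} is all of $\Aut(\ealg^{n}(\fie))$.

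For the irreducibility of this action I would argue directly rather than invoking representation theory. Let $0\neq W\subseteq\ealg^{n}(\fie)$ be $\Aut$-invariant and pick $0\neq w\in W$, written $w = \sum_{k=0}^{n}c_{k}\ga_{k}$ with $\sum_{k}c_{k} = 0$ (possible, and then the $c_{k}$ are unique). For all $i\neq j$ one computes $w - \fl_{ij}(w) = (c_{i}-c_{j})r_{ij}\in W$. If some $c_{i}\neq c_{j}$ then $r_{ij}\in W$, and applying further $\fl_{k\ell}\in\Aut$ puts every $r_{k\ell}$ in $W$, so $W = \ealg^{n}(\fie)$; otherwise all $c_{k}$ are equal and $\sum_{k}c_{k} = 0$ forces $w = 0$, a contradiction. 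I expect the only genuinely delicate point to be the bookkeeping in the first paragraph: fixing the precise meaning of ``root system of type $A_{n}$'' over an arbitrary characteristic-zero base field and verifying the axioms from the rational Gram matrix. The automorphism statements in (2), by contrast, are short once the Gram entries are in hand and Corollary~\ref{griessharadacorollary} is available.
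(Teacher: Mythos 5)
Your proposal is correct and follows essentially the same route as the paper: compute the Gram matrix of the $r_{ij}$ from \eqref{tauperm}, check the root-system axioms and the $A_{n}$ Cartan data, verify by direct substitution that $\fl_{ij}$ acts as the transposition $(i\,j)$, and observe that the permutation action preserves the relations \eqref{ealgrelations} so that the reflections are automorphisms generating $S_{n+1} = \Aut(\ealg^{n}(\fie))$. The only substantive additions are welcome ones: you actually verify the irreducibility claim (which the paper's proof asserts but does not check) via the identity $w - \fl_{ij}(w) = (c_{i}-c_{j})r_{ij}$, and you pin down the meaning of ``root system'' over a general characteristic-zero field by passing to the rational span.
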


\begin{proof}
As $n\ga_{i} = \sum_{0\leq k \leq n, k \neq i}r_{ik}$, the $r_{ij}$ span $\ealg^{n}(\fie)$. The only scalar multiple of $r_{ij}$ in $\Phi$ is $r_{ji} = -r_{ij}$. Using $i,j,k,l$ to denote pairwise distinct indices, it follows from \eqref{tauperm} that $\tau_{\mlt}(r_{ij}, r_{ij}) = 2(n+1)/(n-1)$, $\tau_{\mlt}(r_{ij}, r_{ik}) = (n+1)/(n-1)$, and $\tau_{\mlt}(r_{ij}, r_{kl}) = 0$. Hence the pairing $\lb x, y\ra = 2\tau_{\mlt}(x, y)/\tau_{\mlt}(x, x)$ is integral on $\Phi$. The reflection $\fl_{ij}(x) = x - \lb r_{ij}, x\ra r_{ij}$ through the hyperplane $\tau_{\mlt}$-orthogonal to $r_{ij}$ acts on $\{\ga_{0}, \dots, \ga_{n}\}$ as the transposition interchanging $\ga_{i}$ and $\ga_{j}$ and fixing $\ga_{k}$ for $k \notin \{i, j\}$, and hence preserves $\Phi$. This shows that $\Phi$ is a root system. Write $r_{i} = r_{i\, i+1}$. Then $\Delta = \{r_{i}:0 \leq i \leq n-1\}$ is a set of simple roots and the reflections $\fl_{i}$ through the hyperplanes $\tau_{\mlt}$-orthogonal to the $r_{i} \in \Delta$ suffice to generate an action of $S_{n+1}$. As $\lb r_{i}, r_{j}\ra$ is $-1$ if $|i - j| = 1$ and $0$ if $|i - j| > 1$, $\Phi$ has type $A_{n}$. It is straightforward to check that, for the reflection $\fl$ associated to any element of $\Phi$, $\fl(\ga_{i})\mlt \fl(\ga_{j}) = \fl(\ga_{i}\mlt \ga_{j})$ for all $0 \leq i, j \leq n$. Since the $\ga_{i}$ span $\ealg^{n}(\fie)$ and the $\fl_{ij}$ generate $S_{n+1}$, it follows that $S_{n+1}$ acts on $\ealg^{n}(\fie)$ by automorphisms permuting $\{\ga_{0}, \dots, \ga_{n}\}$. Since $\Aut(\ealg^{n}(\fie))$ acts faithfully on this set, this shows $\Aut(\ealg^{n}(\fie)) = S_{n+1}$.
\end{proof}

\begin{remark}
When $\al \notin \{0, 1/2, -1/(n-1)\}$, the argument proving Lemma \eqref{rootsystemlemma} with $\ric_{\mlt}$ in place of $\tau_{\mlt}$ shows that $\Phi = \{r_{ij} = e_{i} - e_{j}: 1 \leq i\neq j \leq n\} \subset \ker \tr L_{\mlt}$ is a root system of type $A_{n-1}$ in the codimension one subspace $\ker \tr L_{\mlt} \subset \talg_{\al}^{n}$ and that the reflection $\fl_{ij}$ through the hyperplane $\ric_{\mlt}$-orthogonal to the element $r_{ij}$ is an automorphism of $\talg^{n}_{\al}$ that fixes $\ker \tr L_{\mlt}$ and acts on $\{e_{1}, \dots, e_{n}\}$ as the transposition interchanging $e_{i}$ and $e_{j}$ and fixing $e_{k}$ for $k \notin \{i, j\}$, so the Weyl group $S_{n}$ of this root system acts on $\talg^{n}_{\al}$ by automorphisms fixing the subspaces $\ker \tr L_{\mlt}$ and $\ideal = \spn\{e_{0} = -\sum_{i =1}^{n}e_{i}\}$. 
That $S_{n}$ acts by automorphisms can be seen explicitly as follows. For $x = \sum_{k = 1}^{n}x_{k}e_{k}$ let $\ell(x) = \sum_{k = 1}^{n}x_{k}$, so that $\tr L_{\mlt} = (1 + (n-1)\al)\ell$. There hold $r_{ij} \mlt x = \al \ell(x)r_{ij} + (1-2\al)(x_{i}e_{i} - x_{j}e_{j})$ and $\ric_{\mlt}(r_{ij}, x) = (n-1)\al(1-2\al)(x_{i}- x_{j})$, so that $\lb r_{ij}, x_{i}\ra = x_{i} - x_{j}$, which all together yield $\fl_{ij}(x) = x - (x_{i} - x_{j})r_{ij}$ and can be used to check that 
\begin{align}
\begin{split}
\fl_{ij}(x\mlt y) &= x\mlt y - \left(\al(x_{i} - x_{j})\ell(y) + \al(y_{i} - y_{j})\ell(x) + (1-2\al)(x_{i}y_{i} - x_{j}y_{j})\right)r_{ij}\\
& = \fl_{ij}(x)\mlt \fl_{ij}(y).
\end{split}
\end{align}
In the special case $\al = -1/(n-2)$, $S_{n}$ fixes the unique maximal ideal $\ideal$, so, by Lemma \ref{twomodelslemma}, descends to an action of $S_{n}$ on $\ealg^{n-1}(\fie)$ by automorphisms, which recovers that described in Lemma \ref{rootsystemlemma}. 
When $\al = -1/(n-1)$, the reflections $\fl_{ij}$ still generate an action of $S_{n}$, but it is not the full automorphism group of $\talg^{n}_{-1/(n-1)}(\fie)$, as for all $\al$, the $S_{n}$ action fixes the vector $e_{0}= -\sum_{i =1}^{n}e_{i} \in \talg^{n}_{\al}$, which, as $e_{0}\mlt e_{0} = -(1 + 2(n-1)\al)e_{0}$, is for this value of $\al$ (and no other) an idempotent (for other values of $\al$ it generates a one-dimensional invariant subalgebra, but is not an idempotent).
\end{remark}

\section{Tensor products of simplicial algebras}\label{tensorsection}
This section describes some results about the structure of $\ealg^{2}(\fie)\tensor \ealg^{n}(\fie)$. It would be interesting to describe $\ealg^{m}(\fie)\tensor \ealg^{n}(\fie)$ in general. 
The structure of $\ealg^{2}(\fie)\tensor \ealg^{n}(\fie)$ depends more strongly on $n$ than might be suspected. Lemma \ref{2tensor2lemma} shows that $\ealg^{2}(\fie)\tensor \ealg^{2}(\fie)$ is isomorphic to $\ealg^{2}(\fie)\oplus \ealg^{2}(\fie)$ and Lemma \ref{2tensor6lemma} shows that $\ealg^{2}(\fie)\tensor \ealg^{n}(\fie)$ is not simple if $n \geq 6$, while Theorem \ref{ealgtripletheorem} shows that, for any $n\geq 2$, $\ealg^{3}(\fie)\tensor \ealg^{n}(\fie)$ is simple and is not isomorphic to $\ealg^{3n}(\fie)$.

Let $\{e_{i}:0 \leq i \leq 2\}$ be the minimal idempotents in $\ealg^{2}(\fie)$ and let $\{e_{\al}: 0 \leq \al \leq n\}$ be the minimal idempotents in $\ealg^{n}(\fie)$. Then $\{e_{i\al} = e_{i}\tensor e_{\al}: 0\leq i \leq 2, 0 \leq \al \leq n\}$ are idempotents in $\ealg^{2}(\fie)\tensor \ealg^{n}(\fie)$ that satisfy $\sum_{i = 0}^{2}e_{i\al} = 0$ and $\sum_{\al = 0}^{n}e_{i\al} = 0$. For pairwise distinct $\al, \be, \ga \in \{0, \dots,n\}$ and $0 \leq i \leq 2$ define
\begin{align}
\begin{aligned}	
&a_{\al\be\ga} = \tfrac{n-1}{n+1}(e_{0\al} + e_{1\be} + e_{2\ga}),&\\
&b_{i\al\be\ga} = \tfrac{n-1}{n-5}(e_{i\al} + e_{i\be} + e_{i\ga}), &&\text{if}&& n \neq 5,& &z_{i\al\be\ga} = \tfrac{4}{3}(e_{i\al} + e_{i\be} + e_{i\ga}), &&\text{if}&& n = 5.
\end{aligned}
\end{align}
Note that while $b_{i\al\be\ga}$ and $z_{i\al\be\ga}$ do not depend on the ordering of $\al$, $\be$, and $\ga$, $a_{\al\be\ga}$ does.
The elements $a_{\al\be\ga}$ can be visualized as follows. Fill a $3 \times (n+1)$ grid with the indices $i\al$ taken from $\{0, 1, 2\}\times \{0, \dots, n\}$ in lexicographic order. An element $a_{\al\be\ga}$ corresponds to each triple of boxes in distinct rows and columns. The elements $e_{i\al}$ can be recovered by the formulas
\begin{align}\label{earecovery}
\begin{aligned}
&\begin{aligned}
&\tfrac{n+1}{n-1}(a_{\al\be\ga} + a_{\al\ga\be}) + \tfrac{n-5}{n-1}b_{0\al\be\ga} = 3e_{0\al},&\\
&\tfrac{n+1}{n-1}(a_{\al\be\ga} + a_{\ga\be\al}) + \tfrac{n-5}{n-1}b_{1\al\be\ga} = 3e_{1\be},&\\
&\tfrac{n+1}{n-1}(a_{\al\be\ga} + a_{\be\al\ga}) + \tfrac{n-5}{n-1}b_{2\al\be\ga} = 3e_{2\ga},&
\end{aligned}&& \text{if}&& n \neq 5,\\
&\begin{aligned}
&\tfrac{1}{2}(a_{\al\be\ga} + a_{\al\ga\be}) + \tfrac{1}{4}z_{0\al\be\ga} = e_{0\al},&\\
&\tfrac{1}{2}(a_{\al\be\ga} + a_{\ga\be\al}) + \tfrac{1}{4}z_{1\al\be\ga} = e_{1\be},&\\
&\tfrac{1}{2}(a_{\al\be\ga} + a_{\be\al\ga}) + \tfrac{1}{4}z_{2\al\be\ga} = e_{2\ga},&
\end{aligned}&& \text{if}&& n= 5.
\end{aligned}
\end{align}
Straighttforward calculations show the relations
\begin{align}
\label{aaa} &a_{\al\be\ga} + a_{\be\ga\al} + a_{\ga\al\be} = 0,& & b_{0\al\be\ga} + b_{1\al\be\ga} + b_{2\al\be\ga} = 0,&& z_{0\al\be\ga} + z_{1\al\be\ga} + z_{2\al\be\ga} = 0,&\\
\label{nnaa}&a_{\al\be\ga}\mlt a_{\al\be\ga} = a_{\al\be\ga}, & &a_{\al\be\ga}\mlt a_{\be\ga\al} = a_{\ga\al\be}, &&a_{\al\be\ga}\mlt a_{\ga\al\be} = a_{\be\ga\al},&\\
\label{nnaa2}&a_{\al\be\ga}\mlt a_{\al\ga\be} = \tfrac{n-5}{n+1}b_{0\al\be\ga}, &&a_{\al\be\ga}\mlt a_{\ga\be\al} = \tfrac{n-5}{n+1}b_{1\al\be\ga}, &&a_{\al\be\ga}\mlt a_{\be\al\ga} = \tfrac{n-5}{n+1}b_{2\al\be\ga}, &\\
\label{nnaa25}&a_{\al\be\ga}\mlt a_{\al\ga\be} = z_{0\al\be\ga}, &&a_{\al\be\ga}\mlt a_{\ga\be\al} =z_{1\al\be\ga}, &&a_{\al\be\ga}\mlt a_{\be\al\ga} = z_{2\al\be\ga}, &\\
\label{nnbb}&b_{i\al\be\ga}\mlt b_{i\al\be\ga} = b_{i\al\be\ga},& &b_{i\al\be\ga}\mlt b_{i+1\,\al\be\ga} = b_{i+2\,\al\be\ga},&&b_{i\al\be\ga}\mlt b_{i+2\,\al\be\ga} = b_{i+1\,\al\be\ga},\\
\label{nnzz}&z_{i\al\be\ga}\mlt z_{i\al\be\ga} = 0,& &z_{i\al\be\ga}\mlt z_{i+1\,\al\be\ga} = -z_{i+2\,\al\be\ga},&&z_{i\al\be\ga}\mlt z_{i+2\,\al\be\ga} = -z_{i+1\,\al\be\ga},
\end{align}
where Latin indices are taken modulo $2$, \eqref{nnaa2} becomes \eqref{nnaa25} when $n = 5$ provided it is understood that $\tfrac{n-5}{n+1}b_{i\al\be\ga} = \tfrac{n-1}{n+1}(e_{i\al} + e_{i\be} + e_{i\ga})$, and expressions involve $z_{i\al\be\ga}$ are defined only when $n = 5$.

\begin{lemma}
If $\chr \fie = 0$, $\ealg^{2}(\fie)\tensor \ealg^{5}(\fie)$ is not isomorphic to $\ealg^{10}(\fie)$.
\end{lemma}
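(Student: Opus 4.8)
The plan is to separate the two algebras, which have the same dimension ($\dim\ealg^{2}(\fie)\cdot\dim\ealg^{5}(\fie)=2\cdot5=10=\dim\ealg^{10}(\fie)$, so a dimension count is useless), by the isomorphism invariant given by the existence of a nonzero square-zero element.

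First I would observe, using Lemma \ref{haradalemma}\eqref{idem2}, that $\szero(\ealg^{10}(\fie))$ is empty, since $10$ is even. Next I would exhibit a nonzero square-zero element of $\ealg^{2}(\fie)\tensor\ealg^{5}(\fie)$. The cleanest choice: because $5$ is odd, Lemma \ref{haradalemma}\eqref{idem2} provides a nonzero $z\in\szero(\ealg^{5}(\fie))$, and then for any idempotent $e\in\idem(\ealg^{2}(\fie))$ the decomposable tensor $e\tensor z$ is nonzero and satisfies $(e\tensor z)\mlt(e\tensor z)=(e\mlt e)\tensor(z\mlt z)=e\tensor 0=0$, whence $e\tensor z\in\szero(\ealg^{2}(\fie)\tensor\ealg^{5}(\fie))$. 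Alternatively one may point to the explicit elements $z_{i\al\be\ga}=\tfrac{4}{3}(e_{i\al}+e_{i\be}+e_{i\ga})$ displayed above, which exist precisely because $n=5$: by \eqref{nnzz} they satisfy $z_{i\al\be\ga}\mlt z_{i\al\be\ga}=0$, and $z_{i\al\be\ga}=\tfrac{4}{3}\,e_{i}\tensor(e_{\al}+e_{\be}+e_{\ga})\neq 0$ since $e_{i}\neq 0$ and $e_{\al},e_{\be},e_{\ga}$ are distinct among the minimal idempotents $e_{0},\dots,e_{5}$ of $\ealg^{5}(\fie)$, any five of which form a basis.

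To finish I would invoke the elementary fact that an algebra isomorphism $\phi$ sends square-zero elements to square-zero elements, since $\phi(z\mlt z)=\phi(z)\mlt\phi(z)$ and $\phi$ is injective. Thus an isomorphism $\ealg^{2}(\fie)\tensor\ealg^{5}(\fie)\to\ealg^{10}(\fie)$ would carry the square-zero element just built to a nonzero square-zero element of $\ealg^{10}(\fie)$, contradicting $\szero(\ealg^{10}(\fie))=\emptyset$; hence no such isomorphism exists. I do not anticipate any substantive obstacle: the only point needing (trivial) verification is that the exhibited square-zero element is genuinely nonzero, which is immediate in either formulation.
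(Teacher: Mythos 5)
Your proof is correct and follows essentially the same route as the paper: the paper also distinguishes the two algebras by exhibiting a nontrivial square-zero element of $\ealg^{2}(\fie)\tensor\ealg^{5}(\fie)$ (via the elements $z_{i\al\be\ga}$ of \eqref{nnzz}) and invoking Lemma \ref{haradalemma} to see that $\ealg^{10}(\fie)$ has none. Your decomposable-tensor formulation $e\tensor z$ is a slightly cleaner way of producing the same element, but the argument is identical in substance.
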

\begin{proof}
Equation \eqref{nnzz} shows that $\ealg^{2}(\fie)\tensor \ealg^{5}(\fie)$ contains a nontrivial square-zero element, while Lemma \ref{haradalemma} shows that $\ealg^{10}(\fie)$ contains no nontrivial square-zero element.
\end{proof}

Comparing the relations \eqref{n2ba} and \eqref{nnaa2} with \eqref{ealgrelations} shows that for each choice of $0 \leq \al < \be < \ga \leq n$ the set $\{a_{\al\be\ga}, a_{\be\ga\al}, a_{\ga\al\be}\}$ generates a subalgebra of $\ealg^{2}(\fie)\tensor \ealg^{n}(\fie)$ isomorphic to $\ealg^{2}(\fie)$.

\begin{lemma}\label{2tensor2lemma}
If $\chr \fie = 0$, $\ealg^{2}(\fie)\tensor \ealg^{2}(\fie)$ is isomorphic to $\ealg^{2}(\fie)\oplus \ealg^{2}(\fie)$. 
\end{lemma}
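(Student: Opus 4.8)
The plan is to exhibit $\ealg^{2}(\fie)\tensor \ealg^{2}(\fie)$ directly as an internal direct sum of two ideals, each isomorphic to $\ealg^{2}(\fie)$. Since $\dim \ealg^{2}(\fie) = 2$, both $\ealg^{2}(\fie)\tensor \ealg^{2}(\fie)$ and $\ealg^{2}(\fie)\oplus \ealg^{2}(\fie)$ are four-dimensional, so it is enough to produce ideals $I_{1}$ and $I_{2}$, each isomorphic to $\ealg^{2}(\fie)$, with $I_{1}\mlt I_{2} = \{0\}$ and $I_{1} + I_{2} = \ealg^{2}(\fie)\tensor\ealg^{2}(\fie)$.

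The crucial observation is that for $n = 2$ all the elements $b_{i\al\be\ga}$ vanish. Indeed, the only admissible triple of pairwise distinct indices is $\{\al, \be, \ga\} = \{0, 1, 2\}$, and then $b_{i\,012} = \tfrac{n-1}{n-5}(e_{i0} + e_{i1} + e_{i2}) = -\tfrac{1}{3}\sum_{\al = 0}^{2}e_{i\al} = 0$ since $\sum_{\al}e_{i\al} = 0$. I would set
\begin{align*}
I_{1} = \spn\{a_{012}, a_{120}, a_{201}\}, && I_{2} = \spn\{a_{021}, a_{210}, a_{102}\}.
\end{align*}
By the relations \eqref{aaa} and \eqref{nnaa} (already recorded before the statement as the fact that $\{a_{\al\be\ga}, a_{\be\ga\al}, a_{\ga\al\be}\}$ generates a copy of $\ealg^{2}(\fie)$), each of $I_{1}$ and $I_{2}$ is a subalgebra isomorphic to $\ealg^{2}(\fie)$, hence two-dimensional. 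By \eqref{nnaa2} together with the vanishing $b_{i\,012} = 0$, the product of any generator of $I_{1}$ with any generator of $I_{2}$ is zero, so $I_{1}\mlt I_{2} = \{0\}$.

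It remains to see $I_{1} + I_{2}$ is the whole algebra and that $I_{1}, I_{2}$ are ideals. Specializing \eqref{earecovery} to $n = 2$ and using $b_{i\,012} = 0$ gives $e_{i\al} \in I_{1} + I_{2}$ for all $i$ and $\al$ (for example $e_{00} = a_{012} + a_{021}$); since the $e_{i\al}$ span $\ealg^{2}(\fie)\tensor\ealg^{2}(\fie)$, indeed $I_{1} + I_{2} = \ealg^{2}(\fie)\tensor\ealg^{2}(\fie)$. Then each $I_{k}$ is an ideal, since $(I_{1}+I_{2})\mlt I_{k} \subseteq I_{k}\mlt I_{k} + I_{3-k}\mlt I_{k} \subseteq I_{k}$. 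Finally, $\dim I_{1} + \dim I_{2} = 4 = \dim(\ealg^{2}(\fie)\tensor\ealg^{2}(\fie))$ together with $I_{1} + I_{2} = \ealg^{2}(\fie)\tensor\ealg^{2}(\fie)$ forces $I_{1}\cap I_{2} = \{0\}$, so $\ealg^{2}(\fie)\tensor\ealg^{2}(\fie) = I_{1}\oplus I_{2}$, a direct sum of ideals each isomorphic to $\ealg^{2}(\fie)$. The only substantive point is the degeneration $b_{i\,012} = 0$ imposed by $n = 2$; everything else is bookkeeping with the product relations \eqref{aaa}--\eqref{nnaa2}, which I do not expect to cause any difficulty.
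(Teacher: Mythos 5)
Your proof is correct and takes essentially the same route as the paper: the same key degeneration $b_{i\al\be\ga}=0$ for $n=2$, the same two subalgebras $\spn\{a_{012},a_{120},a_{201}\}$ and $\spn\{a_{021},a_{210},a_{102}\}$, and the same product relations to see they are complementary orthogonal ideals isomorphic to $\ealg^{2}(\fie)$. The only (harmless) difference is that you deduce orthogonality directly from \eqref{nnaa2} with $b=0$, whereas the paper routes it through the identities $a_{\al\be\ga}+a_{\al\ga\be}=e_{0\al}$, etc.
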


\begin{proof}
It suffices to find two orthogonal ideals in $\ealg^{2}(\fie)\tensor \ealg^{2}(\fie)$ each isomorphic to $\ealg^{2}$. When $n = 2$, 
\begin{align}
\label{n2ba}&b_{i\al\be\ga} = 0,&\\
\label{n2aa}& a_{\al\be\ga} + a_{\al\ga\be} = e_{0\al},&& a_{\al\be\ga} + a_{\ga\be\al} = e_{1\be},&& a_{\al\be\ga} + a_{\be\al\ga} = e_{2\ga}.&
\end{align}
By \eqref{aaa}, \eqref{n2ba}, \eqref{nnaa}, and \eqref{nnaa2}, each of the sets $\{a_{012}, a_{120}, a_{201}\}$ and $\{a_{021}, a_{210}, a_{102}\}$ consists of three idempotents summing to $0$ and satisfying the relations \eqref{ealgrelations}, so generating a subalgebra of $\ealg^{2}\tensor \ealg^{2}$ isomorphic to $\ealg^{2}$.
By \eqref{n2aa} the nine sums obtained by adding to an element of $\{a_{012}, a_{120}, a_{201}\}$ an element of $\{a_{021}, a_{210}, a_{102}\}$ are exactly the nine idempotents $\{e_{i\al}: 0 \leq i, \al \leq 2\}$. These relations imply that the product of an element of $\{a_{012}, a_{120}, a_{201}\}$ an element of $\{a_{021}, a_{210}, a_{102}\}$ is $0$, so these generate orthogonal ideals in $\ealg^{2}\tensor \ealg^{2}$ each isomorphic to $\ealg^{2}$.
\end{proof}

\begin{remark}
Over $\rea$, Lemma \ref{2tensor2lemma} also follows from \cite[Examples $1.7$ and $3.6$]{Fox-cubicpoly}.
\end{remark}

From the relations \eqref{earecovery} and \eqref{aaa}-\eqref{nnzz} it follows that, if $n > 2$ and $n \neq 5$,
\begin{align}\label{ea2nspan}
\begin{aligned}
\spn&\{a_{\al\be\ga}, a_{\be\ga\al}, a_{\ga\la\be}, a_{\al\ga\be}, a_{\ga\be\al}, a_{\be\al\ga}, b_{0\al\be\ga}, b_{1\al\be\ga}, b_{2\al\be\ga}\}\\
&= \spn\{a_{\al\be\ga}, a_{\be\ga\al}, a_{\al\ga\be}, , a_{\be\al\ga}, b_{0\al\be\ga}, b_{1\al\be\ga}, \}\\
&= \spn\{e_{0\al}, e_{0\be}, e_{0\ga}, e_{1\al}, e_{1\be}, e_{1\ga}\}\\
&= \spn\{e_{0\al}, e_{0\be}, e_{0\ga}, e_{1\al}, e_{1\be}, e_{1\ga}, e_{2\al}, e_{2\be}, e_{2\ga}\},
\end{aligned}
\end{align}
is a $6$-dimensional subalgebra of $\ealg^{2}\tensor \ealg^{n}$. When $n = 5$, \eqref{ea2nspan} is true with $z_{i\al\be\ga}$ in place of $b_{i\al\be\ga}$.

If $\al, \be, \ga, \delta, \si, \mu$ are distinct indices (this requires $n \geq 6$), then
\begin{align}
\label{ddaa}&a_{\al\be\ga} \mlt a_{\delta\si\mu} = 0,&\\
\label{ddab}&b_{0\al\be\ga}\mlt a_{\delta \si \mu} = -\tfrac{3}{n+1}a_{\delta \mu \si},&
&b_{1\al\be\ga}\mlt a_{\delta \si \mu} = -\tfrac{3}{n+1}a_{\mu \si \delta},&
&b_{2\al\be\ga}\mlt a_{\delta \si \mu} = -\tfrac{3}{n+1}a_{\si \delta \mu}.&
\end{align}

\begin{lemma}\label{2tensor6lemma}
For $\chr \fie \neq 0$, $\ealg^{2}(\fie)\tensor \ealg^{n}(\fie)$ is not simple if $n \geq 6$.
\end{lemma}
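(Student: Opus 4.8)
The plan is to establish non-simplicity by producing a proper nonzero ideal of $\mathcal{A} := \ealg^{2}(\fie)\tensor\ealg^{n}(\fie)$ out of the explicit products recorded in \eqref{aaa}--\eqref{nnzz} and \eqref{ddaa}--\eqref{ddab}. The preliminary reduction is this: by Lemma \ref{mcatensorlemma}(2) the algebra $\mathcal{A}$ is Killing metrized and exact, hence semisimple by Lemma \ref{dieudonnelemma}, so by Lemma \ref{simpledecomposelemma} it is the $\tau_{\mlt}$-orthogonal direct sum of its simple ideals; therefore it suffices to exhibit a single proper nonzero ideal $\mathcal{I}$, since then $\mathcal{A} = \mathcal{I}\oplus\mathcal{I}^{\perp}$ is the desired nontrivial decomposition.

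Once a candidate $\mathcal{I}$ has been written down, checking that it is an ideal is a finite, routine verification: the idempotents $e_{i\al}$ span $\mathcal{A}$, so it is enough to check $\mathcal{I}\mlt e_{i\al}\subseteq\mathcal{I}$ for all $i$ and $\al$, and each such product can be expanded using the multiplication tables of $\ealg^{2}$ and $\ealg^{n}$ (which make $e_{i\al}\mlt e_{j\be}$ explicit), the relations \eqref{aaa}--\eqref{ddab}, and the recovery formulas \eqref{earecovery}, after which one collects terms via $\sum_{i}e_{i\al}=0$ and $\sum_{\al}e_{i\al}=0$ and reads off membership. Properness and nontriviality of $\mathcal{I}$ then follow from a dimension count against $\dim\mathcal{A}=2n$.

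The real difficulty, and the step I expect to be the main obstacle, is the choice of $\mathcal{I}$, because for $n\ge 6$ the situation is essentially different from the case $n=2$ of Lemma \ref{2tensor2lemma}, where $b_{i\al\be\ga}=0$ and the two ideals are just the spans of the two $S_{3}$-orbits of the $a_{\al\be\ga}$. In fact no proper ideal of $\mathcal{A}$ can contain any $e_{i\al}$: the identities $e_{i\al}\mlt e_{j\al}=e_{k\al}$ (with $k$ the third index) and $e_{i\al}\mlt e_{i\be}=-\tfrac{1}{n-1}(e_{i\al}+e_{i\be})$ force such an ideal to contain every $e_{i\al}$; and no proper ideal can contain any $a_{\al\be\ga}$ either, for by \eqref{nnaa2} it would then contain $b_{0\al\be\ga}$, by \eqref{ddab} it would contain $a_{\delta\si\mu}$ for every $3$-subset $\{\delta,\si,\mu\}$ disjoint from $\{\al,\be,\ga\}$, and---since the graph on the $3$-subsets of $\{0,\dots,n\}$ in which disjoint subsets are adjacent is connected exactly when $n\ge 6$---iterating yields all the $a_{\al\be\ga}$, hence all the $b_{i\al\be\ga}$, hence by \eqref{earecovery} all the $e_{i\al}$; similarly for the $b_{i\al\be\ga}$. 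So $\mathcal{I}$ must be spanned by subtler combinations, and the natural route is an ansatz adapted to a suitable subgroup of $\Aut(\mathcal{A})\supseteq S_{3}\times S_{n+1}$: for instance the order-$3$ automorphism $\rho\tensor\id$, with $\rho$ cyclically permuting the minimal idempotents of $\ealg^{2}$, induces over a field containing a primitive cube root of unity $\omega$ a grading $\mathcal{A}=\mathcal{A}_{\omega}\oplus\mathcal{A}_{\omega^{2}}$ with $\mathcal{A}_{\omega}\mlt\mathcal{A}_{\omega^{2}}=0$, which sharply constrains the possible ideals even though it does not by itself split $\mathcal{A}$.
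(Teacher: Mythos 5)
Your proposal, as written, does not prove the lemma: after the correct reduction to exhibiting a proper nonzero ideal, you never produce one, and your closing paragraph concedes that you do not know what the ideal should be. That is the gap. For comparison, the paper's proof takes $\alg_{0}=\spn\{e_{i\al}:0\le i\le 2,\ \al\in\{0,1,2\}\}$, which by \eqref{ea2nspan} is exactly the span of the $a_{\al\be\ga}$ and $b_{i\al\be\ga}$ with $\{\al,\be,\ga\}=\{0,1,2\}$, and asserts that $\alg_{0}$ annihilates a complementary subspace, hence is an ideal.

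But your negative observations, which are correct, are fatal to the paper's argument and not merely to your search. The assertion $\alg_{0}\mlt\alg_{k}=\{0\}$ contradicts \eqref{ddab}, since $b_{0\al\be\ga}\in\alg_{0}$ while $b_{0\al\be\ga}\mlt a_{\delta\si\mu}$ is a nonzero multiple of $a_{\delta\mu\si}$; more simply, $e_{0\al}\mlt e_{0\delta}=-\tfrac{1}{n-1}(e_{0\al}+e_{0\delta})\notin\alg_{0}$ for $\al\in\{0,1,2\}$ and $\delta\ge 3$. Your chain $a_{\al\be\ga}\to b_{i\al\be\ga}\to a_{\delta\mu\si}\to\cdots\to e_{i\al}$ via \eqref{nnaa2}, \eqref{ddab}, and \eqref{earecovery} shows that for $n\ge 6$ the ideal generated by any $e_{i\al}$, $a_{\al\be\ga}$, or $b_{i\al\be\ga}$ is everything. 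Pushing your $\mu_{3}$-grading idea one step further closes the remaining cases and indicates the statement itself is false for $n\ge 3$. Over a field containing a primitive cube root of unity $\om$, write $\ealg^{2}=\fie u\oplus\fie v$ with $u=\ga_{0}+\om\ga_{1}+\om^{2}\ga_{2}$ and $v=\ga_{0}+\om^{2}\ga_{1}+\om\ga_{2}$, so that $u\mlt u=3v$, $v\mlt v=3u$, $u\mlt v=0$. Given a nonzero ideal $J$ of $\ealg^{2}\tensor\ealg^{n}$, let $P\subseteq\ealg^{n}$ (resp.\ $Q$) be the set of $u$-components (resp.\ $v$-components) of elements of $J$, and let $P_{0},Q_{0}$ be defined by $J\cap(u\tensor\ealg^{n})=u\tensor P_{0}$ and $J\cap(v\tensor\ealg^{n})=v\tensor Q_{0}$. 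Then $P\mlt\ealg^{n}\subseteq Q_{0}\subseteq Q$ and $Q\mlt\ealg^{n}\subseteq P_{0}\subseteq P$, so $P+Q$ and $P\cap Q$ are ideals of the simple algebra $\ealg^{n}$. If $P\cap Q=\ealg^{n}$ then $Q_{0}\supseteq P\mlt\ealg^{n}=\ealg^{n}$ and likewise $P_{0}=\ealg^{n}$, so $J$ is everything; otherwise $\ealg^{n}=P\oplus Q$ with $P\mlt Q=0$, $P\mlt P\subseteq Q$, $Q\mlt Q\subseteq P$, which is precisely a fixed-point-free order-three automorphism of $\ealg^{n}$. Since $\Aut(\ealg^{n})=S_{n+1}$ by Corollary \ref{griessharadacorollary}, and an order-three permutation with $k$ three-cycles has fixed space of dimension $n-2k$ on $\ealg^{n}$ with $3k\le n+1$, this forces $n=2k\le 2$. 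Hence for $n\ge 3$ the algebra $\ealg^{2}(\fie)\tensor\ealg^{n}(\fie)$ is simple (simplicity descends from a field extension), consistent with Lemma \ref{2tensor2lemma} for $n=2$ and Theorem \ref{ealgtripletheorem} for $n=3$ but contradicting the present lemma. Rather than trying to complete your plan, you should report that the lemma and its published proof appear to be in error.
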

\begin{proof}
By \eqref{ea2nspan}, for any $2 \leq k \leq n-3$ the $6$-dimensional subspaces
\begin{align}
\begin{aligned}
\alg_{0} &= \spn \{e_{i\al}:0 \leq i \leq 2, 0 \leq \al,\be, \ga \leq 1\} =  \spn\{a_{i\al\be\ga}: 0 \leq i \leq 2, 0 \leq \al,\be, \ga \leq 2\}, \\
\alg_{k} &= \spn \{e_{i\al}:0 \leq i \leq 2, k \leq \al,\be, \ga \leq k+1\} \\
&=\spn \{a_{i\al\be\ga}: 0 \leq i \leq 2, k \leq \al,\be, \ga \leq k+2\},
\end{aligned}
\end{align}
are transverse, and, by \eqref{nnaa}, \eqref{nnaa2}, \eqref{nnbb}, \eqref{ddaa}, and \eqref{ddab}, $\alg_{0}\mlt \alg_{k} = \{0\}$. Hence $\alg_{0}$ and $\sum_{k = 2}^{n-3}\alg_{k}$ are complementary and $\alg_{0} \mlt \sum_{k = 2}^{n-3}\alg_{k} = \{0\}$, so $\alg_{0}$ is a nontrivial ideal.
\end{proof}

\section{Unitalization and deunitalization}\label{unitalizationsection} 
The classical Thomas construction associates with a projective connection on a manifold $M$ a Ricci flat affine connection on a one-dimensional fiber bundle over $M$ such that the lifted connection is flat if and only if the original projection connection is projectively flat. This is an extension to a curved differential geometric setting of the usual processes of homogenization and projectivization. The formally analogous notions for commutative algebras are the unitalization and deunitalization constructions described in this section. These are used in Section \ref{characterizationsection} to show that certain quasi-associativity conditions modeled on notions of projective and conformal flatness for connections on vector bundles can be used to characterize the simplicial algebras over $\rea$ or an algebraically closed field of characteristic zero. 

For a symmetric bilinear form $c \in S^{2}\alg^{\ast}$, the \emph{$c$-unitalization} of the commutative algebra $(\alg, \mlt)$ is $\hat{\alg} = \alg \oplus \fie$ equipped with the commutative multiplication $\hmlt$ defined by
\begin{align}\label{hmltdefined}
(x, \al) \hmlt (y, \be) = (x\mlt y + \al y + \be x, \al \be + c(x, y)),
\end{align}
and the bilinear form $\hat{c} \in S^{2}\halg^{\ast}$ defined by $\hat{c}((x, \la), (y, \mu)) = c(x, y) + \la \mu$. By definition $(0, 1) \in \hat{\alg}$ is a unit, so $(\halg, \hmlt)$ is a unital commutative algebra. 
That $\hat{c}$ is $\hmlt$-invariant if and only if $c$ is $\mlt$-invariant follows from the observation that
\begin{align}\label{hatcinvariant}
\hat{c}((x, \al) \hmlt (y, \be), (z, \ga)) = c(x\mlt y, z) + \al c(y, z) + \be c(x, z) + \ga c(x, y) + \al \be \ga 
\end{align}
is completely symmetric if and only if $c$ is $\mlt$-invariant.  

The \emph{unitalization} of a metrized commutative algebra $(\alg, \mlt, h)$ is its $h$-unitalization, $(\halg, \hmlt, \hat{h})$, which is a unital metrized commutative algebra. 

\begin{remark}
The usual definition of the unitalization of an algebra is what is called here the $c$-unitalization with $c$ identically zero. The unitalization as defined here is the unitalization within a category whose objects are metrized commutative algebras.
\end{remark}

Lemma \ref{unitalizationisomorphismlemma} shows that two metrized commutative algebras are isomorphic if and only if their unitalizations are isometrically isomorphic.

A \emph{homomorphism} of metrized commutative algebras is an algebra homomorphism $\Psi:(\alg, \mlt, h) \to (\balg, \cmlt, g)$ that is metric-preserving in the sense that $\Psi^{\ast}(g) = h$. By the nondegeneracy of $h$ and $g$, $\Psi$ is injective. An isomorphism of metrized commutative algebras is a bijective homomorphism of such algebras whose inverse is also a homomorphism of metrized commutative algebras; equivalently it is an isometric algebra isomorphism. The group $\Aut(\alg, \mlt, h)$ of automorphisms of $(\alg, \mlt, h)$ is $\Aut(\alg, \mlt, h) = \Aut(\alg, \mlt) \cap O(h)$.

In the context of nonunital algebras, a homomorphism of algebras that happen to be unital is not required to preserve units. 
\begin{lemma}\label{unitpreservationelemma}
Let $\Phi:(\alg, \mlt) \to (\balg, \cmlt)$ be a homomorphism of metrized commutative algebras and suppose $e \in \alg$ and $f \in \balg$ are units. Then $\Phi(e) = f$ if $f \in \im \Phi$.
\end{lemma}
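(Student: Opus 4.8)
The plan is to exploit two facts recalled just above: that a homomorphism of metrized commutative algebras is injective (a consequence of metric preservation and the nondegeneracy of the metrics), and the elementary observation that a two-sided unit, when it exists, is unique. First I would use the hypothesis $f \in \im\Phi$ to choose $w \in \alg$ with $\Phi(w) = f$. Then, for an arbitrary $x \in \alg$, I would compute
\begin{align}
\Phi(w \mlt x) = \Phi(w)\cmlt\Phi(x) = f \cmlt \Phi(x) = \Phi(x),
\end{align}
where the first equality holds because $\Phi$ is an algebra homomorphism and the last because $f$ is a unit of $(\balg, \cmlt)$. Since $\Phi$ is injective, this forces $w \mlt x = x$ for every $x \in \alg$, so $w$ is a unit of $(\alg, \mlt)$.

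To conclude, I would invoke uniqueness of units: if $e$ and $w$ are both units of $(\alg, \mlt)$, then $e = e \mlt w = w$; hence $w = e$, and therefore $\Phi(e) = \Phi(w) = f$, as claimed.

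I do not anticipate a real obstacle here. The one point requiring care is conceptual rather than technical: as the surrounding discussion stresses, a homomorphism of not-necessarily-unital algebras need not carry the unit of its domain to any distinguished element, so the argument must genuinely recover the unit $e$ from the preimage $w$ of $f$ rather than presuppose that $\Phi(e)$ is a unit of $\balg$. The injectivity of $\Phi$ is exactly what licenses pulling the unit equation back along $\Phi$, and so it is the load-bearing hypothesis.
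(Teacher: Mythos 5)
Your argument is correct and is essentially identical to the paper's proof: both pull the unit equation back along the injective homomorphism to show a preimage of $f$ is a unit of $(\alg,\mlt)$ and then invoke uniqueness of units. No issues.
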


\begin{proof}
If $f = \Phi(u)$, then $\Phi(x) = f \cmlt \Phi(x) = \Phi(u) \cmlt \Phi(x) = \Phi(u \mlt x)$ for all $x \in \alg$. Because $\Phi$ is injective, this implies $u$ is a unit in $\alg$, so must equal $e$, because a unit is unique if it exists.
\end{proof}

\begin{lemma}\label{unitalizationisomorphismlemma}
Let $(\halg, \hmlt, \hat{h})$ and $(\hbalg, \widehat{\cmlt}, \hat{g})$ be the unitalizations of the metrized commutative algebras $(\alg, \mlt, h)$ and $(\balg, \cmlt, g)$. 
\begin{enumerate}
\item\label{unitlift1} The \emph{lift} $\hat{\Psi}:(\halg, \hmlt, \hat{h}) \to (\hbalg, \widehat{\cmlt}, \hat{g})$, defined by $\hat{\Psi}(x, \al) = (\Psi(x), \al)$, is a homomorphism of metrized commutative algebras if and only if $\Psi:(\alg, \mlt, h) \to (\balg, \cmlt, g)$ is a homomorphism of metrized commutative algebras.
\item\label{unitlift2} A homomorphism $\Phi:(\halg, \hmlt, \hat{h}) \to (\hbalg, \widehat{\cmlt}, \hat{g})$ of metrized commutative algebras is the lift $\hat{\Psi}$ of some homomorphism $\Psi:(\alg, \mlt, h) \to (\balg, \cmlt, g)$ of metrized commutative algebras if there holds any of the following conditions:
\begin{enumerate}
\item $g$ is anisotropic.
\item $\Phi$ is surjective.
\item $\Phi(0, 1) = (0, 1)$.
\end{enumerate}
\item\label{unitlift3} The map $\Psi \to \hat{\Psi}$ is a group isomorphism between $\Aut(\alg, \mlt, h)$ and $\Aut(\halg, \hmlt, \hat{h})$. 
\end{enumerate}
\end{lemma}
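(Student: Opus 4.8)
The plan is to establish the three parts in order, with part \eqref{unitlift1} serving as the main tool for the other two. Part \eqref{unitlift1} is a direct verification from the defining formula \eqref{hmltdefined}: expanding $\hat{\Psi}\big((x,\al)\hmlt(y,\be)\big)$ and comparing with $\hat{\Psi}(x,\al)\,\widehat{\cmlt}\,\hat{\Psi}(y,\be)$, one sees that equality of the $\balg$-components amounts exactly to $\Psi(x\mlt y)=\Psi(x)\cmlt\Psi(y)$ and equality of the $\fie$-components amounts exactly to $h(x,y)=g(\Psi(x),\Psi(y))$; combined with the identity $\hat{g}(\hat{\Psi}(x,\la),\hat{\Psi}(y,\mu))=g(\Psi(x),\Psi(y))+\la\mu$, this shows $\hat{\Psi}$ is a homomorphism of metrized commutative algebras if and only if $\Psi$ is (for the reverse implication one restricts the hypothesis on $\hat{\Psi}$ to pairs $(x,0)$).

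For part \eqref{unitlift2} the crux is to show that $\Phi(0,1)=(0,1)$ under each of the three hypotheses, that is, that $\Phi$ preserves the units of the unitalizations. Since $(0,1)$ is the unit of $\halg$ and of $\hbalg$, hypothesis (b) yields $\Phi(0,1)=(0,1)$ at once by Lemma \ref{unitpreservationelemma}, and hypothesis (c) is precisely this conclusion. For hypothesis (a), write $\Phi(0,1)=(u,\la)$ with $u\in\balg$ and $\la\in\fie$; idempotency of $(0,1)$ forces $(u,\la)\,\widehat{\cmlt}\,(u,\la)=(u,\la)$, whose $\fie$-component reads $\la^{2}+g(u,u)=\la$, while metric preservation gives $g(u,u)+\la^{2}=\hat{g}(\Phi(0,1),\Phi(0,1))=\hat{h}((0,1),(0,1))=1$; subtracting yields $\la=1$ and hence $g(u,u)=0$, so $u=0$ because $g$ is anisotropic. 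Having $\Phi(0,1)=(0,1)$, I would then note that $(0,1)$ is anisotropic for $\hat{h}$ and that $\alg\times\{0\}=\langle(0,1)\rangle^{\perp}$ in $(\halg,\hat{h})$, and likewise in $(\hbalg,\hat{g})$; since $\Phi$ preserves the metrics and fixes $(0,1)$, it sends $\alg\times\{0\}$ into $\balg\times\{0\}$, so $\Phi(x,0)=(\Psi(x),0)$ for a linear map $\Psi:\alg\to\balg$, whence $\Phi(x,\al)=\Phi(x,0)+\al\,\Phi(0,1)=(\Psi(x),\al)=\hat{\Psi}(x,\al)$; part \eqref{unitlift1} then shows $\Psi$ is a homomorphism of metrized commutative algebras.

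Part \eqref{unitlift3} follows formally. The assignment $\Psi\mapsto\hat{\Psi}$ lands in $\Aut(\halg,\hmlt,\hat{h})$ because by \eqref{unitlift1} $\hat{\Psi}$ is a homomorphism of metrized commutative algebras and it is bijective with inverse $\widehat{\Psi^{-1}}$; it is a group homomorphism since $\widehat{\Psi_{1}\circ\Psi_{2}}=\hat{\Psi}_{1}\circ\hat{\Psi}_{2}$ by comparison of the defining formulas, and it is injective since $\hat{\Psi}=\id$ forces $\Psi=\id$. For surjectivity I would apply part \eqref{unitlift2}: an automorphism $\Phi$ of $(\halg,\hmlt,\hat{h})$ is in particular a surjective homomorphism of metrized commutative algebras from $(\halg,\hmlt,\hat{h})$ to itself, so by hypothesis (b) of \eqref{unitlift2} it equals $\hat{\Psi}$ for some homomorphism $\Psi:(\alg,\mlt,h)\to(\alg,\mlt,h)$, and bijectivity of $\hat{\Psi}$ forces $\Psi$ to be bijective, hence $\Psi\in\Aut(\alg,\mlt,h)$ and $\Phi$ is in the image.

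The step I expect to be the real obstacle is the anisotropic case of part \eqref{unitlift2}: a priori nothing forces a homomorphism of metrized commutative algebras to respect units (Lemma \ref{unitpreservationelemma} requires the target unit to lie in the image), so one must genuinely extract $\Phi(0,1)=(0,1)$ from the way idempotency interacts with the normalization $\hat{h}((0,1),(0,1))=1$; once that is in hand, the remaining arguments are routine manipulations with \eqref{hmltdefined} and with orthogonal complements.
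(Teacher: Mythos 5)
Your proof is correct, and all three parts reach the right conclusions; part \eqref{unitlift1} and part \eqref{unitlift3} coincide with the paper's argument. Where you diverge is in part \eqref{unitlift2}: the paper parametrizes an arbitrary linear map as $\Phi(x, r) = (\Psi(x) + r u, \mu(x) + cr)$ and grinds out the full system of constraints coming from $\Phi^{\ast}(\hat{g}) = \hat{h}$ and multiplicativity, obtaining $c = 1$, $\mu = 0$, $g(u, u) = 0$, and $u \perp \Psi(\alg)$ \emph{unconditionally}, and only at the end invokes each of the three hypotheses to force $u = 0$. You instead branch at the start, proving $\Phi(0,1) = (0,1)$ by a separate route for each hypothesis — Lemma \ref{unitpreservationelemma} for surjectivity, and for the anisotropic case the pair of identities $\la^{2} + g(u,u) = \la$ (idempotency of the image of the unit) and $\la^{2} + g(u,u) = 1$ (metric preservation), which is exactly the paper's key subsystem $c - c^{2} = g(u,u) = 1 - c^{2}$ repackaged — and then finish uniformly with the observation that $\alg \times \{0\} = \langle (0,1)\rangle^{\perp}$ is carried into $\balg \times \{0\}$. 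Your version is shorter and more conceptual (unit preservation plus orthogonal complements), while the paper's computation yields as a byproduct the exact shape of a hypothetical non-lift homomorphism: the defect $u$ is an isotropic vector orthogonal to the image with $u \,\cmlt\, u = -u$, information your argument does not produce but which the lemma does not need.
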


\begin{proof}
From the definitions there follow
\begin{align}
\begin{split}
\hat{\Psi}(x, \al)\hmlt \hat{\Psi}(y, \be) &- \hat{\Psi}((x, \al)\hmlt (y, \be)) = (\Psi(x)\mlt \Psi(y) - \Psi(x \mlt y), \Psi^{\ast}(g)(x, y) - h(x, y)),\\
\hat{\Psi}^{\ast}(g)((x, \al), (y, \be)) &- \hat{h}((x, \al), (y, \be)) = \Psi^{\ast}(g)(x, y) - h(x, y),
\end{split}
\end{align}
which imply \eqref{unitlift1}. 
Given a linear map $\Phi:\halg \to \hbalg$, there are a linear map $\Psi:\alg \to \balg$, $u \in \balg$, $\mu \in \alg^{\ast}$, and $c \in \fie$ such that $\Phi(x, r) = (\Psi(x) + r u, \mu(x) + cr)$. That $\Phi^{\ast}(\hat{g}) = \hat{h}$ yields the equations
\begin{align}\label{ul1}
&\Psi^{\ast}(g)(x, y) - h(x,y) =- \mu(x) \mu(y) , & & c\mu(x) =- g(\Psi(x), u), & &1 - c^{2} = g(u, u),
\end{align}
for all $x, y \in \alg$,
while that $\Phi$ be an algebra homomorphism yields the equations
\begin{align}\label{ul2}
\begin{aligned}
& c - c^{2} = g(u, u),&  &\Psi(x)\cmlt \Psi(y)  - \Psi(x\mlt y) + \mu(x)\Psi(y) + \mu(y)\Psi(x) = h(x, y)u ,& \\
&u \cmlt u = (1-2c)u , & &\mu(x)\mu(y) - \mu(x \mlt y) + \Psi^{\ast}(g)(x, y) = ch(x, y),&  \\
&(1-c)\mu(x) = g(\Psi(x), u),&  &\Psi(x)\cmlt u + \mu(x)u = (1-c)\Psi(x) .
\end{aligned}
\end{align}
From \eqref{ul1} and \eqref{ul2} there results $1 - c^{2} = g(u, u) = c - c^{2}$, so that $c = 1$ and $g(u, u) = 0$. In \eqref{ul1} and \eqref{ul2} this yields $-\mu(x) = g(\Psi(x), u) = 0$ for all $x \in \alg$, so that $\mu = 0$ and $u$ is $g$-orthogonal to $\Psi(\alg)$. With these observations, \eqref{ul1} becomes $\Psi^{\ast}(g) = h$ and \eqref{ul2} becomes
\begin{align}\label{ul3}
\begin{aligned}
&\Psi(x)\cmlt \Psi(y)  - \Psi(x\mlt y)  = h(x, y)u ,& 
&u \cmlt u = -u , &    &\Psi(x)\cmlt u  =0 .
\end{aligned}
\end{align}
If $g$ is anisotropic, that $g(u, u) =0$ implies $u = 0$. Since $\Phi(x, 0) = (\Psi(x), 0)$, if $\Phi$ is surjective, then $\Psi$ is surjective and that $u$ is $g$-orthgonal to $\Psi(\alg) = \balg$ implies $u = 0$. If $(0, 1) = \Phi(0, 1) = (u, 1)$, then $u = 0$. In any of these cases, \eqref{ul3} implies $\Psi$ is a homomorphism of metrized commutative algebras. 
Because the lift of a composition of homomorphisms is the composition of their lifts, claim \eqref{unitlift3} follows from \eqref{unitlift1} and \eqref{unitlift2}. 
\end{proof}

Let $(\balg, \star, g)$ be a metrized commutative algebra and let $\alg \subset \balg$ be a $g$-nondegenerate subspace. The $g$-orthogonal projection $\pi$ of $\balg$ on $\alg$ means the projection onto $\alg$ along the transverse subspace $\alg^{\perp} = \{x \in \balg: g(x, y) =0\,\,\text{for all}\,\, y \in \alg\}$. The multiplication $\mlt$ defined on $\balg$ by $x \mlt y = \pi(\pi(x)\star \pi(y))$ satisfies
\begin{align}
\begin{split}
g(x &\mlt y, z) = g(\pi(\pi(x)\star \pi(y)), z) = g(\pi(\pi(x)\star \pi(y)),\pi(z)) = g(\pi(x)\star \pi(y), \pi(z)) \\
&= g(\pi(x), \pi(y)\star\pi(z)) = g(\pi(x), \pi(\pi(y)\star\pi(z))) = g(x, \pi(\pi(y)\star\pi(z))) = g(x, y\mlt z),
\end{split}
\end{align}
so $(\balg, \mlt, g)$ is also a metrized commutative algebra. As $\balg \mlt \balg \subset \alg$, $\alg$ is a two-sided ideal of $(\balg, \mlt, g)$. Modifying slightly terminology from \cite{Griess-monster}, the algebra $(\alg, \mlt, g)$ is called the \emph{retraction} of $(\balg, \star, g)$ onto the $g$-nondegenerate subspace $\alg$.

The \emph{deunitalization} $(\alg, \mlt, h)$ of a metrized unital commutative algebra $(\balg,\star, g)$ having a nonistropic unit $e$ is the retraction of $(\balg, \star, h =  g(e, e)^{-1}g)$ along $e$ (note that the induced metric is $h$, not $g$). The terminology \emph{deunitalization} is justified by Lemma \ref{deunitlemma} that shows that unitalization and deunitalization are inverse operations in a precise sense.
The factor of $g(e, e)^{-1}$ in the definition of the metric $h$ of a deunitalization is necessary for the first claim of Lemma \ref{deunitlemma} to hold.

\begin{lemma}\label{deunitlemma}
A unital metrized commutative algebra $(\balg, \star, g)$ with anisotropic unit is isomorphic to the unitalization $(\halg, \hmlt, \hat{h})$ of its deunitalization $(\alg, \mlt, h)$ via the map $\Pi:\balg \to \halg$ defined by $\Pi(x + \al e) = (x, \al) \in \halg$.
A metrized commutative algebra $(\alg, \mlt, h)$ is isomorphic to the deunitalization of its unitalization $(\halg, \hmlt, \hat{h})$ via the inclusion $i:\alg \to \halg$ defined by $i(x) = (x, 0)$.
\end{lemma}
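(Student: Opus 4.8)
The plan is to verify the two claims directly from the explicit formula \eqref{hmltdefined} for $\hmlt$ and from $\hat{h}((x,\la),(y,\mu)) = h(x,y)+\la\mu$; each reduces to a short computation once the relevant orthogonal decomposition is in place.

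For the first claim, set $c = g(e,e)$, which is nonzero since the unit $e$ is anisotropic, and $h := c^{-1}g$, so that $h(e,e) = 1$. Recall that the deunitalization $(\alg,\mlt,h)$ has underlying space $\alg = \{x \in \balg : g(x,e) = 0\}$, metric the restriction $h|_{\alg}$, and product $x \mlt y = \pi(x \star y)$, where $\pi : \balg \to \alg$ is the $g$-orthogonal (equivalently $h$-orthogonal) projection. Since $\fie e$ is a $g$-nondegenerate line, $\balg = \fie e \oplus \alg$, and $\Pi(x + \al e) = (x,\al)$ is a well-defined linear bijection $\balg \to \halg$. The computation rests on one identity: for $x, y \in \alg$, $x \star y = x \mlt y + h(x,y)\,e$. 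Indeed, invariance of $g$ and the unit property $y \star e = y$ give $g(x \star y, e) = g(x, y \star e) = g(x,y)$, so in the decomposition $\balg = \fie e \oplus \alg$ the $e$-component of $x \star y$ is $c^{-1}g(x,y)\,e = h(x,y)\,e$ and its $\alg$-component is $\pi(x \star y) = x \mlt y$. (This is exactly where the normalization $c^{-1}$ built into the deunitalization metric is used: with $g|_{\alg}$ in place of $h|_{\alg}$ the map $\Pi$ would not be multiplicative, which is the content of the parenthetical remark following the definition of deunitalization.) Expanding $(x + \al e)\star(y + \be e)$ by bilinearity using $e \star e = e$ and $e \star x = x$ and then substituting the identity above gives $(x+\al e)\star(y+\be e) = (x \mlt y + \al y + \be x) + (h(x,y) + \al\be)\,e$; applying $\Pi$ and comparing with \eqref{hmltdefined} (taken with the form there equal to $h|_{\alg}$, since the unitalization of $(\alg,\mlt,h)$ is its $h$-unitalization) shows $\Pi\bigl((x+\al e)\star(y+\be e)\bigr) = \Pi(x+\al e) \hmlt \Pi(y + \be e)$. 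For the metrics, $\hat{h}\bigl(\Pi(x+\al e),\Pi(y+\be e)\bigr) = h(x,y) + \al\be = c^{-1}\bigl(g(x,y) + \al\be\,c\bigr) = h(x + \al e, y + \be e)$, so $\Pi^{\ast}(\hat{h}) = h$, the metric from which the deunitalization was formed; in particular $\Pi$ carries $e$ (of $h$-norm $1$) to the unit $(0,1)$ of $\halg$ (of $\hat{h}$-norm $1$). Hence $\Pi$ is an isomorphism of metrized commutative algebras.

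For the second claim, the unitalization $(\halg,\hmlt,\hat{h})$ of $(\alg,\mlt,h)$ has unit $e' = (0,1)$ with $\hat{h}(e',e') = 1$, so $e'$ is anisotropic and the deunitalization of $(\halg,\hmlt,\hat{h})$ is simply the retraction of $(\halg,\hmlt,\hat{h})$ onto $\langle e'\rangle^{\perp}$, no rescaling of $\hat{h}$ being needed. From $\hat{h}\bigl((x,\al),(0,1)\bigr) = \al$ one reads off $\langle e'\rangle^{\perp} = \{(x,0) : x \in \alg\} = i(\alg)$, and the $\hat{h}$-orthogonal projection $\hat{\pi} : \halg \to i(\alg)$ is $(x,\al) \mapsto (x,0)$. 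The restriction of $\hat{h}$ to $i(\alg)$ pulls back under $i$ to $h$, and the retraction product on $i(\alg)$ is $\hat{\pi}\bigl((x,0)\hmlt(y,0)\bigr) = \hat{\pi}(x \mlt y, h(x,y)) = (x \mlt y, 0)$, which pulls back under $i$ to $\mlt$. Hence $i$ is an isomorphism of metrized commutative algebras onto the deunitalization of the unitalization. Beyond these identities the argument is routine bookkeeping; the one delicate point, and not really an obstacle, is keeping track of the normalization factor $c^{-1}$ in the first part.
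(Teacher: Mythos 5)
Your proof is correct and follows essentially the same route as the paper's: expand $(x+\al e)\star(y+\be e)$ using the identity $x\star y = x\mlt y + h(x,y)e$ and compare with \eqref{hmltdefined}, then compute the retraction of the unitalization onto the orthogonal complement of $(0,1)$. The only difference is that you spell out the steps the paper leaves implicit, in particular deriving the key identity from the invariance of $g$ and verifying explicitly that $\Pi$ pulls $\hat{h}$ back to the rescaled metric $h = g(e,e)^{-1}g$, which is a useful clarification but not a different argument.
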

\begin{proof}
From the definition of $\star$ it follows that  
\begin{align}\label{xstary}
\begin{split}
(x + \al e) \star(y + \be e) & = x\mlt y +  \al y + \be x  + \left(h(x, y) +\al\be\right)e,\\
\end{split}
\end{align}
for $x,y \in \alg$ and $\al, \be \in \fie$. Comparing \eqref{xstary} with \eqref{hmltdefined} yields the first claim.
By definition the deunitalization of $(\halg, \hmlt, \hat{h})$ is the vector space $i(\alg)$ equipped with the multiplication $\star$ defined by $i(x)\star i(y) = i(x) \hmlt i(y) - \hat{h}(i(x), i(y))(0, 1) = (x, 0) \hmlt (y, 0) - (0, h(x, y)) = (x\mlt y, 0) = i(x\mlt y)$, and the metric obtained by restricting $\hat{h}$. This shows second claim.
\end{proof}

\begin{lemma}\label{griesseinsteinlemma}
Let $(\balg, \star, g)$ be a metrized unital commutative algebra with anisotropic unit $e$ and suppose there are constants $A$ and $B$ such that 
\begin{align}\label{prech}
\tau_{(\balg, \star)}(x, y) = A g(x, e) g(y, e) + B g(x, y),
\end{align}
for $x, y \in \balg$. Then $\dim \balg = Ag(e, e)^{2} + Bg(e, e)$,
and the deunitalization $(\alg, \mlt, h)$ is exact and satisfies $\tau_{(\alg, \mlt)} = \ka h$ with $\ka = Bg(e, e) - 2$.
\end{lemma}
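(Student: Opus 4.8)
The plan is to compute everything in terms of the $g$-orthogonal direct sum decomposition $\balg = \alg \oplus \fie e$, where $\alg = \langle e \rangle^{\perp}$ is the underlying space of the deunitalization and $h = g(e,e)^{-1}g$. The dimension identity comes for free: since $e$ is the unit of $(\balg, \star)$ we have $L_{\star}(e) = \id_{\balg}$, so $\tau_{(\balg, \star)}(e, e) = \tr L_{\star}(e)^{2} = \dim \balg$, and substituting $x = y = e$ into \eqref{prech} gives $\dim \balg = A g(e,e)^{2} + B g(e,e)$.

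For the rest, I would first write $L_{\star}(x)$ for $x \in \alg$ in block form relative to $\balg = \alg \oplus \fie e$. By \eqref{xstary}, $L_{\star}(x)(y + \be e) = (x\mlt y + \be x) + h(x, y)\,e$ for $y \in \alg$ and $\be \in \fie$, so the diagonal blocks of $L_{\star}(x)$ are $L_{\mlt}(x)$ on $\alg$ and $0$ on $\fie e$; in particular $\tr L_{\star}(x) = \tr L_{\mlt}(x)$. Since $g(x, e) = 0$ for $x \in \alg$, formula \eqref{prech} together with $L_{\star}(e) = \id$ gives $\tr L_{\mlt}(x) = \tr L_{\star}(x) = \tau_{(\balg, \star)}(x, e) = 0$, so $(\alg, \mlt)$ is exact.

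Next I would compute $L_{\star}(x)L_{\star}(y)$ for $x, y \in \alg$ in the same coordinates; a short calculation from \eqref{xstary} gives diagonal blocks $z \mapsto L_{\mlt}(x)L_{\mlt}(y)z + h(y, z)\,x$ on $\alg$ and multiplication by $h(x, y)$ on $\fie e$. As off-diagonal blocks do not affect the trace and the rank-one operator $z \mapsto h(y, z)\,x$ on $\alg$ has trace $h(x, y)$, this yields
\[ \tau_{(\balg, \star)}(x, y) = \tau_{(\alg, \mlt)}(x, y) + 2h(x, y) \quad\text{for all } x, y \in \alg. \]
Evaluating the left-hand side with \eqref{prech}, using $g(x, e) = g(y, e) = 0$ and $g = g(e,e)\,h$ on $\alg$, gives $B g(e,e)\,h(x, y)$, so $\tau_{(\alg, \mlt)} = \bigl(B g(e,e) - 2\bigr)h$ on $\alg$, which is the assertion with $\ka = B g(e,e) - 2$.

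Nothing here is deep; the delicate point is the block-trace computation --- one must note that only the two diagonal blocks of $L_{\star}(x)L_{\star}(y)$ contribute to the trace and that the summand $h(y, z)\,x$ contributes exactly $h(x, y)$, so that $\tr L_{\star}(x)L_{\star}(y)$ exceeds $\tau_{(\alg, \mlt)}(x, y)$ by $2h(x, y)$; this is the source of the $-2$ in $\ka$.
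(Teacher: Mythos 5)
Your proof is correct and follows essentially the same route as the paper: both arguments rest on the two trace identities $\tr L_{\star}(x) = \tr L_{\mlt}(x)$ and $\tau_{(\balg,\star)}(x,y) = \tau_{(\alg,\mlt)}(x,y) + 2h(x,y)$ for $x, y \in \alg$, combined with the specializations $y = e$ and $x = y = e$ of \eqref{prech}. The only difference is that the paper obtains these identities by citing \eqref{unitalizationtrace} and \eqref{tauunitalization} via the identification of $(\balg,\star,g)$ with the unitalization of its deunitalization, whereas you rederive them directly from the block form of $L_{\star}$ relative to $\balg = \alg \oplus \fie e$; the computation, including the rank-one contribution $h(x,y)$ that produces the $-2$ in $\ka$, is accurate.
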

\begin{proof}
By \eqref{unitalizationtrace} combined with Lemma \ref{unitalizationisomorphismlemma}, if $x \in \alg$, then $\tr L_{(\B, \star)}(x) = \tr L_{\mlt}(x)$. Since $g(x, e) = 0$ if $x \in \alg$, taking $x \in \alg$ and $y = e$ in \eqref{prech} gives $ \tr L_{\mlt}(x) = \tr L_{(B, \star)}(x) = (Ag(e, e) + B)g(x, e) = 0$.
This shows that the deunitalization $(\alg, \mlt, h)$ is exact. Taking $x = e = y$ in \eqref{prech} gives $\dim \balg = \tr L_{(B, \star)}(e) = Ag(e, e)^{2} + Bg(e, e)$.
 Combined with Lemma \ref{unitalizationisomorphismlemma}, \eqref{tauunitalization} yields that $\tau_{(\balg, \star)}(x, y)= \tau_{(\alg, \mlt)}(x, y)+  2h(x, y)$,
for $x, y \in \alg$, and with \eqref{prech} this yields $\tau_{(\alg, \mlt)}(x, y) = (Bg(e, e) -2)h(x, y)$.
\end{proof}

Lemma \ref{griesseinsteinlemma} can be used to check that the deunitalizations of certain algebras are Killing metrized.

\begin{example}\label{monsterexample}
In \cite{Griess-friendlygiant}, R.~L. Griess constructed a $196883$-dimensional metrized commutative algebra $(\alg, \star, h)$, called here the \emph{$196883$-dimensional (nonunital) Griess algebra}, and constructed a finite simple group $\monster$ of monster type acting on $(\alg, \star, h)$ as automorphisms. In Griess's construction of $(\alg, \star, h)$ it is defined over $\rat$; here it is considered over $\rea$. For the construction of the monster finite simple group and the associated Griess algebra consult \cite{Frenkel-Lepowsky-Meurman} or \cite{Ivanov}. It was shown by J. Tits in \cite{Tits-friendly} that $\monster$ is the automorphism group of $(\alg, \star, h)$. In \cite{Thompson} and \cite{Griess-Meierfrankenfeld-Segev} it was established that the group $\monster$ is unique up to isomorphism, justifying calling it \emph{the} monster finite simple group. 
It is known that $\monster$ has a unique complex simple representation of dimension $196883$, from which it follows that $h$ is uniquely determined up to homothety. The final equation on page $498$ of \cite{Tits-friendly} establishes that $\tau_{\mlt}$ is a nonzero multiple of $h$ (called there a ``curious relation''; this also appears as equation $(2)$ of section $5$ of \cite{Tits-monstre}, although the normalizations in \cite{Tits-friendly} and \cite{Tits-monstre} are different). 

In studying $\monster$ it is common to work with some $196844$-dimensional unital metrized commutative algebra the deunitalization of which is the $196883$-dimensional Griess algebra $(\alg, \star, h)$, and different references, e.g. \cite{Griess-friendlygiant, Griess-monster, Conway-monster, Frenkel-Lepowsky-Meurman}, use slightly different extensions (any such algebra is commonly also called a \emph{Griess algebra}). 
Here the $196884$-dimensional extension $(\balg, \mlt, g)$ is taken as in \cite{Conway-monster}, except that $g$ is scaled so that the unit $e$ has $g$-norm $1$ (in \cite{Conway-monster}, the unit has length $3/2$). This algebra (so scaled) is called here the \emph{Conway-Griess algebra}. By the formula at the end of section $13$ of \cite{Conway-monster}, there holds \eqref{prech} with $A = 183024$ and $B = 13860$. Substituted into \eqref{prech} of Lemma \ref{griesseinsteinlemma} this shows that the Killing form of the Griess algebra satisfies $\tau_{\mlt} = (4\cdot 13^{2}\cdot 41)h = 13858 h$, which agrees with the cited result of \cite{Tits-friendly} up to normalizations.

As is explained in Example \ref{monsternortonexample}, the Griess algebra satisfies Norton's inequality, so has nonnegative sectional nonassociativity in the sense of Section \ref{nasection}.
\end{example}

\begin{example}\label{voaexample}
Calculations of Matsuo \cite{Matsuo} can be used to show the deunitalizations of Griess algebras of certain vertex operator algebras are Killing metrized exact commutative algebras and to calculate their Einstein constants with respect to a reference metric induced from the vertex operator algebra structure. This entails no new results about such algebras and amounts to translating known results into the language and context used here, so should be understood as an extended example.

A \emph{vertex operator algebra} (VOA) over a field $\fie$ is an $\fie$-vector space $\ste$ equipped with a linear map $Y: \ste \to (\eno\ste)[[z, z^{-1}]]$, called the \emph{state-field correspondence} and written
\begin{align}
Y(a, z) = \sum_{n \in \integer}a_{(n)}z^{-n-1} \in (\eno\ste)[[z, z^{-1}]],
\end{align}
taking values in endomorphism-valued formal power series, and two distinguished vectors, the \emph{vacuum vector} $\one \in \ste$ (also called the \emph{identity element}) and the \emph{conformal vector} $\cc \in \ste$ (also called the \emph{Virasoro element}), that together satisfy the axioms:
\begin{enumerate}
\item (locality) For $a, b \in \ste$, there is $N > 0$ such that $a_{(n)}b =0$ if $n \geq N$. 
\item $Y(\one, z) = \id_{\ste}$.
\item For $a \in \ste$, $Y(a, z)\one \in \ste[[z]]$ and $\lim_{n \to 0}Y(a, z) =a$.
\item (Jacobi/Borcherds identity)  For $a, b \in \ste$ and $l, m, n \in \integer$,
\begin{align}\label{voajacobi}
\begin{split}
\sum_{i =0}^{\infty}& \binom{m}{i}(a_{(l+i)}b)_{(m+n-i)} 
= \sum_{i =0}^{\infty}(-1)^{i}\binom{l}{i}\left(a_{(m+l-i)}b_{(n+i)} - (-1)^{l}b_{(n+l-i)}a_{(m+i)}\right).
\end{split}
\end{align}
\item The endomorphisms $L_{m} = \cc_{(m+1)}$ satisfy
\begin{align}
[L_{m}, L_{n}] = (m-n)L_{m+n} + \tfrac{m^{3} - m}{12}c\delta_{m+ n, 0}
\end{align}
for some constant $c$ called the \emph{central charge} of the VOA (so the conformal vector $\cc$ generates a representation of the Virasoro Lie algebra).
\item The operator $L_{0}$ is semisimple and there is a direct sum decomposition $\ste = \oplus_{n = 0}^{\infty}\ste^{n}$ where $\ste^{n}$ is the eigenspace of $L_{0}$ with eigenvalue $n$ (called the \emph{conformal weight} of a corresponding eigenvector), which is moreover finite-dimensional for all $n \geq 0$.
\end{enumerate}
These axioms force $\one \in \ste^{0}$ and $\ste^{i}_{(n)}\ste^{j} \subset \ste^{i+j-n-1}$.
(The first four axioms define a \emph{vertex algebra}; a vertex algebra satisfying also the last two axioms is a \emph{vertex operator algebra}.)
There are many equivalent definitions of vertex (operator) algebras. Here the axioms are stated as in \cite{Zhu-modularinvariance}; see also \cite{Beilinson-Drinfeld-chiral, DeSole-Kac, Frenkel-Ben-zvi, Frenkel-Lepowsky-Meurman, Gebert, Kac-vertexbook, Lepowsky-Li, Matsuo-Nagatomo}.

Here the phrase \emph{vertex operator algebra} without further qualification means a vertex operator algebra over $\com$. As for Lie algebras, it is different to speak of a real form of a complex vertex operator algebra and a real vertex operator algebra. A precise formulation of what is a real form of a VOA is given in \cite[section $2$]{Mason-fivepieces}.

A VOA is OZ (short for \emph{one-zero}) if $\ste^{0}$ is generated by $\one$ and $\ste^{1}= \{0\}$. 
For an OZ VOA, the multiplication $\star$ defined by $a \star b = a_{(1)}b$ for $a,b \in \balg$ makes $\ste^{2}$ into a commutative algebra and the symmetric bilinear form $g$ defined by $g(a, b)\one = a_{(3)}b$ for $a, b \in \ste^{2}$ is invariant with respect to $\star$. This is explained in detail in \cite[section $3.5$]{Gebert}. 
By definition, $L(\cc) = 2\Id_{\alg}$ and $g(\cc, \cc) = c/2$. Hence $e = \cc/2$ is a unit in $\alg$ such that $g(e, e) = c/8$. The triple $(\ste^{2}, \star, g)$ is called the \emph{Griess algebra} of the OZ VOA. 
Note that the Griess algebra of a OZ complex VOA is, by definition, an algebra over $\com$. If the VOA were a real VOA, then its Griess algebra would be a $\rea$-algebra.

\begin{remark}
The terminology differs from that used in example \ref{monsterexample} in two ways. In example \ref{monsterexample}, the terminology \emph{nonunital Griess algebra} refers to a particular algebra, that associated to the Monster Lie group, whereas here, in the context of VOAs, \emph{Griess algebra} refers to one of a general class of algebras. More importantly, in example \ref{monsterexample} the nonunital Griess algebra is nonunital, while the Griess algebra of an OZ VOA is by definition unital. The relation between the two uses is however straightforward. There is constructed in \cite{Frenkel-Lepowsky-Meurman} an OZ VOA, the \emph{moonshine VOA}, whose Griess algebra is a unitalization of the Griess algebra constructed by Griess. Its Griess algebra is the Conway-Griess algebra of example \ref{monsterexample}.
\end{remark}

In \cite[section $1.3$]{Matsuo} an OZ VOA is defined to be of \emph{class $\S^{n}$} for an integer $n \geq 1$ if the action of its automorphism group on the quotient $\oplus_{k =0}^{n}\ste^{k}/\oplus_{k =0}^{n}\ste^{k}_{\cc}$ has no fixed point. Here $\ste_{\cc}$ is the submodule of $\ste$ generated by the vacuum vector $\one$ and the action $L_{n}$ generated by $\cc$, and $\ste_{\cc}^{k} = \ste_{\cc}\cap \ste^{k}$. By definition, an OZ VOA of class $\S^{n}$ is of class $\S^{m}$ for all $1 \leq m < n$. 

\begin{theorem}[A. Matuso, {\cite[Theorem $2.1$]{Matsuo}}]\label{matsuotheorem}
Let $(\ste^{2}, \star, g)$ be the Griess algebra of an OZ VOA $\ste$ with vacuum vector $\one$, conformal vector $\cc$, and central charge $c$ and suppose that $g$ is nondegenerate. Let $n = \dim \ste^{2}$ and $e = \cc/2$.
\begin{enumerate}
\item If $\ste$ is of class $\S^{2}$ then for all $a \in \ste^{2}$,
\begin{align}
\tr L_{\star}(a) = \tfrac{4n}{c}g(a, \cc) = \tfrac{8n}{c}g(a, e).
\end{align}
\item If $\ste$ is of class $\S^{4}$ then for all $a, b \in \ste^{2}$,
\begin{align}
\begin{split}
\tau_{\star}(a, b) &= \tr L_{\star}(a)L_{\star}(b) = \tfrac{-2(5c^{2} - 88n + 2cn)}{c(5c + 22)}g(a, b) + \tfrac{4(5c + 22n)}{c(5c + 22)}g(a, \cc)g(b, \cc) \\
&= \tfrac{-2(5c^{2} - 88n + 2cn)}{c(5c + 22)}g(a, b) + \tfrac{16(5c + 22n)}{c(5c + 22)}g(a, e)g(b, e)\\
& = \tfrac{-2(5c^{2} - 88n + 2cn)}{c(5c + 22)}g(a, b) + \tfrac{c(5c + 22n)}{8n^{2}(5c + 22)}\tr L_{\star}(a) \tr L_{\star}(b).
\end{split}
\end{align}
\end{enumerate}
\end{theorem}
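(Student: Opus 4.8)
This is \cite[Theorem $2.1$]{Matsuo}, so the plan is to recall why the two trace identities hold and then to record the elementary rewritings that produce the remaining displayed forms. Throughout I will take as given the algebraic properties of the Griess algebra $(\ste^{2}, \star, g)$ recalled just above the statement (see \cite{Gebert}): $\star$ is commutative, $g$ is nondegenerate and $\star$-invariant, $e = \cc/2$ is a unit with $L_{\star}(e) = \Id_{\ste^{2}}$, and $g(e,e) = c/8$. The mechanism behind both identities is that $\tr L_{\star}(\dum)$ and $\tau_{\star} = \tr L_{\star}(\dum)L_{\star}(\dum)$ are $\Aut(\ste)$-invariant tensors on $\ste^{2}$: for $\phi \in \Aut(\ste)$ one has $L_{\star}(\phi a) = \phi L_{\star}(a)\phi^{-1}$, so $\tr L_{\star}(\phi a) = \tr L_{\star}(a)$ and $\tau_{\star}(\phi a, \phi b) = \tau_{\star}(a,b)$. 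Since $g$ is a nondegenerate $\Aut(\ste)$-invariant symmetric form it identifies $\ste^{2}$ with its dual as an $\Aut(\ste)$-module, so the invariant linear functionals on $\ste^{2}$ are identified with the fixed subspace $(\ste^{2})^{\Aut(\ste)}$ and the invariant symmetric bilinear forms with the fixed subspace of $S^{2}\ste^{2}$.

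First I would bound these fixed subspaces using the conformal design hypotheses; this is where the real work lies. Because $\ste^{0} = \fie\one$, $\ste^{1} = \{0\}$, $\ste^{0}_{\cc} = \fie\one$, $\ste^{1}_{\cc} = \{0\}$ and $\ste^{2}_{\cc} = \fie\cc$, the class $\S^{2}$ condition, that $\Aut(\ste)$ has no fixed point on $(\ste^{0}\oplus\ste^{1}\oplus\ste^{2})/(\ste^{0}_{\cc}\oplus\ste^{1}_{\cc}\oplus\ste^{2}_{\cc})$, says precisely that $\ste^{2}/\fie\cc$ has no nonzero $\Aut(\ste)$-fixed vector; since $\cc$ is $\Aut(\ste)$-fixed this gives $(\ste^{2})^{\Aut(\ste)} = \fie\cc$, so the invariant functionals are the multiples of $g(\dum,\cc)$. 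For the class $\S^{4}$ condition one relates $S^{2}\ste^{2}$ to the conformal weight $\leq 4$ part of $\ste$ through the products $a_{(m)}b$, which for $a,b\in\ste^{2}$ and $m\geq -1$ lie in $\ste^{3-m}\subset\oplus_{k\leq 4}\ste^{k}$; analysing the images of these maps together with the no-fixed-point condition for $\S^{4}$ shows that the invariant symmetric bilinear forms span a space of dimension at most $2$, with $\fie g$ complemented by the span of $g(\dum,\cc)g(\dum,\cc)$. Translating the conformal design condition into these dimension bounds for invariant tensors on the Griess algebra is the main obstacle, and is the substance of \cite{Matsuo}.

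Granting the bounds, part (1) follows by writing $\tr L_{\star}(a) = \lambda\, g(a,\cc)$ and evaluating at $a = e$: $n = \dim\ste^{2} = \tr L_{\star}(e) = \lambda\, g(e,\cc) = \lambda c/4$, so $\lambda = 4n/c$; the second equality is $g(a,\cc) = 2g(a,e)$. For part (2) I would write $\tau_{\star}(a,b) = \mu_{1}\,g(a,b) + \mu_{2}\,g(a,\cc)g(b,\cc)$ and pin down $(\mu_{1},\mu_{2})$ by two independent scalar relations. One is free: from $L_{\star}(\cc) = 2\Id_{\ste^{2}}$ and part (1), $\tau_{\star}(a,\cc) = 2\tr L_{\star}(a) = \tfrac{8n}{c}g(a,\cc)$, which forces $\mu_{1} + \tfrac{c}{2}\mu_{2} = \tfrac{8n}{c}$. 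The second comes from computing the $\Aut(\ste)$-invariant scalar $\sum_{i}\tau_{\star}(a_{i},a^{i}) = \tr\left(\sum_{i}L_{\star}(a_{i})L_{\star}(a^{i})\right)$ for $g$-dual bases $\{a_{i}\},\{a^{i}\}$ of $\ste^{2}$: evaluating the contracted operator $\sum_{i}L_{\star}(a_{i})L_{\star}(a^{i})$ by means of the Borcherds identity reduces it to the Virasoro action on $\ste^{2}$ and to $\dim\ste^{2}$, and this is where $c$ and $n$ enter in the combination $5c^{2} - 88n + 2cn$ and where the Virasoro module structure at weight four produces the denominator $5c + 22$. Solving the two relations gives $\mu_{1} = \tfrac{-2(5c^{2} - 88n + 2cn)}{c(5c+22)}$ and $\mu_{2} = \tfrac{4(5c+22n)}{c(5c+22)}$, which is the first displayed formula. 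Finally, the remaining displayed forms of $\tau_{\star}$, and the second form of $\tr L_{\star}$ in part (1), are obtained by substituting $g(a,\cc) = 2g(a,e)$ and, for the last one, $g(a,\cc) = \tfrac{c}{4n}\tr L_{\star}(a)$ from part (1).
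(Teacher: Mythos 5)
The paper does not prove this statement: it is quoted verbatim as Matsuo's Theorem $2.1$, and the surrounding text explicitly says the example ``entails no new results'' and ``amounts to translating known results.'' So there is no internal proof to compare against; the only question is whether your reconstruction is sound. The framework you set up is correct and matches the mechanism behind Matsuo's argument: $\tr L_{\star}$ and $\tau_{\star}$ are $\Aut(\ste)$-invariant, the class $\S^{2}$ condition does force $(\ste^{2})^{\Aut(\ste)} = \fie\cc$ (a fixed vector projects to a fixed vector of $\ste^{2}/\fie\cc$, hence vanishes there), and your normalizations check out --- evaluating $\tr L_{\star}(a) = \la g(a,\cc)$ at $a = e$ with $g(e,\cc) = c/4$ gives $\la = 4n/c$, and the relation $\mu_{1} + \tfrac{c}{2}\mu_{2} = \tfrac{8n}{c}$ obtained from $\tau_{\star}(a,\cc) = 2\tr L_{\star}(a)$ is indeed satisfied by the displayed coefficients.

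The gap is that the entire quantitative content of part (2) lives in the second relation, which you do not derive. Saying that the contracted operator $\sum_{i}L_{\star}(a_{i})L_{\star}(a^{i})$ ``reduces to the Virasoro action'' and that ``this is where $5c+22$ enters'' names the answer without producing it: the actual argument requires showing that the Casimir-type element built from a $g$-dual basis of $\ste^{2}$ is $\Aut(\ste)$-fixed in $\ste^{4}$, hence by the class $\S^{4}$ hypothesis lies in $\ste^{4}_{\cc} = \spn\{L_{-2}^{2}\one, L_{-4}\one\}$, and then computing its coordinates there via the weight-four Virasoro Gram matrix (whose determinant is where $5c+22$ comes from) and the Borcherds identity. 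Your alternative framing --- that class $\S^{4}$ bounds the space of invariant symmetric bilinear forms by $2$ --- is also not justified as stated; it is not an immediate consequence of the no-fixed-point condition on $\oplus_{k\leq 4}\ste^{k}/\ste^{k}_{\cc}$ without the Casimir-element analysis. Since the paper itself treats the theorem as an external citation, deferring this computation to \cite{Matsuo} is a defensible choice, but you should present it as such rather than as a proof: what you have is a verification that the stated constants are consistent with the one relation you can derive for free, plus an accurate description of where the rest comes from.
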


Note that the assumed nondegeneracy of $g$ implies that the central charge is nonzero.

\begin{corollary}
Let $(\ste^{2}, \star, h)$ be the Griess algebra of an OZ VOA $\ste$ of class $\S^{4}$, with vacuum vector $\one$, conformal vector $\cc$, and central charge $c$ and suppose that $h$ is nondegenerate. Let $n = \dim \ste^{2}$ and $e = \cc/2$. Then the deunitalization $(\alg, \mlt,h)$ of $(\ste^{2}, \star, g)$ is a (complex) Killing metrized exact commutative algebra with $\tau_{\mlt} = \ka h$ for $\ka = \tfrac{-5c^{2} + 88(n-2) - 2c(n+20)}{4(5c + 22)}$.
\end{corollary}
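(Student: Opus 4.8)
The plan is to derive the statement from Matsuo's Theorem~\ref{matsuotheorem} together with Lemma~\ref{griesseinsteinlemma}, so that essentially nothing is left but bookkeeping. First I would set up notation carefully: write $g$ for the invariant form on the Griess algebra $(\ste^{2},\star)$ (the one called $h$ in the statement), recall that $e=\cc/2$ is a unit with $g(e,e)=c/8$, and note that the hypothesis that $g$ is nondegenerate forces $c\neq 0$, so $e$ is anisotropic; hence the deunitalization $(\alg,\mlt,h)$ is defined, its metric being $h=g(e,e)^{-1}g$ and its exactness together with the shape of its Killing form being exactly what must be established.

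Next I would invoke the second part of Theorem~\ref{matsuotheorem}: since $\ste$ is of class $\S^{4}$, its Griess algebra satisfies $\tau_{(\ste^{2},\star)}(a,b)=A\,g(a,e)g(b,e)+B\,g(a,b)$ for all $a,b\in\ste^{2}$, with $A=\tfrac{16(5c+22n)}{c(5c+22)}$ and $B=\tfrac{-2(5c^{2}+2cn-88n)}{c(5c+22)}$ (the nonvanishing of $5c+22$ being implicit in Matsuo's statement). This is literally hypothesis~\eqref{prech} of Lemma~\ref{griesseinsteinlemma}, so that lemma applies and yields at once that $(\alg,\mlt,h)$ is exact and that $\tau_{(\alg,\mlt)}=\ka h$ with $\ka=B\,g(e,e)-2$.

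The only remaining work is the arithmetic: substitute $g(e,e)=c/8$ into $\ka=B\,g(e,e)-2$, cancel the factor $c$ in numerator and denominator, and combine over the denominator $4(5c+22)$; this gives $\ka=\tfrac{-(5c^{2}+2cn-88n)-8(5c+22)}{4(5c+22)}=\tfrac{-5c^{2}+88(n-2)-2c(n+20)}{4(5c+22)}$, as claimed. Finally $\tau_{(\alg,\mlt)}=\ka h$ is invariant because $h$ is and nondegenerate because $h$ is and $\ka\neq 0$, so $(\alg,\mlt)$ is Killing metrized; it is a $\com$-algebra because $\ste$ is, and commutative because the product $\star$ on $\ste^{2}$ is. I do not expect a real obstacle here: the substance is entirely contained in Matsuo's theorem and in Lemma~\ref{griesseinsteinlemma}, and the only point that repays a moment's attention is the built-in rescaling $h=g(e,e)^{-1}g$ in the definition of the deunitalization, which is precisely what makes the stated $\ka$ emerge without a spurious factor of $c$.
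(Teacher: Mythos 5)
Your proposal is correct and follows essentially the same route as the paper: invoke Matsuo's Theorem~\ref{matsuotheorem} to put the Killing form of $(\ste^{2},\star,g)$ in the shape \eqref{prech}, read off $A$ and $B$, and apply Lemma~\ref{griesseinsteinlemma} with $g(e,e)=c/8$ to get $\ka=Bg(e,e)-2$. Your arithmetic for $\ka$ is in fact cleaner than the paper's (whose intermediate step ``$=3B-2$'' is only valid for $c=24$), and like the paper you leave the nonvanishing of $\ka$ implicit, so there is nothing substantive to add.
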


\begin{proof}
By Theorem \ref{matsuotheorem}, the Griess algebra $(\balg, \mlt, g)$ satisfies the conditions of Lemma \ref{griesseinsteinlemma} with $A = \tfrac{16(5c + 22n)}{c(5c + 22)}$ and $B =  \tfrac{-2(5c^{2} - 88n + 2cn)}{c(5c + 22)}$. Hence 
\begin{align}
n = \dim \ste^{2}= \tfrac{c(5c + 22n)}{4(5c + 22)} - \tfrac{(5c^{2} - 88n + 2cn)}{4(5c + 22)},
\end{align}
and the deunitalization $(\alg, \mlt,h)$ of $(\ste^{2}, \star, g)$ is a Killing metrized exact commutative algebra with Einstein constant 
\begin{align}
\ka = Bg(e, e) - 2= 3B - 2=  \tfrac{-5c^{2} + 88n - 2cn}{4(5c + 22)} - 2 =  \tfrac{-5c^{2} + 88(n-2) - 2c(n+20)}{4(5c + 22)}.
\end{align}
\end{proof}

For the moonshine VOA, $c = 24$ and $n = 196883$, so $A = 20336$, $B = 4620$, and $\ka = 13858$. As the moonshine VOA is defined over $\rea$ and the invariant form $g$ is positive definite, the deunitalization of the Griess algebra of the moonshine VOA is a Euclidean Killing metrized exact commutative $\rea$-algebra.

As is explained in Example \ref{voanortonexample}, a theorem of Miyamoto shows that the Griess algebras of OZ VOAs satisfy Norton's inequality, so have nonnegative sectional nonassociativity in the sense of Section \ref{nasection}.
\end{example}

\section{Characterization of simplicial algebras and classification when \texorpdfstring{$n \leq 4$}{n}}\label{characterizationsection}
This section applies the unitalization construction to obtain a characterization of simplicial algebras. It concludes with the classification of Euclidean Killing metrized exact commutative algebras of dimension at most $4$.

Let $(\halg, \hmlt, \hat{c})$ be the $c$-unitalization of the $n$-dimensional commutative algebra $(\alg, \mlt)$.
By \eqref{hmltdefined} the matrices of $L_{\mlt}(x)$ and $L_{\hmlt}(x, \al)$ with respect to appropriate bases of $\alg$ and $\hat{\alg}$ are related by
\begin{align}\label{extensionmatrix}
&L_{\hmlt}((x, \al)) = \begin{pmatrix} L_{\mlt}(x) + \al \Id_{\alg}   & x\\ c(x, \dum) & \al \end{pmatrix}.
\end{align}
From \eqref{extensionmatrix} there follow
\begin{align}\label{unitalizationtrace}
\tr L_{\hmlt}((x, \al)) &= \tr L_{\mlt}(x) + (1 + n)\al,\\
\label{tauunitalization}
\tau_{\hmlt}((x, \al), (y, \be)) &= \tau_{\mlt}(x, y) + 2c(x, y) + (1 + n)\al \be  + \be \tr L_{\mlt}(x) + \al \tr L_{\mlt}(y),\\
\label{trlxy}
\tr L_{\hmlt}((x, \al) \hmlt(y,\be)) & = \tr L_{\mlt}(x \mlt y) + \al \tr L_{\mlt}(y) + \be \tr L_{\mlt}(x) + (1 + n)\hat{c}((x, \al),(y, \be)),\\
\label{ricunitalization}
\ric_{\hmlt}((x, \al), (y, \be)) &= \ric_{\mlt}(x, y) + (n - 1)c(x, y).
\end{align}
By \eqref{ricunitalization} the $c$-unitalization of a commutative algebra $(\alg, \mlt)$ corresponding to $c = -\tfrac{1}{n - 1}\ric_{\mlt}$ is distinguished by the requirement that its Ricci form $\ric_{\hmlt}$ vanish.
\begin{definition}\label{intrinsicliftdefinition}
\noindent
\begin{enumerate}
\item The \emph{intrinsic unitalization} of an $n$-dimensional commutative algebra $(\alg, \mlt)$ with Ricci form $\ric_{\mlt}$ is the $-\tfrac{1}{n-1}\ric_{\mlt}$-unitalization of $(\alg, \mlt)$.
\item A commutative algebra $(\alg, \mlt)$ is \emph{projectively associative} if there is $c \in S^{2}\alg^{\ast}$ such that $[x, y, z] = c(y, z)x - c(x, y)z$ for all $x, y, z \in \alg$.
\end{enumerate}
\end{definition}

\begin{lemma}\label{thomaslemma}
Suppose $\chr \fie = 0$. For an $n$-dimensional commutative $\fie$-algebra $(\alg, \mlt)$ with Ricci form $\ric_{\mlt}$ the following are equivalent.
\begin{enumerate}
\item\label{thomas1} There holds $[x, y, z] = -\tfrac{1}{n -1}\ric_{\mlt}(y, z)x + \tfrac{1}{n-1}\ric_{\mlt}(x, y)z$ for all $x, y, z \in \alg$.
\item\label{thomas2} There is $c \in S^{2}\alg^{\ast}$ such that $[x, y, z] = c(y, z)x - c(x, y)z$ for all $x, y, z \in \alg$.
\item\label{thomas3} There is $c \in S^{2}\alg^{\ast}$ such that the $c$-unitalization of $(\alg, \mlt)$ is associative.
\item\label{thomas4} The intrinsic unitalization of $(\alg, \mlt)$ is associative.
\end{enumerate}
When there hold these conditions, the Ricci form $\ric_{\mlt}$ is invariant and for all $x, y, z \in \alg$ there holds
\begin{align}\label{ejkidentity}
[L_{\mlt}(x), L_{\mlt}(y)]L_{\mlt}(z) + [L_{\mlt}(y), L_{\mlt}(z)]L_{\mlt}(x) + [L_{\mlt}(z), L_{\mlt}(x)]L_{\mlt}(y) =0.
\end{align}
\end{lemma}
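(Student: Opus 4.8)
The plan is to reduce the whole lemma to the single endomorphism identity that condition \eqref{thomas2} is equivalent to. Introduce, for $x,y\in\alg$, the rank-at-most-one endomorphism $T_{x,y}\in\eno(\alg)$ given by $T_{x,y}(z)=c(y,z)x$; then \eqref{thomas2} says exactly that
\begin{align}\label{plancore}
L_{\mlt}(x\mlt y)=L_{\mlt}(x)L_{\mlt}(y)+T_{x,y}-c(x,y)\Id_{\alg}\qquad\text{for all }x,y\in\alg .
\end{align}
First I would settle $\eqref{thomas1}\Leftrightarrow\eqref{thomas2}$: the implication $\eqref{thomas1}\Rightarrow\eqref{thomas2}$ holds with $c=-\tfrac1{n-1}\ric_{\mlt}\in S^{2}\alg^{\ast}$ (symmetric because $\ric_{\mlt}$ is), while conversely taking the trace of \eqref{plancore}, using $\tr T_{x,y}=c(x,y)$ and $\tr\Id_{\alg}=n$, yields $\ric_{\mlt}(x,y)=(1-n)c(x,y)$, so that (since $n\geq2$ and $\chr\fie=0$) the form $c$ occurring in \eqref{thomas2} is forced to equal $-\tfrac1{n-1}\ric_{\mlt}$.

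The heart of the argument, and the step I expect to be the main obstacle, is to deduce from \eqref{thomas2} that $c$ — equivalently $\ric_{\mlt}$ — is invariant. My plan is to expand both sides of the tautology $L_{\mlt}((x\mlt y)\mlt z)=L_{\mlt}(x\mlt(y\mlt z))+L_{\mlt}([x,y,z])$ by applying \eqref{plancore} twice on each side, substituting $[x,y,z]=c(y,z)x-c(x,y)z$, hence $L_{\mlt}([x,y,z])=c(y,z)L_{\mlt}(x)-c(x,y)L_{\mlt}(z)$. The bookkeeping is made routine by the auxiliary facts that $T_{x,y}L_{\mlt}(z)$ is the endomorphism $w\mapsto c(y,z\mlt w)x$ and that $L_{\mlt}(x)T_{y,z}=T_{x\mlt y,z}$: the common triple product $L_{\mlt}(x)L_{\mlt}(y)L_{\mlt}(z)$, the term $T_{x\mlt y,z}$, and the term $c(x,y)L_{\mlt}(z)$ cancel, and what survives is precisely
\begin{align}\label{planrem}
T_{x,y}L_{\mlt}(z)-T_{x,y\mlt z}=\big(c(x\mlt y,z)-c(x,y\mlt z)\big)\Id_{\alg}.
\end{align}
The left-hand side of \eqref{planrem} has image inside $\fie x$, so rank at most one, whereas the right-hand side is a scalar multiple of $\Id_{\alg}$; since $\dim\alg=n\geq2$ both sides must vanish, and the vanishing of the right-hand side is exactly the invariance $c(x\mlt y,z)=c(x,y\mlt z)$. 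The subtle point is that one must argue at the level of endomorphisms here: taking traces before \eqref{planrem} would leave the two-dimensional case undecided. Combined with the trace computation above, this proves that whenever \eqref{thomas2} (equivalently \eqref{thomas1}) holds, the form $\ric_{\mlt}=-(n-1)c$ is invariant, which is the first supplementary assertion.

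With invariance of $c$ in hand I would close the remaining implications via the associator of the $c$-unitalization $(\halg,\hmlt)$. A direct computation from \eqref{hmltdefined}, of the same kind already carried out in \eqref{hatcinvariant}, shows that the associator of $(\halg,\hmlt)$ on $(x,\al),(y,\be),(z,\ga)$ has first component $[x,y,z]-c(y,z)x+c(x,y)z$ and second component $c(x\mlt y,z)-c(x,y\mlt z)$; so the $c$-unitalization is associative if and only if \eqref{thomas2} holds \emph{and} $c$ is $\mlt$-invariant. Hence $\eqref{thomas2}\Rightarrow\eqref{thomas3}$ (first component vanishes by \eqref{thomas2}, second by the previous step); $\eqref{thomas3}\Rightarrow\eqref{thomas2}$ by vanishing of the first component, after which $c=-\tfrac1{n-1}\ric_{\mlt}$ is forced, so the $c$ in \eqref{thomas3} is the intrinsic one and $\eqref{thomas3}\Leftrightarrow\eqref{thomas4}$; and $\eqref{thomas4}\Rightarrow\eqref{thomas3}$ is trivial. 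Lastly, for \eqref{ejkidentity}: subtracting from \eqref{plancore} its image under swapping $x$ and $y$ — using commutativity of $\mlt$ and symmetry of $c$ — gives $[L_{\mlt}(x),L_{\mlt}(y)]=T_{y,x}-T_{x,y}$, so $\big([L_{\mlt}(x),L_{\mlt}(y)]L_{\mlt}(z)\big)(w)=c(x,z\mlt w)y-c(y,z\mlt w)x=c(x\mlt z,w)y-c(y\mlt z,w)x$ by the invariance of $c$, and the cyclic sum over $(x,y,z)$ cancels term by term, giving \eqref{ejkidentity}.
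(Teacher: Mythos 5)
Your proof is correct, and it departs from the paper's in two substantive ways, both to its advantage. For the invariance of $\ric_{\mlt}$, the paper traces the operator identity and massages the resulting scalar relation into \eqref{tho5}, namely $\ric_{\mlt}(x\mlt y,z)-\ric_{\mlt}(x,y\mlt z)=\tfrac{1}{(n-1)^{2}}\bigl(\ric_{\mlt}(x\mlt y,z)-\ric_{\mlt}(x,y\mlt z)\bigr)$, concluding ``since $n>1$'' that the difference vanishes; as written that step is vacuous when $n=2$, whereas your endomorphism-level argument --- an operator with image in $\fie x$ cannot equal a nonzero multiple of $\Id_{\alg}$ when $n\geq 2$ --- covers every case uniformly, exactly as you anticipated in your remark that taking traces too early loses the two-dimensional case. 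Second, your computation of the associator of the $c$-unitalization is the correct one: its second component is $c(x\mlt y,z)-c(x,y\mlt z)$, not $0$ as asserted in \eqref{unitalassoc}, so associativity of the $c$-unitalization is a priori the conjunction of condition \eqref{thomas2} with the invariance of $c$; your ordering of the argument (first extract invariance from \eqref{thomas2} by the rank argument, then conclude \eqref{thomas2}$\Leftrightarrow$\eqref{thomas3}) is what makes the equivalence airtight, while the paper's ``immediate from \eqref{unitalassoc}'' silently relies on the faulty second component. The remaining pieces --- the trace computation forcing $c=-\tfrac{1}{n-1}\ric_{\mlt}$, hence \eqref{thomas2}$\Rightarrow$\eqref{thomas1} and \eqref{thomas3}$\Leftrightarrow$\eqref{thomas4}, and the derivation of \eqref{ejkidentity} from $[L_{\mlt}(x),L_{\mlt}(y)]=T_{y,x}-T_{x,y}$ combined with invariance --- agree in substance with the paper's.
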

\begin{proof}
Claim \eqref{thomas1} implies claim \eqref{thomas2} trivially and claim \eqref{thomas4} implies \eqref{thomas3} trivially.
By \eqref{hmltdefined}, the associator of the $c$-unitalization of a commutative algebra $(\alg, \mlt)$ is given by
\begin{align}\label{unitalassoc}
[(x, \al), (y, \be), (z, \ga)] = ([x, y, z] + c(x, y)z - c(y, z)x, 0),
\end{align}
The equivalence of \eqref{thomas2} and \eqref{thomas3} is immediate from \eqref{unitalassoc}. 
If there holds \eqref{thomas2}, then $L_{\mlt}(x \mlt y) - L_{\mlt}(x)L_{\mlt}(y) = c(y, \dum)\tensor x - c(x, y)\Id_{\alg}$ for all $x, y \in \alg$, and tracing this yields $\ric_{\mlt} = (1 - n)c$, so there holds \eqref{thomas1}. The same argument shows \eqref{thomas1} implies \eqref{thomas4}. The identity \eqref{ejkidentity} follows from \eqref{thomas1} straightforwardly. There remains to show the invariance of $\ric_{\mlt}$. Rewrite \eqref{thomas1} as 
\begin{align}\label{tho1}
L_{\mlt}(x \mlt y) = L_{\mlt}(x)\circ L_{\mlt}(y) - \tfrac{1}{n - 1}x\tensor \ric_{\mlt}(y, \dum) + \tfrac{1}{n - 1}\ric_{\mlt}(x, y)\id_{\alg}.
\end{align}
From \eqref{tho1} there follows
\begin{align}\label{tho2}
\begin{split}
\ric_{\mlt}&(x \mlt y, z)   = \tr L_{\mlt}((x\mlt y)\mlt z)) - \tr L_{\mlt}(x\mlt y)L_{\mlt}(z)\\
&= \tr L_{\mlt}((x\mlt y)\mlt z)) - \tr L_{\mlt}(x)L_{\mlt}(y)L_{\mlt}(z) + \tfrac{1}{n - 1}\left(\ric_{\mlt}(y, z\mlt x) - \ric_{\mlt}(x, y)\tr L_{\mlt}(z)\right).
\end{split}
\end{align}
Combining \eqref{tho2} and \eqref{thomas1} yields
\begin{align}\label{tho3}
\begin{split}
\ric_{\mlt}&(x \mlt y, z)  - \ric_{\mlt}(x, y\mlt z) \\
& = \tr L_{\mlt}([x, y, z])  + \tfrac{1}{n - 1}\left( \ric_{\mlt}(y, z\mlt x) - \ric_{\mlt}(x, y)\tr L_{\mlt}(z) -\ric_{\mlt}(z, x\mlt y) + \ric_{\mlt}(y, z)\tr L_{\mlt}(x)\right)\\
& = \tfrac{1}{n - 1}\left(\ric_{\mlt}(z\mlt x, y) - \ric_{\mlt}(z, x\mlt y)\right).
\end{split}
\end{align}
Interchanging $x$ and $y$ in \eqref{tho3} yields
\begin{align}\label{tho4}
\begin{split}
\ric_{\mlt}(y \mlt x, z)  - \ric_{\mlt}(y, x\mlt z) & = \tfrac{1}{n - 1}\left(\ric_{\mlt}(z\mlt y, x) -\ric_{\mlt}(z, y\mlt x) \right).
\end{split}
\end{align}
Substituting \eqref{tho4} into \eqref{tho3} yields
\begin{align}\label{tho5}
\begin{split}
\ric_{\mlt}(x \mlt y, z)  - \ric_{\mlt}(x, y\mlt z) &= \tfrac{1}{(n - 1)^{2}}\left( \ric_{\mlt}(x\mlt y, z) - \ric_{\mlt}(x, y\mlt z)\right) .
\end{split}
\end{align}
Since $n > 1$, \eqref{tho5} implies $\ric_{\mlt}(x \mlt y, z)  - \ric_{\mlt}(x, y\mlt z)  = 0$.
\end{proof}

\begin{corollary}\label{thomascorollary}
Suppose $\chr \fie = 0$. A commutative $\fie$- algebra $(\alg, \mlt)$ is projectively associative if and only if its intrinsic unitalization is associative.
\end{corollary}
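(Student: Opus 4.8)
The plan is to observe that this corollary is an immediate consequence of Lemma \ref{thomaslemma}, obtained simply by matching the two phrases in its statement with two of the numbered conditions of that lemma. First I would recall the relevant definition: by Definition \ref{intrinsicliftdefinition}, the commutative algebra $(\alg, \mlt)$ being \emph{projectively associative} means exactly that there is $c \in S^{2}\alg^{\ast}$ with $[x, y, z] = c(y, z)x - c(x, y)z$ for all $x, y, z \in \alg$, which is verbatim condition \eqref{thomas2} of Lemma \ref{thomaslemma}. Likewise, the assertion that the \emph{intrinsic unitalization} of $(\alg, \mlt)$ is associative is verbatim condition \eqref{thomas4} of that lemma.

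Since Lemma \ref{thomaslemma} (under the standing hypothesis $\chr \fie = 0$) asserts the equivalence of conditions \eqref{thomas1}--\eqref{thomas4}, in particular \eqref{thomas2} holds if and only if \eqref{thomas4} holds, and the corollary follows at once. Concretely, the chain already established in the proof of Lemma \ref{thomaslemma} is the one I would cite: \eqref{thomas2} $\Rightarrow$ \eqref{thomas3} and \eqref{thomas4} $\Rightarrow$ \eqref{thomas3} are immediate from the formula \eqref{unitalassoc} for the associator of a $c$-unitalization, while tracing $[x,y,z] = c(y,z)x - c(x,y)z$ forces $\ric_{\mlt} = (1-n)c$, so the witnessing form $c$ is unique and equal to $-\tfrac{1}{n-1}\ric_{\mlt}$, identifying the $c$-unitalization in question with the intrinsic one; this closes the loop back to \eqref{thomas1} and \eqref{thomas4}. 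No new computation is required.

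The only point worth flagging is hypothesis bookkeeping: the identification of $c$ with $-\tfrac{1}{n-1}\ric_{\mlt}$ uses that $n - 1$ is invertible, i.e.\ that $\chr \fie = 0$ together with $n = \dim\alg \geq 2$ (implicit in the very definition of the intrinsic unitalization), and this is already handled inside Lemma \ref{thomaslemma}. Consequently I expect no genuine obstacle here: the corollary is a bookkeeping restatement of the lemma, and whatever difficulty there was — notably the invariance of $\ric_{\mlt}$, which is not even needed for this corollary but is recorded as a byproduct — was already dispatched in proving Lemma \ref{thomaslemma}.
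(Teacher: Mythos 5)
Your proposal is correct and coincides with the paper's treatment: the corollary is stated without a separate proof precisely because it is the equivalence of conditions \eqref{thomas2} and \eqref{thomas4} of Lemma \ref{thomaslemma}, with Definition \ref{intrinsicliftdefinition} supplying the dictionary. Your added remarks on the uniqueness of the witnessing form $c = -\tfrac{1}{n-1}\ric_{\mlt}$ and the role of $\chr\fie = 0$ are accurate and already contained in the proof of that lemma.
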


\begin{example}\label{ealgunitalizationexample}
By Lemma \ref{talgmodellemma}, the algebra $\talg^{n}_{\al}(\fie)$ is projectively associative for all $\al \in \fie$. Moreover, if $\al \neq 1/2$, its intrinsic unitalization is split over $\fie$, meaning it is isomorphic to a direct sum of $n+1$ copies of $\fie$. To see this for $\ealg^{n}(\fie)$, define $\hat{\ga}_{i} = (\tfrac{n-1}{n+1}\ga_{i}, \tfrac{1}{n+1}) \in \widehat{\ealg}^{n}(\fie)$. From \eqref{ealgrelations} there follow $\hat{\ga}_{i}\hmlt \hat{\ga}_{i} = \hat{\ga}_{i}$ and $\hat{\ga}_{i} \hmlt \hat{\ga}_{j} = 0$ for $0 \leq i < j \leq n$. Hence $\hat{\ga}_{0}, \dots, \hat{\ga}_{n}$ are $n+1$ orthogonal idempotents in $\hat{\ealg}^{n}(\fie)$. For $\talg^{n}_{\al}(\fie)$ with $\al \neq 1/2$, define $\hat{e}_{i} = \left(\tfrac{1}{1-2\al}e_{i}, \tfrac{\al}{2\al -1}\right)$ and $\hat{e}_{0} = \left(\tfrac{1}{2\al -1}\sum_{j = 1}^{n}e_{j}, \tfrac{1 +(n-2)\al}{1-2\al}\right)$. From \eqref{talgrelations} it follows that $\hat{e}_{i}\hmlt \hat{e}_{i} = \hat{e}_{i}$ and $\hat{e}_{i} \hmlt \hat{e}_{j} = 0$ for $0 \leq i < j \leq n$. Hence $\hat{e}_{0}, \dots, \hat{e}_{n}$ are $n+1$ orthogonal idempotents in $\hat{\talg}^{n}_{\al}(\fie)$.
\end{example}

\begin{remark}
That the identity \eqref{ejkidentity} holds for the algebra $\talg^{n}_{\al}(\fie)$ was observed in \cite{Elashvili-Jibladze-Kac-semisimple}. By Lemma \ref{thomaslemma} this is a consequence of its projective associativity.  
\end{remark}

It is convenient to conflate the associator of $\mlt$ with the tensor $\mu$ defined by $\mu(x, y, z) = [y, z, x]$. 
That is, $[x, y, z]^{l} = x^{i}y^{j}z^{k}\mu_{kij}\,^{l}$ for the tensor $\mtens_{ijk}\,^{l} = \mtens_{jk}\,^{p}\mtens_{pi}\,^{l} - \mtens_{ki}\,^{p}\mtens_{jp}\,^{l}$, where $\mtens_{ij}\,^{k}$ is the structure tensor of the multiplication $\mlt$. Attention has to be paid to the ordering and position of the indices of $\mu_{ijk}\,^{l}$. The algebra $(\alg, \mlt)$ is associative if and only if $\mtens_{ijk}\,^{l} = 0$. The Ricci form $\rho_{ij}$ is obtained as the trace $\ric_{ij} = \mu_{pij}\,^{p}$ and is symmetric. 
For a metrized commutative algebra $(\alg, \mlt, h)$, define $\mu_{ijkl} = \mu_{ijk}\,^{p}h_{pl} \in \tensor^{4}\alg^{\ast}$, so that 
\begin{align}\label{hmudefined}
\mu(a, b, c, d) = h([b, c, a], d) = h(b\mlt c, a\mlt d) - h(c\mlt a, b \mlt d)
\end{align}
for $a, b, c, d \in \alg$. From the invariance of $h$ with respect to $\mlt$ it follows that $\mu_{ijkl} = - \mu_{jikl} = -\mu_{ijlk} = \mu_{klij}$ and $\mu_{[ijk]l} =0$, so that $\mu_{ijkl}$ has the symmetries of a metric curvature tensor. 

Define the \emph{conformal nonassociativity tensor} $\om_{ijkl}$ of a metrized commutative algebra $(\alg, \mlt, h)$ of dimension $n \geq 3$ be the completely trace-free part of $\mu_{ijkl}$, 
\begin{align}\label{confnon}
\om_{ijkl} = \mu_{ijkl} + \tfrac{2}{n-2}\left(\ric_{k[i}h_{j]l} - \ric_{l[i}h_{j]k}\right) + \tfrac{2}{(n-1)(n-2)}\scal h_{l[i}h_{j]k},
\end{align}
in which $\scal = h^{ij}\ric_{ij}$ is the metric trace of the Ricci form. Equivalently,
\begin{align}\label{confnon2}
\begin{split}
\om(x, y, z, w) & =  h(y\mlt z, x \mlt w) - h(z \mlt x, w\mlt y) \\
&+ \tfrac{1}{n-2}\left(\ric(x, z)h(y, w) - \ric(y, z)h(x, w) - \ric(x, w)h(y, z) + \ric(y, w)h(x, z)\right) \\
&+ \tfrac{\scal}{(n-1)(n-2)}\left(h(x, w)h(y, z) - h(y, w)h(x, z)\right).
\end{split}
\end{align}
If $\om_{ijkl}$ vanishes then $(\alg, \mlt)$ is \emph{conformally associative}. Since the $O(n)$-module of trace-free tensors with metric curvature symmetries is trivial if $n \leq 3$, in three dimensions this is automatic. The convention is adopted that a two-dimensional metrized commutative algebra is regarded as conformally associative.

\begin{lemma}\label{prebeltramilemma}
If $\chr \fie = 0$, a conformally associative metrized commutative $\fie$-algebra $(\alg, \mlt, h)$ of dimension $n \geq 3$ is projectively associative if and only if there is $c \in \fie$ such that $\ric_{\mlt} = (n-1)c h$.
\end{lemma}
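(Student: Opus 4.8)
The plan is to decompose $\mu_{ijkl}$ into its "trace" part plus the conformal nonassociativity tensor $\om_{ijkl}$, and then to compare this decomposition with the decomposition of $\mu_{ijkl}$ that comes from projective associativity. By hypothesis $\om_{ijkl} = 0$, so \eqref{confnon} reads
\begin{align}\label{prebeltrami1}
\mu_{ijkl} = -\tfrac{2}{n-2}\left(\ric_{k[i}h_{j]l} - \ric_{l[i}h_{j]k}\right) - \tfrac{2}{(n-1)(n-2)}\scal\, h_{l[i}h_{j]k}.
\end{align}
On the other hand, if $(\alg,\mlt)$ is projectively associative, then by Definition \ref{intrinsicliftdefinition}, Lemma \ref{thomaslemma}, and the identification $\mu(x,y,z) = [y,z,x]$, we have $c = -\tfrac{1}{n-1}\ric_{\mlt}$ and $[x,y,z] = c(y,z)x - c(x,y)z$, which in index notation (lowering with $h$ and using $\mu(a,b,c,d) = h([b,c,a],d)$ as in \eqref{hmudefined}) gives
\begin{align}\label{prebeltrami2}
\mu_{ijkl} = -\tfrac{1}{n-1}\left(\ric_{jk}h_{il} - \ric_{ki}h_{jl}\right).
\end{align}

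So the forward direction comes down to showing: if both \eqref{prebeltrami1} and \eqref{prebeltrami2} hold, then $\ric_{\mlt}$ is proportional to $h$. To see this, subtract the two expressions for $\mu_{ijkl}$ and contract cleverly. A clean way is to trace \eqref{prebeltrami2} over $j,k$ (using $h^{jk}$) which, since $\mu_{ijkl}$ is trace-free when $\om = 0$ only if the Ricci part also conspires, recovers a relation between $\ric_{il}$ and $\scal\, h_{il}$; more directly, since the right-hand side of \eqref{prebeltrami1} is by construction the trace part of $\mu$ and the right-hand side of \eqref{prebeltrami2} is a different-looking tensor with the same traces, the tensor
\begin{align}\label{prebeltrami3}
T_{ijkl} = -\tfrac{2}{n-2}\left(\ric_{k[i}h_{j]l} - \ric_{l[i}h_{j]k}\right) - \tfrac{2}{(n-1)(n-2)}\scal\, h_{l[i}h_{j]k} + \tfrac{1}{n-1}\left(\ric_{jk}h_{il} - \ric_{ki}h_{jl}\right)
\end{align}
must vanish. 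Expanding $T_{ijkl}$ in terms of the trace-free part $\mr{\ric}_{ij} = \ric_{ij} - \tfrac{\scal}{n}h_{ij}$ of $\ric$, all the pure-$h$ terms cancel and the terms linear in $\mr{\ric}$ collect to a nonzero multiple of the Kulkarni–Nomizu-type product $\mr{\ric}_{k[i}h_{j]l} - \mr{\ric}_{l[i}h_{j]k}$; setting this to zero and contracting with $h^{jl}$ forces $(n-2)\mr{\ric}_{ik} = 0$, hence $\mr{\ric} = 0$, i.e. $\ric_{\mlt} = \tfrac{\scal}{n}h$. Writing $\tfrac{\scal}{n} = (n-1)c$ gives the claimed form.

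For the converse, suppose $\ric_{\mlt} = (n-1)c\,h$ for a scalar $c \in \fie$. Then $\scal = n(n-1)c$, and substituting into \eqref{confnon} the conformal nonassociativity tensor becomes $\om_{ijkl} = \mu_{ijkl} + (n-1)c\cdot\tfrac{2}{n-2}\big(h_{k[i}h_{j]l} - h_{l[i}h_{j]k}\big) + \tfrac{2}{(n-1)(n-2)}n(n-1)c\, h_{l[i}h_{j]k}$; combining the two $h$-only terms (noting $h_{k[i}h_{j]l} - h_{l[i}h_{j]k} = 2h_{k[i}h_{j]l}$ up to the appropriate sign bookkeeping) shows $\om_{ijkl} = \mu_{ijkl} + 2(n-1)c\, h_{l[i}h_{j]k}$, which is exactly $\mu_{ijkl}$ minus the projectively-associative model tensor with parameter $c$. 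Since $\om_{ijkl} = 0$ by hypothesis (conformal associativity), this says precisely that $\mu_{ijkl}$ has the form in \eqref{prebeltrami2}, i.e. $[x,y,z] = c(y,z)x - c(x,y)z$ with $c$ the given scalar times $h$, so $(\alg,\mlt)$ is projectively associative. The main obstacle I anticipate is keeping the antisymmetrization brackets, the index orderings of $\mu_{ijkl}$, and the sign conventions in $\mu(a,b,c,d) = h([b,c,a],d)$ consistent throughout — the underlying linear algebra (that the space of trace-free curvature tensors is an $O(n)$-irreducible complement, so the two decompositions of $\mu$ can only agree when the trace-free part vanishes and the Ricci part is pure trace) is conceptually immediate, but the bookkeeping is where errors creep in, so I would verify the key contraction $(n-2)\mr{\ric}_{ik}=0$ explicitly rather than by appeal to representation theory.
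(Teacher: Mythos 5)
Your overall architecture is the paper's: equate the expression for $\mu_{ijkl}$ forced by conformal associativity with the one forced by projective associativity, and extract the constraint on $\ric_{\mlt}$ by tracing. The converse direction is structurally fine (though your combined coefficient $2(n-1)c$ should be $-2c$: with $\ric_{ij}=(n-1)ch_{ij}$ and $\scal = n(n-1)c$ one gets $\om_{ijkl}=\mu_{ijkl}-2c\,h_{l[i}h_{j]k}$, which still exhibits projective associativity). The forward direction, however, contains a step that fails as written. First, with the paper's conventions $\mu(a,b,c,d)=h([b,c,a],d)$ and $[x,y,z]=\gamma(y,z)x-\gamma(x,y)z$ one gets $\mu_{ijkl}=\gamma_{ki}h_{jl}-\gamma_{jk}h_{il}$, so with $\gamma=-\tfrac{1}{n-1}\ric_{\mlt}$ your index formula for the projective case has the wrong overall sign; this is not cosmetic, since with your sign the ensuing computation forces $\scal=0$ and hence $\ric_{\mlt}=0$, contradicting your own converse when $c\neq 0$. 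Second, and more seriously, with the correct sign the terms of your difference tensor that are linear in the trace-free part $R_{ij}=\ric_{ij}-\tfrac{\scal}{n}h_{ij}$ do \emph{not} collect into a multiple of $R_{k[i}h_{j]l}-R_{l[i}h_{j]k}$: after the pure-$h$ terms cancel (which they do), what remains is proportional to $(R_{ki}h_{jl}-R_{kj}h_{il})-(n-1)(R_{li}h_{jk}-R_{lj}h_{ik})$, whose two blocks carry unequal coefficients. This combination is annihilated identically by the contraction with $h^{jl}$ (one finds $(n-1)R_{ki}-(n-1)R_{ki}=0$), so the trace you propose yields no information. Contracting instead with $h^{jk}$ gives $-n(n-2)R_{li}=0$ and rescues the argument.

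The paper sidesteps this pitfall by not invoking Lemma \ref{thomaslemma} at the outset: it writes the projective identity as $c_{ki}h_{jl}-c_{jk}h_{il}=\mu_{ijkl}$ with $c_{ij}$ an arbitrary symmetric form and takes the two traces over $il$ and over $jk$, both of which compute $\ric$ on the right by the pair symmetry $\mu_{ijkl}=\mu_{klij}$; the resulting linear relations $c_{kj}-nc_{jk}=\ric_{jk}$ and $c_{li}-(\tr c)h_{il}=\ric_{li}$ combine at once to give $\ric_{ij}=\tfrac{\scal}{n}h_{ij}$. So: right idea, but the specific contraction you name is identically zero and must be replaced by a trace over a different index pair (or by the paper's two-trace argument), and the sign of the projective model tensor needs to be fixed before any contraction is trustworthy.
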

\begin{proof}
If $(\alg, \mlt, h)$ is projectively and conformally assocative, then there is $c_{ij}$ such that $c_{ki}h_{jl} - c_{jk}h_{il} = \mu_{ijkl} = -\tfrac{2}{n-1}(\ric_{k[i}h_{j]l} - \ric_{l[i}h_{j]k}) - \tfrac{2}{(n-1)(n-2)}\si h_{l[i}h_{j]k}$. Tracing this in $il$ yields $c_{kj} - nc_{jk} = \ric_{jk}$, while tracing it in $jk$ yields $c_{li} - c_{p}\,^{p}h_{il} = \ric_{li}$, from which it follows that $\ric_{ij} = \tfrac{1}{n}\si h_{ij}$. If $(\alg, \mlt, h)$ is conformally associative and $\ric_{ij} = (n-1)c h_{ij}$ for some $c \in \fie$, then $\si = n(n-1)c$, and \eqref{confnon} yields $\mu_{ijkl} = 2ch_{l[i}h_{j]k}$, so $(\alg, \mlt, h)$ is projectively associative. 
\end{proof}

\begin{lemma}\label{killingunitalizationlemma}
If $\chr \fie = 0$, the unitalization $(\halg, \hmlt, \hat{h})$ of a metrized commutative $\fie$-algebra $(\alg, \mlt, h)$ of dimension $n > 1$ is Killing Einstein if and only if $(\alg, \mlt, h)$ is exact and Killing Einstein with Einstein constant $\ka = n-1$. In this case $(\halg, \hmlt, \hat{h})$ has Killing Einstein constant $n+1$ and vanishing Ricci form.
\end{lemma}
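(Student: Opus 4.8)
The plan is to obtain the result by direct computation from the trace identities \eqref{tauunitalization} and \eqref{ricunitalization} for the $c$-unitalization, specialized to the case $c=h$ that produces the unitalization of the metrized algebra $(\alg,\mlt,h)$; recall that then $\hat{c}=\hat{h}$ and $\hat{h}((x,\al),(y,\be))=h(x,y)+\al\be$.

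First I would treat the forward implication. Suppose $(\halg,\hmlt,\hat{h})$ is Killing Einstein, say $\tau_{\hmlt}=\hat{\ka}\,\hat{h}$ with $0\neq\hat{\ka}\in\fie$. By \eqref{tauunitalization} (with $c=h$) this says that
\[
\tau_{\mlt}(x,y)+2h(x,y)+(n+1)\al\be+\be\tr L_{\mlt}(x)+\al\tr L_{\mlt}(y)=\hat{\ka}\bigl(h(x,y)+\al\be\bigr)
\]
for all $x,y\in\alg$ and $\al,\be\in\fie$. Specializing $x=y=0,\ \al=\be=1$ forces $\hat{\ka}=n+1$; specializing $\al=\be=0$ forces $\tau_{\mlt}=(n-1)h$, so $(\alg,\mlt,h)$ is Killing Einstein with Einstein constant $n-1$, which is nonzero because $n>1$; and specializing $x=0,\ \al=1,\ \be=0$ forces $\tr L_{\mlt}(y)=0$ for every $y\in\alg$, i.e. $(\alg,\mlt)$ is exact.

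Conversely, assuming $(\alg,\mlt,h)$ is exact and $\tau_{\mlt}=(n-1)h$, I would substitute directly into \eqref{tauunitalization} with $c=h$: the two trace-form terms vanish by exactness, leaving $\tau_{\hmlt}((x,\al),(y,\be))=(n-1)h(x,y)+2h(x,y)+(n+1)\al\be=(n+1)\hat{h}((x,\al),(y,\be))$, so $(\halg,\hmlt,\hat{h})$ is Killing Einstein with Einstein constant $n+1$, nonzero because $\chr\fie=0$. For the Ricci form, recall that exactness of $(\alg,\mlt)$ gives $\ric_{\mlt}=-\tau_{\mlt}=-(n-1)h$; then \eqref{ricunitalization} with $c=h$ gives $\ric_{\hmlt}=\ric_{\mlt}+(n-1)h=0$. (Alternatively, the forward direction follows by regarding $\tau_{\hmlt}=\hat{\ka}\hat{h}$ as the $A=0$ instance of \eqref{prech} and invoking Lemma \ref{griesseinsteinlemma} together with Lemma \ref{deunitlemma}, which identifies the deunitalization of $(\halg,\hmlt,\hat{h})$ with $(\alg,\mlt,h)$.)

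There is no essential difficulty here; the proof is bookkeeping with \eqref{tauunitalization} and \eqref{ricunitalization}. The two points needing attention are that the Einstein constants $n-1$ and $n+1$ are genuinely nonzero — this is exactly where $n>1$ and $\chr\fie=0$ are used, so that the Killing Einstein conclusions are valid — and that the sign relation $\ric_{\mlt}=-\tau_{\mlt}$ valid for exact algebras is invoked correctly when computing $\ric_{\hmlt}$.
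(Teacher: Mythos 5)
Your proof is correct and follows essentially the same route as the paper's: both arguments reduce to the identity \eqref{tauunitalization} specialized to $c=h$, extract $\hat{\ka}=n+1$, $\tau_{\mlt}=(n-1)h$, and exactness by choosing appropriate arguments, and then deduce $\ric_{\hmlt}=0$ from $\ric_{\mlt}=-\tau_{\mlt}=(1-n)h$ (you via \eqref{ricunitalization}, the paper via \eqref{tauunitalization} and \eqref{trlxy}, which amounts to the same computation). Your explicit remarks on where $n>1$ and $\chr\fie=0$ guarantee the nonvanishing of the Einstein constants are exactly the right points of care.
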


\begin{proof}
By \eqref{tauunitalization},
\begin{align}\label{hunittauforms}
\begin{split}
\tau_{\hmlt}((x, \al), (y, \be))&- (2 + \ka)\hat{h}((x, \al), (y, \be)) \\& = \tau_{\mlt}(x, y) - \ka h(x, y) + (n-1-\ka)\al \be +  \al \tr L_{\mlt}(y) + \be \tr L_{\mlt}(x) ,
\end{split}
\end{align}
If $(\alg, \mlt, h)$ is exact and Killing Einstein with Einstein constant $\ka = n-1$, then the right-hand side of \eqref{hunittauforms} vanishes, so $(\halg, \hmlt, \hat{h})$ is Killing Einstein with Einstein constant $\hat{\ka} = 2 + \ka = n+1$. If $(\halg, \hmlt, \hat{h})$ is Killing Einstein with Einstein constant $\hat{\ka}$, then the left-hand side of \eqref{hunittauforms} vanishes for $\ka = \hat{\ka} - 2$. Taking $x = 0 = y$ in \eqref{hunittauforms} shows $\ka = n - 1$, while taking $y = 0$ and $\al = 0$ in \eqref{hunittauforms} shows $\tr L_{\mlt}(x) = 0$ for all $x \in \alg$. It follows that $(\alg, \mlt, h)$ is exact and Killing Einstein with Einstein constant $\ka = n-1$. In this case, $\ric_{\mlt} = -\tau_{\mlt} = (1-n)h$, so, by \eqref{tauunitalization} and \ref{trlxy}, $\ric_{\hmlt} =0$.  
\end{proof}

\begin{theorem}\label{confassclassificationtheorem}
Let $\fie$ be a field of characteristic zero.
\begin{enumerate}
\item If $\fie$ is algebraically closed, an $n$-dimensional commutative $\fie$-algebra $(\alg, \mlt)$ is isomorphic to $\ealg^{n}(\fie)$ if and only if it is exact, has nondegenerate Killing form, and is projectively associative.
\item An $n$-dimensional commutative $\rea$-algebra $(\alg, \mlt)$ is isomorphic to $\ealg^{n}(\rea)$ if and only if it is exact, has positive-definite Killing form $\tau_{\mlt}$, and is projectively associative.
\end{enumerate}
\end{theorem}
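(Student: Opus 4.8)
The plan is to deduce the nontrivial (``if'') direction from Lemma~\ref{flatalgebralemma} by passing to the unitalization, and then to recover $\ealg^{n}(\fie)$ by an explicit deunitalization of the split algebra $\fie^{n+1}$. The ``only if'' direction in both cases is immediate from Corollary~\ref{ealgmodelcorollary}: $\ealg^{n}(\fie)$ is exact, Killing metrized, projectively associative (the identity \eqref{confnon0} is precisely the defining identity of projective associativity with $c = \tfrac{1}{n-1}\tau_{\mlt}$), and over $\rea$ its Killing form is positive definite.

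For the converse, suppose $(\alg, \mlt)$ is exact with $\tau_{\mlt}$ nondegenerate (and positive definite when $\fie = \rea$) and projectively associative. By Lemma~\ref{thomaslemma} (equivalence of \eqref{thomas2} with \eqref{thomas1}) together with exactness, which gives $\ric_{\mlt} = -\tau_{\mlt}$, the associator satisfies $[x,y,z] = \tfrac{1}{n-1}\bigl(\tau_{\mlt}(y,z)x - \tau_{\mlt}(x,y)z\bigr)$, and $\ric_{\mlt}$, hence $\tau_{\mlt}$, is invariant. Set $h := \tfrac{1}{n-1}\tau_{\mlt}$; then $(\alg,\mlt,h)$ is a metrized commutative algebra, exact, with $\tau_{\mlt} = (n-1)h$, i.e.\ Killing Einstein with Einstein constant $n-1$, and $h$ is positive definite when $\fie = \rea$.

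Next I would pass to the unitalization $(\halg,\hmlt,\hat h)$, which has dimension $n+1$. Since $\ric_{\mlt} = -(n-1)h$, this is the intrinsic unitalization of $(\alg,\mlt)$, so it is associative by Corollary~\ref{thomascorollary}; by Lemma~\ref{killingunitalizationlemma} it satisfies $\tau_{\hmlt} = (n+1)\hat h$, hence is Killing metrized, and since $\hat h((x,\lambda),(y,\mu)) = h(x,y)+\lambda\mu$, both $\hat h$ and $\tau_{\hmlt}$ are positive definite when $\fie = \rea$. Lemma~\ref{flatalgebralemma} now yields an algebra isomorphism $\phi\colon \halg \to \fie^{n+1}$, the latter with coordinatewise multiplication; writing $f_{0},\dots,f_{n}$ for the coordinate idempotents, the Frobenius Killing form of $\fie^{n+1}$ is $\tau_{\mathrm{std}}(f_{i},f_{j}) = \delta_{ij}$, and since Killing forms are isomorphism invariants, $\phi$ is an isometric isomorphism onto $(\fie^{n+1},\cdot,\tfrac{1}{n+1}\tau_{\mathrm{std}})$, which by Lemma~\ref{unitpreservationelemma} carries the unit of $\halg$ to $\mathbf 1 = f_{0}+\dots+f_{n}$. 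Then, by Lemma~\ref{deunitlemma}, $(\alg,\mlt,h)$ is isomorphic to the deunitalization of $(\halg,\hmlt,\hat h)$, hence to the deunitalization of $(\fie^{n+1},\cdot,\tfrac{1}{n+1}\tau_{\mathrm{std}})$ along $\mathbf 1$ (which is anisotropic of norm $1$). Computing this directly: with $\ga_{i} := f_{i} - \tfrac{1}{n+1}\mathbf 1$ and $\tilde\ga_{i} := \tfrac{n+1}{n-1}\ga_{i}$ in $\mathbf 1^{\perp}$, and using that the $\tfrac{1}{n+1}\tau_{\mathrm{std}}$-orthogonal projection $\pi$ onto $\mathbf 1^{\perp}$ satisfies $\pi(f_{i}) = \ga_{i}$ and $\pi(\mathbf 1) = 0$, the retracted product $x\mlt' y = \pi(xy)$ gives $\sum_{i}\tilde\ga_{i} = 0$, $\tilde\ga_{i}\mlt'\tilde\ga_{i} = \tilde\ga_{i}$, and $\tilde\ga_{i}\mlt'\tilde\ga_{j} = -\tfrac{1}{n-1}(\tilde\ga_{i}+\tilde\ga_{j})$ for $i\neq j$ --- exactly the presentation \eqref{ealgrelations}. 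Since any $n$ of the $\tilde\ga_{i}$ are linearly independent (they span $\mathbf 1^{\perp}$), the assignment $\ga_{i}\mapsto\tilde\ga_{i}$ extends to an algebra isomorphism $\ealg^{n}(\fie)\to(\mathbf 1^{\perp},\mlt')$, and composing the isomorphisms gives $(\alg,\mlt)\cong\ealg^{n}(\fie)$.

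The calculations are routine; the part demanding care is the bookkeeping of the metric normalizations --- the factor $\tfrac{1}{n-1}$ in the choice of $h$ and the factor $\tfrac{1}{n+1}$ appearing in $\hat h$ and in the deunitalization --- so that Lemmas~\ref{killingunitalizationlemma} and \ref{flatalgebralemma} apply without slack and so that, over $\rea$, positive-definiteness is correctly inherited by $\hat h$ and $\tau_{\hmlt}$. A secondary subtlety is upgrading the isomorphism furnished by Lemma~\ref{flatalgebralemma}, which a priori only respects the algebra structure, to an isometry via invariance of the Killing form, so that Lemma~\ref{deunitlemma} can be used to transport the deunitalization; and checking that the explicit deunitalization of $\fie^{n+1}$ does reproduce the simplicial presentation \eqref{ealgrelations} on a genuine spanning set.
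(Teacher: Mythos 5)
Your proposal is correct and follows essentially the same route as the paper: reduce via Lemma \ref{thomaslemma} to the intrinsic unitalization, show it is an associative, unital, Killing metrized commutative algebra (positive definite over $\rea$), and apply Lemma \ref{flatalgebralemma} to identify it with the split algebra $\fie^{n+1}$. The only cosmetic difference is at the end, where the paper cites Example \ref{ealgunitalizationexample} together with Lemma \ref{unitalizationisomorphismlemma} while you instead compute the deunitalization of $\fie^{n+1}$ explicitly and match the presentation \eqref{ealgrelations}; the two finishes are equivalent.
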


\begin{proof}
That $\ealg^{n}(\fie)$ has the stated properties follows from Corollary \ref{ealgmodelcorollary}, so it suffices to show the converse. 
In either case, that $(\alg, \mlt)$ be projectively associative implies it is Ricci invariant, by Lemma \ref{thomaslemma}, so that it be exact with nondegenerate Killing form implies that it is Killing metrized. 
Let $(\hat{\alg}, \hmlt)$ be the intrinsic unitalization of $(\alg, \mlt)$. By Lemma \ref{thomaslemma}, $(\hat{\alg}, \hmlt)$ is associative if and only if $(\alg, \mlt)$ is projectively associative, in which case, by the exactness of $(\alg, \mlt)$,
\begin{align}\label{confalg}
[x, y, z] = - \tfrac{1}{n-1}\tau_{\mlt}(x, y)z + \tfrac{1}{n-1}\tau_{\mlt}(y, z)x,
\end{align}
for $x, y, z \in \alg$. 
In this case, $(\hat{\alg}, \hmlt)$ is commutative, associative, and unital and, by \eqref{tauunitalization} and \eqref{hatcinvariant},
\begin{align}\label{fat}
\tau_{\hmlt}((x, \al), (y, \be)) = (1 + n)\left(\tfrac{1}{n - 1}\tau_{\mlt}(x, y) + \al\be \right),
\end{align}
is invariant with respect to $\hmlt$. 
Hence if an $n$-dimensional algebra $(\alg, \mlt)$ is exact, has nondegenerate Killing form $\tau_{\mlt}$, and is projectively associative, then its intrinsic unitalization is an $(n+1)$-dimensional commutative, associative, unital algebra with nondegenerate invariant Killing form. If $\fie = \rea$ and $\tau_{\mlt}$ is positive definite, then by \eqref{fat}, $\tau_{\hmlt}$ is positive definite. Consequently, by Lemma \ref{flatalgebralemma}, if either $\fie = \rea$ and $\tau_{\mlt}$ is positive definite or $\fie$ is algebraically closed, $(\halg, \hmlt)$ is a split étale $\fie$-algebra of rank $n+1$. Since, as shown in Example \ref{ealgunitalizationexample}, the unitalization of $\ealg^{n}(\fie)$ is also isomorphic to the direct sum of $n+1$ copies of $\fie$, it follows from Lemma \ref{unitalizationisomorphismlemma} that $(\alg, \mlt)$ is isomorphic to $\ealg^{n}(\fie)$.
\end{proof}

\begin{example}\label{3dexample}
For $c \in \fiet$, define a commutative multiplication $\mlt$ on the $\fie$-vector space $\ste$ generated by $\{e_{1}, e_{2}, e_{3}\}$ by
\begin{align}\label{n3alg}
\begin{split}
&e_{1}\mlt e_{1} = e_{2}\mlt e_{2} = e_{3}\mlt e_{3} = 0, \qquad e_{1}\mlt e_{2} = ce_{3}, \qquad e_{2}\mlt e_{3} = ce_{1}, \qquad e_{3}\mlt e_{1} = ce_{2},
\end{split}
\end{align}
and extending linearly. It is straightforward to check that $(\ste, \mlt)$ is a projectively associative Killing metrized exact commutative algebra such that $\{e_{1}, e_{2}, e_{3}\}$ is a $\tau_{\mlt}$-orthogonal basis satisfying $\tau_{\mlt}(e_{i}, e_{i}) = 2c^{2}$ for $1 \leq i \leq 3$. By Theorem \ref{confassclassificationtheorem}, if $\fie = \rea$ or $\fie$ is algebraically closed, then $(\ste, \mlt)$ is isomorphic to $\ealg^{3}(\fie)$. In fact, $(\ste, \mlt)$ is isomorphic to $\ealg^{3}(\fie)$ for any field $\fie$ of characteristic zero, as can be seen explicitly by observing that the elements $\ga_{0}, \ga_{1}, \ga_{2}, \ga_{3} \in \ste$ defined by $2c\ga_{0} = e_{1} + e_{2} + r_{3}$, $2c\ga_{1} = e_{1} - e_{2} - e_{3}$, $2c\ga_{2} = -e_{1} + e_{2} - e_{3}$, and $2c\ga_{3} = -e_{1}-e_{2} + e_{3}$ (so $c^{-1}e_{1} = -\ga_{2} - \ga_{3}$, $c^{-1}e_{2} = -\ga_{1} - \ga_{3}$, and $c^{-1}e_{3} = -\ga_{1} - \ga_{2}$) satisfy the relations \eqref{ealgrelations}.
\end{example}

\begin{corollary}\label{n23corollary}
A Killing metrized exact commutative $\fie$-algebra $(\alg, \mlt)$ of dimension $n \in \{2, 3\}$ is isomorphic to $\ealg^{n}(\fie)$ if either $\fie$ is algebraically closed or $\fie = \rea$ and $\tau_{\mlt}$ is positive definite.
\end{corollary}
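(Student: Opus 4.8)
The plan is to reduce at once to Theorem~\ref{confassclassificationtheorem}. Under either hypothesis on $\fie$ (algebraically closed, or $\fie=\rea$ with $\tau_{\mlt}$ positive definite), that theorem shows that an $n$-dimensional exact commutative $\fie$-algebra with nondegenerate (resp.\ positive-definite) Killing form which is moreover \emph{projectively associative} is isomorphic to $\ealg^{n}(\fie)$. Since a Killing metrized exact commutative algebra is in particular exact with nondegenerate Killing form, and the hypothesis on $\fie$ supplies the remaining input of Theorem~\ref{confassclassificationtheorem}, it suffices to prove that every Killing metrized exact commutative $\fie$-algebra of dimension $n\in\{2,3\}$ is projectively associative.

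To that end, regard $(\alg,\mlt)$ as the metrized commutative algebra $(\alg,\mlt,h)$ with $h=\tau_{\mlt}$; this is legitimate because $\tau_{\mlt}$ is nondegenerate and invariant. Exactness gives $\ric_{\mlt}=-\tau_{\mlt}=-h$, so $\ric_{\mlt}=(n-1)c\,h$ with $c=-\tfrac1{n-1}\in\fie$ (recall $\chr\fie=0$). Consider the four-tensor $\mu_{ijkl}$ of \eqref{hmudefined}, which by invariance of $h$ carries the symmetries of a metric curvature tensor.

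For $n=3$, the conformal nonassociativity tensor $\om_{ijkl}$ of \eqref{confnon} vanishes automatically, since the $O(n)$-module of trace-free tensors with metric curvature symmetries is zero in dimension $3$; thus $(\alg,\mlt,h)$ is conformally associative, and as $\ric_{\mlt}=(n-1)c\,h$, Lemma~\ref{prebeltramilemma} yields that $(\alg,\mlt)$ is projectively associative. For $n=2$, Lemma~\ref{prebeltramilemma} does not literally apply, but in fact \emph{any} two-dimensional metrized commutative algebra is projectively associative: the space of four-tensors with metric curvature symmetries is one-dimensional, spanned by $h_{l[i}h_{j]k}$, so $\mu_{ijkl}=2c'\,h_{l[i}h_{j]k}$ for some $c'\in\fie$, and contracting one pair of indices against $h$ and using $\ric_{\mlt}=-h$ forces $c'=-\tfrac1{n-1}=-1$; as recorded in the proof of Lemma~\ref{prebeltramilemma}, an identity $\mu_{ijkl}=2c'\,h_{l[i}h_{j]k}$ is exactly the statement that $(\alg,\mlt)$ is projectively associative (with $c$-form $-c'h$). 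In both cases Theorem~\ref{confassclassificationtheorem} now finishes the proof.

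The argument is essentially bookkeeping once one observes that the Killing form should itself be used as the invariant metric and that exactness makes its Ricci form a constant multiple of it. The only point needing care is that $n=2$ lies outside the range $n\ge 3$ of Lemma~\ref{prebeltramilemma}, so one must check directly there that the associator has constant-curvature shape; this is forced by the one-dimensionality of the space of curvature-symmetric four-tensors in dimension $2$ and is consistent with the convention declaring two-dimensional metrized commutative algebras conformally associative.
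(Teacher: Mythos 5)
Your proof is correct and takes essentially the same route as the paper: the paper's own argument is precisely that $n\le 3$ forces conformal associativity, Lemma~\ref{prebeltramilemma} together with $\ric_{\mlt}=-\tau_{\mlt}$ then gives projective associativity, and Theorem~\ref{confassclassificationtheorem} concludes. Your explicit handling of $n=2$ (where Lemma~\ref{prebeltramilemma} does not literally apply, but the one-dimensionality of the space of curvature-symmetric four-tensors forces $\mu_{ijkl}$ to have constant-curvature shape) is a welcome point of care that the paper's one-line proof glosses over.
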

\begin{proof}
Since $n \leq 3$, $(\alg, \mlt, \tau_{\mlt})$ is conformally associative, so projectively associative by Lemma \ref{prebeltramilemma}, and the conclusion follows from Theorem \ref{confassclassificationtheorem}.
\end{proof} 

\begin{example}
By Example \ref{cubicexample}, the conclusion of Corollary \ref{n23corollary} is false over $\rat$. Were the deunitalization of the degree $3$ cyclic Galois extension of $\rat$ obtained by adjoining a root of $r^{3} + r^{2} - 2r - 1$ isomorphic to $\ealg^{2}(\rat)$, then its unitalization would be isomorphic to a direct sum of three copies of $\rat$, which it is not, so, by Lemma \ref{unitalizationisomorphismlemma}, it cannot be isomorphic to $\ealg^{2}(\rat)$.
\end{example}

\begin{lemma}\label{no1dideallemma}
A Killing metrized exact commutative algebra $(\alg, \mlt)$ has no $\tau_{\mlt}$-anisotropic ideal of dimension $1$ or codimension $1$.
\end{lemma}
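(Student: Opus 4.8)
The plan is to treat the dimension-$1$ and codimension-$1$ cases separately, reducing the latter to the former by passing to the $\tau_{\mlt}$-orthogonal complement.

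First I would prove the stronger statement that \emph{every} ideal $\ideal$ of dimension $1$ in a Killing metrized exact commutative algebra is $\tau_{\mlt}$-isotropic (hence, in particular, not anisotropic). Write $\ideal = \spn\{v\}$ with $v \neq 0$. Since $\ideal$ is an ideal, $x \mlt v \in \ideal$ for every $x \in \alg$, so there is a unique $\phi \in \alg^{\ast}$ with $L_{\mlt}(v)x = x \mlt v = \phi(x)v$ for all $x$. Exactness of $(\alg, \mlt)$ gives $0 = \tr L_{\mlt}(v) = \phi(v)$, hence $v \mlt v = \phi(v)v = 0$, and therefore $L_{\mlt}(v)^{2}x = \phi(x)\,(v\mlt v) = 0$ for all $x$, i.e. $L_{\mlt}(v)^{2} = 0$. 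Consequently $\tau_{\mlt}(v,v) = \tr L_{\mlt}(v)L_{\mlt}(v) = \tr L_{\mlt}(v)^{2} = 0$, as claimed.

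Next I would reduce the codimension-$1$ case to this one. Suppose, for contradiction, that $\ideal$ is a $\tau_{\mlt}$-anisotropic ideal of codimension $1$. Anisotropy forces $\ideal \cap \ideal^{\perp} = \{0\}$, since any vector in the intersection lies in $\ideal$ and is $\tau_{\mlt}$-isotropic; combined with nondegeneracy of $\tau_{\mlt}$ this makes $\alg = \ideal \oplus \ideal^{\perp}$ a $\tau_{\mlt}$-orthogonal direct sum with $\dim \ideal^{\perp} = 1$. By the invariance and nondegeneracy of $\tau_{\mlt}$, the subspace $\ideal^{\perp}$ is again an ideal (exactly as in the proof of Lemma \ref{simpleideallemma}), and the restriction of $\tau_{\mlt}$ to the line $\ideal^{\perp}$ is nondegenerate because the sum is $\tau_{\mlt}$-orthogonal and $\tau_{\mlt}$ is nondegenerate on $\alg$; thus $\ideal^{\perp}$ is a $\tau_{\mlt}$-anisotropic ideal of dimension $1$, contradicting the first part. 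This completes the proof.

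The whole argument is short and essentially self-contained, so there is no real obstacle; the only points needing a little care are the routine verifications that $\ideal^{\perp}$ inherits the ideal property and that $\tau_{\mlt}$ restricts nondegenerately to it, and both are immediate from the invariance and nondegeneracy of $\tau_{\mlt}$ and are already contained in the proof of Lemma \ref{simpleideallemma}. No positivity or special-field hypotheses are used.
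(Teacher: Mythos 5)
Your proof is correct and follows essentially the same route as the paper: reduce the codimension-one case to the dimension-one case via the $\tau_{\mlt}$-orthogonal complement (using invariance and nondegeneracy), then kill the dimension-one case with a trace computation using exactness. Your handling of the dimension-one step is a slight refinement — by writing $L_{\mlt}(v)=v\otimes\phi$ you show that \emph{every} one-dimensional ideal is $\tau_{\mlt}$-isotropic using only exactness, whereas the paper first invokes the anisotropic splitting to get $L_{\mlt}(z)=0$ and then contradicts nondegeneracy — but the substance is the same.
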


\begin{proof}
A $\tau_{\mlt}$-anisotropic subspace $\ideal \subset \alg$ is an ideal if and only if its $\tau_{\mlt}$-orthogonal complement $\ideal^{\perp}$ is an ideal. In this case $\alg = \ideal \oplus \ideal^{\perp}$ and $\ideal \mlt \ideal^{\perp} = \{0\}$.
Suppose $\ideal \subset \alg$ is a one-dimensional $\tau_{\mlt}$-anisotropic ideal spanned by $z \in \alg$. Because $L_{\mlt}(z) \ideal^{\perp} = \{0\}$ and $\tr L_{\mlt}(z) = 0$, $z \mlt z = 0$ and so $L_{\mlt}(z) = 0$. Consequently, $\tau_{\mlt}(z, x) = \tr L_{\mlt}(z)L_{\mlt}(x) = 0$ for all $x \in \alg$, contradicting the nondegeneracy of $\tau_{\mlt}$.
\end{proof}

\begin{corollary}
A $3$-dimensional Killing metrized exact commutative algebra with anisotropic Killing form is simple.
\end{corollary}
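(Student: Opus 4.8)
The plan is to combine the structure theorem of Lemma~\ref{dieudonnelemma} with the nonexistence statement of Lemma~\ref{no1dideallemma}, using only the elementary fact that $3$ cannot be written as a sum of two or more integers each at least $2$.

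First I would apply Lemma~\ref{dieudonnelemma}: since the Killing form $\tau_{\mlt}$ of $(\alg,\mlt)$ is invariant and nondegenerate (take $(p,q,r)=(0,1,0)$ in that lemma), $(\alg,\mlt)$ is semisimple and admits a decomposition $\alg = \bigoplus_{i=1}^{k}\alg_{i}$ as a direct sum of simple ideals $\alg_{i}$, pairwise orthogonal with respect to $\tau_{\mlt}$. If $k=1$ then $\alg$ is simple and there is nothing to prove, so suppose $k\geq 2$. Each $\dim\alg_{i}\geq 1$ and $\sum_{i}\dim\alg_{i}=3$; since $3$ is not a sum of two or more integers all $\geq 2$, some summand, say $\alg_{1}$, must satisfy $\dim\alg_{1}=1$.

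Next I would observe that $\alg_{1}$ is an ideal of $(\alg,\mlt)$ and that, because $\tau_{\mlt}$ is anisotropic on $\alg$, its restriction to the subspace $\alg_{1}$ is anisotropic as well (equivalently, $\alg_{1}$ is $\tau_{\mlt}$-nondegenerate by Lemma~\ref{simpledecomposelemma}, which for a one-dimensional space is the same thing). Thus $\alg_{1}$ is a $\tau_{\mlt}$-anisotropic ideal of dimension $1$, contradicting Lemma~\ref{no1dideallemma}. Hence $k=1$ and $(\alg,\mlt)$ is simple.

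I do not expect any genuine obstacle: the corollary is a formal consequence of the two cited lemmas, and the only point requiring a moment's thought is the case analysis in the second paragraph, which is immediate once one notes that the only partition of $3$ into positive parts with no part equal to $1$ is $3$ itself, so that at least two simple summands force a one-dimensional one.
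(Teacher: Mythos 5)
Your argument is correct. The one remark worth making is that the detour through Lemma~\ref{dieudonnelemma} is not needed: the corollary is placed immediately after Lemma~\ref{no1dideallemma} precisely because it follows from that lemma alone. In a $3$-dimensional algebra any nontrivial proper ideal has dimension $1$ or codimension $1$, and since $\tau_{\mlt}$ is anisotropic every subspace --- in particular every ideal --- is $\tau_{\mlt}$-anisotropic, so Lemma~\ref{no1dideallemma} already rules out all nontrivial proper ideals; nontriviality of the multiplication is forced by the nondegeneracy of $\tau_{\mlt}$, and simplicity follows. Your route first invokes Dieudonn\'e's decomposition to reduce to simple summands and then applies only the dimension-$1$ half of Lemma~\ref{no1dideallemma} to a forced one-dimensional summand; this is sound (and the partition-of-$3$ observation is the right elementary input), but it uses the stronger semisimplicity machinery where a direct appeal to the preceding lemma suffices.
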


\begin{lemma}\label{4dnotsimplelemma}
If a $4$-dimensional Euclidean Killing metrized exact commutative algebra $(\alg, \mlt)$ is not simple, then it is isomorphic to $\ealg^{2}(\rea) \oplus \ealg^{2}(\rea)$.
\end{lemma}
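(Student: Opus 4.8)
The plan is to reduce everything to the semisimple decomposition of Lemma \ref{dieudonnelemma} together with the low-dimensional classification in Corollary \ref{n23corollary}. First I would observe that, since $(\alg, \mlt)$ is exact, its Killing form $\tau_{\mlt}$ is invariant, and, since $(\alg, \mlt)$ is Euclidean, $\tau_{\mlt}$ is positive definite, hence nondegenerate; so Lemma \ref{dieudonnelemma} applies and yields a decomposition $\alg = \alg_{1}\oplus \dots \oplus \alg_{k}$ into simple ideals that are pairwise $\tau_{\mlt}$-orthogonal. The hypothesis that $(\alg, \mlt)$ is not simple forces $k \geq 2$.

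Next I would check that each summand $\alg_{i}$ is itself a Euclidean Killing metrized exact commutative $\rea$-algebra. Because $\alg_{i}\mlt \alg_{j} = \{0\}$ for $i \neq j$, for $x \in \alg_{i}$ the endomorphism $L_{\mlt}(x)$ of $\alg$ is block diagonal with block $L_{\mlt_{i}}(x)$ on $\alg_{i}$ and zero blocks elsewhere; hence $\tr L_{\mlt}(x) = \tr L_{\mlt_{i}}(x)$ and $\tr L_{\mlt}(x)L_{\mlt}(y) = \tr L_{\mlt_{i}}(x)L_{\mlt_{i}}(y)$ for $x, y \in \alg_{i}$. Thus exactness of $\alg$ restricts to exactness of each $\alg_{i}$, and the Killing form of $\alg_{i}$ is precisely the restriction of $\tau_{\mlt}$ to $\alg_{i}$, which is positive definite; so each $\alg_{i}$ is a simple Euclidean Killing metrized exact commutative $\rea$-algebra.

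Then I would eliminate the low-dimensional summands by a dimension count. Each $\alg_{i}$ is $\tau_{\mlt}$-anisotropic (being positive definite), so by Lemma \ref{no1dideallemma} none of them can be one-dimensional; equivalently, a one-dimensional simple algebra is impossible, since a simple algebra has nontrivial multiplication while exactness would force the multiplication of a one-dimensional exact algebra to vanish. Hence $\dim \alg_{i} \geq 2$ for all $i$, and since $\sum_{i}\dim\alg_{i} = 4$ with $k \geq 2$, the only possibility is $k = 2$ with $\dim \alg_{1} = \dim \alg_{2} = 2$. Finally, applying Corollary \ref{n23corollary} over $\fie = \rea$ (using that $\tau_{\mlt}$ restricts to a positive-definite form on each $\alg_{i}$) gives $\alg_{1}\cong \ealg^{2}(\rea)\cong \alg_{2}$, and since $\alg = \alg_{1}\oplus \alg_{2}$ as a direct sum of ideals, $(\alg, \mlt)\cong \ealg^{2}(\rea)\oplus \ealg^{2}(\rea)$. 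I do not expect a serious obstacle: the content is carried by Lemma \ref{dieudonnelemma} and Corollary \ref{n23corollary}, and the only point needing a little care is the verification that exactness and the Killing-metrized property descend to the simple summands, which is immediate from the block-diagonal form of the multiplication operators on a $\tau_{\mlt}$-orthogonal decomposition.
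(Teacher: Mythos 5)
Your argument is correct and follows essentially the same route as the paper's: the paper rules out one-dimensional (hence odd-dimensional) ideals by the same trace computation and then applies Corollary \ref{n23corollary} to a two-dimensional ideal and its orthogonal complement, which is exactly what your Dieudonn\'e decomposition plus dimension count amounts to. One small slip in your opening: the invariance of $\tau_{\mlt}$ is not a consequence of exactness (an exact commutative algebra need not have invariant Killing form) but is part of the hypothesis that $(\alg, \mlt)$ is Killing metrized, so nothing breaks, but the justification should be corrected.
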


\begin{proof}
If $(\alg, \mlt)$ has a two-dimensional ideal, then by Corollary \ref{n23corollary} this ideal and its orthogonal complement must be isomorphic to $\ealg^{2}(\rea)$, so it suffices to show that $(\alg, \mlt)$ has no odd-dimensional ideal. Since the orthcomplement of an ideal is an ideal, it suffices to show that $(\alg, \mlt)$ has no one-dimensional ideal. Suppose that $z \in \alg$ generates a one-dimensional ideal. Then $z \mlt z = r z$ for some $r \in \rea$. Because $L_{\mlt}(z)$ maps the orthocomplement of $z$ into the space of $z$ and is exact, $r = \tr L_{\mlt}(z) = 0$. Then $L_{\mlt}(z)^{2} = 0$, so $\tau_{\mlt}(z, z) =0$, contradicting the anisotropy of $\tau_{\mlt}$. 
\end{proof}

\begin{theorem}\label{4dclassificationtheorem}
A Euclidean Killing metrized exact commutative algebra of dimension $n \leq 4$ is isomorphic to exactly one of $\ealg^{2}(\rea)$, $\ealg^{3}(\rea)$, $\ealg^{4}(\rea)$, or $\ealg^{2}(\rea)\oplus \ealg^{2}(\rea)$.
\end{theorem}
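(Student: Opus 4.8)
The four algebras in the list are pairwise nonisomorphic: $\ealg^{2}(\rea)$, $\ealg^{3}(\rea)$ and $\ealg^{4}(\rea)$ have dimensions $2$, $3$ and $4$, while $\ealg^{2}(\rea)\oplus \ealg^{2}(\rea)$ is $4$-dimensional but not simple, whereas $\ealg^{4}(\rea)$ is simple by Corollary \ref{ealgmodelcorollary}; so the word ``exactly one'' is immediate once existence of an isomorphism is established. For $n \in \{2, 3\}$ the conclusion is Corollary \ref{n23corollary}. For $n = 4$, if $(\alg, \mlt)$ is not simple then Lemma \ref{4dnotsimplelemma} gives $(\alg, \mlt) \simeq \ealg^{2}(\rea)\oplus \ealg^{2}(\rea)$. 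It therefore remains to treat a simple $4$-dimensional Euclidean Killing metrized exact commutative algebra $(\alg, \mlt)$ and show $(\alg, \mlt) \simeq \ealg^{4}(\rea)$.

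The plan is to reduce this to Theorem \ref{confassclassificationtheorem}. Normalize so that $h = \tau_{\mlt}$, a positive-definite invariant metric by hypothesis. Since $(\alg, \mlt)$ is exact, $\ric_{\mlt} = -\tau_{\mlt} = -h = (n-1)c\,h$ with $n = 4$ and $c = -\tfrac{1}{3}$. By Lemma \ref{prebeltramilemma}, if $(\alg, \mlt, h)$ is conformally associative then it is projectively associative, and then Theorem \ref{confassclassificationtheorem}(2) yields $(\alg, \mlt) \simeq \ealg^{4}(\rea)$. So everything comes down to proving that a simple $4$-dimensional Euclidean Killing metrized exact commutative algebra is conformally associative, i.e. that its conformal nonassociativity tensor $\om_{ijkl}$ of \eqref{confnon} vanishes; equivalently (since $\ric_{\mlt} = -h$), that $\mu_{ijkl} = \tfrac{2}{n-1}h_{k[i}h_{j]l}$.

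This vanishing is the main obstacle: unlike in dimensions $n \le 3$, for $n = 4$ the $O(h)$-module of trace-free tensors with metric curvature symmetries is nonzero, so there is genuine content. The route I would take is a Peirce analysis at a minimal idempotent. By Lemma \ref{criticalpointlemma}(2a) a minimal idempotent $e$ exists, and by Lemma \ref{criticalpointlemma}(2b) it is absolutely primitive, so $\alg^{(1)}(e) = \lb e \ra$, and $\lb e \ra^{\perp} = \oplus_{\la}\alg^{(\la)}(e)$ is the $h$-orthogonal eigenspace decomposition of the $h$-self-adjoint operator $L_{\mlt}(e)$, with all eigenvalues $\la \le 1/2$. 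Exactness gives $0 = \tr L_{\mlt}(e) = 1 + \sum_{\la}\la \dim \alg^{(\la)}(e)$, so the three eigenvalues of $L_{\mlt}(e)$ on $\lb e \ra^{\perp}$, counted with multiplicity, sum to $-1$. One then uses the Killing Einstein identities $\tr L_{\mlt}(e)^{2} = h(e, e)$ and $\tr L_{\mlt}(e)L_{\mlt}(x) = h(e, x) = 0$ for $x \in \lb e\ra^{\perp}$, together with the invariance of $h$ (which constrains the products $\alg^{(\la)}(e)\mlt \alg^{(\mu)}(e)$), to eliminate every candidate spectrum except $\{-\tfrac13, -\tfrac13, -\tfrac13\}$ and, in the remaining case, to pin down the multiplication: the three $h$-orthogonal directions in $\lb e \ra^{\perp}$ together with $e$ are assembled, after rescaling, into vectors $\ga_{0}, \dots, \ga_{4}$ with $\sum_{i}\ga_{i} = 0$ satisfying the relations \eqref{ealgrelations}, giving $(\alg, \mlt) \simeq \ealg^{4}(\rea)$.

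The hardest step is the elimination of spectra containing a $1/2$-eigenvalue, or two or three distinct eigenvalues: this requires using invariance of $h$ to control the Peirce products and then deriving a contradiction with the nondegeneracy of $\tau_{\mlt}$, with simplicity, or with the bound $\specp(e) \subseteq (-\infty, 1/2]$. An equivalent alternative, avoiding the case analysis, is to invoke the classification in \cite{Fox-cubicpoly} of the cubic polynomials of Euclidean Killing metrized exact commutative algebras: a positive-definite cubic form in four variables whose associated metrized algebra is exact with $\tau_{\mlt} = h$ is, up to an orthogonal change of variables, the cubic polynomial of $\ealg^{4}(\rea)$, and then \eqref{hpolar} (or Lemma \ref{unitalizationisomorphismlemma}) recovers the algebra itself.
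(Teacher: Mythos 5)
Your reduction is exactly the paper's: Corollary \ref{n23corollary} handles $n \in \{2,3\}$, Lemma \ref{4dnotsimplelemma} handles the non-simple $4$-dimensional case, and the pairwise non-isomorphism of the four listed algebras is clear. Your ``equivalent alternative'' at the end --- invoking the normal forms for cubic polynomials of $4$-dimensional Euclidean Killing metrized exact commutative algebras from \cite{Fox-cubicpoly} --- is precisely the proof the paper gives for the remaining simple case. So, read with that fallback, the proposal is correct and coincides with the paper.

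The primary route you propose, however, has a genuine gap. You reduce the simple $4$-dimensional case to showing conformal associativity (vanishing of $\om_{ijkl}$), which is the right target, but the argument you sketch for it is not carried out: the ``elimination of every candidate spectrum except $\{-\tfrac13,-\tfrac13,-\tfrac13\}$'' is exactly the hard content, and nothing in the stated constraints (eigenvalues $\leq 1/2$ summing to $-1$, plus $\tau_{\mlt}(e,e) = 1 + \sum\la_i^2$) rules out, say, $\specp(e) \supset \{1/2\}$ or spectra with several distinct eigenvalues; Lemma \ref{taumaxlemma} only tells you the desired spectrum corresponds to the \emph{minimum} possible value of $\tau_{\mlt}(e,e)$, and minimality of $e$ as an idempotent does not obviously force $\tau_{\mlt}(e,e)$ to attain that minimum. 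Moreover, even granting $\specp(e)=\{-\tfrac13,-\tfrac13,-\tfrac13\}$, knowing $L_{\mlt}(e)$ determines only the products $e \mlt x$; the products $x \mlt y$ for $x, y \in \lb e\ra^{\perp}$ are governed by the retraction of the algebra onto $\lb e \ra^{\perp}$, a $3$-dimensional exact metrized algebra whose Killing metrizability (needed to apply Corollary \ref{n23corollary} to it) is not established, so the ``assembly'' into $\ga_0,\dots,\ga_4$ satisfying \eqref{ealgrelations} is not justified. The paper's own remark following the theorem --- that a purely algebraic proof of the simple $4$-dimensional case ``is desirable'' --- indicates that the author does not know how to complete an argument of the kind you sketch; as written, your main route is a plausible program, not a proof, and the theorem rests entirely on the citation of \cite[Theorem~$1.8$]{Fox-cubicpoly}.
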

\begin{proof}
By Corollary \ref{n23corollary} and Lemma \ref{4dnotsimplelemma} it suffices to show that a simple $4$-dimensional Euclidean Killing metrized exact commutative algebra is isomorphic to $\ealg^{4}(\rea)$. This follows from \cite[Theorem $1.8$]{Fox-cubicpoly}, which gives normal forms for the cubic polynomial of a $4$-dimensional Euclidean Killing metrized exact commutative algebra.
\end{proof}
\begin{remark}
It is desirable to find a purely algebraic proof that a simple $4$-dimensional Euclidean Killing metrized exact commutative algebra is isomorphic to $\ealg^{4}(\rea)$.
\end{remark}

A consequence of Theorem \ref{4dclassificationtheorem} is that for a Euclidean Killing metrized exact commutative algebra to have a positive-dimensional automorphism group it is necessary that its dimension be at least $5$. The examples of Section \ref{jordansection} show that this does occur. 

It is desirable to have a direct proof that positive-dimensional automorphism group implies the dimension of the algebra is at least $5$. Corollary \ref{5dimcorollary} is a result in this direction. It shows that the automorphism group of a Euclidean Killing metrized exact commutative algebra is finite when $n \leq 4$, provided that the algebra is spanned by its minimal idempotents. 

A \emph{derivation} of $(\alg, \mlt)$ is an endomorphism $D \in \eno(\alg)$ satisfying $[D, L_{\mlt}(x)] = L_{\mlt}(Dx)$ for all $x \in \alg$. The derivations of $(\alg, \mlt)$ constitute a Lie subalgebra $\der(\alg, \mlt) \subset \eno(\alg)$ with the usual commutator bracket. A derivation of $(\alg, \mlt)$ need not be $h$-anti-self-adjoint, although a derivation of a Killing metrized commutative algebra is necessarily $\tau_{\mlt}$-anti-self-adjoint.

\begin{lemma}\label{derivationlemma}
Let $(\alg, \mlt, h)$ be a metrized commutative algebra over a field of characteristic not equal to $2$. If $e \in \idem(\alg, \mlt)$ and $D \in \der(\alg, \mlt)$, then $De \in \alg^{(1/2)}(e) \subset \eperp$. In particular, if $1/2 \notin \spec(e)$, then $De = 0$ and $D$ preserves the eigenspaces of $L_{\mlt}(e)$.
\end{lemma}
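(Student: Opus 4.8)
The plan is to differentiate the idempotent relation $e\mlt e = e$. Since $D$ is a derivation and $\mlt$ is commutative, $D(e\mlt e) = (De)\mlt e + e\mlt(De) = 2L_{\mlt}(e)(De)$, while on the other side $D(e\mlt e) = De$. Equating the two expressions gives $2L_{\mlt}(e)(De) = De$, and because $\chr\fie \neq 2$ we may divide by $2$ to obtain $L_{\mlt}(e)(De) = \tfrac{1}{2}De$, which is precisely the assertion $De \in \alg^{(1/2)}(e)$.

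Next I would record the elementary fact that for any scalar $\la \neq 1$ the eigenspace $\alg^{(\la)}(e)$ lies in $\eperp$. This uses only the invariance of $h$ together with $e\mlt e = e$: if $L_{\mlt}(e)x = \la x$, then $h(e,x) = h(e\mlt e, x) = h(e, e\mlt x) = \la\, h(e,x)$, so $(1-\la)h(e,x) = 0$, whence $h(e,x) = 0$. Applying this with $\la = 1/2$ (which is $\neq 1$, again because $\chr\fie\neq 2$, so that $1/2$ makes sense and is distinct from $1$) yields $\alg^{(1/2)}(e)\subset\eperp$, completing the first statement.

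For the refinements: if $1/2\notin\spec(e)$, then $\alg^{(1/2)}(e) = \{0\}$ by the definition of $\spec(e)$, so the inclusion $De\in\alg^{(1/2)}(e)$ forces $De = 0$. To see that $D$ then preserves the eigenspaces of $L_{\mlt}(e)$, take $x\in\alg^{(\la)}(e)$ and compute $D(e\mlt x)$ in two ways: since $e\mlt x = \la x$ it equals $\la Dx$, while since $De = 0$ it equals $(De)\mlt x + e\mlt(Dx) = L_{\mlt}(e)(Dx)$. Hence $L_{\mlt}(e)(Dx) = \la Dx$, i.e.\ $Dx\in\alg^{(\la)}(e)$.

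There is no genuine obstacle here; the argument is a short chain of applications of the derivation property and of invariance. The only point demanding any attention is the systematic use of the hypothesis $\chr\fie\neq 2$, both to divide by $2$ in the opening step and to ensure $\tfrac12\neq 1$ when invoking the eigenspace orthogonality. I would simply carry out the computations above.
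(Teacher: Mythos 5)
Your proof is correct and follows essentially the same route as the paper's: differentiate $e\mlt e=e$ to get $De\in\alg^{(1/2)}(e)$, and use the derivation identity $[D,L_{\mlt}(e)]=L_{\mlt}(De)$ (which your eigenvector computation reproduces) for the final claim. Your explicit verification that $\alg^{(\la)}(e)\subset\eperp$ for $\la\neq 1$ via invariance of $h$ is a detail the paper leaves implicit, and it is correctly done.
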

\begin{proof}
Because $De = D(e\mlt e) = 2e\mlt De$, $De \in \alg^{(1/2)}(e) \subset \eperp$. If $1/2 \notin \spec(e)$, then $De = 0$, so $[D, L_{\mlt}(e)] = L_{\mlt}(De) = 0$, and $D$ preserves the eigenspaces of $L_{\mlt}(e)$.
\end{proof}

\begin{lemma}\label{derivationsizelemma}
Let $\der(\alg, \mlt)$ be the Lie algebra of algebra derivations of an $n$-dimensional Euclidean metrized exact commutative algebra $(\alg, \mlt, h)$.  If there is $e \in \midem(\alg, \mlt, h)$ such that $k = \dim\{D(e): D \in \der(\alg, \mlt)\} > 0$, then $n \geq 2k + 3$.
\end{lemma}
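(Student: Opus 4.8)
The plan is to combine three things: the constraint $De\in\alg^{(1/2)}(e)$ from Lemma \ref{derivationlemma}; the characterization (via Lemma \ref{criticalpointlemma}) of a minimal idempotent as a global maximum of the cubic polynomial on its norm-sphere; and exactness. Write $V=\{De:D\in\der(\alg,\mlt)\}$, so $\dim V=k$, and by Lemma \ref{derivationlemma} $V\subseteq\alg^{(1/2)}(e)$. Decompose $\alg=\fie e\oplus\alg^{(1/2)}(e)\oplus W$ with $W=\bigoplus_{\la<1/2}\alg^{(\la)}(e)$ (the eigenvalues below $1$ lie in $(-\infty,1/2)$ by Lemma \ref{criticalpointlemma}); this is $h$-orthogonal because $L_\mlt(e)$ is $h$-self-adjoint. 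Exactness reads $1+\tfrac12\dim\alg^{(1/2)}(e)+\sum_{\la<1/2}\la\,\dim\alg^{(\la)}(e)=0$, so $W$ contains a negative-eigenvalue subspace; as $\fie e$, $V$, and that subspace sit in pairwise distinct $L_\mlt(e)$-eigenspaces, one gets $n\geq k+2$ for free. The work is to upgrade this to $n\geq 2k+3$.

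Next I would extract the minimality input. For any idempotent $f$ and any $D\in\der$, $Df=D(f\mlt f)=2f\mlt Df\in\alg^{(1/2)}(f)$, which is $h$-orthogonal to $f\in\alg^{(1)}(f)$; hence $\tfrac{d}{dt}|\exp(tD)e|_h^2=2h\bigl(D\exp(tD)e,\exp(tD)e\bigr)=0$, so the $\Aut^\circ$-orbit of $e$ lies on the sphere $S=\{x:|x|_h=|e|_h\}$ and consists of idempotents, hence of minimal idempotents. By Lemma \ref{criticalpointlemma}(1) any critical point $x$ of $P|_S$ with $P(x)\neq0$ yields an idempotent $\tfrac{|e|_h^2}{6P(x)}x$ of norm $|e|_h^3/(6|P(x)|)$, so minimality of $e$ forces $P(x)\leq P(e)=|e|_h^2/6$: thus $e$ is a global maximum of $P|_S$. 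Expanding $P$ along $s\mapsto\sqrt{1-s^2/|e|_h^2}\,e+su$ for unit $u\in\alg^{(1/2)}(e)$ gives $P=|e|_h^2/6+s^3P(u)+O(s^4)$, so $P(u)=0$ for all $u\in\alg^{(1/2)}(e)$; polarizing in characteristic zero, $\alg^{(1/2)}(e)\mlt\alg^{(1/2)}(e)$ is $h$-orthogonal to $\alg^{(1/2)}(e)$, i.e.\ $\alg^{(1/2)}(e)\mlt\alg^{(1/2)}(e)\subseteq\fie e\oplus W$. Moreover $h(v\mlt v,e)=h(v,v\mlt e)=\tfrac12|v|_h^2$, so for $0\neq v\in\alg^{(1/2)}(e)$ the $\fie e$-component of $v\mlt v$ is nonzero.

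The third ingredient is the derivation identity $[D,L_\mlt(e)]=L_\mlt(De)$: for $v=De$ and $x\in\alg^{(\la)}(e)$ it gives $(L_\mlt(e)-\la\,\Id)(Dx)=-v\mlt x$, so $v\mlt x$ has vanishing $\alg^{(\la)}(e)$-component; thus each $L_\mlt(v)$ with $v\in V$ strictly shifts the $L_\mlt(e)$-grading. I would then assemble $2k+3$ linearly independent vectors: $e$; a basis of $V$ inside $\alg^{(1/2)}(e)$; and $k+2$ vectors inside $W$, produced from the symmetric map $V\times V\to W$, $(v,v')\mapsto(v\mlt v')_W$ (whose image I claim spans a subspace of $W$ of dimension at least $k$, for otherwise a degeneracy of this map combined with the nonzero $\fie e$-component from the previous step would let one manufacture, inside $\fie e\oplus V$, an idempotent of norm $<|e|_h$, contradicting minimality), together with two further vectors forced by $\tr L_\mlt(e)|_W<0$ and by the trace-free $h$-self-adjoint operator $L_\mlt(v)|_W$.

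I expect the last step to be the main obstacle: turning the qualitative facts ($V\subseteq\alg^{(1/2)}(e)$, $\alg^{(1/2)}(e)\mlt\alg^{(1/2)}(e)\subseteq\fie e\oplus W$, the positivity of the $\fie e$-components, the grading-shifting of $L_\mlt(v)$, and exactness on $W$) into the precise inequality $\dim W\geq k+2$. The delicate point is the bookkeeping between the multiplicities of the $L_\mlt(e)$-eigenvalues on $W$ and the integer $k$, and ruling out the degenerate possibilities (such as $v\mlt v\in\fie e$) by playing the minimality of $e$ against the exactness trace relation; I would organize that part around the $2$-plane $\fie e\oplus\fie v$ and the quadratic form $v\mapsto(v\mlt v)_W$, using Euclideanness throughout to keep everything semisimple with real spectrum.
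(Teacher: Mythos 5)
Your proposal does not close, and the gap is exactly at the decisive point. Once you have the $h$-orthogonal decomposition $\alg=\fie e\oplus\alg^{(1/2)}(e)\oplus W$ with $V=\{De:D\in\der(\alg,\mlt)\}\subseteq\alg^{(1/2)}(e)$, the lemma is equivalent to $\dim\alg^{(1/2)}(e)+\dim W\ge 2k+2$, and everything you prove before the final assembly (the vanishing of $P$ on $\alg^{(1/2)}(e)$, the nonzero $\fie e$-component of $v\mlt v$, the grading-shift property of $L_{\mlt}(De)$) is correct but does not yield this. The two claims you then invoke are both unsubstantiated: (i) that the image of $(v,v')\mapsto(v\mlt v')_{W}$ has dimension at least $k$ --- the parenthetical ``otherwise one could manufacture an idempotent of norm $<|e|_{h}$ inside $\fie e\oplus V$'' is not an argument, and $\fie e\oplus V$ is not even a subalgebra since $V\mlt V$ lands in $\fie e\oplus W$; and (ii) that exactness and self-adjointness of $L_{\mlt}(v)|_{W}$ ``force'' two further independent vectors --- a single very negative eigenvalue on a one-dimensional $W$ already accounts for $\tr L_{\mlt}(e)|_{W}=-(1+\tfrac{1}{2}\dim\alg^{(1/2)}(e))$, so nothing is forced without additional spectral input. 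You acknowledge this yourself in your last paragraph, so the proposal is a plan rather than a proof.

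The paper's argument is entirely different and much shorter: it never leaves the spectrum of $L_{\mlt}(e)$, which your opening paragraph sets up and then abandons. The $n$ eigenvalues of $L_{\mlt}(e)$ are $1$ (on $\fie e$), $1/2$ with multiplicity at least $k$ (on $V$, by Lemma \ref{derivationlemma}), and $n-1-k$ remaining eigenvalues $\la_{1},\dots,\la_{n-1-k}$. Exactness gives $\sum_{i}\la_{i}=-(k+2)/2$, minimality of $e$ gives $\la_{i}\le 1/2$ via \eqref{localpmaxspeclemma} of Lemma \ref{criticalpointlemma}, and then the Cauchy--Schwarz inequality
\begin{align*}
(k+2)^{2}=4\Bigl(\sum_{i}\la_{i}\Bigr)^{2}\le 4(n-k-1)\sum_{i}\la_{i}^{2}\le (n-k-1)^{2}
\end{align*}
yields $k+2\le n-k-1$, i.e.\ $n\ge 2k+3$. (Note the final step bounds $\sum_{i}\la_{i}^{2}$ by $(n-k-1)/4$, which uses $\la_{i}^{2}\le 1/4$ and hence a two-sided bound on the spectrum; if you rework your proof along these lines you should make sure you can justify the lower bound $\la_{i}\ge -1/2$ as well, since the cited lemma as stated only gives the upper bound.) The moral is that the missing idea is a quadratic constraint on the eigenvalue multiset, not a count of independent vectors produced by multiplying elements of $\alg^{(1/2)}(e)$.
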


\begin{proof}
By hypothesis there are $e \in \midem(\alg, \mlt, h)$ and $D_{1}, \dots, D_{k} \in \der(\alg, \mlt)$ such that $D_{1}(e), \dots, D_{k}(e)$ are linearly independent. 
By Lemma \ref{derivationlemma}, $D_{1}(e), \dots, D_{k}(e) \in \alg^{(1/2)}(e)$, so $1/2$ has multiplicity at least $k$ as an eigenvalue of $L_{\mlt}(e)$. Let $\la_{1}, \dots,\la_{n-1 -k}$ be the remaining (not necessarily distinct) elements of $\spec(e)$. Because $(\alg, \mlt)$ is exact, $\sum_{i = 1}^{n-1-k}\la_{i} = -(k+2)/2$. Since $e$ is minimal, by \eqref{localpmaxspeclemma} of Lemma \ref{criticalpointlemma}, $\la_{i} \leq 1/2$ for $1 \leq i \leq n-1 -k$. Hence, by the Schwarz inequality,
\begin{align}
\begin{split}
(k+2)^{2}= 4\left(\sum_{i = 1}^{n-k-1}\la_{i}\right)^{2} \leq 4(n-k-1)\sum_{i = 1}^{n-k-1}\la_{i}^{2} \leq (n-k-1)^{2}.
\end{split}
\end{align}
Simplifying this shows $2k + 3 \leq n$.
\end{proof}

\begin{corollary}
\label{5dimcorollary}
\leavevmode
\begin{enumerate}
\item If a Euclidean metrized exact commutative algebra $(\alg, \mlt, h)$ admits $D \in \der(\alg, \mlt)$, then either $D(\midem(\alg, \mlt, h)) = \{0\}$ or $\dim \alg \geq 5$. 
\item If a Euclidean Killing metrized exact commutative algebra $(\alg, \mlt)$ is spanned by its minimal idempotents and admits a nontrivial $D \in \der(\alg, \mlt)$, then $\dim \alg \geq 5$.
\end{enumerate}
\end{corollary}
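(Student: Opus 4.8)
The plan is to deduce both statements directly from Lemma \ref{derivationsizelemma}, which already does all the substantive work. For part (1), suppose $(\alg, \mlt, h)$ is a Euclidean metrized exact commutative algebra admitting a derivation $D \in \der(\alg, \mlt)$. If $D(\midem(\alg, \mlt, h)) \neq \{0\}$, then there is a minimal idempotent $e$ with $D(e) \neq 0$, so the integer $k = \dim\{D'(e): D' \in \der(\alg, \mlt)\}$ satisfies $k \geq 1$. Lemma \ref{derivationsizelemma} then gives $n = \dim \alg \geq 2k + 3 \geq 5$. This is the entire argument for the first part; it is essentially a restatement of the lemma.

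For part (2), assume $(\alg, \mlt)$ is Euclidean Killing metrized and exact, is spanned by its minimal idempotents, and admits a nontrivial derivation $D$, i.e.\ $D \neq 0$. The key point is that $D$ cannot annihilate every minimal idempotent: if $D(e) = 0$ for all $e \in \midem(\alg, \mlt, h)$, then since these idempotents span $\alg$ and $D$ is linear, $D = 0$, contradicting nontriviality. (Here one should note that the minimal idempotents exist and form a nonempty set by part \eqref{minimalidempotentlemma} of Lemma \ref{criticalpointlemma}, since a Killing metrized exact commutative algebra is automatically nontrivial as its Killing form is nondegenerate.) Hence $D(\midem(\alg, \mlt, h)) \neq \{0\}$, and part (1) applies to conclude $\dim \alg \geq 5$.

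I do not anticipate any real obstacle here: both parts are formal consequences of Lemma \ref{derivationsizelemma}, with the only mild subtlety being the observation in part (2) that a derivation killing a spanning set of idempotents must vanish. The one thing to be careful about is invoking the hypotheses of Lemma \ref{derivationsizelemma} correctly — it requires a \emph{Euclidean metrized exact} commutative algebra and the existence of a minimal idempotent $e$ with $k = \dim\{D(e) : D \in \der(\alg, \mlt)\} > 0$ — and confirming that in part (2) the Killing metrized hypothesis supplies both exactness-compatible structure and (via nondegeneracy and Lemma \ref{criticalpointlemma}) a nonempty set of minimal idempotents spanning the algebra by assumption.
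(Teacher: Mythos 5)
Your proposal is correct and follows the paper's proof: part (1) is a direct restatement of Lemma \ref{derivationsizelemma}, and part (2) reduces to part (1) by showing a derivation annihilating all minimal idempotents must vanish. The only (harmless) difference is in that last step: you argue directly that $D$ vanishes on a spanning set and hence is zero by linearity, whereas the paper routes through the $\tau_{\mlt}$-anti-self-adjointness of $D$ to conclude $D(\alg)\subset(\spn\midem(\alg,\mlt,\tau_{\mlt}))^{\perp}=\{0\}$; your version is the more economical of the two.
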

\begin{proof}
The first claim is immediate from Lemma \ref{derivationsizelemma}. Suppose $(\alg, \mlt)$ is a Euclidean Killing metrized exact commutative algebra spanned by $\midem(\alg, \mlt, \tau_{\mlt})$ and $D \in \der(\alg, \mlt)$. If $D(\midem(\alg, \mlt, h)) = \{0\}$, then, for $e \in \midem(\alg, \mlt, \tau_{\mlt})$, $0 = \tau_{\mlt}(De, x) =-\tau_{\mlt}(e, Dx)$ for all $x \in \alg$, so $D(\alg) \subset \left(\spn \midem(\alg, \mlt, \tau_{\mlt})\right)^{\perp} = \alg^{\perp} =\{0\}$, showing $D$ is trivial. Consequently, if $D$ is nontrivial, then, by the first claim, $\dim \alg \geq 5$.  
\end{proof}

\section{Quantifying nonassociativity: sectional nonassociativity}\label{nasection}
By the Beltrami theorem  (see \cite[section $4$]{Chern-Griffiths}), the Levi-Civita connection of a Riemannian metric is projectively flat if and only if the metric has constant sectional curvature. 
Here there is introduced a notion of sectional nonassociativity, modeled on the sectional curvature, such that the formally analogous statement for Killing metrized exact commutative algebras holds.
This notion of sectional nonassociativity facilitates quantifications of nonassociativity that are explored in Section \ref{nortoninequalitysection}.

\begin{definition}\label{sectdefinition}
Let $(\alg, \mlt, h)$ be a metrized commutative algebra of dimension at least $2$. Define the \emph{sectional nonassociativity} $\sect(x, y)$ of the $h$-nondegenerate subspace $\spn\{x, y\} \subset \alg$ to be
\begin{align}\label{sectnadefined}
\begin{split}
\sect(x, y) & =  \sect_{\mlt, h}(x, y) = \frac{-\mu(x, y, x, y)}{|x|_{h}^{2}|y|_{h}^{2} - h(x, y)^{2}}= \frac{-h([y, x, x], y)}{|x|_{h}^{2}|y|_{h}^{2} - h(x, y)^{2}}= \frac{h([x, x, y], y)}{|x|_{h}^{2}|y|_{h}^{2} - h(x, y)^{2}}\\
&= \frac{h(x\mlt x , y\mlt y) - h(x\mlt y, y \mlt x)}{|x|_{h}^{2}|y|_{h}^{2} - h(x, y)^{2}},
\end{split}
\end{align}
where $|x|^{2}_{h} = h(x, x)$, $|x\wedge y|^{2}_{h} = 2(|x|_{h}^{2}|y|_{h}^{2} - h(x, y)^{2})$, and $\mu$ is as in \eqref{hmudefined}.
\end{definition}

\begin{lemma}\label{sectnalemma}
Let $(\alg, \mlt, h)$ be a metrized commutative algebra of dimension at least $2$.
The quantity $\sect(x, y)$ defined by \eqref{sectnadefined} for linearly independent $x, y \in \alg$ spanning an $h$-nondegenerate subspace depends only on the subspace spanned by $x$ and $y$.
\end{lemma}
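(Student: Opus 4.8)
The plan is to exploit that the numerator and the denominator of \eqref{sectnadefined} are, respectively, a quartic expression in the pair $(x,y)$ built from a tensor with the symmetries of a metric curvature tensor, and the Gram determinant of $(x,y)$ with respect to $h$, and that under a change of basis of $\spn\{x,y\}$ both scale by the same factor, which then cancels.

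First I would fix another basis $x',y'$ of the two-dimensional subspace $\spn\{x,y\}$ and write $x' = ax+by$, $y' = cx+dy$ for an invertible matrix $M = \left(\begin{smallmatrix} a & b\\ c & d\end{smallmatrix}\right)$. The Gram matrix of $(x',y')$ equals $M$ times the Gram matrix of $(x,y)$ times $M^{t}$, so $|x'|_{h}^{2}|y'|_{h}^{2} - h(x',y')^{2} = (\det M)^{2}\bigl(|x|_{h}^{2}|y|_{h}^{2} - h(x,y)^{2}\bigr)$; in particular $\spn\{x',y'\}$ is $h$-nondegenerate exactly when $\spn\{x,y\}$ is, so the denominators on both sides are nonzero.

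Next I would show $\mu(x',y',x',y') = (\det M)^{2}\,\mu(x,y,x,y)$. Expanding $\mu(ax+by,\,cx+dy,\,ax+by,\,cx+dy)$ multilinearly, every term whose first two entries coincide or whose last two entries coincide vanishes, since the antisymmetries $\mu_{ijkl} = -\mu_{jikl} = -\mu_{ijlk}$ recorded after \eqref{hmudefined} give $\mu(z,z,\cdot,\cdot) = 0 = \mu(\cdot,\cdot,z,z)$. The four surviving terms have coefficients $a^{2}d^{2}$, $abcd$, $abcd$, $b^{2}c^{2}$ and values $\mu(x,y,x,y)$, $\mu(x,y,y,x) = -\mu(x,y,x,y)$, $\mu(y,x,x,y) = -\mu(x,y,x,y)$, $\mu(y,x,y,x) = \mu(x,y,x,y)$, and they sum to $(ad-bc)^{2}\mu(x,y,x,y)$. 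Only the pairwise antisymmetries of $\mu$ enter here; the first Bianchi identity $\mu_{[ijk]l} = 0$ is not needed.

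Dividing the two displayed scaling identities, the factor $(\det M)^{2}$ cancels and $\sect_{\mlt,h}(x',y') = \sect_{\mlt,h}(x,y)$, proving the lemma. There is no genuine obstacle; the whole content is the transformation law of the quartic form $\mu(x,y,x,y)$ under the $GL(2,\fie)$-action on the pair $(x,y)$, which holds precisely because $\mu$ has the curvature symmetries — equivalently, because $\mu$ descends, in its first and last pairs of slots, to a form on the exterior square — exactly as in the classical verification that sectional curvature depends only on the tangent plane.
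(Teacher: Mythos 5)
Your proof is correct and follows essentially the same route as the paper: both arguments show that the numerator and the Gram determinant each scale by $(ad-bc)^{2}$ under a change of basis of the plane, so the ratio is unchanged. Your use of the antisymmetries $\mu_{ijkl}=-\mu_{jikl}=-\mu_{ijlk}$ simply makes explicit the expansion that the paper's equation \eqref{xychange} asserts directly.
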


\begin{proof}
If $\bar{x} = ax + by$ and $\bar{y} = cx + dy$ span the subspace spanned by $x$ and $y$ then 
\begin{align}\label{xychange}
\begin{split}
&h(\bar{x}\mlt \bar{x}, \bar{y}\mlt \bar{y}) - h(\bar{x}\mlt \bar{y}, \bar{x}\mlt \bar{y}) = (ad - bc)^{2}\left(h(x\mlt x , y\mlt y) - h(x\mlt y, y \mlt x)\right),\\
&h(\bar{x}, \bar{y})^{2} - h(\bar{x}, \bar{x})h(\bar{y}, \bar{y}) = (ad - bc)^{2}\left(h(x, y)^{2} - h(x, x)h(y, y)\right),
\end{split}
\end{align}
from which it is apparent that $\sect(\bar{x}, \bar{y}) = \sect(x, y)$, so depends only on the span of $x$ and $y$.
\end{proof}

The dependence of $\sect =  \sect_{\mlt, h}$ on $\mlt$ and $h$ is indicated with subscripts when helpful. For $r, s \in \fiet$, $\sect_{s\mlt, rh} = s^{2}r^{-1}\sect_{\mlt, h}$, where $s\mlt$ means the multiplcation $sx\mlt y$. When $\mlt$ is scaled by $s$, $\tau_{\mlt}$ rescales by $s^{2}$, and it follows that $\sect_{\mlt, \tau_{\mlt}}$ is insensitive to such rescalings. This motivates Definition \ref{isectdefinition}.

\begin{definition}\label{isectdefinition}
The \emph{intrinsic sectional nonassociativity} of a two-dimensional $\tau_{\mlt}$-nondegenerate subspace of a Killing metrized commutative algebra $(\alg, \mlt)$ spanned by $x, y \in \alg$ is
\begin{align}\label{isectdefined}
\isect_{\mlt}(x, y) = \sect_{\mlt, \tau_{\mlt}}(x, y) = \frac{\tau_{\mlt}(x\mlt x , y\mlt y) - \tau_{\mlt}(x\mlt y, y \mlt x)}{\tau_{\mlt}(x, x)\tau_{\mlt}(y, y) - \tau_{\mlt}(x, y)^{2}}.
\end{align}
\end{definition}
The intrinsic sectional nonassociativity is an isomorpism invariant of Killing metrized commutative algebras.

A Euclidean metrized commutative algebra $(\alg,\mlt, h)$ has \emph{constant sectional nonassociativity $\ka$} if there is a constant $\ka \in \rea$ such that $\sect(x, y) = \ka$ for all subspaces $\spn\{x, y\} \subset \alg$. 

\begin{example}
The simplicial algebra $(\ealg^{n}(\rea), \mlt, \tau_{\mlt})$ has constant sectional nonassociativity $-1/(n-1)$.
\end{example}

\begin{lemma}\label{constantsectlemma}
A Euclidean metrized commutative algebra $(\alg, \mlt, h)$ of dimension at least $2$ has constant sectional nonassociativity $\ka$ if and only if
\begin{align}\label{csn}
[x, y, z] = \ka\left(h(x, y)z - h(y, z)x\right)
\end{align}
for all $x, y, z \in \alg$. In this case, $\ric_{\mlt} = \ka(\dim \alg - 1)h$. 
\end{lemma}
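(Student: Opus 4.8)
The plan is to transplant to this setting the classical fact that a tensor with the algebraic symmetries of a metric curvature tensor has constant sectional curvature $\ka$ precisely when it equals $\ka$ times the Kulkarni--Nomizu square of the metric. First I would rephrase the hypothesis: by Definition \ref{sectdefinition}, constant sectional nonassociativity $\ka$ means $-\mu(x,y,x,y) = \ka\bigl(h(x,x)h(y,y) - h(x,y)^{2}\bigr)$ for every $h$-nondegenerate plane $\spn\{x,y\}$. Because $h$ is positive definite, \emph{every} two-plane is $h$-nondegenerate, and when $x$ and $y$ are linearly dependent both sides vanish (the left-hand side by the antisymmetry $\mu_{ijkl} = -\mu_{jikl}$, the right-hand side by the equality case of the Cauchy--Schwarz inequality). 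Hence the polynomial identity $\mu(x,y,x,y) = \ka\bigl(h(x,y)^{2} - h(x,x)h(y,y)\bigr)$ holds for all $x,y\in\alg$.

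Next I would introduce the comparison tensor $T(a,b,c,d) = \ka\bigl(h(a,d)h(b,c) - h(a,c)h(b,d)\bigr)$ and set $W = \mu - T$, viewing $\mu_{ijkl}$ as the $4$-form of \eqref{hmudefined}. A direct check, which I would not write out, shows $T$ is antisymmetric in its first two and in its last two arguments, symmetric under exchanging the two pairs, and satisfies the first Bianchi identity, exactly as $\mu_{ijkl}$ does; and $T(x,y,x,y) = \ka\bigl(h(x,y)^{2} - h(x,x)h(y,y)\bigr)$, so by the previous paragraph $W(x,y,x,y) = 0$ for all $x,y$. The key step is then the standard polarization lemma: a tensor $W$ with metric curvature symmetries that vanishes on every argument of the form $(x,y,x,y)$ is identically zero. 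I would run the usual argument: polarizing $x\mapsto x+z$ and using the pair-exchange symmetry gives $2W(x,y,z,y)=0$, hence $W(x,y,z,y)=0$ (here $\chr\fie = 0$ lets us divide by $2$); polarizing $y\mapsto y+w$ gives $W(x,y,z,w) = -W(x,w,z,y)$; combining this with pair-exchange and the first-pair antisymmetry yields $W(x,y,z,w) = W(y,z,x,w)$, i.e.\ cyclic invariance in the first three slots, so the first Bianchi identity forces $3W = 0$ and thus $W=0$.

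With $\mu(a,b,c,d) = T(a,b,c,d)$ in hand, I would invoke \eqref{hmudefined}, which reads $\mu(a,b,c,d) = h([b,c,a],d)$: specializing to $\mu(z,x,y,d) = h([x,y,z],d) = \ka\bigl(h(x,y)h(z,d) - h(y,z)h(x,d)\bigr) = h\bigl(\ka(h(x,y)z - h(y,z)x),\,d\bigr)$ for all $d$ and using the nondegeneracy of $h$ gives $[x,y,z] = \ka\bigl(h(x,y)z - h(y,z)x\bigr)$, which is \eqref{csn}. Conversely, if \eqref{csn} holds then $\mu(x,y,x,y) = h([y,x,x],y) = \ka\bigl(h(x,y)^{2} - h(x,x)h(y,y)\bigr)$, so $\sect(x,y) = \ka$ on every nondegenerate two-plane, i.e.\ the algebra has constant sectional nonassociativity $\ka$.

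For the last assertion, assume \eqref{csn}. The endomorphism $z\mapsto[x,y,z] = \ka h(x,y)z - \ka h(y,z)x$ equals $\ka\bigl(h(x,y)\Id_{\alg} - x\tensor h(y,\dum)\bigr)$; on the other hand it is $L_{\mlt}(x\mlt y) - L_{\mlt}(x)L_{\mlt}(y)$, whose trace is $\ric_{\mlt}(x,y)$ by \eqref{ricciform}. Since $\tr\Id_{\alg} = \dim\alg$ and $\tr\bigl(x\tensor h(y,\dum)\bigr) = h(x,y)$, taking traces yields $\ric_{\mlt}(x,y) = \ka(\dim\alg-1)h(x,y)$, i.e.\ $\ric_{\mlt} = \ka(\dim\alg-1)h$. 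I expect the only real subtlety to be the index bookkeeping: choosing the comparison tensor $T$ with the argument order matching \eqref{hmudefined} so that the extracted associator is exactly \eqref{csn} and not a permuted variant, and confirming that $T$ carries precisely the symmetries of $\mu_{ijkl}$ so the polarization lemma applies; the polarization argument and the trace computation are then routine.
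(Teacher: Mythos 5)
Your proposal is correct and follows essentially the same route as the paper: the converse direction is a direct computation from Definition \ref{sectdefinition}, the forward direction applies the standard polarization lemma (a tensor with metric curvature symmetries vanishing on all arguments of the form $(x,y,x,y)$ is zero) to the difference between $\mu_{ijkl}$ and $\ka$ times the appropriate combination $2h_{k[i}h_{j]l}$, and the Ricci identity follows by tracing the resulting endomorphism identity. Your two-step polarization plus cyclic-invariance argument is just an expanded form of the paper's condensed index computation \eqref{asyms}, and your observation that positive definiteness makes every two-plane nondegenerate is a correct (and slightly more careful) justification of the passage from the hypothesis on planes to the polynomial identity on all of $\alg\times\alg$.
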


\begin{proof}
If there holds \eqref{csn}, then $\sect(x, y) = \ka$ by \eqref{sectnadefined}. Suppose a tensor $a_{ijkl}$ has the symmetries $a_{ijkl} = -a_{jikl} = - a_{ijlk}$ and $a_{[ijk]l} = 0$. If there holds $x^{i}y^{j}x^{k}y^{l}a_{ijkl} = 0$ for all $x, y \in \alg$, then 
\begin{align}\label{asyms}
0 = a_{ijkl} + a_{kjil} + a_{ilkj} + a_{klij} = 2a_{ijkl} - 2a_{jkil},
\end{align}
so that $a_{ijkl} = a_{jkil} = a_{kijl}$. Consequently $3a_{ijkl} = 3a_{ijkl} - 3a_{[ijk]l} = 2a_{ijkl} - a_{jkil} - a_{kijl} = 0$. That $(\alg, \mlt, h)$ has constant sectional nonassociativity $\ka$ means that $x^{i}y^{j}x^{k}y^{l}(2\ka h_{k[i}h_{j]l} - \mu_{ijkl}) =0$ for all $x, y \in \alg$. Taking $a_{ijkl} = \mu_{ijkl}$ in \eqref{asyms} yields $\mu_{ijkl} = 2\ka h_{k[i}h_{j]l}$, which implies \eqref{csn}. If there holds $\eqref{csn}$, then 
\begin{align}\label{constidentity}
L_{\mlt}(x\mlt y) - L_{\mlt}(x)L_{\mlt}(y) = \ka(h(x, y)\Id_{\alg} - x \tensor h(y, \dum))
\end{align}
and tracing \eqref{constidentity} shows $\ric_{\mlt} = \ka(\dim \alg - 1)h$. 
\end{proof}

More generallly, a metrized commutative algebra $(\alg, \mlt, h)$ over $\fie$, with $h$ possibly isotropic is defined to have constant sectional nonassociativity if there holds \eqref{csn}. This implies that that the sectional nonassociativity of any $h$-nondegenerate subspace is equal to $\ka$. 

\begin{corollary}\label{constantsectunitalizationcorollary}
A metrized commutative $\rea$-algebra $(\alg, \mlt, h)$ of dimension at least $2$ has constant sectional nonassociativity $\ka$ if and only if it is projectively associative with $\ric_{\mlt} = \ka(\dim \alg - 1)h$.
\end{corollary}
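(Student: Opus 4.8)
The plan is to deduce Corollary \ref{constantsectunitalizationcorollary} from the combination of Lemma \ref{constantsectlemma}, Lemma \ref{prebeltramilemma}, and Corollary \ref{thomascorollary} (or directly Lemma \ref{thomaslemma}). The statement is really a repackaging: ``constant sectional nonassociativity $\ka$'' is, by Lemma \ref{constantsectlemma}, the assertion that $[x,y,z] = \ka(h(x,y)z - h(y,z)x)$, which is a special case of projective associativity (take $c = \ka h$), and the same lemma already records $\ric_{\mlt} = \ka(\dim\alg-1)h$. So one direction is immediate. For the converse I would start from projective associativity together with $\ric_{\mlt} = \ka(\dim\alg - 1)h$ and recover \eqref{csn}.

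The key steps, in order. First, the forward implication: if $(\alg,\mlt,h)$ has constant sectional nonassociativity $\ka$, then by Lemma \ref{constantsectlemma} (in its form for possibly isotropic $h$ over a general field, which applies here with $\fie = \rea$) equation \eqref{csn} holds, hence $(\alg,\mlt)$ is projectively associative with witness $c = \ka h \in S^2\alg^\ast$, and the last sentence of Lemma \ref{constantsectlemma} gives $\ric_{\mlt} = \ka(\dim\alg - 1)h$. Second, the converse: suppose $(\alg,\mlt)$ is projectively associative with $\ric_{\mlt} = \ka(\dim\alg-1)h$. Projective associativity means there is $c \in S^2\alg^\ast$ with $[x,y,z] = c(y,z)x - c(x,y)z$; tracing this identity (as in the proof of Lemma \ref{thomaslemma}, where $L_{\mlt}(x\mlt y) - L_{\mlt}(x)L_{\mlt}(y) = c(y,\dum)\tensor x - c(x,y)\Id_\alg$) yields $\ric_{\mlt} = (1-n)c$ with $n = \dim\alg$, so $c = -\tfrac{1}{n-1}\ric_{\mlt} = -\ka h$. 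Hmm — I should double-check the sign: Lemma \ref{constantsectlemma} gives $\ric_{\mlt} = \ka(n-1)h$ from \eqref{csn}, and \eqref{csn} is the projective-associativity relation with $c = \ka h$, so the trace must give $\ric_{\mlt} = (1-n)c \cdot(-1)$... In any case the hypothesis $\ric_{\mlt} = \ka(n-1)h$ pins down $c = \ka h$, so $[x,y,z] = \ka(h(y,z)x - h(x,y)z)$, which up to relabeling is exactly \eqref{csn}, and hence $\sect(x,y) = \ka$ on every $h$-nondegenerate subspace. Third, I would remark that for $\rea$-algebras with $h$ Euclidean this is literally constant sectional nonassociativity in the sense of Definition \ref{sectdefinition}, while in general \eqref{csn} is taken as the definition anyway, so the equivalence is complete.

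The main (and really only) obstacle is bookkeeping of signs and index conventions: \eqref{csn} is stated as $[x,y,z] = \ka(h(x,y)z - h(y,z)x)$ whereas projective associativity in Definition \ref{intrinsicliftdefinition} is $[x,y,z] = c(y,z)x - c(x,y)z$, so one must be careful that ``$c = \ka h$'' is matched with the correct orientation and that the trace identity $\ric_{\mlt} = (1-n)c$ from Lemma \ref{thomaslemma} is applied consistently. Beyond that there is no real content: the corollary is a corollary of Lemma \ref{constantsectlemma} and the trace computation already done inside the proof of Lemma \ref{thomaslemma}. I would therefore write the proof as a short two-sentence argument citing those two results.

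\begin{proof}
If $(\alg, \mlt, h)$ has constant sectional nonassociativity $\ka$, then by Lemma \ref{constantsectlemma} there holds $[x, y, z] = \ka(h(x, y)z - h(y, z)x)$ for all $x, y, z \in \alg$, so $(\alg, \mlt)$ is projectively associative (with $c = \ka h \in S^{2}\alg^{\ast}$ in Definition \ref{intrinsicliftdefinition}) and, again by Lemma \ref{constantsectlemma}, $\ric_{\mlt} = \ka(\dim \alg - 1)h$. Conversely, suppose $(\alg, \mlt)$ is projectively associative with $\ric_{\mlt} = \ka(\dim \alg - 1)h$, and write $n = \dim \alg$. By definition there is $c \in S^{2}\alg^{\ast}$ such that $L_{\mlt}(x \mlt y) - L_{\mlt}(x)L_{\mlt}(y) = c(y, \dum)\tensor x - c(x, y)\id_{\alg}$ for all $x, y \in \alg$; tracing this identity, as in the proof of Lemma \ref{thomaslemma}, gives $\ric_{\mlt} = (1 - n)c$, so the hypothesis forces $c = -\ka h$. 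Hence $[x, y, z] = -\ka h(y, z)x + \ka h(x, y)z = \ka(h(x, y)z - h(y, z)x)$, which is \eqref{csn}, and by Lemma \ref{constantsectlemma} (or directly from \eqref{sectnadefined}) this means $(\alg, \mlt, h)$ has constant sectional nonassociativity $\ka$.
\end{proof}
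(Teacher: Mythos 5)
Your proof is correct and follows the same route as the paper, which simply declares the corollary immediate from Lemmas \ref{thomaslemma} and \ref{constantsectlemma}; your converse direction correctly pins down $c = -\ka h$ via the trace identity $\ric_{\mlt} = (1-n)c$. The only slip is the parenthetical in the forward direction: matching \eqref{csn} against the convention $[x,y,z] = c(y,z)x - c(x,y)z$ of Definition \ref{intrinsicliftdefinition} gives $c = -\ka h$, not $c = \ka h$, though this does not affect the validity of the argument since only the existence of some $c$ is needed.
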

\begin{proof}
This is immediate from Lemmas \ref{thomaslemma} and \ref{constantsectlemma}.
\end{proof}

Lemma \ref{beltramilemma} is the analogue of the theorem of Beltrami. It yields that a metrized commutative algebra has zero sectional nonassociativity if and only if it is associative.
 	
\begin{lemma}\label{beltramilemma}
For an $n$-dimensional Euclidean metrized commutative algebra $(\alg, \mlt, h)$ the following are equivalent:
\begin{enumerate}
\item\label{beltrami1} It has constant sectional nonassociativity $c$.
\item\label{beltrami2} It is conformally associative and there is $c \in \rea$ such that $\ric_{\mlt} = (n-1)c h$.
\item\label{beltrami3} It is projectively and conformally associative.
\end{enumerate}
In particular:
\begin{enumerate}
\setcounter{enumi}{3}
\item\label{beltrami4} Such an algebra has vanishing sectional nonassociativity if and only if it is associative.
\item\label{beltrami5} Such an algebra is Killing metrized with Killing Einstein constant $\ka = -(n-1)c$ if and only if it is exact. In this case, if $\ka \neq 0$, it is isomorphic to $\ealg^{n}(\rea)$.
\end{enumerate}
\end{lemma}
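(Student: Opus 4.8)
The plan is to prove the chain of equivalences \eqref{beltrami1}$\Leftrightarrow$\eqref{beltrami2}$\Leftrightarrow$\eqref{beltrami3} by combining the already-established results, then deduce \eqref{beltrami4} and \eqref{beltrami5} as corollaries. First I would note that \eqref{beltrami1}$\Leftrightarrow$\eqref{beltrami3} is essentially Corollary \ref{constantsectunitalizationcorollary} together with Lemma \ref{prebeltramilemma}: by Lemma \ref{constantsectlemma}, constant sectional nonassociativity $c$ is equivalent to the identity \eqref{csn}, which manifestly says $(\alg,\mlt,h)$ is projectively associative (take the bilinear form in Definition \ref{intrinsicliftdefinition}(2) to be $c\,h$) and also forces $\mu_{ijkl} = 2c\,h_{k[i}h_{j]l}$, which is pure trace, hence conformally associative; conversely, if $(\alg,\mlt,h)$ is projectively and conformally associative, Lemma \ref{prebeltramilemma} gives $\ric_{\mlt} = (n-1)c\,h$ for some $c$, and then the proof of Lemma \ref{prebeltramilemma} already shows \eqref{confnon} collapses to $\mu_{ijkl} = 2c\,h_{l[i}h_{j]k}$, i.e.\ \eqref{csn} holds, so Lemma \ref{constantsectlemma} gives constant sectional nonassociativity. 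For \eqref{beltrami1}$\Leftrightarrow$\eqref{beltrami2}: Lemma \ref{constantsectlemma} already records that constant sectional nonassociativity $c$ implies $\ric_{\mlt} = (n-1)c\,h$, and conformal associativity follows as above; conversely, conformal associativity plus $\ric_{\mlt}=(n-1)c\,h$ gives projective associativity by Lemma \ref{prebeltramilemma}, hence \eqref{beltrami3}, hence \eqref{beltrami1}.

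For claim \eqref{beltrami4}: associativity means $\mu_{ijkl}=0$, which by \eqref{sectnadefined} is exactly vanishing sectional nonassociativity; conversely, vanishing sectional nonassociativity is the case $c=0$ of \eqref{beltrami1}, and then \eqref{csn} reads $[x,y,z]=0$ for all $x,y,z$, i.e.\ associativity. (This is immediate and needs no real argument beyond citing \eqref{csn}.)

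For claim \eqref{beltrami5}: if $(\alg,\mlt,h)$ has constant sectional nonassociativity $c$, then by Lemma \ref{constantsectlemma} we have $\ric_{\mlt}=(n-1)c\,h$, and for an exact commutative algebra $\ric_{\mlt}=-\tau_{\mlt}$, so $\tau_{\mlt}=-(n-1)c\,h=\ka h$ with $\ka=-(n-1)c$, which is the Killing Einstein condition; conversely, if the algebra is Killing Einstein with $\tau_{\mlt}=\ka h$ and $\ka=-(n-1)c$, then $\ric_{\mlt}=-\tau_{\mlt}=(n-1)c\,h$, and since $n\le 3$ forces conformal associativity automatically while for $n\ge 3$ the algebra is conformally associative by hypothesis of \eqref{beltrami2} — wait, here I only have \eqref{beltrami1}, so I should instead argue: a commutative algebra with $[x,y,z]=c(h(x,y)z-h(y,z)x)$ has $\tr L_{\mlt}(x) = \tr L_{\mlt}(x\mlt e)$-type relations; more simply, tracing \eqref{csn} over the appropriate index (as in \eqref{constidentity}) and using $h$ nondegenerate one recovers $\tr L_{\mlt}(x\mlt y)-\tr L_{\mlt}(x)L_{\mlt}(y) = (n-1)c\,h(x,y)-c\,h(x,y)\tr L_{\mlt}(\Id\text{-correction})$; the cleanest route is: the algebra is Killing Einstein $\Rightarrow$ $\tau_{\mlt}$ is a nonzero multiple of $h$, and one must check exactness separately, but actually I realize the statement asserts the algebra \emph{is} Killing metrized with that constant \emph{iff} it is exact, so the forward direction (constant sect.\ nonassoc.\ $+$ exact $\Rightarrow$ Killing Einstein with $\ka=-(n-1)c$) is the substantive one and follows from $\ric_{\mlt}=-\tau_{\mlt}$; the reverse (Killing Einstein with that $\ka$ $\Rightarrow$ exact) should follow because $\ric_{\mlt}=(n-1)c\,h$ is already nondegenerate and, combined with projective associativity coming from \eqref{beltrami1}, Lemma \ref{thomaslemma} makes $\ric_{\mlt}$ invariant, and then $\tau_{\mlt}+\ric_{\mlt}$ measures the failure of exactness in a way that $\eqref{ricciform}$ pins down once $\tau_{\mlt}=\ka h = -\ric_{\mlt}$. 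Finally, when $\ka\neq 0$, $c\neq 0$, so $\tau_{\mlt}$ is a nonzero multiple of the positive-definite $h$; if $c<0$ then $\tau_{\mlt}$ is positive definite and Theorem \ref{confassclassificationtheorem}(2) (the algebra is exact, has positive-definite Killing form, and is projectively associative by \eqref{beltrami3}) gives $(\alg,\mlt)\cong\ealg^{n}(\rea)$; if $c>0$ then $\tau_{\mlt}$ is negative definite, but $\ealg^n(\rea)$ has constant intrinsic sectional nonassociativity $-1/(n-1)<0$, and one checks directly from \eqref{csn} that $c>0$ is incompatible with $h$ positive definite and the algebra being nontrivial (e.g.\ evaluate $h([x,x,y],y)=c|x\wedge y|^2/2 >0$ against $h(x\mlt x,y\mlt y)-h(x\mlt y,y\mlt x)\le h(x\mlt x,y\mlt y)$ and use that $\tr L_{\mlt}=0$ forces some multiplication endomorphism to have a nonpositive eigenvalue — more cleanly, a minimal idempotent $e$ exists by Lemma \ref{criticalpointlemma} and \eqref{csn} with $x=e$, $y\perp e$ gives $L_{\mlt}(e)y = -c\,h(e,e)y$ on $\eperp$, so the eigenvalue $-c\,h(e,e)$ has multiplicity $n-1$, and exactness forces $1+(n-1)(-c\,h(e,e))=0$, i.e.\ $c\,h(e,e)=1/(n-1)>0$, consistent only with $c>0$ — so in fact this case also occurs, and one needs the normalization $\tau_{\mlt}(e,e)=h(e,e)\cdot\ka$... ).

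The main obstacle is the case analysis in \eqref{beltrami5}: one must correctly reconcile the sign of $c$ with positive-definiteness of $h$ and determine whether $c>0$ is genuinely excluded or whether, after rescaling to $\tau_{\mlt}$, it reduces to the $\ealg^{n}(\rea)$ case anyway. The resolution is that the relevant invariant is the intrinsic sectional nonassociativity $\isect_{\mlt}=\sect_{\mlt,\tau_{\mlt}}$, which by the rescaling remark after Definition \ref{isectdefinition} equals $\sect_{\mlt,h}/\ka = c/(-(n-1)c) = -1/(n-1)$ whenever $\ka\neq 0$; thus the algebra has constant \emph{intrinsic} sectional nonassociativity $-1/(n-1)$ exactly matching $\ealg^{n}(\rea)$, and since it is exact, Killing metrized (as $\tau_{\mlt}=\ka h$ with $\ka\neq 0$ and $h$ definite makes $\tau_{\mlt}$ definite), and projectively associative, Theorem \ref{confassclassificationtheorem}(2) applies directly to give $(\alg,\mlt)\cong\ealg^{n}(\rea)$. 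So the clean final step is: replace $h$ by $|\ka|^{-1}\tau_{\mlt}$ (a positive-definite invariant metric since $\ka>0$ would make $\tau_{\mlt}$ negative definite, contradicting $h$ definite and $\tau_{\mlt}=\ka h$ unless $\ka>0$ — here I must be careful about which sign survives, and the honest statement is that $h$ definite and $\tau_{\mlt}=\ka h$ force $\ka>0$ when we want $\tau_{\mlt}$ definite of the same sign, hence $c=-\ka/(n-1)<0$), then invoke Theorem \ref{confassclassificationtheorem}(2).
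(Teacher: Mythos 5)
Your treatment of the equivalences \eqref{beltrami1}$\Leftrightarrow$\eqref{beltrami2}$\Leftrightarrow$\eqref{beltrami3} and of claim \eqref{beltrami4} is correct and follows the same route as the paper, namely combining Lemma \ref{constantsectlemma} with Lemma \ref{prebeltramilemma}. The problems are concentrated in claim \eqref{beltrami5}. For the direction (Killing Einstein with $\ka=-(n-1)c$ implies exact), your remark that ``$\tau_{\mlt}+\ric_{\mlt}$ measures the failure of exactness'' only yields $\tr L_{\mlt}(x\mlt y)=0$ for all $x,y$, which gives exactness only once you know $\alg\mlt\alg=\alg$; that is available (Killing metrized implies semisimple by Lemma \ref{dieudonnelemma}, hence $\alg\mlt\alg=\alg$), but you must say it. The paper instead observes that invariance of both $\ric_{\mlt}$ (from Lemma \ref{thomaslemma}) and $\tau_{\mlt}$ forces $\tr L_{\mlt}([x,y,z])=0$, which with \eqref{csn} and $c\neq 0$ gives $h(x,y)\tr L_{\mlt}(z)=h(y,z)\tr L_{\mlt}(x)$ and hence exactness directly. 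Either route works; yours has a small, fillable gap.

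The genuine error is in your sign analysis for the final assertion. For an idempotent $e$ and $y\in\eperp$, \eqref{csn} gives $[y,e,e]=L_{\mlt}(e)^{2}y-L_{\mlt}(e)y=-c\,h(e,e)y$, so $L_{\mlt}(e)$ does \emph{not} act as the scalar $-c\,h(e,e)$ on $\eperp$: your computation drops the $L_{\mlt}(e)^{2}$ term. Each eigenvalue $\la$ of $L_{\mlt}(e)$ on $\eperp$ instead satisfies the quadratic $\la(1-\la)=c\,h(e,e)$, and the conclusion you draw --- that exactness forces $c>0$ --- is backwards. The correct argument is: an idempotent exists by Lemma \ref{criticalpointlemma} (the multiplication is nontrivial since $c\neq 0$), exactness gives $1+\sum_{i}\la_{i}=0$, so some $\la_{i}<0$, whence $\la_{i}(1-\la_{i})<0$ and $c\,h(e,e)<0$; therefore $c<0$, $\ka=-(n-1)c>0$, and $\tau_{\mlt}=\ka h$ is positive definite, so the second part of Theorem \ref{confassclassificationtheorem} applies. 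Your fallback via the intrinsic sectional nonassociativity equalling $-1/(n-1)$ does not bypass this, because the theorem's hypothesis is positive definiteness of $\tau_{\mlt}$ itself, and your closing sentence assumes the sign it is supposed to establish. (The paper's own proof cites Theorem \ref{confassclassificationtheorem} without discussing the sign, so you were right to worry about it; the resolution you give, however, is incorrect.)
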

\begin{proof}
Lemma \ref{prebeltramilemma} shows the equivalence of \eqref{beltrami2} and \eqref{beltrami3} and its proof shows that these conditions imply \eqref{beltrami1}.
By \eqref{csn} of Lemma \ref{constantsectlemma}, if $(\alg, \mlt, h)$ has constant sectional nonassociativity, then $\mu_{ijkl} = 2ch_{l[i}h_{j]k}$. This implies $[x, y, z] = -ch(y, z)x + ch(x, y)z$, so there holds $[x, y, z]=  -ch(y, z)x + ch(x, y)z$ and $\mlt$ is projectively associative. Also, in this case, $\ric_{ij} = \ricc_{ji} = 2\mu_{pij}\,^{p} = (n-1)ch_{ij}$ and $\si = n(n-1)c$. In \eqref{confnon} there results $\om_{ijkl} = 0$. This shows \eqref{beltrami1} implies \eqref{beltrami2} and \eqref{beltrami3}.  
Claim \eqref{beltrami4} follows from \eqref{csn} of Lemma \ref{constantsectlemma}. 

If an algebra satisfies \eqref{beltrami1}-\eqref{beltrami3} with $\ka \neq 0$, then it is Ricci metrized by Lemma \ref{thomaslemma}. If it is moreover exact, this implies it is Killing metrized. If it is also Killing metrized, then the invariance of both the Ricci and Killing forms implies $0 = \tr L_{\mlt}([x, y, z])$ for all $x, y, z \in \alg$. By \eqref{csn} this implies $h(x, y)\tr L_{\mlt}(x) = h(y, z)\tr L_{\mlt}(x)$ for all $x, y, z \in \alg$. Choosing $y = x$ and $z$ such that $h(x, z) = 0$, there results $|x|^{2}_{h}\tr L_{\mlt}(z) = 0$. This shows $(\alg, \mlt)$ is exact and proves the first part of claim \eqref{beltrami5}. That in this case, if $\ka \neq 0$, the algebra is isomorphic to $\ealg^{n}(\rea)$ follows from Theorem \ref{confassclassificationtheorem}.
\end{proof}

Lemma \ref{deunitalizationsectlemma} shows that unitalization increases sectional nonassociativity.

\begin{lemma}\label{deunitalizationsectlemma}
For the deunitalization $(\alg, \mlt, h)$ of a unital metrized commutative algebra $(\balg, \star, g)$ with nonistropic unit $e$, for all $x, y \in \alg$ with $h$-nondegenerate span there holds
\begin{align}\label{secasecb}
g(e, e)\sect_{(\balg, \star, g)}(x, y)  = \sect_{(\alg, \mlt, h)}(x, y) + 1.
\end{align}
\end{lemma}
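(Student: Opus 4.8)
The plan is to compute both sides of \eqref{secasecb} by directly relating the multiplications $\star$ and $\mlt$ and the metrics $g$ and $h$. Recall from the definition of deunitalization that $h = g(e,e)^{-1}g$, that $\alg = e^{\perp}$ is the $g$-orthogonal complement of the unit $e$, and that for $x,y \in \alg$ one has the decomposition $x \star y = x\mlt y + g(x\star y, e)g(e,e)^{-1}e$. Since $\star$ is unital with unit $e$ and $g$ is $\star$-invariant, $g(x \star y, e) = g(x, y \star e) = g(x, y)$, so the key identity is
\begin{align}\label{starvsmlt}
x \star y = x\mlt y + \frac{g(x,y)}{g(e,e)}\,e = x\mlt y + h(x,y)\,e,
\end{align}
valid for all $x, y \in \alg$; moreover $x \mlt y \in \alg = e^{\perp}$. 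First I would record this identity and note in particular that $h(e,e)$ plays no role since $e \notin \alg$; the relevant norms are all measured by $h$ (equivalently by $g$ up to the overall factor $g(e,e)$).

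Next I would expand the numerator of $\sect_{(\balg,\star,g)}(x,y)$ using \eqref{starvsmlt}. We have $g(x\star x, y\star y) - g(x\star y, y\star x)$, and substituting \eqref{starvsmlt} into each factor and using bilinearity of $g$ together with $g(x\mlt x, e) = 0 = g(x\mlt y, e)$ (as $x\mlt x, x\mlt y \in e^{\perp}$), the cross terms collect into
\begin{align}
g(x\star x, y\star y) - g(x\star y, y\star x) = g(x\mlt x, y\mlt y) - g(x\mlt y, y\mlt x) + \big(h(x,x)h(y,y) - h(x,y)^2\big)g(e,e).
\end{align}
Now dividing through by the Gram determinant: for $\sect_{(\balg,\star,g)}$ the denominator is $g(x,x)g(y,y) - g(x,y)^2 = g(e,e)^2\big(h(x,x)h(y,y) - h(x,y)^2\big)$, while for $\sect_{(\alg,\mlt,h)}$ it is $h(x,x)h(y,y) - h(x,y)^2$ and the numerator is $g(e,e)^{-1}\big(g(x\mlt x, y\mlt y) - g(x\mlt y, y\mlt x)\big)$. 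Assembling these and multiplying by $g(e,e)$ yields precisely
\begin{align}
g(e,e)\,\sect_{(\balg,\star,g)}(x,y) = \frac{g(x\mlt x, y\mlt y) - g(x\mlt y, y\mlt x)}{g(e,e)\big(h(x,x)h(y,y) - h(x,y)^2\big)} + 1 = \sect_{(\alg,\mlt,h)}(x,y) + 1,
\end{align}
which is \eqref{secasecb}.

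There is essentially no serious obstacle here: the argument is a bookkeeping computation, and the only point requiring a little care is the nondegeneracy hypothesis. One must check that the span of $x$ and $y$ inside $\alg$ being $h$-nondegenerate is the same as its span inside $\balg$ being $g$-nondegenerate — this is immediate since $\alg \subset \balg$ and $h$ is a positive multiple of the restriction of $g$, so the Gram determinants differ only by the factor $g(e,e)^2 \neq 0$ — and that $x\mlt x$ and $x\mlt y$ genuinely lie in $e^{\perp}$, which holds because $\balg\mlt\balg \subset \alg$ for a retraction (as noted in the discussion preceding Lemma \ref{deunitlemma}). With those observations in place, the identity \eqref{starvsmlt} and linearity of $g$ do all the work.
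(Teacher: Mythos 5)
Your computation is correct and is exactly the computation the paper's proof alludes to (the paper merely states that the lemma ``follows from a computation using the definition of the deunitalization and \eqref{sectnadefined}''); the key identity $x\star y = x\mlt y + h(x,y)e$ is the specialization of \eqref{xstary}, and the rest is the bookkeeping you describe. The only cosmetic slip is calling $h$ a \emph{positive} multiple of $g|_{\alg}$ --- it is a nonzero multiple, $g(e,e)^{-1}$, which is all the Gram-determinant comparison needs.
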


\begin{proof}
This follows from a computation using the definition of the deunitalization and \eqref{sectnadefined}.
\end{proof}

\section{Bounds on sectional nonassociativity: generalizations of the Norton inequality}\label{nortoninequalitysection}
In Section \ref{characterizationsection} it was shown that certain quasi-associativity conditions modeled on notions of projective and conformal flatness for connections on vector bundles facilitate characterization of the simplicial algebras. This suggests further developing the analogy between the structure tensor of a metrized commutative algebra and a Levi-Civita connection with the idea of mimicking notions more refined than flatness, such as one-sided curvature bounds and curvature pinching.

Over $\fie = \rea$, a Euclidean metrized commutative algebra $(\alg, \mlt, h)$ has positive, nonnegative, zero, etc. sectional nonassociativity if the given qualifier is valid for every two-dimensional subspace of $\alg$. 
Lemma \ref{sectdirectsumlemma} shows that taking direct sums of Euclidean metrized commutative algebras preserves conditions such as nonpositive and nonnegative sectional nonassociativity.
\begin{lemma}\label{sectdirectsumlemma}
Let $(\alg, \mlt, h)$ be a Euclidean metrized commutative algebra and suppose that $\alg = \alg_{1} \oplus \alg_{2}$ is a decomposition into $h$-orthogonal ideals. 
\begin{enumerate}
\item\label{dssectmin} If $(\alg_{1}, \mlt, h)$ and $(\alg_{2}, \mlt, h)$ have sectional nonassociativities bounded below by constants $m_{1}$ and $m_{2}$, then $(\alg, \mlt, h)$ has sectional nonassociativity bounded below by $\min\{m_{1}, m_{2}, 0\}$.
\item\label{dssectmax} If $(\alg_{1}, \mlt, h)$ and $(\alg_{2}, \mlt, h)$ have sectional nonassociativities bounded above by constants $M_{1}$ and $M_{2}$, then $(\alg, \mlt, h)$ has sectional nonassociativity bounded above by $\max\{M_{1}, M_{2}, 0\}$.
\end{enumerate}
In particular, if $(\alg_{1}, \mlt, h)$ and $(\alg_{2}, \mlt, h)$ have nonpositive (resp. nonnegative) sectional nonassociativity, then $(\alg, \mlt, h)$ has nonpositive (resp. nonnegative) sectional nonassociativity.
\end{lemma}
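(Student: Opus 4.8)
The plan is to reduce everything to a direct computation of $\mu$ for the direct sum, using the fact that $\alg_1$ and $\alg_2$ are $h$-orthogonal ideals. Write an arbitrary element of $\alg$ as $x = x_1 + x_2$ with $x_i \in \alg_i$. Since $\alg_1 \mlt \alg_2 = \{0\}$ and $\alg_1, \alg_2$ are $h$-orthogonal, for $x = x_1 + x_2$ and $y = y_1 + y_2$ one has $x \mlt y = x_1 \mlt y_1 + x_2 \mlt y_2$, and hence
\begin{align}\label{musplit}
\begin{split}
\mu(x, y, x, y) &= h(x\mlt x, y\mlt y) - h(x \mlt y, y\mlt x) \\
&= \mu_1(x_1, y_1, x_1, y_1) + \mu_2(x_2, y_2, x_2, y_2),
\end{split}
\end{align}
where $\mu_i$ denotes the tensor $\mu$ of $(\alg_i, \mlt, h)$; the cross terms vanish because they involve $h$ of an element of $\alg_1$ against an element of $\alg_2$. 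So the numerator in \eqref{sectnadefined} splits as a sum over the two factors.

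Next I would set up the geometric bookkeeping. Fix a two-dimensional $h$-nondegenerate subspace $\Pi = \spn\{x, y\} \subset \alg$; since $h$ is positive definite it is automatically nondegenerate, so this is no restriction. Choose an $h$-orthonormal basis $\{u, v\}$ of $\Pi$, and decompose $u = u_1 + u_2$, $v = v_1 + v_2$ with $u_i, v_i \in \alg_i$. By Lemma \ref{sectnalemma} we may compute $\sect(x,y) = \sect(u,v)$, and with an orthonormal basis the denominator is $|u|_h^2|v|_h^2 - h(u,v)^2 = 1$, so by \eqref{musplit}
\begin{align}
\sect(u, v) = -\mu_1(u_1, v_1, u_1, v_1) - \mu_2(u_2, v_2, u_2, v_2).
\end{align}
Now for each $i$, either $u_i$ and $v_i$ are linearly independent — in which case, writing $A_i = |u_i|_h^2|v_i|_h^2 - h(u_i,v_i)^2 > 0$ (positivity because $h$ is definite), the quantity $-\mu_i(u_i,v_i,u_i,v_i)/A_i$ is the sectional nonassociativity of the plane $\spn\{u_i,v_i\}$ in $\alg_i$, hence lies in $[m_i, M_i]$ — or $u_i, v_i$ are dependent, in which case $\mu_i(u_i,v_i,u_i,v_i) = 0$ by the skew symmetry $\mu_i(\cdot,\cdot,a,b) = -\mu_i(\cdot,\cdot,b,a)$ together with bilinearity (a plane degenerates). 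Thus $-\mu_i(u_i, v_i, u_i, v_i) = A_i \cdot \sigma_i$ for some $\sigma_i \in [\min\{m_i,0\}, \max\{M_i,0\}]$ (allowing $A_i = 0$ in the degenerate case), and therefore
\begin{align}
\sect(u, v) = A_1 \sigma_1 + A_2 \sigma_2.
\end{align}
Finally I would observe the Gram identity $A_1 + A_2 + B = 1$, where $B = |u_1|_h^2|v_2|_h^2 + |u_2|_h^2|v_1|_h^2 - 2h(u_1,v_1)h(u_2,v_2) \geq 0$: indeed $1 = |u|^2|v|^2 - h(u,v)^2 = (|u_1|^2+|u_2|^2)(|v_1|^2+|v_2|^2) - (h(u_1,v_1)+h(u_2,v_2))^2$, and expanding gives $A_1 + A_2 + B$ with $B$ nonnegative by Cauchy–Schwarz applied factorwise. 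Hence $A_1, A_2 \geq 0$ and $A_1 + A_2 \leq 1$, so $\sect(u,v) = A_1\sigma_1 + A_2\sigma_2$ is a subconvex combination of $\sigma_1, \sigma_2$ and $0$ (the "missing weight" $B$ sits at $0$), giving $\sect(u,v) \geq \min\{m_1, m_2, 0\}$ in case \eqref{dssectmin} and $\sect(u,v) \leq \max\{M_1, M_2, 0\}$ in case \eqref{dssectmax}. The final assertion about nonpositive/nonnegative sectional nonassociativity is the special case $m_1 = m_2 = 0$ (resp. $M_1 = M_2 = 0$).

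The only real subtlety — the "main obstacle," though it is mild — is handling the degenerate situations cleanly: a plane $\Pi$ in $\alg$ need not project to a plane in either $\alg_i$ (it could project to a line or to $\{0\}$), so one cannot literally invoke the sectional nonassociativity of $\alg_i$ on $\spn\{u_i, v_i\}$. This is why the argument is phrased in terms of the numerators $-\mu_i(u_i,v_i,u_i,v_i) = A_i\sigma_i$ with $\sigma_i$ ranging over the closed interval (and $A_i$ possibly $0$), rather than in terms of ratios; the inclusion of $0$ among the three bounds in the statement is exactly what absorbs the leftover Gram weight $B$ and the degenerate projections. Everything else is the routine computation \eqref{musplit} and the elementary Gram-matrix inequality, which I would not spell out in further detail.
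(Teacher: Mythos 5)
Your proof is correct and follows essentially the same route as the paper's: split the numerator of $\sect$ over the two ideals, observe that the full Gram denominator equals the sum of the factorwise Gram determinants plus a nonnegative cross term, and conclude that $\sect(x,y)$ is a subconvex combination of the two factorwise sectional nonassociativities and $0$. Two cosmetic slips only: your formula for $\mu(x,y,x,y)$ in \eqref{musplit} has the opposite sign from the paper's convention \eqref{hmudefined} (harmless, since you use the numerator of $\sect$ consistently afterwards), and the ``resp.''\ pairing in your last sentence is reversed; on the other hand your explicit treatment of the degenerate projections ($A_i=0$) is slightly more careful than the paper's.
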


\begin{proof}
For $x, y \in \alg$, write $|x|^{2} = |x|^{2}_{h}$ and $\lb x, y \ra = h(x, y)$. Observe that, for any $x_{1}, x_{2}, y_{1}, y_{2} \in \alg$,
\begin{align}\label{splitsect2}
\begin{split}
|x_{1}|^{2}&|y_{2}|^{2} + |x_{2}|^{2}|y_{1}|^{2} - 2\lb x_{1}, y_{1}\ra\lb x_{2}, y_{2}\ra \\
&= \tfrac{1}{2}\left||x_{1}|y_{2} - |y_{1}|x_{2}\right|^{2} +  \tfrac{1}{2}\left||x_{2}|y_{1} - |y_{2}|x_{1}\right|^{2} \\
&\quad + \left(|x_{1}||y_{1}| - \lb x_{1}, y_{1}\ra\right)\lb x_{2}, y_{2}\ra + \left(|x_{2}||y_{2}| - \lb x_{2}, y_{2}\ra\right)\lb x_{1}, y_{1}\ra \\
& \geq  \tfrac{1}{2}\left||x_{1}|y_{2} - |y_{1}|x_{2}\right|^{2} +  \tfrac{1}{2}\left||x_{2}|y_{1} - |y_{2}|x_{1}\right|^{2} \geq 0 .
\end{split}
\end{align}
Let $x_{i} \in \alg_{i}$ be the $h$-orthogonal projection of $x \in \alg$. Note that $\lb x_{1}\wedge y_{1}, x_{2} \wedge y_{2}\ra = |x_{1}|^{2}|y_{2}|^{2} + |x_{2}|^{2}|y_{1}|^{2} - 2\lb x_{1}, y_{1}\ra\lb x_{2}, y_{2}\ra$.
Since $x_{1}\mlt y_{2} = 0$ and $h(x_{1}, y_{2}) = 0$, for linearly independent $x, y \in \alg$,
\begin{align}\label{splitsect1}
\begin{aligned}
\sect_{\alg, h}(x, y) &= \frac{\sect_{\alg_{1}, h}(x_{1}, y_{1})|x_{1}\wedge y_{1}|^{2} + \sect_{\alg_{2}, h}(x_{2}, y_{2})|x_{2}\wedge y_{2}|^{2}}{|x_{1}\wedge y_{1}|^{2} + |x_{2}\wedge y_{2}|^{2} +2\lb x_{1}\wedge y_{1}, x_{2} \wedge y_{2}\ra }\\
& = \sect_{\alg_{1}, h}(x_{1}, y_{1})r_{1} + \sect_{\alg_{2}, h}(x_{2}, y_{2})r_{2},\\
r_{i} &= \frac{|x_{i}\wedge y_{i}|^{2} }{|x_{1}\wedge y_{1}|^{2} + |x_{2}\wedge y_{2}|^{2} +2\lb x_{1}\wedge y_{1}, x_{2} \wedge y_{2}\ra } \in (0, 1] ;  i = 1,2.
\end{aligned}
\end{align}
Hence, by \eqref{splitsect2},
\begin{align}
m_{1}r_{1} + m_{2}r_{2} \leq \sect_{\alg, h}(x, y)= \sect_{\alg_{1}, h}(x_{1}, y_{1})r_{1} + \sect_{\alg_{2}, h}(x_{2}, y_{2})r_{2} \leq M_{1}r_{1} + M_{2}r_{2}
\end{align}
Using the inequalities $\min\{a, b, c\} \leq ar_{1} + br_{2} + c(1 -r_{1} - r_{2}) \leq \max\{a, b, c\}$ for $r_{1}, r_{2} \in [0, 1]$, with $a = m_{1}$ and $b = m_{2}$ or $a = M_{1}$ and $b = M_{2}$, yields \eqref{dssectmin} and \eqref{dssectmax}.
\end{proof}

\begin{example}\label{lietensorexample}
It is more difficult to relate the sectional nonassociativity of a tensor product to the sectional nonassociativities of its factors, but in some cases something can be said.

For a compact semisimple real Lie algebra $\g$, by the Cauchy-Schwarz inequality, the number
\begin{align}
\bw(\g) = \sup_{x, y \in \g: x \wedge y \neq 0}\frac{-B_{\g}([x, y], [x, y])}{B_{\g}(x, x)B_{\g}(y, y) - B_{g}(x, y)^{2}}
\end{align}
is positive and finite. Its value is a basic invariant of $(\g, [\dum, \dum])$. In some cases it can be estimated. This estimation requires the Böttcher-Wenzel inequality stated in Theorem \ref{bwtheorem}. 

\begin{lemma}\label{ghboundlemma}
For compact semisimple real Lie algebras $\g$ and $\h$, the sectional nonassociativity of the subspace of $(\g \tensor \h, \mlt, \tau_{\mlt})$ spanned by decomposable elements $a_{1}\tensor b_{1}, a_{2}\tensor b_{2} \in \g \tensor \h$ satisfies
\begin{align}\label{sectghbounds}
0 \geq \sect(a_{1}\tensor b_{1}, a_{2}\tensor b_{2}) \geq - \bw(\g)\bw(\h).
\end{align}
\end{lemma}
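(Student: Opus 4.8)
The plan is to compute $\sect(a_1\tensor b_1, a_2\tensor b_2)$ directly from the definition \eqref{sectnadefined} using the tensor product structure, and then reduce the two inequalities to the Cauchy--Schwarz-type estimates encoded in $\bw(\g)$ and $\bw(\h)$ together with the Böttcher--Wenzel inequality. First I would record the relevant structural facts from Lemma \ref{mcatensorlemma}: on $(\g\tensor\h,\mlt)$ the multiplication is $(a_1\tensor b_1)\mlt(a_2\tensor b_2) = [a_1,a_2]_\g\tensor[b_1,b_2]_\h$ (with the Lie bracket playing the role of the factor multiplications), and the Killing form of the tensor product is $\tau_{\mlt} = B_\g\tensor B_\h$. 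Writing $\lb\dum,\dum\ra_\g = -B_\g$ and $\lb\dum,\dum\ra_\h = -B_\h$ for the positive definite forms (compact semisimple), the numerator $-\mu(x,y,x,y) = \tau_{\mlt}(x\mlt x, y\mlt y) - \tau_{\mlt}(x\mlt y, y\mlt x)$ becomes, after substituting $x = a_1\tensor b_1$, $y = a_2\tensor b_2$ and using $[a_i,a_i]_\g = 0$,
\begin{align}
-\mu(x,y,x,y) = -\tau_{\mlt}\bigl([a_1,a_2]_\g\tensor[b_1,b_2]_\h,\ [a_2,a_1]_\g\tensor[b_2,b_1]_\h\bigr) = -B_\g([a_1,a_2]_\g,[a_1,a_2]_\g)\,B_\h([b_1,b_2]_\h,[b_1,b_2]_\h),
\end{align}
which equals $-\lb[a_1,a_2]_\g,[a_1,a_2]_\g\ra_\g\,\lb[b_1,b_2]_\h,[b_1,b_2]_\h\ra_\h \le 0$ since each factor is $\le 0$ (norms of brackets in the negative of a positive form). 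This immediately gives the upper bound $\sect(x,y)\le 0$, because the denominator $|x|_h^2|y|_h^2 - h(x,y)^2$ is positive by the assumed nondegeneracy of the span plus positive definiteness.

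Next I would handle the lower bound. The numerator factors as a product of two terms, each of which is the (negative of the) numerator of the corresponding ``Lie sectional curvature''; the denominator, by the tensor product Gram matrix computation, factors as well: for decomposable elements,
\begin{align}
|x|_h^2|y|_h^2 - h(x,y)^2 = |a_1|_\g^2|b_1|_\h^2|a_2|_\g^2|b_2|_\h^2 - \lb a_1,a_2\ra_\g^2\lb b_1,b_2\ra_\h^2.
\end{align}
This denominator does not factor as cleanly as the numerator, so the key inequality I need is: the product term $|a_1|_\g^2|a_2|_\g^2|b_1|_\h^2|b_2|_\h^2 - \lb a_1,a_2\ra_\g^2\lb b_1,b_2\ra_\h^2$ dominates both $\bigl(|a_1|_\g^2|a_2|_\g^2 - \lb a_1,a_2\ra_\g^2\bigr)|b_1|_\h^2|b_2|_\h^2$ and the analogous expression with the roles of $\g$ and $\h$ swapped (indeed it dominates their sum up to the cross term). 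Then, using the defining bound of $\bw$, namely $-B_\g([a_1,a_2]_\g,[a_1,a_2]_\g)\le \bw(\g)\bigl(|a_1|_\g^2|a_2|_\g^2 - \lb a_1,a_2\ra_\g^2\bigr)$ and similarly for $\h$, I multiply the two bounds and compare against the factored denominator. The arithmetic reduces to showing
\begin{align}
\sect(x,y) = \frac{-\lb[a_1,a_2]_\g,[a_1,a_2]_\g\ra_\g\,\lb[b_1,b_2]_\h,[b_1,b_2]_\h\ra_\h}{|a_1|_\g^2|b_1|_\h^2|a_2|_\g^2|b_2|_\h^2 - \lb a_1,a_2\ra_\g^2\lb b_1,b_2\ra_\h^2} \ge -\bw(\g)\bw(\h),
\end{align}
which follows once I show the numerator (an absolute value, after flipping signs) is at most $\bw(\g)\bw(\h)$ times the denominator. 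Writing $P = |a_1|_\g^2|a_2|_\g^2$, $Q = \lb a_1,a_2\ra_\g^2$, $R = |b_1|_\h^2|b_2|_\h^2$, $S = \lb b_1,b_2\ra_\h^2$, with $0\le Q\le P$ and $0\le S\le R$ by Cauchy--Schwarz, this is the elementary inequality $(P-Q)(R-S)\le PR - QS$, which holds since $PR - QS - (P-Q)(R-S) = PS + QR - 2QS = (P-Q)S + Q(R-S) + QS \ge QS\ge 0$ — actually more simply $PS + QR - 2QS \ge 2\sqrt{PS\cdot QR} - 2QS \ge 0$ is not needed; the expansion $PS+QR-2QS = S(P-Q) + Q(R-S) + QS \ge 0$ suffices.

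Assembling: the upper bound is immediate from sign considerations; the lower bound combines the factorization of the numerator, the definition of $\bw(\g)$ and $\bw(\h)$ (each a finite supremum by Cauchy--Schwarz applied to the Killing form of a compact semisimple Lie algebra, as noted just before Lemma \ref{ghboundlemma}), and the elementary inequality $(P-Q)(R-S)\le PR-QS$ above. The main obstacle I anticipate is purely bookkeeping: being careful that the denominator $|x|_h^2|y|_h^2 - h(x,y)^2$ for decomposable $x,y$ genuinely factors as $PR - QS$ (this uses that $h = \tau_{\mlt} = B_\g\tensor B_\h$ on decomposables is multiplicative) and that the hypothesis ``$x\wedge y\ne 0$'' together with positive definiteness guarantees $PR - QS > 0$ so that the division is legitimate and sign-preserving. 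Nowhere is the Böttcher--Wenzel inequality (Theorem \ref{bwtheorem}) actually needed for this decomposable-element statement — it would enter only if one wanted bounds on arbitrary (non-decomposable) elements of $\g\tensor\h$, or an explicit evaluation of $\bw(\so(n))$; I would remark on this but not pursue it here.
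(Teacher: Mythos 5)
Your proposal is correct and follows essentially the same route as the paper: the upper bound from the sign of $-B_{\g}([a_{1},a_{2}],[a_{1},a_{2}])B_{\h}([b_{1},b_{2}],[b_{1},b_{2}])$ over a positive denominator, and the lower bound by applying the defining inequality of $\bw(\g)$ and $\bw(\h)$ factorwise and then the elementary estimate $(P-Q)(R-S)\le PR-QS$. (Minor slip: $PR-QS-(P-Q)(R-S)=PS+QR-2QS=S(P-Q)+Q(R-S)$, without the extra $+QS$ you wrote; the conclusion is unaffected, and your observation that Theorem \ref{bwtheorem} is only needed to evaluate $\bw$ numerically, not for the lemma itself, is accurate.)
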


\begin{proof}
The upper bound in \eqref{sectghbounds} follow from 
\begin{align}\label{sectgh1}
\begin{split}
\sect(a_{1}\tensor b_{1}, a_{2}\tensor b_{2}) & = -\tfrac{B_{\g}([a_{1}, a_{2}], [a_{1}, a_{2}])B_{\h}([b_{1}, b_{2}], [b_{1}, b_{2}])}{B_{\g}(a_{1}, a_{1})
B_{\g}(a_{2}, a_{2})B_{\h}(b_{1}, b_{1})
B_{\h}(b_{2}, b_{2}) - B_{\g}(a_{1}, a_{2})^{2}B_{\h}(b_{1}, b_{2})^{2}}\leq 0.
\end{split}
\end{align}
Because $\g$ and $\h$ are compact, $-B_{\g}([a_{1}, a_{2}], [a_{1}, a_{2}]) \geq 0$ and $-B_{\h}([b_{1}, b_{2}], [b_{1}, b_{2}])\geq 0$, so, by definition of $\bw(\g)$ and $\bw(\h)$,
\begin{align}
\begin{split}
-&\tfrac{\sect(a_{1}\tensor b_{1}, a_{2}\tensor b_{2}) }{\bw(\g)\bw(\h)} = \tfrac{1}{\bw(\g)\bw(\h)}\tfrac{\left(-B_{\g}([a_{1}, a_{2}], [a_{1}, a_{2}])\right)\left(-B_{\h}([b_{1}, b_{2}], [b_{1}, b_{2}])\right)}{B_{\g}(a_{1}, a_{1})
B_{\g}(a_{2}, a_{2})B_{\h}(b_{1}, b_{1})B_{\h}(b_{2}, b_{2}) - B_{\g}(a_{1}, a_{2})^{2}B_{\h}(b_{1}, b_{2})^{2}}\\
& \leq \tfrac{\left(B_{\g}(a_{1}, a_{1})B_{\g}(a_{2}, a_{2}) - B_{\g}(a_{1}, a_{2})^{2} \right)\left(B_{\h}(b_{1}, b_{1})B_{\h}(b_{2}, b_{2}) - B_{\h}(b_{1}, b_{2})^{2} \right)}{B_{\g}(a_{1}, a_{1})B_{\g}(a_{2}, a_{2})B_{\h}(b_{1}, b_{1})B_{\h}(b_{2}, b_{2}) - B_{\g}(a_{1}, a_{2})^{2}B_{\h}(b_{1}, b_{2})^{2}}\\
& =1 - \tfrac{B_{\g}(a_{1}, a_{2})^{2}\left(B_{\h}(b_{1}, b_{1})B_{\h}(b_{2}, b_{2}) - B_{\h}(b_{1}, b_{2})^{2} \right) + B_{\h}(b_{1}, b_{2})^{2}\left(B_{\g}(a_{1}, a_{1})B_{\g}(a_{2}, a_{2}) - B_{\g}(a_{1}, a_{2})^{2} \right)}{B_{\g}(a_{1}, a_{1})B_{\g}(a_{2}, a_{2})B_{\h}(b_{1}, b_{1})B_{\h}(b_{2}, b_{2}) - B_{\g}(a_{1}, a_{2})^{2}B_{\h}(b_{1}, b_{2})^{2}}\leq 1,
\end{split}
\end{align}
which shows the lower bound in \eqref{sectghbounds}.
\end{proof} 
For example, for $\su(n)$ and $\so(n)$, $B_{\su(n)}(X, Y) =2nf(X, Y)$ and $B_{\so(n)}(X, Y) = (n-2)f(X, Y)$ where $f(X, Y) = \tr \bar{X}^{t}Y$, so it follows from the Böttcher-Wenzel inequality, Theorem \ref{bwtheorem}, that
\begin{align}
&\bw(\so(n)) \leq \tfrac{2}{n-2},& &\bw(\su(n)) \leq \tfrac{1}{n}.
\end{align}
For the special case of $\so(n)$, viewed as skew-symmetric matrices, equipped with the Frobenius norm $f$, the essentially equivalent quantity $ \sup_{x, y \in \g: x \wedge y \neq 0}\tfrac{|[X, Y]|_{f}}{|X|_{f}|Y|_{f}}$ was shown to equal $\sqrt{2}$ if $n \geq 4$ in \cite[Theorem $6$]{Bloch-Iserles}. By Lemma \ref{ghboundlemma} thee estimates yield sharp numerical bounds on the sectional nonassociativites of subspaces spanned by decomposable elements in algebras such as $\so(m)\tensor \so(n)$, $\so(m) \tensor \su(n)$, and $\su(m)\tensor \su(n)$.

Note that Lemma \ref{ghboundlemma} does not imply that $\g \tensor \h$ has nonpositive sectional nonassociativity. In fact, Theorem \ref{compactnahmtheorem} shows that, for any compact simple real Lie algebra $\g$, $\so(3)\tensor\g$ has sectional nonassociativities of both signs.
\end{example}

From \eqref{sectnadefined} it is apparent that a metrized commutative algebra $(\alg, \mlt, h)$ has nonnegative sectional nonassociativity if and only if for all $x, y \in \alg$ there holds
\begin{align}\label{norton}
0 \leq h([x, x, y], y) = h(x\mlt x, y \mlt y) - h(x \mlt y, x \mlt y).
\end{align}
The inequality \eqref{norton} is known as \emph{Norton's inequality}. Similarly, nonpositive sectional nonassociativity is equivalent to the \emph{reverse Norton's inequality}, which is \eqref{norton} with the inequality reversed.
From the point of view taken here, Norton's inequality is analogous to nonpositive sectional curvature. The notion of sectional nonassociativity facilitates quantifications of nonassociativity more refined than Norton's inequality.

For a different perspective on the importance of the Norton inequality, see the discussion of Majorana involutions and axial vectors in \cite[Chapter $8.6$]{Ivanov}.

\begin{example}\label{monsternortonexample}
A fundamental fact about the $196884$-dimensional Conway-Griess algebra described in Example \ref{monsterexample}, established in \cite[section $16$]{Conway-monster}, is that it satisfies Norton's inequality \eqref{norton}. Hence it has nonnegative sectional nonassociativity, and by \eqref{secasecb} it follows that the $196883$-dimensional nonunital Griess algebra $(\alg, \star, h)$ has sectional nonassociativity greater than or equal to $-1$.
\end{example}

\begin{example}\label{voanortonexample}
By \cite[Theorem $6.3$]{Miyamoto-griessalgebras}, if an OZ VOA defined over $\rea$ has a positive definite bilinear form, then the Norton inequality holds on its Griess algebra. In the language used here this means that such a Griess algebra has nonnegative sectional nonassociativity. In particular this means that the (unital) Griess algebra of the moonshine VOA has nonnegative sectional nonassociativity.
\end{example}

\begin{lemma}\label{complexidempotentlemma}
Let $(\alg, \mlt, h)$ be a Euclidean metrized commutative algebra.
\begin{enumerate}
\item\label{complexzeroclaim} If $a + \j b \in \szero(\alg\tensor_{\rea}\com, \mlt)$, then
\begin{align}\label{complexszerosect}
\sect(a, b) = \tfrac{|a \mlt a|^{2}_{h}}{|a|_{h}^{2}|b|^{2}_{h} - h(a, b)^{2}} \geq 0,
\end{align}
with equality if and only if $a$ and $b$ span a trivial subalgebra of $(\alg, \mlt)$.
\item If $a + \j b \in \idem(\alg\tensor_{\rea}\com, \mlt)$, then
\begin{align}\label{complexidemsect}
\sect(a, b) = \tfrac{|b \mlt b|^{2}_{h} + |a \mlt b|^{2}_{h}}{|a|_{h}^{2}|b|^{2}_{h} - h(a, b)^{2}} \geq 0,
\end{align}
with equality if and only if $b = 0$ and $a\in \idem(\alg, \mlt)$.
\end{enumerate}
\end{lemma}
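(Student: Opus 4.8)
The plan is to expand the complexified square-zero and idempotent conditions into their real and imaginary parts, and then substitute these relations into the formula \eqref{sectnadefined} for $\sect(a,b)$. Throughout write $\lb u, v\ra = h(u,v)$ and $|u|^{2} = h(u,u)$, and recall from \eqref{sectnadefined} that
\begin{align*}
\sect(a,b) = \frac{\lb a\mlt a, b\mlt b\ra - \lb a\mlt b, a\mlt b\ra}{|a|^{2}|b|^{2} - \lb a,b\ra^{2}}.
\end{align*}
The multiplication on $\alg \tensor_{\rea}\com$ extends $\mlt$ $\com$-bilinearly, so for $a + \j b$ one has $(a+\j b)\mlt(a+\j b) = (a\mlt a - b\mlt b) + \j\,(2\, a\mlt b)$.

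For claim \eqref{complexzeroclaim}, the hypothesis $a + \j b \in \szero$ means $(a+\j b)\mlt(a+\j b) = 0$, which by separating real and imaginary parts gives $a\mlt a = b\mlt b$ and $a\mlt b = 0$. Substituting into the numerator of $\sect(a,b)$: the term $\lb a\mlt b, a\mlt b\ra$ vanishes, and $\lb a\mlt a, b\mlt b\ra = \lb a\mlt a, a\mlt a\ra = |a\mlt a|^{2}_{h}$, which yields \eqref{complexszerosect}. Nonnegativity is then immediate since $h$ is positive definite (the denominator $|a|^{2}|b|^{2} - \lb a,b\ra^{2}$ is positive by Cauchy--Schwarz whenever $a, b$ are linearly independent, which they are since $a + \j b \neq 0$ and the map is well-defined on the span). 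Equality holds iff $a\mlt a = 0$; combined with $a\mlt b = 0$ and $b\mlt b = a\mlt a = 0$, this says precisely that $\spn\{a,b\}$ is a trivial subalgebra of $(\alg, \mlt)$.

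For the second claim, $a + \j b \in \idem$ means $(a+\j b)\mlt(a+\j b) = a + \j b$; separating parts gives $a\mlt a - b\mlt b = a$ and $2\, a\mlt b = b$. Now compute the numerator of $\sect(a,b)$. Using $a\mlt a = a + b\mlt b$ and invariance of $h$, one gets $\lb a\mlt a, b\mlt b\ra = \lb a, b\mlt b\ra + |b\mlt b|^{2}_{h}$; also $\lb a\mlt b, a\mlt b\ra = \tfrac14|b|^{2}_{h}$ is not directly what we want, so instead keep $\lb a\mlt b, a\mlt b\ra = |a\mlt b|^{2}_{h}$, and the cross term $\lb a, b\mlt b\ra = \lb a\mlt b, b\ra = \tfrac12\lb b,b\ra = \tfrac12|b|^2_h$; but more to the point, the claimed identity \eqref{complexidemsect} suggests that $\lb a, b\mlt b\ra$ cancels against a contribution of $|a\mlt b|^2_h$. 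Indeed $|a\mlt b|^2_h = \lb a\mlt b, a\mlt b\ra = \lb a, b\mlt(a\mlt b)\ra$ by invariance; the cleaner route is to write the numerator as $\lb b\mlt b + a, b\mlt b\ra - |a\mlt b|^2_h = |b\mlt b|^2_h + \lb a, b\mlt b\ra - |a\mlt b|^2_h$ and then use $\lb a, b\mlt b\ra = \lb a\mlt b, b\ra = \lb 2 a\mlt b, b\ra \cdot \tfrac12$... The main obstacle, and where I would spend care, is exactly this bookkeeping: getting the cross term $\lb a, b\mlt b\ra$ to combine correctly so that the numerator reduces to $|b\mlt b|^2_h + |a\mlt b|^2_h$ as in \eqref{complexidemsect}. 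Once that algebra is pinned down, nonnegativity follows from positive-definiteness of $h$, and the equality case $b\mlt b = 0$ and $a\mlt b = 0$, fed back into the idempotent relations $a\mlt a - b\mlt b = a$ and $2\, a\mlt b = b$, forces $b = 0$ and $a\mlt a = a$, i.e. $a \in \idem(\alg, \mlt)$. I expect the whole proof to be short once the sign bookkeeping in the idempotent case is settled; the paper itself only says "This follows from a computation using the definition," so a terse but complete version of the above is what I would write.
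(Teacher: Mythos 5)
Your approach is exactly the paper's: complexify, split the square-zero/idempotent condition into real and imaginary parts, and substitute into \eqref{sectnadefined}. Part \eqref{complexzeroclaim} is complete and correct (though your parenthetical justification of linear independence of $a$ and $b$ from $a+\j b\neq 0$ is not right --- one of them could vanish; the statement implicitly assumes $\sect(a,b)$ is defined, as does the paper).

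In part (2) you stop exactly at the step that needs to be done, and you say so yourself. The missing line is: by invariance and the relation $b = 2\,a\mlt b$,
\begin{align*}
h(a, b\mlt b) = h(a\mlt b, b) = h(a\mlt b, 2\,a\mlt b) = 2\,|a\mlt b|^{2}_{h},
\end{align*}
so the numerator of $\sect(a,b)$ is
\begin{align*}
h(a\mlt a, b\mlt b) - |a\mlt b|^{2}_{h} = h(b\mlt b + a, b\mlt b) - |a\mlt b|^{2}_{h} = |b\mlt b|^{2}_{h} + 2|a\mlt b|^{2}_{h} - |a\mlt b|^{2}_{h} = |b\mlt b|^{2}_{h} + |a\mlt b|^{2}_{h},
\end{align*}
which is \eqref{complexidemsect}. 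Note that you had already computed the two ingredients --- $h(a, b\mlt b) = \tfrac12|b|^{2}_{h}$ and (implicitly) $|a\mlt b|^{2}_{h} = \tfrac14|b|^{2}_{h}$ --- and these are consistent with $h(a,b\mlt b) = 2|a\mlt b|^{2}_{h}$; you just needed to express the cross term in terms of $|a\mlt b|^{2}_{h}$ rather than $|b|^{2}_{h}$ before combining. Your treatment of the equality case is then correct: $b\mlt b = 0$ and $a\mlt b = 0$ force $b = 2\,a\mlt b = 0$ and $a\mlt a = a$.
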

\begin{proof}
That $a + \j b \in \szero(\alg\tensor_{\rea}\com, \mlt)$ is equivalent to the equations $a\mlt a = b \mlt b$ and $a \mlt b  = 0$, and in \eqref{sectnadefined} these yield \eqref{complexszerosect}. If equality holds in \eqref{complexszerosect}, then $b\mlt b = a \mlt a = 0$, so $a$ and $b$ generate a trivial subalgebra. 
Suppose $a + \j b \in \idem(\alg\tensor_{\rea}\com, \mlt)$, so that $a\mlt a - b \mlt b = a$ and $2 a \mlt b = b$. Then 
\begin{align}
\begin{split}
h(a\mlt a, b\mlt b) & = h(b \mlt b + a, b\mlt b) = |b \mlt b|^{2}_{h} + h(a\mlt b, b) = |b\mlt b|^{2}_{h} + 2|a \mlt b|^{2}_{h},
\end{split}
\end{align}
so 
\begin{align}
\begin{split}
\sect(a, b) & =  \tfrac{h(a\mlt a , b\mlt b) -  |a \mlt b|^{2}_{h}}{|a|_{h}^{2}|b|^{2}_{h} - h(a, b)^{2}} = \tfrac{|b \mlt b|^{2}_{h} + |a \mlt b|^{2}_{h}}{|a|_{h}^{2}|b|^{2}_{h} - h(a, b)^{2}} \geq 0.
\end{split}
\end{align}
Equality holds if and only if $a \mlt b = 0$ and $b \mlt b = 0$, which imply that $b = 0$ and $a \mlt a = a$.
\end{proof}

\begin{corollary}\label{nocscorollary}
In a Euclidean metrized commutative algebra $(\alg, \mlt, h)$ with nonpositive sectional nonassociativity, any element of $\idem(\alg\tensor_{\rea}\com, \mlt)$ or $\szero(\alg\tensor_{\rea}\com, \mlt)$ is a multiple of an element of $\idem(\alg, \mlt)$ or $\szero(\alg, \mlt)$.
\end{corollary}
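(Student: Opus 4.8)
The plan is to run everything through Lemma \ref{complexidempotentlemma}. Write an element of $\alg\tensor_{\rea}\com$ as $w = a + \j b$ with $a, b \in \alg$ and $w \neq 0$. Since $h$ is positive definite, its restriction to any subspace of $\alg$ is nondegenerate, so whenever $a$ and $b$ are linearly independent over $\rea$ the subspace $\spn\{a, b\}$ is $h$-nondegenerate and $\sect(a, b)$ is defined. In that situation, if $w$ is square-zero (respectively idempotent), Lemma \ref{complexidempotentlemma} gives $\sect(a, b) \geq 0$, while the hypothesis of nonpositive sectional nonassociativity gives $\sect(a, b) \leq 0$; hence $\sect(a, b) = 0$ and the equality clause of the relevant part of Lemma \ref{complexidempotentlemma} applies. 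The argument thus splits according to whether $a$ and $b$ are linearly independent or linearly dependent.

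For the idempotent case I would first treat $a, b$ linearly independent: then $b \neq 0$, $\sect(a, b) = 0$ by the squeeze above, and the equality clause of the idempotent part of Lemma \ref{complexidempotentlemma} forces $b = 0$, a contradiction. Hence $a$ and $b$ are proportional; the case $a = 0$ is impossible because then $w\mlt w = -(b\mlt b)$ lies in $\alg$ and cannot equal $w = \j b$ unless $b = 0$, so $b = ta$ with $a \neq 0$, and substituting this into the idempotency relations $a\mlt a - b\mlt b = a$ and $2a\mlt b = b$ forces (by a one-line computation giving $t^{2} = -1$ unless $t=0$) that $t = 0$ and $a\mlt a = a$. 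Thus $w = a \in \idem(\alg, \mlt)$, i.e. $w$ is a (real, unit-coefficient) multiple of an element of $\idem(\alg, \mlt)$.

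For the square-zero case, $w \mlt w = 0$ is equivalent to $a\mlt a = b\mlt b$ and $a\mlt b = 0$. If $a$ and $b$ are proportional, these relations immediately give $a\mlt a = 0$ (take $b = ta$: if $t \neq 0$ this is $a\mlt b = 0$ divided by $t$, and if $t = 0$ it is $a\mlt a = b\mlt b = 0$); replacing $a$ by $b$ when $a = 0$, one finds that $w$ is a complex scalar multiple of the nonzero square-zero element $a$ (or $b$), so a multiple of an element of $\szero(\alg, \mlt)$. If instead $a$ and $b$ are linearly independent, the squeeze gives $\sect(a, b) = 0$, and the equality clause of the square-zero part of Lemma \ref{complexidempotentlemma} says that $a$ and $b$ span a trivial subalgebra of $(\alg, \mlt)$; in particular $a\mlt a = b\mlt b = 0$, so $a, b \in \szero(\alg, \mlt)$ and $w$ lies in the complexification of the square-zero subspace $\spn\{a, b\}$, again accounted for by elements of $\szero(\alg, \mlt)$.

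The only delicate point — and it is a point of bookkeeping rather than of difficulty — is the separation into the linearly dependent and linearly independent subcases: when $a$ and $b$ are proportional there is no two-plane on which $\sect$ is defined, so Lemma \ref{complexidempotentlemma} is not directly available and one must read the conclusion straight off the square-zero and idempotency equations. Once that case is isolated, the remaining content is purely the observation that a quantity which Lemma \ref{complexidempotentlemma} forces to be nonnegative and the hypothesis forces to be nonpositive must vanish, after which the equality statements of that lemma finish the proof.
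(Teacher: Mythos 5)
Your argument is correct and is exactly the intended one: the paper supplies no separate proof of this corollary, which is meant to follow by squeezing $\sect(a,b)$ between the nonnegativity furnished by Lemma \ref{complexidempotentlemma} and the nonpositivity hypothesis and then invoking the equality clauses, and your supplementary analysis of the case where $a$ and $b$ are linearly dependent (where $\sect(a,b)$ is undefined and the lemma is silent) is both necessary and carried out correctly. The one place to be careful is your last subcase: when $w = a + \j b$ is square-zero with $a$ and $b$ linearly independent, what you actually establish is that $a$ and $b$ span a trivial subalgebra, so that $a, b \in \szero(\alg, \mlt)$ and $a \mlt b = 0$ --- but then $w$ itself is genuinely \emph{not} a complex scalar multiple of any single element of $\alg$, so the corollary's literal wording (``is a multiple of an element of $\szero(\alg, \mlt)$'') is not what you have proven there. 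Indeed the literal statement fails already for the trivial algebra $(\rea^{2}, \mlt \equiv 0, h)$ with $h$ the standard inner product, which is Euclidean metrized with identically zero sectional nonassociativity: the element $e_{1} + \j e_{2} \in \szero(\rea^{2}\tensor_{\rea}\com, \mlt)$ is not proportional over $\com$ to any real vector. This is a looseness in the statement of the corollary rather than a gap in your reasoning, but your phrase ``accounted for by elements of $\szero(\alg, \mlt)$'' quietly substitutes the correct conclusion (the real and imaginary parts of $w$ are square-zero and annihilate one another) for the stated one; you should say explicitly that this is the strongest conclusion available in that subcase.
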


\begin{lemma}
If in a Euclidean metrized commutative algebra $(\alg, \mlt, h)$ there is a $C^{1}$ immersion $x:(-a, a) \to \alg$ such that $x(t)\in \idem(\alg, \mlt)$ for all $t \in (-a, a)$ and $\dot{x}(0) \neq 0$, then $1/2 \in \spec(x(0))$ and 
\begin{align}\label{smoothpathsect}
\sect(x(0), \dot{x}(0)) = \tfrac{1}{4|x(0)|^{2}_{h}} > 0.
\end{align}
\end{lemma}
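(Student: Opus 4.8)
The plan is to differentiate the idempotent relation $x(t)\mlt x(t) = x(t)$ at $t=0$ to extract the needed spectral information, then substitute into the definition of sectional nonassociativity. Write $e = x(0)$ and $v = \dot x(0) \neq 0$. Differentiating $x(t)\mlt x(t)= x(t)$ gives $2\,x(t)\mlt \dot x(t) = \dot x(t)$, so at $t=0$ we get $2\,e\mlt v = v$, i.e. $L_{\mlt}(e)v = \tfrac12 v$. Hence $v \in \alg^{(1/2)}(e)$ and in particular $1/2 \in \spec(e)$ and $v \neq 0$. Since $v$ is a $\tfrac12$-eigenvector of $L_{\mlt}(e)$ and $e$ is a $1$-eigenvector, and $L_{\mlt}(e)$ is $h$-self-adjoint, $h(e,v)=0$; thus $e$ and $v$ are linearly independent (as $v\neq 0$) and span an $h$-nondegenerate two-plane, so $\sect(e,v)$ is defined. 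Note $|e\wedge v|^2_h$ reduces to $2|e|^2_h|v|^2_h$ because $h(e,v)=0$.

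Next I would compute $-\mu(e,v,e,v) = h(e\mlt e, v\mlt v) - h(e\mlt v, v\mlt e)$ using the relations just derived. We have $e\mlt e = e$ and $e\mlt v = \tfrac12 v$, so $h(e\mlt v, v\mlt e) = \tfrac14 h(v,v) = \tfrac14 |v|^2_h$. For the first term, $h(e\mlt e, v\mlt v) = h(e, v\mlt v)$, and by invariance of $h$ this equals $h(e\mlt v, v) = \tfrac12 h(v,v) = \tfrac12 |v|^2_h$. Therefore
\begin{align}
-\mu(e,v,e,v) = \tfrac12 |v|^2_h - \tfrac14 |v|^2_h = \tfrac14 |v|^2_h.
\end{align}
Dividing by $|e|^2_h|v|^2_h - h(e,v)^2 = |e|^2_h|v|^2_h$ gives $\sect(e,v) = \dfrac{1}{4|e|^2_h}$, which is the claimed formula \eqref{smoothpathsect}, and it is positive since $h$ is positive definite (the algebra is Euclidean) and $e$ is a nonzero idempotent, hence $h$-anisotropic.

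There is essentially no obstacle here: the only point requiring a touch of care is justifying that $e$ is $h$-anisotropic, so that the two-plane $\spn\{e,v\}$ is $h$-nondegenerate and the denominator in \eqref{sectnadefined} is nonzero. This is automatic in a Euclidean metrized commutative algebra since $h$ is positive definite and $e\neq 0$ (a $C^1$ immersion has $x(t)\neq 0$; in any case $e\mlt e = e$ with $e=0$ would contradict $\dot x(0)\neq 0$ only trivially, but positive definiteness settles it directly: $|e|^2_h > 0$). The rest is the short computation above, so I would present it essentially verbatim.
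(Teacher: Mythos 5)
Your proof is correct and follows essentially the same route as the paper: differentiate $x\mlt x = x$ to get $L_{\mlt}(x(0))\dot{x}(0) = \tfrac{1}{2}\dot{x}(0)$, deduce $h(x(0),\dot{x}(0))=0$ (the paper does this by a direct invariance computation rather than via self-adjointness and distinct eigenvalues, but the content is the same), and then substitute into \eqref{sectnadefined}. No gaps.
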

\begin{proof}
Differentiating $x\mlt x = x$ shows that $\dot{x}(0)$ is an eigenvector of $L_{\mlt}(x(0))$ with eigenvalue $1/2$. As $h(x, \dot{x}) = 2h(x, x\mlt \dot{x}) = 2h(x\mlt x, \dot{x}) = 2h(x, \dot{x})$, there holds $h(x, \dot{x}) = 0$, so $x(0)$ and $\dot{x}(0)$ are linearly independent. The identity \eqref{smoothpathsect} follows from \eqref{sectnadefined}.
\end{proof}

\begin{lemma}\label{finiteautolemma}
Let  $(\alg, \mlt, h)$ be a Euclidean metrized commutative algebra $(\alg, \mlt, h)$.
\begin{enumerate}
\item If $(\alg, \mlt, h)$ has negative sectional nonassociativity, then $\Aut(\alg, \mlt, h)$ is finite.
\item If $(\alg, \mlt, h)$ has nonpositive sectional nonassociativity, either $\Aut(\alg, \mlt, h)$ is finite or $(\alg, \mlt, h)$ contains a trivial two-dimensional subalgebra (the possibilities are not exclusive).
\end{enumerate}
\end{lemma}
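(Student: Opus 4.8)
The plan is to pass to the Lie algebra of the automorphism group and exploit the complexification of a hypothetical nonzero symmetry generator. Since $h$ is positive definite, $O(h)$ is compact, so $\Aut(\alg,\mlt,h)=\Aut(\alg,\mlt)\cap O(h)$ is a closed subgroup of a compact group, hence a compact Lie group. If it is infinite it cannot be $0$-dimensional (a compact discrete group is finite), so it has positive dimension and therefore its Lie algebra contains a nonzero element $D$; such a $D$ is simultaneously an algebra derivation of $(\alg,\mlt)$ and $h$-antisymmetric. Thus the whole statement reduces to showing that a nonzero $h$-antisymmetric derivation $D$ produces $\rea$-linearly independent $a,b\in\alg$ with $a\mlt a=b\mlt b$ and $a\mlt b=0$. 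Once this is done, $a+\j b\in\szero(\alg\tensor_{\rea}\com,\mlt)$ and, since $|a|_{h}^{2}|b|_{h}^{2}-h(a,b)^{2}>0$ by Cauchy--Schwarz, Lemma~\ref{complexidempotentlemma}\eqref{complexzeroclaim} gives $\sect(a,b)\ge 0$, with equality exactly when $\spn\{a,b\}$ is a trivial subalgebra. This immediately yields both claims: under negative sectional nonassociativity $\sect(a,b)\ge 0$ is a contradiction, so no such $D$ exists and $\Aut(\alg,\mlt,h)$ is finite; under nonpositive sectional nonassociativity $\sect(a,b)=0$ is forced, so $\spn\{a,b\}$ is a trivial two-dimensional subalgebra, and when no such $D$ exists the group is finite.

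To manufacture $a$ and $b$, I would complexify $D$ to $D_{\com}$ on $\alg\tensor_{\rea}\com$. Because $D$ is antisymmetric for the positive definite form $h$, $D_{\com}$ is semisimple with purely imaginary spectrum, and since $D\neq 0$ it has a nonzero eigenvalue; choose one, say $\j\lambda$ with $\lambda\in\reat$, of maximal modulus, so that $2\j\lambda$ is not an eigenvalue of $D_{\com}$. Let $w=a+\j b$ be a corresponding eigenvector. Extending the derivation identity $\com$-bilinearly, $D_{\com}(w\mlt w)=2(D_{\com}w)\mlt w=2\j\lambda\,(w\mlt w)$; since $2\j\lambda$ is not an eigenvalue, $w\mlt w=0$, i.e.\ $a\mlt a=b\mlt b$ and $a\mlt b=0$. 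Linear independence of $a$ and $b$ over $\rea$ holds because otherwise $w$ would be a scalar multiple of a real vector $v$ with $Dv=\j\lambda v$, impossible as $Dv\in\alg$ is real and $\lambda\neq 0$. (If one prefers to avoid the maximal-modulus choice: were $2\j\lambda$ an eigenvalue whenever $\j\lambda\neq 0$ is, the set $\{2^{k}\j\lambda_{0}\}_{k\ge 0}$ would be infinitely many eigenvalues, contradicting finite-dimensionality.)

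The remaining ingredients are routine: compactness and the Lie-group structure of $\Aut(\alg,\mlt,h)$, the identification of its Lie algebra with $h$-antisymmetric derivations, semisimplicity and imaginary spectrum of a real antisymmetric operator, and that the $\com$-linear extension of a derivation derives the $\com$-bilinear extension of the product. The one load-bearing computation is the eigenvalue argument forcing $w\mlt w=0$; everything else is bookkeeping, and no nonassociativity computation beyond what is already packaged in Lemma~\ref{complexidempotentlemma} is required. The main conceptual point to get right is simply the translation ``infinite automorphism group $\Rightarrow$ nonzero antisymmetric derivation,'' after which the structure of the conclusion is dictated by the sign dichotomy in Lemma~\ref{complexidempotentlemma}\eqref{complexzeroclaim}.
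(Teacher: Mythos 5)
Your proof is correct, and it reaches the key intermediate object --- a nonzero square-zero element $a+\j b$ of $\alg\tensor_{\rea}\com$ with $a,b$ spanning an $h$-nondegenerate plane --- by a genuinely different route from the paper. The paper does not pass to the Lie algebra of $\Aut(\alg,\mlt,h)$ at all: it views $\mu(x,y,z)=h(x\mlt y,z)$ as a cubic form on the complexification and invokes Suzuki's theorem (\cite[Theorem B]{Suzuki-automorphismgroups}), which says that the linear automorphism group of a multilinear form over an algebraically closed field is either finite or admits a nonzero $v$ with $\theta(v,\dots,v,w)=0$ for all $w$; nondegeneracy of $h$ then yields $v\in\szero(\alg\tensor_{\rea}\com,\mlt)$, and the endgame via claim \eqref{complexzeroclaim} of Lemma \ref{complexidempotentlemma} is identical to yours. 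Your substitute for Suzuki --- compactness of $\Aut(\alg,\mlt,h)\subset O(h)$, hence a nonzero $h$-antisymmetric derivation $D$ when the group is infinite, followed by the maximal-modulus-eigenvalue argument forcing an eigenvector of $D_{\com}$ to square to zero --- is elementary, self-contained, and sound in every step (the identification of the Lie algebra with $h$-antisymmetric derivations, the purely imaginary semisimple spectrum, and $D_{\com}(w\mlt w)=2\j\la\, w\mlt w$ are all standard). It also supplies something the paper's proof leaves implicit: Lemma \ref{complexidempotentlemma} needs $a$ and $b$ linearly independent for $\sect(a,b)$ to be defined, and your $\la\neq 0$ argument guarantees this, whereas Suzuki's vector could a priori be a complex multiple of a real square-zero element, a degenerate case that negative sectional nonassociativity alone does not exclude (cf.\ Lemma \ref{rchassoclemma}). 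The trade-off is that you rely on Lie-theoretic machinery (Cartan's closed subgroup theorem and compactness, hence on positive definiteness of $h$ already at that step), while the paper's route is shorter modulo the cited black box; your version has the small bonus that the trivial subalgebra produced in case (2) is exhibited concretely as the real span of a derivation eigenvector.
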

\begin{proof}
For $k \geq 3$, let $\theta$ be a multilinear $k$-form on a vector space $\ste$ over an algebraically closed field of characteristic zero. By a theorem of H. Suzuki \cite[Theorem B]{Suzuki-automorphismgroups}, either the group of linear automorphisms of $\theta$ is finite or there is a nonzero vector $v\in \ste$ such that $\theta(v, \dots, v, w) = 0$ for all $w\in \ste$. Regard $\mu(x, y, z) = h(x\mlt y, z)$ as a cubic form on the complexification $\ste = \alg \tensor_{\rea} \com$. An orthogonal automorphism of $(\alg, \mlt, h)$ preserves $\mu$, and extends to an automorphism of the complexified algebra. By Suzuki's theorem if $\Aut(\alg, \mlt, h)$ is not finite there exists $a + \j b \in  \alg \tensor_{\rea} \com$ such that $h((a + \j b)\mlt(a + \j b), w) = 0$ for all $w \in \alg \tensor_{\rea} \com$. By the nondegeneracy of $h$, $a + \j b \in \szero(\alg\tensor_{\rea}\com, \mlt)$.  The conclusion follows from \eqref{complexzeroclaim} of Lemma \ref{complexidempotentlemma}. 
\end{proof}

\begin{example}
Let $\g$ and $\h$ be Lie algebras over a field $\fie$ of characteristic zero. Let $\cl \subset \g$ be an abelian subalgebra and let $\kl \subset \h$ be any subalgebra. Then $\cl \tensor_{\fie} \kl \subset \g \tensor_{\fie} \h$ is a trivial subalgebra. In particular, if $\g$ and $\h$ are semisimple Lie algebras over a field of characteristic zero and $\cl$ is a Cartan subalgebra, then $\cl \tensor_{\fie}\h$ is a large trivial subalgebra of the Killing metrized exact commutative algebra $\g \tensor_{\fie}\h$.
\end{example}

This section concludes with some results showing that nonnegative sectional nonassociativity prevents the existence of square-zero elements. Example \ref{largeszeroexample} shows that some condition is necessary for such a conclusion.

\begin{example}\label{largeszeroexample}
For a Killing metrized exact commutative algebra, $\szero(\alg, \mlt)$ can be large.
Let $\g$ and $\h$ be Lie algebras over a field $\fie$ of characteristic zero. In $\g \tensor_{\fie}\h$, every decomposable element is square-zero. More generally, for any $a \in \g$, $\fie\lb a \ra \tensor_{\fie}\h$ is a trivial subalgebra. 
\end{example}

\begin{lemma}\label{rchassoclemma}
Let $(\alg,\mlt, h)$ be a Euclidean metrized commutative algebra. Let $0 \neq z \in \szero(\alg, \mlt)$.
 For $0 \neq y \in \alg$ not in the span of $z$, $\sect(z, y) \leq 0$, with equality if and only if $z \mlt y = 0$. In particular, $\szero(\alg, \mlt) = \{0\}$ if there holds any of the following conditions:
\begin{enumerate}
\item $(\alg, \mlt, h)$ has positive sectional nonassociativity.
\item\label{nsz2} $(\alg, \mlt, h)$ has nonnegative sectional nonassociativity and is semisimple.
\item\label{nsz3} $(\alg, \mlt, h)$ has nonnegative sectional nonassociativity and positive definite Killing form.
\end{enumerate}
\end{lemma}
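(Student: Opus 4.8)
The plan is to establish the core inequality first and then peel off each of the three special cases as a short corollary. For the main assertion, let $0 \neq z \in \szero(\alg, \mlt)$ and let $0 \neq y \in \alg$ with $y \notin \spn\{z\}$, so that $\spn\{z, y\}$ is a genuine $2$-plane (automatically $h$-nondegenerate since $h$ is positive definite). Plugging $x = z$ into the defining formula \eqref{sectnadefined} and using $z \mlt z = 0$, the numerator collapses:
\begin{align}
\sect(z, y) = \frac{h(z\mlt z, y \mlt y) - h(z\mlt y, z \mlt y)}{|z|^{2}_{h}|y|^{2}_{h} - h(z, y)^{2}} = \frac{-|z \mlt y|^{2}_{h}}{|z|^{2}_{h}|y|^{2}_{h} - h(z, y)^{2}}.
\end{align}
The denominator is strictly positive by the Cauchy--Schwarz inequality together with linear independence of $z$ and $y$, and the numerator is $\leq 0$ since $h$ is positive definite, with equality precisely when $z \mlt y = 0$. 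This proves $\sect(z, y) \leq 0$ and characterizes equality. The first special case is then immediate: if $(\alg, \mlt, h)$ has positive sectional nonassociativity, then no such $2$-plane can exist, forcing $\szero(\alg, \mlt) = \{0\}$.

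For case \eqref{nsz2}, suppose $(\alg, \mlt, h)$ has nonnegative sectional nonassociativity and is semisimple, and suppose toward a contradiction that $0 \neq z \in \szero(\alg, \mlt)$. Combining nonnegativity with the inequality just proved forces $\sect(z, y) = 0$, hence $z \mlt y = 0$, for every $y$ not in $\spn\{z\}$; and $z \mlt z = 0$ by hypothesis, so $z \mlt y = 0$ for all $y \in \alg$, i.e. $L_{\mlt}(z) = 0$. Thus the multiplication of $(\alg, \mlt, h)$ is not faithful, contradicting Lemma \ref{faithfulmultiplicationlemma}, which asserts that the multiplication of a metrized semisimple commutative algebra is faithful. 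Hence $\szero(\alg, \mlt) = \{0\}$.

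For case \eqref{nsz3}, assume nonnegative sectional nonassociativity and positive definite Killing form $\tau_{\mlt}$. Running the same argument as in case \eqref{nsz2}, a nonzero $z \in \szero(\alg, \mlt)$ would again yield $L_{\mlt}(z) = 0$, whence $\tau_{\mlt}(z, x) = \tr L_{\mlt}(z)L_{\mlt}(x) = 0$ for all $x \in \alg$, contradicting the (positive-definiteness, hence) nondegeneracy of $\tau_{\mlt}$. The only mild subtlety worth noting is that this case does not assume exactness, so I invoke nondegeneracy of $\tau_{\mlt}$ directly rather than through any of the Killing-metrized structure theory; no real obstacle arises. Overall the argument is short and the only thing to be careful about is the bookkeeping in the first display (signs, and that the denominator is the correct Gram determinant), after which all three cases follow by the faithfulness lemma or by nondegeneracy of the relevant form.
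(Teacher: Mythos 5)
Your proposal is correct and follows essentially the same route as the paper: compute $\sect(z,y)$ directly using $z\mlt z=0$ to get $-|z\mlt y|_{h}^{2}$ over a positive Gram determinant, then in the nonnegative cases conclude $L_{\mlt}(z)=0$ and contradict faithfulness (Lemma \ref{faithfulmultiplicationlemma}) or nondegeneracy of $\tau_{\mlt}$. No issues.
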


\begin{proof}
Let $0 \neq z \in \szero(\alg, \mlt)$. For all $y \in \alg$ not in $\spn\{z\}$, there holds $\sect(z, y)(|z|^{2}_{h}|y|^{2}_{h} - h(z, y)^{2}) = - |L_{\mlt}(z)y|^{2}_{h} \leq 0$, with equality if and only if $z \mlt y = 0$. In particular, $(\alg, \mlt,h)$ cannot have positive sectional nonassociativity, and if $(\alg, \mlt,h)$ has nonnegative sectional nonassociativity, then $L_{\mlt}(z)y = 0$ for all $y \in \alg$, so $L_{\mlt}(z) =0$. 
By Lemma \ref{faithfulmultiplicationlemma}, if $L_{\mlt}(z) = 0$, then $(\alg, \mlt)$ is not semisimple. Similarly, $\tau_{\mlt}$ is not positive definite, for $L_{\mlt}(z) =0$ implies $\tau_{\mlt}(z, z) = \tr L_{\mlt}(z)^{2} = 0$.
\end{proof}

\begin{corollary}
A Euclidean Killing metrized commutative algebra $(\alg, \mlt)$ with nonnegative sectional nonassociativity contains no nontrivial square-zero elements.
\end{corollary}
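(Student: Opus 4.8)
The plan is to deduce the statement directly from Lemma~\ref{rchassoclemma}. First I would unwind the definitions: a \emph{Euclidean Killing metrized commutative algebra} is, by the definitions of \emph{Killing metrized} and \emph{Euclidean metrized}, a commutative $\rea$-algebra $(\alg,\mlt)$ whose Killing form $\tau_{\mlt}$ is invariant, nondegenerate, and positive definite, and which is to be regarded as the metrized algebra $(\alg,\mlt,\tau_{\mlt})$. Thus $(\alg,\mlt,\tau_{\mlt})$ is a Euclidean metrized commutative algebra whose metric is its own Killing form, which is positive definite, and by hypothesis it has nonnegative sectional nonassociativity.

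Next I would simply invoke condition~\eqref{nsz3} of Lemma~\ref{rchassoclemma}, which under exactly these hypotheses (nonnegative sectional nonassociativity together with positive definite Killing form) gives $\szero(\alg,\mlt)=\{0\}$, i.e. $(\alg,\mlt)$ has no nonzero square-zero element. As an alternative route, one could first note that by Lemma~\ref{dieudonnelemma}, applied with $p=r=0$ and $q=1$ so that $\tau_{p,q,r}=\tau_{\mlt}$, the algebra is semisimple, and then invoke condition~\eqref{nsz2} of Lemma~\ref{rchassoclemma} instead; either route closes the argument.

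There is no genuine obstacle here: the only point requiring attention is the bookkeeping observation that, for a Killing metrized algebra, the distinguished invariant metric is $\tau_{\mlt}$ itself, so that the qualifier ``Euclidean'' means precisely that $\tau_{\mlt}$ is positive definite — which is one of the two alternative hypotheses of Lemma~\ref{rchassoclemma}. Once this identification is made, the corollary is immediate.
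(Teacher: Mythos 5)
Your proposal is correct and takes essentially the same route as the paper: the paper's proof also reduces to Lemma~\ref{rchassoclemma}, first deriving semisimplicity from Lemma~\ref{dieudonnelemma} and then citing either \eqref{nsz2} or \eqref{nsz3}, while you go directly via \eqref{nsz3} (and note the semisimplicity route as an alternative). The bookkeeping observation that "Euclidean" here means $\tau_{\mlt}$ itself is positive definite is exactly the point that makes \eqref{nsz3} applicable, and you have it right.
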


\begin{proof}
By Lemma \ref{dieudonnelemma}, $(\alg, \mlt)$ is semisimple, so the claim follows from \eqref{nsz2} or \eqref{nsz3} of Lemma \ref{rchassoclemma}. 
\end{proof}

\section{Deunitalizations of simple Euclidean Jordan algebras}\label{jordansection}
An $\fie$-algebra $(\alg, \mlt)$ is \emph{Jordan} if it is commutative and satisfies $[x\mlt x, y, x]  = 0$ for all $x, y \in \alg$. 
For the background on Jordan algebras used here and in what follows see \cite{Ash-Mumford-Rapoport-Tai, Faraut-Koranyi, Jacobson-jordan, Schafer, Springer-Veldkamp}. 
 
A Jordan algebra is semisimple if and only if the invariant symmetric bilinear form $ \tr L_{\mlt}(x \mlt y)$ is nondegenerate \cite[section IV.$1$]{Schafer}, in which case the Jordan algebra is necessarily unital. 

\begin{lemma}\label{notjordanlemma}
Let $\chr \fie = 0$ and let $(\alg, \mlt, h)$ be a metrized commutative $\fie$-algebra with unitalization $(\halg, \hmlt, \hat{h})$.
\begin{enumerate}
\item\label{unitaljordanclaim} The algebra $(\halg, \hmlt)$ is Jordan if and only if for all $x, y \in \alg$ there holds
\begin{align}\label{nj}
[x\mlt x, y, x] = h(x, y)x\mlt x - h(x\mlt x, y)x.
\end{align}
\item\label{unitalpaclaim} The algebra $(\halg, \hmlt)$ is power associative if and only if for all $x \in \alg$ there holds
\begin{align}\label{npa}
&[x\mlt x, x, x] = h(x, x)x\mlt x - h(x\mlt x, x)x.
\end{align}
\item\label{unitalizationssclaim} If $(\alg, \mlt, h)$ satisfies \eqref{nj}, its unitalization is a semisimple Jordan algebra.
\end{enumerate}
\end{lemma}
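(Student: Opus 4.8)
The plan is to push everything through formula \eqref{unitalassoc} for the associator of a $c$-unitalization, applied with $c = h$. Writing $X = (x,\al)$, $Y = (y,\be)$ for elements of $\halg = \alg \oplus \fie$, one has $X \hmlt X = (x\mlt x + 2\al x,\, \al^{2} + h(x,x))$, so \eqref{unitalassoc} yields
\begin{align*}
[X\hmlt X, Y, X] = \bigl([x\mlt x + 2\al x,\, y,\, x] + h(x\mlt x + 2\al x, y)\,x - h(y,x)(x\mlt x + 2\al x),\; 0\bigr),
\end{align*}
with the right-hand side independent of the scalar components, and likewise for $[X\hmlt X, X, X]$. The only further input needed is that every commutative algebra is flexible, so $[w,y,w] = 0$ for all $w,y$; in particular $[x,y,x] = 0$ and $[x,x,x] = 0$.

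For \eqref{unitaljordanclaim}, flexibility reduces $[x\mlt x + 2\al x, y, x]$ to $[x\mlt x, y, x]$, and the two terms carrying the factor $2\al$, namely $2\al\,h(x,y)\,x$ and $-2\al\,h(x,y)\,x$, cancel by symmetry of $h$; hence
\begin{align*}
[X\hmlt X, Y, X] = \bigl([x\mlt x, y, x] + h(x\mlt x, y)\,x - h(x,y)\,x\mlt x,\; 0\bigr)
\end{align*}
for all $X, Y \in \halg$, with no dependence on $\al, \be$. Since the Jordan identity for $(\halg, \hmlt)$ is the requirement that this vanish for all $X, Y$, it is equivalent to \eqref{nj} holding for all $x, y \in \alg$.

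For \eqref{unitalpaclaim}, putting $Y = X$ in the previous display and using $[x,x,x] = 0$ gives $[X \hmlt X, X, X] = \bigl([x\mlt x, x, x] + h(x\mlt x, x)\,x - h(x,x)\,x\mlt x,\; 0\bigr)$. I would then invoke Albert's theorem that a commutative algebra over a field of characteristic zero is power associative exactly when $[w\mlt w, w, w] = 0$ for all $w$. Applied to $(\halg, \hmlt)$ this says power associativity is equivalent to $[X\hmlt X, X, X] = 0$ for all $X$, i.e.\ to \eqref{npa} for all $x \in \alg$ (the implication from power associativity being the elementary half).

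For \eqref{unitalizationssclaim}, by \eqref{unitaljordanclaim} the unitalization $(\halg, \hmlt)$ is a Jordan algebra, it is unital with unit $(0,1)$, and $\hat h$ is nondegenerate and invariant, the latter by \eqref{hatcinvariant} with $c = h$. It remains to prove semisimplicity, and the plan is to do so via the criterion recalled just before the lemma, by showing the generic trace form $\tau_{0}(X,Y) = \tr L_{\hmlt}(X \hmlt Y)$ is nondegenerate. By \eqref{trlxy} one has $\tau_{0}((x,\al),(y,\be)) = \tr L_{\mlt}(x\mlt y) + \al\,\tr L_{\mlt}(y) + \be\,\tr L_{\mlt}(x) + (1+n)\hat h((x,\al),(y,\be))$, and the idea is to combine this with the polarizations of \eqref{nj} (which, as in the proof of Lemma \ref{thomaslemma}, also constrain $\ric_{\mlt}$) so as to rewrite $\tau_{0}$ in terms of $\hat h$ and the linear functional $\tr L_{\hmlt}$, and then conclude nondegeneracy of $\tau_{0}$ from that of $\hat h$; equivalently one can appeal to the structure theory of Jordan algebras over a field of characteristic zero (\cite{Jacobson-jordan, Schafer}). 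I expect this last step to be the main obstacle, since a nondegenerate invariant symmetric bilinear form does not in general force a Jordan algebra to be semisimple; the identity \eqref{nj} therefore has to enter essentially, through its polarizations and the trace criterion, rather than only through the conclusion of \eqref{unitaljordanclaim}. Parts \eqref{unitaljordanclaim} and \eqref{unitalpaclaim}, by contrast, are routine once \eqref{unitalassoc} and flexibility are available.
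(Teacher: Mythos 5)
Your treatments of claims \eqref{unitaljordanclaim} and \eqref{unitalpaclaim} are correct and coincide with the paper's argument: both come down to \eqref{unitalassoc}, the cancellation of the $\al$-dependent terms, flexibility of a commutative algebra, and (for power associativity) the characteristic-zero criterion that $[w\mlt w,w,w]=0$ suffices.

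Claim \eqref{unitalizationssclaim} is where the gap is, and you correctly sensed it but did not close it. Your plan --- use \eqref{trlxy} together with polarizations of \eqref{nj} to show the generic trace form $\tau_{0}(X,Y)=\tr L_{\hmlt}(X\hmlt Y)$ is nondegenerate --- cannot be carried out from the stated hypotheses alone, because the claim as printed is false without an exactness assumption. Take $\alg=\fie$ with the ordinary multiplication and $h(x,y)=-\tfrac{1}{4}xy$: this is a metrized commutative algebra satisfying \eqref{nj} (everything is one-dimensional and associative, so both sides vanish), yet in its unitalization $(x,\al)\hmlt(y,\be)=(xy+\al y+\be x,\,\al\be-\tfrac{1}{4}xy)$ the element $(2,-1)$ spans a one-dimensional ideal with $(2,-1)\hmlt(2,-1)=(0,0)$, so $(\halg,\hmlt)$ is a unital commutative associative (hence Jordan) algebra that is not semisimple. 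The terms $\al\tr L_{\mlt}(y)+\be\tr L_{\mlt}(x)$ in \eqref{trlxy} really can destroy nondegeneracy of $\tau_{0}$ even when $\hat{h}$ is nondegenerate. What the paper actually proves is the statement under the additional hypothesis that $(\alg,\mlt)$ is exact (its proof of this part begins with that assumption, even though it is missing from the statement): exactness kills all the $\tr L_{\mlt}$ terms in \eqref{trlxy}, giving $\tr L_{\hmlt}(X\hmlt Y)=(n+1)\hat{h}(X,Y)$ outright, so the generic trace form is a nonzero multiple of the nondegenerate form $\hat{h}$ and semisimplicity follows from the criterion recalled at the start of the section. To repair your part (3), add exactness to the hypotheses rather than trying to extract nondegeneracy of $\tau_{0}$ from \eqref{nj}.
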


\begin{proof}
By \eqref{unitalassoc}, 
\begin{align}\label{unitaljordan}
\begin{split}
[(x, \al)\hmlt(x,\al), (y, \be), (x, \al)] &= ([x\mlt x, y, x] + 2\al[x, y, x] + h(x\mlt x, y)x - h(x, y)x\mlt x, 0)\\
&= ([x\mlt x, y, x] + h(x\mlt x, y)x - h(x, y)x\mlt x, 0),
\end{split}
\end{align}
the second equality because $[x, y, x] = 0$ in any commutative algebra. Claim \eqref{unitaljordanclaim} follows. 

An algebra $(\alg, \mlt)$ over a field of characteristic zero is power associative if and only if $[x, x, x] = 0$ (which is automatic for a commutative algebra) and $[x\mlt x, x, x] = 0$ for all $x \in \alg$. Since $\chr \fie = 0$ is assumed, taking $z = y = x$ in \eqref{unitalassoc} and $y = x$ in \eqref{unitaljordan} shows that $(\halg, \mlt)$ is power associative if and only if there holds \eqref{npa}. 

If $(\alg, \mlt)$ is exact, it follows from \eqref{unitalizationtrace} that $\tr L_{\hmlt}((x, r) \hmlt(y,s)) = (n+1)\hat{h}((x, r),(y, s))$, so that, in this case, $\tr L_{\hmlt}(\dum \hmlt \dum) = (n+1)\hat{h}$ is nondegenerate on $(\halg, \hmlt)$, so $(\halg, \hmlt)$ is semisimple. 
\end{proof}

\begin{remark}
Claim \eqref{unitaljordanclaim} of Lemma \ref{notjordanlemma} is valid over any field, but claims \eqref{unitalpaclaim} and \eqref{unitalizationssclaim} are not. Since a commutative algebra over a field of characteristic not dividing $30$ is power associative if and only if $[x \mlt x, x, x] = 0$ \cite[Chapter $5$]{Schafer}, claim \eqref{unitalpaclaim} is true in this case, but can fail if $\chr \fie$ divides $30$. Claim \eqref{unitalizationssclaim} fails because the characterization of semisimplicity in terms of the nondegeneracy of the form $\tr L(\dum \hmlt \dum)$ fails. 
\end{remark}

\begin{remark}
If $\chr \fie = 0$, claims \ref{unitaljordanclaim} and \ref{unitalpaclaim} of Lemma \ref{jordanminimalsectlemma} are redundant.
A Jordan algebra is power associative. 
Conversely, by the main theorem of \cite{Kokoris}, combined with results from \cite{Albert-powerassociative}, a simple commutative power associative algebra over a field of characteristic zero is Jordan. 
\end{remark}

A Jordan algebra is \emph{Euclidean} if it is real and admits an invariant positive definite symmetric bilinear form. 
The rank of an element $x$ in a real or complex Jordan algebra with unit $e$ is the smallest integer $k$ such that $\{e, x, x^{2}, \dots, x^{k}\}$ is linearly dependent (this uses that a Jordan algebra is power associative), and the rank $r$ of the Jordan algebra is the maximum of the ranks of its elements. In a unital Jordan algebra, a set of primitive orthogonal idempotents is \emph{complete} if it sums to the unit.
For a simple Euclidean Jordan algebra, the rank $r$ is the number of elements in a complete system of primitive orthogonal idempotents $\{e_{1}, \dots, e_{r}\}$, and the \emph{Peirce invariant} $d$ is the common dimension of the image of $L_{\mlt}(e_{i})L_{\mlt}(e_{j})$ for $i \neq j$ (equivalently, it is the dimension of the intersection of the $1/2$-eigenspaces of any two orthogonal primitive idempotents; see \cite[chapter $V$]{Faraut-Koranyi}). It is related to the rank and the dimension $N$ by the equivalent formulas (see \cite[p. $71$]{Faraut-Koranyi}) 
\begin{align}
&N = r + \tfrac{dr(r-1)}{2},& &d = \tfrac{2(N-r)}{r(r-1)}.
\end{align}
Every simple Euclidean Jordan algebra over the real numbers having rank at least two is isomorphic to one of the Jordan algebras listed in table \ref{jordantable}. The algebra $(\rea^{n}\oplus \rea, \star)$ is the unitalization of the trivial algebra $(\rea^{n}, \mlt)$ with respect to the standard Euclidean metric. It is often called a \emph{spin factor}.

A unital composition algebra is also called a \emph{Hurwitz algebra}.
Let $\mat(n, \fie)$ denote the $n \times n$ matrices over the real Hurwitz algebra $\fie \in \{\rea, \com, \quat, \cayley\}$. The subspace of $n \times n$ Hermitian matrices over $\fie$ is the subspace $\herm(n, \fie) = \{x \in \mat(n, \fie): \bar{x}^{t} = x\}$ comprising the fixed points of the conjugate transpose $x \to \bar{x}^{t}$, where $x \to \bar{x}$ is the involution of $\fie$ determined by its structure as a composition algebra and fixing the real subfield (so is trivial when $\fie = \rea$). When $\fie = \cayley$ there is always supposed $n = 3$. The space $\herm(n, \fie)$ is a Jordan algebra when equipped with the symmetrized matrix product $x \star y = \tfrac{1}{2}(xy + yx)$, where juxtaposition, as in the expression $xy$, indicates the matrix product in $\herm(n, \rea)$. 

The following facts are used in what follows. 
\begin{enumerate}
\item Because a Jordan algebra is power associative, the trace $\tr x$ and determinant $\det x$ of an element $x \in \balg$ can be defined as the coefficients of the subleading term and the zeroth order term in the minimal polynomial of $x$. The trace and determinant of a real Jordan algebra $(\balg, \star)$ are invariant under $\Aut(\balg, \star)$. See \cite[Proposition II.$4.2$]{Faraut-Koranyi}.
\item For an element $x$ of a rank $r$ Euclidean Jordan algebra $(\balg, \star)$, there is a complete system of orthogonal idempotents, $\{e_{1}, \dots, e_{r}\}$, such that $x \in \spn\{e_{1}, \dots, e_{r}\}$. See \cite[Theorem III.$1.1$]{Faraut-Koranyi}.
\item Given two complete systems of orthogonal idempotents, $\{e_{1}, \dots, e_{r}\}$ and $\{f_{1}, \dots, f_{r}\}$, in a simple Euclidean Jordan algebra $(\balg, \star)$ of rank $r$, there is $g \in \Aut(\balg, \star)$ such that $ge_{i} = f_{i}$ for $1 \leq i \leq r$. See \cite[Theorem IV.$2.5$]{Faraut-Koranyi}.
\item (Principal axis theorem) Every element of $\herm(n, \fie)$ (where $n = 3$ if $\fie = \cayley$) is equivalent via an element of $\Aut(\herm(n, \fie), \star)$ to a diagonal matrix. For $\fie \in \{\rea, \com, \quat\}$ this is well known. For $\fie = \cayley$ this is \cite[Theorem $5.1$]{Freudenthal}; see also \cite[Theorem V.$2.5$]{Faraut-Koranyi}.
\end{enumerate}

\begin{table}[!htbp]
\caption{Simple Euclidean Jordan algebras}\label{jordantable}
\def\arraystretch{1.5}
\begin{tabular}{|l|c|c|c|c|}
\hline
Description & Notation & Dimension & Rank & $d$ \\
\hline\hline
Euclidean spin factor&$\rea^{n} \oplus \rea$ & $n+1$ & $2$ & $n-1$\\\hline
Real symmetric matrices & $\herm(n, \rea)$ & $n(n+1)/2$ & $n$ & $1$ \\\hline
Complex Hermitian matrices &$\herm(n, \com)$ & $n^{2}$ & $n$ & $2$  \\\hline
Quaternionic Hermitian matrices & $\herm(n, \quat)$ &$2n^{2} - n$& $n$ & $4$\\\hline
Octonionic Hermitian matrices & $\herm(3, \cayley)$ & $27$ & $3$ & $8$ \\\hline
\end{tabular}
\end{table}

If a Jordan algebra $(\balg, \star)$ is simple, then any invariant metric is a multiple of the invariant metric $g(x, y) = \tr L_{\star}(x \star y)$. 
Thus the deunitalization $(\alg, \mlt, h)$ of a simple unital Euclidean Jordan algebra $(\balg, \star)$ means unambiguously its deunitalization with respect to $g$. 
For a simple Euclidean Jordan algebra $(\balg, \star)$ of rank $r \geq 2$ and dimension $N$, the unit satisfies $\tr L_{\star}(e) = N$ and the deunitalization of $(\balg, \star)$ is defined using $\hat{h}(x, y) = N^{-1}\tr L_{\star}(x\star y)= r^{-1}\tr(x\star y)$. The deunitalization of $\herm(n, \fie)$ is the subspace $\herm_{0}(n, \fie) = \{x \in \herm(n, \fie): \tr x = 0\}$ equipped with the product $\mlt$, invariant metric $h$, and cubic polynomial $P$ given by
\begin{align}\label{jrank}
\begin{split}
x\mlt y &= x \star y - n^{-1}\tr(x\star y)I = \tfrac{1}{2}\left(xy + yx - n^{-1}\tr(xy + yx)I\right),\\ 
h(x, y) &= \tfrac{1}{n}\tr(x \star y) = \tfrac{1}{2n}\tr(xy + yx)= \tfrac{1}{n}\re \tr (xy),\\
6P(x) &= \tfrac{1}{2n}\tr((x^{2})x + x(x^{2})) = \tfrac{1}{n}\re \tr((x^{2})x), 
\end{split}
\end{align}
where juxtaposition, as in $xy$ or $x(x^{2})$, indicates the matrix product in $\herm(n, \rea)$, and $I$ is the identity matrix in $\herm(n, \rea)$. It is necessary to write $x(x^{2}) + (x^{2})x$ and not $2x^{3}$ because the matrix multiplication in $\herm(3, \cayley)$ is not associative, although it is true that $x(x^{2}) - (x^{2})x$ is a multiple of the identity by an element of $\cayley$ \cite[Lemma V$.2.4$]{Faraut-Koranyi}. Note also that $\re \tr(xy) = \re \tr(yx)$ and $\re \tr((xy)z) = \re \tr(x (yz))$ are true for $n\times n$ matrices over $\fie$ \cite[Proposition V.$2.1$]{Faraut-Koranyi}.

\begin{remark}
In \cite[section $3$]{Okubo}, the simple Euclidean Jordan algebras are constructed as the unitalizations of the nonunital algebras $\herm_{0}(n, \fie)$ equipped with the multiplication $\mlt$ and metric $h$ defined in \eqref{jrank} (see Remark $3.3$ and equations $3.27$a and $3.27$b in Remark $3.4$ of \cite{Okubo}).
\end{remark}

\begin{remark}
The deunitalizations $\herm_{0}(n, \fie)$ are studied in terms quite similar to those here in \cite{Tkachev-jordan} and in \cite[section $6$]{Nadirashvili-Tkachev-Vladuts} where they are called \emph{special REC algebras}. In particular, the computation of the Einstein constants is made, albeit not in this language, in \cite[equation $(6.4.15)$]{Nadirashvili-Tkachev-Vladuts}, and this justifies the attribution of Theorem \ref{jordandeunitalizationtheorem} to these authors.
\end{remark}

\begin{theorem}[{\cite[Section $6$]{Nadirashvili-Tkachev-Vladuts}}]\label{jordandeunitalizationtheorem}
The deunitalization $(\alg, \mlt, h)$ of a simple Euclidean Jordan algebra $(\balg, \star)$ of rank $r \geq 2$ and dimension $N$ is an exact Killing invariant commutative algebra with Killing form $\tau_{\mlt}$ satisfying $\tau_{\mlt} = \ka h$ where $h$ is as in \eqref{jrank} and $\ka = (r - 2)(1 + \tfrac{r(N-r)}{2r(r-1)})$. In particular, if $r > 2$, then $(\herm_{0}(r, \fie), \star, h)$ is an exact Killing metrized commutative algebra with $h = \ka\tau_{\mlt}$.
\end{theorem}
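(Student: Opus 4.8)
The plan is to deduce the statement from Lemma~\ref{griesseinsteinlemma}. Equip the simple Euclidean Jordan algebra $(\balg,\star)$ with its canonical invariant metric $g(x,y)=\tr L_{\star}(x\star y)$, which is positive definite since $(\balg,\star)$ is Euclidean, hence nondegenerate, and whose unit $e$ satisfies $g(e,e)=\tr L_{\star}(e)=N$; then $h=g(e,e)^{-1}g=N^{-1}g$ is precisely the metric of the deunitalization appearing in \eqref{jrank}. By Lemma~\ref{griesseinsteinlemma}, once we exhibit constants $A,B$ with
\begin{align}\label{jordankf}
\tau_{\star}(x,y)=\tr L_{\star}(x)L_{\star}(y)=A\,g(x,e)\,g(y,e)+B\,g(x,y)
\end{align}
for all $x,y\in\balg$, it follows that the deunitalization $(\alg,\mlt,h)$ is exact with $\tau_{\mlt}=\ka h$ and $\ka=Bg(e,e)-2=BN-2$. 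Since $\chr\fie=0$, \eqref{jordankf} follows by polarization from the quadratic identity $\tau_{\star}(x,x)=A\,g(x,e)^{2}+B\,g(x,x)$, which is what I would verify directly. (Note one cannot simply invoke uniqueness of invariant forms, since $\tau_{\star}$ need not be $\star$-invariant; it must be computed.)

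To check that quadratic identity I would use the spectral and Peirce decompositions of $(\balg,\star)$, see \cite[Chapters~III--IV]{Faraut-Koranyi}. Fix $x\in\balg$, write its spectral decomposition $x=\sum_{i=1}^{r}a_{i}c_{i}$ relative to a Jordan frame $\{c_{1},\dots,c_{r}\}$, and let $\balg=\bigoplus_{1\le i\le j\le r}\balg_{ij}$ be the associated Peirce decomposition, so that $\dim\balg_{ii}=1$ and $\dim\balg_{ij}=d$ for $i<j$, where $d=\tfrac{2(N-r)}{r(r-1)}$ is the Peirce invariant (this is where $r\ge 2$ enters). Because each $L_{\star}(c_{k})$ acts on $\balg_{kk}$, on the spaces $\balg_{kl}$ with $l\neq k$, and on the spaces $\balg_{lm}$ with $k\notin\{l,m\}$ as the scalars $1$, $\tfrac12$ and $0$ respectively, the operator $L_{\star}(x)$ acts on $\balg_{ii}$ as $a_{i}$ and on $\balg_{ij}$ ($i<j$) as $\tfrac12(a_{i}+a_{j})$. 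Taking traces, and using $\sum_{i<j}(a_{i}+a_{j})^{2}=(r-2)\sum_{i}a_{i}^{2}+\bigl(\sum_{i}a_{i}\bigr)^{2}$,
\begin{align}
\tr L_{\star}(x)^{2}=\sum_{i}a_{i}^{2}+\tfrac{d}{4}\sum_{i<j}(a_{i}+a_{j})^{2}=\Bigl(1+\tfrac{d(r-2)}{4}\Bigr)\sum_{i}a_{i}^{2}+\tfrac{d}{4}\Bigl(\sum_{i}a_{i}\Bigr)^{2}.
\end{align}
Since $x\star x=\sum_{i}a_{i}^{2}c_{i}$, $e=\sum_{i}c_{i}$, and $\tr L_{\star}(c_{i})=1+\tfrac{d(r-1)}{2}=N/r$, one gets $g(x,x)=\tfrac{N}{r}\sum_{i}a_{i}^{2}$ and $g(x,e)=\tr L_{\star}(x)=\tfrac{N}{r}\sum_{i}a_{i}$. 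Comparing the coefficients of $\sum_{i}a_{i}^{2}$ and of $\bigl(\sum_{i}a_{i}\bigr)^{2}$ yields \eqref{jordankf} with $B=\tfrac{r}{N}\bigl(1+\tfrac{d(r-2)}{4}\bigr)$ (and $A=\tfrac{dr^{2}}{4N^{2}}$, whose value is not needed).

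Feeding this $B$ into Lemma~\ref{griesseinsteinlemma}, the deunitalization $(\alg,\mlt,h)$ is exact and $\tau_{\mlt}=\ka h$ with
\begin{align}
\ka=BN-2=r\Bigl(1+\tfrac{d(r-2)}{4}\Bigr)-2=(r-2)\Bigl(1+\tfrac{dr}{4}\Bigr)=(r-2)\Bigl(1+\tfrac{r(N-r)}{2r(r-1)}\Bigr),
\end{align}
the last equality because $\tfrac{dr}{4}=\tfrac{N-r}{2(r-1)}$. In particular $\ric_{\mlt}=-\tau_{\mlt}=-\ka h$ is $\mlt$-invariant, so $(\alg,\mlt)$ is exact and Killing invariant. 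When $r>2$ the factor $1+\tfrac{dr}{4}$ is positive, so $\ka>0$ and $\tau_{\mlt}=\ka h$ is positive definite, hence nondegenerate; thus $(\alg,\mlt)$ is Killing metrized, and applying this to $\herm(n,\fie)$ (which has rank $n$) for $n\ge 3$ gives the stated conclusion for $(\herm_{0}(n,\fie),\mlt,h)$. The only delicate point is the bookkeeping of which Peirce space $L_{\star}(x)$ scales by which eigenvalue, but this is entirely standard Peirce theory for Euclidean Jordan algebras; indeed the whole argument is a streamlined rendering of the Einstein-constant computation of \cite[Section~6]{Nadirashvili-Tkachev-Vladuts}, which is why the theorem is attributed there.
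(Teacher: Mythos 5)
Your proof is correct and follows essentially the same route as the paper: both reduce the statement to Lemma \ref{griesseinsteinlemma} applied to the trace identity $\tau_{\star}(x,y)=\bigl(1+\tfrac{d(r-2)}{4}\bigr)\tr(x\star y)+\tfrac{d}{4}\tr(x)\tr(y)$ and then compute $\ka=Bg(e,e)-2$. The only difference is that you derive this identity directly from the Peirce decomposition (correctly), whereas the paper cites it as \cite[Lemma VI.$1.1$]{Faraut-Koranyi}, which is itself proved the same way.
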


\begin{proof}
Let $(\balg, \star)$ be a simple Euclidean Jordan algebra of rank $r \geq 2$ and dimension $N$. It is claimed that
\begin{align}\label{fk0}
\begin{split}
\tau_{\star}(x, y) &  = (1 +\tfrac{d(r-2)}{4})\tr(x\star y) + \tfrac{d}{4}\tr(x)\tr(y)\\
& = \tfrac{r}{N}\left(1 + \tfrac{(r-2)(N-r)}{2r(r-1)}\right)\tr L_{\star}(x\star y) + \tfrac{r(N-r)}{2N^{2}(r-1)}\tr L_{\star}(x)\tr L_{\star}(y).
\end{split}
\end{align}
The first equality of \eqref{fk0} follows from \cite[Lemma VI.$1.1$]{Faraut-Koranyi}, proved using the Peirce decomposition. By \cite[Proposition III.$4.2$]{Faraut-Koranyi}, $\tr L_{\star}(x) = \tfrac{N}{r}\tr x$, where $N$ and $r$ are the dimension and rank of $(\balg, \star)$. It follows that $\tr L_{\star}(x \star y)  = \tfrac{N}{r}\tr(x\star y)$. This yields the second equality of \eqref{fk0}. Noting $\tau_{\star}(x, e) = \tr L_{\star}(x)$ and rewriting \eqref{fk0} yields
\begin{align}
\tau_{\star}(x, y) & = r(1 +\tfrac{d(r-2)}{4})\hat{h}(x, y) + \tfrac{r^{2}d}{4}\hat{h}(x, e)\hat{h}(y, e).
\end{align}
By Lemma \ref{griesseinsteinlemma} with $A = \tfrac{r^{2}d}{4}$ and $B = r(1 + \tfrac{d(r-2)}{4})$, the deunitalization $(\alg, \mlt, h)$ is exact and Einstein with Einstein constant $B\hat{h}(e, e) - 2 = r(1 + \tfrac{d(r-2)}{4}) - 2 = (r - 2)(1 + \tfrac{dr}{4})$.
\end{proof}
The deunitalizations corresponding to the Jordan algebras of table \ref{jordantable}, and their Einstein constants, are listed in table \ref{jordantable2}. Note that the table includes some entries for which the Einstein constant is equal to $0$; in these cases, which the rank $2$ cases, the Killing form $\tau_{\mlt}$ vanishes.

\begin{table}[!htbp]
\caption{Einstein constants of deunitalizations of the simple Euclidean Jordan algebras}\label{jordantable2}
\def\arraystretch{1.35}
\begin{tabular}{|l|l|c|c|}
\hline
Description & Notation & Dimension & Einstein constant $\ka$\\
\hline\hline
Trivial algebra &$\rea^{n} $ & $n$ & $0$\\\hline
Traceless symmetric $\rea$-matrices & $\herm_{0}(n, \rea)$ & $\tfrac{(n+2)(n-1)}{2}$& $\tfrac{(n-2)(n+4)}{4}$\\\hline
Traceless Hermitian $\com$-matrices &$\herm_{0}(n, \com)$ & $n^{2} - 1$ &$\tfrac{n^{2} - 4}{2}$ \\\hline
Traceless Hermitian $\quat$-matrices & $\herm_{0}(n, \quat)$ &$(2n + 1)(n-1)$& $(n-2)(n+1) $\\\hline
Traceless Hermitian $\cayley$-matrices & $\herm_{0}(3, \cayley)$ & $26$ & $7$\\\hline
\end{tabular}
\end{table}

\begin{lemma}\label{herm0cartanlemma}
Suppose $\fie$ is a real Hurwitz algebra and $n \geq 3$, with equality if $\fie = \cayley$.
The diagonal subalgebra $(\balg, \mlt, h) \subset (\herm_{0}(n, \fie), \mlt, h)$ is isometrically isomorphic to $(\ealg^{n-1}(\rea), (n-2)\tau)$.
\end{lemma}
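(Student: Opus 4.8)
The plan is to produce an explicit isomorphism $\Psi\colon\ealg^{n-1}(\rea)\to\balg$ and then verify, on a spanning set, that it intertwines the two metrics up to the asserted scalar.

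\textbf{Step 1: describe $\balg$ concretely.} The Hermitian condition $\bar{x}^{t}=x$ forces the diagonal entries of an element of $\herm(n,\fie)$ into the real subfield of $\fie$, so the diagonal subalgebra $\balg\subset\herm_{0}(n,\fie)$ is exactly the $(n-1)$-dimensional space of trace-free real diagonal $n\times n$ matrices. For diagonal $x,y$ the matrix product $xy$ is again real diagonal and $xy=yx$, so by \eqref{jrank} the restricted operations are $x\mlt y=xy-n^{-1}\tr(xy)I$ and $h(x,y)=n^{-1}\tr(xy)$. In particular neither the noncommutativity nor the nonassociativity of matrix multiplication over $\fie$ plays any role here (this is why the case $\fie=\cayley$ is not special), and $\balg$ is genuinely a subalgebra of $(\herm_{0}(n,\fie),\mlt,h)$.

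\textbf{Step 2: build $\Psi$.} For $1\leq i\leq n$ put $m(i)=\tfrac{n}{n-2}\bigl(e_{ii}-\tfrac1nI\bigr)\in\balg$, where $e_{ii}$ is the $i$-th diagonal matrix unit; this is the analogue for $\herm_{0}(n,\fie)$ of the matrix model of $\ealg^{n}(\fie)$ described after Corollary \ref{ealgmodelcorollary}. A direct matrix computation gives $m(i)\mlt m(i)=m(i)$ and $m(i)\mlt m(j)=-\tfrac1{n-2}(m(i)+m(j))$ for $i\neq j$, so $m(1),\dots,m(n-1)$ satisfy the defining relations \eqref{talgrelations} of $\talg^{n-1}_{-1/(n-2)}(\rea)$. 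By the universal property of $\talg^{n-1}_{\al}$ as an algebra presented by generators and relations there is then a unique algebra homomorphism $\talg^{n-1}_{-1/(n-2)}(\rea)\to\balg$ sending the generators to $m(1),\dots,m(n-1)$, and composing with the isomorphism $\ealg^{n-1}(\rea)\cong\talg^{n-1}_{-1/(n-2)}(\rea)$ of Lemma \ref{twomodelslemma} yields $\Psi\colon\ealg^{n-1}(\rea)\to\balg$ carrying the spanning set $\{\ga_{0},\dots,\ga_{n-1}\}$ of \eqref{ealgrelations} onto $\{m(1),\dots,m(n)\}$, with $\sum_{i=1}^{n}m(i)=0$ corresponding to $\sum_{i=0}^{n-1}\ga_{i}=0$. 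The only linear relation among the $m(i)$ is $\sum_{i=1}^{n}m(i)=0$, so they span the $(n-1)$-dimensional space $\balg$; hence $\Psi$ is surjective, and since $\dim\ealg^{n-1}(\rea)=n-1=\dim\balg$ it is an isomorphism of commutative algebras.

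\textbf{Step 3: compare metrics.} It suffices to match Gram matrices on the spanning set. A short computation gives $h(m(i),m(i))=\tfrac{n-1}{(n-2)^{2}}$ and $h(m(i),m(j))=-\tfrac1{(n-2)^{2}}$ for $i\neq j$, while by \eqref{tauperm} (equivalently Lemma \ref{haradalemma}) the Killing form of $\ealg^{n-1}(\rea)$ satisfies $\tau_{\mlt}(\ga_{i},\ga_{i})=\tfrac{n-1}{n-2}$ and $\tau_{\mlt}(\ga_{i},\ga_{j})=-\tfrac1{n-2}$ for $i\neq j$. Therefore $h(\Psi x,\Psi y)=(n-2)^{-1}\tau_{\mlt}(x,y)$ for $x,y$ in the spanning set, and by bilinearity for all $x,y\in\ealg^{n-1}(\rea)$; equivalently the Killing form of $\balg$ equals $(n-2)\,h$, so $(\balg,\mlt,h)$ is isometrically isomorphic to $\ealg^{n-1}(\rea)$ with its Killing metric rescaled by $(n-2)^{-1}$, which is the assertion. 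There is no genuine obstacle: the only points needing care are the reduction in Step 1 (which makes the two matrix identities of Step 2 and the Gram-matrix computation of Step 3 elementary) and the hypothesis $n\geq 3$, used both so that $\ealg^{n-1}(\rea)$ is defined and so that the scalar $(n-2)^{-1}$ is legitimate.
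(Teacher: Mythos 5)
Your proof is correct and follows essentially the same route as the paper: the same generators $m(i)=\tfrac{n}{n-2}\left(e_{ii}-\tfrac{1}{n}I\right)$, the same verification of the relations \eqref{ealgrelations} via Corollary \ref{ealgmodelcorollary} and Lemma \ref{twomodelslemma}, and the same Gram-matrix comparison with \eqref{tauperm}. Your value $h(m(i),m(i))=\tfrac{n-1}{(n-2)^{2}}$ is in fact the correct one (the paper prints $\tfrac{n}{(n-2)^{2}}$, a slip), and it is the value consistent with the proportionality $\tau_{\mlt}=(n-2)h$ on $\balg$.
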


\begin{proof}
Let $e_{ii}$ be the $n\times n$ matrix with a $1$ in the $ii$ position and all other entries $0$. The elements
\begin{align}\label{gaidef}
\ga(i) = \tfrac{n}{n-2}\left(e_{ii} - \tfrac{1}{n}I\right) \in \balg
\end{align}
satisfy $\ga(i) \mlt \ga(i) = \ga(i)$ and $(2-n)\ga(i)\mlt \ga(j) = \ga(i) + \ga(j)$ for $i \neq j$. By Corollary \ref{ealgmodelcorollary}, this suffices to show that $(\balg, \mlt)$ is isomorphic to $\ealg^{n-1}(\rea)$. Straightforward computations show $h(\ga(i), \ga(i)) = \tfrac{n}{(n-2)^{2}}$ and $h(\ga(i), \ga(j)) = -\tfrac{1}{(n-2)^{2}}$, so that, by \eqref{tauperm}, $(\balg, \mlt, h)$ is isomorphic as a metrized commutative algebra to $(\ealg^{n-1}(\rea), (n-2)\tau)$.
\end{proof}

\begin{lemma}\label{herm0simplelemma}
The deunitalization of a simple Euclidean Jordan algebra of rank $r \geq 3$ is simple. 
\end{lemma}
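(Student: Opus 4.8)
The plan is to leverage the structural results already established: a Killing metrized exact commutative algebra decomposes (Lemma \ref{dieudonnelemma}) into simple ideals pairwise orthogonal with respect to $\tau_{\mlt}$, so it suffices to rule out any nontrivial proper ideal in the deunitalization $(\alg, \mlt, h)$ of a simple Euclidean Jordan algebra $(\balg, \star)$ of rank $r \geq 3$. Since the rank is at least $3$, Theorem \ref{jordandeunitalizationtheorem} gives $\ka = (r-2)(1 + \tfrac{dr}{4}) > 0$, so $(\alg, \mlt)$ is genuinely Killing metrized (not merely Killing invariant), hence semisimple by Lemma \ref{dieudonnelemma}. The heart of the argument is therefore: show directly that $(\alg, \mlt, h)$ has no nontrivial proper ideal, or equivalently that if $\ideal \subsetneq \alg$ is an ideal, then $\ideal = \{0\}$.

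The approach I would take exploits the action of the automorphism group. By Lemma \ref{herm0cartanlemma} the diagonal subalgebra $\balg_{\mathrm{diag}} \subset \alg = \herm_{0}(n, \fie)$ is isomorphic (up to scaling of the metric) to $\ealg^{n-1}(\rea)$, whose primitive idempotents $\ga(i) = \tfrac{n}{n-2}(e_{ii} - \tfrac{1}{n}I)$, $i = 1, \dots, n$, span the diagonal. A nonzero ideal $\ideal$ is $L_{\mlt}(x)$-invariant for all $x$; taking $x$ to be a diagonal idempotent $\si_I$ (in the notation of Lemma \ref{haradalemma}, an idempotent of the diagonal subalgebra), $\ideal$ is invariant under the self-adjoint operator $L_{\mlt}(\si_I)$, so $\ideal$ decomposes as a direct sum of pieces lying in the eigenspaces $\alg^{(\la)}(\si_I)$. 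More usefully, the automorphism group of $(\alg, \mlt, h)$ contains the automorphism group of the Jordan algebra $(\balg, \star)$ restricted to $\herm_0$ (an automorphism of $\balg$ fixes the unit and the trace form, hence preserves $\herm_0$, the deunitalization product, and $h$); by the principal axis theorem and transitivity on complete systems of orthogonal idempotents, this group acts transitively on the primitive idempotents and, more strongly, the Peirce decomposition $\alg = \bigoplus_{i \leq j} \balg_{ij}$ (with $\balg_{ii}$ one-dimensional spanned by $e_{ii} - \tfrac1n I$ and $\dim \balg_{ij} = d$ for $i < j$) is an eigenspace decomposition for the diagonal idempotents. The strategy is: given $0 \neq \ideal$, pick $0 \neq v \in \ideal$; after applying a suitable automorphism (principal axis theorem) assume $v$ is diagonal; then $v$ is a nonzero element of the diagonal subalgebra $\ealg^{n-1}(\rea)$, which is simple by Corollary \ref{ealgmodelcorollary}, so repeated multiplication by the $\ga(i)$ shows $\ideal$ contains the whole diagonal subalgebra, in particular it contains some $\si_I$ (equivalently some $e_{ii} - \tfrac1n I$); finally, multiplying $e_{ii} - \tfrac1n I$ by well-chosen off-diagonal elements $e_{ij} + e_{ji}$-type Hermitian matrices (which lie in $\alg$) one recovers the Peirce blocks $\balg_{ij}$, forcing $\ideal = \alg$.

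Concretely, the key computation is that for an off-diagonal Hermitian matrix $u \in \balg_{ij}$ ($i \neq j$), the product $(e_{ii} - \tfrac1n I) \star u = \tfrac12 u$ in the Jordan algebra (since $e_{ii} u + u e_{ii} = u$ when $u$ is supported in rows/columns $i,j$ off-diagonal... actually $e_{ii} u$ picks out the $i$-th row of $u$), so in the deunitalized product $(e_{ii} - \tfrac1n I) \mlt u = \tfrac12 u - n^{-1}\tr((e_{ii} - \tfrac1n I) \star u) I = \tfrac12 u$ because the trace of an off-diagonal matrix vanishes. Thus $\balg_{ij} \subset \alg^{(1/2)}(e_{ii} - \tfrac1n I)$, and multiplying a diagonal element of $\ideal$ by elements of $\balg_{ij}$ and using that $\balg_{ij} \mlt \balg_{ij}$ hits the diagonal blocks $\balg_{ii}, \balg_{jj}$ nontrivially (for $r \geq 3$ one also uses $\balg_{ij}\mlt \balg_{jk} \subset \balg_{ik}$) propagates membership across all Peirce blocks. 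The main obstacle I anticipate is the bookkeeping in the octonionic case $\herm_0(3, \cayley)$, where the matrix product is nonassociative and the Peirce multiplication rules require the careful statements from \cite[Chapter V]{Faraut-Koranyi}; here one must verify $\balg_{12}\mlt \balg_{23}$ generates $\balg_{13}$ and $\balg_{12}\mlt \balg_{12}$ reaches $\balg_{11}\oplus\balg_{22}$ using only the Jordan identity rather than matrix associativity. Alternatively, a cleaner route avoiding Peirce arithmetic is: observe that any ideal $\ideal$ of $(\alg,\mlt,h)$ lifts to an ideal of the unitalization $(\halg, \hmlt, \hat h)$, which by Lemma \ref{notjordanlemma} (via Theorem \ref{jordandeunitalizationtheorem}, $(\alg,\mlt)$ is exact, so $\halg$ is semisimple Jordan) and the construction is precisely the simple Jordan algebra $(\balg,\star)$ up to isomorphism by uniqueness of deunitalization/unitalization (Lemma \ref{deunitlemma}); since $(\balg,\star)$ is simple as a Jordan algebra, its only ideals are $\{0\}$ and $\balg$, and an ideal of $\alg$ corresponds to an ideal of $\balg$ contained in $\herm_0$, forcing $\ideal \in \{\{0\}, \alg\}$ — I would present this as the primary proof and relegate the Peirce computation to a remark, since it sidesteps the nonassociativity entirely.
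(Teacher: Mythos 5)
Your first argument---diagonalize a nonzero element of the ideal via the principal axis theorem, invoke simplicity of the diagonal subalgebra $\ealg^{n-1}(\rea)$ (Lemma \ref{herm0cartanlemma} and Corollary \ref{ealgmodelcorollary}) to conclude the ideal contains the whole diagonal, then recover the off-diagonal part by multiplying against diagonal elements---is exactly the paper's proof. The Peirce bookkeeping you worry about in the octonionic case is unnecessary: the paper multiplies $ze_{ij}+\bar z e_{ji}$ by the diagonal matrix $D_{ij}=e_{ii}+e_{jj}-2e_{kk}$ (available precisely because $n\ge 3$), and the product of an arbitrary Hermitian matrix with a real diagonal matrix involves no associativity issues and returns $ze_{ij}+\bar z e_{ji}$ on the nose. (A small slip in your version: $(e_{ii}-\tfrac1n I)\mlt u=(\tfrac12-\tfrac1n)u$ for off-diagonal $u$, not $\tfrac12 u$; the coefficient is still nonzero for $n\ge3$, and vanishes exactly at $n=2$, which is a hint about where the rank hypothesis enters.)

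The route you propose to present as primary, however, has a genuine gap: an ideal of the deunitalization $(\alg,\mlt)$ does \emph{not} lift to an ideal of the unitalization. From \eqref{hmltdefined}, $(x,0)\hmlt(y,0)=(x\mlt y,\,h(x,y))$, so for $y$ in a nonzero ideal $\ideal$ and $x$ chosen with $h(x,y)\ne 0$ (which exists by nondegeneracy of $h$) the product leaves $\ideal\times\{0\}$; adjoining the unit direction does not help, since $(x,0)\hmlt(0,\be)=(\be x,0)$ escapes $\ideal\oplus\fie e$ as well. There is therefore no correspondence between ideals of $\alg$ and Jordan ideals of $\balg$, and simplicity of $(\balg,\star)$ does not transfer downward. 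The spin factor makes this concrete: $\rea^{n}\oplus\rea$ is a simple Euclidean Jordan algebra whose deunitalization is the \emph{trivial} algebra $\rea^{n}$, in which every subspace is an ideal. Your argument, if valid, would prove the rank-$2$ case too, contradicting this example; the failure of the lifting is exactly why $r\ge 3$ is needed and why the paper argues inside $\herm_{0}(n,\fie)$ directly. Keep the first argument as the proof and discard the second.
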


\begin{proof}
Such an algebra has the form $(\herm_{0}(n, \fie), \mlt)$ where $\fie$ is a real Hurwitz algebra, $n \geq 3$, and $n = 3$ if $\fie = \cayley$.
Let $J \subset (\herm_{0}(n, \fie), \mlt)$ be a nontrivial ideal. By the principal axis theorem, a nonzero $x \in J$ is equivalent to a diagonal element of $\herm_{0}(n, \fie)$ via an element of $\Aut(\herm_{0}(n, \fie), \mlt)$. Since the image of $J$ under an automorphism of $(\herm_{0}(n, \fie), \mlt)$ is again an ideal, without loss of generality it can be supposed that $J$ contains some element of the diagonal subalgebra $(\balg, \mlt) \subset (\herm_{0}(n, \fie), \mlt)$. Then $J \cap \balg$ is a nontrivial ideal in $(\balg, \cdmlt)$. By Lemma \ref{herm0cartanlemma}, $(\balg, \mlt)$ is isomorphic to $\ealg^{n-1}(\rea)$, which is simple by Corollary \ref{ealgmodelcorollary}, so $J \cap \balg = \balg$. That is, $J$ contains $\balg$. Together with $\balg$, $\herm_{0}(n, \fie)$ is spanned as a vector space by elements of the form $ze_{ij} + \bar{z}e_{ji}$  where $e_{ij}$ is the elementary matrix with a $1$ in the $ij$ entry and $0$ in every other entry, $z \in \fie$, and $i \neq j$. The matrix $D_{ij} = e_{ii} + e_{jj} - 2e_{kk}$ is in $\balg$, and so $ ze_{ij} + \bar{z}e_{ji} = (ze_{ij} + \bar{z}e_{ji})\mlt D_{ij} \in \balg$. This proves $\herm_{0}(n, \fie) = J$.
\end{proof}

For $I = \{i_{1}, \dots, i_{r}\} \subset \{1, \dots, n\}$ let $\ga(I) = \ga(i_{1}) + \dots + \ga(i_{r})$ and write $|I| = r$. 
Note that $\ga(1) + \ga(2) + \dots + \ga(n-1) + \ga(n) = 0$. As a consequence of the proof of Lemma \ref{herm0cartanlemma}, the products $\ga(I)\mlt\ga(J)$ are given by the formulas \eqref{eIeJ} (specialized with $\al = -1/(n-1)$). If $1 \leq |I| \leq n-1$ and $2|I| \neq n$ then $E(I) = \tfrac{2(2-n)}{n - 2|I|}\ga(I)$ is an idempotent in $(\herm_{0}(n, \fie), \mlt)$, while if $2|I| = n$ then $N(I) = \ga(I)$ satisfies $N(I)\mlt N(I) = 0$. Explicitly, 
\begin{align}
E(I) & = \left(\tfrac{n-|I|}{n-2|I|}\sum_{i \in I}e_{ii} - \tfrac{|I|}{n-2|I|}\sum_{i \notin I}e_{ii} \right)
\end{align}

\begin{lemma}\label{herm0idempotentlemma}
Suppose $\fie$ is a real Hurwitz algebra and $n \geq 3$, with $n = 3$ if $\fie = \cayley$.
\begin{enumerate}
\item\label{herm0idem1} If $X \in \szero(\herm_{0}(n, \fie), \mlt)$, then $X$ is in the $\Aut(\herm_{0}(n, \fie), \mlt)$ orbit of a matrix of the form $c\ga(I)$ with $0 \neq c \in \fie$ and $|I| = n/2$. In particular, if $n$ is odd then $\szero(\herm_{0}(n, \fie), \mlt) = \{0\}$.
\item\label{herm0idem2} $\idem(\herm_{0}(n, \fie), \mlt)$ is the union of the $\Aut(\herm_{0}(n, \fie), \mlt)$ orbits of the matrices $E(I)$ where $I \subset \{1, \dots, n\}$ satisfies $1 \leq |I| \leq n-1$ and $2|I| \neq n$. Moreover, $E(I)$ and $E(J)$ are in the same $\Aut(\herm_{0}(n, \fie), \mlt)$ orbit if and only if $|I| = |J|$ or $|I| = n - |J|$. 
\item\label{herm0idem3} The idempotent $E(I)$ satisfies $|E(I)|^{2}_{h} = \tfrac{|I|(n-|I|)}{(n-2|I|)^{2}}$, and so $|E(I)|^{2}_{h} < |E(J)|^{2}_{h}$ if $||I| - n/2| > ||J| - n/2|$. Consequently the idempotents in $(\herm_{0}(n, \fie), \mlt, h)$ having minimal norm are the elements in the $\Aut(\herm_{0}(n, \fie), \mlt)$ orbits of $E(i)$, $1 \leq i \leq n$.
\item\label{herm0idem4} A minimal idempotent in $(\herm_{0}(n, \fie), \mlt, h)$ has a positive eigenvalue on its orthogonal complement.
\end{enumerate}
\end{lemma}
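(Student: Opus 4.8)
The plan is to reduce the whole statement to the diagonal subalgebra $(\balg,\mlt,h)\subset(\herm_0(n,\fie),\mlt)$ of Lemma~\ref{herm0cartanlemma} via the principal axis theorem, and then to quote the classification of idempotents and square-zero elements of $\ealg^{n-1}(\rea)$ from Lemma~\ref{haradalemma}. First I would note that every $g\in\Aut(\herm(n,\fie),\star)$ fixes the unit $I$ (a unit is unique when it exists), hence preserves the trace-free subspace $\herm_0(n,\fie)$ and, since it preserves $\star$, the trace, and $I$, also preserves the product $x\mlt y=x\star y-n^{-1}\tr(x\star y)I$; so $\Aut(\herm(n,\fie),\star)$ acts on $(\herm_0(n,\fie),\mlt)$ by automorphisms. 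By the principal axis theorem every element of $\herm_0(n,\fie)$ lies in the $\Aut(\herm(n,\fie),\star)$-orbit of a diagonal matrix, i.e.\ of an element of $\balg$. By Lemma~\ref{herm0cartanlemma}, $(\balg,\mlt)$ is isomorphic to $\ealg^{n-1}(\rea)$ with the $\ga(i)$ of \eqref{gaidef} corresponding to the spanning idempotents of \eqref{ealgrelations}, so Lemma~\ref{haradalemma} (with $n$ there replaced by $n-1$) identifies the nonzero idempotents of $\balg$ as the $E(I)$ with $1\le|I|\le n-1$, $2|I|\ne n$, and its nonzero square-zero elements --- present exactly when $n$ is even --- as the nonzero real multiples of $\ga(I)$ with $2|I|=n$. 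Diagonalizing a given idempotent or square-zero element of $\herm_0(n,\fie)$ then gives \eqref{herm0idem1} and the first half of \eqref{herm0idem2}.

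To finish \eqref{herm0idem2} and prove \eqref{herm0idem3} I would compute the $h$-norms. Using the explicit diagonal form of $E(I)$ (entry $\tfrac{n-|I|}{n-2|I|}$ at the indices of $I$, $-\tfrac{|I|}{n-2|I|}$ elsewhere) together with $h(x,y)=\tfrac1n\re\tr(xy)$ from \eqref{jrank}, one gets $|E(I)|_h^2=\tfrac1n\tr(E(I)^2)=\tfrac{|I|(n-|I|)}{(n-2|I|)^2}$; writing $s=|I|-\tfrac n2$ this is $\tfrac{n^2}{16s^2}-\tfrac14$, strictly decreasing in $|s|$. Hence $|E(I)|_h=|E(J)|_h$ iff $|I|=|J|$ or $|I|=n-|J|$, and the value is minimal precisely for $|I|\in\{1,n-1\}$. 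Because $\herm_0(n,\fie)$ is Killing metrized (Theorem~\ref{jordandeunitalizationtheorem}, its rank being $n\ge3$), every automorphism of $(\herm_0(n,\fie),\mlt)$ preserves $\tau_\mlt$, hence $h$; so the $h$-norm is an $\Aut$-orbit invariant, which gives the ``only if'' part of the orbit statement. For ``if'': a permutation matrix is an automorphism of $(\herm(n,\fie),\star)$ sending $E(I)$ to $E(J)$ when $|I|=|J|$, and $E(J)=E(J^{c})$ because $\ga(J)+\ga(J^{c})=\sum_i\ga(i)=0$, so the case $|I|=n-|J|$ follows by permuting $I$ onto $J^{c}$. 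Finally, every idempotent is $\Aut$-conjugate to some $E(I)$, minimality of norm forces $|I|\in\{1,n-1\}$, and $E(I)=E(I^{c})$ collapses these into the single orbit of $E(\{i\})=\ga(i)$; this is \eqref{herm0idem3}.

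For \eqref{herm0idem4} it suffices --- since the eigenvalues of $L_\mlt(e)$ on $\lb e\ra^{\perp}$ are constant along $\Aut$-orbits and every minimal idempotent is conjugate to $E(\{1\})=\ga(1)$ --- to exhibit a positive eigenvalue of $L_\mlt(\ga(1))$ on $\lb\ga(1)\ra^{\perp}$. Inside $\balg$ the spectrum of $L_\mlt(\ga(1))$ is $\{1,-\tfrac1{n-2}\}$ by Lemma~\ref{haradalemma}, nonpositive off $\ga(1)$, so one must look at the off-diagonal directions. A direct computation with $\ga(1)=\tfrac n{n-2}e_{11}-\tfrac1{n-2}I$ gives $\ga(1)\mlt(ze_{1j}+\bar z e_{j1})=\tfrac12(ze_{1j}+\bar z e_{j1})$ for $j\ne1$, $z\in\fie$, and $\ga(1)\mlt(ze_{jk}+\bar z e_{kj})=-\tfrac1{n-2}(ze_{jk}+\bar z e_{kj})$ for distinct $j,k\ne1$; all these vectors are $h$-orthogonal to $\ga(1)$. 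Hence $\tfrac12\in\specp(\ga(1))$, proving \eqref{herm0idem4}.

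The bookkeeping aside, the one genuinely non-formal step is the off-diagonal computation behind \eqref{herm0idem4}: $L_\mlt(\ga(1))$ is strictly negative on the part of $\lb\ga(1)\ra^{\perp}$ lying in $\balg$, so the positive eigenvalue must come from directions transverse to $\balg$, and one has to work in $\herm(n,\fie)$ itself (with a nod to the non-associativity of matrix multiplication when $\fie=\cayley$, which causes no problem for the particular products at issue). Everything else --- the reduction to the diagonal, the orbit invariant, the norm monotonicity --- is routine once the identification of $\balg$ with $\ealg^{n-1}(\rea)$ is in place.
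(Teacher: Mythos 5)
Your proposal is correct and follows essentially the same route as the paper: reduce to the diagonal subalgebra via the principal axis theorem, identify it with $\ealg^{n-1}(\rea)$ by Lemma \ref{herm0cartanlemma}, quote Lemma \ref{haradalemma} for the idempotents and square-zero elements, and exhibit the eigenvalue $1/2$ on an off-diagonal direction for claim \eqref{herm0idem4}. You supply more detail than the paper on the orbit classification in \eqref{herm0idem2} (norm as an orbit invariant for the only-if direction, permutations and $E(J)=E(J^{c})$ for the if direction), but the argument is the same in substance.
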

\begin{proof}
Every element of $\herm_{0}(n, \fie)$ is $\Aut(\herm_{0}(n, \fie), \mlt)$ equivalent to a diagonal matrix. By Lemma \ref{herm0cartanlemma}, the subalgebra $(\balg, \mlt, h) \subset (\herm_{0}(n, \fie), \mlt, h)$ of diagonal matrices is isomorphic as a metrized commutative algebra to $(\ealg^{n-1}(\rea), (n-2)\tau)$. The claims \eqref{herm0idem1}-\eqref{herm0idem3} follow from the description of the idempotents and square-zero elements of $\ealg^{n-1}(\rea)$ given in Lemma \ref{haradalemma}. By \eqref{herm0idem2} and \eqref{herm0idem3}, a minimal norm idempotent is in the $\Aut(\herm_{0}(n, \fie), \mlt)$ orbit of $E(n)$. Since $\Aut(\herm_{0}(n, \fie), \mlt)$ acts transitively on this orbit, to show \eqref{herm0idem4} it suffices to show that $\spec(E(n))$ contains a positive number. For $x_{i} \in \rea$ define $x = \sum_{i = 1}^{n-1}x_{i}(e_{in} + e_{ni})$. Then $h(x, E(n)) = 0$ and $E(n)\mlt x = (1/2)x$, which proves the claim.
\end{proof}

\begin{remark}\label{nodoremark}
As an element of the diagonal subalgebra of $(\herm_{0}(n, \fie), \mlt, h)$, the orthogonal spectrum of $E(i)$ is contained in $(-\infty, 0])$, but \eqref{herm0idem4} shows that with respect to the full algebra $(\herm_{0}(n, \fie), \mlt, h)$ the orthogonal spectrum contains $1/2$.
\end{remark}

There is an alternative description of $(\herm_{0}(n, \com), \mlt)$.
Let $SU(n)$ be the special unitary group and let $\su(n)$ be its Lie algebra, regarded as the space of $n\times n$ skew-Hermitian matrices $\su(n) = \{x\in \mat(n, \com): x = -\bar{x}^{t}\}$. The Lie Killing form is $B_{\su(n)}(x, y) = 2n\tr xy$. Take $h(x, y) = -2n^{-2}B_{\su(n)}(x, y) = - n^{-1}\tr(xy)$ as the metric.
Define a commutative, nonassociative multiplication $\cdmlt$ on $\su(n)$ by
\begin{align}\label{sunmlt}
x \cdmlt y = \tfrac{\j}{2}\left(xy + yx - \tfrac{2}{n}\tr(xy)I\right),
\end{align}
where $I$ is the $n \times n$ identity matrix. The map $\Psi:\su(n) \to \herm_{0}(n, \com)$ defined by $\Psi(x) = \j x$ is an isometric isomorphism from $(\su(n), \cdmlt, h)$ to $(\herm_{0}(n, \com), \mlt, h)$.

The multiplication \eqref{sunmlt} appears in \cite{Laquer}, where it is used in the construction of a nontrivial family of bi-invariant affine connections on $SU(n)$. See also \cite{Benito-Draper-Elduque}.

The adjoint action $\Ad$ of $SU(n)$ on $\su(n)$, $\Ad(g) X = gXg^{-1}$ preserves $h$ and $\cdmlt$, so $\Ad$ induces an injective homomorphism $SU(n) \to \Aut(\su(n), \cdmlt, h)$. However $\Ad(SU(n))$ is contained properly in $\Aut(\su(n), \cdmlt, h)$, for the outer Lie algebra antiautomorphism $\Theta(x) =  -\bar{x}$ of $(\su(n), [\dum, \dum])$ is an $h$-isometric automorphism of $(\su(n), \cdmlt)$. Lemma \ref{sunautomorphismlemma} shows that $\Ad(SU(n))$ and $\Theta$ generate $\Aut(\su(n), \mlt)$.

\begin{lemma}\label{sunautomorphismlemma}
The adjoint action of $SU(n)$ on $(\su(n), [\dum, \dum])$ and the outer automorphism $\Theta(X) = -\bar{X}$ generate the group $\Aut(\su(n), \cdmlt)$, which preserves the invariant metric $h$ on $(\su(n), \cdmlt)$.
\end{lemma}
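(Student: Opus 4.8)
The plan is to transfer the statement to the simple Euclidean Jordan algebra $\herm(n,\com)$ and then invoke the classical description of its automorphism group; throughout $n\geq 3$, as in Lemma~\ref{herm0cartanlemma}. Recall from the discussion preceding the lemma that $\Psi(x)=\j x$ is an isometric isomorphism from $(\su(n),\cdmlt,h)$ onto $(\herm_{0}(n,\com),\mlt,h)$, and that the latter is by construction the deunitalization of $(\herm(n,\com),\star)$. The inclusion $\langle\Ad(SU(n)),\Theta\rangle\subseteq\Aut(\su(n),\cdmlt,h)$ has already been observed. Moreover, any automorphism of $(\su(n),\cdmlt)$ preserves its Killing form, and by Theorem~\ref{jordandeunitalizationtheorem} (see Table~\ref{jordantable2}) that Killing form equals $\tfrac{n^{2}-4}{2}\,h$, a nonzero multiple of $h$ for $n\geq 3$; hence $\Aut(\su(n),\cdmlt)\subseteq O(h)$. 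This already gives the parenthetical assertion of the lemma and reduces the remaining work to the inclusion $\Aut(\su(n),\cdmlt)\subseteq\langle\Ad(SU(n)),\Theta\rangle$.

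The next step is to pass through the unitalization/deunitalization correspondence. Since $(\herm(n,\com),\star)$ is isomorphic to the unitalization of $(\herm_{0}(n,\com),\mlt,h)$ by Lemma~\ref{deunitlemma}, part~\eqref{unitlift3} of Lemma~\ref{unitalizationisomorphismlemma} yields a group isomorphism $\Aut(\herm_{0}(n,\com),\mlt,h)\cong\Aut(\herm(n,\com),\star,\hat h)$; concretely, every Jordan automorphism fixes the unit $I$ (uniqueness of units) and preserves the trace (a trace-form is an automorphism invariant), so it preserves $\herm_{0}(n,\com)=\{X:\tr X=0\}$ and restricts there to an automorphism of $\mlt$, and this restriction inverts the lift. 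Because $\herm(n,\com)$ is simple, its invariant metric is unique up to a scalar and is preserved by every algebra automorphism, so $\Aut(\herm(n,\com),\star,\hat h)=\Aut(\herm(n,\com),\star)$. Composing with $\Psi$ identifies $\Aut(\su(n),\cdmlt)$ with the automorphism group of the Jordan algebra of complex Hermitian $n\times n$ matrices.

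It then remains to use the classical fact (see \cite{Jacobson-jordan, Springer-Veldkamp}) that $\Aut(\herm(n,\com),\star)$ is generated by the conjugations $X\mapsto UXU^{\ast}$, $U\in U(n)$, together with complex conjugation $X\mapsto\bar X$. The underlying argument is that such an automorphism $\phi$ fixes $I$ and permutes the primitive idempotents, which are exactly the rank-one orthogonal projections $vv^{\ast}$ with $v\in\com^{n}$, $|v|=1$, respecting the orthogonality relation $vv^{\ast}\star ww^{\ast}=0\iff v\perp w$; for $n\geq 3$, Uhlhorn's refinement of Wigner's theorem forces $\phi$ to be induced by a unitary or an antiunitary transformation of $\com^{n}$, so $\phi$ has one of the stated forms on the primitive idempotents and hence, by linearity and the fact that the primitive idempotents span $\herm(n,\com)$, everywhere. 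Finally I would track the generators back through the identifications: since $X\mapsto UXU^{\ast}$ depends only on the class of $U$ modulo scalars, these maps are exactly the $X\mapsto gXg^{-1}$ with $g\in SU(n)$, which pull back along $\Psi$ to $\Ad(SU(n))$; and complex conjugation of $\herm(n,\com)$ pulls back to $\Theta(x)=-\bar x$ on $\su(n)$. Therefore $\Aut(\su(n),\cdmlt)=\langle\Ad(SU(n)),\Theta\rangle$, completing the proof.

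The main obstacle is the classical input identifying $\Aut(\herm(n,\com),\star)$; everything else is formal bookkeeping with the (de)unitalization correspondence of Section~\ref{unitalizationsection} together with the Einstein-constant computation $\tau_{\cdmlt}=\tfrac{n^{2}-4}{2}h$. If one wants a genuinely self-contained proof, the one nontrivial step is the appeal to Uhlhorn's theorem — equivalently the fundamental theorem of projective geometry applied to $\mathbb{P}(\com^{n})$ with its orthogonality structure — and this is precisely where $n\geq 3$ is essential; in the degenerate case $n=2$ the multiplication $\cdmlt$ vanishes identically, so that case is excluded from consideration here.
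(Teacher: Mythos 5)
Your proposal is correct and follows essentially the same route as the paper: identify $\Aut(\su(n),\cdmlt)$ with the automorphism group of the unitalization $(\herm(n,\com),\star)$ via Lemma \ref{unitalizationisomorphismlemma}, and then invoke the classical description of the Jordan automorphism group of complex Hermitian matrices as generated by unitary conjugation and complex conjugation (the paper cites Vinberg for this; you sketch the Uhlhorn/Wigner argument instead). Your additional observation that $\tau_{\cdmlt}$ is a nonzero multiple of $h$ for $n\geq 3$, so that algebra automorphisms automatically preserve $h$, is a useful explicit justification of a step the paper leaves implicit.
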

\begin{proof}
This follows from Lemma \ref{unitalizationisomorphismlemma}, relating the automorphism groups of $(\su(n), \mlt, h)$ and its unitalization and the fact (see \cite{Vinberg-automorphisms}) that the automorphism group of $(\herm(n, \com), \star)$ is generated by the conjugation action of $U(n)$ and the outer automorphism $x \to \bar{x}$. 
\end{proof}

\begin{remark}
The $\Ad(SU(n))$-invariance of $\cdmlt$ is equivalent to the statement that $\ad(X)$ is a derivation of $\cdmlt$ for $X \in \su(n)$. That is,
\begin{align}\label{adder}
[X, Y\cdmlt Z] = [X, Y]\cdmlt Z + Y\cdmlt[X, Z].
\end{align}
A useful consequence of \eqref{adder} is that $[X, X\cdmlt X] = 0$. By \eqref{adder}, the data $(\su(n), \cdmlt, [\dum, \dum])$ constitutes what might be called a nonassociative Poisson algebra. 
The associator of $(\su(n), \cdmlt)$ is
\begin{align}\label{sunass}
\begin{split}
[Y, Z, X]_{\cdmlt} &= \tfrac{1}{4}[Z, [X, Y]] + \tfrac{1}{n}(\tr(XY)Z - \tr(YZ)X)\\
&= \tfrac{1}{4}[Z, [X, Y]] + h(X, Y)Z - h(Y, Z)X.
\end{split}\end{align}
From \eqref{adder} and \eqref{sunass} it follows that $[X\cdmlt X, Y, X] = h(X, Y)X\cdmlt X - h(X \cdmlt X, Y)X$, so, by Lemma \ref{notjordanlemma}, the $h$-unitalization $(\su(n)_{g}, \hmlt, h)$ is a Jordan algebra. In \cite[section $3$]{Okubo}, the simple Euclidean Jordan algebras are constructed in essentially this manner as the unitalizations of the nonunital algebras $\herm_{0}(n, \fie)$ equipped with the multiplication $\mlt$ and metric $h$ defined in \eqref{jrank} (see in particular Remark $3.3$ and equations $3.27$a and $3.27$b in Remark $3.4$ of \cite{Okubo}).

The product $\cdmlt$ on $\su(3)$ appears implicitly in \cite[Equation $(5.2)$]{Gell-Mann} where a constant multiple of it is denoted $\{X, Y\}$ and it is written in terms of an orthogonal basis, regarded as a generalization of the usual Pauli basis in $\su(2)$. For $\su(n)$ the identities \eqref{adder} and \eqref{sunass} appear in the physics literature, for example \cite[page $79$]{Macfarlane-Sudbery-Weisz} and \cite[page $3$]{Macfarlane-Pfeiffer}, although their form is obfuscated because they are written in terms of an orthonormal basis. 
\end{remark}

The decomposition of $(\su(n), \cdmlt)$ into the $\pm 1$ eigenspaces $\su(n)_{\pm}$ (the subspaces comprising the imaginary skew-Hermitian and the real antisymmetric matrices) of the outer automorphism $\Theta$ is an $h$-orthogonal decomposition that makes $(\su(n), \cdmlt)$ a $\integer/2\integer$-graded algebra.
The outer automorphism $\Theta$ corresponds to the outer autormorphism $\theta$ of $(\herm_{0}(n, \com), \mlt)$ given by $\theta(x) = \bar{x}$, and the $+1$ eigenspace of $\theta$ is isomorphic to $(\herm_{0}(n, \rea), \mlt)$). In this setting, Lemma \ref{herm0cartanlemma} admits the following reformulation.

\begin{lemma}\label{sucartanlemma}
A Cartan subalgebra $\balg$ of the Lie algebra $(\su(n), [\dum, \dum])$ is a subalgebra of $(\su(n), \cdmlt)$ isomorphic to $\ealg^{n-1}(\rea)$. Moreover the Weyl group of $\su(n)$ acts as the automorphism group of $(\balg, \cdmlt)$. 
\end{lemma}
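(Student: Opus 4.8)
The plan is to reduce Lemma~\ref{sucartanlemma} to the matrix statement already proved in Lemma~\ref{herm0cartanlemma} by transporting everything through the isometric isomorphism $\Psi:(\su(n), \cdmlt, h) \to (\herm_0(n, \com), \mlt, h)$, $\Psi(x) = \j x$. A Cartan subalgebra $\balg \subset (\su(n), [\dum, \dum])$ is, up to conjugation by $\Ad(SU(n))$, the standard Cartan subalgebra of imaginary diagonal trace-free matrices; since $\Ad(SU(n))$ acts by automorphisms of $(\su(n), \cdmlt, h)$ (Lemma~\ref{sunautomorphismlemma}), it suffices to treat this standard $\balg$. First I would observe that $\Psi$ carries the standard diagonal Cartan subalgebra of $\su(n)$ onto the real diagonal subalgebra of $\herm_0(n, \com)$, i.e.\ onto the subalgebra $(\balg, \mlt, h) \subset (\herm_0(n, \com), \mlt, h)$ appearing in Lemma~\ref{herm0cartanlemma} (with $\fie = \com$). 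By that lemma this diagonal subalgebra is a subalgebra of $(\herm_0(n, \com), \mlt)$ isometrically isomorphic to $(\ealg^{n-1}(\rea), (n-2)\tau)$, hence in particular isomorphic as an algebra to $\ealg^{n-1}(\rea)$. Pulling back along $\Psi^{-1}$ gives the first assertion: $\balg$ is a subalgebra of $(\su(n), \cdmlt)$ isomorphic to $\ealg^{n-1}(\rea)$. (If one wants the metric statement too, it also follows that $\balg$ is isometrically isomorphic to $(\ealg^{n-1}(\rea), (n-2)\tau)$, since $\Psi$ is an isometry.)

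For the Weyl group statement, I would argue that the Weyl group $W(\su(n)) \cong S_n$ is realized inside $\Aut(\su(n), \cdmlt, h)$ by the elements of $\Ad(SU(n))$ normalizing $\balg$: conjugation by a permutation matrix (suitably rescaled to lie in $SU(n)$) permutes the diagonal entries, so acts on $\balg$ exactly as $S_n$ permutes the $\ga(i)$ of \eqref{gaidef}. Comparing with Lemma~\ref{rootsystemlemma} and the remark following it, this is precisely the $S_n$-action on $\ealg^{n-1}(\rea)$ by the reflections $\fl_{ij}$ generating $\Aut(\ealg^{n-1}(\rea)) = S_n$ (note the index shift: $\ealg^{n-1}$ has spanning set $\ga(1), \dots, \ga(n)$ of size $n$, so its automorphism group is $S_n$). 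Thus the Weyl group acts on $(\balg, \cdmlt)$ by its full automorphism group. Conversely, every automorphism of $(\balg, \cdmlt) \cong \ealg^{n-1}(\rea)$ arises this way, since $\Aut(\ealg^{n-1}(\rea)) = S_n$ by Corollary~\ref{griessharadacorollary} and every element of $S_n$ is hit; so the Weyl group of $\su(n)$ acts \emph{as} the automorphism group of $(\balg, \cdmlt)$, which is the claim.

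The one genuine subtlety — and the step I expect to be the main obstacle — is bookkeeping the two conflicting conventions for how a permutation sits in the Weyl group versus how $\fl_{ij}$ acts on the $\ga(i)$, together with the dimension/index shift between $\su(n)$ (rank $n-1$, Cartan of dimension $n-1$) and $\ealg^{n-1}(\rea)$ (spanned by $n$ idempotents). Concretely one must check that the normalizer $N_{SU(n)}(\balg)/\ctr_{SU(n)}(\balg)$ really is all of $S_n$ and that its action through $\Ad$ coincides, under the isomorphism $\balg \cong \ealg^{n-1}(\rea)$ furnished by $\ga(i) \mapsto \ga_i$, with the reflection action of Lemma~\ref{rootsystemlemma}; this is a direct but slightly fiddly matching of two explicit $S_n$-actions, which I would carry out by evaluating both on the generators $\ga(i)$ and noting each sends $\ga(i)$ to $\ga(\sigma(i))$. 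Once that identification is in place, everything else is immediate from the cited results, so I would keep the write-up short and lean on Lemmas~\ref{herm0cartanlemma}, \ref{sunautomorphismlemma}, \ref{rootsystemlemma}, and Corollary~\ref{griessharadacorollary}.
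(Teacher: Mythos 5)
Your proposal is correct and coincides with one of the two routes the paper itself gives: after the same reduction to the diagonal Cartan subalgebra via $\Ad(SU(n))$-conjugacy, the paper says the claims follow either from the observation that $\balg$ is abelian, hence $(\balg,\cdmlt)$ is conformally associative by \eqref{sunass} and Theorem \ref{confassclassificationtheorem} applies, or, ``alternatively,'' from Lemma \ref{herm0cartanlemma} -- which is exactly your transport through $\Psi(x)=\j x$. The Weyl-group bookkeeping you flag as the delicate step (matching the permutation action on diagonal entries with the $S_{n}$-action of Corollary \ref{griessharadacorollary} on the $n$ idempotents $\ga(i)$ spanning $\ealg^{n-1}(\rea)$) is handled correctly, and is no more than the paper's remark that the Weyl group visibly acts by automorphisms together with the identification $\Aut(\ealg^{n-1}(\rea))=S_{n}$.
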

\begin{proof}
Since any two Cartan subalgebras are unitarily conjugate and the multiplication $\cdmlt$ is unitarily invariant, it suffices to prove the claims for the subspace $\balg$ of $(\su(n), \cdmlt)$ comprising diagonal matrices. It is apparent from \eqref{sunmlt} that $\balg$ is a subalgebra of $(\su(n), \cdmlt)$ on which the Weyl group acts by automorphisms. Since $\balg$ is an abelian Lie subalgebra of $\su(n)$, it follows from \eqref{sunass} that $(\balg, \cdmlt)$ is conformally associative. The proof of the final claim can be based on this observation and the uniqueness statement in Theorem \ref{confassclassificationtheorem}. Alternatively, it follows from Lemma \ref{herm0cartanlemma}.
\end{proof}

Corollary \ref{jordandeunitalizationsectcorollary} bounds the sectional nonassociativity of $(\herm_{0}(n, \fie), \mlt, h)$. It follows from Lemma \ref{hermsectlemma}, that bounds the sectional nonassociativity of $(\herm(n, \fie), \star, \hat{h})$. The proof uses the extension to real Hurwitz algebras of the Chern-do Carmo-Kobayashi-Böttcher-Wenzel inequality reported as Theorem \ref{bwtheorem} in the appendix.

\begin{lemma}\label{hermsectlemma}
Let $\fie$ be a real Hurwitz algebra. 
Equip $(\herm(n, \fie), \balg)$ with the metric $\hat{h}(x, y) = N^{-1}\tr L_{\star}(x\star y)= n^{-1}\tr(x\star y)$ where $N = \dim \herm(n, \fie)$ and $n$ is the rank of $\herm(n, \fie)$. For linearly independent $x, y \in \herm(n, \fie)$, the sectional nonassociativity of $(\herm(n, \fie), \star, \hat{h})$ satisfies
\begin{align}\label{hermsect}
0 \leq \sect(x, y) = \sect_{\hat{h}, \star}(x, y) \leq \tfrac{n}{2},
\end{align}
for all linearly independent $x, y \in \herm(n, \fie)$.
\begin{enumerate}
\item If $\fie$ is associative, equality holds in the lower bound of \eqref{hermsect} if and only if $x$ and $y$ commute with respect to the matrix product; in particular, if $x$ and $x \star x$ are linearly independent, then $\sect(x, x\star x) = 0$. 
\item Equality holds in the upper bound of \eqref{hermsect} if and only if $x$ and $y$ are simultaneously equivalent under $\Aut(\herm(n, \fie), \star)$ to scalar multiples of $e_{11} - e_{nn}$ and $e_{1n} + e_{n1}$.
\end{enumerate}
\end{lemma}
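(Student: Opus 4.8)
The plan is to reduce the statement to the corresponding fact about the Frobenius-normed matrix algebra $(\mat(n,\fie),\cdot)$ and then invoke the Chern--do Carmo--Kobayashi--Böttcher--Wenzel inequality (Theorem \ref{bwtheorem}) for real Hurwitz algebras. First I would write the numerator of $\sect(x,y)$ in terms of the matrix product. By \eqref{sectnadefined}, $\sect(x,y)\big(|x|^2_{\hat{h}}|y|^2_{\hat{h}}-\hat{h}(x,y)^2\big) = \hat{h}(x\star x,\,y\star y) - \hat{h}(x\star y,\,y\star x)$. Using $x\star y = \tfrac12(xy+yx)$ and the trace identities $\re\tr(ab)=\re\tr(ba)$ and $\re\tr((ab)c)=\re\tr(a(bc))$ recorded after \eqref{jrank}, a direct expansion should collapse this to a multiple of $\re\tr$ of a commutator square, namely $\hat{h}([x,y],[x,y])$ up to the normalization constant $N^{-1}=n^{-1}\cdot$ (rank factor) — concretely I expect the numerator to equal $-\tfrac{1}{2N}\re\tr\big([x,y]^2\big)=\tfrac{1}{2N}|[x,y]|_F^2$ where $[x,y]=xy-yx$ and $|\cdot|_F$ is the Frobenius norm $|a|_F^2=\re\tr(\bar a^t a)$ (noting that for Hermitian $x,y$ the commutator $[x,y]$ is skew-Hermitian, so $\re\tr([x,y]^2)=-|[x,y]|_F^2$). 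The nonnegativity in \eqref{hermsect} is then immediate, with equality iff $[x,y]=0$, i.e. $x,y$ commute under matrix multiplication; taking $y=x\star x$ (which, when $\fie$ is associative, is just $x^2$ and commutes with $x$) gives the stated consequence.

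Next, for the upper bound I would rewrite the denominator as a Gram determinant: $|x|^2_{\hat{h}}|y|^2_{\hat{h}}-\hat{h}(x,y)^2 = c^2\big(|x|_F^2|y|_F^2-\re\tr(xy)^2\big)$ for the same normalization constant $c$, using that $x,y$ Hermitian forces $\hat{h}(x,y)=\tfrac1N\re\tr(xy)$ and $\re\tr(xy)$ is real symmetric. So $\sect(x,y)=\tfrac{n}{2}\cdot \dfrac{|[x,y]|_F^2}{2\big(|x|_F^2|y|_F^2-\re\tr(xy)^2\big)}$, and $\tfrac n2$-boundedness is exactly the Hurwitz-algebra Böttcher--Wenzel inequality $|[x,y]|_F^2\le 2(|x|_F^2|y|_F^2-\re\tr(xy)^2)\le 2|x|_F^2|y|_F^2$ from Theorem \ref{bwtheorem}, applied to Hermitian matrices where $\re\tr(\bar x^t x)=\re\tr(x^2)$. (The sharper form with the subtracted term is what is needed to get the correct equality case, and it is the version of Böttcher--Wenzel stated in the appendix; one should double-check the constant matches.)

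For the equality case in the upper bound I would argue as follows. Equality in Böttcher--Wenzel, in the sharp form, is known to force $x$ and $y$ to lie (after the obvious reductions) in a common $2\times 2$ block where they are scalar multiples of the two real Pauli-type matrices. More precisely: by the principal axis theorem one may use $\Aut(\herm(n,\fie),\star)$ to diagonalize $x$; then equality analysis of Böttcher--Wenzel shows $x$ has exactly two distinct relevant eigenvalues, say supported on indices $i$ and $j$, and $y$ is supported on the same $2\times 2$ block with off-diagonal entries forcing, after a further automorphism (conjugation by a block-diagonal unitary over $\fie$), the forms $x = a(e_{11}-e_{nn})$ and $y=b(e_{1n}+e_{n1})$. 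Conversely a direct computation for this pair verifies $|[x,y]|_F^2 = 2|x|_F^2|y|_F^2$ with $\re\tr(xy)=0$, so $\sect=\tfrac n2$.

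The main obstacle I anticipate is the equality-case analysis in the upper bound: tracking exactly when the Böttcher--Wenzel inequality is sharp over a general real Hurwitz algebra (including the nonassociative $\cayley$, $n=3$) and then extracting the precise simultaneous normal form under the Jordan automorphism group rather than just the matrix conjugation group. The algebraic bookkeeping — confirming that all the normalization constants ($N$, $n$, the factor $g(e,e)^{-1}$ hidden in $\hat h$) combine to give exactly $\tfrac n2$ and not some other multiple — is routine but error-prone, and I would verify it by testing against the known low-rank case $\herm(2,\rea)$ where $n=2$ and the bound should read $\sect\le 1$, consistent with the constant sectional nonassociativity $1$ of $\ealg^2(\rea)$ appearing after unitalization via \eqref{secasecb}.
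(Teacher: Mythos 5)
Your strategy is the paper's own: collapse the numerator $\hat h(x\star x,y\star y)-\hat h(x\star y,x\star y)$ of $\sect(x,y)$ to $\tfrac{1}{4n}\tr\,\overline{[x,y]}^{t}[x,y]$ and then apply the Hurwitz-algebra Chern--do Carmo--Kobayashi inequality from the appendix. For associative $\fie$ this goes through exactly as you describe (your intermediate constant $\tfrac{1}{2N}$ should be $\tfrac{1}{4n}$, but your final displayed formula for $\sect(x,y)$ is the correct one), and your treatment of both equality cases matches the paper's.

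Two points need repair. First, over $\cayley$ the ``direct expansion'' into quartic traces is not justified: $\mat(3,\cayley)$ is not associative, so manipulations such as $\re\tr(xy\cdot yx)=\re\tr((x^{2})(y^{2}))$ cannot be taken for granted from the cited cubic trace identities. The paper's route is to first use the principal axis theorem to write $x=g\lambda$ with $g\in\Aut(\herm(3,\cayley),\star)$ and $\lambda$ diagonal (hence real-entried, so the matrix-product associators $[\lambda,\lambda,y]_{\cdot}$ and $[y,\lambda,\lambda]_{\cdot}$ vanish), after which the associator identity reduces to $4[x,x,z]_{\star}=g[\lambda,[\lambda,y]]$ and Freudenthal's trace invariance finishes the computation; this reduction is needed already to establish the basic commutator formula, not only in the equality analysis where you locate the octonionic difficulty. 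Second, the inequality you must invoke for $\quat$ and $\cayley$ is Lemma \ref{cdklemma}, not Theorem \ref{bwtheorem}: the unrestricted B\"ottcher--Wenzel constant over $\quat$ is $4$, and it is precisely the restriction to Hermitian matrices (and, over $\cayley$, to pairs with one matrix diagonal --- another reason the diagonalization must be performed first, with the equality case transported back through the Jordan automorphism) that recovers the constant $2$ and hence the bound $n/2$.
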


\begin{proof}
Let $[x, y] = xy - yx$ for $x, y \in \mat(n, \fie)$. The associators $[x, y, z]_{\star} = - [z, y, x]_{\star}$ and $[x, y, z]_{\cdot}$ of $\star$ and the matrix product $\cdot$ satisfy 
\begin{align}\label{jordanassociators}
\begin{split}
4[x, y, z]_{\star}& = [y, [x, z]] + [x, y, z]_{\cdot}  - [z, y, x]_{\cdot} + [y, x, z]_{\cdot}  - [y, z, x]_{\cdot} + [x, z, y]_{\cdot}- [z, x, y]_{\cdot} ,\\
& = 2[x, y, z]_{\cdot} - 2[z, y, x]_{\cdot} + [x, [y, z]] + [z, [x, y]],
\end{split}
\end{align}
where the second equality follows from the identity $\cycle \left([x, y, z] - [y, x, z]\right) = \cycle [[x, y], z]$ valid in any algebra \cite[Equation $2.19$]{Okubo-octonion}.
When $\fie \in \{\rea, \com, \quat\}$, the algebra $(\mat(n, \fie), \cdot)$ is associative, so \eqref{jordanassociators} becomes simply $4[x, y, z]_{\star} = [y, [x, z]]$. 
Taking $y = x$ in \eqref{jordanassociators} shows that 
\begin{align}\label{condensedassociator}
4[x, x, z]_{\star} = [x, [x, z]] + 2[x, x, z]_{\cdot} - 2[z, x, x]_{\cdot}.
\end{align} 
If $\fie$ is associative, then \eqref{condensedassociator} yields $4[x, x, z]_{\star} = [x, [x, z]]$.
Hence
\begin{align}\label{hermsect0}
\begin{split}
\hat{h}(x\star x, z \star z) & - \hat{h}(x\star z, z \star x)  = \hat{h}([x, x, z]_{\star}, z) = \tfrac{1}{4}\hat{h}([x, [x, z]], z) = \tfrac{1}{4n}\tr([x, [x, z]]\star y) \\
&= \tfrac{1}{8n}\tr\left([x, [x, z]]z + z[x, [x, z]]\right) = -\tfrac{1}{4n}\tr [x, z][x, z]= \tfrac{1}{4n}\tr \overline{[x, z]}^{t}[x, z].
\end{split}
\end{align}
When $\fie$ is associative, an automorphism of $(\herm(n, \fie), \star)$ preserves the ordinary matrix product and commutator bracket on $\mat(n, \fie)$, but this might not be true when $\fie = \cayley$, so in this case a an alternative argument is necessary (what follow works for associative $\fie$ too).
If $\fie = \cayley$, by the principal axis theorem there are $g \in \Aut(\herm(n, \fie), \star))$ and diagonal $\la \in \herm(n, \fie)$ such that $x = g\la$. Let $y = g^{-1}z$. Since $\la$ is Hermitian its elements are real, so $L_{\cdot}(\la \cdot \la) = L_{\cdot}(\la)^{2}$ and $R_{\cdot}(\la \cdot \la) = R_{\cdot}(\la)^{2}$, and hence $[\la, \la, y]_{\cdot} =  [y, \la, \la]_{\cdot} = 0$. In \eqref{condensedassociator} this yields
\begin{align}
 4[x, x, z]_{\star} = 4g[\la, \la,y]_{\star} = g[\la, [\la, y]] .
\end{align} 
As in \eqref{hermsect0} this yields
\begin{align}\label{hermsect0b}
\begin{split}
\hat{h}&(x\star x, z \star z) - \hat{h}(x\star z, z \star x)  = \hat{h}([x, x, z]_{\star}, z) = \tfrac{1}{4}\hat{h}(g[\la,  [\la, y]], gy)  = \tfrac{1}{4}\hat{h}([\la,  [\la, y]], y)\\
&= \tfrac{1}{4n}\tr([\la, [\la, y]]\star y) = \tfrac{1}{8n}\tr\left([\la, [\la, y]]y + y[\la, [\la, y]]\right) = -\tfrac{1}{4n}\tr [\la, y][\la, y]= \tfrac{1}{4n}\tr \overline{[\la, y]}^{t}[\la, y],
\end{split}
\end{align}
where, in the case $\fie = \cayley$, the penultimate equality follows from the invariance $\tr([x, y]z) + \tr(y[x, z]) = 0$ for $x, y, z \in\herm(n, \fie)$ proved in \cite[section $4.4$]{Freudenthal}. 
By \eqref{hermsect0} and \eqref{hermsect0b}, $\sect(x, y) \geq 0$. If $\fie$ is associative, by \eqref{hermsect0}, equality holds if and only if $[x, y] = 0$; in particular, $[x\star x, x] = [x, x, x]_{\star} = 0$ because $\star$ is commutative, so if $x$ and $x \star x$ are linearly independent, then $\sect(x, x \star x) = 0$.

Since $\tr\bar{x}^{t}x$ equals the Frobenius norm on $\herm(n, \fie)$, by Lemma \ref{cdklemma}, if $\fie$ is associative
\begin{align}
\hat{h}(x\star x, z\star z) - \hat{h}(x\star z, z \star x) = \tfrac{1}{4n}\tr \overline{[x, z]}^{t}[x, z] \leq \tfrac{n}{2}\left(|x|^{2}_{\hat{h}}|z|^{2}_{\hat{h}} - \hat{h}(x, z)^{2}\right),
\end{align}
which shows the upper bound in \eqref{hermsect}. The characterization of the equality case in Lemma \ref{cdklemma} yields the characterization of the equality case in the upper bound of \eqref{hermsect}.
If $\fie = \cayley$, then by Lemma \ref{cdklemma},
\begin{align}
\tfrac{1}{4n}\tr \overline{[\la, y]}^{t}[\la, y] \leq \tfrac{n}{2}\left(|\la|^{2}_{\hat{h}}|y|^{2}_{\hat{h}} - \hat{h}(\la, y)^{2}\right) =\tfrac{n}{2}\left(|x|^{2}_{\hat{h}}|z|^{2}_{\hat{h}} - \hat{h}(x, z)^{2}\right) ,
\end{align}
(where $n  = 3$) and in \eqref{hermsect0b} this shows the upper bound in \eqref{hermsect}. The characterization of the equality case in Lemma \ref{cdklemma} again yields the characterization of the equality case in the upper bound of \eqref{hermsect}.
\end{proof}

\begin{remark}
In \cite[Table $5.1$]{Liu-curvatureestimates}, X. Liu gives sharp upper bounds on the sectional curvatures of irreducible Riemannian symmetric spaces. It would be interesting to know if the upper bound of \eqref{hermsect} can be deduced from Liu's result for the relevant symmetric spaces (types AI, AII, and EIV). In this regard \cite{Benito-Draper-Elduque} is relevant.
\end{remark}

\begin{lemma}\label{nodominantlemma}
If a Euclidean metrized commutative algebra $(\alg,\mlt, h)$ has nonnegative sectional nonassociativity, then it is not exact and $\specp(e) \subset [0, 1/2]$ for any minimal idempotent $e \in \midem(\alg, \mlt, h)$.
\end{lemma}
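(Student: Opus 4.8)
The plan is to derive both assertions from a single elementary computation: the sectional nonassociativity of the plane spanned by a minimal idempotent $e$ and an eigenvector of $L_{\mlt}(e)$ lying in $\eperp$.

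First I would fix $e \in \midem(\alg, \mlt, h)$, which is nonempty by Lemma \ref{criticalpointlemma}, together with $0 \neq y \in \eperp$ satisfying $L_{\mlt}(e)y = \la y$ for some $\la \in \specp(e)$. Since $h$ is positive definite, $e$ and $y$ are linearly independent and span an $h$-nondegenerate plane, so $\sect(e, y)$ is defined. Using $e \mlt e = e$, $e \mlt y = \la y$, and the invariance of $h$ (which gives $h(e, y \mlt y) = h(e \mlt y, y) = \la |y|^{2}_{h}$), formula \eqref{sectnadefined} collapses to $\sect(e, y) = \la(1 - \la)/|e|^{2}_{h}$. As $|e|^{2}_{h} > 0$, the hypothesis $\sect(e, y) \geq 0$ forces $\la(1 - \la) \geq 0$, i.e. $\la \in [0, 1]$; combined with the bound $\specp(e) \subset (-\infty, 1/2]$ from Lemma \ref{criticalpointlemma} this yields $\la \in [0, 1/2]$. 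Since $\la$ was an arbitrary element of $\specp(e)$, this proves $\specp(e) \subset [0, 1/2]$.

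For non-exactness I would argue by contradiction: suppose $(\alg, \mlt, h)$ is exact and pick a minimal idempotent $e$. It is absolutely primitive by Lemma \ref{criticalpointlemma}, so $\alg^{(1)}(e) = \spn\{e\}$, and because $L_{\mlt}(e)$ is $h$-self-adjoint with $h$ positive definite it is diagonalizable; hence $\alg = \spn\{e\} \oplus \eperp$ with $L_{\mlt}(e)$ acting on $\eperp$ with all eigenvalues in $\specp(e) \subset [0, 1/2]$ by the first part. Then $\tr L_{\mlt}(e) = 1 + \tr\bigl(L_{\mlt}(e)|_{\eperp}\bigr) \geq 1 > 0$, contradicting exactness; so the algebra is not exact. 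I do not anticipate a genuine obstacle: both parts rest on the identity $\sect(e, y) = \la(1 - \la)/|e|^{2}_{h}$ and on facts already in hand (nonemptiness of $\midem$, absolute primitivity of minimal idempotents, and $\specp(e) \subset (-\infty, 1/2]$). The only points needing care are recording that self-adjointness of $L_{\mlt}(e)$ makes its trace the sum of its eigenvalues with multiplicity, and the standing assumption that the algebra is nontrivial so that a minimal idempotent exists.
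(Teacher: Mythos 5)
Your argument is correct and is essentially the paper's own proof: the key computation $0 \le h(e\mlt e, y\mlt y) - |e\mlt y|_{h}^{2} = \la(1-\la)|y|_{h}^{2}$ for $y \in \eperp$ an eigenvector, combined with the upper bound $\specp(e) \subset (-\infty, 1/2]$ from Lemma \ref{criticalpointlemma}, gives the spectral claim, and the trace/exactness contradiction is the same (the paper phrases it as "trace zero plus eigenvalue $1$ forces a negative eigenvalue," which your $\tr L_{\mlt}(e) \geq 1$ computation makes explicit). The only cosmetic difference is that you invoke absolute primitivity, which is not actually needed for the trace estimate.
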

\begin{proof}
By \eqref{minimalidempotentlemma} of Lemma \ref{criticalpointlemma} there exists $0 \neq e \in \idem(\alg, \mlt)$. Were $(\alg, \mlt)$ exact, then, because $\tr L_{\mlt}(e) = 0$ and $1$ is an eigenvalue of $L_{\mlt}(e)$, there would be a negative eigenvalue of $L_{\mlt}(e)$. Let $z \in \alg$ be $h$-orthogonal to $e \in \midem(\alg, \mlt, h)$ and suppose $L_{\mlt}(e)z = \la z$. Because the sectional nonassociativity is nonnegative, $0 \leq h(e\mlt e, z \mlt z) - |e\mlt z|^{2}_{h}= \la(1 - \la)|z|^{2}_{h}$, so $\la \in [0, 1/2]$ (the upper bound by Lemma \ref{criticalpointlemma}). 
\end{proof}

\begin{example}
Together Lemmas \ref{hermsectlemma} and \ref{nodominantlemma} show that $\min\specp(e) \geq 0$ for $e \in \midem (\herm(n, \fie), \star, h)$. 
\end{example}

\begin{lemma}\label{jordanminimalsectlemma}
Over a base field of characteristic zero, if the unitalization of the metrized commutative algebra $(\alg, \mlt, h)$ is power associative, then, for any $0 \neq x \in \alg$, either $x$ generates a one-dimensional subalgebra of $(\alg, \mlt)$ or $\sect(x\mlt x, x) = -1$.
\end{lemma}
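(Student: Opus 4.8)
The plan is to derive the statement directly from the characterization of power associativity of the unitalization given in Lemma~\ref{notjordanlemma}. By \eqref{unitalpaclaim} of that lemma, the hypothesis that $(\halg,\hmlt)$ be power associative is equivalent to the identity \eqref{npa}, that is, $[x\mlt x, x, x] = h(x,x)\,x\mlt x - h(x\mlt x,x)\,x$ for all $x\in\alg$. First I would fix $0\neq x\in\alg$, set $w = x\mlt x$, expand the associator $[w,x,x] = (w\mlt x)\mlt x - w\mlt(x\mlt x) = (w\mlt x)\mlt x - w\mlt w$, and substitute into \eqref{npa} to obtain
\[
(w\mlt x)\mlt x = w\mlt w + h(x,x)\,w - h(w,x)\,x .
\]

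Next I would compute $h([w,w,x],x)$. By \eqref{sectnadefined} (equivalently, by the invariance of $h$ directly), $h([w,w,x],x) = h(w\mlt w, x\mlt x) - h(w\mlt x, x\mlt w) = h(w\mlt w, w) - |w\mlt x|_h^2$, using $x\mlt x = w$ and commutativity. Then, again by the invariance and commutativity of $\mlt$ with respect to $h$, $|w\mlt x|_h^2 = h(w\mlt x, w\mlt x) = h\bigl(w,(w\mlt x)\mlt x\bigr)$, and substituting the displayed identity gives
\[
|w\mlt x|_h^2 = h(w\mlt w, w) + h(x,x)\,h(w,w) - h(w,x)^2 .
\]
Combining the two computations yields $h([w,w,x],x) = -\bigl(|x|_h^2\,|w|_h^2 - h(w,x)^2\bigr)$.

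Finally I would conclude via \eqref{sectnadefined}: if $w = x\mlt x$ and $x$ are linearly independent and span an $h$-nondegenerate subspace — exactly the case in which $\sect(x\mlt x, x)$ is defined — then the Gram determinant $|w|_h^2|x|_h^2 - h(w,x)^2$ is nonzero, and dividing the identity just obtained by it gives $\sect(x\mlt x, x) = -1$. If instead $w$ and $x$ are linearly dependent, then $x\mlt x = w\in\spn\{x\}$, so $\spn\{x\}$ is a one-dimensional subalgebra of $(\alg,\mlt)$. This exhausts the dichotomy.

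The argument is essentially one invariant computation, so I do not expect a real obstacle; the only point demanding care is the repeated and correct use of the invariance of $h$ to bring $|w\mlt x|_h^2$ into the form to which \eqref{npa} applies, together with the routine remark at the end that linear dependence of $x\mlt x$ and $x$ is precisely the statement that $x$ generates a one-dimensional subalgebra.
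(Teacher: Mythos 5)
Your proof is correct and takes essentially the same route as the paper: both rest on the characterization \eqref{npa} of power associativity of the unitalization from Lemma \ref{notjordanlemma} together with the invariance of $h$, the only cosmetic difference being that the paper pairs $[x\mlt x, x, x]$ directly with $x\mlt x$ to produce the Gram determinant in the numerator, whereas you route the same computation through $|(x\mlt x)\mlt x|_{h}^{2}$. Your closing remark that the formula only yields $\sect(x\mlt x, x)=-1$ when the span is $h$-nondegenerate is, if anything, slightly more careful than the paper's own wording.
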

\begin{proof}
By Lemma \ref{notjordanlemma}, if the unitalization of $(\alg, \mlt, h)$ is power associative, then $[x\mlt x, x, x] = |x|^{2}_{h}x\mlt x - h(x\mlt x, x)x$ for all $x \in \alg$. The quantity $|x|^{2}_{h}x\mlt x - h(x\mlt x, x)x$ vanishes if and only if $x\mlt x$ is a multiple of $x$, in which case  $|x|^{2}_{h}x\mlt x = h(x\mlt x, x)x$ and $x$ generates a one-dimensional subalgebra of $(\alg, \mlt)$. Otherwise, pairing this quantity with $x\mlt x$ and comparing with \eqref{sectnadefined} yields 
\begin{align}
\begin{split}
-\sect(x, x\mlt x) & = \tfrac{h([x\mlt x, x, x], x\mlt x)}{|x|^{2}_{h}|x\mlt x|^{2}_{h} - h(x\mlt x, x)^{2}} = 1,
\end{split}
\end{align}
which completes the proof.
\end{proof}

\begin{corollary}
\label{jordandeunitalizationsectcorollary}
Let $\fie$ be a real Hurwitz algebra. The sectional nonassociativity of the deunitalization $(\herm_{0}(n, \fie), \mlt, h)$ of a simple Euclidean Jordan algebra of rank $n \geq 2$ ($n = 3$ if $\fie = \cayley$) satisfies
\begin{align}\label{hermsectb}
-1 \leq \sect(x, y) \leq \tfrac{n-2}{2}.
\end{align}
and both bounds are attained. If $\fie$ is associative, the associator is given by 
\begin{align}\label{herm0ass}
[x, y, z]_{\mlt} & = \tfrac{1}{4}[y, [x, z]] - h(x, y)z + h(y, z)x,
\end{align}
for $x, y, z \in \herm_{0}(n, \fie)$.
\end{corollary}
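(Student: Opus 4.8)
The plan is to deduce Corollary \ref{jordandeunitalizationsectcorollary} from the already-established results about $\herm(n,\fie)$ and the deunitalization construction, using Lemma \ref{deunitalizationsectlemma} to transfer the bounds. First I would observe that $(\herm_0(n,\fie),\mlt,h)$ is, by definition and by Theorem \ref{jordandeunitalizationtheorem}, the deunitalization of the simple Euclidean Jordan algebra $(\herm(n,\fie),\star)$ with $\hat h(x,y) = n^{-1}\tr(x\star y)$, and that the unit $e=I$ of $\herm(n,\fie)$ satisfies $\hat h(e,e) = n^{-1}\tr I = 1$ in this normalization. Hence Lemma \ref{deunitalizationsectlemma} applies with $g(e,e)=1$ and gives, for all $x,y$ in the orthogonal complement of $e$ (i.e.\ in $\herm_0(n,\fie)$) with $h$-nondegenerate span,
\begin{align}\label{transfereq}
\sect_{(\herm_0(n,\fie),\mlt,h)}(x,y) = \sect_{(\herm(n,\fie),\star,\hat h)}(x,y) - 1.
\end{align}
Combining \eqref{transfereq} with the two-sided bound $0 \le \sect_{(\herm(n,\fie),\star,\hat h)}(x,y) \le n/2$ of Lemma \ref{hermsectlemma} immediately yields $-1 \le \sect(x,y) \le (n-2)/2$, which is \eqref{hermsectb}. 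The claim that both bounds are attained follows from the equality characterizations in Lemma \ref{hermsectlemma}: the lower bound $-1$ is attained precisely at pairs $x,y\in\herm_0(n,\fie)$ that commute as matrices (for associative $\fie$), and in particular at $\sect(x, x\mlt x) = -1$ when $x$ and $x\star x$ are linearly independent — alternatively this follows from Lemma \ref{jordanminimalsectlemma} together with Lemma \ref{notjordanlemma}, since the unitalization of $(\herm_0(n,\fie),\mlt,h)$ is the Jordan (hence power associative) algebra $\herm(n,\fie)$; the upper bound $(n-2)/2$ is attained at the images in $\herm_0(n,\fie)$ of scalar multiples of $e_{11}-e_{nn}$ and $e_{1n}+e_{n1}$ (these already lie in $\herm_0(n,\fie)$, so no adjustment is needed). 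One should check that the subspace spanned by these witnessing pairs is indeed $\hat h$- and $h$-nondegenerate, which is routine.

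For the associator formula \eqref{herm0ass}, valid when $\fie$ is associative, I would compute directly from the definition of the deunitalized product $\mlt$. By Lemma \ref{deunitlemma} the deunitalization sits inside $\herm(n,\fie)$ via $x\mlt y = \pi(\pi(x)\star\pi(y))$ where $\pi$ is the $\hat h$-orthogonal projection onto $\herm_0(n,\fie)$, i.e.\ $x\star y$ followed by subtracting its $\hat h$-component along $e=I$. Thus for $x,y\in\herm_0(n,\fie)$ one has $x\mlt y = x\star y - \hat h(x\star y, I)\,I = x\star y - \hat h(x,y)\,I$. Expanding $[x,y,z]_\mlt = (x\mlt y)\mlt z - x\mlt(y\mlt z)$ with this formula, the correction terms produce exactly $-h(x,y)z + h(y,z)x$ (using that $I$ acts as a unit up to the appropriate scalar and that $h$ is the restriction of $\hat h$), while the remaining part is $[x,y,z]_\star$ — which for associative $\fie$, by the computation in the proof of Lemma \ref{hermsectlemma} (see \eqref{jordanassociators}), equals $\tfrac14[y,[x,z]]$. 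Assembling these gives \eqref{herm0ass}.

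The main obstacle is bookkeeping rather than conceptual: one must be careful about the normalization of the metrics (the factor $g(e,e)^{-1}$ built into the deunitalization, versus the $n^{-1}\tr$ normalization of $\hat h$ chosen so that $\hat h(e,e)=1$), so that Lemma \ref{deunitalizationsectlemma} can be applied with the clean constant $g(e,e)=1$; getting this wrong would shift the bounds. A secondary point requiring care is that Lemma \ref{deunitalizationsectlemma} is stated for $x,y$ with $h$-nondegenerate span, so the equality-case witnesses must be verified to span nondegenerate $2$-planes inside $\herm_0(n,\fie)$ — straightforward for the explicit matrices $e_{11}-e_{nn}$, $e_{1n}+e_{n1}$ and for a linearly independent pair $x, x\star x$. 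No new ideas beyond the cited results are needed.
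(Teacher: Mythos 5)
Your proposal is correct and follows essentially the same route as the paper: the two-sided bound is obtained by combining Lemma \ref{hermsectlemma} with the shift formula \eqref{secasecb} of Lemma \ref{deunitalizationsectlemma} (with $\hat h(I,I)=1$), attainment of the lower bound comes from Lemma \ref{jordanminimalsectlemma} (the unitalization being the Jordan algebra $\herm(n,\fie)$) and of the upper bound from the explicit pair $e_{11}-e_{nn}$, $e_{1n}+e_{n1}$, and the associator identity \eqref{herm0ass} is the deunitalization counterpart of \eqref{unitalassoc} combined with \eqref{jordanassociators}. Your attention to the metric normalization is exactly the right point of care, and no gaps remain.
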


\begin{proof}
The bounds \eqref{hermsectb} are immediate from Lemma \ref{hermsectlemma} and \eqref{secasecb}. By Lemma \ref{jordanminimalsectlemma}, if $x \in \herm_{0}(n, \fie)$ is not a multiple of an idempotent or a square-zero element, then $\sect(x \mlt x, x) = -1$, showing that the lower bound in \eqref{hermsectb} is attained. That the upper bound in \eqref{hermsectb} is attained for $x = e_{1n} + e_{n1}$ and $y = e_{11} - e_{nn}$ follows from computations using $x\mlt x = e_{11} + e_{nn} - \tfrac{2}{n}I = y \mlt y$ and $x \mlt y = 0$. The identity \eqref{herm0ass} follows from \eqref{unitalassoc} and \eqref{jordanassociators} when $\fie$ is associative. 
\end{proof}

\section{The conformal extension of a Killing metrized commutative algebra}
An $n$-dimensional Killing metrized exact commutative $\fie$-algebra gives rise to one of $(n+1)$-dimensions when $\chr\fie = 0$ and $\fie$ is algebraically closed or $\fie = \rea$. The restriction on the field is because square-roots are taken in the construction. This construction is an algebraic version of \cite[Lemma $6.9$]{Fox-cubicpoly} and \cite[Lemma $7.15$]{Fox-ahs}, whose purely algebraic content is somewhat obscured because they are stated in terms of the associated cubic polynomials.

\begin{lemma}\label{confextensionlemma}
Let $\fie$ be either an algebraically closed field of characteristic zero or $\rea$.
Let $(\alg, \mlt)$ be an $n$-dimensional Killing metrized exact commutative $\fie$-algebra. Equip the vector space $\lalg = \alg \oplus \fie$ with the commutative multiplication $\lmlt$ defined by
\begin{align}\label{confmult}
\begin{split}
\bar{x}\lmlt\bar{y} &= \tfrac{1}{\sqrt{(n+1)n}}\left( \sqrt{(n+2)(n-1)} x \mlt y - sx - ry , nrs - \tau_{\mlt}(x, y)\right).
\end{split}
\end{align}
where $\bar{x} = (x, r)$, $\bar{y} = (y, s)$.
Then:
\begin{enumerate}
\item The multiplication $\lmlt$ is exact.
\item The Killing form $\tau_{\lmlt}$ is nondegenerate and invariant and satisfies
\begin{align}\label{confmulttau}
\tau_{\lmlt}(\bar{x}, \bar{y}) = \tau_{\mlt}(x, y) + rs.
\end{align}
\item The conformal nonassociativity tensors $\om_{\lmlt}$ and $\om_{\mlt}$ are related by
\begin{align}\label{lconfass}
\tfrac{n}{n+2}\om_{\lmlt}(\bar{x}_{1}, \bar{x}_{2}, \bar{x}_{3}, \bar{x}_{4}) =  \tfrac{n-1}{n+1}\om_{\mlt}(x_{1}, x_{2}, x_{3}, x_{4}),
\end{align}
where $x_{i} = \pi(\bar{x}_{i})$ is image of $\bar{x}_{i} \in \lalg$ under the canonical projection $\pi:\lalg \to \alg$.
In particular, $(\lalg, \lmlt, \tau_{\lmlt})$ is conformally associative if and only if $(\alg, \mlt, \tau_{\mlt})$ is conformally associative.
\item\label{confassdominant} The element $e = (0, \sqrt{\tfrac{n+1}{n}})$ is an idempotent of $(\lalg, \lmlt)$ satisfying $\tau_{\lmlt}(e, e) = \tfrac{n+1}{n}$ and such that $L_{\lmlt}(e)$ acts on $\alg \oplus \{0\}$ as multiplication by $-\tfrac{1}{n}$, so $e$ is an absolutely primitive, and hence primitive, idempotent.
\end{enumerate}
\end{lemma}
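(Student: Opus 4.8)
All four assertions follow by direct computation, carried out most conveniently in block form for the splitting $\lalg = \alg \oplus \fie$. Write $\bar x = (x, r)$, $\bar y = (y, s)$ and set $N = \sqrt{n(n+1)}$, $a = \sqrt{(n+2)(n-1)}$, so that $a^{2} + 2 = N^{2}$ and $a^{2} = (n+2)(n-1)$. From \eqref{confmult} the endomorphism $L_{\lmlt}(\bar x)$ has block matrix $\tfrac1N\begin{pmatrix} aL_{\mlt}(x) - r\,\id_{\alg} & -x \\ -\tau_{\mlt}(x,\dum) & nr\end{pmatrix}$, where the off-diagonal entries denote $t \mapsto -tx$ and $z \mapsto -\tau_{\mlt}(x,z)$. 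Its trace is $\tfrac1N(a\tr L_{\mlt}(x) - nr + nr) = 0$ by the exactness of $(\alg, \mlt)$, which gives (1). Multiplying the block matrices of $L_{\lmlt}(\bar x)$ and $L_{\lmlt}(\bar y)$, taking the trace, and discarding $\tr L_{\mlt}(x)$ and $\tr L_{\mlt}(y)$ by exactness, one finds $\tau_{\lmlt}(\bar x,\bar y) = \tfrac1{N^{2}}((a^{2}+2)\tau_{\mlt}(x,y) + (n^{2}+n)rs) = \tau_{\mlt}(x,y) + rs$, which is \eqref{confmulttau}; this form is block diagonal with nondegenerate blocks $\tau_{\mlt}$ and $t \mapsto t$, hence nondegenerate, and its invariance follows from \eqref{confmult} and \eqref{confmulttau} because the difference of the expansions of $\tau_{\lmlt}(\bar x \lmlt \bar y, \bar z)$ and $\tau_{\lmlt}(\bar x, \bar y \lmlt \bar z)$ reduces to $\tfrac aN(\tau_{\mlt}(x\mlt y, z) - \tau_{\mlt}(x, y\mlt z)) = 0$. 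This establishes (1) and (2).

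For (4), putting $x = y = 0$ and $r = s = \sqrt{(n+1)/n}$ into \eqref{confmult} gives $e \lmlt e = (0, \tfrac{n+1}{N}) = (0, \sqrt{(n+1)/n}) = e$, and $\tau_{\lmlt}(e,e) = (n+1)/n$ by \eqref{confmulttau}; putting $\bar y = (y, 0)$ into \eqref{confmult} gives $e \lmlt \bar y = (-\tfrac1N\sqrt{(n+1)/n}\,y, 0) = (-\tfrac1n y, 0)$, so $L_{\lmlt}(e)$ acts on $\alg \oplus \{0\}$ as $-\tfrac1n\id$. Since $\lalg = \fie e \oplus (\alg \oplus \{0\})$ and $-1/n \neq 1$ for $n \geq 2$, the $1$-eigenspace of $L_{\lmlt}(e)$ is exactly $\fie e$, so $e$ is absolutely primitive; being $\tau_{\lmlt}$-anisotropic over a field of characteristic zero, it is primitive by Lemma \ref{absolutelyprimitivelemma}.

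The substance is (3), which I would prove as follows. By (1) both algebras are exact, so $\ric_{\mlt} = -\tau_{\mlt}$ and $\ric_{\lmlt} = -\tau_{\lmlt}$, with metric traces (with respect to $\tau_{\mlt}$ and $\tau_{\lmlt}$) equal to $-n$ and $-(n+1)$; a short computation substituting these into \eqref{confnon2} collapses the correction terms, so that, with $G_{\mlt}(x,y,z,w) := \tau_{\mlt}(x,z)\tau_{\mlt}(y,w) - \tau_{\mlt}(y,z)\tau_{\mlt}(x,w)$ and $G_{\lmlt}$ defined analogously from $\tau_{\lmlt}$, one has $\om_{\mlt} = \mu_{\mlt} - \tfrac1{n-1}G_{\mlt}$ (for $n \geq 3$) and $\om_{\lmlt} = \mu_{\lmlt} - \tfrac1n G_{\lmlt}$. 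I would then compute $\mu_{\lmlt}$ from \eqref{hmudefined}, \eqref{confmult} and \eqref{confmulttau}: exactly as in the verification of the invariance of $\tau_{\lmlt}$, all terms linear in $a$ (those carrying a factor $\tau_{\mlt}(u\mlt v, w)$) cancel in pairs by the invariance of $\tau_{\mlt}$, and the surviving terms assemble, using the identity $G_{\lmlt} = G_{\mlt} + (\text{products of the }\fie\text{-components with }\tau_{\mlt})$ that comes from \eqref{confmulttau}, into $\mu_{\lmlt} = \tfrac{a^{2}}{n(n+1)}\mu_{\mlt} + \tfrac1n G_{\lmlt} - \tfrac{n+2}{n(n+1)}G_{\mlt}$, where $\mu_{\mlt}$ and $G_{\mlt}$ are now evaluated at $\pi(\bar x), \dots, \pi(\bar w)$. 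Subtracting $\tfrac1n G_{\lmlt}$ kills the $G_{\lmlt}$ term, and inserting $\mu_{\mlt} = \om_{\mlt} + \tfrac1{n-1}G_{\mlt}$ gives $\om_{\lmlt} = \tfrac{a^{2}}{n(n+1)}\om_{\mlt} + \bigl(\tfrac{a^{2}}{n(n+1)(n-1)} - \tfrac{n+2}{n(n+1)}\bigr)G_{\mlt}$; since $a^{2} = (n+2)(n-1)$ the coefficient of $G_{\mlt}$ vanishes, while $\tfrac{a^{2}}{n(n+1)} = \tfrac{(n-1)(n+2)}{n(n+1)} = \tfrac{n+2}{n}\cdot\tfrac{n-1}{n+1}$, which is \eqref{lconfass}. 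For $n = 2$ both sides vanish, by the convention on two-dimensional algebras and the triviality of the $O(m)$-module of trace-free curvature-symmetric tensors for $m \leq 3$; and since the proportionality constant $\tfrac{(n-1)(n+2)}{n(n+1)}$ is nonzero, $(\lalg, \lmlt, \tau_{\lmlt})$ is conformally associative if and only if $(\alg, \mlt, \tau_{\mlt})$ is. The one delicate point is the computation of $\mu_{\lmlt}$: one must track signs carefully and recognize that, once the $a$-linear terms have cancelled, everything not proportional to $\mu_{\mlt}$ is a linear combination of $G_{\lmlt}$ and $G_{\mlt}$.
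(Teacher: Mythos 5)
Your proposal is correct and follows essentially the same route as the paper's proof: the block-matrix form of $L_{\lmlt}(\bar{x})$ for exactness and for \eqref{confmulttau}, cancellation of the cross terms by invariance of $\tau_{\mlt}$, and the decomposition of $\mu_{\lmlt}$ into $\tfrac{(n+2)(n-1)}{n(n+1)}\mu_{\mlt}$ plus Gram-type terms that are absorbed into the trace corrections defining $\om_{\lmlt}$ and $\om_{\mlt}$. The only cosmetic difference is that the paper works with undetermined coefficients and derives the normalization $\cn{}^{2}(n+2)(n-1)=1$ from the requirements \eqref{confmulttau} and invariance, whereas you verify the given constants directly; your intermediate identity for $\mu_{\lmlt}$ checks out exactly.
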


\begin{proof}
The claims all follow from straightforward computations that are summarized here for convenience and in order to indicate where the hypotheses are used. Suppose $\lmlt$ is given by 
\begin{align}\label{confmulttemp}
\bar{x} \lmlt \bar{y} = \bn{}\left(x\mlt y - \cn{}(ry + sx), \cn{}(nrs - \tau_{\mlt}(x, y))\right),
\end{align}
where $\bn{}, \cn{} \in \fiet$ are parameters to be chosen. The operator $L_{\lmlt}(\bar{x})$ corresponding with \eqref{confmulttemp} can be represented matricially as
\begin{align}
L_{\lmlt}(\bar{x}) = \bn{}\begin{pmatrix}L_{\mlt}(x) - cr\Id & -cx\\ -c\tau_{\mlt}(x, \dum) & cnr\end{pmatrix},
\end{align}
from which it follows that $\tr L_{\lmlt}(\bar{x}) = \bn{}\tr L_{\mlt}(x) = 0$, so that $\lmlt$ is exact, and 
\begin{align}\label{confmulttau0}
\begin{split}
\bn{}^{-2}\tau_{\lmlt}(\bar{x}, \bar{y})& = (1 + 2\cn{}^{2})\tau_{\mlt}(x, y) + \cn{}^{2}n(n+1)rs.
\end{split}
\end{align}
Let $\bar{z} = (z, t) \in \lalg$. From \eqref{confmulttau0} there follows
\begin{align}\label{confmultinv}
\begin{split}
\bn{}^{-3}&\left(\tau_{\lmlt}(\bar{x},\lmlt \bar{y}, \bar{z})  - \tau_{\lmlt}(\bar{x}, \bar{y} \lmlt \bar{z})\right)\\
&= (1 + 2\cn{}^{2})\left(\tau_{\mlt}(x\mlt y, z) - \tau_{\mlt}(x, y \mlt z)\right) +  \cn{}\left(\cn{}^{2}(n+2)(n-1) - 1\right)\left(r\tau_{\mlt}(y, z)- t\tau_{\mlt}(x, y) \right)\\
&=   \cn{}\left(\cn{}^{2}(n+2)(n-1) - 1\right)\left(r\tau_{\mlt}(y, z)- t\tau_{\mlt}(x, y) \right).
\end{split}
\end{align}
To have \eqref{confmulttau} it is necessary that $\bn{}^{-2} = 1 + 2\cn{n}^{2} = \cn{n}^{2}n(n+1)$, or, equivalently, $\cn{}^{2}(n+2)(n-1) = 1$. 
Similarly, by \eqref{confmultinv}, in order that $\tau_{\lmlt}$ be invariant, it is necessary that $\cn{}^{2}(n+2)(n-1) = 1$. These equations have the four solutions $\cn{} = \pm \tfrac{1}{\sqrt{(n+2)(n-1)}}$ and $\bn{} = \pm \sqrt{\tfrac{(n+2)(n-1)}{(n+1)n}}$. (The hypotheses on $\fie$ guarantee these equations have solutions in $\fie$.)
However, the different choices of signs yield isomorphic algebras, so no generality is lost by choosing for $\cn{}$ the solution $\cn{n} = ((n+2)(n-1))^{-1/2}$ and for $\bn{}$ the solution $\bn{n} = \sqrt{\tfrac{(n+2)(n-1)}{(n+1)n}}$. In this case it follows from \eqref{confmultinv} that $\tau_{\lmlt}$ is invariant.
Further calculation using $\bn{n}^{-2} = 1 + 2\cn{n}^{2} = \cn{n}^{2}n(n+1)$ yields
\begin{align}\label{lmltassoc}
\begin{split}
[\bar{x}, \bar{y}, \bar{z}]_{\lmlt}  &= \left(\bn{n}^{2}[x, y, z]_{\mlt} + \tfrac{1}{n(n+1)}\left(\tau_{\mlt}(x, y)z - \tau_{\mlt}(y, z)x\right)+ \tfrac{1}{n}(st x - rs z) ,\right.\\
&\qquad \left. \tfrac{1}{n}\left(r\tau_{\mlt}(y, z) - t\tau_{\mlt}(x, y)\right)\right).
\end{split}
\end{align}
Let $\bar{w} = (w, u) \in \lalg$. Combining \eqref{lmltassoc} with \eqref{confmulttau} yields
\begin{align}
\begin{split}
\tau_{\lmlt}([\bar{x}, \bar{y}, \bar{z}]_{\lmlt}, \bar{w}) & = \bn{n}^{2}\tau_{\mlt}([x, y, z]_{\mlt}, w) + \tfrac{1}{n(n+1)}\left(\tau_{\mlt}(x, y)\tau_{\mlt}(z, w) - \tau_{\mlt}(y, z)\tau_{\mlt}(x, w)\right) \\
&\qquad+ \tfrac{1}{n}\left( st \tau_{\mlt}(x, w) - rs\tau_{\mlt}(z, w) + ru\tau_{\mlt}(y, z) - tu \tau_{\mlt}(x, y)\right), 
\end{split}\\
\begin{split}
\tau_{\lmlt}(\bar{x}, \bar{y})\tau_{\lmlt}(\bar{z}, \bar{w}) & - \tau_{\lmlt}(\bar{y}, \bar{z})\tau_{\lmlt}(\bar{z}, \bar{w})= \tau_{\mlt}(x, y)\tau_{\mlt}(z, w) - \tau_{\mlt}(y, z)\tau_{\mlt}(x, w) \\
&\qquad - st \tau_{\mlt}(x, w) + rs\tau_{\mlt}(z, w) - ru\tau_{\mlt}(y, z) + tu \tau_{\mlt}(x, y).
\end{split}
\end{align}
Since $\ricc_{\lmlt} = -\tau_{\lmlt}$ and $\ricc_{\mlt} = -\tau_{\mlt}$, there follows
\begin{align}
\begin{split}
\om_{\lmlt}&(\bar{z}, \bar{x}, \bar{y}, \bar{w})  = \tau_{\lmlt}([\bar{x}, \bar{y}, \bar{z}]_{\lmlt}, \bar{w}) \\
&\qquad + \tfrac{1}{n-1}\left(\ricc_{\lmlt}(\bar{y}, \bar{z})\tau_{\lmlt}(\bar{z}, \bar{w})  -\ricc_{\lmlt}(\bar{x}, \bar{y})\tau_{\lmlt}(\bar{z}, \bar{w}) +  \ricc_{\lmlt}(\bar{x}, \bar{w})\tau_{\lmlt}(\bar{y}, \bar{z})  -\ricc_{\lmlt}(\bar{z}, \bar{w})\tau_{\lmlt}(\bar{x}, \bar{y}) \right) 
\\& \qquad - \tfrac{n+1}{n(n-1)}\left( \tau_{\lmlt}(\bar{x}, \bar{y})\tau_{\lmlt}(\bar{z}, \bar{w})  - \tau_{\lmlt}(\bar{y}, \bar{z})\tau_{\lmlt}(\bar{z}, \bar{w})\right)\\
& =  \tau_{\lmlt}([\bar{x}, \bar{y}, \bar{z}]_{\lmlt}, \bar{w}) + \tfrac{1}{n}\left( \tau_{\lmlt}(\bar{x}, \bar{y})\tau_{\lmlt}(\bar{z}, \bar{w})  - \tau_{\lmlt}(\bar{y}, \bar{z})\tau_{\lmlt}(\bar{z}, \bar{w})\right)\\
& =\bn{n}^{2}\left(\tau_{\mlt}([x, y, z]_{\mlt}, w) + \tfrac{1}{n-1}(\tau_{\mlt}(x, y)\tau_{\mlt}(z, w) - \tau_{\mlt}(y, z)\tau_{\mlt}(x, w) \right)\\
& = \tfrac{(n+2)(n-1)}{(n+1)n}\om_{\mlt}(z, x, y, w).
\end{split}
\end{align}
This proves \eqref{lconfass}. The quantitative assertions of claim \eqref{confassdominant} regarding $e = (0, \sqrt{\tfrac{n+1}{n}})$ follow from straightforward computations. 
Since $e$ is absolutely primitive, it is primitive by Lemma \ref{absolutelyprimitivelemma}.
\end{proof}

\begin{definition}
For a field $\fie$ that is algebraically closed of characteristic zero or equal to $\rea$, define the \emph{conformal extension} of an $n$-dimensional exact commutative $\fie$-algebra $(\alg, \mlt)$ to be the exact commutative algebra $(\lalg, \lmlt)$ with multiplication $\lmlt$ as in \eqref{confmult}.
\end{definition}

\begin{remark}
For any $\be \in \fiet$, the rescaled multiplication 
\begin{align}\label{confmultbe}
\begin{split}
\hat{x}\bmlt\hat{y} &= \tfrac{\be}{\sqrt{(n+1)n}}\left( \sqrt{(n+2)(n-1)} x \mlt y - sx - ry , nrs - \tau_{\mlt}(x, y)\right).
\end{split}
\end{align}
satisfies
\begin{align}\label{confmulttaube}
&\tau_{\bmlt}(\bar{x}, \bar{y}) = \be^{2}\left(\tau_{\mlt}(x, y) + rs\right),&&
\om_{\bmlt}(\bar{x}_{1}, \bar{x}_{2}, \bar{x}_{3}, \bar{x}_{4}) = \be^{4} \tfrac{(n+2)(n-1)}{(n+1)n}\om_{\mlt}(x_{1}, x_{2}, x_{3}, x_{4}),
\end{align}
and the element $e = (0, \be^{-1}\sqrt{\tfrac{n+1}{n}})$ is an idempotent satisfying $\tau_{\bmlt}(e, e) = \tfrac{n+1}{n}$ and such that $L_{\bmlt}(e)$ acts on $\alg \oplus \{0\}$ as multiplication by $-\tfrac{1}{n}$.
The choice $\be = \sqrt{\frac{n(n+1)}{(n-1)(n+2)}}$ minimizes the occurrence of square-roots in \eqref{confmultbe} (there appear only $\sqrt{n+2}$ and $\sqrt{n-1}$, but not $\sqrt{n}$ or $\sqrt{n+1}$) and leads to the most symmetric scaling in \eqref{confmulttaube}, for it yields 
$\be^{4}\frac{(n-1)(n+2)}{n(n+1)} = \be^{2}$ so that
\begin{align}\label{confmultnm}
\begin{split}
\hat{x}\nmlt\hat{y} &= \left( x \mlt y - \cn{n}\left(sx + ry\right) , \cn{n}\left(nrs - \tau_{\mlt}(x, y)\right)\right),
\end{split}
\end{align}
where $\cn{n} = ((n+2)(n-1))^{-1/2}$, and
\begin{align}\label{confmulttaunm}
&\tfrac{n+2}{n+1}\tau_{\nmlt}(\bar{x}, \bar{y}) = \tfrac{n}{n-1}\left(\tau_{\mlt}(x, y) + rs\right),&&
\tfrac{n+2}{n+1}\om_{\nmlt}(\bar{x}_{1}, \bar{x}_{2}, \bar{x}_{3}, \bar{x}_{4}) = \tfrac{n}{n-1}\om_{\mlt}(x_{1}, x_{2}, x_{3}, x_{4}).
\end{align}
Over an algebraically closed field with $\chr \fie = 0$ or over $\rea$ there seems to be no reason to prefer any particular choice of $\bmlt$, but for more general fields, different choices might lead to nonisomorphic extensions. Here, any such choice is called a \emph{conformal extension} (the normalization is that in \eqref{confmult} unless otherwise indicated).
\end{remark}

\begin{lemma}
Let $\fie$ be a field that is algebraically closed of characteristic zero or $\rea$. If $(\lalg, \lmlt)$ is a Killing metrized exact commutative $\fie$-algebra of dimenion $n+1$ and $\bar{e} \in \idem(\lalg, \lmlt)$ satisfies $\tau_{\lmlt}(\bar{e}, \bar{e}) = \tfrac{n+1}{n}$ and the restriction of $L_{\lmlt}(\bar{e})$ to the $\tau_{\lmlt}$-orthogonal complement $\balg$ of $\bar{e}$ is multipication by $-1/n$, then the multiplication $\tilde{\mlt}$ on $\balg$ defined by 
\begin{align}
\bar{x} \tilde{\mlt} \bar{y} = \sqrt{\tfrac{n(n+1)}{(n+2)(n-1)}}\left(\bar{x}\lmlt \bar{y} + \tfrac{1}{n+1}\tau_{\lmlt}(\bar{x}, \bar{y})\bar{e}\right)
\end{align}
for $\bar{x}, \bar{y} \in \balg$,  makes $(\balg, \tilde{\mlt})$ a Killing metrized exact commutative $\fie$-algebra, and the conformal extension of $(\balg, \tilde{\mlt})$ is isomorphic to $(\lalg, \lmlt)$ via the map $(\bar{x}, r) \in \balg \oplus \fie \to \bar{x} + r\bar{e} \in \lalg$. 
\end{lemma}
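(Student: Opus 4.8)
The plan is to carry everything out inside the $\tau_{\lmlt}$-orthogonal splitting $\lalg=\fie\,\bar{e}\oplus\balg$, using the $\tau_{\lmlt}$-orthogonal projection $\pi\colon\lalg\to\balg$; under this splitting every assertion about $(\balg,\tilde{\mlt})$ becomes an assertion about $(\lalg,\lmlt)$ ``in the direction of'' $\bar{e}$.

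First I would establish the identity $\bar{x}\lmlt\bar{y}=\pi(\bar{x}\lmlt\bar{y})-\tfrac1{n+1}\tau_{\lmlt}(\bar{x},\bar{y})\bar{e}$ for $\bar{x},\bar{y}\in\balg$. This is immediate from the invariance of $\tau_{\lmlt}$, the hypothesis $L_{\lmlt}(\bar{e})|_{\balg}=-\tfrac1n\,\Id$, and $\tau_{\lmlt}(\bar{e},\bar{e})=\tfrac{n+1}{n}$, since then $\tau_{\lmlt}(\bar{x}\lmlt\bar{y},\bar{e})=\tau_{\lmlt}(\bar{x},\bar{y}\lmlt\bar{e})=-\tfrac1n\tau_{\lmlt}(\bar{x},\bar{y})$. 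Two consequences follow at once: the corrective term in the definition of $\tilde{\mlt}$ exactly cancels the $\bar{e}$-component of $\bar{x}\lmlt\bar{y}$, so $\tilde{\mlt}$ does map $\balg\times\balg$ into $\balg$ (and is manifestly commutative), and $\bar{x}\tilde{\mlt}\bar{y}=\sqrt{\tfrac{n(n+1)}{(n+2)(n-1)}}\;\pi(\bar{x}\lmlt\bar{y})$, equivalently $L_{\tilde{\mlt}}(\bar{x})=\sqrt{\tfrac{n(n+1)}{(n+2)(n-1)}}\;\pi\circ L_{\lmlt}(\bar{x})|_{\balg}$. I would then write $L_{\lmlt}(\bar{x})$, for $\bar{x}\in\balg$, as a $2\times2$ block operator relative to $\fie\,\bar{e}\oplus\balg$: the $(\bar{e},\bar{e})$-entry vanishes (because $\bar{x}\lmlt\bar{e}=-\tfrac1n\bar{x}\in\balg$), the $\balg\to\bar{e}$ block is the functional $-\tfrac1{n+1}\tau_{\lmlt}(\bar{x},\,\cdot\,)$, the $\bar{e}\to\balg$ block is $-\tfrac1n\bar{x}$, and the $\balg\to\balg$ block is $\sqrt{\tfrac{(n+2)(n-1)}{n(n+1)}}\,L_{\tilde{\mlt}}(\bar{x})$.

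From this block form the exactness and Killing-form claims drop out. Taking the trace gives $\tr L_{\lmlt}(\bar{x})=\sqrt{\tfrac{(n+2)(n-1)}{n(n+1)}}\,\tr L_{\tilde{\mlt}}(\bar{x})$, so exactness of $(\lalg,\lmlt)$ forces $(\balg,\tilde{\mlt})$ to be exact. Composing the block forms for $L_{\lmlt}(\bar{x})$ and $L_{\lmlt}(\bar{y})$ and tracing produces $\tau_{\lmlt}(\bar{x},\bar{y})=\tfrac{2}{n(n+1)}\tau_{\lmlt}(\bar{x},\bar{y})+\tfrac{(n+2)(n-1)}{n(n+1)}\tau_{\tilde{\mlt}}(\bar{x},\bar{y})$, and since $1-\tfrac{2}{n(n+1)}=\tfrac{(n+2)(n-1)}{n(n+1)}$ this simplifies to $\tau_{\tilde{\mlt}}=\tau_{\lmlt}|_{\balg}$. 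Nondegeneracy of $\tau_{\tilde{\mlt}}$ is then inherited from that of $\tau_{\lmlt}$, since $\balg=\bar{e}^{\perp}$ is the orthogonal complement of an anisotropic vector; invariance is inherited by combining $\bar{x}\tilde{\mlt}\bar{y}=\sqrt{\tfrac{n(n+1)}{(n+2)(n-1)}}\,\pi(\bar{x}\lmlt\bar{y})$ with the invariance of $\tau_{\lmlt}$ and the elementary fact that $\tau_{\lmlt}(\pi(w),\bar{z})=\tau_{\lmlt}(w,\bar{z})$ for $\bar{z}\in\balg$. Thus $(\balg,\tilde{\mlt})$ is a Killing metrized exact commutative $\fie$-algebra.

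For the last assertion I would form the conformal extension of $(\balg,\tilde{\mlt})$ on the space $\balg\oplus\fie$ as in \eqref{confmult} (legitimate precisely because $\fie$ is algebraically closed of characteristic zero, or $\rea$), and then verify directly that the stated linear bijection $\balg\oplus\fie\to\lalg$ intertwines the extension product with $\lmlt$. Concretely, one computes the extension product of $(\bar{x},r)$ and $(\bar{y},s)$ from \eqref{confmult}, substituting $\bar{x}\tilde{\mlt}\bar{y}=\sqrt{\tfrac{n(n+1)}{(n+2)(n-1)}}\,\pi(\bar{x}\lmlt\bar{y})$ and $\tau_{\tilde{\mlt}}=\tau_{\lmlt}|_{\balg}$, and compares with $(\bar{x}+r\bar{e})\lmlt(\bar{y}+s\bar{e})$, expanded via $\bar{e}\lmlt\bar{e}=\bar{e}$, $\bar{e}\lmlt\bar{x}=-\tfrac1n\bar{x}$, and the decomposition of the first step; matching the $\balg$- and $\fie$-components finishes the argument. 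I expect this final step to be the main obstacle: it is routine in principle, but it is exactly the place where the scalar factors $\sqrt{(n+2)(n-1)}$, $\sqrt{n(n+1)}$, $\tfrac1{n+1}$, and $n$ conspire, so a normalization slip is easy to make. A good sanity check is that the distinguished idempotent of the conformal extension — which by claim \eqref{confassdominant} of Lemma \ref{confextensionlemma} has $\tau$-norm $\tfrac{n+1}{n}$ and acts as $-\tfrac1n$ on its $\tau$-orthogonal complement — must be carried to $\bar{e}$, whose abstract characterization it shares.
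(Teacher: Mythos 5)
Your approach is the same as the paper's, whose entire proof is the sentence ``This follows from computations based on the proof of Lemma \ref{confextensionlemma}''; you have simply written the computation out. All of your structural steps check: the identity $\bar{x}\lmlt\bar{y}=\pi(\bar{x}\lmlt\bar{y})-\tfrac{1}{n+1}\tau_{\lmlt}(\bar{x},\bar{y})\bar{e}$, the block form of $L_{\lmlt}(\bar{x})$, the resulting exactness of $\tilde{\mlt}$, and the identity $\tau_{\tilde{\mlt}}=\tau_{\lmlt}|_{\balg}$ via $1-\tfrac{2}{n(n+1)}=\tfrac{(n+2)(n-1)}{n(n+1)}$ are all correct, as are the nondegeneracy and invariance arguments.

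The one step you deferred is exactly where something breaks, and your own sanity check detects it. The distinguished idempotent of the conformal extension of $(\balg,\tilde{\mlt})$ is $(0,\sqrt{\tfrac{n+1}{n}})$, so any algebra isomorphism onto $(\lalg,\lmlt)$ must carry $(0,\sqrt{\tfrac{n+1}{n}})$ to $\bar{e}$; the map stated in the lemma carries it to $\sqrt{\tfrac{n+1}{n}}\,\bar{e}$, which is not idempotent. Equivalently: an isomorphism of Killing metrized algebras preserves Killing forms, the stated map is the identity on $\balg$ (where the two Killing forms already agree by your computation), yet $(0,1)$ has $\tau$-norm-squared $1$ in the extension while $\tau_{\lmlt}(\bar{e},\bar{e})=\tfrac{n+1}{n}$. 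So the stated map cannot intertwine the products for any normalization $\beta$ of the extension. The correct isomorphism is $(\bar{x},r)\mapsto\bar{x}+r\sqrt{\tfrac{n}{n+1}}\,\bar{e}$: setting $c=\sqrt{n/(n+1)}$ one has $\tfrac{c}{n}=\tfrac{1}{\sqrt{n(n+1)}}$, $\tfrac{nc}{\sqrt{n(n+1)}}=c^{2}=\tfrac{n}{n+1}$, and $\tfrac{c}{\sqrt{n(n+1)}}=\tfrac{1}{n+1}$, and with these three identities the image of $(\bar{x},r)\lmlt'(\bar{y},s)$ under this map matches term by term the expansion $(\bar{x}+rc\bar{e})\lmlt(\bar{y}+sc\bar{e})=\pi(\bar{x}\lmlt\bar{y})-\tfrac{1}{n+1}\tau_{\lmlt}(\bar{x},\bar{y})\bar{e}-\tfrac{c}{n}\left(s\bar{x}+r\bar{y}\right)+\tfrac{n}{n+1}rs\,\bar{e}$ coming from your first step. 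So finish the verification you outlined, but with this corrected scalar on the $\fie$-coordinate; as literally stated, the final claim of the lemma is off by precisely that normalization, and the paper's one-line proof does not record the computation that would have caught it.
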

\begin{proof}
This follows from computations based on the proof of Lemma \ref{confextensionlemma}.
\end{proof}

\begin{lemma}
Let $\fie$ be a field that is algebraically closed of characteristic zero or $\rea$.
\begin{enumerate}
\item\label{cea1} If $\Psi:(\alg_{1}, \mlt_{1}) \to (\alg_{2}, \mlt_{2})$ is an algebra homomorphism between Killing metrized exact commutative $\fie$-algebras, then $
\bar{\Psi}:\alg_{1}\oplus \fie \to \alg_{2}\oplus \fie$ defined by $\bar{\Psi}(x, r) = (\Psi(x), r)$ is an algebra homomorphism between their conformal extensions. 
\item\label{cea2} For a Killing metrized exact commutative $\fie$-algebra $(\alg, \mlt)$, the map $\Psi \to \bar{\Psi}$ is an injective group homomorphism $\Aut(\alg, \mlt) \to \Aut(\lalg, \lmlt)$.
\end{enumerate}
\end{lemma}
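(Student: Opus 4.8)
The plan is to prove both claims by substituting directly into the defining formula \eqref{confmult} for the conformal extension (and \eqref{confmulttau} for $\tau_{\lmlt}$); the only structural input needed is that a homomorphism $\Psi:(\alg_{1},\mlt_{1})\to(\alg_{2},\mlt_{2})$ of Killing metrized exact commutative algebras intertwines the Killing forms, $\Psi^{\ast}(\tau_{\mlt_{2}})=\tau_{\mlt_{1}}$. This is part of the definition of a morphism in the metrized category (where a homomorphism is metric-preserving), and in any case an injective algebra homomorphism between equidimensional Killing metrized exact commutative algebras is bijective and hence preserves every trace form. Equidimensionality is implicit in the statement: the constants $\tfrac{1}{\sqrt{(n+1)n}}$ and $\sqrt{(n+2)(n-1)}$ in \eqref{confmult} depend on $n=\dim\alg$, so $\bar\Psi$ can be a homomorphism of conformal extensions only when $\dim\alg_{1}=\dim\alg_{2}=:n$.

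For \eqref{cea1} I would fix $\bar x=(x,r)$ and $\bar y=(y,s)$ in $\lalg_{1}=\alg_{1}\oplus\fie$ and compare the two sides of the homomorphism identity componentwise. Applying $\bar\Psi$ to $\bar x\lmlt_{1}\bar y$ and using linearity and multiplicativity of $\Psi$, the $\alg_{2}$-component of $\bar\Psi(\bar x\lmlt_{1}\bar y)$ is $\tfrac{1}{\sqrt{(n+1)n}}\bigl(\sqrt{(n+2)(n-1)}\,\Psi(x)\mlt_{2}\Psi(y)-s\Psi(x)-r\Psi(y)\bigr)$, which is exactly the $\alg_{2}$-component of $\bar\Psi(\bar x)\lmlt_{2}\bar\Psi(\bar y)=(\Psi x,r)\lmlt_{2}(\Psi y,s)$; the scalar component of $\bar\Psi(\bar x\lmlt_{1}\bar y)$ is $\tfrac{1}{\sqrt{(n+1)n}}\bigl(nrs-\tau_{\mlt_{1}}(x,y)\bigr)$, while that of $\bar\Psi(\bar x)\lmlt_{2}\bar\Psi(\bar y)$ is $\tfrac{1}{\sqrt{(n+1)n}}\bigl(nrs-\tau_{\mlt_{2}}(\Psi x,\Psi y)\bigr)$, and these agree precisely because $\Psi^{\ast}(\tau_{\mlt_{2}})=\tau_{\mlt_{1}}$. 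Hence $\bar\Psi(\bar x\lmlt_{1}\bar y)=\bar\Psi(\bar x)\lmlt_{2}\bar\Psi(\bar y)$, so $\bar\Psi$ is an algebra homomorphism of conformal extensions.

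For \eqref{cea2} I would specialize to $\alg_{1}=\alg_{2}=\alg$ with $\Psi\in\Aut(\alg,\mlt)$, so that $\bar\Psi$ is an algebra endomorphism of $(\lalg,\lmlt)$ by \eqref{cea1}. From the explicit form $\bar\Psi(x,r)=(\Psi x,r)$ one reads off $\overline{\Phi\circ\Psi}=\bar\Phi\circ\bar\Psi$ and $\overline{\id_{\alg}}=\id_{\lalg}$; consequently $\bar\Psi$ is invertible with inverse $\overline{\Psi^{-1}}$, whence $\bar\Psi\in\Aut(\lalg,\lmlt)$ and $\Psi\mapsto\bar\Psi$ is a group homomorphism $\Aut(\alg,\mlt)\to\Aut(\lalg,\lmlt)$. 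Injectivity is immediate since the $\alg$-component of $\bar\Psi(x,0)$ is $\Psi(x)$, so $\bar\Psi=\id_{\lalg}$ forces $\Psi=\id_{\alg}$. I would also remark that, by Lemma \ref{confextensionlemma}, $(\lalg,\lmlt)$ is itself Killing metrized and exact, so every element of $\Aut(\lalg,\lmlt)$ automatically preserves $\tau_{\lmlt}$ and the statement reads equally in the metrized category.

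All of this is elementary bookkeeping; the only spot that deserves care — and thus the "hard part" such as it is — is matching the scalar components in \eqref{cea1}, which is exactly where the hypothesis that $\Psi$ respects the Killing forms (and that source and target have the same dimension, so the normalization constants in \eqref{confmult} coincide) gets used. I expect no genuine obstacle beyond keeping those constants straight.
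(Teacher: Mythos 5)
Your proof is correct and is essentially the paper's own argument: the paper's proof consists precisely of the observation that $\Psi$ preserves Killing forms, followed by the componentwise computation with \eqref{confmult} that you carry out, with claim \eqref{cea2} deduced from \eqref{cea1} by functoriality. Your extra care about equidimensionality and about why $\Psi^{\ast}(\tau_{\mlt_{2}})=\tau_{\mlt_{1}}$ holds is a welcome refinement of a point the paper leaves implicit.
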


\begin{proof}
Because $\Psi$ is an algebra homomorphism, it preserves Killing forms. A computation using this observation shows that $\bar{\Psi}$ is an algebra homomorphim. Claim \eqref{cea2} follows from \eqref{cea1}.
\end{proof}

\begin{corollary}
For a field $\fie$ that is algebraically closed of characteristic zero or equal to $\rea$, the conformal extension of $\ealg^{n}(\fie)$ is isomorphic to $\ealg^{n+1}(\fie)$. 
\end{corollary}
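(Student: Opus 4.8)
The plan is to recognize the conformal extension $(\lalg, \lmlt)$ of $\ealg^{n}(\fie)$ as an $(n+1)$-dimensional exact commutative algebra with nondegenerate Killing form that is moreover projectively associative, and then to invoke Theorem \ref{confassclassificationtheorem}. The ingredients supplied by Lemma \ref{confextensionlemma} do most of the work: $\lmlt$ is exact, the Killing form $\tau_{\lmlt}$ is nondegenerate and invariant, and $\tau_{\lmlt}(\bar{x}, \bar{y}) = \tau_{\mlt}(x, y) + rs$ for $\bar{x} = (x,r)$, $\bar{y} = (y,s)$; hence $(\lalg, \lmlt)$ is Killing metrized. When $\fie = \rea$, Corollary \ref{ealgmodelcorollary} gives that $\tau_{\mlt}$ is positive definite on $\ealg^{n}(\rea)$, and then $\tau_{\lmlt}(\bar{x},\bar{y}) = \tau_{\mlt}(x,y) + rs$ is manifestly positive definite on $\lalg$.

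The one substantive step is the projective associativity of $(\lalg, \lmlt)$. First I would substitute the associator identity $[x,y,z]_{\mlt} = \tfrac{1}{n-1}(\tau_{\mlt}(y,z)x - \tau_{\mlt}(x,y)z)$ of Corollary \ref{ealgmodelcorollary} into the expression \eqref{lmltassoc} for $[\bar{x},\bar{y},\bar{z}]_{\lmlt}$ derived in the proof of Lemma \ref{confextensionlemma}, using $\bn{n}^{2} = \tfrac{(n+2)(n-1)}{(n+1)n}$, so that $\bn{n}^{2}[x,y,z]_{\mlt} = \tfrac{n+2}{(n+1)n}(\tau_{\mlt}(y,z)x - \tau_{\mlt}(x,y)z)$. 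A short simplification combining this with the remaining terms of \eqref{lmltassoc} and the identity $\tau_{\lmlt}(\bar{x},\bar{y}) = \tau_{\mlt}(x,y) + rs$ collapses both the $\alg$-component and the $\fie$-component, yielding
\begin{align*}
[\bar{x}, \bar{y}, \bar{z}]_{\lmlt} = \tfrac{1}{n}\left(\tau_{\lmlt}(\bar{y}, \bar{z})\,\bar{x} - \tau_{\lmlt}(\bar{x}, \bar{y})\,\bar{z}\right)
\end{align*}
for all $\bar{x}, \bar{y}, \bar{z} \in \lalg$, which is exactly the associator identity \eqref{confnon0} of $\ealg^{n+1}(\fie)$ and exhibits projective associativity with $c = \tfrac{1}{n}\tau_{\lmlt}$. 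Alternatively, one can avoid this computation entirely: $\ealg^{n}(\fie)$ has constant sectional nonassociativity $-\tfrac{1}{n-1}$ by \eqref{confnon0}, hence is conformally associative (its tensor $\mu_{ijkl} = 2(-\tfrac{1}{n-1})h_{l[i}h_{j]k}$ by Lemma \ref{constantsectlemma} forces $\om_{\mlt} = 0$); by Lemma \ref{confextensionlemma} then $(\lalg, \lmlt, \tau_{\lmlt})$ is conformally associative; and since $\lmlt$ is exact, $\ric_{\lmlt} = -\tau_{\lmlt} = (\dim\lalg - 1)(-\tfrac{1}{n})\tau_{\lmlt}$, so Lemma \ref{prebeltramilemma} (applicable as $\chr\fie = 0$ and $\dim\lalg = n+1 \geq 3$) yields that $(\lalg, \lmlt, \tau_{\lmlt})$ is projectively associative.

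Having established that $(\lalg, \lmlt)$ is an $(n+1)$-dimensional exact projectively associative commutative algebra whose Killing form is nondegenerate (and positive definite when $\fie = \rea$), Theorem \ref{confassclassificationtheorem} gives that $(\lalg, \lmlt)$ is isomorphic to $\ealg^{n+1}(\fie)$, as claimed. I do not anticipate a genuine obstacle here; the only points requiring care are bookkeeping ones — keeping track that in the Killing metrized picture the metric is $\tau_{\lmlt}$ itself, getting the constant $\tfrac{1}{n}$ correct (so that $\ric_{\lmlt} = (\dim\lalg - 1)c\,\tau_{\lmlt}$ with $\dim\lalg = n+1$), and, in the real case, recording the positive definiteness of $\tau_{\lmlt}$ before applying part (2) of Theorem \ref{confassclassificationtheorem}.
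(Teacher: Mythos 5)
Your proof is correct and follows essentially the same route as the paper, which likewise applies Lemma \ref{confextensionlemma} to see the conformal extension is a conformally associative Killing metrized exact algebra (with positive definite Killing form over $\rea$) and then invokes Theorem \ref{confassclassificationtheorem}. You are in fact slightly more careful than the paper's one-line argument, since you make explicit the bridge from conformal to projective associativity (via Lemma \ref{prebeltramilemma} and the Einstein condition $\ric_{\lmlt} = -\tau_{\lmlt}$, or by the direct computation from \eqref{lmltassoc}), which the paper leaves implicit.
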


\begin{proof}
By Lemma \ref{confextensionlemma}, the conformal extension of $\ealg^{n}(\fie)$ is a conformally associative Killing metrized exact commutative $\fie$-algebra with Killing form that is positive definite when $\fie = \rea$. By Theorem \ref{confassclassificationtheorem}, such an algebra is isomorphic to $\ealg^{n+1}(\fie)$.
\end{proof}

\begin{remark}
By Lemma \ref{confextensionlemma}, the conformal extension of a Killing metrized exact commutative algebra that is not conformally associative is not conformally associative, so any such example yields examples in all higher dimensions. The construction of such algebras in \cite{Fox-ahs} was the author's original motivation for this entire project.
\end{remark}

\begin{lemma}\label{confextensionsectlemma}
For the conformal extension $(\lalg = \alg \oplus \rea, \lmlt)$ of an $n$-dimensional Killing metrized exact commutative $\rea$-algebra $(\alg, \mlt)$,
\begin{align}\label{confextsect}
\begin{split}
\isect_{(\lalg, \lmlt)}(\bar{x}, \bar{y})
& =  \left(\tfrac{(n+2)(n-1)\isect_{(\alg,\mlt)}(x, y) + 1}{(n+1)n}\right)\tfrac{|x|^{2}|y|^{2} - \lb x , y \ra^{2}}{|x|^{2}|y|^{2} - \lb x , y \ra^{2} + |sx - ry|^{2}} - \tfrac{1}{n}\tfrac{|sx - ry|^{2}}{|x|^{2}|y|^{2} - \lb x , y \ra^{2} + |sx - ry|^{2}},
\end{split}
\end{align}
where $\bar{x} = (x, r), \bar{y} = (y, s) \in \lalg$, $|\dum|^{2} = \tau_{\mlt}(\dum, \dum)$, and $\lb x , y \ra = \tau_{\mlt}(x, y)$. 
Consequently:
\begin{enumerate}
\item\label{confsect1} $\tfrac{n}{n+2}\left(\isect_{(\lalg, \lmlt)}(\hat{x}, \hat{y}) + \tfrac{1}{n}\right)$ lies between $\tfrac{n-1}{n+1}\left(\isect_{(\alg,\mlt)}(x, y) + \tfrac{1}{n-1}\right)$ and $0$.
\item\label{confsect2} $(\lalg, \lmlt, \tau_{\lmlt})$ has nonpositive sectional nonassociativity if and only if $(\alg, \mlt, \tau_{\mlt})$ has sectional nonassociativity bounded above by $-\tfrac{1}{(n+2)(n-1)}$.
\item\label{confsect3} $(\lalg, \lmlt, \tau_{\lmlt})$ has sectional nonassociativity bounded above (respectively, below) by $-\tfrac{1}{n}$ if and only if $(\alg, \mlt, \tau_{\mlt})$ has sectional nonassociativity bounded above (respectively, below) by $-\tfrac{1}{n-1}$.
\item\label{confsect4} $(\lalg, \lmlt, \tau_{\lmlt})$ has constant sectional nonassociativity if and only if $(\alg, \mlt, \tau_{\mlt})$ has constant sectional nonassociativity. In this case $(\lalg, \lmlt, \tau_{\lmlt})$ has constant sectional nonassociativity $-\tfrac{1}{n}$ and $(\alg, \mlt, \tau_{\mlt})$ has constant sectional nonassociativity $-\tfrac{1}{n-1}$.
\end{enumerate}
\end{lemma}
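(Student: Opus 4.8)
The plan is to derive \eqref{confextsect} by substituting into the definition \eqref{isectdefined} of intrinsic sectional nonassociativity the two formulas already established in the proof of Lemma~\ref{confextensionlemma}: the associator formula \eqref{lmltassoc} and the Killing-form formula \eqref{confmulttau}. Writing $\bar{x} = (x, r)$ and $\bar{y} = (y, s)$, I would first specialize \eqref{lmltassoc} so that its first two entries are $\bar{x}$ and its third is $\bar{y}$, then pair the result with $\bar{y}$ using $\tau_{\lmlt}((a, p), (b, q)) = \tau_{\mlt}(a, b) + pq$. Using $\tau_{\mlt}([x, x, y]_{\mlt}, y) = \isect_{\mlt}(x, y)\bigl(|x|^{2}|y|^{2} - \lb x, y\ra^{2}\bigr)$ (which is just \eqref{isectdefined}) and the identity $|x|^{2}s^{2} - 2\lb x, y\ra rs + |y|^{2}r^{2} = |sx - ry|^{2}$, the cross terms collapse and one is left with
\begin{align*}
\tau_{\lmlt}\bigl([\bar{x}, \bar{x}, \bar{y}]_{\lmlt}, \bar{y}\bigr) = \Bigl(\bn{n}^{2}\isect_{\mlt}(x, y) + \tfrac{1}{n(n+1)}\Bigr)\bigl(|x|^{2}|y|^{2} - \lb x, y\ra^{2}\bigr) - \tfrac{1}{n}|sx - ry|^{2},
\end{align*}
and since $\bn{n}^{2} = \tfrac{(n+2)(n-1)}{n(n+1)}$ the bracketed coefficient is $\tfrac{(n+2)(n-1)\isect_{\mlt}(x,y) + 1}{n(n+1)}$.

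The denominator is immediate from \eqref{confmulttau}: $\tau_{\lmlt}(\bar{x}, \bar{x})\tau_{\lmlt}(\bar{y}, \bar{y}) - \tau_{\lmlt}(\bar{x}, \bar{y})^{2} = \bigl(|x|^{2}|y|^{2} - \lb x, y\ra^{2}\bigr) + |sx - ry|^{2}$. Dividing the numerator by this gives \eqref{confextsect} verbatim. The formula holds for any $\bar{x}, \bar{y}$ spanning a $\tau_{\lmlt}$-nondegenerate plane; for a plane whose image under $\pi:\lalg\to\alg$ is one-dimensional the first summand vanishes and the value is $-\tfrac1n$, consistent with everything that follows.

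For the consequences I would set $A = |x|^{2}|y|^{2} - \lb x, y\ra^{2}$ and $B = |sx - ry|^{2}$, both $\geq 0$ in the Euclidean setting in which the one-sided-bound statements are posed, and $t = A/(A + B) \in [0, 1]$; then \eqref{confextsect} reads $\isect_{(\lalg,\lmlt)}(\bar{x}, \bar{y}) = \tfrac{(n+2)(n-1)\isect_{\mlt}(x,y) + 1}{n(n+1)}\,t - \tfrac{1}{n}(1 - t)$, an affine function of $t$, which rearranges to
\begin{align*}
\tfrac{n}{n+2}\Bigl(\isect_{(\lalg, \lmlt)}(\bar{x}, \bar{y}) + \tfrac{1}{n}\Bigr) = t\cdot\tfrac{n-1}{n+1}\Bigl(\isect_{\mlt}(x, y) + \tfrac{1}{n-1}\Bigr).
\end{align*}
Since $t\in[0,1]$, the left side lies between $0$ and $\tfrac{n-1}{n+1}(\isect_{\mlt}(x,y) + \tfrac{1}{n-1})$, proving \eqref{confsect1}. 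For \eqref{confsect2} and \eqref{confsect3} I would observe that, as $(r,s)$ varies with $x, y$ fixed and linearly independent, $sx - ry$ sweeps all of $\spn\{x, y\}$, so $t$ attains every value in $(0, 1]$, with $t = 1$ at $r = s = 0$; evaluating the affine expression at $t = 1$ yields the ``only if'' directions ($\isect_{(\lalg,\lmlt)}\le 0$ forces $\isect_{\mlt}(x,y) \le -\tfrac{1}{(n+2)(n-1)}$; $\isect_{(\lalg,\lmlt)} \le -\tfrac1n$ forces $\isect_{\mlt}(x,y) \le -\tfrac1{n-1}$, and likewise for lower bounds), while the hypotheses on $\isect_{\mlt}$ make the affine expression, resp.\ $\isect_{(\lalg,\lmlt)} + \tfrac1n$, have the asserted sign for every $t \in [0,1]$, giving the converses. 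For \eqref{confsect4}, constancy of $\isect_{(\lalg,\lmlt)}$ forces (via the degenerate planes) the constant to equal $-\tfrac1n$, and then the displayed identity, read as a function of $t\in(0,1]$ for each fixed $x, y$, forces $\isect_{\mlt}(x, y) = -\tfrac{1}{n-1}$; the converse is immediate.

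I expect no genuine obstacle: the argument is a substitution followed by elementary inequalities. The only step needing a little attention is the last paragraph's reduction to the single parameter $t$ --- one must verify that $t$ really does range over all of $(0, 1]$ for each fixed pair $x, y$ (so the ``only if'' implications are not vacuous) and that $2$-planes of $\lalg$ degenerating under $\pi$ contribute only the value $-1/n$, which is harmless for every bound claimed.
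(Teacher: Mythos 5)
Your derivation of \eqref{confextsect} is correct and is essentially the computation the paper intends: substitute \eqref{lmltassoc} and \eqref{confmulttau} into \eqref{isectdefined}, collapse the cross terms into $-\tfrac{1}{n}|sx-ry|^{2}$, and read the result as a convex combination with weight $t = A/(A+B)$. Claims \eqref{confsect1}--\eqref{confsect3} then follow just as you say, and your attention to the planes with one-dimensional projection (which contribute only the value $-1/n$) and to the fact that $t=1$ is attained at $r=s=0$ supplies exactly the details the paper leaves implicit.

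The one place you are too quick is the converse direction of \eqref{confsect4}. From \eqref{confextsect} alone, if $(\alg,\mlt,\tau_{\mlt})$ has constant sectional nonassociativity $c$, then the extension's sectional nonassociativity is the affine function $\tfrac{(n+2)(n-1)c+1}{n(n+1)}\,t - \tfrac{1}{n}(1-t)$ of $t$, which is constant in $t$ only when $c = -\tfrac{1}{n-1}$; so the converse is \emph{not} immediate unless you first rule out other values of the constant. The missing ingredient is that for a Killing metrized \emph{exact} algebra the constant is forced: by Lemma \ref{constantsectlemma} constancy gives $\ric_{\mlt} = c(n-1)\tau_{\mlt}$, while exactness gives $\ric_{\mlt} = -\tau_{\mlt}$, whence $c = -\tfrac{1}{n-1}$, after which your formula immediately yields the constant value $-\tfrac{1}{n}$ for the extension. (The paper instead routes this direction through conformal associativity: constant sectional nonassociativity implies conformal associativity, which is preserved by conformal extension by Lemma \ref{confextensionlemma}, and a conformally associative Killing metrized algebra has constant sectional nonassociativity by Lemma \ref{beltramilemma}.) With that one line added your argument is complete.
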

\begin{proof}
The identity \eqref{confextsect} follows from the definition of the sectional nonassociativity, \eqref{confmult}, and \eqref{confmulttau}. It exhibits $\isect_{(\lalg, \lmlt)}(\bar{x}, \bar{y})$ as a convex combination of $\tfrac{(n+2)(n-1)\isect_{(\alg,\mlt)}(x, y) + 1}{(n+1)n}$ and $-\tfrac{1}{n}$, and so $\isect_{(\lalg, \lmlt)}(\bar{x}, \bar{y})$ must lie between $\tfrac{(n+2)(n-1)\isect_{(\alg,\mlt)}(x, y) + 1}{(n+1)n}$ and $-\tfrac{1}{n}$. Rewriting this relation yields \eqref{confsect1} and claims \eqref{confsect2} and \eqref{confsect3} follow directly from \eqref{confsect1}.
If $(\lalg, \lmlt, \tau_{\lmlt})$ has constant sectional nonassociativity $\bar{c}$, then \eqref{confextsect} equals $\bar{c}$ for all linearly independent choices of $\bar{x} = (x, r)$ and $\bar{y} = (y, s)$. Choosing $r = 0$ and $y = 0$ in \eqref{confextsect} yields $\bar{c} \isect_{(\lalg, \lmlt)}((x, 0), (0, s)) = -\tfrac{1}{n}$, while choosing $r = 0 = s$ in \eqref{confextsect} yields $-\tfrac{1}{n} = \bar{c} = \tfrac{(n+2)(n-1)\isect_{(\alg,\mlt)}(x, y) + 1}{(n+1)n}$ which has the solution $\isect_{(\alg,\mlt)} = -\tfrac{1}{n-1}$. Suppose $(\alg, \mlt, \tau_{\mlt})$ has constant sectional nonassociativity. Then it is conformally associative, so by Lemma \ref{confextensionlemma}, its conformal extension is conformally associative; since it is also Killing metrized, it has constant sectional nonassociativity by Lemma \ref{beltramilemma}, and so the preceding applies. This shows \eqref{confsect4}.
\end{proof}

\begin{remark}
Define the \emph{modified (intrinsic) sectional nonassociativity} of $\spn\{x, y\} \subset (\alg, \mlt)$ by 
\begin{align}
\msect_{\mlt}(x, y) = \tfrac{n-1}{n+1}\left(\isect_{\mlt}(x, y) + \tfrac{1}{n-1}\right).
\end{align}
By Lemma \ref{confextensionlemma}, conformal extension preserves the modified sectional nonassociativity's sign and decreases its absolute value, and an $n$-dimensional Euclidean Killing metrized exact commutative algebra has vanishing modified sectional nonassociativity if and only if it is isomorphic to $\ealg^{n}(\rea)$.
\end{remark}

\begin{lemma}\label{taumaxlemma}
Let $(\alg, \mlt, h)$ be a Euclidean metrized exact commutative algebra of dimension $n \geq 2$.
If $e \in \idem(\alg, \mlt)$, then $\tau_{\mlt}(e, e) \geq n/(n-1)$, with equality if and only if $\specp(e)$ comprises $-1/(n-1)$ with multiplicity $n-1$.
\end{lemma}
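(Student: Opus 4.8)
The plan is to analyze the spectrum of $L_{\mlt}(e)$ for an idempotent $e$, using exactness to control the trace and the minimal-idempotent type estimate (Lemma \ref{criticalpointlemma}) to control the top eigenvalue, and then apply the Cauchy–Schwarz inequality. First I would recall that $1$ is always an eigenvalue of $L_{\mlt}(e)$, with $e$ itself an eigenvector, and that $L_{\mlt}(e)$ preserves $\eperp = \lb e\ra^{\perp}$ (this is stated in the excerpt, just before the definition of $\specp(e)$). Since $(\alg, \mlt, h)$ is Euclidean and $L_{\mlt}(e)$ is $h$-self-adjoint, $\eperp$ decomposes into eigenspaces of $L_{\mlt}(e)$; let $\la_{1}, \dots, \la_{n-1}$ be the eigenvalues listed with multiplicity. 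Exactness gives $\tr L_{\mlt}(e) = 0$, so $1 + \sum_{i=1}^{n-1}\la_{i} = 0$, i.e. $\sum_{i}\la_{i} = -1$. On the other hand $\tau_{\mlt}(e, e) = \tr L_{\mlt}(e)^{2} = 1 + \sum_{i=1}^{n-1}\la_{i}^{2}$.

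The key step is the Cauchy–Schwarz inequality applied to the vector $(\la_{1}, \dots, \la_{n-1})$ against the all-ones vector: $1 = \left(\sum_{i}\la_{i}\right)^{2} \leq (n-1)\sum_{i}\la_{i}^{2}$, hence $\sum_{i}\la_{i}^{2} \geq \tfrac{1}{n-1}$, and therefore $\tau_{\mlt}(e, e) = 1 + \sum_{i}\la_{i}^{2} \geq 1 + \tfrac{1}{n-1} = \tfrac{n}{n-1}$. Equality in Cauchy–Schwarz holds if and only if $(\la_{1}, \dots, \la_{n-1})$ is proportional to the all-ones vector, i.e. all $\la_{i}$ are equal; combined with $\sum_{i}\la_{i} = -1$ this forces $\la_{i} = -\tfrac{1}{n-1}$ for every $i$, which is exactly the statement that $\specp(e)$ consists of $-1/(n-1)$ with multiplicity $n-1$. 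Conversely, if $\specp(e)$ is $\{-1/(n-1)\}$ with multiplicity $n-1$, then $\tau_{\mlt}(e, e) = 1 + (n-1)\cdot\tfrac{1}{(n-1)^{2}} = \tfrac{n}{n-1}$.

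I do not think there is a serious obstacle here — the argument is essentially the Schwarz-inequality computation already used in the proof of Lemma \ref{derivationsizelemma}. The one point to be careful about is that the statement as given does not assume $e$ is a minimal idempotent, so I should not invoke the bound $\specp(e) \subset (-\infty, 1/2]$ from Lemma \ref{criticalpointlemma}; it is not needed, since only exactness and self-adjointness enter. (If one did want to phrase it via minimal idempotents, the bound $\la_{i} \le 1/2$ would be available but superfluous.) A minor subtlety: one should note that $e$ being anisotropic is automatic here since $\tau_{\mlt}(e,e) = 1 + \sum \la_i^2 \ge 1 > 0$ in the Euclidean setting, so $\eperp$ is genuinely a complement and the eigenvalue count gives exactly $n-1$ values. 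With these remarks the proof is a few lines.
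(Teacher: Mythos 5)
Your argument is correct and is essentially identical to the paper's proof: both use the $h$-self-adjointness of $L_{\mlt}(e)$ to diagonalize it with eigenvalues $1, \la_{1}, \dots, \la_{n-1}$, use exactness to get $\sum_{i}\la_{i} = -1$, and apply Cauchy--Schwarz to conclude $\tau_{\mlt}(e,e) = 1 + \sum_{i}\la_{i}^{2} \geq n/(n-1)$ with equality exactly when all $\la_{i} = -1/(n-1)$. Your remark that the bound $\specp(e)\subset(-\infty,1/2]$ is not needed is also consistent with the paper, which does not invoke it.
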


\begin{proof}
Because $L_{\mlt}(e)$ is $h$-self-adjoint, it is diagonalizable with eigenvalues $1$ and $\la_{1},\dots, \la_{n-1} \in \rea$. Because $\tr L_{\mlt}(e) = 0$ and $L_{\mlt}(e)$ preserves $\eperp$, the $\la_{1}, \dots, \la_{n-1}$ equal the eigenvalues of the restriction of $L_{\mlt}(e)$ to $\eperp$ and sum to $-1$. By the Cauchy-Schwarz inequality, $\tau_{\mlt}(e, e) = \tr L_{\mlt}(e)^{2} = 1 + \sum_{i = 1}^{n-1}\la_{i}^{2} \geq n/(n-1)$, with equality if and only if $\la_{i} = -1/(n-1)$ for all $1 \leq i \leq n-1$.
\end{proof}

\begin{lemma}\label{conformalextensionidempotentlemma}
Let $(\alg, \mlt)$ be an $n$-dimensional Euclidean Killing metrized exact commutative algebra and let $(\lalg, \lmlt)$ be its conformal extension normalized with $\cn{n} = \left((n+2)(n-1)\right)^{-1/2}$ as in \eqref{confmultnm}. For $x \in \alg$, let $|x|^{2} = \tau_{\mlt}(x, x)$, and for $\bar{x} \in \lalg$, let $|\bar{x}|^{2} = \tau_{\lmlt}(\bar{x}, \bar{x})$.
\begin{enumerate}
\item For $x \in \alg$ define
\begin{align}\label{sxpm}
\begin{split}
s_{\pm} &= s(x)_{\pm}  = \left(\tfrac{-1 - 4\cn{n}^{2}|x|^{2} \pm \sqrt{1 + 4(n+2)\cn{n}^{2}|x|^{2}}}{4\cn{n}^{2}|x|^{2}}\right)=  \tfrac{n - 4\cn{n}^{2}|x|^{2}}{1 + 4\cn{n}^{2}|x|^{2} \pm \sqrt{1 + 4(n+2)\cn{n}^{2}|x|^{2}}}.
\end{split}
\end{align}
When $e \in \idem(\alg, \mlt)$ and $s_{\pm} = s(e)_{\pm} \neq 0$, the element 
\begin{align}\label{epmdefined}
\bar{e}_{\pm} = s_{\pm}^{-1}\bar{e}_{\pm} = s^{-1}_{\pm}((1 + s_{\pm})e, \tfrac{1}{2\cn{n}}) \in \lalg
\end{align} 
is idempotent and satisfies
\begin{align}
|\bar{e}_{\pm}|^{2} = \phi(s(e)_{\pm}) = \tfrac{n(n+1)(n+1 - 2s_{\pm})}{4s_{\pm}^{2}},
\end{align}
where $\phi(s) = \tfrac{n(n+1)(n+1 -2s)}{4s^{2}}$.

\item\label{confextidempotents} A nonzero idempotent of $(\lalg, \lmlt)$ has one of the following forms.
\begin{enumerate}
\item\label{confextidem1} If $z \in \szero(\alg, \mlt)$, then 
\begin{align}\label{cei1}
\bar{z}_{\pm} = \left(\pm\tfrac{(n+2)\sqrt{n-1}}{2}\tfrac{z}{|z|}, - \tfrac{\sqrt{(n+2)(n-1)}}{2}\right) = \left(\pm\tfrac{\sqrt{n+2}}{2\cn{n}}\tfrac{z}{|z|}, - \tfrac{1}{2\cn{n}}\right) \in \lalg
\end{align}
are idempotent, and $|\bar{z}_{\pm}|^{2} = \phi(-1)= \tfrac{(n+3)(n+1)n}{4} = \tfrac{n+1}{4\cn{n+1}^{2}}$.
\item\label{confextidem2} If $e \in \idem(\alg, \mlt)$ satisfies $|e|^{2} = \tfrac{(n+2)n(n-1)}{4} = \tfrac{n}{4\cn{n}^2}$ then
\begin{align}\label{cei2}
\bar{e}_{-} = \left( \tfrac{n+2}{2(n+1)}e, -\tfrac{n\sqrt{(n+2)(n-1)}}{4(n+1)}\right) =\left( \tfrac{n+2}{2(n+1)}e, -\tfrac{n}{4(n+1)\cn{n}}\right)  \in \lalg
\end{align}
is idempotent, and $|\bar{e}_{-}|^{2} =\phi(-\tfrac{2(n+1)}{n}) = \tfrac{(n+4)n^{2}}{16}$.
\item\label{confextidem} If $e \in \idem(\alg, \mlt)$ and $4\cn{n}^{2}|e|^{2} \neq n$, then 
\begin{align}\label{cei3}
\begin{split}
\bar{e}_{\pm} &=  s^{-1}_{\pm}((1 + s_{\pm})e, \tfrac{1}{2\cn{n}})\\
& = \tfrac{n+2}{n+1 \mp \sqrt{1 + 4(n+2)\cn{n}^{2}|e|^{2}}}\left(e, \tfrac{1}{2\cn{n}}\left(\tfrac{1\pm \sqrt{1 + 4(n+2)\cn{n}^{2}|e|^{2}}}{n+2} \right)\right)  \in \lalg 
\end{split}
\end{align}
are idempotent, and $|\bar{e}_{\pm}|^{2} = \phi(s(e)_{\pm})  = \tfrac{n(n+1)(n+1 - 2s_{\pm})}{4s_{\pm}^{2}}$. 
\end{enumerate}
Which of the forms \eqref{confextidem1}-\eqref{confextidem} the idempotent of $(\lalg, \lmlt)$ has is determined by its $\tau_{\lmlt}$ norm. Precisely, for $e \in \idem(\alg, \mlt)$, there hold
\begin{align}\label{spmbounds}
& s(e)_{-} \in [-\tfrac{n(n+1)}{2}, -1),& &  s(e)_{+}  \in (-1, 0) \cup (0, \tfrac{n}{2}),
\end{align}
and $\phi$ is strictly monotone increasing on $(-\infty, 0)$ and strictly monotone decreasing on $(0, \tfrac{n}{2})$ and there hold
\begin{align}\label{fspmbounds}
\begin{split}
 \tfrac{n+1}{n} &\leq \phi(s(e)_{-}) < \tfrac{n(n+1)(n+3)}{4},\\
 \phi(s(e)_{+}) &> \tfrac{n+1}{n} \quad \text{if}\,\, s(e)_{+} \in (0, \tfrac{n}{2}),\\
 \phi(s(e)_{+}) &\geq \tfrac{n(n+1)(n+3)}{4} \quad \text{if}\,\, s(e)_{+} \in (-1, 0).
\end{split}
\end{align}
Equality holds in either of the nonstrict inequalities of \eqref{fspmbounds} (equivalently in the nonstrict inequalities of \eqref{spmbounds}) if and only if $|e|^{2} = \tfrac{n}{n-1}$, in which case $e$ is a minimal idempotent.

\item\label{confextsz} A  $\bar{w} \in \lalg$ is square-zero if and only if it is a constant multiple of an element of the form
\begin{align}
\left(e, \tfrac{\sqrt{(n+2)(n-1)}}{2}\right) = \left(e, \tfrac{1}{2\cn{n}}\right) \in \lalg,
\end{align}
where $e \in \idem(\alg, \mlt)$ satisfies $|e|^{2} = \tfrac{(n+2)n(n-1)}{4} = \tfrac{n}{4\cn{n}^{2}}$. 
\item\label{ceispec1} If $e \in \idem(\alg, \mlt)$, then
\begin{align}\label{specepm}
\specp(\bar{e}_{\pm}) = \{\tfrac{n+2}{2s(e)_{\pm}}\} \cup \{\tfrac{2(s(e)_{\pm} + 1)\la - 1}{2s(e)_{\pm}}: \la \in \specp(e)\}.
\end{align}
\end{enumerate}
\end{lemma}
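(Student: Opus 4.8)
The plan is to read everything off the two scalar equations encoding idempotency. Writing a general element of $\lalg$ as $\bar{x}=(x,r)$ with $x\in\alg$, $r\in\fie$, formula \eqref{confmultnm} gives $\bar{x}\lmlt\bar{x}=\bigl(x\mlt x-2\cn{n}rx,\ \cn{n}(nr^{2}-\tau_{\mlt}(x,x))\bigr)$, so (writing $|x|^{2}:=\tau_{\mlt}(x,x)$) the element $\bar{x}$ is idempotent exactly when
\begin{align}\label{ceplanidem}
x\mlt x=(1+2\cn{n}r)x,\qquad \cn{n}\bigl(nr^{2}-|x|^{2}\bigr)=r .
\end{align}
I would first handle $x=0$: the second equation of \eqref{ceplanidem} then forces $r\in\{0,\tfrac{1}{n\cn{n}}\}$, and $(0,\tfrac{1}{n\cn{n}})$ is precisely the distinguished idempotent $e$ of Lemma \ref{confextensionlemma} written in the normalization \eqref{confmultnm}. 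Assuming $x\neq0$, put $t:=1+2\cn{n}r$; the first equation of \eqref{ceplanidem} gives two subcases: if $t\neq0$ then $e:=t^{-1}x\in\idem(\alg,\mlt)$, and if $t=0$ then $x\in\szero(\alg,\mlt)$ and $r=-\tfrac{1}{2\cn{n}}$.

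In the square-zero subcase, substituting $r=-\tfrac{1}{2\cn{n}}$ into the second equation of \eqref{ceplanidem} forces $|x|^{2}=\tfrac{n+2}{4\cn{n}^{2}}$, which is the form \eqref{cei1}; its $\tau_{\lmlt}$-norm is read off from \eqref{confmulttaunm}. In the subcase $t\neq0$, set $s:=(t-1)^{-1}$, so $\bar{x}=\bigl(\tfrac{1+s}{s}e,\tfrac{1}{2s\cn{n}}\bigr)$, and set $a:=4\cn{n}^{2}\tau_{\mlt}(e,e)$; the second equation of \eqref{ceplanidem} becomes the quadratic $as^{2}+2(1+a)s+(a-n)=0$, whose two roots are the $s(e)_{\pm}$ of \eqref{sxpm} (the rationalized expression in \eqref{sxpm} is obtained by clearing the surd). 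Rearranging this quadratic yields the key identity
\begin{align}\label{cepidentity}
a(1+s)^{2}=n-2s\qquad\text{for }s\in\{s(e)_{\pm}\},
\end{align}
which gives the explicit forms \eqref{cei2}--\eqref{cei3}; in particular the degenerate root $s_{-}=-\tfrac{2(n+1)}{n}$ occurs exactly when $a=n$ (i.e. $\tau_{\mlt}(e,e)=\tfrac{n}{4\cn{n}^{2}}$), in which case the companion value $s_{+}=0$ does not produce an idempotent by this formula but is instead the square-zero element of \eqref{confextsz}. From \eqref{cepidentity} one also gets $|x|^{2}+r^{2}=\tfrac{1}{4s^{2}\cn{n}^{2}}(a(1+s)^{2}+1)=\tfrac{n+1-2s}{4s^{2}\cn{n}^{2}}$, so \eqref{confmulttaunm} gives $\tau_{\lmlt}(\bar{x},\bar{x})=\phi(s)$ with $\phi(s)=\tfrac{n(n+1)(n+1-2s)}{4s^{2}}$; this proves part~(1) (idempotency of $\bar{e}_{\pm}$ being immediate by reversing the derivation) and the norm assertions throughout part~(2). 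The square-zero classification \eqref{confextsz} is proved by the same case split applied to $\bar{w}\lmlt\bar{w}=0$: the case $r=0$ is excluded because $\tau_{\mlt}$ is positive definite (so $\tau_{\mlt}(z,z)\neq0$ for $0\neq z$), while $r\neq0$ forces $4\cn{n}^{2}\tau_{\mlt}(e,e)=n$ as claimed.

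For the ranges and the equality discussion in \eqref{spmbounds}--\eqref{fspmbounds} I would record $\phi'(s)=\tfrac{n(n+1)(s-n-1)}{2s^{3}}$, so $\phi$ is strictly increasing on $(-\infty,0)$ and strictly decreasing on $(0,n+1)$, with $\phi(-\tfrac{n(n+1)}{2})=\tfrac{n+1}{n}$, $\phi(-1)=\tfrac{n(n+1)(n+3)}{4}$ and $\phi(\tfrac{n}{2})=\tfrac{n+1}{n}$. Since $(\alg,\mlt)$ is Euclidean and exact, Lemma \ref{taumaxlemma} gives $\tau_{\mlt}(e,e)\ge\tfrac{n}{n-1}$, i.e. $a\ge a_{0}:=\tfrac{4n}{(n-1)^{2}(n+2)}$, with equality if and only if $e$ is minimal. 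Viewing $s(e)_{\pm}$ as functions of $a$ on $[a_{0},\infty)$, using the sign of $n-a$ to separate $s_{+}>0$ from $s_{+}<0$ and the values at $a=a_{0}$ together with the limit $a\to\infty$, one reads off \eqref{spmbounds}; substituting into the monotonicity of $\phi$ yields \eqref{fspmbounds}, and in each nonstrict bound equality is equivalent to $a=a_{0}$, hence (again by Lemma \ref{taumaxlemma}) to $e$ being a minimal idempotent.

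Finally, for the orthogonal spectrum \eqref{specepm} of $\bar{e}_{\pm}=(x,r)$ with $x\in\langle e\rangle$, the operator $L_{\lmlt}(\bar{e}_{\pm})$ has the block form
\begin{align}\label{cepblock}
L_{\lmlt}(\bar{e}_{\pm})=\begin{pmatrix} L_{\mlt}(x)-\cn{n}r\,\Id_{\alg} & -\cn{n}x\\ -\cn{n}\tau_{\mlt}(x,\dum) & \cn{n}nr\end{pmatrix}.
\end{align}
For an eigenvector $w\in\eperp$ of $L_{\mlt}(e)$ with eigenvalue $\la\in\specp(e)$ one has $\tau_{\mlt}(x,w)=\tfrac{1+s}{s}\tau_{\mlt}(e,w)=0$, so $(w,0)$ is an eigenvector of \eqref{cepblock} with eigenvalue $\tfrac{(1+s)\la}{s}-\cn{n}r=\tfrac{2(s+1)\la-1}{2s}$, which accounts for the second set in \eqref{specepm}. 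On the two-dimensional $L_{\lmlt}(\bar{e}_{\pm})$-invariant subspace $\spn\{(e,0),(0,1)\}$ the block \eqref{cepblock} has trace $\tfrac{2s+1+n}{2s}$; since one of its eigenvalues is $1$ (namely $\bar{e}_{\pm}$ itself) the other is $\tfrac{2s+1+n}{2s}-1=\tfrac{n+1}{2s}$ (equivalently, the determinant of the block, simplified via \eqref{cepidentity}), completing the determination of $\specp(\bar{e}_{\pm})$. The only genuinely delicate part is the interval bookkeeping behind \eqref{spmbounds}--\eqref{fspmbounds}: keeping track of which branch of the surd is $s_{\pm}$, of the sign change of $n-a$, of the two monotone pieces of $\phi$, and of the reconciliation of the degenerate root $s_{-}=-\tfrac{2(n+1)}{n}$ with the square-zero part; the rest is substitution into \eqref{ceplanidem}--\eqref{cepidentity}.
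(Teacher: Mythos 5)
Your proposal is correct and follows essentially the same route as the paper's proof: parametrize a candidate idempotent as $s^{-1}((1+s)e,\tfrac{1}{2\cn{n}})$ (with the square-zero case $1+2\cn{n}r=0$ split off), reduce idempotency to the quadratic $n-2s-4\cn{n}^{2}(1+s)^{2}|e|^{2}=0$, compute $|\bar{e}_{\pm}|^{2}=\phi(s_{\pm})$ from that relation, and obtain the ranges from the monotonicity of $\phi$ together with the bound $|e|^{2}\geq\tfrac{n}{n-1}$ of Lemma \ref{taumaxlemma}; your only departures are cosmetic (the abbreviation $a=4\cn{n}^{2}|e|^{2}$, an explicit treatment of the canonical idempotent $(0,\tfrac{1}{n\cn{n}})$ via the $x=0$ case, and extracting the last eigenvalue from the trace/determinant of the $2\times2$ block rather than from the explicit eigenvector $(e,-2\cn{n}(s_{\pm}+1)|e|^{2})$ used in the paper). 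Note that your value $\tfrac{n+1}{2s_{\pm}}$ for that eigenvalue agrees with the paper's own proof, so the $\tfrac{n+2}{2s(e)_{\pm}}$ appearing in the displayed statement \eqref{specepm} is evidently a typographical error rather than a flaw in your argument.
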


\begin{proof}
Every nonzero element in $\lalg$ can be written in the form $(x, r)$ for some $x \in \alg$ and $r \in \rea$. Since $(x, r)\lmlt (x, r) -(x, r) = (x\mlt x - (1 + 2\cn{n}r)x, \cn{n}(nr^{2} - |x|^{2}) - r)$, if $r = 0$, then $(x, r)$ is not idempotent, so it can be assumed from the beginning that $r \neq 0$.
If $1 + 2\cn{n}r \neq 0$, then $r = \tfrac{1}{2\cn{n}s}$ for some $s \neq -1$ and there is $e \in \alg$ so that $x = (1 + s^{-1})e$. Consequently every element $(x, r) \in \lalg$ with $r \neq 0$ can be written either in the form $\bar{e}_{s} = ((1+ s^{-1})e, \tfrac{1}{2\cn{n}s}) = s^{-1}((1 + s)e, \tfrac{1}{2\cn{n}})$ for some $-1 \neq s \in \rea$ and some $e \in \alg$ or in the form $\tilde{x} = (x, -\tfrac{1}{2\cn{n}})$ for some $x \in \alg$. 
From
\begin{align}
\tilde{x}\lmlt \tilde{x} - \tilde{x} = (x\mlt x, \tfrac{n+2}{4\cn{n}} - \cn{n}|x|^{2}),
\end{align}
it follows that $\tilde{x}$ is idempotent if and only if $x\mlt x= 0$ and $|x|^{2} = \tfrac{n+2}{4\cn{n}^{2}}$. In this case $\tilde{x} = (\pm\frac{\sqrt{n+2}}{2\cn{n}} \tfrac{x}{|x|}, -\tfrac{1}{2\cn{n}})$ has the form \eqref{cei1}, and, by \eqref{confmulttau}, $|\tilde{x}|^{2} = n(n+1)\cn{n}^{2}(|x|^{2} + \tfrac{1}{4\cn{n}^{2}}) = n(n+1)(n+3)/4$.
From 
\begin{align}
\bar{e}_{s}\lmlt \bar{e}_{s} - \bar{e}_{s} = s^{-2}\left((s+1)^{2}(e\mlt e - e), \tfrac{1}{4\cn{n}}\left(n - 2s - 4\cn{n}^{2}(s + 1)^{2}|e|^{2} \right)\right),
\end{align}
it follows that $\bar{e}_{s}$ is idempotent if and only if $e \in \idem(\alg, \mlt)$ and 
\begin{align}\label{cei2b}
0 & = n - 2s - 4\cn{n}^{2}(s + 1)^{2}|e|^{2} = n+1-2(s+1)- 4\cn{n}^{2}(s + 1)^{2}|e|^{2}.
\end{align}
The equation \eqref{cei2b} has the distinct real roots
\begin{align}
s(e)_{\pm} + 1 = \tfrac{1}{4\cn{n}^{2}|e|^{2}}\left( - 1 \pm \sqrt{ 1+ 4(n+2)\cn{n}^{2}|e|^{2}}\right),
\end{align}
so that
\begin{align}\label{sepm}
\begin{split}
s_{\pm} &= s(e)_{\pm}  = \left(\tfrac{-1 - 4\cn{n}^{2}|e|^{2} \pm \sqrt{1 + 4(n+2)\cn{n}^{2}|e|^{2}}}{4\cn{n}^{2}|e|^{2}}\right)=  \tfrac{n - 4\cn{n}^{2}|e|^{2}}{1 + 4\cn{n}^{2}|e|^{2} \pm \sqrt{1 + 4(n+2)\cn{n}^{2}|e|^{2}}}.
\end{split}
\end{align}
If $n = 4\cn{n}^{2}|e|^{2}$, then $s(e)_{+} = 0$, so if $\bar{e}_{s}$ is to be idempotent, this solution cannot occur. The other solution, $s(e)_{-}$ yields an idempotent $\bar{e}_{s_{-}}= \bar{e}_{-}$ of the form \eqref{cei2}.
If $n \neq 4\cn{n}^{2}|e|^{2}$, then \eqref{cei2b} yields the two distinct nonzero real solutions \eqref{sepm}, and these yield the idempotents $\bar{e}_{s_{\pm}} = \bar{e}_{\pm}$ of \eqref{cei3}. 

By \eqref{confmulttau} and the fact that $s_{\pm}$ solves \eqref{cei2b},
\begin{align}
\begin{split}
|\bar{e}_{\pm}|^{2} &= n(n+1)\cn{n}^{2}\left(\left(1 +\tfrac{1}{s_{\pm}}\right)^{2}|e|^{2} + \tfrac{1}{4\cn{n}^{2}s_{\pm}^{2}}\right)  = \tfrac{n(n+1)(n+1 - 2s_{\pm})}{4s_{\pm}^{2}} = \phi(s_{\pm}).
\end{split}
\end{align}
The function $\phi(s) = \tfrac{n(n+1)(n+1 - 2s)}{4s^{2}}$ satisfies $\phi^{\prime}(s) = -\tfrac{n(n+1)(n+1 -s)}{2s^{3}}$, so is monotone decreasing on $(0, n+1)$ and monotone increasing on $(-\infty, 0)$.
Because $\sqrt{1 + x} < 1 + \tfrac{x}{2}$ if $x > 0$,
\begin{align}
-1 \leq s(e)_{+} = \tfrac{-1 - 4\cn{n}^{2}|e|^{2} + \sqrt{1 + 4(n+2)\cn{n}^{2}|e|^{2}}}{4\cn{n}^{2}|e|^{2}} < \tfrac{n}{2}.
\end{align}
Since $\phi$ is strictly monotone decreasing on $(0, n+1)$,
\begin{align}
\phi(s(e)_{+}) > \phi(\tfrac{n}{2}) = \tfrac{n+1}{n},
\end{align}
if $s(e)_{+} \geq 0$, and, since $\phi$ is monotone increasing on $(-\infty, 0)$,
\begin{align}
\phi(s(e)_{+}) \geq \phi(-1) = \tfrac{n(n+1)(n+3)}{4},
\end{align}
if $s(e)_{+} \in (-1, 0)$. 
The function $g(x) = \tfrac{1 + x + \sqrt{1 + (n+2)x}}{x}$ is monotone decreasing on $(0, \infty)$. By Lemma \ref{taumaxlemma}, $|e|^{2} \geq \tfrac{n}{n-1}$, so 
\begin{align}
-1 > s(e)_{-} = \tfrac{-1 - 4\cn{n}^{2}|e|^{2} - \sqrt{1 + 4(n+2)\cn{n}^{2}|e|^{2}}}{4\cn{n}^{2}|e|^{2}} = -g(4\cn{n}^{2}|e|^{2}) \geq -g(\tfrac{4n}{(n+2)(n-1)^{2}})  = - \tfrac{n(n+1)}{2}. 
\end{align}
Since $\phi$ is monotone increasing on $(-\infty, 0)$ this implies
\begin{align}
\tfrac{n(n+1)(n+3)}{4} = \phi(-1) > \phi(s(e)_{-})  \geq \phi(-\tfrac{n(n+1)}{2}) = \tfrac{n+1}{n}.
\end{align}
An element $\bar{w} = (w, r) \in \lalg$ is square-zero if and only if $w \mlt w = 2\cn{n}r w$ and $nr^{2} = |w|^{2}$. Because $nr^{2} = |w|^{2}$, if $\bar{w} \neq 0$, then $r \neq 0$ and $w \neq 0$. In this case $\tfrac{1}{2\cn{n}r}w$ is idempotent in $(\alg, \mlt)$. Since if $\bar{w}$ is square-zero, any multiple $\la \bar{w}$ is also square-zero, it can be supposed without loss of generality that $2\cn{n} r = 1$ and $w$ is idempotent. In this case $|w|^{2} = nr^{2} = 4^{-1}n\cn{n}^{-2} = (n+2)n(n-1)/4$. This proves \eqref{confextsz}. 

If $e \in \idem(\alg, \mlt)$ and $\la \in \specp(e)$ then there is $u \in \alg$ such that $\tau_{\mlt}(e, u) =0$ and $e \mlt u = \la u$. A straightforward computation shows $\bar{e}_{\pm}\lmlt(u, 0) = (\tfrac{2(s_{\pm} + 1)\la - 1}{2s_{\pm}}u, 0)$ and $\tau_{\lmlt}(\bar{e}_{\pm}, (u, 0)) = 0$, so that $\tfrac{2(s_{\pm} + 1)\la - 1}{2s_{\pm}} \in \specp(\bar{e}_{\pm})$. The elements of the form $(u, 0)$ with $\tau_{\mlt}(e, u) =0$ span a codimension one subspace of the $\tau_{\lmlt}$-orthogonal of $\bar{e}_{\pm}$ in $\lalg$. The vector $(e, -2\cn{n}(s(e)_{\pm} + 1)|e|^{2})$ is $\tau_{\lmlt}$-orthogonal to $\bar{e}_{\pm}$ and straightforward computations show that it is an eigenvector of $L_{\lmlt}(\bar{e}_{\pm})$ with eigenvalue $\tfrac{n+1}{2s(e)_{\pm}}$. This proves \eqref{specepm} and confirms \eqref{ceispec1}.
\end{proof}

\begin{lemma}\label{doubleconformallemma}
A Euclidean Killing metrized exact commutative algebra contains a nonzero square-zero element if and only if the conformal extension of its conformal extension contains a nonzero square-zero element.
\end{lemma}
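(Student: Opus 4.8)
The plan is to route everything through Lemma~\ref{conformalextensionidempotentlemma}, applied once to $(\alg, \mlt)$ and once to its conformal extension. Write $(\lalg, \lmlt)$ for the conformal extension of $(\alg, \mlt)$; by Lemma~\ref{confextensionlemma} it is an $(n+1)$-dimensional Killing metrized exact commutative $\rea$-algebra, and it is Euclidean because $\tau_{\lmlt}((x,r),(x,r)) = \tau_{\mlt}(x,x) + r^{2}$ is positive definite by \eqref{confmulttau}. Let $(\widehat{\lalg}, \widehat{\lmlt})$ be the conformal extension of $(\lalg, \lmlt)$, of dimension $n+2$. First I would apply claim~\eqref{confextsz} of Lemma~\ref{conformalextensionidempotentlemma} with $(\lalg, \lmlt)$ in the role of $(\alg, \mlt)$ (so $n \mapsto n+1$): this shows that $(\widehat{\lalg}, \widehat{\lmlt})$ has a nonzero square-zero element if and only if $(\lalg, \lmlt)$ has an idempotent $\bar{e}$ with $\tau_{\lmlt}(\bar{e}, \bar{e}) = \tfrac{n+1}{4\cn{n+1}^{2}} = \tfrac{(n+1)(n+3)n}{4}$, and I would note that in the notation of Lemma~\ref{conformalextensionidempotentlemma} this target value is exactly $\phi(-1)$, the $\tau_{\lmlt}$-norm attached by claim~\eqref{confextidem1} to the idempotents $\bar{z}_{\pm}$ built from elements $z \in \szero(\alg, \mlt)$.

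Given this reduction, the ``only if'' direction is immediate: if $0 \neq z \in \szero(\alg, \mlt)$ then $|z|_{\tau_{\mlt}} > 0$ since $(\alg, \mlt)$ is Euclidean, so \eqref{cei1} gives an idempotent $\bar{z}_{+}$ of $(\lalg, \lmlt)$ of $\tau_{\lmlt}$-norm $\phi(-1)$, whence $(\bar{z}_{+}, \tfrac{1}{2\cn{n+1}})$ is a nonzero square-zero element of $(\widehat{\lalg}, \widehat{\lmlt})$. For the ``if'' direction I would start from an idempotent $\bar{e}$ of $(\lalg, \lmlt)$ with $\tau_{\lmlt}(\bar{e}, \bar{e}) = \phi(-1)$ and use the classification of the idempotents of $(\lalg, \lmlt)$ in claim~\eqref{confextidempotents} to identify its family. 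Family \eqref{confextidem2} carries norm $\phi(-\tfrac{2(n+1)}{n}) = \tfrac{(n+4)n^{2}}{16}$, which differs from $\phi(-1) = \tfrac{(n+1)(n+3)n}{4}$ because the difference is $\tfrac{3n(n+2)^{2}}{16} > 0$; the minus branch of family \eqref{confextidem} carries norm $\phi(s(e)_{-}) < \tfrac{n(n+1)(n+3)}{4} = \phi(-1)$ by \eqref{fspmbounds}; so the only serious alternative to family \eqref{confextidem1} is the plus branch of \eqref{confextidem}. Once that alternative is excluded, $\bar{e} = \bar{z}_{\pm}$ for some $z \in \szero(\alg, \mlt)$, and $\bar{e} \neq 0$ forces $z \neq 0$, so $\szero(\alg, \mlt) \neq \{0\}$, finishing the biconditional.

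The main obstacle is precisely excluding the plus branch of family \eqref{confextidem}. Since $\phi$ is strictly decreasing on $(0, n/2)$ with $\phi(n/2) = \tfrac{n+1}{n} < \phi(-1)$, the equation $\phi(s) = \phi(-1)$ does have a root $s_{0} = \tfrac{n+1}{n+3}$ in $(0, n/2)$, and by \eqref{sxpm} this value is attained by $s(e)_{+}$ exactly when $e$ is an idempotent of $(\alg, \mlt)$ with $\tau_{\mlt}(e, e) = \tfrac{(n-1)^{2}(n+3)}{16}$; so the step that needs genuine work is to show that a Euclidean Killing metrized exact commutative algebra admitting such an idempotent must contain a nonzero square-zero element (equivalently, that this value of $\tau_{\mlt}(e,e)$ is not realized, which is what makes the $\tau_{\lmlt}$-norm of an idempotent of $(\lalg, \lmlt)$ pin down its family, as asserted in Lemma~\ref{conformalextensionidempotentlemma}). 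By Lemma~\ref{taumaxlemma} this already forces $(n-1)^{3}(n+3) \geq 16n$, hence $n \geq 3$, with equality at $n = 3$, where $e$ is a minimal idempotent and $(\alg, \mlt) \cong \ealg^{3}(\rea)$ by Corollary~\ref{n23corollary}, which has square-zero elements (so the conclusion holds on independent grounds); for $n = 4$ one checks directly against the list in Theorem~\ref{4dclassificationtheorem}; and for $n \geq 5$ I would expect a sharpened Cauchy--Schwarz estimate on the spectrum of $L_{\mlt}(e)$, in the spirit of Lemma~\ref{taumaxlemma}, to rule the case out. Everything else in the argument is bookkeeping with the quantities $\phi(s)$ already tabulated in Lemma~\ref{conformalextensionidempotentlemma}.
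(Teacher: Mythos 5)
Your proposal follows the same skeleton as the paper's proof: reduce, via claim \eqref{confextsz} of Lemma \ref{conformalextensionidempotentlemma} applied to $(\lalg,\lmlt)$, to the assertion that $(\lalg,\lmlt)$ has an idempotent of $\tau_{\lmlt}$-norm-squared $\phi(-1)=\tfrac{n(n+1)(n+3)}{4}$; obtain the forward implication from \eqref{confextidem1}; and for the converse argue that an idempotent of that norm must be of the form $\bar z_{\pm}$ for some $z\in\szero(\alg,\mlt)$. The paper settles the converse in one line by invoking the assertion at the end of claim \eqref{confextidempotents} that the form of an idempotent of $(\lalg,\lmlt)$ is determined by its norm. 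Your computation correctly shows that the inequalities \eqref{spmbounds}--\eqref{fspmbounds} offered in support of that assertion do not exclude the plus branch of family \eqref{confextidem}: since $\phi$ decreases from $+\infty$ to $\tfrac{n+1}{n}$ on $(0,n/2)$, the value $\phi(-1)$ is attained at $s=\tfrac{n+1}{n+3}$, which by \eqref{sxpm} corresponds to an idempotent $e\in\idem(\alg,\mlt)$ with $\tau_{\mlt}(e,e)=\tfrac{(n-1)^{2}(n+3)}{16}$, and the resulting $\bar e_{+}$ has positive last coordinate so is not of the form $\bar z_{\pm}$. You have therefore located a real soft spot, and it is one that is present in the paper's own argument, which simply does not address this case.

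That said, as submitted your proof is incomplete: for $n\geq 5$ you explicitly leave open the exclusion of an idempotent with $\tau_{\mlt}(e,e)=\tfrac{(n-1)^{2}(n+3)}{16}$ in an algebra with $\szero(\alg,\mlt)=\{0\}$, and ``I would expect a sharpened Cauchy--Schwarz estimate to rule the case out'' is not an argument. Lemma \ref{taumaxlemma} gives only $\tau_{\mlt}(e,e)\geq\tfrac{n}{n-1}$, which for $n\geq 4$ lies far below the critical value, and nothing in the paper forbids idempotents of that particular norm in a general Euclidean Killing metrized exact commutative algebra. Your low-dimensional checks ($n=3$ via Corollary \ref{n23corollary}, $n=4$ via Theorem \ref{4dclassificationtheorem}) are fine but do not generalize. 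So either this case must be eliminated by a genuinely new estimate on the possible norms of idempotents, or the equivalence must be proved by a route that does not pass through ``the norm determines the family.'' Until one of these is supplied, your proof --- and, for exactly the same reason, the paper's --- is unfinished at this step.
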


\begin{proof}
By \eqref{confextidem1} of Lemma \ref{conformalextensionidempotentlemma}, if an $n$-dimensional algebra $(\alg, \mlt)$ contains a nontrivial square-zero element, then the $(n+1)$-dimensional algebra $(\lalg, \lmlt)$ contains an idempotent with squared-norm $\tfrac{n+1}{4\cn{n+1}^{2}}$. By \eqref{confextidempotents}  of Lemma \ref{conformalextensionidempotentlemma}, the converse is true as well, for if $(\lalg, \lmlt)$ contains an idempotent with squared-norm $\tfrac{n+1}{4\cn{n+1}^{2}}$, then this idempotent must have one of the forms listed in \eqref{confextidempotents} of Lemma \ref{conformalextensionidempotentlemma}, and the condition on the norm forces it to have the form \eqref{cei1} of \eqref{confextidem1} of Lemma \ref{conformalextensionidempotentlemma}.
By \eqref{confextsz} of Lemma \ref{conformalextensionidempotentlemma}, the $(n+2)$-dimensional double conformal extension $(\overline{\bar{\alg}}, \overline{\bar{\mlt}})$ contains a square-zero element if and only if the $(n+1)$-dimensional algebra $(\lalg, \lmlt)$ contains an idempotent with squared-norm $\tfrac{n+1}{4\cn{n+1}^{2}}$. 
\end{proof}

\begin{lemma}\label{confextminimalslemma} 
Let $(\alg, \mlt)$ be a Euclidean Killing metrized exact commutative algebra with conformal extension $(\lalg, \lmlt, \tau_{\lmlt})$. For $\bar{e} \in \idem(\lalg, \lmlt, \tau_{\lmlt})$, the following are equivalent:
\begin{enumerate}
\item\label{confextminimals1} $\bar{e}$ is minimal.
\item\label{confextminimals3} $\tau_{\lmlt}(\bar{e}, \bar{e}) = \tfrac{n+1}{n}$.
\item\label{confextminimals2} $\bar{e}$ is minimal and $\specp(\bar{e}) = \{-1/n\}$.
\item\label{confextminimals4} Either $\bar{e}$ is the canonical minimal idempotent $\bar{w} = (0, n^{-1}\cn{n}^{-1})$ or it has the form $\bar{e}_{-}$ for a minimal idempotent $e \in \midem(\alg, \mlt, \tau_{\mlt})$ (where $\bar{e}_{-}$ is as in \eqref{epmdefined}) satisfying $\tau_{\mlt}(e, e) = \tfrac{n}{n-1}$.
\end{enumerate}
\end{lemma}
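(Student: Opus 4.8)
The plan is to reduce the whole statement to two already-established facts: the lower bound on the Killing norm of an idempotent in an $(n+1)$-dimensional Euclidean Killing metrized exact commutative algebra (Lemma~\ref{taumaxlemma} applied to $(\lalg, \lmlt)$), and the classification of the idempotents of a conformal extension together with the monotonicity of $\phi(s) = n(n+1)(n+1-2s)/(4s^{2})$ (Lemma~\ref{conformalextensionidempotentlemma}). First I would record the preliminaries. By Lemma~\ref{confextensionlemma}, in the $\cn{n}$-normalized form \eqref{confmultnm}, $(\lalg, \lmlt)$ is an $(n+1)$-dimensional exact commutative algebra whose Killing form $\tau_{\lmlt}$ is invariant and nondegenerate; moreover $\tau_{\lmlt}(\bar x, \bar x)$ is a positive scalar multiple of $\tau_{\mlt}(x, x) + r^{2}$ for $\bar x = (x, r)$, hence positive definite since $\tau_{\mlt}$ is. Thus $(\lalg, \lmlt, \tau_{\lmlt})$ is again a Euclidean Killing metrized exact commutative algebra, now of dimension $n+1$. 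A one-line computation from \eqref{confmultnm} shows that $\bar w = (0, n^{-1}\cn{n}^{-1})$ is an idempotent, that $\bar w^{\perp} = \alg \oplus \{0\}$ with $L_{\lmlt}(\bar w)$ acting there as $-1/n$, and that $\tau_{\lmlt}(\bar w, \bar w) = (n+1)/n$.

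The equivalence of \eqref{confextminimals1}, \eqref{confextminimals2}, and \eqref{confextminimals3} then follows at once from Lemma~\ref{taumaxlemma} applied to the $(n+1)$-dimensional algebra $(\lalg, \lmlt, \tau_{\lmlt})$: every $\bar e \in \idem(\lalg, \lmlt)$ satisfies $\tau_{\lmlt}(\bar e, \bar e) \geq (n+1)/n$, with equality exactly when $\specp(\bar e)$ consists of $-1/n$ with multiplicity $n$. Since $\bar w$ attains this bound and $\midem(\lalg, \lmlt, \tau_{\lmlt})$ is nonempty (Lemma~\ref{criticalpointlemma}), the common norm of a minimal idempotent of $(\lalg, \lmlt, \tau_{\lmlt})$ is exactly $(n+1)/n$; hence $\bar e$ is minimal $\Leftrightarrow$ $\tau_{\lmlt}(\bar e, \bar e) = (n+1)/n$ $\Leftrightarrow$ $\specp(\bar e) = \{-1/n\}$, which is \eqref{confextminimals1} $\Leftrightarrow$ \eqref{confextminimals3} $\Leftrightarrow$ \eqref{confextminimals2}.

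For the equivalence with \eqref{confextminimals4} I would argue both directions through Lemma~\ref{conformalextensionidempotentlemma}. For \eqref{confextminimals4}$\Rightarrow$\eqref{confextminimals3}: $\bar w$ has norm $(n+1)/n$ by the above, and if $\bar e = \bar e_{-}$ arises from $e \in \midem(\alg, \mlt, \tau_{\mlt})$ with $\tau_{\mlt}(e, e) = n/(n-1)$, then by Lemma~\ref{conformalextensionidempotentlemma}(1) the parameter $s(e)_{-}$ takes the boundary value $-n(n+1)/2$ of \eqref{spmbounds} (attained exactly when $\tau_{\mlt}(e,e) = n/(n-1)$), so $\tau_{\lmlt}(\bar e, \bar e) = \phi(s(e)_{-}) = \phi(-n(n+1)/2) = (n+1)/n$. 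For \eqref{confextminimals3}$\Rightarrow$\eqref{confextminimals4}: assume $\tau_{\lmlt}(\bar e, \bar e) = (n+1)/n$ and $\bar e \neq \bar w$; then $\bar e$ is of one of the forms \eqref{cei1}, \eqref{cei2}, or \eqref{cei3} of Lemma~\ref{conformalextensionidempotentlemma}. Form \eqref{cei1} has norm $\phi(-1) = n(n+1)(n+3)/4$, which is never $(n+1)/n$ for $n \geq 2$; form \eqref{cei2} has norm $\phi(-2(n+1)/n) = n^{2}(n+4)/16$, which equals $(n+1)/n$ only when $n = 2$, and then the defining condition $4\cn{n}^{2}|e|^{2} = n$ coincides with $\tau_{\mlt}(e,e) = n/(n-1)$, so $e$ is minimal and $\bar e = \bar e_{-}$ has the required form; for form \eqref{cei3}, the bounds \eqref{spmbounds} and \eqref{fspmbounds} together with the monotonicity of $\phi$ give $\phi(s(e)_{+}) > (n+1)/n$ (so the $+$ branch cannot occur), while $\phi(s(e)_{-}) = (n+1)/n$ forces $s(e)_{-} = -n(n+1)/2$, hence $\tau_{\mlt}(e,e) = n/(n-1)$ and $e$ minimal. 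This exhausts the cases and yields \eqref{confextminimals4}.

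I expect the only real obstacle to be clerical: keeping the normalization $\cn{n} = ((n+2)(n-1))^{-1/2}$ consistent across \eqref{confmult}, \eqref{confmultnm}, and Lemma~\ref{conformalextensionidempotentlemma}, and noting explicitly that the canonical idempotent $\bar w$ (the degenerate ``$e = 0$'' member of the family $\bar e_{s}$) is not literally among the forms \eqref{cei1}, \eqref{cei2}, \eqref{cei3}, so it must be adjoined by hand when invoking that classification. All substantive content is already contained in Lemmas~\ref{taumaxlemma} and \ref{conformalextensionidempotentlemma} together with the elementary fact that $\phi$ is strictly increasing on $(-\infty,0)$ and strictly decreasing on $(0,n/2)$ with $\phi(n/2) = \phi(-n(n+1)/2) = (n+1)/n$.
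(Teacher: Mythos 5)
Your proof is correct and follows essentially the same route as the paper's: the equivalence of \eqref{confextminimals1}--\eqref{confextminimals2} comes from Lemma \ref{taumaxlemma} applied to the $(n+1)$-dimensional extension together with the fact that $\bar{w}$ attains the bound $\tfrac{n+1}{n}$, and the equivalence with \eqref{confextminimals4} comes from the classification of idempotents and the monotonicity of $\phi$ in Lemma \ref{conformalextensionidempotentlemma}. Your case analysis is in fact somewhat more careful than the paper's — you explicitly rule out form \eqref{cei1}, catch the $n=2$ degeneration where the minimal idempotent of $\alg$ satisfies $4\cn{n}^{2}|e|^{2}=n$ and so produces $\bar{e}_{-}$ through form \eqref{cei2} rather than \eqref{cei3}, and note that $\bar{w}$ (the $e=0$ member of the family) must be adjoined by hand to the classification — but the substance is identical.
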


\begin{proof}
That $\bar{w} = (0, n^{-1}\cn{n}^{-1})$ is minimal with $\tau_{\lmlt}(\bar{w}, \bar{w}) = \tfrac{n+1}{n}$ follows from \eqref{confassdominant} of Lemma \ref{confextensionlemma}.
Consequently any other minimal idempotent in $(\lalg, \lmlt, \tau_{\lmlt})$ satisfies $\tau_{\lmlt}(\bar{e}, \bar{e}) = \tfrac{n+1}{n}$. By Lemma \ref{taumaxlemma}, for such an idempotent $\bar{e}$, $\specp(\bar{e}) = \{-1/n\}$. This proves the equivalence of claims \eqref{confextminimals1} and \eqref{confextminimals3}. 
By \eqref{confextidempotents} of Lemma \ref{conformalextensionidempotentlemma} such an idempotent has the form $\bar{e}_{\pm}$ for some $e \in \idem(\alg, \mlt)$ and moreover there holds $\tfrac{n(n+1)(n+1 - 2s(e)_{\pm})}{4s(e)_{\pm}^{2}} = \tfrac{n+1}{n}$. This forces $s(e)_{\pm} \in \{-\tfrac{n(n+1)}{2}, \tfrac{n}{2}\}$. By \eqref{spmbounds} and \eqref{fspmbounds} this can happen only if there is $e \in \idem(\alg, \mlt)$ such that $s(e)_{-} = -\tfrac{n(n+1)}{2}$, in which case a simple computation shows that $\tau_{\mlt}(e, e) = \tfrac{n}{n-1}$. 
\end{proof}

\begin{corollary}\label{confextsaturatedcorollary}
The conformal extension of a Euclidean Killing metrized exact commutative algebra $(\alg, \mlt)$ is spanned by minimal idempotents if and only if $\alg$ is spanned by minimal idempotents.
\end{corollary}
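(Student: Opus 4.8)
The plan is to read off the minimal idempotents of the conformal extension $(\lalg = \alg \oplus \fie, \lmlt)$ from Lemma~\ref{confextminimalslemma} and then transport the two spanning statements through the canonical projection $\pi \colon \lalg \to \alg$. By Lemma~\ref{confextminimalslemma}, $\midem(\lalg, \lmlt, \tau_{\lmlt})$ consists of the canonical minimal idempotent $\bar{w} = (0, n^{-1}\cn{n}^{-1})$ together with the idempotents $\bar{e}_{-}$ of~\eqref{epmdefined} attached to those $e \in \midem(\alg, \mlt, \tau_{\mlt})$ for which $\tau_{\mlt}(e, e) = n/(n-1)$; write $E_{0}$ for the set of those distinguished minimal idempotents of $\alg$. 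Since $\pi(\bar{w}) = 0$, since $\pi(\bar{e}_{-})$ is a nonzero scalar multiple of $e$ for each $e \in E_{0}$, and since $\bar{w}$ by itself spans the summand $\{0\} \oplus \fie$, a direct bookkeeping gives $\spn \midem(\lalg, \lmlt, \tau_{\lmlt}) = \spn(E_{0}) \oplus \fie$. Thus $(\lalg, \lmlt)$ is spanned by its minimal idempotents if and only if $\spn(E_{0}) = \alg$, and the corollary is reduced to the equivalence of $\spn(E_{0}) = \alg$ with $\spn \midem(\alg, \mlt, \tau_{\mlt}) = \alg$.

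The ``only if'' direction is then immediate: since $E_{0} \subset \midem(\alg, \mlt, \tau_{\mlt})$, $\spn(E_{0}) = \alg$ forces $\spn \midem(\alg, \mlt, \tau_{\mlt}) = \alg$ (equivalently, one projects a spanning family of minimal idempotents of $\lalg$, discards $\pi(\bar{w}) = 0$, and recovers $\alg$ as the span of scalar multiples of minimal idempotents of $\alg$).

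For the ``if'' direction, assume $\spn \midem(\alg, \mlt, \tau_{\mlt}) = \alg$. By Lemma~\ref{taumaxlemma} every idempotent $e$ satisfies $\tau_{\mlt}(e, e) \geq n/(n-1)$, with equality exactly when $\specp(e)$ is $-1/(n-1)$ with multiplicity $n-1$; as all minimal idempotents share a single $\tau_{\mlt}$-norm, either $E_{0} = \midem(\alg, \mlt, \tau_{\mlt})$, in which case $\spn(E_{0}) = \alg$ and the reduction above gives the claim (concretely, the $\bar{e}_{-}$ for $e \in E_{0}$ together with $\bar{w}$ span $\lalg = \alg \oplus \fie$), or else $E_{0} = \emptyset$. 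Excluding this last possibility is the step I expect to be the main obstacle: one must show that a Euclidean Killing metrized exact commutative algebra spanned by its minimal idempotents has every minimal idempotent $e$ realizing the extremal value $\tau_{\mlt}(e, e) = n/(n-1)$, equivalently $\specp(e) = \{-1/(n-1)\}$ with multiplicity $n-1$. The approach I would take is a tightness argument: a spanning $\Aut(\alg, \mlt)$-orbit of equal-norm minimal idempotents should be a tight frame for $\tau_{\mlt}$ as in~\eqref{idemtight}, and pairing the resulting frame identity with an orbit member, combined with the spectral bound $\specp(e) \subset (-\infty, 1/2]$ of Lemma~\ref{criticalpointlemma} and the exactness relation that the eigenvalues of $L_{\mlt}(e)$ on $\eperp$ sum to $-1$, should pin the common norm and the spectrum to the required values.
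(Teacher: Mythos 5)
Your reduction is correct, and your reading of Lemma \ref{confextminimalslemma} is in fact more careful than the paper's own one-line argument: the paper asserts that $\{\bar{w}\}\cup\{\bar{e}_{-}: e\in\midem(\alg,\mlt,\tau_{\mlt})\}$ lies in $\midem(\lalg,\lmlt,\tau_{\lmlt})$, whereas claim (4) of that lemma only yields $\bar{e}_{-}\in\midem(\lalg,\lmlt,\tau_{\lmlt})$ for those minimal idempotents $e$ with $\tau_{\mlt}(e,e)=n/(n-1)$ --- your set $E_{0}$. Your ``only if'' half, the bookkeeping $\spn\midem(\lalg,\lmlt,\tau_{\lmlt})=\spn(E_{0})\oplus\fie$, and the dichotomy $E_{0}=\midem(\alg,\mlt,\tau_{\mlt})$ or $E_{0}=\emptyset$ are all fine (and the paper's treatment of the converse direction does keep the norm condition).

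The step you flag --- excluding $E_{0}=\emptyset$ when $\alg$ is spanned by its minimal idempotents --- is a genuine gap, and the tightness argument you sketch cannot close it. Take $\alg=\ealg^{2}(\rea)\oplus\ealg^{2}(\rea)$, so $n=4$: its minimal idempotents are the six elements $(\ga_{i},0)$ and $(0,\ga_{j})$, they span $\alg$, and each has $\tau_{\mlt}$-norm-squared equal to $2$, strictly larger than the extremal value $n/(n-1)=4/3$ of Lemma \ref{taumaxlemma}. So ``spanned by minimal idempotents'' does not force the extremal norm; the identity \eqref{idemtight} is special to $\ealg^{n}(\fie)$, and a spanning family of equal-norm minimal idempotents need not be a tight frame, nor would tightness alone pin the norm to $n/(n-1)$. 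For this $\alg$ your reduction gives $\midem(\lalg,\lmlt,\tau_{\lmlt})=\{\bar{w}\}$, which does not span the five-dimensional $\lalg$; so the ``if'' direction, and hence the corollary as stated, fails for this algebra, the defect being inherited from the paper's proof, which silently drops the norm condition from Lemma \ref{confextminimalslemma}(4). What your argument does establish is the corollary under the additional hypothesis that the common norm of the minimal idempotents of $\alg$ is $n/(n-1)$ (equivalently, by Lemma \ref{taumaxlemma}, that $\specp(e)$ consists of $-1/(n-1)$ with multiplicity $n-1$ for one, hence every, minimal idempotent), and that is the honest statement your reduction supports.
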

\begin{proof}
If $\midem(\alg, \mlt, \tau_{\mlt})$ spans $\alg$, by Lemma \ref{confextminimalslemma}, the set $\{(0, -\tfrac{1}{n\cn{n}})\} \cup\{\bar{e}_{-}: e \in \midem(\alg, \mlt, \tau_{\mlt})\}$ is contained in $\midem(\lalg, \lmlt, \tau_{\lmlt}) $ and spans $(\lalg, \lmlt, \tau_{\lmlt})$. Conversely, if $(\lalg, \lmlt, \tau_{\lmlt})$ is spanned by minimal idempotents, by Lemma \ref{confextminimalslemma} these include $\bar{w}$ and at least $n$ more linearly independent minimal idempotents of the form $\bar{e}_{i\,-}$ for $e_{i} \in \midem(\alg, \mlt, \tau_{\mlt})$ satisfying $\tau_{\mlt}(e_{i}, e_{i}) = n/n-1$. The $e_{1}, \dots, e_{n}$ span $\alg$.
\end{proof}

\section{Triple algebras and Nahm algebras}\label{triplesection}
This section describes an iterative construction yielding simple Killing metrized exact commutative algebras of arbitrarily high dimension. Given a commutative algebra $(\alg, \mlt)$, the construction produces a commutative algebra structure on the \emph{triple} $\trip(\alg) = \alg \oplus \alg \oplus \alg$. This iterative construction preserves both the Killing metrized condition and simplicity, so yields simple Killing metrized exact commutative algebras of arbitrarily high dimension. A preliminary version of this material appeared in \cite[Section $7.2$]{Fox-ahs}.

Lemma \ref{tripletensorlemma} shows that the triple algebra is simply the tensor product with $\ealg^{3}$. Among tensor products with $\ealg^{n}$ what is special about the exponent $n = 3$ is that the polarization $Q(x, y, z) = P(x + y + z) - P(x + y) - P(y + z) - P(z + x) + P(x) + P(y) + P(z)$ of the cubic polynomial $P(x)$ of a metrized commutative algebra $(\alg, \mlt, h)$ can be viewed as a cubic polynomial on $\alg \oplus \alg \oplus \alg$, so defines on $\alg \oplus \alg \oplus \alg$ (equipped with the direct sum metric) a structure of a metrized commutative algebra. See \eqref{ptrip} of Lemma \ref{triplealgebrastructurelemma}.

\begin{definition}
The \emph{triple} $(\trip(\alg), \tmlt)$ of the algebra $(\alg, \mlt)$ is the exact commutative algebra defined as follows. Let $\trip(\alg) = \alg \oplus \alg \oplus \alg$,  let $\pi_{i}:\trip(\alg) \to \alg$ be projection onto the $i$th component, and write $x_{i} = \pi_{i}(x)$. Write $\nu_{i}:\alg \to \trip(\alg)$ for the inclusion as the $i$th component. The notations $x = \nu_{1}(x) + \nu_{2}(x) + \nu_{3}(x)$ and $x = (x_{1}, x_{2}, x_{3})$ are synonymous, and which is used depends on context.
Define a commutative multiplication $\tmlt$ on $\trip(\alg)$ by
\begin{align}\label{tripmlt}
\begin{split}
x \tmlt y & = \tfrac{1}{2}\left(\nu_{1}(x_{2} \mlt y_{3} + y_{2}\mlt x_{3}) + \nu_{2}(x_{3}\mlt y_{1} + y_{3} \mlt x_{1})  + \nu_{3}({1}\mlt y_{2} + y_{1}\mlt x_{2}) \right)\\
&= \tfrac{1}{2}\left(x_{2} \mlt y_{3} + y_{2}\mlt x_{3}, x_{3}\mlt y_{1} + y_{3} \mlt x_{1}, x_{1}\mlt y_{2} + y_{1}\mlt x_{2}\right).
\end{split}
\end{align}
Alternatively, the left multiplication $L_{\tmlt}(x)$ of $\trip(\alg)$ has the block matrix form
\begin{align}\label{generaltriple}
L_{\tmlt}(x) = \tfrac{1}{2}\begin{pmatrix} 0 & R_{\mlt}(x_{3}) & L_{\mlt}(x_{2}) \\ L_{\mlt}(x_{3}) & 0 & R_{\mlt}(x_{1})\\ R_{\mlt}(x_{2}) & L_{\mlt}(x_{1}) & 0\end{pmatrix},
\end{align}
where $L_{\mlt}, R_{\mlt}:\alg \to \eno(\alg)$ are the left and right regular representations of $(\alg, \mlt)$.
\end{definition}

One reason for choosing the factor of $1/2$ in \eqref{tripmlt} is that when $(\alg, \mlt)$ is commutative the injective diagonal map $\diag:\alg \to \trip(\alg)$ defined by $\diag(x) = \nu_{1}(x) + \nu_{2}(x) + \nu_{3}(x)$ is an algebra homomorphism. 

\begin{example}\label{rtripleexample}
Regard $\fie$ as a Killing metrized commutative algebra. The Killing form of $\fie$ is $\tau_{\rea}(x, x) = x^{2}$ and the associated cubic polynomial is $\tfrac{1}{6}x^{3}$. The triple $\trip(\fie)$ has multiplication table \eqref{n3alg} with $c = 1/2$, so by Example \ref{3dexample} is isomorphic to $\ealg^{3}(\fie)$. This provides motivation for Lemma \ref{tripletensorlemma}.
The trace-form $\tau_{\trip(\fie)}$ equals $\tfrac{1}{2}(x_{1}y_{1} + x_{2}y_{2} + x_{3}y_{3})$ for $x, y \in \fie^{3}$, in accord with \eqref{tautrip} below, and the cubic polynomial of $(\trip(\fie), \mlt, \tau_{\trip(\fie)})$ is $\tfrac{1}{4}x_{1}x_{2}x_{3}$, in accord with \eqref{ptrip} below. 
\end{example}

The Lie algebra $\so(3)$ is defined over a ground field $\fie$ of characteristic not equal to $2$ as a $3$-dimensional $\fie$-vector space $\ste$ equipped with a basis $\{e_{1}, e_{2}, e_{3}\}$ and the Lie bracket satisfying $[e_{1}, e_{2}] = e_{3}$ and its cyclic permutations. With respect to its Killing form $B$, the basis $\{e_{1}, e_{2}, e_{3}\}$ is orthogonal and satisfies $B(e_{i}, e_{i}) = -2$ for $1 \leq i \leq 3$. The special orthogonal group $SO(3)$ is defined to be the group of invertible linear automorphisms preserving $B$ and fixing $e_{1}\wedge e_{2} \wedge e_{3} \in \ext^{3}\ste$.

\begin{lemma}\label{tripletensorlemma}
\noindent
\begin{enumerate}
\item If $(\g, [\dum, \dum])$ is an anticommutative algebra, then $(\trip(\g), \tmlt)$ is isomorphic to $\so(3)\tensor_{\fie}\g$.
\item If $(\alg, \mlt)$ is a commutative algebra, then $(\trip(\alg), \tmlt)$ is isomorphic to $\ealg^{3}\tensor_{\fie}\alg$.
\end{enumerate}
\end{lemma}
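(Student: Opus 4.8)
The plan is to construct explicit isomorphisms in both cases by choosing bases adapted to the tensor‐product structure and checking that the multiplication tables match. Since the two claims are parallel (the second being obtained from the first by replacing the anticommutative $\so(3)$ with the commutative $\ealg^{3}$), I would set up a uniform computation and only at the end specialize the signs.

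For the first claim, recall that $\so(3)$ has a basis $\{e_{1},e_{2},e_{3}\}$ with $[e_{i},e_{j}]=\epsilon_{ijk}e_{k}$ (cyclically). A general element of $\so(3)\tensor_{\fie}\g$ is then $\sum_{i=1}^{3}e_{i}\tensor a_{i}$ with $a_{i}\in\g$, and under $L_{\mlt}(u\tensor v)=L_{[\dum,\dum]}(u)\tensor L_{[\dum,\dum]}(v)$ the product is $(e_{i}\tensor a_{i})\mlt(e_{j}\tensor b_{j})=[e_{i},e_{j}]\tensor [a_{i},b_{j}]$. I would define $\Phi:\trip(\g)\to\so(3)\tensor_{\fie}\g$ by $\Phi(x_{1},x_{2},x_{3})=\sum_{i=1}^{3}e_{i}\tensor x_{i}$ (possibly after rescaling each $x_{i}$ by a fixed scalar, to be determined), and compute $\Phi(x)\mlt\Phi(y)$ using the bracket relations of $\so(3)$. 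Comparing the result with $\Phi(x\tmlt y)$ computed from \eqref{tripmlt}, using the block form \eqref{generaltriple} of $L_{\tmlt}$ together with the fact that $L_{[\dum,\dum]}(x)=-R_{[\dum,\dum]}(x)$ in an anticommutative algebra, should show that a suitable choice of normalization makes $\Phi$ an algebra isomorphism; it is linear and bijective by construction. The factor $1/2$ in \eqref{tripmlt} is exactly what makes this work without an awkward scalar, and the cyclic structure of both the $\so(3)$ bracket and the index pattern in \eqref{tripmlt} is what guarantees term‐by‐term agreement.

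For the second claim I would argue identically, but now using the spanning set $\{\ga_{0},\ga_{1},\ga_{2},\ga_{3}\}$ of $\ealg^{3}(\fie)$ satisfying \eqref{ealgrelations}, or more conveniently a model of $\ealg^{3}(\fie)$ with an orthogonal basis $\{f_{1},f_{2},f_{3}\}$ and multiplication $f_{i}\mlt f_{i}=0$, $f_{i}\mlt f_{j}=c\,f_{k}$ (cyclically) as in Example \ref{3dexample} with $c=1/2$ — this is precisely the structure that appears in Example \ref{rtripleexample}, where $\trip(\fie)$ is computed to be $\ealg^{3}(\fie)$. Since $\ealg^{3}(\fie)$ is commutative, $L_{\mlt}(f_{i})=R_{\mlt}(f_{i})$, and the block matrix \eqref{generaltriple} for $\trip(\alg)$ with $L_{\mlt}(x_i)$ and $R_{\mlt}(x_i)$ now symmetric matches, after the substitution $e_{i}\tensor a_{i}\leftrightarrow\nu_{i}(a_{i})$ with the appropriate scalar $c=1/2$, the block structure of $L_{\mlt}$ on $\ealg^{3}(\fie)\tensor_{\fie}\alg$, where $L_{\mlt}(f_{i}\tensor a_{i})=L_{\mlt}(f_{i})\tensor L_{\mlt}(a_{i})$ and the only nonzero blocks come from $f_{i}\mlt f_{j}=\tfrac12 f_{k}$ for $i\neq j$. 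The diagonal entries vanish because $f_{i}\mlt f_{i}=0$, matching the zero diagonal of \eqref{generaltriple}.

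The only real subtlety — and the step I expect to require genuine care rather than being purely mechanical — is pinning down the normalization constant relating $\nu_{i}(a)$ to $e_{i}\tensor a$ (resp.\ $f_{i}\tensor a$) so that the $1/2$ in \eqref{tripmlt} is absorbed correctly; one must also be careful about the asymmetry in \eqref{generaltriple} between left and right multiplications, which for the anticommutative case introduces a sign that must be tracked through each of the three blocks and cancelled against a sign in the identification $e_{i}\tensor\leftrightarrow\nu_{i}$. Once the correct scalars are fixed, verifying that $\Phi$ intertwines the multiplications reduces to checking three cyclically related identities, each an immediate consequence of \eqref{tripmlt} and the structure constants, so I would state those three checks and note they follow by direct computation. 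Finally, bijectivity and linearity of $\Phi$ are obvious from its definition on the direct‐sum decomposition, completing the proof.
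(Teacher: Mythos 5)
Your proposal is correct and takes essentially the same route as the paper: the paper likewise picks the cyclic basis $\{e_1,e_2,e_3\}$ of $\so(3)$ (resp.\ the basis of $\ealg^3(\fie)$ from Example \ref{3dexample}) and defines the componentwise identification $e_i\tensor x \leftrightarrow \nu_i(x)$ up to a scalar, namely $\Psi(e_i\tensor x)=2\nu_i(x)$ with $c=1$, which is trivially equivalent to your choice of $c=1/2$ with no rescaling. The verification on decomposable elements is the same direct computation you describe, and the sign bookkeeping via $R_{[\dum,\dum]}=-L_{[\dum,\dum]}$ works out exactly as you anticipate.
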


\begin{proof}
When $(\g, [\dum,\dum])$ is anticommutative, so that $\ad = L_{[\dum, \dum]} = -R_{[\dum, \dum]}$, \eqref{generaltriple} becomes
\begin{align}\label{nahmmlt}
L_{\tmlt}(x) = \tfrac{1}{2}\begin{pmatrix} 0 & -\ad(x_{3}) & \ad(x_{2}) \\ \ad(x_{3}) & 0 & -\ad(x_{1})\\ -\ad(x_{2}) & \ad(x_{1}) & 0\end{pmatrix},
\end{align}
so that $\trip(\g)$ carries the commutative multiplication
\begin{align}\label{nahmmult}
x \tmlt y = \tfrac{1}{2}\left([x_{2}, y_{3}] + [y_{2}, x_{3}], [x_{3}, y_{1}] + [y_{3}, x_{1}], [x_{1}, y_{2}] + [y_{1}, x_{2}] \right).
\end{align}
There is a basis $\{e_{1}, e_{2}, e_{3}\}$ of $\so(3)$ that satisfies $[e_{1}, e_{2}] = e_{3}$ and its cyclic permutations. The map $\Psi:\so(3)\tensor_{\fie}\g \to \trip(\g)$ defined by $\Psi((a_{1}e_{1} + a_{2}e_{2} + a_{3}e_{3}) \tensor x) = 2(a_{1}x, a_{2}x, a_{3}x)$ is the desired isomorphism.
When $(\alg, \mlt)$ is commutative, \eqref{generaltriple} becomes
\begin{align}\label{lefttriple}
L_{\tmlt}(x) = \tfrac{1}{2}\begin{pmatrix} 0 & L_{\mlt}(x_{3}) & L_{\mlt}(x_{2}) \\ L_{\mlt}(x_{3}) & 0 & L_{\mlt}(x_{1})\\ L_{\mlt}(x_{2}) & L_{\mlt}(x_{1}) & 0\end{pmatrix}.
\end{align}
By Example \ref{3dexample} there is a basis $\{e_{1}, e_{2}, e_{3}\}$ of $\ealg^{3}(\fie)$ that satisfies $e_{1}\mlt e_{1} = 0$, $e_{1}\mlt e_{2} = e_{3}$ and their cyclic permutations. The map $\Psi:\ealg^{3}\tensor_{\fie}\alg \to \trip(\alg)$ defined by $\Psi((a_{1}e_{1} + a_{2}e_{2} + a_{3}e_{3}) \tensor x) = 2(a_{1}x, a_{2}x, a_{3}x)$ is the desired isomorphism.
\end{proof}

When $(\g, [\dum, \dum])$ is a Lie algebra, the algebra $(\nahm(\g) = \trip(\g), \tmlt)$ is the \emph{Nahm algebra} of $(\g, [\dum, \dum])$ defined by M. Kinyon and A. Sagle in \cite{Kinyon-Sagle}. This observation provides additional motivation for studying triple algebras. As will become apparent, some of the results about Nahm algebras shown in \cite{Kinyon-Sagle} carry over to the triple algebras of commutative algebras, although the precise claims and their proofs usually require modification. On the other hand, that the Nahm algebra of a Lie algebra and the triple algebra of a commutative algebra are specializations of a single construction is somewhat misleading, as the following alternative construction of the triple algebra of a commutative algebra suggests. 

Let $(\alg^{4} = \alg\oplus \alg \oplus \alg \oplus \alg, \mlt)$ be the direct sum of $4$ copies of the algebra $(\alg, \mlt)$.
The symmetric group $S_{4}$ acts freely on $\alg^{4}$ by permuting the components, and this action is by automorphisms of $(\alg^{4}, \mlt)$. Let $\ste = \{z \in \alg^{4}: z_{0} + z_{1} + z_{2} + z_{3} = 0\}$ where $z_{i}$ is the projection of $z \in \alg^{4}$ on the component $i$, and the components are indexed by $\{0, 1, 2, 3\}$ and let $\diag(\alg) = \{(z, z, z, z) \in \alg^{4}: z \in \alg\} \subset \alg^{4}$ be the $S_{4}$-invariant diagonal subspace. The projection $\Pi \in \eno(\alg^{4})$ onto $\ste$ along $\diag(\alg)$ is $S_{4}$-equivariant. The linear map $\phi:\trip(\alg) \to \alg^{4}$ defined by 
\begin{align}\label{s4phi}
\phi(x) = \tfrac{1}{2}(x_{1} + x_{2} +x_{3}, x_{1} - x_{2} - x_{3}, -x_{1} + x_{2} - x_{3}, - x_{1} - x_{2} + x_{3})
\end{align}
is injective, with image equal to $\ste$ (this step requires that $\chr \fie \neq 2$). A computation shows that 
\begin{align}\label{tripprojected0}
\phi(x \tmlt y) - \Pi(\phi(x) \mlt \phi(y))  & = \phi\left(\tfrac{1}{2}\left(x_{3}\mlt y_{2} - y_{2}\mlt x_{3}, x_{1} \mlt y_{3} - y_{3}\mlt x_{1}, x_{2} \mlt y_{1} - y_{1}\mlt x_{2} \right)\right).
\end{align}
In particular, if $(\alg, \mlt)$ is commutative, then
\begin{align}\label{tripprojected}
\Pi(\phi(x) \mlt \phi(y)) = \phi(x \tmlt y)
\end{align}
for $x, y \in \trip(\alg)$. For commutative $(\alg, \mlt)$, \eqref{tripprojected} could be taken as the definition of $\tmlt$.

Lemma \ref{tripleautomorphismlemma}, that shows that there are actions of $SO(3)$ and the symmetric group $S_{4}$ by outer automorphisms of $(\trip(\alg), \tmlt)$, when $(\alg, \mlt)$ is, respectively, Lie or commutative. 

The action of $S_{4}$ on $\trip(\alg)$ is induced from the action of $S_{4}$ on $\alg^{4}$ via $\phi$, by $\phi(\si \cdot x) = \si \cdot \phi(x)$ for $x \in \trip(\alg)$ and $\si \in S_{4}$. Consider the group $\Ga$ acting on $\trip(\alg)$ by permutation of the summands (so as $S_{3})$ and changes of signs of an even number (so $0$ or $2$) of components. Let $(ij)$ be the permutation of $i, j \in \{1, 2, 3\}$ and let $(\pm, \pm, \pm)$ indicate the sign change on the components of $\trip(\alg)$. It follows from \eqref{s4phi} that $(ij)$ corresponds to the permutation $(ij)$ of $\alg\oplus \alg \oplus \alg \oplus \alg$, while $(+, -, -)$, $(-, + , -)$, and $(-, -, +)$ correspond respectively to the permutations $(01)(23)$, $(02)(13)$ and $(03)(12)$ of $\{0, 1, 2, 3\}$. Since the product of $(01)(23)$ with $(23)$ is $(01)$ and similarly cycling the the indices, the image of $\Ga$ contains all the transpositions in $S_{4}$, so generates $S_{4}$. As $\Ga$ has the same order as $S_{4}$, it is isomorphic to $S_{4}$, and moreover, the preceding shows that its action is the action induced via $\phi$ defined previously. More explicitly, if $(ij)$ denotes a transposition, and $x = (x_{1}, x_{2}, x_{3}) \in \trip(\alg)$,
\begin{align}\label{s4action}
\begin{aligned}
&(01)\cdot x = (x_{1}, -x_{3}, -x_{2}),& & (02)\cdot x = (-x_{3}, x_{2}, -x_{1}), & & (03)\cdot x = (-x_{2}, -x_{1}, x_{3}),\\
&(12)\cdot x = (x_{2}, x_{1}, x_{3}),& & (13)\cdot x = (x_{3}, x_{2}, x_{1}), & & (23)\cdot x = (x_{1}, x_{3}, x_{2}).
\end{aligned}
\end{align}

\begin{lemma}\label{tripleautomorphismlemma}
Let $\chr \fie = 0$ and let $(\alg, \mlt)$ and $(\balg, \mlt)$ be $\fie$-algebras.
\begin{enumerate}
\item\label{tripauto} If $\Psi \in \hom(\alg, \balg)$ is an algebra homomorphism then $\diag(\Psi)(x) = \sum_{ i = 1}^{3}\nu_{i}(\Psi(x_{i}))$ is an algebra homomorphism from $(\trip(\alg), \tmlt)$ to $(\trip(\balg), \tmlt)$. In particular, the map $\diag: \Aut(\alg, \mlt) \to\Aut(\trip(\alg), \tmlt)$ is an injective group homomorphism.
\item\label{nahmauto} If $(\alg, \mlt) = (\g, [\dum, \dum])$ is a Lie 
algebra, the action of $SO(3)$ on $(\nahm(\g), \tmlt)$ defined by $\nu_{i}(r \cdot x) = R_{i}\,^{j}\nu_{j}(x)$ where $r = R_{i}\,^{j} \in SO(3)$ and $x \in \nahm(\g)$ is an action by automorphisms.
\item\label{s4auto} If $(\alg, \mlt)$ is commutative, the action of $S_{4}$ on $(\trip(\alg), \tmlt)$ defined by $\phi(\si \cdot x) = \si \cdot \phi(x)$ for $x \in \trip(\alg)$ and $\si \in S_{4}$, where $\phi$ is as in \eqref{s4phi}, is an action by automorphisms. 
\end{enumerate}
\end{lemma}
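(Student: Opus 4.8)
For claim \ref{tripauto} the plan is a direct verification. The product \eqref{tripmlt} is assembled componentwise out of the products $x_i\mlt y_j$ in $(\alg,\mlt)$, and an algebra homomorphism $\Psi:(\alg,\mlt)\to(\balg,\mlt)$ intertwines each such product; applying $\diag(\Psi)=\sum_i\nu_i\circ\Psi\circ\pi_i$ to both sides of \eqref{tripmlt} and using $\Psi(x_i\mlt y_j)=\Psi(x_i)\mlt\Psi(y_j)$ then gives $\diag(\Psi)(x\tmlt y)=\diag(\Psi)(x)\tmlt\diag(\Psi)(y)$. Since $\diag(\Psi_1)\circ\diag(\Psi_2)=\diag(\Psi_1\circ\Psi_2)$ and $\diag(\Id_\alg)=\Id_{\trip(\alg)}$, the map $\diag$ is a group homomorphism $\Aut(\alg,\mlt)\to\Aut(\trip(\alg),\tmlt)$, and it is injective because $\diag(\Psi)\circ\nu_1=\nu_1\circ\Psi$ forces $\Psi=\Id$ whenever $\diag(\Psi)=\Id$.

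For claim \ref{s4auto} I would exploit the factorization \eqref{tripprojected}, valid for commutative $\alg$, namely $\phi(x\tmlt y)=\Pi(\phi(x)\mlt\phi(y))$, where $\phi$ is the injective linear map \eqref{s4phi} with image $\ste\subset\alg^4$ and $\Pi$ is the $S_4$-equivariant projection of $(\alg^4,\mlt)$ onto $\ste$ along $\diag(\alg)$ (recall that $S_4$ acts on $(\alg^4,\mlt)$ by automorphisms permuting the summands). The action on $\trip(\alg)$ is, by definition, transported through $\phi$, so $\phi$ is $S_4$-equivariant; combining this with the $S_4$-equivariance of $\Pi$ and with \eqref{tripprojected} gives the chain $\phi((\si\cdot x)\tmlt(\si\cdot y))=\Pi(\phi(\si\cdot x)\mlt\phi(\si\cdot y))=\Pi(\si\cdot(\phi(x)\mlt\phi(y)))=\si\cdot\Pi(\phi(x)\mlt\phi(y))=\phi(\si\cdot(x\tmlt y))$, and injectivity of $\phi$ yields $(\si\cdot x)\tmlt(\si\cdot y)=\si\cdot(x\tmlt y)$; the same equivariance and injectivity show $\si\mapsto(x\mapsto\si\cdot x)$ is a genuine left action. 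The only role of $\chr\fie\neq 2$ is to guarantee that $\phi$ is an isomorphism onto $\ste$, as already recorded in the discussion around \eqref{s4phi}. Alternatively — and this is what makes contact with the explicit formulas \eqref{s4action} — one identifies this action, via the isomorphism $(\trip(\alg),\tmlt)\cong(\ealg^3(\fie)\tensor_\fie\alg,\mlt)$ of Lemma \ref{tripletensorlemma}, with the action of $\Aut(\ealg^3(\fie))=S_4$ (Corollary \ref{griessharadacorollary}) on the first tensor factor, which acts by automorphisms of the tensor product because each factor automorphism does.

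For claim \ref{nahmauto} the cleanest route is again through Lemma \ref{tripletensorlemma}: for a Lie algebra $\g$ one has $(\nahm(\g),\tmlt)\cong(\so(3)\tensor_\fie\g,\mlt)$, and under this identification the prescribed action $\nu_i(r\cdot x)=R_i{}^j\nu_j(x)$ is exactly $SO(3)$ acting on the tensor factor $\so(3)$ by its standard three-dimensional representation. It then suffices to note $SO(3)\subseteq\Aut(\so(3))$: a linear map preserving the Killing form $B$ and fixing $e_1\wedge e_2\wedge e_3$ preserves the bilinear operation determined by $B([u,v],w)=\text{const}\cdot(e_1\wedge e_2\wedge e_3)^{\ast}(u,v,w)$, i.e. the Lie bracket of $\so(3)$ — the familiar statement that a rotation preserves the cross product. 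Since an automorphism of either tensor factor induces an automorphism of $\so(3)\tensor_\fie\g$, the $SO(3)$ action is by automorphisms of $(\nahm(\g),\tmlt)$; as for claim \ref{s4auto}, that it is a left action is immediate. A coordinate proof is also available, starting from the identity $(x\tmlt y)_i=\tfrac12\sum_{j,k}\epsilon_{ijk}[x_j,y_k]$ read off from \eqref{nahmmult} and reducing the claim to the contracted Levi-Civita identity $\sum_{j,k}\epsilon_{ijk}R_{jl}R_{km}=(\det R)\sum_p R_{ip}\epsilon_{plm}$ evaluated at $\det R=1$.

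I expect no serious obstacle: the statement is essentially "functoriality of the $\trip(-)$ construction, together with the two tensor-product identifications of Lemma \ref{tripletensorlemma}." The points needing a little care are bookkeeping ones — confirming that the $S_4$-action built from \eqref{s4phi}--\eqref{s4action} is the automorphism action transported from $\Aut(\ealg^3(\fie))$, and tracking index placement in the $SO(3)$ computation — together with the elementary inclusion $SO(3)\subseteq\Aut(\so(3))$, which is precisely where the defining properties of $SO(3)$ (preserving $B$, fixing $e_1\wedge e_2\wedge e_3$) are used.
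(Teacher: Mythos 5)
Your proposal is correct. For claims (1) and (3) it coincides with the paper's proof: the injectivity of $\diag$ is checked the same way (you use $\diag(\Psi)\circ\nu_{1}=\nu_{1}\circ\Psi$ where the paper evaluates on $\diag(x)$, an immaterial difference), and claim (3) is established by exactly the same chain of equalities through $\phi$, the $S_{4}$-equivariance of $\Pi$, the identity \eqref{tripprojected}, and the injectivity of $\phi$. The one genuine divergence is claim (2): the paper disposes of it by citing \cite[Corollary $9.1$]{Kinyon-Sagle}, whereas you give a self-contained argument via the identification $(\nahm(\g),\tmlt)\cong\so(3)\tensor_{\fie}\g$ of Lemma \ref{tripletensorlemma} together with the inclusion $SO(3)\subseteq\Aut(\so(3))$ (which you justify correctly: $B([u,v],w)$ is an alternating trilinear form, so it is preserved by anything preserving $B$ and the volume element, and nondegeneracy of $B$ then forces preservation of the bracket). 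Your coordinate alternative via $(x\tmlt y)_{i}=\tfrac12\sum_{j,k}\epsilon_{ijk}[x_{j},y_{k}]$ and the contracted Levi-Civita identity also checks out against \eqref{nahmmult}. What your route buys is independence from the external reference and a uniform "functoriality of $\trip$ plus tensor-factor automorphisms" viewpoint that also explains claim (3); what the paper's citation buys is brevity. No gaps.
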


\begin{proof}
The only part of \eqref{tripauto} that requires comment is the injectivity of $\diag$. If $x \in \alg$, then, by definition, $\diag(\Psi)(\diag(x)) = \diag(\Psi(x))$. Hence if $\diag(\Psi)$ is the identity, then $\diag(x) = \diag(\Psi(x))$ for all $x \in \alg$, and this shows $\Psi$ is the identity. Claim \eqref{nahmauto} is \cite[Corollary $9.1$]{Kinyon-Sagle}. Claim \eqref{s4auto} follows from \eqref{tripprojected}, the definition of the $S_{4}$ action on $\trip(\alg)$, that $S_{4}$ acts on $(\alg^{4}, \mlt)$ by automorphisms, and the $S_{4}$-equivariance of the projection $\Pi$. Explicitly, for $\si \in S_{4}$ and $x, y, \in \trip(\alg)$,
\begin{align}
\begin{split}
\phi(\si \cdot x \tmlt \si \cdot y) &= \Pi(\phi(\si \cdot x) \mlt \phi(\si \cdot y)) = \Pi(\si \cdot \phi(x) \mlt \si \cdot \phi(y))  = \Pi(\si\cdot(\phi(x) \mlt \phi(y))\\
& = \si \cdot \Pi(\phi(x) \mlt \phi(y) = \si \cdot \phi(x \tmlt y) = \phi(\si \cdot (x\tmlt y)),
\end{split}
\end{align}
and by the injectivity of $\phi$ this shows $\si \cdot x \tmlt \si \cdot y = \si \cdot (x \tmlt y)$.
\end{proof}

\begin{remark}
Lemma \ref{tripleautomorphismlemma} should be compared with the discussion in \cite[Chapter $5.2$]{Ivanov}.
\end{remark}

\begin{remark}
It follows from Lemma \ref{tripleautomorphismlemma} that a derivation $D$ of $(\alg, \mlt)$ extends to a derivation $\trip(D)$ of $(\trip(\alg), \tmlt)$ defined by $\nu_{i}(\trip(D)x) = Dx_{i}$ for $x \in \trip(\alg)$. If $(\g, [\dum, \dum])$ is a Lie algebra, $\nahm(\g)$ is also a Lie algebra, with the componentwise Lie bracket. For $z \in \g$ the derivation $\ad(z)$ of $\g$ extends to a derivation $\trip(\ad(z))$ of $\trip(\alg)$, that is given by the componentwise Lie bracket with the diagonal image $\diag(z)$, so that $\trip(\ad_{\g}(z)) = \ad_{\trip(\g)}(\diag(z))$. Equivalently,
\begin{align}
[\diag(z), x\tmlt y] = [\diag(z), x]\tmlt y + x \tmlt [\diag(z), y]
\end{align}
for $x, y \in \nahm(\g)$ and $z \in \g$. 
\end{remark}

For a commutative algebra $(\alg, \mlt)$ and a Lie algebra $(\g, [\dum, \dum])$, there follow from \eqref{lefttriple} and \eqref{nahmmlt}
\begin{align}\label{tautrip}
&2\tau_{\tmlt}(x, y) = \sum_{i = 1}^{3}\tau_{\mlt}(x_{i}, y_{i}), & &2\tau_{\nahm(\g)}(x, y) = -\sum_{i = 1}^{3}B_{g}(x_{i}, y_{i}).
\end{align}
The second equality of \eqref{tautrip} is \cite[Theorem $7.2$]{Kinyon-Sagle}. From \eqref{tautrip} and the invariance of the Killing form it follows that $\tau_{\nahm(\g)}$ is invariant \cite[Theorem $7.4$]{Kinyon-Sagle}. By \cite[Theorem $5.1$]{Kinyon-Sagle}, $\nahm(\g)$ is simple if and only if $\g$ is simple, and by \cite[Theorem $5.3$]{Kinyon-Sagle}, $\nahm(\g)$ is a direct sum of simple ideals if and only if $\g$ is semisimple. From these and the preceding remarks there follows:
\begin{theorem}[{\cite[Corollary $7.7$]{Kinyon-Sagle}}]
For a real Lie algebra $\g$ the following are equivalent.
\begin{enumerate}
\item $\g$ is semisimple.
\item The Killing form $B_{\g}$ is nondegenerate. 
\item $\nahm(\g)$ is a direct sum of simple ideals.
\item $\tau_{\nahm(\g)}$ is nondegenerate. 
\end{enumerate}
\end{theorem}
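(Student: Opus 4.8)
The plan is to establish the four-way equivalence by closing a short cycle, relying on Cartan's criterion, the trace-form identity \eqref{tautrip}, and the structural facts of Kinyon and Sagle recalled above. Fix a basis $\{v_1,\dots,v_m\}$ of $\g$, where $m=\dim\g$, and use the induced basis $\{\nu_i(v_a):1\le i\le 3,\ 1\le a\le m\}$ of $\nahm(\g)=\g\oplus\g\oplus\g$. First I would note that $(1)\Leftrightarrow(2)$ is precisely Cartan's criterion for semisimplicity of a finite-dimensional Lie algebra over a field of characteristic zero, and may simply be cited (e.g.\ \cite{Jacobson}). For $(2)\Leftrightarrow(4)$ I would invoke the first equality of \eqref{tautrip}, $2\tau_{\nahm(\g)}(x,y)=-\sum_{i=1}^{3}B_{\g}(x_i,y_i)$: in the chosen basis this says the Gram matrix of $2\tau_{\nahm(\g)}$ is block diagonal with three identical blocks, each equal to minus the Gram matrix $G$ of $B_\g$, so its determinant is $(-1)^{3m}(\det G)^3$, which vanishes exactly when $\det G=0$. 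Hence $\tau_{\nahm(\g)}$ is nondegenerate if and only if $B_\g$ is, giving $(2)\Leftrightarrow(4)$.

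It remains to link $(3)$. The text already records that $\tau_{\nahm(\g)}$ is invariant for every Lie algebra $\g$ (from \eqref{tautrip} and the $\ad$-invariance of $B_\g$); therefore, if $\tau_{\nahm(\g)}$ is moreover nondegenerate, then $(\nahm(\g),\tmlt,\tau_{\nahm(\g)})$ is metrized and Lemma \ref{dieudonnelemma} exhibits it as a direct sum of simple ideals, proving $(4)\Rightarrow(3)$. (Alternatively, $(1)\Leftrightarrow(3)$ is \cite[Theorem 5.3]{Kinyon-Sagle} applied to $\nahm(\g)$, and this already suffices; I include the $(4)\Rightarrow(3)$ route to keep the argument inside the present framework.) For the last implication $(3)\Rightarrow(1)$ I would argue by contrapositive. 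If $\g$ is not semisimple, its solvable radical is nonzero and hence contains a nonzero abelian ideal $\mathfrak a$ of $\g$ (take the last nonzero term of the derived series of the radical, which is characteristic in the radical and thus an ideal of $\g$). Using \eqref{nahmmult} one checks that $\trip(\mathfrak a)=\nu_1(\mathfrak a)\oplus\nu_2(\mathfrak a)\oplus\nu_3(\mathfrak a)$ is a nonzero ideal of $\nahm(\g)$ and that $\trip(\mathfrak a)\tmlt\trip(\mathfrak a)=\{0\}$, since every component of such a product is a bracket of two elements of $\mathfrak a$. Finally, a direct sum of simple ideals admits no nonzero square-zero ideal: if $\nahm(\g)=\oplus_k\balg_k$ with each $\balg_k$ simple and $\ideal$ is an ideal, then each projection $\pi_k(\ideal)$ is an ideal of $\balg_k$ and $\pi_k(\ideal\tmlt\ideal)=\pi_k(\ideal)\tmlt\pi_k(\ideal)$, so $\ideal\tmlt\ideal=\{0\}$ forces $\pi_k(\ideal)\tmlt\pi_k(\ideal)=\{0\}$, whence by simplicity of $\balg_k$ (which gives $\balg_k\tmlt\balg_k=\balg_k\neq\{0\}$) one gets $\pi_k(\ideal)=\{0\}$ for all $k$, i.e.\ $\ideal=\{0\}$. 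This contradiction closes the cycle $(1)\Leftrightarrow(2)\Leftrightarrow(4)\Rightarrow(3)\Rightarrow(1)$.

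There is no real obstacle: the statement is a repackaging of \cite[Corollary 7.7]{Kinyon-Sagle} together with Cartan's criterion, and the translation of nondegeneracy across \eqref{tautrip} is elementary linear algebra. The only point meriting a little care in a self-contained write-up is the claim that a direct sum of simple (not necessarily associative) commutative ideals has no nonzero square-zero ideal, because the familiar Lie-theoretic or associative reflexes do not all transfer; the projection argument above settles it, and if desired this entire step can be replaced by a direct appeal to \cite[Theorem 5.3]{Kinyon-Sagle} for $(1)\Leftrightarrow(3)$.
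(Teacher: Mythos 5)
Your proof is correct. The paper itself disposes of this theorem almost entirely by citation: $(1)\Leftrightarrow(2)$ is Cartan's criterion, $(2)\Leftrightarrow(4)$ is read off from the second equality of \eqref{tautrip} exactly as you do, and the link with $(3)$ is obtained by quoting \cite[Theorems 5.1 and 5.3]{Kinyon-Sagle}, which assert that $\nahm(\g)$ is simple (respectively, a direct sum of simple ideals) if and only if $\g$ is simple (respectively, semisimple). Where you differ is in replacing that citation with a self-contained argument internal to the paper's framework: $(4)\Rightarrow(3)$ via the invariance of $\tau_{\nahm(\g)}$ together with Lemma \ref{dieudonnelemma} (note that $\tau_{\nahm(\g)}$ is exactly the form $\tau_{0,1,0}$ of that lemma, so it applies), and $(3)\Rightarrow(1)$ by exhibiting, from a nonzero abelian ideal $\mathfrak a$ of a non-semisimple $\g$, the nonzero square-zero ideal $\trip(\mathfrak a)$ of $\nahm(\g)$ via \eqref{nahmmult}, and then ruling out square-zero ideals in a direct sum of simple ideals by the projection argument. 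Your caution about the last step is warranted and your resolution of it is sound: the projections $\pi_k(\ideal)$ are ideals of the simple summands because distinct summands annihilate each other, and simplicity forces $\balg_k\tmlt\balg_k=\balg_k\neq\{0\}$ by the paper's convention, so a square-zero ideal must project to zero everywhere. The trade-off is clear: the paper's route is shorter but outsources the structural content to \cite{Kinyon-Sagle}, while yours is longer but keeps the entire cycle $(1)\Rightarrow(2)\Rightarrow(4)\Rightarrow(3)\Rightarrow(1)$ inside the tools already developed in the paper (Dieudonn\'e's argument and the explicit multiplication \eqref{nahmmult}), and in particular it reproves the relevant direction of Kinyon--Sagle's Theorem 5.3 rather than assuming it.
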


\begin{corollary}\label{nahmcorollary}
If $\g$ is a semisimple (simple) real Lie algebra, then $(\nahm(\g), \mprod, \tau)$ is a semisimple (simple) Killing metrized exact commutative algebra. 
\end{corollary}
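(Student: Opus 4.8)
The plan is to obtain the corollary directly from the theorem immediately preceding it, together with the structural results of Kinyon and Sagle already quoted in this section. The first observation is that, by the very definition of the triple, $(\nahm(\g), \tmlt) = (\trip(\g), \tmlt)$ is an exact commutative algebra; so exactness and commutativity need no separate argument, and only the Killing metrized property and (semi)simplicity remain to be verified.

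Next I would treat the semisimple case. Assume $\g$ is semisimple. Then its Killing form $B_{\g}$ is nondegenerate, so by the equivalence of conditions (2) and (4) in the preceding theorem the trace form $\tau_{\nahm(\g)}$ is nondegenerate. Combined with the invariance of $\tau_{\nahm(\g)}$ noted just after \eqref{tautrip} (which follows from \eqref{tautrip} and the invariance of $B_{\g}$), this shows $(\nahm(\g), \tmlt)$ is Killing metrized. Moreover, by the equivalence of conditions (1) and (3) of that theorem, $\nahm(\g)$ is a direct sum of simple ideals, that is, it is semisimple in the sense used in Section \ref{tensorsection}. This settles the semisimple assertion.

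Finally, if $\g$ is simple then in particular it is semisimple, so the previous paragraph already gives that $(\nahm(\g), \tmlt, \tau_{\nahm(\g)})$ is an exact Killing metrized commutative algebra; and by \cite[Theorem $5.1$]{Kinyon-Sagle}, quoted above, $\nahm(\g)$ is simple precisely because $\g$ is. I do not expect a genuine obstacle: the corollary is essentially a bookkeeping consequence of the cited equivalences, the only mild point being to make explicit that the notion of semisimplicity appearing in the conclusion is exactly "direct sum of simple ideals" (the definition adopted here), which is precisely what the equivalence (1)$\Leftrightarrow$(3) of the preceding theorem supplies — a distinction worth noting in view of the caveat about semisimplicity recorded in Section \ref{tensorsection} (after Lemma \ref{faithfulmultiplicationlemma}).
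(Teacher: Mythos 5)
Your proposal is correct and follows essentially the same route as the paper, which derives the corollary directly from the preceding Kinyon--Sagle equivalences together with the invariance of $\tau_{\nahm(\g)}$ noted after \eqref{tautrip} and the exactness built into the definition of the triple. The only difference is that you spell out the bookkeeping that the paper leaves implicit in the phrase ``from these and the preceding remarks there follows.''
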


The formally analogous results for the triple algebra of a commutative algebra $(\alg, \mlt)$ are also valid, with the changes that the invariance of the trace form of $(\alg, \mlt)$ is no longer automatic, and has to be introduced as a hypothesis, and that the proofs follow different lines. 

\begin{lemma}\label{triplealgebrastructurelemma}
Let $\chr \fie = 0$ and let $(\trip(\alg), \tmlt)$ be the triple of the commutative $\fie$-algebra $(\alg, \mlt)$.
\begin{enumerate}
\item\label{ttrip2} $\tau_{\tmlt}$ is invariant if and only if $\tau_{\mlt}$ is invariant. In this case, the cubic polynomials $P_{\tmlt}$ and $P_{\mlt}$ of $(\trip(\alg), \tmlt, \tau_{\tmlt})$ and $(\alg, \mlt, \tau_{\mlt})$ are related by 
\begin{align}\label{ptrip}
\begin{aligned}
4P_{\tmlt}(x)  = \tau_{\mlt}(x_{1}\mlt x_{2}, x_{3}) &= P_{\mlt}(x_{1} + x_{2} + x_{3}) - P_{\mlt}(x_{1} + x_{2}) - P_{\mlt}(x_{1} + x_{3}) - P_{\mlt}(x_{2} + x_{3}) \\&\quad + P_{\mlt}(x_{1}) + P_{\mlt}(x_{2}) + P_{\mlt}(x_{3}).
\end{aligned}
\end{align}
In particular $2P_{\tmlt}(\diag(x)) = 3P_{\mlt}(x)$ for $x \in \alg$.
\item\label{ttrip1} $\tau_{\tmlt}$ is nondegenerate if and only if $\tau_{\mlt}$ is nondegenerate.
\item\label{ttrip1b} If $\fie = \rea$, $\tau_{\tmlt}$ is positive definite if and only if $\tau_{\mlt}$ is positive definite.
\end{enumerate}
\end{lemma}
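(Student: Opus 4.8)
The plan is to derive all three parts from the identity \eqref{tautrip}, namely $2\tau_{\tmlt}(x,y)=\sum_{i=1}^{3}\tau_{\mlt}(x_i,y_i)$, which displays $2\tau_{\tmlt}$ as the $\tau_{\tmlt}$-orthogonal direct sum of three copies of $\tau_{\mlt}$. In a basis of $\trip(\alg)$ adapted to the decomposition $\alg\oplus\alg\oplus\alg$, the Gram matrix of $\tau_{\tmlt}$ is $\tfrac12$ times the block-diagonal matrix with three diagonal blocks, each equal to the Gram matrix of $\tau_{\mlt}$. Since a block-diagonal symmetric matrix is invertible (resp., over $\rea$, positive definite) if and only if each of its diagonal blocks is, claims \eqref{ttrip1} and \eqref{ttrip1b} follow immediately.

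For \eqref{ttrip2}, I would first prove that invariance of $\tau_{\mlt}$ implies invariance of $\tau_{\tmlt}$. When $\tau_{\mlt}$ is invariant, the commutativity of $\mlt$ makes the trilinear form $T(a,b,c)=\tau_{\mlt}(a\mlt b,c)$ totally symmetric. Expanding $(x\tmlt y)_i$ via \eqref{tripmlt} and applying \eqref{tautrip}, one computes
\[
4\,\tau_{\tmlt}(x\tmlt y,z)=\sum_{\sigma\in S_3}T(x_{\sigma(1)},y_{\sigma(2)},z_{\sigma(3)});
\]
the substance of the computation is that, after each of the six resulting terms is rewritten in the form $T(x_i,y_j,z_k)$ using the symmetry of $T$, the triples $(i,j,k)$ run exactly through the six permutations of $(1,2,3)$. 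This right-hand side is unchanged under any permutation of $x$, $y$, $z$, so $\tau_{\tmlt}(a\tmlt b,c)$ is totally symmetric in $a,b,c$; in particular $\tau_{\tmlt}(x\tmlt y,z)=\tau_{\tmlt}(x,y\tmlt z)$, which is the invariance of $\tau_{\tmlt}$. For the converse I would use that the diagonal embedding $\diag:\alg\to\trip(\alg)$ is an algebra homomorphism, as noted after the Definition; then \eqref{tautrip} gives $\tau_{\mlt}(x,y)=\tfrac23\,\tau_{\tmlt}(\diag x,\diag y)$, and invariance of $\tau_{\mlt}$ is transported from that of $\tau_{\tmlt}$ along $\diag$:
\[
\tau_{\mlt}(x\mlt y,z)=\tfrac23\,\tau_{\tmlt}(\diag(x)\tmlt\diag(y),\diag z)=\tfrac23\,\tau_{\tmlt}(\diag x,\diag(y)\tmlt\diag(z))=\tau_{\mlt}(x,y\mlt z).
\]

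To finish \eqref{ttrip2}, assume $\tau_{\mlt}$ (equivalently $\tau_{\tmlt}$) is invariant and compute the cubic polynomials. From \eqref{tripmlt}, $x\tmlt x=(x_2\mlt x_3,\,x_3\mlt x_1,\,x_1\mlt x_2)$, so \eqref{tautrip} together with the total symmetry of $T$ gives $\tau_{\tmlt}(x\tmlt x,x)=\tfrac32\,\tau_{\mlt}(x_1\mlt x_2,x_3)$; since $6P_{\tmlt}(x)=\tau_{\tmlt}(x\tmlt x,x)$, this is the first equality $4P_{\tmlt}(x)=\tau_{\mlt}(x_1\mlt x_2,x_3)$ of \eqref{ptrip}. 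The second equality of \eqref{ptrip} is precisely the polarization identity \eqref{hpolar} for $(\alg,\mlt,\tau_{\mlt})$ applied to the arguments $x_1,x_2,x_3$, and setting $x_1=x_2=x_3=x$ yields $4P_{\tmlt}(\diag x)=\tau_{\mlt}(x\mlt x,x)=6P_{\mlt}(x)$, i.e. $2P_{\tmlt}(\diag x)=3P_{\mlt}(x)$. No step presents a serious obstacle; the only place requiring care is the combinatorial bookkeeping of the six terms $T(x_i,y_j,z_k)$ in the invariance computation, everything else being direct substitution into \eqref{tautrip} and \eqref{hpolar}.
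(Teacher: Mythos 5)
Your proof is correct and follows essentially the same route as the paper: everything is reduced to the identity $2\tau_{\tmlt}(x,y)=\sum_{i}\tau_{\mlt}(x_{i},y_{i})$, with nondegeneracy and definiteness read off from the orthogonal block decomposition, the converse direction of invariance transported along the diagonal homomorphism, and the cubic polynomial computed from $x\tmlt x=(x_{2}\mlt x_{3},x_{3}\mlt x_{1},x_{1}\mlt x_{2})$ together with the polarization identity. The only difference is that you spell out the six-term symmetrization behind the forward invariance implication, which the paper dismisses as immediate; your bookkeeping there is accurate.
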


\begin{proof}
That the invariance of $\tau_{\mlt}$ implies the invariance of $\tau_{\tmlt}$ is immediate from \eqref{tripmlt} and \eqref{tautrip}. By \eqref{tautrip}, $2\tau_{\tmlt}(\Delta(x), \Delta(y)) = 3\tau_{\mlt}(x, y)$. The invariance of $\tau_{\tmlt}$ implies that of $\tau_{\mlt}$ follows from this and the fact that $\diag$ is a homomorphism. By \eqref{tautrip} and the invariance of $\tau_{\mlt}$,
\begin{align}\label{ptrip0}
\begin{split}
12P_{\tmlt}(x) &= 2\tau_{\tmlt}(x\tmlt x, x) =  \tau_{\mlt}(x_{2}\mlt x_{3}, x_{1}) +  \tau_{\mlt}(x_{3}\mlt x_{1}, x_{2}) + \tau_{\mlt}(x_{1}\mlt x_{2}, x_{3}) \\
& = 3\tau_{\mlt}(x_{1}\mlt x_{2}, x_{3}).
\end{split}
\end{align}
The second equality of \eqref{ptrip} is true in general. 
This shows \eqref{ttrip2}. 
If $x \in \trip(\alg)$ satisfies $\tau_{\tmlt}(x, y) = 0$ for all $y \in \trip(\alg)$, then, by \eqref{tautrip}, $0 = 2\tau_{\tmlt}(x, \sum_{i = 1}^{3}\al_{i}\nu_{i}(v)) = \tau_{\mlt}(\sum_{i = 1}^{3}\al_{i}x_{i}, v) = 0$ for all $v \in \alg$ and $\al_{i} \in \fie$. If $\tau_{\mlt}$ is nondegenerate, this implies all linear combinations of $x_{1}$, $x_{2}$, and $x_{3}$ vanish, so $x_{1} = x_{2} = x_{3} = 0$, which shows $x = 0$. If $x \in \alg$ is such that $\tau_{\mlt}(x, y) = 0$ for all $y \in \alg$, then $\tau_{\tmlt}(\diag(x), y) = \tau_{\mlt}(x, y_{1}) +  \tau_{\mlt}(x, y_{2}) +  \tau_{\mlt}(x, y_{3})  = 0$ for all $y \in \trip(\alg)$. If $\tau_{\tmlt}$ is nondegenerate, this implies $\diag(x) = 0$, so $x = 0$, because $\diag$ is injective. This proves \eqref{ttrip1}. 
Suppose $\fie = \rea$. From $2\tau_{\tmlt}(\nu_{1}(x), \nu_{1}(x)) = \tau_{\mlt}(x, x)$ for $x \in \alg$, it follows that $\tau_{\mlt}$ is positive definite if $\tau_{\tmlt}$ is positive definite, while from $2\tau_{\tmlt}(x, x) = \sum_{i = 1}^{3}\tau_{\mlt}(x_{i}, x_{i})$ for $x \in \trip(\alg)$ there follows the converse. This shows \eqref{ttrip1b}. 
\end{proof}

\begin{theorem}\label{triplealgebratheorem}
Let $\chr \fie = 0$. The triple algebra $(\trip(\alg), \tmlt, \tau_{\tmlt})$ of a commutative $\fie$-algebra $(\alg, \mlt)$ is Killing metrized if and only if $(\alg, \mlt)$ is Killing metrized.
\end{theorem}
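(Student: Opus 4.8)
The plan is to derive Theorem \ref{triplealgebratheorem} immediately from Lemma \ref{triplealgebrastructurelemma}, which already isolates the two properties that together constitute the Killing metrized condition. Recall that a commutative algebra is called Killing metrized precisely when its Killing form is both invariant and nondegenerate; there is no additional exactness hypothesis in that definition, and in any case $(\trip(\alg), \tmlt)$ is automatically exact, since every diagonal block of the block matrix \eqref{generaltriple} representing $L_{\tmlt}(x)$ vanishes, whence $\tr L_{\tmlt}(x) = 0$ for all $x \in \trip(\alg)$. So the content of the theorem is exactly the conjunction of two separate biconditionals about $\tau_{\tmlt}$ versus $\tau_{\mlt}$.

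First I would invoke part \ref{ttrip2} of Lemma \ref{triplealgebrastructurelemma}: $\tau_{\tmlt}$ is invariant if and only if $\tau_{\mlt}$ is invariant. Then I would invoke part \ref{ttrip1} of the same lemma: $\tau_{\tmlt}$ is nondegenerate if and only if $\tau_{\mlt}$ is nondegenerate. Conjoining these gives that $\tau_{\tmlt}$ is invariant and nondegenerate if and only if $\tau_{\mlt}$ is invariant and nondegenerate, i.e. $(\trip(\alg), \tmlt, \tau_{\tmlt})$ is Killing metrized if and only if $(\alg, \mlt)$ is Killing metrized. Since $(\trip(\alg), \tmlt)$ is commutative by construction and, as noted, always exact, nothing further needs to be checked.

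There is no genuine obstacle: the substantive work — the trace-form identity $2\tau_{\tmlt}(x, y) = \sum_{i=1}^{3} \tau_{\mlt}(x_i, y_i)$ of \eqref{tautrip}, the fact that $\diag \colon \alg \to \trip(\alg)$ is an algebra homomorphism, and the rank/nondegeneracy computation — has already been carried out in Lemmas \ref{triplealgebrastructurelemma} (and, for the homomorphism property, in the discussion preceding \eqref{tripmlt}). The only step worth spelling out in the write-up is the bookkeeping observation of the first paragraph: that "Killing metrized'' unwinds to exactly the conjunction of the two conditions treated separately by the lemma, so the proof is a one-line combination of parts \ref{ttrip2} and \ref{ttrip1} of Lemma \ref{triplealgebrastructurelemma}.
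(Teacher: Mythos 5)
Your proof is correct and is essentially identical to the paper's: the paper also derives the theorem directly by combining claims \eqref{ttrip2} and \eqref{ttrip1} of Lemma \ref{triplealgebrastructurelemma}. Your extra observation that $(\trip(\alg), \tmlt)$ is automatically exact is accurate but not needed, since the Killing metrized condition involves only invariance and nondegeneracy of $\tau_{\tmlt}$.
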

\begin{proof}
This follows from \eqref{ttrip2} and \eqref{ttrip1} of Lemma \ref{triplealgebrastructurelemma}.
\end{proof}

\begin{lemma}\label{tripideallemma}
Let $(\trip(\alg), \tmlt)$ be the triple of the $\fie$-algebra $(\alg, \mlt)$ where $\chr \fie \neq 2$.
Let $J$ be an ideal in $\trip(\alg)$ and define $J_{i} = \pi_{i}(J) \subset \alg$. Then $J_{1}\cap J_{2} \cap J_{3}$ and $J_{1} + J_{2} + J_{3}$ are ideals in $(\alg, \mlt)$.
\end{lemma}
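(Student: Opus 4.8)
The plan is to exploit the block form \eqref{generaltriple} of the left multiplication $L_{\tmlt}$ by probing $J$ with the three coordinate elements $\nu_1(b)=(b,0,0)$, $\nu_2(b)=(0,b,0)$, $\nu_3(b)=(0,0,b)$ of $\trip(\alg)$, for $b\in\alg$. Since these span $\trip(\alg)$ and $J$ is an ideal of the commutative algebra $(\trip(\alg),\tmlt)$, understanding $\nu_i(b)\tmlt J$ suffices to control all products $z\tmlt J$, and this is exactly the information needed about the components $J_i=\pi_i(J)$.

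The core computation is pure bookkeeping from \eqref{tripmlt}. If $a\in J_1$, choose a witness $(a,u,v)\in J$; then \eqref{tripmlt} gives $\nu_2(b)\tmlt(a,u,v)=\tfrac12(b\mlt v,\,0,\,a\mlt b)\in J$ and $\nu_3(b)\tmlt(a,u,v)=\tfrac12(u\mlt b,\,b\mlt a,\,0)\in J$, so $a\mlt b\in J_3$ and $b\mlt a\in J_2$. The analogous computations, with witnesses $(p,a,q)\in J$ for $a\in J_2$ and $(r,s,a)\in J$ for $a\in J_3$, yield $a\mlt b\in J_1$, $b\mlt a\in J_3$ in the first case and $a\mlt b\in J_2$, $b\mlt a\in J_1$ in the second; in short, for $a\in J_k$ one gets $a\mlt b\in J_{k-1}$ and $b\mlt a\in J_{k+1}$, indices taken cyclically in $\{1,2,3\}$. (The hypothesis $\chr\fie\neq2$ enters only through the factor $\tfrac12$ in $\tmlt$; nothing else is used.)

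Granting this table, both claims drop out. For $a\in J_1\cap J_2\cap J_3$ and $b\in\alg$: membership of $a$ in each $J_k$ places $a\mlt b$ into $J_{k-1}$ and $b\mlt a$ into $J_{k+1}$ for every $k$, hence $a\mlt b,\, b\mlt a\in J_1\cap J_2\cap J_3$; thus $J_1\cap J_2\cap J_3$ is a two-sided ideal of $(\alg,\mlt)$. For the sum, write $a=a_1+a_2+a_3$ with $a_i\in J_i$; then $a\mlt b=\sum_i a_i\mlt b\in J_3+J_1+J_2=J_1+J_2+J_3$, and similarly $b\mlt a\in J_1+J_2+J_3$, so $J_1+J_2+J_3$ is an ideal.

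There is no genuine obstacle here; the only point requiring attention is that $(\alg,\mlt)$ need not be commutative, so one must verify closure under both left and right multiplication, which is precisely why each $a\in J_k$ is tested against two of the three coordinate elements rather than one. If one prefers a more structural phrasing, one can instead read off $L_{\tmlt}(\nu_i(b))$ directly from \eqref{generaltriple} as the matrix with $L_{\mlt}(b)$ and $R_{\mlt}(b)$ occupying the two off-diagonal slots determined by $i$, and simply observe how it shuffles the three summands.
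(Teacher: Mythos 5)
Your proof is correct and is essentially the paper's argument: both probe the ideal $J$ with the coordinate elements $\nu_i(b)$, read off from \eqref{tripmlt} the cyclic shuffling $J_k\mlt\alg\subset J_{k-1}$ and $\alg\mlt J_k\subset J_{k+1}$, and conclude that the intersection and the sum are two-sided ideals. The bookkeeping in your table matches the paper's inclusions $L_{\mlt}(\alg)J_{i}\subset J_{i+1}$ and $R_{\mlt}(\alg)J_{i}\subset J_{i-1}$ exactly.
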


\begin{proof}
Let $a \in \alg$ and $x \in J$. By definition $2\nu_{1}(x_{2}\mlt a) + \nu_{2}(a\mlt x_{1}) = \nu_{3}(a)\tmlt x \in J$, so $x_{2}\mlt a = 2\pi_{1}(\nu_{3}(a)\tmlt x) \in J_{1}$ and $a \mlt x_{1} = 2\pi_{2}(\nu_{3}(a)\tmlt x) \in J_{2}$. These relations remain valid under cyclic permutations of the indices. This shows $L_{\mlt}(\alg)J_{1}\subset J_{2}$, $L_{\mlt}(\alg)J_{2}\subset J_{3}$, $L_{\mlt}(\alg)J_{3}\subset J_{1}$, $R_{\mlt}(\alg)J_{2} \subset J_{1}$, $R_{\mlt}(\alg)J_{3} \subset J_{2}$, and $R_{\mlt}(\alg)J_{1} \subset J_{3}$. Hence $L_{\mlt}(\alg)(J_{1}\cap J_{2} \cap J_{3}) \subset J_{1}\cap J_{2}\cap J_{3}$, $R_{\mlt}(\alg)(J_{1}\cap J_{2} \cap J_{3}) \subset J_{1}\cap J_{2}\cap J_{3}$, $L_{\mlt}(\alg)(J_{1} +  J_{2} + J_{3}) \subset J_{1}+  J_{2} +  J_{3}$, and $R_{\mlt}(\alg)(J_{1} +  J_{2} + J_{3}) \subset J_{1}+  J_{2} +  J_{3}$, showing that $J_{1}\cap J_{2}\cap J_{3}$ and $J_{1} + J_{2} + J_{3}$ are two-sided ideals of $(\alg, \mlt)$. 
\end{proof}

The proof of Theorem \ref{tripsimpletheorem} is motivated by the proof of \cite[Theorem $5.1$]{Kinyon-Sagle}, but even in the antisymmetric case the argument is slightly stronger, as it makes no use of the Jacobi identity.
\begin{theorem}\label{tripsimpletheorem}
If an $\fie$-algebra $(\alg, \mlt)$, where $\chr \fie \neq 2$, is either commutative or anticommutative, then $(\trip(\alg), \tmlt)$ is simple if and only if $(\alg, \mlt)$ is simple.
\end{theorem}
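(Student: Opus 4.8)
The plan is to prove the two implications separately, the substantive one being that simplicity of $(\alg,\mlt)$ forces simplicity of $(\trip(\alg),\tmlt)$. For the direction $(\trip(\alg),\tmlt)$ simple $\Rightarrow (\alg,\mlt)$ simple I would argue the contrapositive: if the multiplication $\mlt$ is trivial then by \eqref{tripmlt} so is $\tmlt$, and since $\dim\trip(\alg)=3\dim\alg$ this precludes simplicity; otherwise $(\alg,\mlt)$ admits a nonzero proper ideal $I$, and it is immediate from \eqref{tripmlt} that $\trip(I)=I\oplus I\oplus I$ is then a nonzero proper ideal of $(\trip(\alg),\tmlt)$. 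Either way $(\trip(\alg),\tmlt)$ is not simple.

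For the converse, assume $(\alg,\mlt)$ is simple. First, $\tmlt$ is nontrivial: choosing $a,b\in\alg$ with $a\mlt b\neq 0$, a direct computation with \eqref{tripmlt} gives $\nu_{3}(a)\tmlt\nu_{1}(b)=\tfrac12\nu_{2}(a\mlt b)\neq 0$. Now let $J\neq\{0\}$ be an ideal of $\trip(\alg)$; I must show $J=\trip(\alg)$. For $i\in\{1,2,3\}$ let $\tilde J_{i}\subseteq\alg$ be the subspace with $\nu_{i}(\tilde J_{i})=J\cap\nu_{i}(\alg)$. The crucial point is a \emph{pure element extraction}: for $x\in J$ and $c,c'\in\alg$,
\[
\nu_{3}(c')\tmlt\bigl(\nu_{2}(c)\tmlt x\bigr)=\tfrac14\,\nu_{2}\bigl(c'\mlt(c\mlt x_{3})\bigr),
\]
as one verifies by two applications of \eqref{tripmlt} (this uses $\chr\fie\neq 2$). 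The cyclic coordinate permutation is an automorphism of $\trip(\alg)$, visible directly from the cyclic shape of \eqref{tripmlt}, so after relabeling I may assume a fixed $0\neq x\in J$ has $x_{3}\neq 0$. The displayed identity then gives $\tilde J_{2}\supseteq\alg\mlt(\alg\mlt x_{3})$. Since $(\alg,\mlt)$ is simple with nontrivial multiplication, the annihilator $\ann(\alg)=\{a\in\alg:a\mlt\alg=\{0\}\}$ is an ideal distinct from $\alg$, hence $\{0\}$; applying this first to $x_{3}\neq 0$ and then to the nonzero subspace $\alg\mlt x_{3}$ shows $\alg\mlt(\alg\mlt x_{3})\neq\{0\}$, so $\tilde J_{2}\neq\{0\}$.

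Next, computing $\nu_{j}(c)\tmlt\nu_{i}(a)$ for the various $i,j$ (exactly in the spirit of the proof of Lemma \ref{tripideallemma}) yields the cyclic containments $\alg\mlt\tilde J_{i}\subseteq\tilde J_{i+1}\cap\tilde J_{i+2}$, indices read cyclically in $\{1,2,3\}$; these hold in both the commutative and the anticommutative case, since then $R_{\mlt}=\pm L_{\mlt}$ makes the one-sided containments two-sided. Consequently $\tilde S:=\tilde J_{1}+\tilde J_{2}+\tilde J_{3}$ satisfies $\alg\mlt\tilde S\subseteq\tilde S$, i.e.\ is an ideal of $\alg$; as $\tilde J_{2}\neq\{0\}$ and $\alg$ is simple, $\tilde S=\alg$. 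Writing $\tilde T:=(\tilde J_{1}\cap\tilde J_{2})+(\tilde J_{2}\cap\tilde J_{3})+(\tilde J_{3}\cap\tilde J_{1})$ and $\tilde N:=\tilde J_{1}\cap\tilde J_{2}\cap\tilde J_{3}$, one has $\alg=\alg\mlt\alg=\alg\mlt\tilde S\subseteq\tilde T\subseteq\alg$, while $\alg\mlt(\tilde J_{i}\cap\tilde J_{j})\subseteq(\alg\mlt\tilde J_{i})\cap(\alg\mlt\tilde J_{j})\subseteq\tilde N$, so $\alg=\alg\mlt\alg=\alg\mlt\tilde T\subseteq\tilde N\subseteq\alg$. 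Hence $\tilde J_{1}=\tilde J_{2}=\tilde J_{3}=\alg$, so $\nu_{i}(\alg)\subseteq J$ for each $i$ and $J=\trip(\alg)$.

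The main obstacle is the pure element extraction: passing from "$J\neq\{0\}$" to "$J$ meets some $\nu_{i}(\alg)$ nontrivially". That is the only place one genuinely exploits the precise form of \eqref{tripmlt} — it takes two suitably chosen left multiplications to kill two of the three coordinates — together with the simplicity of $\alg$, through vanishing of the annihilator, to keep $\alg\mlt(\alg\mlt x_{3})$ nonzero. Everything afterwards is a bookkeeping cascade among the subspaces $\tilde J_{i}$ of $\alg$, structurally the same as Lemma \ref{tripideallemma}. The commutative/anticommutative hypothesis is used only via $R_{\mlt}=\pm L_{\mlt}$, and $\chr\fie\neq 2$ only through the scalars $\tfrac12,\tfrac14$; no associativity or Jacobi-type identity is invoked, which is why the Nahm (Lie) case is subsumed and the argument is even slightly stronger there than the one in \cite{Kinyon-Sagle}.
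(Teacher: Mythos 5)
Your proof is correct and follows essentially the same strategy as the paper's: the cyclic containments from (the proof of) Lemma \ref{tripideallemma}, iterated left multiplications to trap double products in a single component, and the vanishing of the annihilator forced by simplicity. The only real difference is bookkeeping — you track the pure slices $\tilde J_{i}=\{a:\nu_{i}(a)\in J\}$ rather than the projections $\pi_{i}(J)$, which has the virtue of making explicit the pure-element extraction that the paper's argument uses tacitly when it concludes $J=\trip(\alg)$ from $\pi_{1}(J)\cap\pi_{2}(J)\cap\pi_{3}(J)=\alg$.
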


\begin{proof}
Let $J$ be an ideal in $\trip(\alg)$ and define $J_{i} = \pi_{i}(J) \subset \alg$. Since $R_{\mlt} = \pm L_{\mlt}$, the proof of Lemma \ref{tripideallemma} shows $L_{\mlt}(\alg) J_{1} \subset J_{2}\cap J_{3}$ and the similar inclusions obtained by permuting the indices cyclically. There follows $\alg \mlt (\alg \mlt J_{1}) \subset \alg \mlt (J_{2} \cap J_{3}) \subset (J_{1}\cap J_{3}) \cap (J_{1} \cap J_{2}) =  J_{1} \cap J_{2} \cap J_{3}$, and similarly with $J_{2}$ or $J_{3}$ in place of $J_{1}$. If $\alg$ is simple then, by Lemma \ref{tripideallemma}, either $J_{1} \cap J_{2} \cap J_{3} = \{0\}$ or $J_{1} \cap J_{2} \cap J_{3} = \alg$. In the latter case, $J = \trip(\alg)$, so suppose $J_{1} \cap J_{2} \cap J_{3} = \{0\}$. Then if $x, y \in \alg$ and $a\in J_{i}$ there holds $x\mlt(y\mlt a) = 0$. Since $\alg$ is simple, $\alg\mlt \alg = \alg$. The set of $z \in \alg$ such that $z \mlt u = 0$ for all $u \in \alg$ is an ideal. Were it equal to $\alg$, then the multiplication in $\alg$ would be trivial, so this set is $\{0\}$. Hence $x \mlt(y\mlt a) = 0$ for all $x, y \in \alg$ implies $y \mlt a = 0$ for all $y \in \alg$, which in turn implies $a = 0$. Hence $J_{1} = J_{2} = J_{3} = \{0\}$, and so $J = \{0\}$. This shows that if $\alg$ is simple, then $\trip(\alg)$ is simple. Now suppose $\trip(\alg)$ is simple and let $J$ be an ideal in $\alg$. Then $ \nu_{1}(J) + \nu_{2}(J) + \nu_{3}(J) = \{(x, y, z) \in \trip(\alg): x, y, z \in J\}$ is an ideal in $\trip(\alg)$. If it equals $\trip(\alg)$ then $J = \alg$, while if it equals $\{0\}$ then $J = \{0\}$, so $\alg$ is simple. 
\end{proof}

\begin{remark}
Because, by Example \eqref{rtripleexample}, $\trip(\fie)$ is isomorphic to $\ealg^{3}(\fie)$, Theorems \ref{triplealgebratheorem} and \ref{tripsimpletheorem} give an alternative proof that $\ealg^{3}(\fie)$ is a simple Killing metrized exact commutative algebra.
\end{remark}

\begin{example}\label{permanenttripleexample}
The triple algebra $\trip(\ealg^{3}(\fie))$ is a simple $9$-dimensional Killing metrized exact commutative algebra. Let $\ga_{0}, \ga_{1}, \ga_{2}, \ga_{3}$ be the generators of $\ealg^{3}(\fie)$ as in \eqref{ealgrelations}. The vectors $\{e_{1} = -(\ga_{2} + \ga_{3}), e_{2} = -(\ga_{1} + \ga_{3}), e_{3} = -(\ga_{1} + \ga_{2})\}$ are $\tau_{\mlt}$-orthogonal, satisfy $\tau_{\mlt}(e_{i}, e_{i}) = 2$, and satisfy \eqref{n3alg} with $c = 1$.
By \eqref{tautrip}, $\{f_{j + 3(i - 1)} = \nu_{i}(e_{j}): 1 \leq i, j \leq 3\}$ is a $\tau_{\tmlt}$-orthonormal basis of $\trip(\ealg^{3}(\fie))$. It follows from \eqref{ptrip} that, for $x = \sum_{i = 1}^{9}x_{i}f_{i}$, 
\begin{align}
\begin{aligned}
x \tmlt x &= (x_{5}x_{9} + x_{6}x_{8})f_{1} + (x_{6}x_{7} + x_{4}x_{9})f_{2} + (x_{4}x_{8} + x_{5}x_{7})f_{3}\\
& + (x_{8}x_{3} + x_{9}x_{2})f_{4} + (x_{9}x_{1} + x_{7}x_{3})f_{5} + (x_{7}x_{2} + x_{8}x_{1})f_{6}\\
&+  (x_{2}x_{6} + x_{3}x_{5})f_{7} + (x_{3}x_{4} + x_{1}x_{6})f_{8} + (x_{1}x_{5} + x_{2}x_{4})f_{9},
\end{aligned}
\end{align}
and the cubic polynomial of $(\trip(\ealg^{3}(\fie)), \tmlt, \tau_{\tmlt})$ is
\begin{align}
\label{permpoly}
2P_{\tmlt}(x)  & = \begin{aligned}
& x_{1}x_{5}x_{9} + x_{2}x_{6}x_{7} + x_{3}x_{4}x_{8} \\
 &+ x_{1}x_{6}x_{8} + x_{2}x_{4}x_{9} + x_{3}x_{5}x_{7}
\end{aligned}= \perm \begin{pmatrix} x_{1} & x_{2} & x_{3} \\ x_{4} & x_{5} & x_{6}\\ x_{7} & x_{8} & x_{9}\end{pmatrix},
\end{align}
which is the permanent of the indicated $3 \times 3$ matrix.
\end{example}

\begin{example}\label{so3nahmexample}
Identifying $x \in \fie^{3}$ with $\hat{x} \in \so(3)$ defined by $\hat{x}y = x \times y$ gives an isomorphism $(\rea^{3}, \times) \to (\so(3), [\dum, \dum])$. Let $e_{1}, e_{2}, e_{3}$ be the standard basis of $\so(3)$ such that $e_{1} \times e_{2} = e_{3}$, $e_{2} \times e_{3} = e_{1}$ and $e_{3} \times e_{1} = e_{2}$. The basis is orthogonal with respect to the Killing form $B_{\so(3)}(x, y) = \tr \hat{x}\hat{y}$ and satisfies $B_{\so(3)}(e_{i}, e_{i}) = -2$. By \eqref{tautrip}, $\{f_{j + 3(i - 1)} = \nu_{i}(e_{j}): 1 \leq i, j \leq 3\}$ is a $\tau_{\tmlt}$-orthonormal basis of $\nahm(\so(3))$. It follows from \eqref{ptrip} that, for $x = \sum_{i = 1}^{9}x_{i}f_{i}$, 
\begin{align}
\begin{aligned}
x \tmlt x &= (x_{5}x_{9} - x_{6}x_{8})f_{1} + (x_{6}x_{7} - x_{4}x_{9})f_{2} + (x_{4}x_{8} - x_{5}x_{7})f_{3}\\
& + (x_{8}x_{3} - x_{9}x_{2})f_{4} + (x_{9}x_{1} - x_{7}x_{3})f_{5} + (x_{7}x_{2} - x_{8}x_{1})f_{6}\\
&+  (x_{2}x_{6} - x_{3}x_{5})f_{7} + (x_{3}x_{4} - x_{1}x_{6})f_{8} + (x_{1}x_{5} - x_{2}x_{4})f_{9},
\end{aligned}
\end{align} 
and 
\begin{align}
\label{nahmpolyso3}
2P(x)  &  = \begin{aligned}
&x_{1}x_{5}x_{9} + x_{2}x_{6}x_{7} + x_{3}x_{4}x_{8} \\
- &x_{1}x_{6}x_{8} - x_{2}x_{4}x_{9} - x_{3}x_{5}x_{7}
\end{aligned}= \det \begin{pmatrix} x_{1} & x_{2} & x_{3} \\ x_{4} & x_{5} & x_{6}\\ x_{7} & x_{8} & x_{9}\end{pmatrix}
\end{align}
is the cubic polynomial $P(x) = \tfrac{1}{6}\tau_{\tmlt}(x\tmlt x, x)$ corresponding to $\tmlt$.
\end{example}

\begin{lemma}[M. Kinyon - A. Sagle; \cite{Kinyon-Sagle}]\label{kslemma}
For a finite-dimensional real Lie algebra $\g$ a nonzero element $x \in \nahm(\g)$ is idempotent if and only if the components $x_{i} = \pi_{i}(x) \in \g$ of $x$ generate a subalgebra of $\g$ isomorphic to $\so(3)$.
\end{lemma}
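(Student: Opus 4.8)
The plan is to read off an explicit formula for $x\tmlt x$ from \eqref{nahmmult} and recognise the resulting equations as the defining relations of $\so(3)$. Putting $y=x$ in \eqref{nahmmult} gives
\[
x\tmlt x=\bigl([x_{2},x_{3}],\,[x_{3},x_{1}],\,[x_{1},x_{2}]\bigr),
\]
so $x$ is idempotent exactly when
\[
x_{1}=[x_{2},x_{3}],\qquad x_{2}=[x_{3},x_{1}],\qquad x_{3}=[x_{1},x_{2}].
\]
The cleanest packaging is to associate to $x=(x_{1},x_{2},x_{3})\in\nahm(\g)$ the linear map $\Phi_{x}\colon\so(3)\to\g$ determined by $\Phi_{x}(e_{i})=x_{i}$, where $\{e_{1},e_{2},e_{3}\}$ is the standard basis of $\so(3)$ (so $[e_{1},e_{2}]=e_{3}$ and cyclically). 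Checking the homomorphism property on basis brackets suffices by bilinearity, and the three displayed equations say precisely that $\Phi_{x}$ respects those brackets. Hence $x$ is idempotent if and only if $\Phi_{x}$ is a Lie algebra homomorphism.

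For the substantive direction, suppose $x\neq 0$ is idempotent, so $\Phi_{x}$ is a nonzero homomorphism out of $\so(3)$. Since $\so(3)$ is a simple Lie algebra (over $\rea$), its kernel, being an ideal, is trivial, so $\Phi_{x}$ is injective. Therefore $x_{1},x_{2},x_{3}$ are linearly independent, and their span equals $\Phi_{x}(\so(3))$, a subalgebra of $\g$ isomorphic to $\so(3)$ via $\Phi_{x}$; as this span is already closed under the bracket (again by the three equations), it coincides with the subalgebra generated by $x_{1},x_{2},x_{3}$. Conversely, if $x_{1},x_{2},x_{3}$ form a basis of a subalgebra of $\g$ on which $e_{i}\mapsto x_{i}$ is an isomorphism onto $\so(3)$, then the structure equations hold by definition of $\so(3)$, whence $x\tmlt x=x$ and $x\neq 0$ because $x_{1}\neq 0$.

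If one wishes to avoid invoking the simplicity of $\so(3)$, linear independence of $x_{1},x_{2},x_{3}$ can be extracted by hand: were $V=\spn\{x_{1},x_{2},x_{3}\}$ of dimension $\le 2$, then (the three equations closing $V$ under the bracket) the cases $\dim V\in\{0,1\}$ force $x=0$, while if $\dim V=2$ one may, after a cyclic relabelling — which leaves the system invariant — take $\{x_{1},x_{2}\}$ as a basis of $V$ and write $x_{3}=ax_{1}+bx_{2}$; substituting into $x_{1}=[x_{2},x_{3}]$ together with $x_{3}=[x_{1},x_{2}]$ yields $-a^{2}=1$, impossible for $a\in\rea$. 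This is the only place realness enters, and it is what the lemma genuinely requires — over $\com$ there are idempotents whose components span a proper two-dimensional (non-$\so(3)$) subalgebra. I expect this linear-independence step — equivalently, the appeal to simplicity of $\so(3)$ and the correct reading of what "the $x_{i}$ generate a subalgebra isomorphic to $\so(3)$" must mean — to be the only real content; the identification $x\tmlt x=\bigl([x_{2},x_{3}],[x_{3},x_{1}],[x_{1},x_{2}]\bigr)$ is a one-line computation.
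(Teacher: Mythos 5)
Your proof is correct and follows essentially the same route as the paper: reduce idempotency to the three relations $x_{1}=[x_{2},x_{3}]$, $x_{2}=[x_{3},x_{1}]$, $x_{3}=[x_{1},x_{2}]$, and then show these force linear independence of the $x_{i}$ — a step the paper simply attributes to Kinyon--Sagle as "easy to check," and which you actually carry out (twice, via simplicity of $\so(3)$ and via the explicit case analysis).

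One parenthetical claim is wrong, though: over $\com$ there are \emph{no} idempotents whose components span a two-dimensional subspace. Your own computation, pushed one step further, shows this: writing $x_{3}=ax_{1}+bx_{2}$, the equation $x_{1}=[x_{2},x_{3}]$ gives $-a^{2}=1$ and $-ab=0$, while $x_{2}=[x_{3},x_{1}]$ gives $-b^{2}=1$, and the system $a^{2}=b^{2}=-1$, $ab=0$ is contradictory over any field. Consistently, your first argument via simplicity never used realness, since $\so(3)\tensor\com\cong\mathfrak{sl}(2,\com)$ is still simple; so realness is not in fact where the content lies, and the lemma's conclusion (with $\so(3)$ read as the complexified algebra) persists over $\com$.
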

\begin{proof}
That $x$ be idempotent implies $x_{1} = [x_{2}, x_{3}]$, $x_{2} = [x_{3}, x_{1}]$, and $x_{3} = [x_{1}, x_{2}]$. As is shown at the end of section $4$ of \cite{Kinyon-Sagle} (and is easy to check), these relations cannot be satisfied unless $x_{1} \wedge x_{2}\wedge x_{3} \neq 0$, in which case the $x_{i}$ generate a subalgebra of $\g$ isomorphic to $\so(3)$.
\end{proof}

\begin{example}\label{203nahmidempotentsexample}
Let $e_{1}, e_{2}, e_{3}$ be the standard basis of $\so(3)$ such that $e_{1} \times e_{2} = e_{3}$, $e_{2} \times e_{3} = e_{1}$ and $e_{3} \times e_{1} = e_{2}$. For an orientation preserving rotation $g$ of $\rea^{3}$ let $g_{i} = ge_{i}$ and note that $g_{1}\times g_{2} = g_{3}$, etc. Then $X_{g} = \nu_{1}(g_{1}) + \nu_{2}(g_{2}) + \nu_{3}(g_{3}) \in \nahm(\g)$ is idempotent. Let $X_{0}$ be $X_{g}$ for $g$ the identity transformation. It is straightforward to check that
\begin{align}
-2X_{0} \tmlt y = (-y_{5} - y_{9} , y_{4} , y_{7} , y_{2} , -y_{1} - y_{9} , y_{8} , y_{3} , y_{6} , -y_{1} - y_{5}).
\end{align}
The orthocomplement of $X_{0}$ is given by $y_{1} + y_{5} + y_{9} = 0$. On this orthocomplement, the eigenvalues of $L_{\tmlt}(X_{0})$ are $1/2$ with $3$-dimensional eigenspace spanned by the eigenvectors $\nu_{1}(e_{2}) - \nu_{2}(e_{1})$, $\nu_{1}(e_{3}) - \nu_{3}(e_{1})$, and $\nu_{2}(e_{3}) - \nu_{3}(e_{2})$, and $-1/2$, with $5$-dimensional eigenspace spanned by the eigenvectors $\nu_{1}(e_{1}) - \nu_{3}(e_{3})$, $\nu_{2}(e_{2}) - \nu_{3}(e_{3})$, $\nu_{1}(e_{2}) + \nu_{2}(e_{1})$, $\nu_{1}(e_{3}) + \nu_{3}(e_{1})$, and $\nu_{2}(e_{3}) + \nu_{3}(e_{2})$.

For $\g = \so(3)$, any triple $\{x_{1}, x_{2}, x_{3}\}$ of elements as in the proof of Lemma \ref{kslemma} is the image of the standard triple $\{e_{1}, e_{2}, e_{3}\}$ under an element of $SO(3)$. It follows that every idempotent of $\nahm(\so(3))$ has the form $X_{g}$. Since all the idempotents have the same norm, and by \eqref{minimalidempotentlemma} of Lemma \ref{criticalpointlemma} there is necessarily a minimal idempotent, all the $X_{g}$ are minimal idempotents. However as the eigenvalues of $L_{\tmlt}(X_{g})$ are the same as those of $L_{\tmlt}(X_{0})$, and among them $1/2$ occurs with multiplicity $3$, no $X_{g}$ has orthogonal spectrum contained in $(-\infty, 0]$, and $\nahm(\so(3))$ has no minimal idempotent with orthogonal spectrum contained in $(-\infty, 0]$.

Let $f_{i} = 2\nu_{i}(e_{i})$ where $\{e_{1}, e_{2}, e_{3}\}$ is the standard orthonormal basis of $\so(3)$. The $f_{i}$ satisfy the relations \eqref{n3alg}  (with $c = 1$) defining the algebra $\ealg^{3}$, showing that $\nahm(\so(3))$ contains $\ealg^{3}$ as a subalgebra. This is observed, albeit not quite in these terms, in \cite[example $5.2$]{Kinyon-Sagle}.
\end{example}

\begin{theorem}\label{compactnahmtheorem}
For a compact simple real Lie algebra, $\g$, the Nahm algebra $(\nahm(\g), \mlt, \tau)$ is a simple Killing metrized exact commutative algebra that has sectional nonassociativities of both signs. In particular it is not isomorphic to a direct sum of real simplicial algebras. 
\end{theorem}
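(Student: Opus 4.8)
The plan is to combine several results already established in the paper. First, by Theorem~\ref{tripsimpletheorem} (the commutative, equivalently the anticommutative case for a Lie algebra via $R_{[\dum,\dum]} = -L_{[\dum,\dum]}$) and Corollary~\ref{nahmcorollary}, if $\g$ is a compact simple real Lie algebra then $\nahm(\g) = \trip(\g) \cong \so(3)\tensor_{\rea}\g$ (Lemma~\ref{tripletensorlemma}) is a simple Killing metrized exact commutative algebra; the metric $\tau$ is positive definite because $\g$ is compact, so $B_{\g}$ is negative definite and $\tau_{\nahm(\g)} = -\tfrac12\sum_i B_{\g}(\dum,\dum)$ by \eqref{tautrip} is positive definite. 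Thus $(\nahm(\g),\mlt,\tau)$ is a Euclidean Killing metrized exact commutative algebra. This disposes of the first sentence of the statement.

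The substantive assertion is that the sectional nonassociativity takes both signs, and here I would argue separately for each sign. For the negative sign, I would invoke Lemma~\ref{ghboundlemma}: for $\g = \h$ compact semisimple, the sectional nonassociativity of the subspace of $\so(3)\tensor\g \cong \nahm(\g)$ spanned by decomposable elements $a_1\tensor b_1, a_2\tensor b_2$ is $\le 0$, with the formula \eqref{sectgh1} showing it is strictly negative as soon as $[a_1,a_2]\neq 0$ in $\so(3)$ and $[b_1,b_2]\neq 0$ in $\g$; since $\so(3)$ is simple and $\g$ is simple, such non-commuting pairs exist, so there is a two-dimensional subspace with strictly negative sectional nonassociativity. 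For the positive sign, I would use Example~\ref{203nahmidempotentsexample}, which is proved for $\g = \so(3)$ but extends verbatim: by Lemma~\ref{kslemma} an idempotent of $\nahm(\g)$ corresponds to an $\so(3)$-subalgebra, and conjugating by an inner automorphism gives a one-parameter family of idempotents $X_g$ as $g$ ranges over a one-parameter subgroup of the adjoint group of $\g$ (any compact simple Lie algebra contains $\so(3)$-triples and has positive-dimensional automorphism group). Differentiating this smooth path of idempotents and applying the lemma preceding Lemma~\ref{finiteautolemma} (equation \eqref{smoothpathsect}) yields $\sect(x(0),\dot x(0)) = \tfrac{1}{4|x(0)|_\tau^2} > 0$. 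Hence both signs occur.

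The final sentence follows immediately: a real simplicial algebra $\ealg^{m}(\rea)$ has constant sectional nonassociativity $-1/(m-1) < 0$, and by Lemma~\ref{sectdirectsumlemma} a direct sum of such algebras has sectional nonassociativity bounded above by $\max\{-1/(m_i-1),0\} = 0$ and bounded below by $\min\{-1/(m_i-1)\} < 0$ — in particular it never attains strictly positive values. Since $\nahm(\g)$ does attain a strictly positive value, it cannot be isomorphic to such a direct sum.

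The main obstacle I anticipate is the existence, inside a general compact simple real Lie algebra $\g$, of both a non-commuting pair of elements (trivial) and an honest smooth family of $\so(3)$-subalgebras producing a smooth path of idempotents in $\nahm(\g)$ with non-vanishing velocity. The cleanest route is to fix one $\so(3)$-triple $\{x_1,x_2,x_3\}\subset\g$ (which exists since $\g$ is simple of rank $\ge 1$, hence contains a copy of $\so(3)$ via any root $\su(2)$), form the idempotent $X = \nu_1(x_1)+\nu_2(x_2)+\nu_3(x_3)\in\nahm(\g)$ by Lemma~\ref{kslemma}, and then move it by $\exp(t\,\trip(\ad z))$ for suitable $z\in\g$ with $[z,x_i]\neq 0$ for some $i$; the derivation property (the remark following Lemma~\ref{tripleautomorphismlemma}) guarantees this stays within the idempotents and has non-zero derivative, so \eqref{smoothpathsect} applies. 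This is where a little care is needed, but it is routine once the $\so(3)$-triple is in hand.
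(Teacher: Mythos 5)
Your proof is correct, but it reaches both signs by a genuinely different route than the paper. The paper makes a single self-contained computation: it derives the identity \eqref{nahmsect} for $\tau_{\tmlt}(x\tmlt x, y\tmlt y) - \tau_{\tmlt}(x\tmlt y, x\tmlt y)$ and then, for one noncommuting pair $a, b \in \g$, exhibits explicit two-planes of each sign, namely $\spn\{\nu_{1}(a)+\nu_{2}(b),\ \nu_{1}(b)-\nu_{2}(a)\}$ (positive, value $-\tfrac{1}{2}B_{\g}([a,b],[a,b])>0$ in the numerator) and $\spn\{\nu_{1}(a)+\nu_{2}(b),\ \nu_{1}(a)-\nu_{2}(b)\}$ (negative). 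You instead obtain the negative sign from the tensor-product estimate of Lemma \ref{ghboundlemma} applied to $\so(3)\tensor_{\rea}\g \cong \nahm(\g)$ on a plane of decomposables with both commutators nonzero, and the positive sign from the existence of a nonconstant smooth path of idempotents — an $\so(3)$-triple in $\g$ via Lemma \ref{kslemma}, pushed around by $\exp(t\,\trip(\ad z))$, followed by the path lemma \eqref{smoothpathsect}. Both steps are sound: $\isect$ is an isomorphism invariant so the tensor-product computation transfers; the centralizer of a nonzero element of a simple Lie algebra is proper, so a $z$ with nonzero velocity exists; and $\tau_{\nahm(\g)}$ is positive definite since $\g$ is compact, so the Euclidean hypotheses of both lemmas hold. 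What your route buys is conceptual clarity — the positive sectional nonassociativity is exposed as a consequence of the positive-dimensional automorphism orbit of an idempotent, tying into the paper's broader theme that continuous symmetry forces positivity — at the cost of invoking the existence of $\so(3)$-subalgebras of $\g$ (standard, but external to the paper) where the paper needs only a single noncommuting pair, which is immediate from simplicity. Your handling of the final sentence via Lemma \ref{sectdirectsumlemma} coincides with the paper's.
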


\begin{proof}
Let $(\g, [\dum, \dum])$ be a real Lie algebra. The subspace $\diag(\g) = \{x \in \nahm(\g): \nu_{1}(x) = \nu_{2}(x) = \nu_{3}(x)\}$ is a $\mlt$-trivial subalgebra. Its $\tau_{\mlt}$-orthocomplement $\diag(\g)^{\perp}$ comprises those $ x\in \nahm(\g)$ for which $\nu_{1}(x) + \nu_{2}(x) + \nu_{3}(x) = 0$. These satisfy $\diag(\g)\mlt \diag(\g) = \{0\}$, $\diag(\g) \mlt \diag(\g)^{\perp} \subset \diag(\g)^{\perp}$, and $\diag(\g)^{\perp}\mlt \diag(\g)^{\perp} \subset \diag(\g)$. Since $(\diag(\g), \mlt)$ is an associative subalgebra of $\nahm(\g)$, $\isect_{\mlt}(x, y) = 0$ for $x, y \in \diag(\g)$. Straightforward calculations using \eqref{tautrip} and the invariance of $B_{\g}$ yield
\begin{align}\label{nahmsect}
\begin{split}
&\tau_{\mlt}(x \mlt x, y \mlt y) - \tau_{\mlt}(x\mlt y, x \mlt y) \\
&= \tfrac{1}{2}\cycle_{\{1, 2, 3\}}
\left(- B_{\g}([x_{1}, x_{2}], [y_{1}, y_{2}]) + \tfrac{1}{4}B_{\g}([x_{1}, y_{2}] + [y_{1}, x_{2}], [x_{1}, y_{2}] + [y_{1}, x_{2}])\right)\\
& =  \tfrac{1}{2}\cycle_{\{1, 2, 3\}}\left(- B_{\g}([x_{1}, y_{1}], [x_{2}, y_{2}]) + \tfrac{1}{4}B_{\g}([x_{1}, y_{2}] - [y_{1}, x_{2}], [x_{1}, y_{2}] - [y_{1}, x_{2}])\right),
\end{split}
\end{align}
in which $\cycle_{\{1, 2, 3\}}$ indicates the sum over the cyclic permutations of the indicated indices. Suppose $\g$ is simple and compact, so that $B_{\g}$ is negative definite. If $x \in \diag(\g)$, then the first term on the second line of \eqref{nahmsect} vanishes and so, as the second term on the second line of \eqref{nahmsect} is nonpositive, it follows that $\isect_{\mlt}(x, y) \leq 0$ whenever $x \in \diag(\g)$.
Since $\g$ is simple there are $a$ and $b$ in $\g$ such that $[a, b] \neq 0$. Let $x = \nu_{1}(a) + \nu_{2}(b)$, $y = \nu_{1}(b) - \nu_{2}(a)$, and $z = \nu_{1}(a) - \nu_{2}(b)$. Since $\nu_{i}(a) \mlt \nu_{i}(a) = 0$ for $i = 1, 2, 3$, and $2\nu_{1}(a) \mlt \nu_{2}(b) = \nu_{3}([a, b])$ there hold $x \mlt y = 0$, $x \mlt z = 0$, and $x \mlt x = y \mlt y = - z \mlt z$. Hence $\tau_{\mlt}(x \mlt x, y \mlt y) - \tau_{\mlt}(x\mlt y, x \mlt y) = -\tfrac{1}{2}B_{\g}([a, b], [a, b]) > 0$, and $\tau_{\mlt}(x \mlt x, z \mlt z) - \tau_{\mlt}(x \mlt z, x \mlt z) = \tfrac{1}{2}B_{\g}([a, b], [a, b]) < 0$, the inequalities because $\g$ is compact. This shows that $\isect_{\mlt}(x, y) > 0$ and $\isect_{\mlt}(x, z) < 0$, so $(\nahm(\g), \mlt, \tau)$ has sectional nonassociativities of both signs. By Lemma \ref{sectdirectsumlemma}, a direct sum of real simplicial algebras has nonpositive sectional nonassociativity, so $(\nahm(\g), \mlt, \tau)$ is not isomorphic to any such direct sum.
\end{proof}

What follows shows how conclusions like those of Theorem \ref{compactnahmtheorem} can be obtained for triple algebras.
Let $(\trip(\alg), \tmlt)$ be the triple algebra of the algebra $(\alg, \mlt)$. 
For $i \in \{0, 1, 2, 3\}$ define linear maps $\Ga_{i}:\alg \to \trip(\alg)$ by 
\begin{align}
&\Ga_{0}(x)  = \diag(x) ,&& \Ga_{1}(x) = (x, -x, -x),&&\Ga_{2}(x)  = (-x, x, -x), &&\Ga_{3}(x) = (-x, -x, x),
\end{align}
and note $\Ga_{0}(x) + \Ga_{1}(x) + \Ga_{2}(x) + \Ga_{3}(x) = 0$. From \eqref{s4action} it follows that, for $i \in \{1, 2, 3\}$, $\Ga_{i}$ is the composition of an element of $S_{4}$ with $\Ga_{0}$. For example $\Ga_{1} = (01)\cdot \Ga_{0}$. Using that $S_{4}$ acts by algebra automorphisms, it is straightforward to check the relations
\begin{align}\label{gammagammageneral}
\begin{split}
&\Ga_{i}(x) \tmlt \Ga_{i}(y) = \tfrac{1}{2}\Ga_{i}(x \mlt y + y \mlt x),\\
&\Ga_{i}(x)\tmlt \Ga_{j}(y)  + \Ga_{j}(x)\tmlt \Ga_{i}(y) = -\tfrac{1}{2}\left(\Ga_{i}(x\mlt y + y \mlt x) + \Ga_{j}(x \mlt y + y \mlt x)\right),\\
&\Ga_{i}(x)\tmlt \Ga_{j}(y)  - \Ga_{j}(x)\tmlt \Ga_{i}(y) = \tfrac{\sign(ijkl)}{2}\left(\Ga_{k}(x\mlt y - y \mlt x) - \Ga_{l}(x \mlt y - y \mlt x)\right)
\end{split}
\end{align}
for $x, y \in \alg$, where $\{i, j, k, l\}$ is a permutation of $\{0, 1, 2, 3\}$ and $\sign(ijkl)$ is $\pm 1$ as it is an even or odd permutation.
The first identity of \eqref{gammagammageneral} shows that $\Ga_{i}(\alg)$ is a subalgebra of $(\trip(\alg), \tmlt)$ that is trivial if $\mlt$ is antisymmetric. 

Let $\nabla(\alg) = \{x \in \trip(\alg): x_{1} + x_{2} + x_{3} = 0\}$. 
For $i, j \in \{0, 1,2, 3\}$ define linear maps $\nabla^{ij}:\alg \to \nabla(\alg) \subset \trip(\alg)$ by $\nabla^{ij}= \tfrac{1}{2}(\Ga_{i} - \Ga_{j})  = -\nabla^{ji}$. For example $\nabla^{23}(x) = \tfrac{1}{2}(\Ga_{2}(x) - \Ga_{3}(x)) = (0, x, -x) $. Any two of $\nabla^{12}(x)$, $\nabla^{23}(x)$, $\nabla^{31}(x)$ constitute a basis of $\nabla(\alg)$. 
There hold
\begin{align}\label{gammanablageneral}
\Ga_{0}(x) \tmlt \nabla^{12}(y) = \tfrac{1}{2}\left(\nabla^{23}(x \mlt y) + \nabla^{31}(y \mlt x)\right),
\end{align}
and the identities obtained from it by cyclically permuting the indices $\{1, 2, 3\}$. This shows that $\diag(\alg)\tmlt \nabla(\alg) \subset \nabla(\alg)$.  
If $\tau_{\tmlt}$ is nondegenerate then the orthogonal complement $\diag(\alg)^{\perp}$ of $\diag(\alg)$ equals $\nabla(\alg)$ and $\diag(\alg)\mlt \diag(\alg)^{\perp} \subset \diag(\alg)^{\perp}$. 

Now suppose $(\alg, \mlt)$ is commutative. In this case, the relations \eqref{gammagammageneral} and \eqref{gammanablageneral} simplify to
\begin{align}\label{gammagamma}
&\begin{aligned}
&\Ga_{i}(x) \tmlt \Ga_{i}(y) = \Ga_{i}(x \mlt y),&
&\Ga_{i}(x)\tmlt \Ga_{j}(y) = -\tfrac{1}{2}\left(\Ga_{i}(x\mlt y) + \Ga_{j}(x \mlt y)\right),
\end{aligned}\\
\label{gammanabla}
&\Ga_{i}(x)\tmlt \nabla^{jk}(y) = -\tfrac{1}{2}\nabla^{jk}(x\mlt y),&
\end{align}
for distinct $i,j, k \in \{0, 1,2, 3\}$.
The first equation of \eqref{gammagamma} shows that $\Ga_{i}:\alg \to \trip(\alg)$ is an injective algebra homomorphism for all $i \in \{0, 1, 2, 3\}$.

Define $\nabla_{i}:\alg \to \nabla(\alg) \subset \trip(\alg)$ by $\nabla_{i} = \diag - 3\nu_{i} = \nabla^{ji} + \nabla^{ki}$ where $\{i, j, k\} = \{1, 2, 3\}$.
For example, $\nabla_{1}(x) = (-2x, x, x)$. Note that $\nabla_{i}(\alg)\cap \nabla_{j}(\alg) = \{0\}$ if $i \neq j$ and $\nabla_{1}(x) + \nabla_{2}(x) + \nabla_{3}(x) = 0$. Straightforward computations show that, for $x, y \in \alg$ and $a, b, c, d \in \fie$,
\begin{align}\label{nablanabla}
\begin{split}
(a\nabla^{13}(x) + b\nabla^{23}(x))& \tmlt (c\nabla^{13}(y) + d\nabla^{23}(y)) \\
& = - bd \nabla^{13}(x\mlt y) - ac \nabla^{23}(x \mlt y) -\tfrac{1}{2}(ad + bc)\Ga_{0}(x \mlt y),\\
\nabla^{12}(x)&\tmlt(a\nabla^{13}(y) + b\nabla^{23}(y)) = \tfrac{a+b}{2}\nabla^{12}(x\mlt y) + \tfrac{b-a}{2}\nu_{3}(x\mlt y).
\end{split}
\end{align}
Specializing \eqref{nablanabla} in different ways yields
\begin{align}\label{nablanabla2}
\begin{split}
&\nabla_{i}(x\mlt y) + \nabla_{i}(x)\tmlt \nabla_{i}(y) = -\diag(x \mlt y),\\
&\nabla_{i}(x \mlt y) + \nabla_{j}(x) \tmlt \nabla_{k}(y) = \tfrac{1}{2}\diag(x \mlt y),\\
&\diag(x) \tmlt \nabla_{i}(y) = -\tfrac{1}{2}\nabla_{i}(x \mlt y),
\end{split}
\end{align}
where $\{i, j, k\}$ is any permutation of $\{1, 2, 3\}$. 

From \eqref{tautrip} there follow the relations
\begin{align}
\label{tautripgamma}
&\tau_{\tmlt}(\Ga_{i}(x), \Ga_{i}(y)) = \tfrac{3}{2}\tau_{\mlt}(x, y), && \tau_{\tmlt}(\Gamma_{i}(x), \Gamma_{j}(y)) = -\tfrac{1}{2}\tau_{\mlt}(x, y),& \\
\label{tautripnabla}
&\tau_{\tmlt}(\nabla_{i}(x), \nabla_{i}(y)) = 3\tau_{\mlt}(x, y), && \tau_{\tmlt}(\nabla_{i}(x), \nabla_{j}(y)) = -\tfrac{3}{2}\tau_{\mlt}(x, y),&\\
\label{tautripdiag}
& \tau_{\tmlt}(\nabla^{ij}(x), \nabla^{ij}(y)) = \tau_{\mlt}(x, y),&&\tau_{\tmlt}(\nabla^{ij}(x), \nabla^{ik}(y)) = \tfrac{1}{2}\tau_{\mlt}(x, y),&
\end{align} 
for $x, y \in \alg$, where distinct indices take on distinct values in the admissible ranges $\{0, 1,2 ,3\}$ (in \eqref{tautripgamma}) or $\{1, 2, 3\}$ (in \eqref{tautripnabla} and \eqref{tautripdiag}).

\begin{theorem}\label{ealgtripletheorem}
Over a field $\fie$ of characteristic $0$, the triple algebra $\trip(\ealg^{n}(\fie)) \simeq \ealg^{3}(\fie)\tensor \ealg^{n}(\fie)$ is a simple Killing metrized exact commutative algebra that is not isomorphic to $\ealg^{3n}(\fie)$.
\end{theorem}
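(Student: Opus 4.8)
The plan is to assemble the statement from pieces already established in the excerpt, treating the three assertions separately. For the isomorphism $\trip(\ealg^{n}(\fie)) \simeq \ealg^{3}(\fie)\tensor_{\fie}\ealg^{n}(\fie)$, I would cite Lemma \ref{tripletensorlemma}(2), which says the triple of any commutative algebra $(\alg, \mlt)$ is isomorphic to $\ealg^{3}\tensor_{\fie}\alg$; applying it with $\alg = \ealg^{n}(\fie)$ gives the claim directly. For simplicity, I would invoke Theorem \ref{tripsimpletheorem}: since $\ealg^{n}(\fie)$ is commutative and simple (Corollary \ref{ealgmodelcorollary}), its triple $\trip(\ealg^{n}(\fie))$ is simple. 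For the Killing metrized and exact properties, I would use Theorem \ref{triplealgebratheorem} together with the fact that $\ealg^{n}(\fie)$ is Killing metrized (Corollary \ref{ealgmodelcorollary}), so its triple is Killing metrized; exactness is built into the definition of the triple (or follows since $\trip(\alg)$ is always exact, as noted after the definition). Alternatively, Lemma \ref{mcatensorlemma}(2) gives the Killing metrized exact conclusion directly from the tensor product description, since $\ealg^{n}(\fie)$ is Killing metrized and exact.

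The substantive content is the final clause: that $\trip(\ealg^{n}(\fie))$ is \emph{not} isomorphic to $\ealg^{3n}(\fie)$. Here the dimensions match ($3n$ on both sides), so I need an invariant that distinguishes them. The natural choice is the intrinsic sectional nonassociativity. By the Example preceding Definition \ref{isectdefinition}, $(\ealg^{3n}(\rea), \mlt, \tau_{\mlt})$ has constant sectional nonassociativity $-1/(3n-1)$, and this constancy is an isomorphism invariant (it can be detected over any field of characteristic zero via the identity \eqref{confnon0}, which holds for $\ealg^{m}(\fie)$ with the relevant constant). So it suffices to exhibit two two-dimensional $\tau_{\tmlt}$-nondegenerate subspaces of $\trip(\ealg^{n}(\fie))$ with different intrinsic sectional nonassociativities, or one subspace with value different from $-1/(3n-1)$. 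I would use the explicit computations set up at the end of the excerpt: the relations \eqref{gammagamma}, \eqref{gammanabla}, \eqref{nablanabla}, \eqref{nablanabla2} for products of the maps $\Ga_i$, $\nabla^{ij}$, $\nabla_i$, together with the trace-form relations \eqref{tautripgamma}, \eqref{tautripnabla}, \eqref{tautripdiag}. Concretely, picking $x, y \in \ealg^{n}(\fie)$ with $\tau_{\mlt}$-nondegenerate span and forming, say, $\Ga_0(x)$ and $\Ga_0(y)$ (which span a subalgebra isomorphic to $\ealg^{n}(\fie)$ by the first equation of \eqref{gammagamma}), one gets $\isect_{\tmlt}(\Ga_0(x),\Ga_0(y)) = \isect_{\mlt}(x,y) = -1/(n-1)$ by the example after Definition \ref{isectdefinition}; whereas a mixed subspace such as $\spn\{\nabla^{12}(x), \nabla^{13}(y)\}$, using \eqref{nablanabla} and \eqref{tautripdiag}, yields a different value. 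If that particular pair happens to give the same value, I would instead use a $\diag(\alg)$-versus-mixed comparison as in the proof of Theorem \ref{compactnahmtheorem}, or simply note that since $\ealg^{3n}(\fie)$ has constant sectional nonassociativity and $\trip(\ealg^{n}(\fie))$ contains the associative subalgebra $\diag(\ealg^n)$ (giving sectional nonassociativity $-1/(n-1) \neq -1/(3n-1)$) it cannot be isomorphic to $\ealg^{3n}(\fie)$ unless $n = 3n$, impossible.

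The cleanest route, which I would adopt, is: $\diag:\ealg^{n}(\fie) \to \trip(\ealg^{n}(\fie))$ is an injective algebra homomorphism (the factor $1/2$ in \eqref{tripmlt} was chosen precisely so that $\diag$ is a homomorphism, and the first equation of \eqref{gammagamma} confirms this for $\Ga_0 = \diag$), so $\trip(\ealg^{n}(\fie))$ contains a subalgebra isomorphic to $\ealg^{n}(\fie)$, and by \eqref{tautripgamma} this subalgebra is $\tau_{\tmlt}$-nondegenerate with induced metric $\tfrac{3}{2}\tau_{\mlt}$. On such a subalgebra the intrinsic sectional nonassociativity equals that of $\ealg^{n}(\fie)$, namely $-1/(n-1)$, by the scaling-insensitivity of $\isect$ (the remark after Definition \ref{isectdefinition}). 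If $\trip(\ealg^{n}(\fie))$ were isomorphic to $\ealg^{3n}(\fie)$, then by Corollary \ref{ealgmodelcorollary} (the identity \eqref{confnon0}) the whole algebra $\trip(\ealg^{n}(\fie))$ would have constant intrinsic sectional nonassociativity $-1/(3n-1)$, forcing $-1/(n-1) = -1/(3n-1)$, i.e. $n-1 = 3n-1$, i.e. $n = 0$, a contradiction. The main obstacle I anticipate is purely bookkeeping: making sure the comparison subspace and the constancy argument work uniformly over an arbitrary field of characteristic zero (not just $\rea$), which is why I would phrase the constant-sectional-nonassociativity obstruction through the algebraic identity \eqref{confnon0}/\eqref{tconfnon0} rather than through the Euclidean Example statement; once that is done the argument is immediate from the lemmas cited.
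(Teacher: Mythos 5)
Your handling of the first three assertions coincides with the paper's: simplicity and the Killing metrized/exact properties are exactly Theorems \ref{tripsimpletheorem} and \ref{triplealgebratheorem} (with Lemma \ref{tripletensorlemma} giving the tensor-product identification), and your idea for the final clause --- show that the identity \eqref{confnon0} with constant $\tfrac{1}{3n-1}$, which any algebra isomorphic to $\ealg^{3n}(\fie)$ must satisfy, fails in $\trip(\ealg^{n}(\fie))$ --- is also the paper's idea, and your decision to phrase the obstruction through the algebraic identity rather than through a Euclidean positivity statement is the right way to keep the argument valid over an arbitrary field of characteristic zero. The paper realizes the obstruction with the mixed pair $\diag(e)$, $\nabla_{i}(y)$ ($e$ a minimal idempotent, $y$ an eigenvector of $L_{\mlt}(e)$), computing via \eqref{tautripgamma} and \eqref{nablanabla2} that the projective-associativity defect is a nonzero multiple of $\nabla_{i}(y)$; your diagonal $2$-plane is a closely related and arguably cleaner witness.

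However, your ``cleanest route'' contains a concrete error. The subalgebra $\diag(\ealg^{n}(\fie))$ is isomorphic to $\ealg^{n}(\fie)$ as an algebra, but the metric it inherits from $\tau_{\tmlt}$ is $\tfrac{3}{2}\tau_{\mlt}$ by \eqref{tautrip}, which is \emph{not} its own Killing form, and the scaling-insensitivity remark after Definition \ref{isectdefinition} does not apply: that remark concerns rescaling the multiplication together with the Killing form it induces, whereas here the multiplication on the diagonal is unrescaled and only the ambient metric is scaled. The correct value is $\isect_{\tmlt}(\diag(x),\diag(y)) = \tfrac{2}{3}\isect_{\mlt}(x,y) = -\tfrac{2}{3(n-1)}$ --- this is precisely \eqref{secttripgaigai} of Lemma \ref{tripsectlemma} --- not $-\tfrac{1}{n-1}$, so your contradiction ``$n-1=3n-1$'' rests on the wrong number. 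Fortunately the argument survives the correction: $-\tfrac{2}{3(n-1)} = -\tfrac{1}{3n-1}$ forces $3n=-1$, which is impossible, so the conclusion stands once the factor $\tfrac{2}{3}$ is inserted. (A smaller slip: $\diag(\ealg^{n}(\fie))$ is not an \emph{associative} subalgebra --- you appear to be conflating this with the Lie-algebra situation of Theorem \ref{compactnahmtheorem}, where $\diag(\g)$ is a trivial subalgebra of the Nahm algebra.)
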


\begin{proof}
That $\trip(\ealg^{n}(\fie))$ is a simple Killing metrized exact commutative algebra follows from Theorems \ref{tripsimpletheorem} and \ref{triplealgebratheorem}.  The spectrum of a minimal idempotent $e \in \midem(\ealg^{n}(\fie))$ contains only $-1/(n-1)$ with multiplicity $n-1$. Let $y \in \ealg^{n}(\fie)$ be an eigenvector with eigenvalue $-1/(n-1)$. By \eqref{tautripgamma} and \eqref{nablanabla2},
\begin{align}
\begin{split}
[\diag(e), \diag(e), \nabla_{i}(y)]_{\tmlt} & - \tfrac{1}{3n-1}\tau_{\tmlt}(\diag(e), \diag(e))\nabla_{i}(y) + \tfrac{1}{3n-1}\tau_{\tmlt}(\diag(e), \nabla_{i}(y))\diag(e)\\
& = \diag(e)\tmlt \nabla_{i}(y) + \tfrac{1}{2}\diag(e)\tmlt\nabla_{i}(e\mlt y) - \tfrac{3}{2(3n-1)}\tau_{\mlt}(e, e)\nabla_{i}(y) \\
& = \left(\tfrac{1}{2(n-1)} + \tfrac{1}{4(n-1)^{2}} - \tfrac{3n}{2(3n-1)(n-1)}\right)\nabla_{i}(y) =\tfrac{1}{4(3n-1)(n-1)}\nabla_{i}(y) .
\end{split}
\end{align}
This shows $\trip(\ealg^{n}(\fie))$ is not intrinsically projectively associative, so is not isomorphic to $\ealg^{3n}(\fie)$.
\end{proof}

Let $(\alg, \mlt)$ be a commutative algebra.
For $x \in \alg$ define a $3$-dimensional subspace 
\begin{align}
\ealg(x) = \spn \{\Ga_{i}(x): 0 \leq i \leq 3\}
\end{align}
and a two-dimensional subspace 
\begin{align}
\begin{split}
\zalg(x) &= \spn\{\nabla^{ij}(x): i, j \in \{1, 2, 3\}\}  = \spn\{\nabla_{i}(x): i \in \{1, 2, 3\}\} \\
&=  \{(ax, bx, -(a + b)x) \in \trip(\alg): a, b \in \fie\}. 
\end{split}
\end{align}
Suppose $w \in \zalg(x) \cap \zalg(y)$. Then there are $a, b, c, d \in \fie$ such that $(ax, bx, -(a+b)x) = w = (cy, dy, -(c+d)y)$, and this can be so only if $x$ and $y$ are linearly dependent. Hence, if $x, y \in \alg$ are linearly independent, then $\zalg(x) \cap \zalg(y) = \{0\}$. 

\begin{lemma}\label{tripidemlemma}
Let $(\trip(\alg), \tmlt)$ be the triple of the commutative $\fie$-algebra $(\alg, \mlt)$, where $\chr \fie = 0$. 
\begin{enumerate}
\item If $e \in \idem(\alg, \mlt)$, then $\Ga_{i}(e) \in \idem(\trip(\alg), \tmlt)$ for $i \in \{0, 1, 2, 3\}$ and:
\begin{enumerate}
\item The subspace $\ealg(e)$ of $(\trip(\alg), \tmlt)$ is a subalgebra isomorphic to $\ealg^{3}(\fie)$.
\item For $i \in \{0, 1, 2, 3\}$, $-1/2$ is an eigenvalue of $L_{\tmlt}(\Ga_{i}(e))$ with multiplicity at least $2$.
\end{enumerate}
\item If, for $x \in \alg$, $\la$ is an eigenvalue of $L_{\mlt}(x)$ with multiplicity $m$, then $-\la/2$ is an eigenvalue of $L_{\tmlt}(\diag(x))$ with multiplicity at least $2m$.
\item If $z \in \szero(\alg, \mlt)$, then $\nabla_{i}(z) \in \szero(\trip(\alg), \tmlt)$ for $1 \leq i \leq 3$ and the subspace $\ealg(z)$ is a trivial subalgebra of $(\trip(\alg), \tmlt)$.
\end{enumerate}
\end{lemma}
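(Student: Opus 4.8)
The plan is to read everything off the algebra identities \eqref{gammagamma}, \eqref{gammanabla}, \eqref{nablanabla}, \eqref{nablanabla2} for the generators $\Ga_{i}$, $\nabla^{ij}$, $\nabla_{i}$, together with the characterization of $\ealg^{3}(\fie)$ in Corollary \ref{ealgmodelcorollary} and the $S_{4}$-action of Lemma \ref{tripleautomorphismlemma}.

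For claim (1) I would first note that each $\Ga_{i}(e)$ is nonzero (some component equals $\pm e$) and that $\Ga_{i}(e)\tmlt\Ga_{i}(e)=\Ga_{i}(e\mlt e)=\Ga_{i}(e)$ by the first identity of \eqref{gammagamma}, so $\Ga_{i}(e)\in\idem(\trip(\alg),\tmlt)$. For (1a), since any three of the generators $\ga_{0},\dots,\ga_{3}$ of $\ealg^{3}(\fie)$ form a basis and $\sum_{i}\ga_{i}=0$, while $\sum_{i}\Ga_{i}(e)=0$ as well, the rule $\ga_{i}\mapsto\Ga_{i}(e)$ extends to a well-defined linear map $\Phi\colon\ealg^{3}(\fie)\to\trip(\alg)$ (consistency at $\ga_{3}$ following from $\sum_{i}\Ga_{i}(e)=0$); comparing the relations \eqref{ealgrelations} with \eqref{gammagamma} shows $\Phi$ is an algebra homomorphism, and its kernel, being a proper ideal ($\Phi(\ga_{0})=\diag(e)\neq 0$) of the simple algebra $\ealg^{3}(\fie)$, is trivial, so $\Phi$ is an isomorphism onto $\ealg(e)$. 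For (1b) with $i=0$, the identities \eqref{gammanabla} give $\diag(e)\tmlt\nabla^{12}(e)=-\tfrac12\nabla^{12}(e)$ and $\diag(e)\tmlt\nabla^{13}(e)=-\tfrac12\nabla^{13}(e)$, and $\nabla^{12}(e),\nabla^{13}(e)$ are linearly independent (they span the two-dimensional space $\zalg(e)$ since $e\neq 0$), so $-1/2$ has multiplicity at least $2$; for $i\in\{1,2,3\}$ one uses $\Ga_{i}(e)=\si\cdot\diag(e)$ with $\si\in\{(01),(02),(03)\}$ and the fact (Lemma \ref{tripleautomorphismlemma}) that $S_{4}$ acts by automorphisms, which makes $L_{\tmlt}(\Ga_{i}(e))$ conjugate to $L_{\tmlt}(\diag(e))$, hence with the same eigenvalue multiplicities.

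For claim (2) I would set $V=\ker(L_{\mlt}(x)-\la\Id_{\alg})$ with basis $v_{1},\dots,v_{m}$, and observe from the third identity of \eqref{nablanabla2} that $\diag(x)\tmlt\nabla_{i}(v)=-\tfrac12\nabla_{i}(x\mlt v)=-\tfrac{\la}{2}\nabla_{i}(v)$ for $v\in V$ and $i\in\{1,2,3\}$. Writing $\nabla_{1}(v)=(-2v,v,v)$ and $\nabla_{2}(v)=(v,-2v,v)$ and using linear independence of the $v_{j}$, a short component check shows the $2m$ vectors $\nabla_{1}(v_{j}),\nabla_{2}(v_{j})$ ($1\le j\le m$) are linearly independent, so the $(-\la/2)$-eigenspace of $L_{\tmlt}(\diag(x))$ has dimension at least $2m$. (Equivalently, under $\trip(\alg)\cong\fie^{3}\tensor\alg$ one has $L_{\tmlt}(\diag(x))=\tfrac12 A\tensor L_{\mlt}(x)$ with $A$ the adjacency matrix of the complete graph on three vertices, whose eigenvalue $-1$ has multiplicity $2$; this variant also yields the statement for algebraic multiplicities.) Claim (3) is then immediate: the first identity of \eqref{nablanabla2} gives $\nabla_{i}(z)\tmlt\nabla_{i}(z)=-\diag(z\mlt z)-\nabla_{i}(z\mlt z)=0$ with $\nabla_{i}(z)\neq 0$ since $z\neq 0$, and by \eqref{gammagamma} every pairwise product of the spanning vectors $\Ga_{i}(z)$ of $\ealg(z)$ vanishes because $z\mlt z=0$ ($\Ga_{i}(z)\tmlt\Ga_{i}(z)=\Ga_{i}(z\mlt z)=0$ and $\Ga_{i}(z)\tmlt\Ga_{j}(z)=-\tfrac12(\Ga_{i}(z\mlt z)+\Ga_{j}(z\mlt z))=0$), so $\ealg(z)$ is a trivial subalgebra.

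The only genuinely delicate point is the multiplicity bookkeeping in (2): settling that ``multiplicity'' should be read as the dimension of the eigenspace (or, in the $\fie^{3}\tensor\alg$ picture, as algebraic multiplicity) and confirming the linear-independence count that produces the factor $2$. Everything else reduces to substituting the already-established identities, and the hypothesis $\chr\fie=0$ guarantees that no small-characteristic degeneracies intrude into those computations.
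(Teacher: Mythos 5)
Your proof is correct and follows essentially the same route as the paper: everything is read off the identities \eqref{gammagamma}, \eqref{gammanabla}, and \eqref{nablanabla2}, with the isomorphism to $\ealg^{3}(\fie)$ coming from matching the relations \eqref{ealgrelations}. The only cosmetic differences are that the paper applies \eqref{gammanabla} uniformly for all $i\in\{0,1,2,3\}$ rather than invoking the $S_{4}$-action for $i\neq 0$, and that your explicit linear-independence count (or the $\tfrac12 A\tensor L_{\mlt}(x)$ picture) for claim (2) is, if anything, tighter than the paper's appeal to pairwise transversality of the subspaces $\zalg(v_{j})$.
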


\begin{proof}
That, for $e \in \idem(\alg, \mlt)$, $\ealg(e)$ is a subalgebra of $(\trip(\alg), \tmlt)$ and, for $z \in \szero(\alg, \mlt)$, $\ealg(z)$ is a trivial subalgebra of $(\trip(\alg), \tmlt)$, are immediate from \eqref{gammagamma}. From \eqref{gammagamma} it follows that $\Ga_{i}(e)$, $i \in \{0, 1, 2, 3\}$, satisfy the relations \eqref{ealgrelations}, so $\ealg(e)$ is isomorphic to $\ealg^{3}(\fie)$. By \eqref{gammanabla}, $\Ga_{i}(e)\tmlt \nabla^{jk}(e) = -\tfrac{1}{2}\nabla^{jk}(e)$ for distinct $i, j, k \in \{0, 1, 2, 3\}$. This shows the two-dimensional subspace $\zalg(e)$ is contained in the $-1/2$-eigenspace of $L_{\tmlt}(\Ga_{i}(e))$, so the multiplicity of $-1/2$ as an eigenvalue is at least $2$.
If $L_{\mlt}(x)y = \la y$, then, by \eqref{gammanabla}, $L_{\tmlt}(\diag(x))\nabla^{ij}(y) = -(\la/2)\nabla^{ij}(y)$, so $\zalg(y)$ is contained in the $-1/2$-eigenspace of $L_{\tmlt}(\Delta(x))$. Since the subspaces $\zalg(y)$ corresponding to linearly independent elements of the $\la$-eigenspace of $L_{\mlt}(x)$ are pairwise transverse, if $\la$ is an eigenvalue of multiplicity $m$ of $L_{\mlt}(x)$, then $-\la/2$ is an eigenvalue of $L_{\tmlt}(\diag(x))$ having multiplicity at least $2m$.
\end{proof}

\begin{lemma}\label{tripsectlemma}
Let $(\trip(\alg), \tmlt)$ be the triple of a Killing metrized commutative $\fie$-algebra $(\alg, \mlt)$, where $\chr \fie = 0$. The intrinsic sectional nonassociativities of $(\trip(\alg), \mlt, \tau_{\tmlt})$ and $(\alg, \mlt, \tau_{\mlt})$ are related by
\begin{align}
\label{secttripgaigai}
\isect_{\tmlt}(\Ga_{i}(x), \Ga_{i}(y)) &= \tfrac{2}{3}\isect_{\mlt}(x, y), && i \in \{0, 1,2, 3\},\\
\label{secttripnainai}
\isect_{\tmlt}(\nabla_{i}(x), \nabla_{i}(y)) &= \tfrac{1}{2}\isect_{\mlt}(x, y),&& i \in \{1,2, 3\},\\
\label{secttripgaigaj}
\isect_{\tmlt}(\Ga_{i}(x), \Ga_{j}(y)) &= -2 \tfrac{\tau_{\mlt}(x\mlt x, y \mlt y) + |x \mlt y|_{\tau_{\mlt}}^{2}}{9|x|^{2}_{\tau_{\mlt}}|y|^{2}_{\tau_{\mlt}} -\tau_{\mlt}(x, y)^{2}}, && i \neq j \in \{0, 1,2, 3\},\\
\label{secttripnainaj}
\isect_{\tmlt}(\nabla_{i}(x), \nabla_{j}(y)) &= -\tfrac{3}{2}\tfrac{|x \mlt y|_{\tau_{\mlt}}^{2}}{4|x|^{2}_{\tau_{\mlt}}|y|^{2}_{\tau_{\mlt}} - \tau_{\mlt}(x, y)^{2}}, && i \neq j \in \{1,2, 3\},\\
\label{secttripdiagnai}
\isect_{\tmlt}(\diag(x), \nabla_{i}(y)) &= - \tfrac{1}{6}\tfrac{2\tau_{\mlt}(x\mlt x, y \mlt y) + |x \mlt y|_{\tau_{\mlt}}^{2}}{|x|^{2}_{\tau_{\mlt}}|y|^{2}_{\tau_{\mlt}}}, && i  \in \{1,2, 3\},
\end{align}
for $x, y \in \alg$ for which these expressions are defined. In particular:
\begin{enumerate}
\item\label{secttripeij} If $e \in \idem(\alg, \mlt)$ is $\tau_{\mlt}$-anisotropic, then $\isect_{\tmlt}(\nabla_{i}(e), \nabla_{j}(e)) = -(1/2)\tau_{\mlt}(e, e)^{-1}$ and $\isect_{\tmlt}(\Ga_{i}(e), \Ga_{j}(e)) = -(3/4)\tau_{\mlt}(e, e)^{-1}$.
\item\label{secttripenainaj} If $x \in \alg$ is $\tau_{\mlt}$-anisotropic and $y \in \alg$ is an eigenvector of $L_{\mlt}(x)$ that is $\tau_{\mlt}$-orthogonal to $x$ and having eigenvalue $\la$, then $\isect_{\tmlt}(\nabla_{i}(x), \nabla_{j}(y)) = - (3\la^{2}/8)\tau_{\mlt}(x, x)^{-1}$.
\item\label{secttripegaigaj} If $e \in \idem(\alg, \mlt)$ is $\tau_{\mlt}$-anisotropic and $y \in \alg$ is an eigenvector of $L_{\mlt}(e)$ that is $\tau_{\mlt}$-orthogonal to $e$ and having eigenvalue $\la$, then $\isect_{\tmlt}(\Ga_{i}(e), \Ga_{j}(y)) = - (2\la(1+\la)/9)\tau_{\mlt}(e, e)^{-1} = (2(1 + \la)/(9(1 - \la)))\isect_{\mlt}(e, y)$.\item\label{secttripzgaigaj} If $z \in \szero(\alg, \mlt)$ is $\tau_{\mlt}$-anisotropic and $y \in \alg$ is an eigenvector of $L_{\mlt}(e)$ that is $\tau_{\mlt}$-orthogonal to $e$ and having eigenvalue $\la$, then $\isect_{\tmlt}(\Ga_{i}(z), \Ga_{j}(y)) = - (2\la^{2}/9)\tau_{\mlt}(z, z)^{-1} = (2\la/(9(1 - \la)))\isect_{\mlt}(z, y)$.
\item\label{secttripidem} If $e \in \idem(\alg, \mlt)$ and $y$ is a vector $\tau_{\mlt}$-orthogonal to $e$ and contained in the $\la$-eigenspace of $L_{\mlt}(e)$, then
$\isect_{\tmlt}(\diag(e), w) = -(\la(2 + \la)/6)\tau_{\mlt}(e, e)^{-1} = ((\la + 2)/(6(\la - 1))\isect_{\mlt}(e, y)$, for any $w \in \zalg(y)$, where the second equality is valid provided $\la \neq 1$.
\item\label{secttripsz} If $z \in \szero(\alg,\mlt)$ and $y$ is an eigenvector of $L_{\mlt}(z)$ that is $\tau_{\mlt}$-orthogonal to $z$ and having eigenvalue $\la$, then
$\isect_{\tmlt}(\diag(z), w) = -(\la^{2}/6)\tau_{\mlt}(z, z)^{-1}  = (1/6)\isect_{\mlt}(z, y)$, for any $w \in \zalg(y)$.
\end{enumerate}
\end{lemma}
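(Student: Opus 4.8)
The plan is to evaluate $\isect_{\tmlt}$ on each of the listed pairs of elements of $\trip(\alg)$ directly from Definition \ref{isectdefinition}, reducing every quantity to $\tau_{\mlt}$-data on $(\alg,\mlt)$. The only ingredients are the formula \eqref{tripmlt} for $\tmlt$ and the formula \eqref{tautrip} for $\tau_{\tmlt}$; everything else is bookkeeping.

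First I would record the following structural observation. Each element occurring here --- $\Ga_i(x)$, $\nabla^{ij}(x)$, $\nabla_i(x)$, $\diag(x)$ --- is a $\fie$-linear combination (with coefficients in $\{0,\pm1,\pm2,\pm\tfrac12\}$) of the three slots $\nu_1(x),\nu_2(x),\nu_3(x)$; hence by \eqref{tripmlt} the $\tmlt$-product of any two such elements built from $x$ and $y$ is a fixed $\fie$-linear combination of $\nu_k(x\mlt x),\nu_k(x\mlt y),\nu_k(y\mlt y)$, i.e.\ of the $\Ga_i(x\mlt x),\Ga_i(x\mlt y),\Ga_i(y\mlt y)$ or the corresponding $\nabla$'s --- which is exactly the content of the collected relations \eqref{gammagamma}, \eqref{gammanabla}, \eqref{nablanabla}, \eqref{nablanabla2}. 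Likewise, by \eqref{tautrip} the form $\tau_{\tmlt}$ applied to two such combinations is a fixed $\fie$-linear combination of $\tau_{\mlt}(u,v)$ with $u,v\in\{x\mlt x,x\mlt y,y\mlt y\}$ (resp.\ $u,v\in\{x,y\}$ for the denominator); the needed values are tabulated in \eqref{tautripgamma}, \eqref{tautripnabla}, \eqref{tautripdiag}, together with $2\tau_{\tmlt}(\diag(u),\diag(v))=3\tau_{\mlt}(u,v)$ and the automatic orthogonality $\tau_{\tmlt}(\diag(\alg),\nabla(\alg))=\{0\}$ (so that all cross terms between a $\diag$-part and a $\nabla$-part drop out). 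Using that $\tmlt$ is commutative, so that $\bar x\tmlt\bar y=\bar y\tmlt\bar x$ and the second numerator term in \eqref{isectdefined} is $|\bar x\tmlt\bar y|^2_{\tau_{\tmlt}}$, one then gets in each of the five families \eqref{secttripgaigai}--\eqref{secttripdiagnai}: the numerator $\tau_{\tmlt}(\bar x\tmlt\bar x,\bar y\tmlt\bar y)-|\bar x\tmlt\bar y|^2_{\tau_{\tmlt}}$ equals a scalar multiple of $\tau_{\mlt}(x\mlt x,y\mlt y)-|x\mlt y|^2_{\tau_{\mlt}}$ in the matched-index cases \eqref{secttripgaigai},\eqref{secttripnainai}, and a scalar combination of $\tau_{\mlt}(x\mlt x,y\mlt y)$ and $|x\mlt y|^2_{\tau_{\mlt}}$ with distinct coefficients in the mixed-index cases \eqref{secttripgaigaj},\eqref{secttripnainaj},\eqref{secttripdiagnai}; the denominator $\tau_{\tmlt}(\bar x,\bar x)\tau_{\tmlt}(\bar y,\bar y)-\tau_{\tmlt}(\bar x,\bar y)^2$ equals a scalar multiple of $\tau_{\mlt}(x,x)\tau_{\mlt}(y,y)-\tau_{\mlt}(x,y)^2$, except in the $\diag$--$\nabla$ case where the off-diagonal term vanishes and it is a multiple of $\tau_{\mlt}(x,x)\tau_{\mlt}(y,y)$ alone. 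Dividing yields the five displayed identities; Lemma \ref{sectnalemma} guarantees these depend only on the spans.

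For the specializations (1)--(6) I would substitute $x\mlt x=e$ when $e$ is idempotent, $x\mlt x=0$ when $z$ is square-zero, and $x\mlt y=\la y$ when $y$ is an eigenvector of $L_{\mlt}(x)$; when moreover $y$ is $\tau_{\mlt}$-orthogonal to $x$, invariance of $\tau_{\mlt}$ gives $\tau_{\mlt}(e,y\mlt y)=\tau_{\mlt}(e\mlt y,y)=\la\,\tau_{\mlt}(y,y)$, $|x\mlt y|^2_{\tau_{\mlt}}=\la^2\tau_{\mlt}(y,y)$, and $\tau_{\mlt}(x,y)=0$, which turns the general formulas into the stated closed forms in $\la$ and $\tau_{\mlt}(e,e)$ (resp.\ $\tau_{\mlt}(z,z)$). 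For (5) and (6) one observes, from \eqref{gammanabla}, that $L_{\tmlt}(\diag(e))$ restricts to $\zalg(y)$ as multiplication by $-\la/2$, so every $w\in\zalg(y)$ is an eigenvector of $L_{\tmlt}(\diag(e))$ and $\isect_{\tmlt}(\diag(e),w)$ is the same for all such $w$ (indeed $\tau_{\tmlt}(\diag(e),w\tmlt w)=-\tfrac{\la}{2}|w|^2_{\tau_{\tmlt}}$ since the slots of $w$ lie in $\spn\{y\}$ and sum to zero); the computation then runs exactly as for \eqref{secttripdiagnai}. The alternative forms "$=\cdots\isect_{\mlt}(e,y)$'' and "$=\cdots\isect_{\mlt}(z,y)$'' follow by dividing by $\isect_{\mlt}(e,y)=\la(1-\la)\tau_{\mlt}(e,e)^{-1}$ (resp.\ $\isect_{\mlt}(z,y)=-\la^2\tau_{\mlt}(z,z)^{-1}$), which are themselves immediate from \eqref{isectdefined} using invariance and orthogonality.

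The proof is thus entirely computational and poses no conceptual obstacle; the real difficulty is organizational. There are eight essentially different pairings to run through, and one must keep careful track of the numerous factors of $\tfrac12$ entering through the definitions of $\Ga_i$, $\nabla^{ij}$, $\tmlt$, and $\tau_{\tmlt}$, and of the combinatorial distinctions (equal versus unequal indices, $\nabla^{ij}$ versus $\nabla_i$, and the appearance of $\diag$-terms). I would therefore carry out each case directly from \eqref{tripmlt} and \eqref{tautrip} in $\alg^3$-coordinates rather than through the intermediate product relations, and cross-check the resulting constants against $2P_{\tmlt}(\diag(x))=3P_{\mlt}(x)$ from Lemma \ref{triplealgebrastructurelemma} and against the model $\trip(\fie)\cong\ealg^3(\fie)$ of Example \ref{rtripleexample}.
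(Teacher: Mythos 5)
Your proposal is correct and is essentially the paper's own proof: both evaluate each $\isect_{\tmlt}$ directly from \eqref{isectdefined} using \eqref{tautrip} together with the product and trace-form relations \eqref{gammagamma}--\eqref{nablanabla2} and \eqref{tautripgamma}--\eqref{tautripdiag}, and then obtain the enumerated claims by specialization (using, for claims (5) and (6), that $L_{\tmlt}(\diag(x))$ acts on $\zalg(y)$ as multiplication by $-\la/2$). One caveat you will hit when executing the plan: substituting $x=y=e$ into \eqref{secttripgaigaj} gives $\isect_{\tmlt}(\Ga_{i}(e),\Ga_{j}(e))=-\tfrac{1}{2}\tau_{\mlt}(e,e)^{-1}$ rather than the $-\tfrac{3}{4}\tau_{\mlt}(e,e)^{-1}$ printed in claim (1); this is an inconsistency in the stated lemma (the paper's proof performs the identical substitution), not a defect of your method.
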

\begin{proof}
The identities \eqref{secttripgaigai} and \eqref{secttripgaigaj} follow from \eqref{tautrip} and \eqref{tautripgamma}.
The identity \eqref{secttripnainai} follows from \eqref{tautrip} and \eqref{tautripnabla}. Straightforward computations using \eqref{tautrip}, \eqref{nablanabla}, and \eqref{tautripdiag} yield \eqref{secttripnainaj} and \eqref{secttripdiagnai}. 
Taking $x = y = e$ in \eqref{secttripgaigaj} and \eqref{secttripnainaj} yields claim \eqref{secttripeij}.
Specializing \eqref{secttripnainaj} yields \eqref{secttripenainaj}.
Specializing \eqref{secttripgaigaj} yields \eqref{secttripegaigaj} and \eqref{secttripzgaigaj}.
Specializing \eqref{secttripgaigai}-\eqref{secttripdiagnai} yields \eqref{secttripidem} and \eqref{secttripsz}.
By the proof of Lemma \ref{tripidemlemma}, if $L_{\mlt}(x)y = \la y$ then any $w \in \zalg(y)$ satisfies $L_{\tmlt}(\diag(x))w = -(\la/2)w$. The identities \eqref{secttripidem} and \eqref{secttripsz} follow from \eqref{sectnadefined} using $w \in \zalg(y) \subset \diag(\alg)^{\perp}$.
\end{proof}

\begin{corollary}
\noindent
\begin{enumerate}
\item\label{tripnotnonneg} The triple algebra of a Euclidean Killing metrized commutative algebra does not have nonnegative intrinsic sectional nonassociativity.
\item\label{tripnotnonpos} The triple algebra of a Euclidean Killing metrized commutative algebra with nonpositive intrinsic sectional nonassociativity does not have nonpositive intrinsic sectional nonassociativity.
\end{enumerate}
\end{corollary}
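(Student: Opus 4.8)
The plan is to read both assertions off Lemma \ref{tripsectlemma} by exhibiting, in each case, a single two-plane of $\trip(\alg)$ on which $\isect_{\tmlt}$ has the prohibited sign. Some facts are common to both parts. Since $\tau_{\mlt}$ is nondegenerate the multiplication is nonzero, so $(\alg,\mlt,\tau_{\mlt})$ is a nontrivial Euclidean metrized commutative algebra and by \eqref{minimalidempotentlemma} of Lemma \ref{criticalpointlemma} it has a minimal idempotent $e$; as $\tau_{\mlt}$ is positive definite, $e$ is $\tau_{\mlt}$-anisotropic. By Theorem \ref{triplealgebratheorem} and \eqref{ttrip1b} of Lemma \ref{triplealgebrastructurelemma}, $(\trip(\alg),\tmlt,\tau_{\tmlt})$ is again a Euclidean Killing metrized exact commutative algebra, so $\tau_{\tmlt}$ is positive definite and every two-dimensional subspace of $\trip(\alg)$ is $\tau_{\tmlt}$-nondegenerate, hence carries a well-defined intrinsic sectional nonassociativity. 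For claim \eqref{tripnotnonneg} I take the two-plane $\spn\{\nabla_{1}(e),\nabla_{2}(e)\}$: the vectors $\nabla_{1}(e),\nabla_{2}(e)$ are nonzero and linearly independent (they lie in the transverse subspaces $\nabla_{1}(\alg),\nabla_{2}(\alg)$), and \eqref{secttripeij} of Lemma \ref{tripsectlemma} gives $\isect_{\tmlt}(\nabla_{1}(e),\nabla_{2}(e))=-\tfrac{1}{2}\tau_{\mlt}(e,e)^{-1}<0$. Thus $\trip(\alg)$ has a two-plane of strictly negative sectional nonassociativity, so it does not have nonnegative intrinsic sectional nonassociativity; no hypothesis on the sectional nonassociativity of $\alg$ is used here.

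For claim \eqref{tripnotnonpos} I instead produce a two-plane of strictly positive sectional nonassociativity. By \eqref{secttripidem} of Lemma \ref{tripsectlemma}, if $e\in\idem(\alg,\mlt)$ is anisotropic, $y\in\eperp$ lies in the $\la$-eigenspace of $L_{\mlt}(e)$, and $0\neq w\in\zalg(y)$, then $\isect_{\tmlt}(\diag(e),w)=-\tfrac{\la(2+\la)}{6}\tau_{\mlt}(e,e)^{-1}$; since $\diag(e)$ and $w$ lie in the transverse subspaces $\diag(\alg)$ and $\nabla(\alg)=\diag(\alg)^{\perp}$, they are independent, so it suffices to find an idempotent of $\alg$ with an eigenvalue in $(-2,0)$ on its orthocomplement. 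Take the minimal idempotent $e$. For an eigenvector $y\in\eperp$ of $L_{\mlt}(e)$ with eigenvalue $\la$, using $e\mlt e=e$, $e\mlt y=\la y$, $\tau_{\mlt}(e,y)=0$ and the invariance of $\tau_{\mlt}$ (so $\tau_{\mlt}(e,y\mlt y)=\tau_{\mlt}(e\mlt y,y)=\la\tau_{\mlt}(y,y)$), a short computation gives $\isect_{\mlt}(e,y)=\la(1-\la)\tau_{\mlt}(e,e)^{-1}$, so the nonpositivity hypothesis forces $\la\le 0$ or $\la\ge 1$; combined with $\specp(e)\subset(-\infty,1/2]$ from \eqref{localpmaxspeclemma} of Lemma \ref{criticalpointlemma}, this yields $\specp(e)\subset(-\infty,0]$. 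Because $e$ is anisotropic, $\alg=\lb e\ra\oplus\eperp$, and because $\alg$ is exact, $\tr L_{\mlt}(e)=0$, so the eigenvalues of $L_{\mlt}(e)$ on $\eperp$, counted with multiplicity, sum to $-1$. Since they are all $\le 0$, at least one of them, say $\la_{0}$, is strictly negative; and an eigenvalue $\le -2$ would force the others to sum to a strictly positive number, impossible unless $\specp(e)=\{\la_{0}\}$, in which case $\la_{0}$ times its multiplicity equals $-1$ and so $\la_{0}\ge -1$. Hence $\la_{0}\in(-2,0)$, and the displayed formula, applied to a $\la_{0}$-eigenvector $y_{0}\in\eperp$ and any nonzero $w\in\zalg(y_{0})$, gives $\isect_{\tmlt}(\diag(e),w)=-\tfrac{\la_{0}(2+\la_{0})}{6}\tau_{\mlt}(e,e)^{-1}>0$; thus $\trip(\alg)$ does not have nonpositive intrinsic sectional nonassociativity.

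The delicate point, and the main obstacle, is the spectral step in \eqref{tripnotnonpos}: trapping a negative eigenvalue of a minimal idempotent inside the open interval $(-2,0)$. This rests on the interplay of three ingredients — minimality (which caps $\specp(e)$ at $1/2$), the nonpositivity hypothesis (which removes the band $(0,1)$ from $\specp(e)$), and exactness (which pins the eigenvalue sum at $-1$) — and the conclusion genuinely fails without exactness: for the split étale algebra $\rea\oplus\rea$ the triple is $\ealg^{3}(\rea)\oplus\ealg^{3}(\rea)$, which by Lemma \ref{sectdirectsumlemma} has nonpositive sectional nonassociativity. Accordingly \eqref{tripnotnonpos} is to be read with the (here essential) hypothesis that $\alg$ itself is exact.
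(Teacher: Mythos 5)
Your proof is correct and follows the same route as the paper: claim (1) is read off from claim \eqref{secttripeij} of Lemma \ref{tripsectlemma} applied to an idempotent of $\alg$, and claim (2) from claim \eqref{secttripidem} applied to a minimal idempotent $e$ and an eigenvector of $L_{\mlt}(e)$ with negative eigenvalue, the negativity being forced by the nonpositivity hypothesis (which gives $\specp(e)\subset(-\infty,0]$) together with exactness (which gives $\tr L_{\mlt}(e)=0$). You go beyond the paper's proof in two useful ways. First, the formula $\isect_{\tmlt}(\diag(e),w)=-\la(2+\la)\tau_{\mlt}(e,e)^{-1}/6$ is positive only for $\la\in(-2,0)$, and the paper's proof asserts positivity for any negative eigenvalue without checking $\la>-2$; your argument that exactness plus $\specp(e)\subset(-\infty,0]$ excludes eigenvalues $\le -2$ (since the eigenvalues on $\eperp$ are nonpositive and sum to $-1$) closes this small gap. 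Second, you correctly observe that exactness, although absent from the statement of claim (2), is invoked in the paper's proof and is genuinely needed: $\rea\oplus\rea$ is Euclidean Killing metrized with vanishing (hence nonpositive) sectional nonassociativity, and its triple is isomorphic to $\ealg^{3}(\rea)\oplus\ealg^{3}(\rea)$, which has nonpositive sectional nonassociativity by Lemma \ref{sectdirectsumlemma}. Both points are accurate and worth recording.
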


\begin{proof}
A Euclidean Killing metrized commutative algebra contains an idempotent $e$, so, by \eqref{secttripeij} of Lemma \ref{tripsectlemma}, $\isect_{\tmlt}(\nabla_{i}(e), \nabla_{j}(e) < 0$. This shows claim \eqref{tripnotnonneg}. By Lemma \ref{criticalpointlemma}, a Euclidean Killing metrized commutative algebra $(\alg, \mlt)$ contains a minimal idempotent $e$. If $L_{\mlt}(e)z = \la z$ and $\tau_{\mlt}(e, z) =0$, then nonpositive sectional nonassociativity implies $0 \geq \tau_{\mlt}(e\mlt e, z \mlt z) -\tau_{\mlt}(e\mlt z, e\mlt z) = \la(1-\la)\tau_{\mlt}(z, z)$, so $\la \leq 0$ and $\specp(e) \subset (-\infty, 0]$. If $(\alg, \mlt)$ is moreover exact, then $L_{\mlt}(e)$ must have a negative eigenvalue $\la$. By claim \eqref{secttripidem} of Lemma \ref{triplealgebrastructurelemma}, if $y \in \alg$ is an eigenvector of $L_{\mlt}(e)$ with eigenvalue $\la$, then $\sect(\diag(e), \nabla_{i}(y)) > 0$. This shows claim \eqref{tripnotnonpos}.
\end{proof}

\section{Concluding remarks and speculations}\label{conclusionsection}
This section records some remarks about topics not treated here and indicates directions for further study.

\subsection{}
There are several crude dichotomies that help to organize Killing metrized exact commutative algebras. There is the distinction between algebras whose automorphism groups are Lie groups and those whose automorphism groups are discrete. There is the distinction between nonnegative and nonpositive sectional nonassociativity (Norton inequality and reverse Norton inequality), or, more generally, lower bounds and upper bounds on sectional nonassociativity. In very imprecise terms these distinctions overlap in that Lie groups of automorphisms are associated with lower bounds on sectional nonassociativity (nonnegativity) while upper bounds on sectional nonassociativity (nonpositivity) tend to force discreteness of automorphism groups. Substance is given to such claims by the example of formally real simple Euclidean Jordan algebras and their deunitalizations (Lemma \ref{hermsectlemma} and Corollary \ref{jordandeunitalizationsectcorollary}) and results such as Lemma \ref{finiteautolemma}. From this point of view that Griess algebras whose automorphism groups are finite satisfy the Norton inequality (nonnegativity) could appear somewhat anomalous, but could be explained via a more refined study of the relation between sectional nonassociativity and the spectra of the multiplication endomorphisms of idempotents. The machinery developed by Tkachev in \cite{Tkachev-universality} facilitates precise formulations, and these ideas will be pursued in joint work with Tkachev.

\subsection{}
It is implicit in the work of R. Hamilton and G. Huisken on the Ricci flow that a certain commutative multiplication on the space of metric curvature tensors discovered by Hamilton makes this space into a metrized commutative $\rea$-algebra. This is explained in \cite{Fox-curvtensor}, where it is shown that the subspaces of Weyl and Kähler Weyl curvature tensors constitute subalgebras that are Killing metrized and exact. This an interesting family of examples. Among the examples considered in the present paper they most closely resemble the deunitalizations of simple Euclidean Jordan algebras in that their automorphism groups are Lie groups and consequently their idempotents constitute submanifolds (these are studied in detail in \cite{Fox-curvtensor}). Several interesting questions arise:
\begin{enumerate}
\item Obtain estimates of the sectional nonassociativities of the algebras of Weyl curvature tensors. The author expects there is a lower bound similar to that in Corollary \ref{jordandeunitalizationsectcorollary}. 
\item As vector spaces the spaces of Weyl and Kähler Weyl curvature tensors are naturally identified with the second Cartan powers of the Lie algebras $\so(n)$ and $\su(n)$. There should be a general construction of a commutative product on the second Cartan power of a compact simple real Lie algebra $\g$ that makes it a Killing metrized exact commutative algebra and that recovers the Hamilton product when $\g$ is $\so(n)$ o $\su(n)$. 
\item The geometric motivations lead to considering these algebras over $\rea$. It should be interesting to study them over more general fields, starting with $\com$.
\end{enumerate}

\subsection{}
In \cite{Fox-cubicpoly} it was shown how to associate with a tight frame in a Euclidean vector space a harmonic cubic polynomial. This polynomial determines an exact metrized commutative algebra. For a balanced tight two-distance unit norm frame, the author has shown that the associated exact metrized commutative algebra is Killing metrized. A tight two-distance frame determines a strongly regular graph. In fact these combinatorial objects are essentially equivalent; \cite[Theorem $1.2$]{Barg-Galzyrin-Okoudjou-Yu} characterizes nonequiangular tight two-distance frames in terms of strongly regular graphs and \cite{Waldron} charaterizes equiangular tight frames in similar terms. A strongly regular graph in turn determines an association scheme \cite{Bannai-Ito, Cameron-Goethals-Seidel-stronglyregular}. When it satisfies some hypothesis, a joint eigenspace of the Bose-Mesner algebra of an association scheme carries a structure of a commutative nonassociative algebra, called a \emph{Norton algebra}, introduced in \cite[Section $6$]{Cameron-Goethals-Seidel}, \cite{Cameron-Goethals-Seidel-stronglyregular}. For a balanced tight two-distance unit frame, the algebra determined by the cubic polynomial of the frame as in \cite{Fox-cubicpoly} can be shown to be isomorphic to the Norton algebra of the associated association scheme (the author did not understand this when he wrote \cite{Fox-cubicpoly}). The conclusion that these Norton algebras are Killing metrized is new, and the notion of sectional nonassociativity offers interesting possibilities for studying these Norton algebras. These examples are particularly interesting because explicit computations are viable because of the detailed combinatorial information available regarding strongly regular graphs. However, developing the background necessary for their presentation to a readership not specializing in the study of the relevant combinatorial objects requires considerable space, and so these results will be treated in a companion paper.

A general question suggested by these examples is for what metrized commutative exact algebras does some subset of $\idem(\alg, \mlt)$ have some particular combinatorial structure. For example, when does some subset of $\idem(\alg, \mlt)$ constitute a two-distance tight frame? When it does, to what extent does such combinatorial structure on its idempotents determine the algebra up to isomorphism?

\subsection{}
It would be interesting to adapt the notions considered here to work over fields of positive characteristic, in particular over finite fields. Interesting examples are known. The Harada-Norton group is the sporadic simple group $HN$ of order $2^{14}\cdot 3^{6}\cdot 5^{6}\cdot 7\cdot 11 \cdot 19$. In \cite{Ryba}, A. Ryba constructed a $133$-dimensional metrized commutative algebra defined over the finite field $\mathbb{F}_{5}$ and it follows from Theorem $1$, Lemma $9.1(ii)$, and Theorems $5$ and $6$ of \cite{Ryba} that this algebra is Killing metrized with automorphism group equal to $HN$. That the invariant bilinear form equals twice the Killing form is claim $(ii)$ of Lemma $9.1$, while the invariance is Theorem $6$. 

\appendix
\section{The Chern-do Carmo-Kobayashi inequality over real Hurwitz algebras}\label{bwappendix}
For a real Hurwitz algebra $\fie$, let $\mat(n, \fie)$ be the space of $n \times n$ matrices over $\fie$. Let $\herm(n, \fie) = \{X \in \mat(n, \fie): \bar{X}^{t} = X\}$ where the conjugate transpose $X \to \bar{X}^{t}$ is induced by the involution of $\fie$ fixing the real subfield. The Frobenius bilinear form $f(X, Y) = \re \tr \bar{X}^{t}Y = \tfrac{1}{2}\tr(\bar{X}^{t}Y + \bar{Y}^{t}X)$ is symmetric \cite[Proposition V.$2.1$]{Faraut-Koranyi} and positive definite (this is true for $\fie = \cayley$ even if $n > 3$).

The Böttcher-Wenzel inequality, first proved for real symmetric matrices by Chern-do Carmo-Kobayashi \cite{Chern-Docarmo-Kobayashi}, is an optimal bound on the Frobenius norm of the commutator of two complex matrices in terms of the product of the Frobenius norms of the matrices, that is stronger than the bound obtained via a naive application of the Cauchy-Schwarz inequality. This appendix recalls these inequalities, shows that the Chern-do Carmo-Kobayashi inequality holds for $\herm(n, \quat)$ and partially for $\herm(3, \cayley)$, and makes some remarks about the proofs of both. 
These refinements are used in the main text to obtain bounds on sectional nonassociativities. Theorem \ref{bwtheorem} and the variant Lemma \ref{cdklemma}, valid over a real Hurwitz algebra, are used in the proof of Lemma \ref{hermsectlemma} to estimate the sectional nonassociativity of the deunitalization of a simple Euclidean Jordan algebra. In a sense, this amounts to reinterpreting the  Böttcher-Wenzel and Chern-do Carmo-Kobayashi inequalities as upper bounds on sectional nonassociativity. 

\begin{lemma}\label{bwreductionlemma}
Let $\fie$ be a real Hurwitz algebra. For $n \geq 1$, the number
\begin{align}\label{bwdefined}
\bw = \bw(\mat(n, \fie)) = \sup\left\{\tfrac{|[X, Y]|^{2}_{f}}{|X|_{f}^{2}|Y|_{f}^{2}}: X, Y \in \mat(n, \fie) \setminus\{0\}\right\}
\end{align}
is finite and
\begin{align}\label{bwpre}
|[X, Y]|^{2}_{f} \leq \bw(|X|_{f}^{2}|Y|_{f}^{2} - f(X, Y)^{2}) \leq \bw|X|_{f}^{2}|Y|_{f}^{2},
\end{align}
for all $X, Y \in \mat(n, \fie)$. If $|[X, Y]|^{2}_{f} \leq \bw(|X|_{f}^{2}|Y|_{f}^{2} - f(X, Y)^{2})$ then $P = |Y|_{f}X - |X|_{f}Y$ and $Q = |Y|_{f}X + |X|_{f}Y$ satisfy $|[P, Q]|^{2}_{f} = \bw|P|_{f}^{2}|Q|_{f}^{2}$.
\end{lemma}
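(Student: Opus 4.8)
\textbf{Proof plan for Lemma \ref{bwreductionlemma}.}
The plan is to treat the three assertions in turn, since each is essentially a separate observation. First, the finiteness of $\bw$: the ratio $|[X,Y]|^2_f/(|X|_f^2|Y|_f^2)$ is a continuous function on the product of unit spheres of $(\mat(n,\fie), f)$, a compact set, so the supremum is attained and finite. (Here one uses that $f$ is positive definite, which holds even for $\fie = \cayley$ with $n > 3$ by the cited \cite[Proposition V.$2.1$]{Faraut-Koranyi}.) The crude bound $|[X,Y]|_f \le |XY|_f + |YX|_f$ together with submultiplicativity of the Frobenius norm already shows the supremum is at most a constant times $1$, but compactness is the clean route.

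Second, the refined inequality \eqref{bwpre}. The right-hand inequality is trivial since $f(X,Y)^2 \ge 0$. For the left-hand inequality the standard trick is to replace $Y$ by its component orthogonal to $X$: given linearly independent $X, Y$, write $Y = \tfrac{f(X,Y)}{|X|_f^2}X + Y^{\perp}$ where $f(X, Y^{\perp}) = 0$. Since $[X, X] = 0$, one has $[X, Y] = [X, Y^{\perp}]$, so
\begin{align}
|[X,Y]|^2_f = |[X, Y^{\perp}]|^2_f \le \bw\, |X|_f^2 |Y^{\perp}|_f^2 = \bw\left(|X|_f^2|Y|_f^2 - f(X,Y)^2\right),
\end{align}
using $|Y^{\perp}|_f^2 = |Y|_f^2 - f(X,Y)^2/|X|_f^2$ and multiplying through by $|X|_f^2$. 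The degenerate case (where $X$ and $Y$ are dependent, or one of them is zero) makes both sides zero. This step is where one must be a little careful that the Pythagorean decomposition is available, i.e. that $f$ is a genuine inner product on $\mat(n,\fie)$ over $\rea$; this is exactly the positive-definiteness recalled in the appendix preamble.

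Third, the equality claim for $P = |Y|_f X - |X|_f Y$ and $Q = |Y|_f X + |X|_f Y$. I would compute directly: $[P, Q] = [|Y|_f X - |X|_f Y,\ |Y|_f X + |X|_f Y]$. Expanding bilinearly, the $[X,X]$ and $[Y,Y]$ terms vanish, and the cross terms give $|X|_f|Y|_f\left([X,Y] - [Y,X]\right) = 2|X|_f|Y|_f[X,Y]$, so $|[P,Q]|^2_f = 4|X|_f^2|Y|_f^2\,|[X,Y]|^2_f$. On the other hand $f(P, Q) = |Y|_f^2|X|_f^2 - |X|_f^2|Y|_f^2 = 0$ by symmetry of $f$, and $|P|_f^2 = |Q|_f^2 = |Y|_f^2|X|_f^2 + |X|_f^2|Y|_f^2 - \text{(cross term)} + \text{(cross term)}$; more precisely $|P|_f^2|Q|_f^2 = \left(2|X|_f^2|Y|_f^2 - 2|X|_f|Y|_f f(X,Y)\right)\left(2|X|_f^2|Y|_f^2 + 2|X|_f|Y|_f f(X,Y)\right) = 4|X|_f^2|Y|_f^2\left(|X|_f^2|Y|_f^2 - f(X,Y)^2\right)$. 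Substituting the hypothesis $|[X,Y]|^2_f = \bw(|X|_f^2|Y|_f^2 - f(X,Y)^2)$ into $|[P,Q]|^2_f = 4|X|_f^2|Y|_f^2|[X,Y]|^2_f$ then yields $|[P,Q]|^2_f = \bw|P|_f^2|Q|_f^2$ exactly. I expect no real obstacle here — the computation is routine bilinear algebra — the only point requiring attention is handling the case $|X|_f|Y|_f = 0$, where $P = Q = 0$ and the identity holds vacuously. So the genuine content of the lemma is the orthogonal-projection reduction in the second part, and the equality statement is a bookkeeping consequence of how $P$ and $Q$ are built to saturate the Cauchy–Schwarz-type slack.
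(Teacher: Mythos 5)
Your proof is correct, but the route to the key inequality differs from the paper's. For the left-hand inequality in \eqref{bwpre} the paper follows Wu--Liu: it introduces $P = |Y|_{f}X - |X|_{f}Y$ and $Q = |Y|_{f}X + |X|_{f}Y$ at the outset, computes $[P,Q] = 2|X|_{f}|Y|_{f}[X,Y]$ and $|P|_{f}^{2}|Q|_{f}^{2} = 4|X|_{f}^{2}|Y|_{f}^{2}\bigl(|X|_{f}^{2}|Y|_{f}^{2} - f(X,Y)^{2}\bigr)$, and applies the defining bound $|[P,Q]|_{f}^{2}\leq \bw|P|_{f}^{2}|Q|_{f}^{2}$ to this pair; the equality claim then falls out for free, since equality in \eqref{bwpre} forces equality in the one inequality of that chain. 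You instead prove \eqref{bwpre} by splitting $Y = \tfrac{f(X,Y)}{|X|_{f}^{2}}X + Y^{\perp}$, using $[X,X]=0$ to get $[X,Y]=[X,Y^{\perp}]$, and applying the defining bound to $(X,Y^{\perp})$; you then verify the equality claim by the same direct computation of $[P,Q]$ and $|P|_{f}^{2}|Q|_{f}^{2}$ that the paper uses for the inequality. The two arguments are equivalent in content (both only use bilinearity and antisymmetry of the commutator plus positive definiteness of $f$, so both are insensitive to the nonassociativity of $\cayley$), but yours cleanly separates the inequality from the equality statement, at the cost of doing the $P,Q$ computation anyway for the last claim; the paper's version gets both from one chain. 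Your compactness argument for finiteness is also fine and slightly more self-contained than the paper's one-line appeal to Cauchy--Schwarz (which implicitly uses boundedness of the bilinear multiplication map on a finite-dimensional space). One cosmetic remark: the final hypothesis in the statement is surely meant to read equality rather than $\leq$ (as the inequality always holds by the first part); both you and the paper correctly treat it as equality.
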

\begin{proof}
That $\bw(\mat(n, \fie))$ is finite follows from the Cauchy-Schwarz inequality. The following argument from the proof of \cite[Theorem $2.2$]{Wu-Liu} shows that the apparently weaker inequality $|[X, Y]|^{2}_{f} \leq \bw(\mat(n, \fie))|X|_{f}^{2}|Y|_{f}^{2}$ implies the first inequality of \eqref{bwpre}. If either $X$ or $Y$ equals $0$ there is nothing to show, so suppose $X \neq 0$ and $Y \neq 0$. Take $P = |Y|_{f}X - |X|_{f}Y$ and $Q = |Y|_{f}X + |X|_{f}Y$. Then $[P, Q] = 2|X|_{f}|Y|_{f}[X, Y]$, and straightforward computation shows
\begin{align}\label{bwreduction}
\begin{split}
|[X, Y]|^{2}_{f} = \tfrac{1}{4|X|^{2}_{f}|Y|^{2}_{f}}|[P, Q]|^{2} \leq \tfrac{1}{2|X|^{2}_{f}|Y|^{2}_{f}}|P|^{2}_{f}|Q|^{2}_{f} = 2(|X|_{f}^{2}|Y|_{f}^{2} - f(X, Y)^{2}).
\end{split}
\end{align}
If there is equality in the first inequality of \eqref{bwpre}, then \eqref{bwreduction} shows that $|[P, Q]|^{2}_{f} = \bw|P|_{f}^{2}|Q|_{f}^{2}$.
\end{proof}

\begin{theorem}[\cite{Chern-Docarmo-Kobayashi}, \cite{Bottcher-Wenzel}]\label{bwtheorem}
There holds $\bw(\mat(n, \com)) = 2$. That is for $X, Y \in \mat(n, \com)$, 
\begin{align}\label{bw}
|[X, Y]|^{2}_{f} \leq 2(|X|_{f}^{2}|Y|_{f}^{2} - f(X, Y)^{2}) \leq 2|X|_{f}^{2}|Y|_{f}^{2},
\end{align}
for the inner product $f(X, Y) = \re \tr \bar{X}^{t}Y$.
\begin{enumerate}
\item\label{submax0} If either $X, Y \in \herm(n, \com)$ or $X, Y \in \su(n)$, then $|[X, Y]|^{2}_{f} = 2|X|_{f}^{2}|Y|_{f}^{2}$ if and only if $X$ and $Y$ are simultaneously unitarily conjugate to scalar multiples of $e_{11} - e_{nn}$ and $e_{1n} + e_{n1}$.
\item\label{submax} If either $X, Y \in \herm(n, \com)$ or $X, Y \in \su(n)$, then $|[X, Y]|^{2}_{f} = 2(|X|_{f}^{2}|Y|_{f}^{2} - f(X, Y)^{2})$ if and only if $X$ and $Y$ either are linearly dependent over $\com$ or are simultaneously unitarily conjugate to scalar multiples of $e_{11} - e_{nn}$ and $e_{1n} + e_{n1}$.
\end{enumerate}
\end{theorem}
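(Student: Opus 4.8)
\textbf{Proof proposal for Theorem \ref{bwtheorem}.}

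The plan is to treat the scalar inequality \eqref{bw} and the characterization of equality cases \eqref{submax0}, \eqref{submax} somewhat separately, since the former is classical and the latter requires the extra structure coming from self-adjointness. First I would invoke the Böttcher--Wenzel inequality in the form proved in \cite{Bottcher-Wenzel} (and for Hermitian matrices originally in \cite{Chern-Docarmo-Kobayashi}): $|[X, Y]|^{2}_{f} \leq 2|X|^{2}_{f}|Y|^{2}_{f}$ for all $X, Y \in \mat(n, \com)$, which is precisely the assertion $\bw(\mat(n, \com)) \leq 2$. Sharpness, $\bw(\mat(n, \com)) = 2$, is then witnessed by $X = e_{11} - e_{nn}$, $Y = e_{1n} + e_{n1}$, for which $[X, Y] = 2(e_{1n} - e_{n1})$ and a direct computation gives $|[X, Y]|^{2}_{f} = 8 = 2 \cdot 2 \cdot 2 = 2|X|^{2}_{f}|Y|^{2}_{f}$. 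The stronger, traceless form $|[X, Y]|^{2}_{f} \leq 2(|X|^{2}_{f}|Y|^{2}_{f} - f(X, Y)^{2})$ then follows formally from the weaker one by the polarization trick already recorded in Lemma \ref{bwreductionlemma}: applying the weak inequality to $P = |Y|_{f}X - |X|_{f}Y$ and $Q = |Y|_{f}X + |X|_{f}Y$ and using $[P, Q] = 2|X|_{f}|Y|_{f}[X, Y]$, $|P|^{2}_{f}|Q|^{2}_{f} = 4(|X|^{2}_{f}|Y|^{2}_{f} - f(X,Y)^{2})|X|_f^2|Y|_f^2$, rearranges to \eqref{bw}. This disposes of the displayed inequality; the second inequality in \eqref{bw} is trivial from Cauchy--Schwarz.

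For the equality characterizations I would first reduce the two stated cases to one. If $X, Y \in \su(n)$, then $\j X, \j Y \in \herm(n, \com)$ and both $f$ and $[\dum, \dum]$ are (up to the obvious signs that cancel) insensitive to this substitution, so it suffices to treat $X, Y \in \herm(n, \com)$. Next, for Hermitian $X$ the principal axis theorem lets me conjugate $X$ by a unitary $U$ to a real diagonal matrix $\Lambda = \operatorname{diag}(\lambda_{1}, \dots, \lambda_{n})$; since conjugation by $U$ preserves $f$, $[\dum,\dum]$, and the class "$\herm(n,\com)$" (and carries the model pair $e_{11} - e_{nn}$, $e_{1n} + e_{n1}$ to unitarily conjugate matrices), there is no loss of generality in assuming $X = \Lambda$ is diagonal. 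I would then compute $[\Lambda, Y]$ entrywise: its $(i,j)$ entry is $(\lambda_{i} - \lambda_{j})Y_{ij}$, so
\begin{align}\label{bwproposalcommutator}
|[\Lambda, Y]|^{2}_{f} = \sum_{i, j}(\lambda_{i} - \lambda_{j})^{2}|Y_{ij}|^{2},
\end{align}
while $|\Lambda|^{2}_{f} = \sum_{i}\lambda_{i}^{2}$, $|Y|^{2}_{f} = \sum_{i,j}|Y_{ij}|^{2}$, and $f(\Lambda, Y) = \sum_{i}\lambda_{i}\,\re Y_{ii}$. The equality analysis in \eqref{submax0} now becomes the elementary claim that $\sum_{i,j}(\lambda_{i} - \lambda_{j})^{2}|Y_{ij}|^{2} = 2\bigl(\sum_{i}\lambda_{i}^{2}\bigr)\bigl(\sum_{i,j}|Y_{ij}|^{2}\bigr)$ forces the optimizing configuration. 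Using $(\lambda_{i} - \lambda_{j})^{2} \leq 2(\lambda_{i}^{2} + \lambda_{j}^{2})$ and bounding $\sum_{i,j}(\lambda_i^2 + \lambda_j^2)|Y_{ij}|^2$, equality requires: diagonal entries of $Y$ vanish ($Y_{ii} = 0$), $Y_{ij}$ is supported only on pairs where $(\lambda_i - \lambda_j)^2 = 2(\lambda_i^2+\lambda_j^2)$, i.e. $\lambda_i = -\lambda_j$, and moreover all the weight must sit on a single antipodal pair of equal-modulus eigenvalues; a short argument then pins $\Lambda$ (after rescaling) to $e_{11} - e_{nn}$ and $Y$ to a scalar multiple of $e_{1n} + e_{n1}$ up to a further unitary fixing $\Lambda$. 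For \eqref{submax}, the polarization identity from Lemma \ref{bwreductionlemma} reduces the traceless-equality case to the plain-equality case applied to $P$ and $Q$ (noting $P, Q \in \herm(n,\com)$ when $X, Y$ are), the extra alternative "$X, Y$ linearly dependent over $\com$" accounting for the degenerate situation $P = 0$.

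The main obstacle I anticipate is the bookkeeping in the equality analysis of \eqref{bwproposalcommutator}: one must carefully track the several simultaneous equality conditions in the chain of inequalities (the inequality $(\lambda_i - \lambda_j)^2 \le 2(\lambda_i^2 + \lambda_j^2)$, the localization of the mass of $Y$, and the residual unitary freedom that stabilizes $\Lambda$) and argue that they can only be met by the asserted normal form. The passage from $\su(n)$ to $\herm(n,\com)$ and from general Hermitian $X$ to diagonal $X$ is routine but should be stated explicitly so that the reduction is clean; everything else is either the cited Böttcher--Wenzel theorem or the polarization lemma already available in the appendix.
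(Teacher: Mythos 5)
Your proposal is correct and follows essentially the same route as the paper: cite the B\"ottcher--Wenzel bound, upgrade it to the variance form via the polarization trick of Lemma \ref{bwreductionlemma}, reduce $\su(n)$ to $\herm(n,\com)$ by multiplication by $\j$, diagonalize $X$ and run the Chern--do Carmo--Kobayashi entrywise equality analysis for \eqref{submax0}, and reduce \eqref{submax} to \eqref{submax0} by applying the polarization identity to $P$ and $Q$. The only cosmetic difference is that the paper outsources the diagonal equality analysis to the proof of Lemma \ref{cdklemma} rather than inlining it.
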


\begin{proof}
The inequality \eqref{bw} as such was first proved for real symmetric matrices, with a characterization of the case of equality $|[X, Y]|^{2}_{f} = 2|X|_{f}^{2}|Y|_{f}^{2}$, as \cite[Lemma $1$]{Chern-Docarmo-Kobayashi}. It was proved for $\mat(n, \com)$ in full generality by Z. Lu in \cite{Lu-normalscalar} and as stated in the proof of \cite[Theorem $3.1$]{Bottcher-Wenzel}. A nice exposition of the proof of \eqref{bw}, following \cite{Audenaert}, can be found in \cite[chapter $9$]{Zhan}. Its history is somewhat complicated; see \cite{Audenaert, Bottcher-Wenzel-howbig, Bottcher-Wenzel, Cheng-Vong-Wenzel, Lu-normalscalar, Lu-remarks,Lu-Wenzel,  Zhan} among many references. 
Characterizations of equality in \eqref{bw} are given in various references, for instance \cite{Bottcher-Wenzel}. Here these characterizations are needed when $X, Y \in \herm(n, \com)$.
Since multiplication by $\j$ is an isometric real linear isomorphism from $\su(n)$ to $\herm(n, \com)$, the validity of either of \eqref{submax0} or \eqref{submax} for $X$ and $Y$ both Hermitian or both skew-Hermitian implies its validity for the other class. The characterization of equality in \eqref{submax0} for Hermitian matrices is proved as it is in \cite{Chern-Docarmo-Kobayashi} for $\herm(n, \rea)$; see the proof of Lemma \ref{cdklemma} below. Suppose there holds $|[X, Y]|^{2}_{f} = 2(|X|_{f}^{2}|Y|_{f}^{2} - f(X, Y)^{2})$ in \eqref{bw} for linearly independent $X, Y \in \herm(n, \com)$. Then \eqref{bwreduction} shows that the Hermitian matrices $P$ and $Q$ satisfy $|[P, Q]|^{2} = 2|P|^{2}_{f}|Q|^{2}_{f}$, so, by \eqref{submax0}, are simultaneously unitarily conjugate to (nonzero) scalar multiples of $e_{11} - e_{nn}$ and $e_{1n} + e_{n1}$. Since this implies that $|P|^{2}_{f}= |Q|^{2}_{f}$, from $X = \tfrac{P+Q}{2|X|}$ and $Y = \tfrac{Q - P}{2|Y|}$ it follows that $f(X, Y) = 0$, so, by \eqref{bwreduction}, there holds $|[X, Y]|^{2}_{f} = 2|X|_{f}^{2}|Y|_{f}^{2}$. By \eqref{submax0}, $X$ and $Y$ are simultaneously unitarily conjugate to scalar multiples of $e_{11} - e_{nn}$ and $e_{1n} + e_{n1}$. This proves \eqref{submax}. 
\end{proof}

The inequality \eqref{bw} is not valid over $\quat$ and $\cayley$. Taking $X = \j I$ and $Y = \mathsf{j}I \in \mat(n, \quat)$ shows $\bw(\mat(n, \fie)) \geq 4$ when $\fie \in \{\quat, \cayley\}$. In \cite[Theorem $3.1$]{Ge-Li-Zhou} it is shown that $\bw(\mat(n, \quat)) = 4$ and the equality case is characterized. Nonetheless, the proof of \eqref{bw} given in \cite{Chern-Docarmo-Kobayashi} for $\herm(n, \rea)$ adapts for $\herm(n, \fie)$ over any real Hurwitz algebra $\fie \in \{\rea, \com, \quat, \cayley\}$. Over $\rea$ or $\com$, because a Hermitian matrix is unitarily diagonalizable, it suffices to prove the claim in the case one of the matrices is diagonal. This proof requires the prinicipal axis theorem, which is valid over $\quat$ and $\cayley$, and one has to check that the noncommutativity of $\quat$ and $\cayley$ does not effect the rest of the argument. This works over $\quat$, but the reduction via diagonalization faces an obstacle over $\cayley$, namely that, for $X, Y \in \herm(n, \fie)$, the inequality \eqref{bw} is not manifestly invariant with respect to the action of $\Aut(\herm(n, \fie), \star)$. Over associative $\fie$, $\Aut(\herm(n, \fie))$ preserves the commutator of the matrix product on $\mat(n, \fie)$ restricted to $\herm(n, \fie)$, but when $\fie = \cayley$, in which case $\Aut(\herm(3, \cayley))$ is the compact form of the simple real Lie group of type $F_{4}$ \cite{Schafer}, it is not clear to the author whether this is true. However the partial result, that \eqref{bw} holds when $X$ is diagonal, is true, and this suffices for the application to sectional nonassociativity in the proof of Lemma \ref{hermsectlemma}.

\begin{lemma}\label{cdklemma}
Let $\fie$ be a real Hurwitz algebra. 
Let $f(X, Y) = \re \tr \bar{X}^{t}Y$ on $\mat(n, \fie)$.
\begin{enumerate}
\item If $\fie$ is associative, for all $X, Y \in \herm(n, \fie)$ there holds 
\begin{align}\label{cdk}
|[X, Y]|^{2}_{f} \leq 2(|X|_{f}^{2}|Y|_{f}^{2} - f(X, Y)^{2})\leq 2|X|_{f}^{2}|Y|_{f}^{2}.
\end{align}
\item If $\fie = \cayley$ and $n = 3$, \eqref{cdk} holds for all $Y \in \herm(3, \cayley)$ and all \emph{diagonal} $X \in \herm(3, \cayley)$.
\item\label{maximalcommutator} The equality $|[X, Y]|^{2}_{f} = 2|X|_{f}^{2}|Y|_{f}^{2}$ holds in \eqref{cdk} (where, if $\fie = \cayley$ and $n = 3$, $X$ is assumed diagonal) if and only if $X$ and $Y$ are simultaneously equivalent via an automorphism of the Jordan algebra $(\herm(n, \fie), \star)$ to real multiples of $e_{11} - e_{nn}$ and $e_{1n} + e_{n1}$.
\item\label{submaximalcommutator} The equality $|[X, Y]|^{2}_{f} = 2(|X|_{f}^{2}|Y|_{f}^{2} - f(X, Y)^{2})$ holds in \eqref{cdk} (where, if $\fie = \cayley$ and $n = 3$, $X$ is assumed diagonal) if and only if $X$ and $Y$ are either linearly dependent over $\fie$ or are simultaneously equivalent via an automorphism of the Jordan algebra $(\herm(n, \fie), \star)$ to real multiples of $e_{11} - e_{nn}$ and $e_{1n} + e_{n1}$. 
\end{enumerate}
\end{lemma}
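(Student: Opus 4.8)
The plan is to mimic the argument of Chern--do Carmo--Kobayashi, reducing to the case where one matrix is diagonal and then handling the resulting scalar inequality explicitly; the Hurwitz-algebra subtleties are isolated in the reduction step. First, by the principal axis theorem (valid over every real Hurwitz algebra $\fie$), for associative $\fie$ one can replace $X$ by an $\Aut(\herm(n, \fie), \star)$-equivalent diagonal matrix $\la = \diag(\la_1, \dots, \la_n)$ with $\la_i \in \rea$; since for associative $\fie$ an automorphism of $(\herm(n,\fie),\star)$ preserves the ordinary matrix product and commutator on $\mat(n,\fie)$, and preserves $f$, the quantities $|[X,Y]|_f^2$, $|X|_f^2$, $|Y|_f^2$, $f(X,Y)^2$ are all unchanged. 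For $\fie = \cayley$ and $n = 3$ this reduction is not available (it is exactly the point discussed before the lemma that $\Aut(\herm(3,\cayley),\star)$ may not preserve the matrix commutator), so there one simply assumes $X = \la$ diagonal from the outset. In either case it remains to prove \eqref{cdk} when $X = \la$ is diagonal with real entries.

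Next I would carry out the diagonal computation. Writing $Y = (y_{ij})$ with $y_{ij} \in \fie$ and $\overline{y_{ij}} = y_{ji}$, the matrix $[\la, Y]$ has $ij$-entry $(\la_i - \la_j)y_{ij}$, so
\begin{align}\label{cdkdiag}
|[\la, Y]|_f^2 = \sum_{i,j}(\la_i - \la_j)^2 |y_{ij}|^2 = \sum_{i \neq j}(\la_i - \la_j)^2 |y_{ij}|^2,
\end{align}
where $|y_{ij}|^2 = \re(\overline{y_{ij}}y_{ij})$ is the Hurwitz norm; note $|y_{ij}|^2 = |y_{ji}|^2$. Also $|\la|_f^2 = \sum_i \la_i^2$, $f(\la, Y) = \sum_i \la_i \, \re(y_{ii}) = \sum_i \la_i y_{ii}$ (the diagonal entries of a Hermitian matrix are real), and $|Y|_f^2 = \sum_i y_{ii}^2 + \sum_{i \neq j}|y_{ij}|^2$. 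Setting $c_{ij} = |y_{ij}|^2 \geq 0$ for $i < j$, the desired inequality \eqref{cdk} becomes a statement purely about the real numbers $\la_i$, $y_{ii}$, and $c_{ij}$, which is precisely the inequality established in \cite[Lemma $1$]{Chern-Docarmo-Kobayashi} for real symmetric matrices (there $c_{ij}$ is a single real number squared, here it is a Hurwitz norm squared, but the inequality only uses $c_{ij} \geq 0$). So the first inequality of \eqref{cdk} follows verbatim from the Chern--do Carmo--Kobayashi argument, and the second inequality is immediate since $f(X,Y)^2 \geq 0$.

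For the equality cases, claim \eqref{maximalcommutator}: tracing through the Chern--do Carmo--Kobayashi equality analysis, $|[\la,Y]|_f^2 = 2|\la|_f^2 |Y|_f^2$ forces, after a permutation of indices (a Jordan automorphism), $\la$ to be a real multiple of $e_{11} - e_{nn}$ and $Y$ to have all entries zero except $y_{1n}, y_{n1}$ with $|y_{1n}|$ determined; a further automorphism (conjugation by a diagonal unit matrix, which lies in $\Aut(\herm(n,\fie),\star)$) rotates the unit $y_{1n} \in \fie$ to $1$, giving $Y$ a real multiple of $e_{1n} + e_{n1}$. Claim \eqref{submaximalcommutator} then follows from \eqref{maximalcommutator} by the reduction device already recorded in Lemma \ref{bwreductionlemma}: if $X, Y$ are linearly independent over $\fie$ and equality holds in the middle term of \eqref{cdk}, then $P = |Y|_f X - |X|_f Y$ and $Q = |Y|_f X + |X|_f Y$ lie in $\herm(n,\fie)$ (and, when $\fie=\cayley$, $n=3$, $X$ diagonal, one checks $P,Q$ may be re-diagonalized, or argues directly) and satisfy $|[P,Q]|_f^2 = 2|P|_f^2|Q|_f^2$, so by \eqref{maximalcommutator} they are simultaneously equivalent to multiples of $e_{11}-e_{nn}$ and $e_{1n}+e_{n1}$; this forces $|P|_f = |Q|_f$, hence $f(X,Y) = 0$, hence $|[X,Y]|_f^2 = 2|X|_f^2|Y|_f^2$, and \eqref{maximalcommutator} applies to $X, Y$ themselves. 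The main obstacle is the $\cayley$ case: one must verify that every step after the diagonalization of $X$ — in particular the normalization of the off-diagonal unit and the application of Lemma \ref{bwreductionlemma} — can be performed without conjugating by a non-automorphism, using only diagonal-unit conjugations and permutations, which do lie in $\Aut(\herm(3,\cayley),\star)$; this is the delicate bookkeeping, and it is why the $\cayley$ statement is restricted to diagonal $X$.
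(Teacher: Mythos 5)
Your argument is correct and follows essentially the same route as the paper's: reduction to diagonal $X$ by the principal axis theorem (with the same restriction to diagonal $X$ when $\fie = \cayley$), the elementary estimate $(\la_i-\la_j)^2\le 2(\la_i^2+\la_j^2)$ on the diagonal form, the Chern--do Carmo--Kobayashi equality analysis for claim \eqref{maximalcommutator}, and the $P,Q$ polarization for claim \eqref{submaximalcommutator}. One small correction: the refined middle term of \eqref{cdk} is not literally what \cite[Lemma $1$]{Chern-Docarmo-Kobayashi} proves (that lemma gives only $|[X,Y]|_f^2\le 2|X|_f^2|Y|_f^2$), so rather than citing it for the refinement you should, as the paper does, obtain the middle term by feeding the crude bound into Lemma \ref{bwreductionlemma}, which you already invoke for the equality case.
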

\begin{proof}
Suppose $X \in \herm(n, \fie)$ is diagonal, where $n = 3$ if $\fie = \cayley$. Since $X$ is Hermitian, its entries are real. Let $x_{i} = X_{ii}$. Then $[X, Y]_{ij} = x_{i}Y_{ij} - Y_{ij}x_{j} = (x_{i} - x_{j})Y_{ij}$, the last equality because $x_{j}$ is real, so in the center of $\fie$. Now the proof goes through as in \cite{Chern-Docarmo-Kobayashi}:
\begin{align}\label{cdk0}
\begin{split}
|[X, Y]|^{2}_{f} &= \sum_{i, j = 1}^{n}(x_{i} -x_{j})^{2}|Y_{ij}|^{2}  \leq 2\sum_{i, j = 1}^{n}(x_{i}^{2} + x_{j}^{2})|Y_{ij}|^{2} \leq 2\sum_{k = 1}^{n}x_{k}^{2}\sum_{i, j = 1}^{n}|Y_{ij}|^{2} = 2|X|^{2}_{f}|Y|^{2}_{f}.
\end{split}
\end{align}
This proves $|[X, Y]|^{2}_{f} \leq 2|X|^{2}_{f}|Y|^{2}_{f}$ when $X$ is diagonal. By the principal axis theorem, any element of $(\herm(n, \fie), \star)$ is equivalent via an automorphism of $(\herm(n, \fie), \star)$ to a diagonal matrix. When $\fie = \cayley$ (so $n = 3$) this is due to \cite[Theorem $5.1$]{Freudenthal} (see also \cite[Theorem V.$2.5$]{Faraut-Koranyi})). Since an automorphism of $(\herm(n, \fie), \star)$ preserves the trace, it is isometric, and, when $\fie$ is associative it preserves the the commutator of the matrix product on $\mat(n, \fie)$ restricted to $\herm(n, \fie)$, when $\fie$ is associative to prove $|[X, Y]|^{2}_{f} \leq 2|X|^{2}_{f}|Y|^{2}_{f}$ in general it suffices to prove it when $X$ is diagonal. By Lemma \ref{bwreductionlemma}, in either case $|[X, Y]|^{2}_{f} \leq 2|X|^{2}_{f}|Y|^{2}_{f}$ implies \eqref{cdk}.

The characterization of the equality case \eqref{maximalcommutator} goes through as in the proof of \cite[Lemma $1$]{Chern-Docarmo-Kobayashi}. Precisely, if there is equality in \eqref{cdk0}, then 
\begin{align}
0 = 4\sum_{i = 1}^{n}x_{i}^{2}|Y_{ii}|^{2} + 2\sum_{i\neq j}(x_{i} + x_{j}))^{2}|Y_{ij}|^{2} =  2\sum_{k = 1}^{n}x_{k}^{2}\sum_{i = 1}^{n}|Y_{ii}|^{2} + 2\sum_{i\neq j}(x_{i} + x_{j}))^{2}|Y_{ij}|^{2}.
\end{align}
If $X$ is nonzero, this implies $Y_{ii} = 0$ for $1 \leq i \leq n$, $x_{i} + x_{j} = 0$ when $Y_{ij} \neq 0$ for $i \neq j$, and that at exactly two of $x_{1}, \dots, x_{n}$ are nonzero. Suppose $i \neq j$ are the indices such that $x_{i} = -x_{j} \neq 0$. It follows that $Y_{kl} = 0$ if $\{k, l\} \neq \{i, j\}$. Hence $X$ and $Y$ are scalar multiples of $e_{ii} - e_{jj}$ and $e_{ij} + e_{ji}$, respectively. Since $\aut(\herm(n, \fie))$ contains a subgroup acting as permutations on the diagonal subalgebra of $\herm(n, \fie)$, it can be assumed that $i = 1$ and $j = n$. The proof of the equality case \eqref{submaximalcommutator} follows from \eqref{maximalcommutator} exactly as the in the proof of \eqref{submax} of Theorem \ref{bwtheorem}.
\end{proof}
 
\begin{remark}
Observe that although the same difficulty related to diagonalization occurs in the proof of Lemma \ref{hermsectlemma} as occurs in the proof of Lemma \ref{cdklemma}, the end result in Lemma \ref{hermsectlemma} is stated in terms manifestly invariant with respect to $\Aut(\herm(n, \fie))$, and as a result takes the same form whether or not $\fie$ is associative. This observation suggests that the upper bound on sectional nonassociativity, which is closely related to the commutator bound, is the more natural bound to consider, at least from the algebraic point of view, although it could also simply reflect a technical deficiency in the proof of Lemma \ref{cdklemma}.
\end{remark}

\begin{remark}
It would be useful to know whether \eqref{cdk} is true for $\herm(3, \cayley)$. This seems likely. More, generally, it would be interesting to know whether \eqref{cdk} is true for $\herm(n, \cayley)$ for $n > 3$. This would follow from a principal axis theorem and a characterization of the automorphism group, but it appears that neither is known. See \cite{Zohrabi-Zumanovich} for what is known in this regard.
\end{remark}

\begin{remark}
Although this does not seem to be realized widely, for skew-Hermitian matrices the inequality \eqref{bw} is closely related to Vinberg's results on invariant norms on compact simple Lie algebras in \cite{Vinberg-invariantnorms} applied in the special case of $\su(n)$. Precisely, Vinberg shows that, for an invariant norm, $||\dum||$ on a compact simple Lie algebra $\g$ the quantity $\theta(x) = \sup_{0 \neq y \in \g}\tfrac{||[x, y]||}{||y||}$ does not depend on the choice of invariant norm, and equals the spectral norm of $\ad_{\g}(x)$. Since by definition $\theta([x, y]) \leq \theta(x)\theta(y)$, taking $\g$ to be a compact simple Lie algebra of matrices, e.g. skew-symmetric or skew-Hermitian matrices, and taking $||\dum ||$ to be the Frobenius norm there results $|[X, Y]|_{f} \leq \theta(X)|Y|_{f}$. It suffices then to check that the spectral norm of $\ad_{\g}(X)$ is no greater than $\sqrt{2}|X|_{f}$. In the case $\g = \su(n)$ this can be shown as follows. Since any skew-Hermitian matrix is unitarily conjugate to a diagonal matrix and the spectral norms of the adjoint representations of unitarily conjugate matrices are the same, it suffices to consider the case of diagonal $X \in \su(n)$. In this case the nonzero eigenvalues of $\ad_{\su(n)}(X)$ have the form $\la_{i} - \la_{j}$ where $\la_{1}, \dots, \la_{n}$ are the diagonal elements of $X$. Since $|\la_{i} - \la_{j}|^{2} \leq 2|\la|^{2} \leq 2|X|^{2}_{f}$, where $\la$ is the element of $X$ with the greatest modulus, the spectral norm of $\ad_{\su(n)}(X)$ is no greater than $\sqrt{2}|X|_{f}$. In the case of $\g = \so(n)$, something equivalent is shown in \cite{Bloch-Iserles}. 
\end{remark}

\bibliographystyle{amsplain}

\begin{thebibliography}{10}

\bibitem{Albert-radical}
A.~A. Albert, \emph{The radical of a non-associative algebra}, Bull. Amer.
  Math. Soc. \textbf{48} (1942), 891--897.

\bibitem{Albert-powerassociative}
\bysame, \emph{Power-associative rings}, Trans. Amer. Math. Soc. \textbf{64}
  (1948), 552--593.

\bibitem{Ash-Mumford-Rapoport-Tai}
A.~Ash, D.~Mumford, M.~Rapoport, and Y.-S. Tai, \emph{Smooth compactifications
  of locally symmetric varieties}, second ed., Cambridge Mathematical Library,
  Cambridge University Press, Cambridge, 2010.

\bibitem{Audenaert}
K.~M.~R. Audenaert, \emph{Variance bounds, with an application to norm bounds
  for commutators}, Linear Algebra Appl. \textbf{432} (2010), no.~5,
  1126--1143.

\bibitem{Bannai-Ito}
E.~Bannai and T.~Ito, \emph{Algebraic combinatorics. {I}}, The
  Benjamin/Cummings Publishing Co., Inc., Menlo Park, CA, 1984, Association
  schemes.

\bibitem{Barg-Galzyrin-Okoudjou-Yu}
A.~Barg, A.~Glazyrin, K.~A. Okoudjou, and W.-H. Yu, \emph{Finite two-distance
  tight frames}, Linear Algebra Appl. \textbf{475} (2015), 163--175.

\bibitem{Beilinson-Drinfeld-chiral}
A.~Beilinson and V.~Drinfeld, \emph{Chiral algebras}, American Mathematical
  Society Colloquium Publications, vol.~51, American Mathematical Society,
  Providence, RI, 2004.

\bibitem{Benito-Draper-Elduque}
P.~Benito, C.~Draper, and A.~Elduque, \emph{On some algebras related to simple
  {L}ie triple systems}, J. Algebra \textbf{219} (1999), no.~1, 234--254.

\bibitem{Bloch-Iserles}
A.~M. Bloch and A.~Iserles, \emph{Commutators of skew-symmetric matrices},
  Internat. J. Bifur. Chaos Appl. Sci. Engrg. \textbf{15} (2005), no.~3,
  793--801.

\bibitem{Bordemann}
M.~Bordemann, \emph{Nondegenerate invariant bilinear forms on nonassociative
  algebras}, Acta Math. Univ. Comenian. (N.S.) \textbf{66} (1997), no.~2,
  151--201.

\bibitem{Bottcher-Wenzel-howbig}
A.~B{\"o}ttcher and D.~Wenzel, \emph{How big can the commutator of two matrices
  be and how big is it typically?}, Linear Algebra Appl. \textbf{403} (2005),
  216--228.

\bibitem{Bottcher-Wenzel}
\bysame, \emph{The {F}robenius norm and the commutator}, Linear Algebra Appl.
  \textbf{429} (2008), no.~8-9, 1864--1885.

\bibitem{Bourbaki-algebre4}
N.~Bourbaki, \emph{\'{E}l\'{e}ments de math\'{e}matique}, Masson, Paris, 1981,
  Alg\`ebre. Chapitres 4 \`a 7.

\bibitem{Cameron-Goethals-Seidel}
P.~J. Cameron, J.-M. Goethals, and J.~J. Seidel, \emph{The {K}rein condition,
  spherical designs, {N}orton algebras and permutation groups}, Nederl. Akad.
  Wetensch. Indag. Math. \textbf{40} (1978), no.~2, 196--206.

\bibitem{Cameron-Goethals-Seidel-stronglyregular}
\bysame, \emph{Strongly regular graphs having strongly regular
  subconstituents}, J. Algebra \textbf{55} (1978), no.~2, 257--280.

\bibitem{Cheng-Vong-Wenzel}
C.-M. Cheng, S.-W. Vong, and D.~Wenzel, \emph{Commutators with maximal
  {F}robenius norm}, Linear Algebra Appl. \textbf{432} (2010), no.~1, 292--306.

\bibitem{Chern-Docarmo-Kobayashi}
S.~S. Chern, M.~do~Carmo, and S.~Kobayashi, \emph{Minimal submanifolds of a
  sphere with second fundamental form of constant length}, Functional
  {A}nalysis and {R}elated {F}ields (F.~E. Browder, ed.), Springer, New York,
  1970, pp.~59--75.

\bibitem{Chern-Griffiths}
S.~S. Chern and Phillip Griffiths, \emph{Abel's theorem and webs}, Jahresber.
  Deutsch. Math.-Verein. \textbf{80} (1978), no.~1-2, 13--110.

\bibitem{Conway-monster}
J.~H. Conway, \emph{A simple construction for the {F}ischer-{G}riess monster
  group}, Invent. Math. \textbf{79} (1985), no.~3, 513--540.

\bibitem{DeSole-Kac}
A.~De~Sole and V.~G. Kac, \emph{Finite vs affine {$W$}-algebras}, Jpn. J. Math.
  \textbf{1} (2006), no.~1, 137--261.

\bibitem{Dieudonne-killing}
J.~Dieudonn\'{e}, \emph{On semi-simple {L}ie algebras}, Proc. Amer. Math. Soc.
  \textbf{4} (1953), 931--932.

\bibitem{Dong-Griess}
C.~Dong and R.~L. Griess, Jr., \emph{Rank one lattice type vertex operator
  algebras and their automorphism groups}, J. Algebra \textbf{208} (1998),
  no.~1, 262--275.

\bibitem{Elashvili-Jibladze-Kac-semisimple}
A.~G. Elashvili, M.~Jibladze, and V.~G. Kac, \emph{{Semisimple cyclic elements
  in semisimple Lie algebras}},
  \href{https://arxiv.org/abs/1907.09170}{arXiv:1907.09170}.

\bibitem{Faraut-Koranyi}
J.~Faraut and A.~Kor{\'a}nyi, \emph{Analysis on symmetric cones}, Oxford
  Mathematical Monographs, The Clarendon Press Oxford University Press, New
  York, 1994, Oxford Science Publications.

\bibitem{Fox-curvtensor}
D.~J.~F. Fox, \emph{The commutative nonassociative algebra of metric curvature
  tensors}, \href{https://arxiv.org/abs/1901.04012}{arXiv:1901.04012}.

\bibitem{Fox-ahs}
\bysame, \emph{Geometric structures modeled on affine hypersurfaces and
  generalizations of the {E}instein {W}eyl and affine hypersphere equations},
  \href{http://arxiv.org/abs/0909.1897}{arXiv:0909.1897}.

\bibitem{Fox-cubicpoly}
\bysame, \emph{Harmonic cubic homogeneous polynomials such that the
  norm-squared of the {H}essian is a multiple of the euclidean quadratic form},
  \href{https://arxiv.org/abs/1905.00071}{arXiv:1905.00071}.

\bibitem{Fox-crm}
\bysame, \emph{Geometric structures modeled on affine hypersurfaces and
  generalizations of the {E}instein-{W}eyl and affine sphere equations},
  Extended abstracts {F}all 2013---geometrical analysis, type theory, homotopy
  theory and univalent foundations, Trends Math. Res. Perspect. CRM Barc.,
  vol.~3, Birkh\"{a}user/Springer, Cham, 2015, pp.~15--19.

\bibitem{Fox-frames}
\bysame, \emph{Tight two-distance unimodular frames and {N}orton algebras}, in
  preparation.

\bibitem{Frenkel-Ben-zvi}
E.~Frenkel and D.~Ben-Zvi, \emph{Vertex algebras and algebraic curves},
  Mathematical Surveys and Monographs, vol.~88, American Mathematical Society,
  Providence, RI, 2001.

\bibitem{Frenkel-Lepowsky-Meurman}
I.~Frenkel, J.~Lepowsky, and A.~Meurman, \emph{Vertex operator algebras and the
  {M}onster}, Pure and Applied Mathematics, vol. 134, Academic Press Inc.,
  Boston, MA, 1988.

\bibitem{Freudenthal}
H.~Freudenthal, \emph{Oktaven, {A}usnahmegruppen und {O}ktavengeometrie}, Geom.
  Dedicata \textbf{19} (1985), no.~1, 7--63.

\bibitem{Ge-Li-Zhou}
J.~Ge, F.~Li, and Y.~Zhou, \emph{Some generalizations of the ddvv-type
  inequalities}, \href{https://arxiv.org/abs/math/1807.07307}{
  arXiv:1807.07307}.

\bibitem{Gebert}
R.~W. Gebert, \emph{Introduction to vertex algebras, {B}orcherds algebras and
  the {M}onster {L}ie algebra}, Internat. J. Modern Phys. A \textbf{8} (1993),
  no.~31, 5441--5503.

\bibitem{Gell-Mann}
M.~Gell-Mann, \emph{Symmetries of baryons and mesons}, Phys. Rev. (2)
  \textbf{125} (1962), 1067--1084.

\bibitem{Griess-friendlygiant}
R.~L. Griess, Jr., \emph{The friendly giant}, Invent. Math. \textbf{69} (1982),
  no.~1, 1--102.

\bibitem{Griess-monster}
\bysame, \emph{The {M}onster and its nonassociative algebra}, Finite
  groups---coming of age ({M}ontreal, {Q}ue., 1982), Contemp. Math., vol.~45,
  Amer. Math. Soc., Providence, RI, 1985, pp.~121--157.

\bibitem{Griess-Meierfrankenfeld-Segev}
R.~L. Griess, Jr., U.~Meierfrankenfeld, and Y.~Segev, \emph{A uniqueness proof
  for the {M}onster}, Ann. of Math. (2) \textbf{130} (1989), no.~3, 567--602.

\bibitem{Hall-transpositionalgebras}
J.~I. Hall, \emph{Transposition algebras}, Bull. Inst. Math. Acad. Sin. (N.S.)
  \textbf{14} (2019), no.~2, 155--187.

\bibitem{Hall-Rehren-Shpectorov-primitive}
J.~I. Hall, F.~Rehren, and S.~Shpectorov, \emph{Primitive axial algebras of
  {J}ordan type}, J. Algebra \textbf{437} (2015), 79--115.

\bibitem{Hall-Segev-Shpectorov}
J.~I. Hall, Y.~Segev, and S.~Shpectorov, \emph{Miyamoto involutions in axial
  algebras of {J}ordan type half}, Israel J. Math. \textbf{223} (2018), no.~1,
  261--308.

\bibitem{Hall-Segev-Shpectorov-primitiveaxial}
\bysame, \emph{On primitive axial algebras of {J}ordan type}, Bull. Inst. Math.
  Acad. Sin. (N.S.) \textbf{13} (2018), no.~4, 397--409.

\bibitem{Harada}
K.~Harada, \emph{On a commutative nonassociative algebra associated with a
  multiply transitive group}, J. Fac. Sci. Univ. Tokyo Sect. IA Math.
  \textbf{28} (1981), no.~3, 843--849 (1982).

\bibitem{Ivanov}
A.~A. Ivanov, \emph{The {M}onster group and {M}ajorana involutions}, Cambridge
  Tracts in Mathematics, vol. 176, Cambridge University Press, Cambridge, 2009.

\bibitem{Jacobson}
N.~Jacobson, \emph{Lie algebras}, Interscience Tracts in Pure and Applied
  Mathematics, No. 10, Interscience Publishers, New York-London, 1962.

\bibitem{Jacobson-jordan}
\bysame, \emph{Structure and representations of {J}ordan algebras}, American
  Mathematical Society Colloquium Publications, Vol. XXXIX, American
  Mathematical Society, Providence, R.I., 1968.

\bibitem{Kac-vertexbook}
V.~Kac, \emph{Vertex algebras for beginners}, second ed., University Lecture
  Series, vol.~10, American Mathematical Society, Providence, RI, 1998.

\bibitem{Kinyon-Sagle}
M.~K. Kinyon and A.~A. Sagle, \emph{Nahm algebras}, J. Algebra \textbf{247}
  (2002), no.~2, 269--294.

\bibitem{Knus-Merkurjev-Rost-Tignol}
M.-A. Knus, A.~Merkurjev, M.~Rost, and J.-P. Tignol, \emph{The book of
  involutions}, American Mathematical Society Colloquium Publications, vol.~44,
  American Mathematical Society, Providence, RI, 1998.

\bibitem{Kokoris}
L.~A. Kokoris, \emph{Simple power-associative algebras of degree two}, Ann. of
  Math. (2) \textbf{64} (1956), 544--550.

\bibitem{Krasnov-Tkachev}
Y.~Krasnov and V.~G. Tkachev, \emph{{Variety of idempotents in nonassociative
  algebras}}, \href{http://arxiv.org/abs/1801.00617}{arXiv:1801.00617}.

\bibitem{Krasnov-Tkachev-idempotentgeometry}
\bysame, \emph{Idempotent geometry in generic algebras}, Adv. Appl. Clifford
  Algebr. \textbf{28} (2018), no.~5, Art. 84, 14.

\bibitem{Laquer}
H.~T. Laquer, \emph{Invariant affine connections on {L}ie groups}, Trans. Amer.
  Math. Soc. \textbf{331} (1992), no.~2, 541--551.

\bibitem{Lepowsky-Li}
J.~Lepowsky and H.~Li, \emph{Introduction to vertex operator algebras and their
  representations}, Progress in Mathematics, vol. 227, Birkh\"auser Boston,
  Inc., Boston, MA, 2004.

\bibitem{Liu-curvatureestimates}
X.~Liu, \emph{Curvature estimates for irreducible symmetric spaces}, Chinese
  Ann. Math. Ser. B \textbf{27} (2006), no.~3, 287--302.

\bibitem{Lu-normalscalar}
Z.~Lu, \emph{Normal scalar curvature conjecture and its applications}, J.
  Funct. Anal. \textbf{261} (2011), no.~5, 1284--1308.

\bibitem{Lu-remarks}
\bysame, \emph{Remarks on the {B}\"ottcher-{W}enzel inequality}, Linear Algebra
  Appl. \textbf{436} (2012), no.~7, 2531--2535.

\bibitem{Lu-Wenzel}
Z.~Lu and D.~Wenzel, \emph{The normal {R}icci curvature inequality}, Recent
  advances in the geometry of submanifolds---dedicated to the memory of
  {F}ranki {D}illen (1963--2013), Contemp. Math., vol. 674, Amer. Math. Soc.,
  Providence, RI, 2016, pp.~99--110.

\bibitem{Macfarlane-Pfeiffer}
A.~J. Macfarlane and H.~Pfeiffer, \emph{On characteristic equations, trace
  identities and {C}asimir operators of simple {L}ie algebras}, J. Math. Phys.
  \textbf{41} (2000), no.~5, 3192--3225.

\bibitem{Macfarlane-Sudbery-Weisz}
A.~J. Macfarlane, A.~Sudbery, and P.~H. Weisz, \emph{On {G}ell-{M}ann's
  {$\lambda$}-matrices, {$d$}- and {$f$}-tensors, octets, and parametrizations
  of {$SU(3)$}}, Comm. Math. Phys. \textbf{11} (1968), no.~1, 77--90.

\bibitem{Mason-fivepieces}
G.~{Mason}, \emph{{Five not-so-easy pieces:open problems about vertex rings}},
  \href{https://arxiv.org/abs/1812.06206}{arXiv:1812.06206}.

\bibitem{Matsuo}
A.~Matsuo, \emph{Norton's trace formulae for the {G}riess algebra of a vertex
  operator algebra with larger symmetry}, Comm. Math. Phys. \textbf{224}
  (2001), no.~3, 565--591.

\bibitem{Matsuo-Nagatomo}
A.~Matsuo and K.~Nagatomo, \emph{Axioms for a vertex algebra and the locality
  of quantum fields}, MSJ Memoirs, vol.~4, Mathematical Society of Japan,
  Tokyo, 1999.

\bibitem{Meyer-Neutsch}
W.~Meyer and W.~Neutsch, \emph{Associative subalgebras of the {G}riess
  algebra}, J. Algebra \textbf{158} (1993), no.~1, 1--17.

\bibitem{Miyamoto-griessalgebras}
Masahiko Miyamoto, \emph{Griess algebras and conformal vectors in vertex
  operator algebras}, J. Algebra \textbf{179} (1996), no.~2, 523--548.

\bibitem{Nadirashvili-Tkachev-Vladuts}
N.~Nadirashvili, V.~Tkachev, and S.~Vl\u{a}du\c{t}, \emph{Nonlinear elliptic
  equations and nonassociative algebras}, Mathematical Surveys and Monographs,
  vol. 200, American Mathematical Society, Providence, RI, 2014.

\bibitem{Okubo}
S.~Okubo, \emph{Construction of nonassociative algebras from representation
  modules of simple {L}ie algebras}, Algebras Groups Geom. \textbf{3} (1986),
  no.~1, 60--127.

\bibitem{Okubo-octonion}
\bysame, \emph{Introduction to octonion and other non-associative algebras in
  physics}, Montroll Memorial Lecture Series in Mathematical Physics, vol.~2,
  Cambridge University Press, Cambridge, 1995.

\bibitem{Poonen}
B.~Poonen, \emph{Why all rings should have a 1}, Math. Mag. \textbf{92} (2019),
  no.~1, 58--62.

\bibitem{Popov}
V.~L. Popov, \emph{An analogue of {M}. {A}rtin's conjecture on invariants for
  nonassociative algebras}, Lie groups and {L}ie algebras: {E}. {B}. {D}ynkin's
  {S}eminar, Amer. Math. Soc. Transl. Ser. 2, vol. 169, Amer. Math. Soc.,
  Providence, RI, 1995, pp.~121--143.

\bibitem{Ryba}
A.~J.~E. Ryba, \emph{A natural invariant algebra for the {H}arada-{N}orton
  group}, Math. Proc. Cambridge Philos. Soc. \textbf{119} (1996), no.~4,
  597--614.

\bibitem{Schafer}
R.~D. Schafer, \emph{Structure and representation of nonassociative algebras},
  Bull. Amer. Math. Soc. \textbf{61} (1955), 469--484.

\bibitem{Schafer-book}
\bysame, \emph{An introduction to nonassociative algebras}, Pure and Applied
  Mathematics, Vol. 22, Academic Press, New York, 1966.

\bibitem{Smith}
S.~D. Smith, \emph{Nonassociative commutative algebras for triple covers of
  {$3$}-transposition groups}, Michigan Math. J. \textbf{24} (1977), no.~3,
  273--287.

\bibitem{Springer-Veldkamp}
T.~A. Springer and F.~D. Veldkamp, \emph{Octonions, {J}ordan algebras and
  exceptional groups}, Springer Monographs in Mathematics, Springer-Verlag,
  Berlin, 2000.

\bibitem{Suzuki-automorphismgroups}
H.~Suzuki, \emph{Automorphism groups of multilinear maps}, Osaka J. Math.
  \textbf{20} (1983), no.~3, 659--673.

\bibitem{Thompson}
J.~G. Thompson, \emph{Uniqueness of the {F}ischer-{G}riess monster}, Bull.
  London Math. Soc. \textbf{11} (1979), no.~3, 340--346.

\bibitem{Tits-friendly}
J.~Tits, \emph{On {R}. {G}riess' ``friendly giant''}, Invent. Math. \textbf{78}
  (1984), no.~3, 491--499.

\bibitem{Tits-monstre}
\bysame, \emph{Le {M}onstre (d'apr\`es {R}. {G}riess, {B}. {F}ischer et al.)},
  Ast\'erisque (1985), no.~121-122, 105--122, Seminar Bourbaki, Vol. 1983/84.

\bibitem{Tkachev-universality}
V.~G. Tkachev, \emph{The universality of one half in commutative nonassociative
  algebras with identities},
  \href{http://arxiv.org/abs/1808.03808}{arXiv:1808.03808}.

\bibitem{Tkachev-jordan}
\bysame, \emph{A {J}ordan algebra approach to the cubic eiconal equation}, J.
  Algebra \textbf{419} (2014), 34--51.

\bibitem{Tkachev-correction}
\bysame, \emph{A correction of the decomposability result in a paper by
  {M}eyer-{N}eutsch}, J. Algebra \textbf{504} (2018), 432--439.

\bibitem{Tkachev-extremal}
\bysame, \emph{On an extremal property of {J}ordan algebras of {C}lifford
  type}, Comm. Algebra \textbf{47} (2019), no.~3, 1057--1066.

\bibitem{Tkachev-summary}
\bysame, \emph{{V.M. Miklyukov: from dimension 8 to nonassociative algebras}},
  Mathematical Physics and Computer Simulation \textbf{22} (2019), no.~2,
  33--50.

\bibitem{Vinberg-automorphisms}
{\`E}.~B. Vinberg, \emph{Structure of the group of automorphisms of a
  homogeneous convex cone}, Trudy Moskov. Mat. Ob\v s\v c. \textbf{13} (1965),
  56--83.

\bibitem{Vinberg-invariantnorms}
\bysame, \emph{Invariant norms in compact {L}ie algebras}, Funkcional. Anal. i
  Prilo\v zen \textbf{2} (1968), no.~2, 89--90.

\bibitem{Waldron}
S.~Waldron, \emph{On the construction of equiangular frames from graphs},
  Linear Algebra Appl. \textbf{431} (2009), no.~11, 2228--2242.

\bibitem{Waldron-frames}
S.~F.~D. Waldron, \emph{An introduction to finite tight frames}, Applied and
  Numerical Harmonic Analysis, Birkh\"{a}user/Springer, New York, 2018.

\bibitem{Wu-Liu}
Y.-D. Wu and X.-Q. Liu, \emph{A short note on the {F}robenius norm of the
  commutator}, Math. Notes \textbf{87} (2010), no.~6, 903--907.

\bibitem{Zhan}
X.~Zhan, \emph{Matrix theory}, Graduate Studies in Mathematics, vol. 147,
  American Mathematical Society, Providence, RI, 2013.

\bibitem{Zhu-modularinvariance}
Y.~Zhu, \emph{Modular invariance of characters of vertex operator algebras}, J.
  Amer. Math. Soc. \textbf{9} (1996), no.~1, 237--302.

\bibitem{Zohrabi-Zumanovich}
A.~Zohrabi and P.~Zusmanovich, \emph{On {H}ermitian and skew-{H}ermitian matrix
  algebras over octonions}, \href{https://arxiv.org/abs/math/1912.03370}{
  arXiv:1912.03370}.

\end{thebibliography}
\def\polhk#1{\setbox0=\hbox{#1}{\ooalign{\hidewidth
  \lower1.5ex\hbox{`}\hidewidth\crcr\unhbox0}}} \def\cprime{$'$}
  \def\cprime{$'$} \def\cprime{$'$}
  \def\polhk#1{\setbox0=\hbox{#1}{\ooalign{\hidewidth
  \lower1.5ex\hbox{`}\hidewidth\crcr\unhbox0}}} \def\cprime{$'$}
  \def\cprime{$'$} \def\cprime{$'$} \def\cprime{$'$} \def\cprime{$'$}
  \def\polhk#1{\setbox0=\hbox{#1}{\ooalign{\hidewidth
  \lower1.5ex\hbox{`}\hidewidth\crcr\unhbox0}}} \def\cprime{$'$}
  \def\Dbar{\leavevmode\lower.6ex\hbox to 0pt{\hskip-.23ex \accent"16\hss}D}
  \def\cprime{$'$} \def\cprime{$'$} \def\cprime{$'$} \def\cprime{$'$}
  \def\cprime{$'$} \def\cprime{$'$} \def\cprime{$'$} \def\cprime{$'$}
  \def\cprime{$'$} \def\cprime{$'$} \def\cprime{$'$} \def\cprime{$'$}
  \def\dbar{\leavevmode\hbox to 0pt{\hskip.2ex \accent"16\hss}d}
  \def\cprime{$'$} \def\cprime{$'$} \def\cprime{$'$} \def\cprime{$'$}
  \def\cprime{$'$} \def\cprime{$'$} \def\cprime{$'$} \def\cprime{$'$}
  \def\cprime{$'$} \def\cprime{$'$} \def\cprime{$'$} \def\cprime{$'$}
  \def\cprime{$'$} \def\cprime{$'$} \def\cprime{$'$} \def\cprime{$'$}
  \def\cprime{$'$} \def\cprime{$'$} \def\cprime{$'$} \def\cprime{$'$}
  \def\cprime{$'$} \def\cprime{$'$} \def\cprime{$'$} \def\cprime{$'$}
  \def\cprime{$'$} \def\cprime{$'$} \def\cprime{$'$} \def\cprime{$'$}
  \def\cprime{$'$} \def\cprime{$'$} \def\cprime{$'$} \def\cprime{$'$}
  \def\cprime{$'$} \def\cprime{$'$} \def\cprime{$'$}
\providecommand{\bysame}{\leavevmode\hbox to3em{\hrulefill}\thinspace}
\providecommand{\MR}{\relax\ifhmode\unskip\space\fi MR }
\providecommand{\MRhref}[2]{%
  \href{http://www.ams.org/mathscinet-getitem?mr=#1}{#2}
}
\providecommand{\href}[2]{#2}

\end{document}